 \newcommand{\F}{\ensuremath{\mathbb{F}}}
 \newcommand{\R}{\ensuremath{\mathbb{R}}}
 \def\III{\mathbb{I}}
 \def\XX{\mathcal{X}}
 \def\RR{\mathcal{R}}
 \def\MM{\mathcal{M}}
 \def\LL{\mathcal{L}}
 \def\BB{\mathcal{B}}
 \def\Tr{\mathrm{Tr}}
 \def\KK{\mathcal{K}}
 \def\Var{\mathrm{Var}}
 \def\TT{\mathcal{T}}
 \def\VV{\mathcal{V}}
 \def\scal{\mathrm{R}}
 \def\rCd{\mathbf{r}_{C,\delta}'}
 \def\rE{\mathbf{r}_E}
 \def\loc{\mathrm{loc}}
 \def\rCd{\textbf{r}_{C,\delta}}
 \def\spt{\mathrm{supp}}
 \def\r{\textbf{r}}
 \newcommand{\ba}{\begin{align*}}
 \newcommand{\ea}{\end{align*}}
 \newcommand{\flow}{\mathcal{X}=\{M^n,(g(t))_{t\in I}\}}
 \newcommand{\na}{\nabla}
\newcommand{\la}{\langle}
\newcommand{\ra}{\rangle}
\newcommand{\lc}{\left(}
\newcommand{\rc}{\right)}
\newcommand{\ep}{\epsilon}
\def\De{\Delta}
\def\CC{\mathcal{C}}
\def\NN{\mathcal{N}}
\def\WW{\mathcal{W}}
\def\Var{\mathrm{Var}}
\def\Div{\mathrm{div}}
\def\dd{\mathrm{d}}
\newcommand{\di}{\text{div}}
\newcommand{\Rm}{\ensuremath{\mathrm{Rm}}}
\newcommand{\Ric}{\ensuremath{\mathrm{Ric}}}
\renewcommand{\t}{\mathfrak{t}}
\newcommand{\CF}{\mathfrak{C}}
\def\rB{\textbf{r}_{B}}
\def\rBC{\textbf{r}_{B,\sigma}}
\def\rA{\textbf{r}_A}
\def\MS{\mathcal{S}}
\newcommand{\IF}{\ensuremath{\mathbb{F}}}
 \def\ExtendSymbol#1#2#3#4#5{\ext@arrow 0099{\arrowfill@#1#2#3}{#4}{#5}}
 \def\ExtendSymbol#1#2#3#4#5{\ext@arrow 0099{\arrowfill@#1#2#3}{#4}{#5}}
\def\aint{\,\ThisStyle{\ensurestackMath{%
  \stackinset{c}{.2\LMpt}{c}{.5\LMpt}{\SavedStyle-}{\SavedStyle\phantom{\int}}}%
  \setbox0=\hbox{$\SavedStyle\int\,$}\kern-\wd0}\int}
\DeclarePairedDelimiter\abs{\lvert}{\rvert}%
\let\oldabs\abs
\def\abs{\@ifstar{\oldabs}{\oldabs*}}
\numberwithin{equation}{section}
\newtheorem{thm}{Theorem}[section]
\newtheorem{cor}[thm]{Corollary}
\newtheorem{prop}[thm]{Proposition}
\newtheorem{lem}[thm]{Lemma}
\newtheorem{rem}[thm]{Remark}
\newtheorem{defn}[thm]{Definition}
\newtheorem{exmp}[thm]{Example}
\newtheorem{claim}[thm]{Claim}
\title{Strong uniqueness of tangent flows at cylindrical singularities in Ricci flow}
\author{Hanbing Fang \quad and \quad Yu Li} 
\date{\today}
\begin{document}
	\begin{CJK}{UTF8}{gbsn}

\maketitle
	\begin{abstract}
In this paper, we establish a Lojasiewicz inequality for the pointed $\mathcal{W}$-entropy in the Ricci flow, under the assumption that the geometry near the base point is close to a standard cylinder $\mathbb{R}^k \times S^{n-k}$ or the quotient thereof. As an application, we prove the strong uniqueness of the cylindrical tangent flow at the first singular time of the Ricci flow. Specifically, we show that the modified Ricci flow near the singularity converges to the cylindrical model under a fixed gauge.
	\end{abstract}
	
	\tableofcontents
\section{Introduction}

Ricci flow, introduced by Richard Hamilton in 1982 in his foundational paper \cite{hamilton1982three}, is a geometric evolution equation for Riemannian metrics. Given an initial Riemannian manifold $(M^n, g_0)$, the Ricci flow evolves the metric according to the nonlinear PDE:
\begin{equation*}
	\partial_tg(t)=-2\Ric(g(t)).
\end{equation*}
This equation may be viewed as a heat-type flow that tends to uniformize the geometry of the manifold by smoothing out curvature irregularities over time. Ricci flow has had a profound impact on geometric analysis and topology, culminating in Grigori Perelman's resolution of the Poincar\'e and Geometrization Conjectures (\cite{perelman2002entropy, perelman2003ricci, perelman2003finite}). 

In many cases, the development of short-time singularities is inevitable and has become a central focus of study. A short-time singularity is formed at the first singular time, near which the curvature of the Ricci flow is unbounded. Understanding the structure of such singularities is crucial for extending the Ricci flow past singular times, whether through geometric surgeries or via weak flow techniques.

A key strategy in analyzing singularities is blow-up analysis—rescaling the flow at the singularity to extract \textbf{tangent flows}. For Type I singularities, it follows from the work of Enders-M\"uller-Topping \cite{enders2011type} that any blow-up sequence converges smoothly to a self-similar ancient solution, specifically a Ricci shrinker. More generally, Bamler \cite{bamler2020structure} showed that blow-up sequences $\mathbb F$-converge to a Ricci shrinker with mild singularities, which is referred to as a metric soliton.

In \cite{fang2025RFlimit}, a spacetime distance $d^*$ was introduced for closed Ricci flows with entropy bounded below (see Definitions \ref{def:entropybound} and \ref{defnd*distance}). Using this notion, the time-zero slice—corresponding to the singular time—can be obtained as the metric completion under $d^*$. Specifically, consider a smooth closed Ricci flow $\XX=\{M^n, (g(t))_{t \in [-T,0)}\}$ with entropy bounded below by $-Y$, where $t=0$ is the first singular time. Then the space
  \begin{align*}
  	(Z,d_{Z} ,\t)
  \end{align*}
is defined as the metric completion of $\XX_{[-0.98T, 0)}$ with respect to $d^*$. It can be shown (see \cite[Section 9]{fang2025RFlimit}) that the completion only adds new points in the time-zero slice $Z_0$, and that $(Z,d_{Z} ,\t)$ is a noncollapsed Ricci flow limit space in the sense of \cite{fang2025RFlimit}, with singularities entirely contained in $Z_0$.
  
In this framework, a tangent flow $(Z', d_{Z'}, z', \t')$ at a point $z \in Z_0$ is defined as a pointed Gromov--Hausdorff limit of $(Z, r_j^{-1}d_Z, z, r_j^{-2}(\t-\t(z)))$ for a sequence $r_j \searrow 0$. It is proved in \cite[Section 7]{fang2025RFlimit} that $(Z', d_{Z'}, z', \t')$ is itself a noncollapsed Ricci flow limit space defined over $(-\infty,0]$. Moreover, its regular part $(\RR', \t', \partial_{\t'}, g^{Z'}_t)$, which is given by a Ricci flow spacetime, satisfies the Ricci shrinker equation
\begin{equation*}
\Ric(g^{Z'})+\na^2 f_{z'}=\frac{g^{Z'}}{2|\t'|},
\end{equation*}
where $f_{z'}$ is the potential function at $z'$ (see Definition \ref{def:chkm}). For further properties of tangent flows, we refer readers to \cite[Theorem 1.9]{fang2025RFlimit}.

A fundamental question in the analysis of geometric PDEs is whether the tangent space at a singularity is unique. When a singularity forms, one typically performs a blow-up procedure to study the local behavior. However, depending on the choice of rescaling sequences, non-uniqueness of tangent spaces is in principle possible. Conversely, uniqueness suggests that the underlying geometric and analytic structure is stable and coherent at all smaller scales.

Significant progress has been made in establishing the uniqueness of tangent spaces across various geometric contexts. For minimal surfaces, Allard and Almgren \cite{allard81} in 1981 proved the uniqueness of tangent cones under the assumption of smooth cross-sections and integrability conditions. This integrability assumption was later removed by Leon Simon in 1983 \cite{simon1983asymptotics}. White \cite{white1983} further showed that tangent cones are unique for two-dimensional area-minimizing integral currents.

In the setting of Ricci-flat manifolds with maximal volume growth, Cheeger and Tian \cite{Cheeger94} proved the uniqueness of tangent cones at infinity under quadratic curvature decay and an integrability assumption. Colding and Minicozzi later showed in \cite{colding2014uniqueness} that if one tangent cone has a smooth cross-section, then the tangent cone at infinity is unique. They also established the uniqueness of tangent cones at singular points in Gromov--Hausdorff limits of noncollapsed Einstein manifolds, again assuming the smoothness of the cross-section of a tangent cone. In the K\"ahler--Einstein setting, Donaldson and Sun \cite{ds2017} proved the full uniqueness of tangent cones, without assuming smooth cross-sections.

In mean curvature flow, tangent flows correspond to self-shrinkers, and uniqueness means that a particular self-shrinker is the unique blow-up limit at a singularity. Several notable results have established the uniqueness under various geometric assumptions. Schulze \cite{schulze2014uniqueness} proved uniqueness when one tangent flow is a compact smooth shrinker. Colding and Minicozzi \cite{colding2015uniqueness, colding2019regularity} showed that uniqueness holds when one tangent flow is the standard cylinder $\mathbb{R}^{k} \times S^{n-k}$. More recently, Chodosh--Schulze \cite{CS21} and Lee--Zhao \cite{LZ24} established the uniqueness in the case where the tangent flow is an asymptotically conical self-shrinker.

The uniqueness of tangent flows in Ricci flow remains a subtle and evolving topic. When a tangent flow at a singularity is given by a compact, smooth Ricci shrinker, it has been shown by Sun--Wang \cite{sun2015kahler} and Chan--Ma--Zhang \cite{CMZ24} that the tangent flow is unique. More recently, Choi--Lai \cite{Choilai25} obtained sharper convergence rates under the additional assumption that the Ricci shrinker is integrable. In the setting of Fano manifolds, the tangent flow at the single singularity is also known to be unique; see \cite[Theorem 1.2]{csw2018} and \cite[Corollary 1.4]{hanli2024}. However, in general, most tangent flows are non-compact, and the question of uniqueness in this broader context remains significantly more delicate and technically challenging.

One effective approach to proving the uniqueness of tangent flows is to study the \textbf{rigidity} of Ricci shrinkers in the moduli space of all shrinkers. Specifically, if a Ricci shrinker is isolated in this space---meaning no other nearby Ricci shrinkers exist---then, via a continuity argument, one can deduce that if a tangent flow is given by this shrinker, then it must be the unique tangent flow. This argument is plausible because all tangent flows at a singularity are derived from a single underlying closed Ricci flow, and the blow-up limits vary continuously under suitable topologies.

The first example of a rigid non-compact, nontrivial Ricci shrinker was given by Li--Wang \cite{li2024rigidity}, who proved the rigidity of the standard cylinder $\R \times S^{n-1}$. Subsequently, Colding--Minicozzi \cite{colding2021singularities} established the rigidity of the cylinders $\R^{k}\times S^{n-k}$ for all $1 \le k \le n-2$. Later, Li--Zhang \cite{li2023rigidity} extended the techniques of \cite{colding2021singularities} to prove the rigidity of more general cylinders of the form $\R^{k} \times N^{n-k}$ and their quotients, where $N$ is an $H$-stable Einstein manifold with obstruction of order $3$ (see \cite[Definitions 2.11, 2.20]{li2023rigidity}). 

These rigidity results yield powerful consequences for tangent flow uniqueness. In particular, Colding--Minicozzi \cite[Theorem 1.2]{colding2021singularities} proved that, under the Type I curvature assumption, the tangent flow at a singularity is unique if one tangent flow is given by $\R^{k}\times S^{n-k}$. More generally, Li--Zhang \cite[Theorem 6.2]{li2023rigidity} showed that the tangent metric soliton at a singularity is unique if one of such blow-up limits is a generalized cylinder or its quotient. Here, the notion of a tangent metric soliton refers to a blow-up limit in the $\mathbb F$-convergence sense introduced by Bamler \cite{bamler2020structure}.

Building on these rigidity results, one obtains the following uniqueness theorem for cylindrical tangent flows in the framework of Ricci flow limit spaces in \cite{fang2025RFlimit}. Here, the definition of isometry can be found in \cite[Definition 5.21]{fang2025RFlimit}.

\begin{thm}[Uniqueness of the cylindrical tangent flow] \label{thm: weakunique}
Let $(Z, d_Z, \t)$ be the completion of a closed Ricci flow $\XX=\{M^n, (g(t))_{t \in [-T,0)}\}$ with entropy bounded below by $-Y$. For any $z \in Z_0$, if a tangent flow at $z$ is isometric to $\bar{\mathcal C}^k$, then every tangent flow at $z$ is isometric to $\bar{\mathcal C}^k$.
\end{thm}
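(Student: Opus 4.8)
The plan is to follow the continuity-plus-rigidity strategy of Colding--Minicozzi \cite{colding2021singularities} and Li--Zhang \cite{li2023rigidity}, adapted to the Ricci flow limit space framework of \cite{fang2025RFlimit}. The three ingredients are: (i) every tangent flow at $z$ carries a Ricci shrinker structure; (ii) the set $\TT(z)$ of all tangent flows at $z$ is compact and connected in the pointed Gromov--Hausdorff topology; and (iii) the cylindrical model $\bar{\mathcal C}^k$ is isolated among Ricci shrinker limit spaces. Granting these, $\TT(z)$ is connected and contains $\bar{\mathcal C}^k$ as an isolated, hence relatively open and closed, point, so $\TT(z)=\{\bar{\mathcal C}^k\}$, which is the assertion.

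For (i) I would invoke \cite[Theorem 1.9]{fang2025RFlimit} together with the discussion preceding the present theorem: any tangent flow $(Z',d_{Z'},z',\t')$ at $z\in Z_0$ is a noncollapsed Ricci flow limit space over $(-\infty,0]$ whose regular part $(\RR',\t',\partial_{\t'},g^{Z'}_t)$ solves the Ricci shrinker equation based at $z'$. In particular $\TT(z)$ lies inside the pointed Gromov--Hausdorff precompact family of such shrinker limit spaces furnished by the compactness theory of \cite{fang2025RFlimit}, which gives half of (ii). For the connectedness in (ii), write $\mathcal{F}_r:=(Z,r^{-1}d_Z,z,r^{-2}(\t-\t(z)))$ for $r>0$. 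The almost-monotonicity and almost-selfsimilarity estimates underlying the blow-up construction show that $r\mapsto\mathcal{F}_r$ is continuous in the pointed Gromov--Hausdorff topology, uniformly on bounded spacetime regions, and that
\[
\TT(z)=\bigcap_{\ep>0}\overline{\bigl\{\mathcal{F}_r:0<r<\ep\bigr\}}.
\]
Each set $\overline{\{\mathcal{F}_r:0<r<\ep\}}$ is compact by the precompactness above and connected as the closure of a connected curve, so the nested intersection $\TT(z)$ is compact and connected.

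For (iii) I would cite the rigidity of the generalized cylinder: by Colding--Minicozzi \cite{colding2021singularities} for $\R^k\times S^{n-k}$ and by Li--Zhang \cite{li2023rigidity} for its quotients, there is $\ep_0>0$ such that any Ricci shrinker $\ep_0$-close to $\bar{\mathcal C}^k$ is isometric to $\bar{\mathcal C}^k$. Applying the $\ep$-regularity theorem of \cite{fang2025RFlimit} first, any tangent flow Gromov--Hausdorff-close to the smooth model $\bar{\mathcal C}^k$ is smooth with uniformly controlled local geometry, so these rigidity statements apply verbatim and give $\TT(z)\cap B_{\ep_0}(\bar{\mathcal C}^k)=\{\bar{\mathcal C}^k\}$, i.e. isolation of $\bar{\mathcal C}^k$ in $\TT(z)$. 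Combining with the connectedness of $\TT(z)$ and the hypothesis that $\bar{\mathcal C}^k\in\TT(z)$ completes the proof.

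The main obstacle is reconciling the two notions of convergence: the rigidity theorems are phrased in terms of weighted $C^k$ (or $\IF$-) closeness of smooth shrinkers, whereas $\TT(z)$ is built from the spacetime Gromov--Hausdorff theory of \cite{fang2025RFlimit}, so one must upgrade Gromov--Hausdorff closeness to $\bar{\mathcal C}^k$ into smooth closeness on large compact regions by $\ep$-regularity and interior parabolic estimates, and verify the precompactness and continuity of $r\mapsto\mathcal{F}_r$ used above. An alternative route, which also supplies the quantitative input needed for the subsequent strong uniqueness statement, is to derive the isolation of $\bar{\mathcal C}^k$ inside $\TT(z)$ directly from the Lojasiewicz inequality for the pointed $\WW$-entropy established in this paper, bypassing the translation between convergence notions entirely.
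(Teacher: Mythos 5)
Your proposal rests on the same underlying mechanism as the paper---rigidity of the cylinder combined with a continuity/connectedness argument for blow-ups---but it is organized quite differently from the paper's actual proof, and its key step is under-justified as written. The paper does not rerun the continuity--rigidity argument in the Gromov--Hausdorff framework. Instead, the proof of Theorem \ref{thm:weakunique} passes to the metric flow $\XX^z$ associated with $z$, invokes \cite[Theorem 6.2]{li2023rigidity} to conclude that every tangent metric flow in the $\IF$-convergence sense is $\mathcal C^k$, and then transfers this back to Gromov--Hausdorff tangent flows via the correspondence theorems of \cite{fang2025RFlimit}: Theorem 4.17(2) there matches $\IF$-blow-ups with the pointed Gromov--Hausdorff blow-ups, and Theorems 7.19 and 7.25 identify the full limit space $Z'$ (including its singular time-zero slice and the absence of a forward-in-time part) with $\bar{\mathcal C}^k$. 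Your route would in effect re-derive Li--Zhang's theorem rather than quote it.

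The genuine gap is in your step (iii). The rigidity theorems of \cite{colding2021singularities} and \cite{li2023rigidity} require closeness to the cylinder in weighted norms on regions of \emph{every} sufficiently large size $L$ (the conditions $(\star_L)$, $(\dagger_L)$), whereas a tangent flow that is Gromov--Hausdorff close to $\bar{\mathcal C}^k$ is a priori only close on a fixed bounded spacetime region, and may be singular. Upgrading the latter to the former is not a ``verbatim'' application of $\ep$-regularity plus interior estimates: it is an iteration that propagates closeness outward scale by scale, and it is precisely the technical content of \cite[Theorem 6.2]{li2023rigidity} (and of the $\rA$/$\rBC$ extension machinery in Sections 5--6 of this paper). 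If you cite that theorem to fill the gap, your argument collapses into the paper's. Separately, even after identifying the regular shrinker part of a tangent flow with $\mathcal C^k$, you still must show that the tangent flow \emph{as a completed limit space} equals $\bar{\mathcal C}^k$---i.e., that the completion adds only $\R^k\times\{0\}$ at time zero and nothing afterwards; your sketch omits this, while the paper handles it with \cite[Theorems 7.19, 7.25]{fang2025RFlimit}. Finally, be careful with your suggested alternative via the Lojasiewicz inequality: Theorem \ref{lojaRFcyl} is stated for cylindrical singularities, whose hypothesis already packages the uniqueness, and Theorem \ref{thm:modi} uses Theorem \ref{thm:weakunique} as input. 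One could in principle start from the scale-by-scale version (Theorem \ref{thm:losing}, which only needs $\rA\ge L_\beta$ at a single scale) and bootstrap, but that again requires the same propagation-of-closeness argument, so it does not actually bypass the translation between convergence notions.
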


The proof of Theorem \ref{thm: weakunique} will be given later in Theorem \ref{thm:weakunique}, and is based on the aforementioned rigidity and convergence results. Here, we define the model cylindrical Ricci shrinker
\begin{align*}
\mathcal C^{k}_{-1}:=(\bar M,\bar g,\bar f)=\left(\R^{k}\times S^{n-k}, g_E \times g_{S^{n-k}}, \frac{|\vec{x}|^2}{4}+\frac{n-k}{2}+\Theta_{n-k} \right),
\end{align*}
where $g_E$ is the Euclidean metric on $\R^{k}$, and $g_{S^{n-k}}$ is the round metric on $S^{n-k}$ such that $\Ric(g_{S^{n-k}})=g_{S^{n-k}}/2$. The vector $\vec{x}=(x_1,\ldots,x_{k})$ denotes the standard coordinate function on $\R^{k}$. The constant $\Theta_{n-k}$ is the entropy of the cylinder. 

Let $\mathcal C^{k}=(\bar M, (\bar g(t))_{t<0}, (\bar f(t))_{t<0})$ denote the corresponding Ricci flow with potential such that $t=0$ is the singular time. Then, $\bar{\mathcal C}^k$ is defined as the completion of $(\bar M, (\bar g(t))_{t \in (-\infty, 0)})$ with respect to the spacetime distance $d_{\mathcal C}^*$. The base point $p^*$ is taken to be the limit of $(\bar p, t)$ as $t \nearrow 0$, where $\bar p$ is a minimum point of $\bar f$.

We call $z \in Z_0$ a \textbf{cylindrical singularity} with respect to $\bar{\mathcal C}^k$ if every tangent flow is isometric to $\bar{\mathcal C}^k$. The uniqueness result in Theorem \ref{thm: weakunique}, combined with the smooth convergence properties of Ricci flow limit spaces (see Theorem \ref{thm:intro3}), implies the following pointed Cheeger–Gromov convergence:
\begin{align}\label{eq:intro001}
\lc M, r^{-2} g(-r^2), f_z(-r^2), p_r \rc \xrightarrow[r \to 0]{\quad \text{pointed Cheeger--Gromov} \quad} (\bar M, \bar g, \bar f, \bar p),
\end{align}
where $f_z$ is the potential function at $z$ (see Definition \ref{def:chkm}), and $p_r$ is a minimum point of $f_z(-r^2)$. 

However, this convergence has two drawbacks. First, it occurs only in the Cheeger--Gromov sense, which means that the convergence is realized via a sequence of diffeomorphisms from open subsets of $\bar M$ into $M$, but these diffeomorphisms may be entirely unrelated to one another.  Second, the convergence \eqref{eq:intro001} does not yield any quantitative rate of convergence for the original Ricci flow $\XX$ as it approaches the cylindrical singularity $z$. Thus, in order to obtain more analytical information approaching $z$, one must obtain a stronger result.

To gain more analytic control and resolve these issues, we establish a stronger result—the strong uniqueness of the cylindrical tangent flow. To this end, we first modify our Ricci flow $\XX=\{M^n, (g(t))_{t \in [-T,0)}\}$ as follows (see also \cite{colding2024eigenvalue}).

Let $\phi_t$ be the family of diffeomorphisms generated by $-\na_{g(t)} f_z(t)$ with $\phi_{-T}=\mathrm{id}$. Define the \textbf{modified Ricci flow} with respect to $z$ as:
  	\begin{equation*}
  		\begin{dcases}
  			&g^z (s):= e^s \phi^*_{-e^{-s}} g(-e^{-s}),   \\
  			&f^z (s):=\phi^*_{-e^{-s}} f_z(-e^{-s}),\\
  		\end{dcases}
  	\end{equation*}
  	where $s \in [-\log T, \infty)$. The pair $(g^z(s),f^z(s))$ evolves by the following system:
  	\begin{align*}
  		\begin{dcases}
  			& \partial_s g^z(s)=g^z-2 \Ric(g^z)-2\na^2 f^z, \\
  			&	\partial_s f^z(s)=\frac{n}{2}-\scal_{g^z}-\Delta f^z.
  		\end{dcases}
  	\end{align*}
  	
We now state our first main result, which provides a precise convergence estimate for the modified flow near the cylindrical singularity.

\begin{thm}[Strong uniqueness of the cylindrical tangent flow]\label{thm:stronguni2intro}
In the setting above, suppose $z \in Z_0$ is a cylindrical singularity with respect to $\bar{\mathcal C}^k$. Then for any small $\ep>0$, there exists a large constant $\bar s$ such that for any integer $j \ge \bar s$, there exists a diffeomorphism $\psi_{j}$ from $\Omega^j:=\left\{\bar f \le (2-\ep)\log j\right\} \subset \bar M$ onto a subset of $M$ satisfying the compatibility condition
	\begin{align*}
		\psi_{j+1}=\psi_j \quad \mathrm{on}\quad \Omega^j,
	\end{align*}
and for all $s\geq j$, the following decay estimate holds on $\Omega^j$:
	\begin{align}\label{eq:intro002}
	\left[\psi_{j}^*g^z(s)-\bar g \right]_{[\ep^{-1}]}+\left[\psi_{j}^*f^z(s)-\bar f \right]_{[\ep^{-1}]}\leq C(n,Y,\ep)e^{\frac{\bar f}{2}} s^{-1+\ep}.
	\end{align}
Here, for any integer $l \ge 0$, the norm is defined by
	\begin{align*}
[\cdot]_l:=\sum_{i=0}^l \left|\na_{\bar g}^i (\cdot) \right|_{\bar g}.
\end{align*}
\end{thm}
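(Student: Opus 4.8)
\section*{Proof proposal}

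The plan is to derive \eqref{eq:intro002} from a \L ojasiewicz--Simon inequality for the pointed $\WW$-entropy along the modified flow, following the general strategy of Colding--Minicozzi \cite{colding2021singularities} but adapted to the Ricci flow setting and to the Ricci flow limit space framework of \cite{fang2025RFlimit}. The modified flow $(g^z(s),f^z(s))$ is essentially the negative gradient flow of the pointed $\WW$-entropy $\WW_z$, and by Theorem \ref{thm: weakunique} we already know that $z$ being a cylindrical singularity means $(g^z(s),f^z(s))$ subconverges in pointed Cheeger--Gromov sense to the fixed model $(\bar g,\bar f)$ on $\bar M=\R^k\times S^{n-k}$. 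The first step is therefore to set up, on a large but fixed compact exhaustion region $\Omega^j=\{\bar f\le (1-\ep)\log j\}$, the \L ojasiewicz inequality
\[
\left|\WW_z(g^z(s),f^z(s))-\Theta_{n-k}\right|^{1-\theta}\;\le\;C\,\left\|\mathcal{G}(g^z(s),f^z(s))\right\|,
\]
where $\mathcal{G}$ denotes the gradient (the quantity $\Ric+\na^2 f^z-\tfrac{g^z}{2}$ appearing in the evolution, suitably weighted by $e^{-f^z}$), with $\theta$ depending only on the geometry of the cylinder. This is the analytic heart of the paper and presumably is proved in a section preceding this one; here I would invoke it.

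Second, I would run the standard \L ojasiewicz--Simon ODE argument. Writing $D(s):=\WW_z(g^z(s),f^z(s))-\Theta_{n-k}\ge 0$, the monotonicity of $\WW$ along the modified flow gives $\tfrac{d}{ds}D(s)=-\|\mathcal{G}(s)\|^2$ (up to weighting), and combined with the \L ojasiewicz inequality $D(s)^{1-\theta}\le C\|\mathcal{G}(s)\|$ one obtains $-\tfrac{d}{ds}(D(s)^{\theta})\ge c\,\|\mathcal{G}(s)\|$, hence $\int_j^\infty \|\mathcal{G}(s)\|\,ds\le C\,D(j)^{\theta}$. Because the exponent here is weaker than what is needed for exponential decay (the cylinder is not integrable in the strong sense), one only gets a polynomial rate: $D(s)\le C s^{-1/\theta}$ and $\int_s^\infty \|\mathcal{G}(\sigma)\|\,d\sigma\le C s^{-\theta/(1-2\theta)}$ or similar, and after optimizing the \L ojasiewicz exponent for the cylinder (which in these problems comes out to give precisely a rate of the form $s^{-1/2+\ep}$) one recovers the exponent in \eqref{eq:intro002}. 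The weighted factor $e^{\bar f/2}$ on the right-hand side reflects that the gradient estimate is in a weighted $L^2$ norm and must be converted to a pointwise estimate on the growing region $\Omega^j$ via interior parabolic (Shi-type) estimates for the modified flow, which cost a factor polynomial in $e^{\bar f}$ and force the $\ep$-loss in the number of derivatives $[\ep^{-1}]$.

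Third, I would construct the diffeomorphisms $\psi_j$ and verify the compatibility $\psi_{j+1}=\psi_j$ on $\Omega^j$. The idea is: on the fixed region $\Omega^{j}$, once $s$ is large the metric $g^z(s)$ is $C^{[\ep^{-1}]}$-close to $\bar g$ by the Cheeger--Gromov convergence from Theorem \ref{thm: weakunique}; one fixes a diffeomorphism $\psi_j$ at some base time $s=j$ and then shows, using the integrated gradient bound $\int_j^\infty\|\mathcal{G}\|<\infty$, that the flow stays within a controlled neighborhood so that the \emph{same} $\psi_j$ works for all $s\ge j$ after absorbing the small motion generated by $\mathcal{G}$ into the gauge — this is where one uses that the diffeomorphisms $\phi_t$ defining the modified flow already factor out the drift by $-\na f_z$. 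The nesting $\Omega^j\subset\Omega^{j+1}$ together with uniqueness of the gauge on the overlap (the cylinder has no nontrivial Jacobi fields decaying at the rate in question once the $\R^k$-translations and $O(n-k+1)$-rotations are pinned by the base point $p^*$ and the potential) yields $\psi_{j+1}|_{\Omega^j}=\psi_j$. Finally, pulling the weighted decay back through $\psi_j$ and re-expressing in the norms $[\cdot]_{[\ep^{-1}]}$ gives \eqref{eq:intro002}.

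The main obstacle I anticipate is the second step in its quantitative form: on a \emph{non-compact} limit the \L ojasiewicz inequality cannot hold globally with a uniform constant, so one must localize it to $\Omega^j$ and carefully control the error terms coming from the "neck" region near $\partial\Omega^j$ where $\bar f\sim(1-\ep)\log j$ — these boundary/cutoff errors must be shown to decay faster than the main term, which requires precise asymptotics of $f^z(s)$ and of the eigenfunctions of the weighted Laplacian $\Delta_{\bar f}$ on the cylinder (the quadratic-growth modes and their interaction with the $\R^k$ directions). Balancing the cutoff error against the \L ojasiewicz gain is exactly what pins the size of $\Omega^j$ to $(1-\ep)\log j$ and produces the $s^{-1/2+\ep}$ rate rather than a cleaner power; getting this bookkeeping right, uniformly in $j$ and $s$, is the technically delicate part.
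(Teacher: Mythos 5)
Your outline has the right global shape (a \L ojasiewicz inequality for $\WW_z$, integration along the modified flow, then gauge fixing), but the step you yourself flag as ``the technically delicate part'' is precisely the content of the paper's proof, and your proposed route through it would not work as written. The paper does \emph{not} establish, and explicitly explains it cannot establish, a gradient-type inequality $|\WW_z-\Theta_{n-k}|^{1-\theta}\le C\|\mathcal G\|$ on the non-compact model (no minimizer, no analyticity, no control at infinity); what is available is the \emph{discrete} inequality of Theorem \ref{lojaRFcyl}, $|\WW_z(\tau)-\Theta_{n-k}|\le C(\WW_z(\tau/2)-\WW_z(2\tau))^{\beta}$ with $\beta$ arbitrarily close to $3/4$. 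From this the paper deduces only the entropy-gap decay $\WW_z(e^{-s})-\Theta_{n-k}\le Cs^{-3+\ep}$, equivalently $\rE(s)\ge\sqrt{(12-\ep)\log s}$. The passage from this $L^2$-in-spacetime information to the pointwise, $e^{\bar f/2}$-weighted $C^{[\ep^{-1}]}$ bound on a region of radius $2\sqrt{(1-\ep)\log s}$ is not a Shi-type interior estimate applied to an integrated gradient bound, as you suggest; it is the radius-comparison machinery of Sections 4--5 (Theorems \ref{thm:ext1}, \ref{thm:ext2}, \ref{thm:ra} give $\min\{\rA(s),\rBC(s)\}\ge(1-3\sigma)\rE(s)$, and Propositions \ref{stabilitypotential}, \ref{betterpotential}, \ref{pointwiseshrinkeruantity} convert the entropy radius into the pointwise bound $[\tfrac{g^z}{2}-\na^2 f^z-\Ric]_l\le Ce^{\bar f/2}s^{-3/2+\ep/8}$). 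The final rate then comes from integrating the modified flow equation \eqref{equationMRF} in $s$: $\int_s^\infty t^{-3/2+\ep/8}\,dt\sim s^{-1/2+\ep/8}$. Your numerology (``optimizing the \L ojasiewicz exponent ... gives precisely $s^{-1/2+\ep}$'') is not derived and, if one ran the standard ODE argument with a generic exponent, would not land on this rate; the $-1/2$ is forced by the specific chain $\beta\to 3/4\Rightarrow$ entropy gap $\lesssim s^{-3+\ep}\Rightarrow$ pointwise shrinker operator $\lesssim e^{\bar f/2}s^{-3/2+\ep}\Rightarrow$ integrate once.

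Two smaller points. First, the reason a \emph{fixed} diffeomorphism $\varphi_{s_0}$ works for all $s\ge s_0$ is not that ``the flow stays within a controlled neighborhood'' in a metric sense, but that the potential $f^z$ changes by an integrable (hence bounded) amount along the flow on $\Omega^{s_0}$ (Claim \ref{claimpotential} in the paper), so the fixed region remains inside the domain where the pointwise shrinker estimate holds for every later $s$; only then can one integrate the evolution equation with a time-independent gauge. Second, the limit $(g_\infty,f_\infty)=\lim_s\varphi_{s_0}^*(g^z(s),f^z(s))$ is not a priori $(\bar g,\bar f)$; it must be identified with $(\bar g,\bar f)$ by composing with a limiting diffeomorphism extracted from the Cheeger--Gromov convergence (Claim \ref{cla:limit}), and the compatibility $\psi_{j+1}=\psi_j$ is obtained not by ``pinning'' the symmetries but by showing the transition map is an isometry of the cylinder preserving $\bar f$ and absorbing it into the next gauge (Lemma \ref{lem:gluediff}). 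These are fixable, but as it stands the proposal defers the essential quantitative mechanism rather than supplying it.
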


Roughly speaking, Theorem \ref{thm:stronguni2intro} asserts that for sufficiently large $j$, one can construct a diffeomorphism $\psi_j$ from a region of size approximately $\sqrt{8\log j}$ in $\bar M$ into $M$, such that these diffeomorphisms are compatible on their common domains. Moreover, under each $\psi_j$, the modified Ricci flow exhibits a precise decay estimate as \eqref{eq:intro002}. In other words, the modified Ricci flow converges to the cylindrical model \textbf{without} modulo any diffeomorphism.

The main technical tool used in the proof of Theorem \ref{thm:stronguni2intro} is a Lojasiewicz-type inequality. In his seminal work \cite{loja1965}, Lojasiewicz proved the following fundamental result: let $f: U \to \R$ be a real-analytic function on a domain $U \subset \R^n$, then for any $x_0 \in U$ with $\na f(x_0)=0$, there exist a small neighborhood $W \subset U$ and constants $C>0$ and $\beta \in [1/2, 1)$ such that
\begin{equation}\label{loja2}
	|f(x)-f(x_0)|^\beta\leq C|\na f (x)|, \quad \forall x \in W.
\end{equation}
This inequality implies that if $x(t)$ is a negative gradient flow of $f$, and $x_0$ is the limit point of $x(t_i)$ for a sequence $t_i \to \infty$, then \eqref{loja2} can be used to show both convergence of $x(t) \to x_0$ and a quantitative decay rate depending on the exponent $\beta$ (see, for instance, \cite[Section 3]{sun2014}).

In his celebrated work \cite{simon1983asymptotics}, Leon Simon established an infinite-dimensional version of the Lojasiewicz inequality for analytic functionals on Banach spaces. As an application, he proved a fundamental result on the uniqueness of tangent cones with smooth cross-sections for minimal surfaces—an important theorem mentioned earlier. The key idea in Simon's approach was to reduce the infinite-dimensional problem to the classical finite-dimensional Lojasiewicz inequality via a Lyapunov–Schmidt reduction.

Simon’s method has since had a profound influence across many areas of analysis and geometry. Variants and extensions of his techniques have been applied in diverse settings, including global convergence results and stability analysis. For instance, see \cite{colding2014uniqueness, sun2014, haslhofer2014dynamical, sun2015kahler, kroncke2015stability, colding2014uniqueness, colding2015uniqueness, colding2019regularity, zhu2020lojasiewicz, deruelle2022lojasiewicz, CS21}, among others.

In the context of Ricci flow, the use of Lojasiewicz-type inequalities to prove the uniqueness of tangent flows is not new, as the Ricci flow can be viewed as the gradient flow of Perelman's $\boldsymbol{\mu}$-functional. In \cite[Lemma 3.1]{sun2015kahler}, Sun and Wang established a Lojasiewicz inequality for the $\boldsymbol{\mu}$-functional:
	\begin{align}\label{eq:intro003}
\abs{\boldsymbol{\mu}(g, 1)-\boldsymbol{\mu}(g_0,1)}^{\beta} \le C\|\na \boldsymbol{\mu}(g, 1)\|_{L^2},
	\end{align}
for some constant $\beta \in [1/2, 1)$, where $g_0$ is a given Ricci shrinker metric on a closed manifold and $g$ is a metric nearby. By applying inequality \eqref{eq:intro003} and following the general strategy, they proved the uniqueness of the tangent flow at a singularity, assuming that one blow-up limit is given by the compact Ricci shrinker $g_0$.

It is tempting to directly generalize inequality \eqref{eq:intro003} to non-compact Ricci shrinkers, but this cannot be done for several reasons. First, in the non-compact setting, a minimizer for Perelman's $\boldsymbol{\mu}(g, 1)$ may fail to exist, and even when it does, it may not depend analytically on the metric $g$. Second, $\boldsymbol{\mu}$-functional is inherently a global quantity, and as such, it does not effectively capture localized behavior near a given singularity. Third, when $g_0$ is a Ricci shrinker on a non-compact manifold, a perturbation $g$ that is close to $g_0$ may only resemble $g_0$ in a large but bounded region, without any control at infinity.

To overcome these difficulties, we use the \textbf{pointed $\WW$-entropy}---first introduced in \cite[Section 5]{perelman2002entropy} and studied systematically in \cite{hein2014new}---a localized version of Perelman’s $\WW$-functional. Specifically, if we write the conjugate heat kernel at $z \in Z_0$ by $\mathrm{d}\nu_{z; t}=(4\pi |t|)^{-\frac  n 2} e^{-f_z} \,\mathrm{d}V_{g(t)}$, then the pointed $\WW$-entropy at $z$ is defined by
	\begin{equation*}
		\WW_{z}(\tau):= \int_M\tau(2\Delta f_{z}-|\na f_{z}|^2+\scal)+f_{z}-n \,\mathrm{d}\nu_{z;-\tau}.
	\end{equation*}

Now, we state the Lojasiewicz inequality for the pointed $\WW$-entropy:

\begin{thm}[Lojasiewicz inequality]\label{lojaRFcyl}
Let $(Z, d_Z, \t)$ be the completion of a closed Ricci flow $\XX=\{M^n, (g(t))_{t \in [-T,0)}\}$ with entropy bounded below by $-Y$. For any cylindrical singularity $z \in Z_0$ with respect to $\bar{\mathcal C}^k$, the following property holds.

For any $\beta \in (0,3/4)$, there exists a constant $C=C(n,Y)$ such that for all sufficiently small $\tau>0$,
	\begin{align}\label{eq:intro004}
		\left|\mathcal W_{z}(\tau)-\Theta_{n-k} \right| \le C \left( \mathcal W_{z}(\tau/2)-\mathcal W_{z}(2 \tau) \right)^{\beta}.
	\end{align}
\end{thm}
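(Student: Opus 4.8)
The strategy is to transplant Leon Simon's Lyapunov–Schmidt reduction to the setting of the pointed $\WW$-entropy along the modified Ricci flow, using the rigidity of the cylinder $\bar{\mathcal C}^k$ established in \cite{colding2021singularities, li2023rigidity} in place of an integrability hypothesis. The starting point is the monotonicity of the pointed $\WW$-entropy: $\WW_z(\tau)$ is nonincreasing as $\tau\searrow 0$ (equivalently nondecreasing in the logarithmic time variable $s=-\log\tau$ along $g^z(s)$), with $\tfrac{\dd}{\dd s}\WW_z \le 0$ controlled by a weighted $L^2$ norm of the soliton tensor $\Ric(g^z)+\na^2 f^z-\tfrac12 g^z$; moreover $\WW_z(\tau)\to\Theta_{n-k}$ as $\tau\to 0$ since $z$ is a cylindrical singularity. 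Thus the left side of \eqref{eq:intro004} is the ``height'' above the limiting value and the right side is essentially the total energy dissipated over a scale-invariant window $[\tau/2, 2\tau]$, i.e. $\int$ of the dissipation rate; the inequality is the expected Lojasiewicz-gradient relation $|\WW-\Theta|\lesssim(\text{dissipation})^{\beta}$.

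The plan is: (1) Use the smooth convergence \eqref{eq:intro001} together with Theorem \ref{thm:intro3} to fix, for $\tau$ small, a gauge in which $g^z(-\log\tau)$ is $C^{[\ep^{-1}]}$-close to $\bar g$ on a region $\{\bar f\le L(\tau)\}$ with $L(\tau)\to\infty$; near the cylindrical base this reduces everything to a perturbation problem over a fixed large-but-finite portion of $\bar M$, with exponentially decaying weights $e^{-\bar f}$ controlling the noncompact tail. (2) Parametrize metrics near $\bar g$ by sections $h$ of $\mathrm{Sym}^2 T^*\bar M$ in a slice to the diffeomorphism action (a Bianchi/Ebin-type gauge adapted to the weighted measure $\dd\nu$), and view $h\mapsto\WW$ as an analytic functional on a weighted Sobolev space. (3) Perform the Lyapunov–Schmidt reduction: the linearization of the $\WW$-gradient at $\bar g$ is the stability operator of the cylinder; the rigidity results say its kernel $\mathcal K$ is finite-dimensional and consists of ``trivial'' (integrable, in fact isometry/scaling-generated after accounting for the $\R^k$ factor and the quotient) directions, and—crucially—that $\WW$ restricted to $\mathcal K$ is constant to infinite order, or more precisely that the cylinder is an isolated critical point modulo these directions. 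Solving away the infinite-dimensional (coercive) part via the implicit function theorem reduces the problem to a finite-dimensional Lojasiewicz inequality, which holds automatically by \cite{loja1965}; the exponent one extracts is $\beta<3/4$, reflecting the quadratic-type degeneracy of the cylinder's stability operator (the $3/4$, versus the generic $1/2$, comes from the sharp spectral gap / polynomial weight computation for the cylinder, cf. \cite{colding2021singularities}).

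The main obstacle is the noncompactness, which enters in two linked ways. First, the $\WW$-gradient and its linearization are only defined on weighted spaces where the exponential weight $e^{-\bar f}\sim e^{-|\vec x|^2/4}$ on the $\R^k$-factor and the round factor must be handled simultaneously; one must show the relevant weighted elliptic operator is Fredholm with the expected kernel and a genuine spectral gap on the orthogonal complement, and that the reduction's error terms (quadratic and higher in $h$, plus the cutoff error from passing between $L(\tau)$ and $\bar M$) are absorbable. This requires quantitative control of how well $g^z$ approximates $\bar g$ at the boundary $\{\bar f=L(\tau)\}$ and how the entropy defect picks up tail contributions—precisely the content one must bootstrap from \eqref{eq:intro001} and the pseudolocality/heat-kernel estimates underlying the Ricci flow limit space machinery of \cite{fang2025RFlimit}. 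Second, one must ensure the gauge fixing (the diffeomorphisms implicit in \eqref{eq:intro001}) can be chosen so that the $\WW$-gradient in that gauge is genuinely the gradient of a function of $h$ alone, i.e. reconcile the pointwise-in-$s$ gauge with the potential-function gauge defining the modified flow. Once the weighted Fredholm theory and the tail estimates are in place, the Lojasiewicz inequality \eqref{eq:intro004} follows from the finite-dimensional reduction exactly as in \cite{simon1983asymptotics, sun2015kahler}; the interpolation between the $[\ep^{-1}]$-norm control and the $L^2$-type dissipation, and the passage from a pointwise-in-$\tau$ statement to the windowed difference $\WW_z(\tau/2)-\WW_z(2\tau)$, are then routine.
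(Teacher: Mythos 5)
There is a genuine gap, and it sits exactly where you declare things to be ``routine.'' The paper explicitly does \emph{not} use Simon's Lyapunov--Schmidt reduction (the introduction states this and explains why: no analytic minimizer, globality of the functional, and no control at infinity for a perturbation that is only close on a bounded region). Instead it Taylor-expands $\WW$ to third order around the cylinder and feeds the expansion into the rigidity inequalities of Colding--Minicozzi and Li--Zhang. Your description of the kernel is also off in a way that matters: the $L^2$-kernel of the stability operator in $\ker(\Div_{\bar f})$ is $\KK_0\, g_{S^{n-k}}$ with $\KK_0$ the quadratic Hermite polynomials, and these directions are \emph{not} isometry/scaling-generated or integrable --- they are genuinely obstructed, but only at third order. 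The exponent $3/4$ comes precisely from this: writing $h=u g_{S^{n-k}}+\zeta$ with $\alpha=\|u\|_{L^2}$, one gets $|\WW-\Theta_{n-k}|\lesssim \alpha^3+\mathfrak X^2+\cdots$ together with the third-order obstruction $\alpha^2\lesssim\mathfrak X$, hence $\alpha^3\lesssim\mathfrak X^{3/2}$ and $\beta<3/4$. It does not come from a spectral gap or polynomial weight computation, and it is not compatible with your claim that $\WW$ is ``constant to infinite order'' on $\mathcal K$.

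The more serious missing ingredient is quantitative: to convert the perturbative estimate into \eqref{eq:intro004} you must truncate at some radius $L$, and the truncation contributes an error of size $e^{-L^2/4}$ to $|\WW_z(\tau)-\Theta_{n-k}|$ via the remainder estimates. The right-hand side of \eqref{eq:intro004} is $e^{-\beta\,\rE^2/4}$ with $\rE$ the entropy radius, so the argument only closes if the region on which $g^z$ is quantitatively cylindrical has radius at least $(1-\sigma)\rE$. The pointed Cheeger--Gromov convergence \eqref{eq:intro001} is purely qualitative and gives no relation whatsoever between the size of the good region and the entropy drop; there is nothing to ``bootstrap'' from it. Establishing $\min\{\rA,\rBC\}\ge(1-3\sigma)\rE$ is the technical core of the paper: it requires the a priori pointwise shrinker estimate inside $\r_{C,\delta}$ (via the auxiliary function $\tilde F$ and the sharp heat kernel bounds), the extension of the almost-splitting direction $\na \tilde F/L$ across the boundary of the cylindrical region by a limiting argument in the Ricci flow limit space, and the gluing/eigenfunction argument that upgrades the extended region to a $\rBC$-region, all iterated. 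None of this is present in, or implied by, your outline, and without it the inequality you would obtain has a tail error that swamps the dissipation term whenever the a priori cylindrical scale is much smaller than $\rE$.
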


Inequality \eqref{eq:intro004} is often referred to as a discrete Lojasiewicz inequality, and can be seen as a discretized variant of the classical version \eqref{loja2}. Unlike the classical setting in \cite{simon1983asymptotics}, we do not rely on Simon’s argument, as our model space is non-compact. Instead, we provide a direct proof of \eqref{eq:intro004}. Similar forms of discrete Lojasiewicz inequalities have appeared in the context of mean curvature flow, particularly in the work of Colding and Minicozzi \cite{colding2015uniqueness, colding2019regularity}, where they established analogous results for cylindrical singularities, using entropy functionals developed in \cite{colding2012generic}.

In our setting, this inequality yields a natural summability result for the entropy difference, which plays an important role in quantifying the convergence toward cylindrical models. To formulate this precisely, we introduce the following notion:

Let $\XX=\{M^n, (g(t))_{t \in [-T, 0)}\}$ be a closed Ricci flow with entropy bounded below by $-Y$ such that $(Z,d_{Z} ,\t)$ is the metric completion of $\XX_{[-0.98T, 0)}$ with respect to $d^*$. A point $z \in Z$ is called \textbf{$(k,\ep,r)$-cylindrical} if $\t(z)-\ep^{-1} r^2 \ge -0.98T$ and
	  \begin{align*}
(Z, r^{-1} d_Z, z, r^{-2}(\t-\t(z))) \quad \text{is $\ep$-close to} \quad (\bar{\mathcal C}^k ,d_{\mathcal C}, p^*,\t) \quad \text{over} \quad [-\ep^{-1}, \ep^{-1}],
  \end{align*} 
where the definition of $\ep$-closeness can be found in Definition \ref{defn:close}.

We now state a direct consequence of Theorem \ref{lojaRFcyl}, whose proof will be given in Corollary \ref{quantisummabilityW}:

\begin{cor}\label{cor:sum}
Given constants $\ep>0$, $\alpha\in (1/4,1)$ and $\delta\leq\delta(n,Y,\ep,\alpha)$, the following holds.
	
Let $\XX=\{M^n,(g(t))_{t\in [-T, 0) }\}$ be a closed Ricci flow with entropy bounded below by $-Y$. Fix $x_0^*=(x_0,t_0)\in M \times [-T, 0)$ and constants $s_2>s_1>0$. If
	\begin{align*}
		\left|\WW_{x_0^*}(s_1)-\WW_{x_0^*}(s_2)\right|<\delta,
	\end{align*}
and, for any $s\in [s_1,s_2]$, $x_0^*$ is $(k,\delta,\sqrt{s})$-cylindrical, then
	\begin{align}\label{eq:intro005}
		\sum_{s_1\leq r_j=2^{-j} \leq s_2}\left|\WW_{x_0^*}(r_j)-\WW_{x_0^*}(r_{j-1})\right|^{\alpha}<\ep.
	\end{align}
\end{cor}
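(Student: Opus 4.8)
\textbf{Proof proposal for Corollary \ref{cor:sum}.}

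The plan is to derive the summability estimate \eqref{eq:intro005} from the discrete Lojasiewicz inequality \eqref{eq:intro004} by a standard iteration argument, taking care that the $\ep$-closeness hypothesis is what legitimizes applying the Lojasiewicz bound at every dyadic scale in $[s_1,s_2]$. First I would set up notation: write $r_j = 2^{-j}$ for the dyadic scales lying in $[s_1, s_2]$, and let $W_j := \WW_{x_0^*}(r_j)$. Since $x_0^*$ is $(k,\delta,\sqrt{s})$-cylindrical for every $s \in [s_1, s_2]$, the geometry near $x_0^*$ at each such scale is $\delta$-close to $\bar{\mathcal C}^k$; this is precisely the hypothesis under which Theorem \ref{lojaRFcyl} (or more precisely its scale-localized version, which I would extract from the proof of Theorem \ref{lojaRFcyl} applied after parabolic rescaling) gives, for a fixed $\beta \in (1/4, \alpha) \cap (0, 3/4)$ with $\beta < 1$ chosen depending on $\alpha$,
\begin{equation*}
\left| W_j - \Theta_{n-k} \right| \le C \left( W_{j+1} - W_{j-1} \right)^{\beta}
\end{equation*}
for all relevant $j$, where I use monotonicity of the pointed $\WW$-entropy in $\tau$ (so that $\WW_{x_0^*}(r_j/2) - \WW_{x_0^*}(2r_j) = W_{j+1} - W_{j-1} \ge 0$). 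The role of the smallness of $\delta$ is to guarantee both that the Lojasiewicz inequality is applicable and that $|W_j - \Theta_{n-k}|$ is itself small (since $|W_{s_1} - W_{s_2}| < \delta$ and the entropy is monotone, each $W_j$ lies within $\delta$ of the common endpoint value, which is close to $\Theta_{n-k}$).

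Next I would run the telescoping/iteration. Set $a_j := W_j - W_{j+1} \ge 0$ (the consecutive entropy increments), so that $\sum_j a_j = W_{s_1} - W_{s_2} < \delta$ and $W_{j+1} - W_{j-1} = a_{j-1} + a_j$. The Lojasiewicz inequality combined with the fact that $W_j$ decreases to a limit $W_\infty$ with $|W_j - W_\infty| \le \sum_{i \ge j} a_i$ gives a differential-inequality-type recursion: $\left(\sum_{i \ge j} a_i\right)^{1/\beta} \lesssim |W_j - \Theta_{n-k}|^{1/\beta} + \text{(endpoint error)} \lesssim a_{j-1} + a_j + \delta^{1/\beta}$. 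Actually the cleaner route, following Colding--Minicozzi, is: from $|W_j - \Theta_{n-k}| \le C(a_{j-1}+a_j)^\beta$ and the crude bound $a_j \le |W_j - \Theta_{n-k}| + |W_{j+1} - \Theta_{n-k}|$... this is circular, so instead I use the tail: letting $T_j := \sum_{i \ge j} a_i \le |W_j - W_\infty|$, and noting $W_\infty$ is within $\delta$ of $\Theta_{n-k}$, we get $T_j \le |W_j - \Theta_{n-k}| + \delta \le C(a_{j-1}+a_j)^\beta + \delta$. Hence $a_{j-1} + a_j \ge c(T_j - \delta)_+^{1/\beta}$, which is a discrete ODE for $T_j$ of the form $T_{j-1} - T_{j+1} \ge c(T_j - \delta)_+^{1/\beta}$. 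Since $1/\beta > 1$, a standard comparison argument (integrating the continuous analogue $-T' \ge cT^{1/\beta}$, whose solutions decay like $j^{-\beta/(1-\beta)}$) shows $T_j \le \delta + C' j^{-\beta/(1-\beta)}$ for $j$ in the range, with $C'$ depending only on $n, Y, \beta$; combined with $T_j \le \delta$ from the global hypothesis one extracts $a_j \le C(a_{j-1}+a_j)^\beta$-driven geometric-type decay giving $\sum a_j^\alpha$ small.

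The cleanest way to conclude \eqref{eq:intro005}: I claim $\sum_j a_j^\alpha \le \big(\sum_j a_j\big)^? $ fails since $\alpha < 1$ makes $\ell^\alpha$ larger than $\ell^1$, so I genuinely need the decay rate. Using $a_j \lesssim (a_{j-1}+a_j)^\beta$, raise to power $\alpha$: $a_j^\alpha \lesssim (a_{j-1}+a_j)^{\alpha\beta}$. Hmm, I actually want to use the interpolation $a_j^\alpha = a_j^{\alpha} $ with the pointwise decay $a_j \le C j^{-1/(1-\beta)}$ coming from the recursion above together with $\sum a_j < \delta$; then $\sum_j a_j^\alpha \le \big(\sup_j a_j^{\alpha - \gamma}\big)\sum_j a_j^{\gamma}$... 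The robust approach, which I expect to be the technical heart, is the one Colding--Minicozzi use: show directly that $a_j^\alpha$ is summable with sum controlled by $\big(W_{s_1}-W_{s_2}\big)^{\alpha'}$ for some $\alpha' > 0$ by a dyadic-block argument splitting the $j$'s according to whether $a_{j-1} \le a_j$ or not, and summing a geometric-type series in each block using $1/\beta > 1$; the bound then tends to $0$ as $\delta = |W_{s_1} - W_{s_2}| \to 0$. Choosing $\delta(n,Y,\ep,\alpha)$ small enough makes the right side $< \ep$.

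The main obstacle I anticipate is not the iteration itself — that is routine given \eqref{eq:intro004} — but rather verifying that the Lojasiewicz inequality of Theorem \ref{lojaRFcyl} can be applied \emph{uniformly at every dyadic scale} $r_j \in [s_1,s_2]$ under only the quantitative $(k,\delta,\sqrt{s})$-cylindrical hypothesis. Theorem \ref{lojaRFcyl} as stated is an asymptotic statement "for all sufficiently small $\tau$" at a genuine cylindrical singularity; here I need its effective, rescaled counterpart: after parabolically rescaling $\XX$ by $r_j^{-1}$ around $x_0^*$, the $\delta$-closeness over $[-\ep^{-1},\ep^{-1}]$ should provide exactly the hypotheses entering the proof of \eqref{eq:intro004} (closeness of the geometry, of the potential $f_{x_0^*}$, and of the relevant derivatives on a large cylindrical region), with constant $C = C(n,Y)$ independent of $j$. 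I would therefore need to revisit the proof of Theorem \ref{lojaRFcyl} and record the precise smallness threshold on $\delta$ and the size of the region it requires, then feed that into the choice $\delta \le \delta(n,Y,\ep,\alpha)$. Once this uniformity is in hand, the corollary follows by the summation argument above.
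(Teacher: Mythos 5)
Your overall strategy — apply the effective, scale-uniform Lojasiewicz inequality (Theorem \ref{thm:lo}, made applicable at every dyadic scale by the $(k,\delta,\sqrt{s})$-cylindrical hypothesis via Lemma \ref{lem:smoothcyl}), derive polynomial decay of $|\WW_{x_0^*}(r_j)-\Theta_{n-k}|$ from the resulting discrete ODE, and then sum — is the paper's strategy, and the first of these steps is handled exactly as you anticipated. Two corrections to the iteration step: the increments have the opposite sign from what you wrote ($\WW_{x_0^*}(r_j)$ is nondecreasing in $j$ since $\WW_{x_0^*}(\tau)$ is nonincreasing in $\tau$), and the decay must be run \emph{two-sidedly}. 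The paper splits the index range at the crossing point $j_0$ where $\WW_{x_0^*}(r_j)$ passes $\Theta_{n-k}$ and obtains $|\WW_{x_0^*}(r_j)-\Theta_{n-k}|\le C(j+1-N')^{-1/\theta}$ on one side and $\le C(N+1-j)^{-1/\theta}$ on the other ($N',N$ the extreme dyadic indices, $\theta$ determined by the Lojasiewicz exponent $\beta=1/(1+\theta)$). Your one-sided tail-sum recursion $T_{j-1}-T_{j+1}\ge c(T_j-\delta)_+^{1/\beta}$ leaves an additive $\delta$-offset in the bound for every $j$; since the number of dyadic scales in $[s_1,s_2]$ is unbounded, that offset cannot be summed away, so the one-sided formulation does not close.

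The genuine gap is the final summation, which you flag but do not execute. Your fallback — pointwise decay $b_j\lesssim j^{-\beta/(1-\beta)}$ for the increments $b_j=\WW_{x_0^*}(r_j)-\WW_{x_0^*}(r_{j-1})\ge 0$, followed by direct summation of $b_j^{\alpha}$ — requires $\alpha\beta/(1-\beta)>1$, which together with the constraint $\beta<3/4$ forces $\alpha>1/3$; it cannot reach the range $\alpha\in(1/4,1/3]$ covered by the statement, and your proposed choice $\beta\in(1/4,\alpha)$ makes matters worse since small $\beta$ degrades the decay rate. The paper instead performs a weighted Abel summation: for any $q<\theta^{-1}$ one shows $\sum_l b_l\,(l-N')^{q}\le\Psi(k^{-1})$ (the boundary term is $\le Ck^{q-1/\theta}$ and the series $\sum(l-N')^{q-1-1/\theta}$ converges), and then applies H\"older with exponents $1/\alpha$ and $1/(1-\alpha)$,
\begin{align*}
\sum_l b_l^{\alpha} \le \Big(\sum_l b_l\,(l-N')^{q}\Big)^{\alpha}\Big(\sum_l (l-N')^{-\frac{\alpha q}{1-\alpha}}\Big)^{1-\alpha},
\end{align*}
with the second factor finite precisely when $q>(1-\alpha)/\alpha$. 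The window $\max\{1,(1-\alpha)/\alpha\}<q<\theta^{-1}$ with $\theta>1/3$ is nonempty exactly when $\alpha>1/4$: this is where the threshold in the hypothesis enters, and it is the mechanism your proposal is missing. The finitely many ($2k$) terms near the endpoints are then absorbed using $|\WW_{x_0^*}(s_1)-\WW_{x_0^*}(s_2)|<\delta$, choosing $k$ large first and then $\delta$ small.
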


In our forthcoming work \cite{FLrec25}, we extend Theorem \ref{lojaRFcyl} and Corollary \ref{cor:sum} to noncollapsed Ricci flow limit spaces. The summability inequality \eqref{eq:intro005} with $\alpha=1/2$ is a key analytic ingredient in our analysis of cylindrical singularities. Specifically, we prove in \cite{FLrec25} that, for any noncollapsed Ricci flow limit space, the subset of the singular set consisting of points at which some tangent flow is $\bar{\mathcal C}^k$ is \textbf{parabolic $k$-rectifiable} with respect to the spacetime distance introduced in \cite{fang2025RFlimit}.

\subsection*{Outline of the Proof of Theorem \ref{lojaRFcyl}}

The proof of Theorem \ref{lojaRFcyl} involves several technical challenges and key analytic ingredients. We provide an overview of the main steps below.

\textbf{(A)} Variation of the $\WW$-functional under global perturbations of the cylinder

We begin by considering the model cylinder $\mathcal C^{k}_{-1}=(\bar M,\bar g,\bar f)$, along with a global perturbation $(g, f)$, such that
	\begin{align*}
h:=g-\bar g,\quad \chi:=f-\bar f, \quad \text{with} \quad \|h\|_{C^2}+\|\chi\|_{C^2} \ll 1.
	\end{align*}
	
Following the approach in \cite{colding2021singularities} and \cite{li2023rigidity}, we apply the decomposition:
\begin{align*}
h=ug_{S^{n-k}}+\zeta,\quad \chi=\frac{n-k}{2} u+q,
\end{align*}
 where $ug_{S^{n-k}}$ denotes the projection of $h$ onto $\KK_0 g_{S^{n-k}}$, and $\KK_0$ is the space of quadratic Hermite polynomials (see Definition \ref{defncylinder}). 
 
 In Section \ref{seclojaW} (see Theorem \ref{lojaforF}), we establish the following variational inequality for the $\WW$-functional:
 	\begin{equation}\label{intrlojaW}
 		\left|\WW(g,f)-\Theta_{n-k}\right|\leq C\left(\rVert u\rVert_{L^2}^3+\rVert \mathbf{\Phi}(g,f)\rVert_{L^2}^2+\rVert\Div_{\bar f} h\rVert_{W^{1,2}}^2+|\BB(h,\chi)|^2+\left|\VV(g,f)-1\right|\right).
 	\end{equation} 
Here:
\begin{itemize}
 	\item $\WW(g, f)$ is Perelman's $\WW$-functional at scale $1$ (see Definition \ref{defnshrinkerquant}),
 	
 	\item $\displaystyle \mathbf{\Phi}(g,f):=\frac{g}{2}-\Ric(g)-\na^2f$ is the Ricci shrinker operator,
 	
 	\item $\displaystyle \VV(g,f)=(4\pi)^{-\frac{n}{2}}\int_{\bar M}e^{-f}\,\mathrm{d}V_g$ is the weighted volume,
 	
 	\item $\BB(h,\chi)$ is the center of mass vector (see \eqref{defncentermass}), and
 	
 	\item all $L^2$-norms are taken with respect to the weighted measure $\mathrm{d}V_{\bar f}=(4\pi)^{-\frac{n}{2}} e^{-\bar f}\,\mathrm{d}V_{\bar g}$.
 \end{itemize}
 
Inequality \eqref{intrlojaW} is derived via a Taylor expansion of the $\WW$-functional on the cylinder, combined with a quadratic rigidity inequality (see Proposition \ref{prop:tay1}) from \cite{colding2021singularities}.
 
 Let 
 	\begin{align*}
g(s):=\bar g+sh,\quad f(s):=\bar f+s\chi, \quad  W(s):=\WW(g(s),f(s)).
	\end{align*}
 Then the Taylor expansion yields:
 	 \begin{align}\label{intrtaylor1}
 	 	\left|\WW(g,f)- \Theta_{n-k} \right|=\left|W(1)-W(0)\right|
 	 	\leq  \left|W'(0)+\frac{1}{2}W''(0)\right|+\frac{1}{6}\sup_{s\in [0,1]}\left|W'''(s)\right|.
 	 \end{align}
By performing detailed calculations for $W'$, $W''$, and $W'''$, and carefully applying the rigidity inequality, we control each term on the right-hand side of \eqref{intrtaylor1}, which ultimately yields inequality \eqref{intrlojaW}.

\textbf{(B)} Characterization of almost cylindrical regions in the Ricci flow

Next, we consider a closed Ricci flow $\XX=\{M,(g(t))_{t \in I}\}$ with entropy bounded below by $-Y$ and a based point $x_0^*=(x_0,t_0)\in\XX$ such that $[t_0-2r^2,t_0]\subset I$. Then we consider the conjugate heat kernel measure $\nu_t=\nu_{x_0^*;t}$ with potential function $f=f_{x_0^*}$. For simplicity, we assume $t_0=0$, $r=1$, and $\tau=-t$.

For the weighted Riemannian manifold $(M, g(-1), f(-1))$, the geometry may not arise from a global perturbation of the cylinder $\mathcal C^{k}_{-1}$, even if it is locally close to one. Therefore, to detect and work with such almost cylindrical regions, we introduce a set of radius functions that quantify the size of regions in $M$ where the geometry closely approximates that of a cylinder. We define $\bar b:=2\sqrt{\bar f}$ and introduce four key radii:

\begin{defn}
	For small positive constants $\sigma$ and $\delta$, and the weighted Riemannian manifold $(M, g(-1), f(-1))$ with $\Phi=\mathbf{\Phi}(g(-1), f(-1))$:
		\begin{itemize}[leftmargin=*, label={}]
		\item \emph{($\rA$-radius)}. $\rA$ is defined as the largest number $L$ such that there exists a diffeomorphism $\varphi_A$ from $\{\bar b \le L\} \subset \bar M$ onto a subset of $M$ such that
		\begin{align}\label{eq:intro007a}
			\left[\bar g-\varphi_A^* g(-1)\right]_5+\left[\bar f-\varphi_A^* f(-1)\right]_5 \leq e^{\frac{\bar f}{4}-\frac{L^2}{16}}.
		\end{align}
		\item \emph{($\rBC$-radius)}. $\rBC$ is defined as the largest $L$ such that there exists a diffeomorphism $\varphi_B$ from $\{\bar b \le L\} \subset \bar M$ onto a subset of $M$ such that
		\begin{align}\label{eq:intro007b}
			\left[\bar g-\varphi_B^* g(-1)\right]_0+\left[\bar f-\varphi_B^* f(-1)\right]_0\leq e^{-\frac{L^2}{33}},
		\end{align}
		and
		\begin{equation}\label{eq:intro007c}
			\int_{\{\bar b\le L\}}\left|\varphi_B^*\Phi\right|^2 \,\mathrm{d}V_{\bar f}\leq e^{-\frac{L^2}{4-\sigma}}.
		\end{equation}
		Furthermore, for all $l \in [1, 10^{10} n\sigma^{-1}]$, the $C^l$-norms of $\bar g-\varphi_B^* g(-1)$ and $\bar f-\varphi_B^* f(-1)$ are bounded by $1$. 
		
		\item \emph{($\r_{C, \delta}$-radius)}. $\r_{C,\delta}$ is defined as the largest $L$ such that there exists a diffeomorphism $\varphi_C$ from $\{\bar b \le L\}$ of $\bar M$ onto a subset of $M$ such that $f \lc \varphi_C(\bar p), -1 \rc \le n$ and
	\begin{align*}
		\left[\bar g-\varphi_C^* g(-1)\right]_2\leq \delta.
	\end{align*}
	
	\item 	\emph{($\rE$-radius)}. The \textbf{entropy radius} $\rE$ is defined as
	\begin{align*}
		\exp\left(-\frac{\mathbf{r}^2_E}{4}\right):=\mathcal W_{x_0^*}(1/2)-\mathcal W_{x_0^*}(2).
	\end{align*}
	\end{itemize}
\end{defn}

The radius functions $\rA$ and $\rBC$ are closely related to the conditions $(\star_L)$ and $(\dagger_L)$ introduced in \cite[Section 5]{li2023rigidity}; see also similar conditions in \cite[Section 7]{colding2021singularities}. Unlike in \cite{li2023rigidity}, however, we incorporate an additional integral condition \eqref{eq:intro007c} in the definition of $\rBC$. In the setting of \cite{li2023rigidity}, this condition is trivially satisfied. The $\r_{C, \delta}$-radius is essentially weaker than both $\rA$ and $\rBC$, and can be viewed as an analog of the graphical radius introduced in \cite[Section 0.6]{colding2015uniqueness}. In what follows, we will compare the three radii—$\rA$, $\rBC$, and $\r_{C, \delta}$—within the entropy radius $\rE$.
 
(I) Given $\sigma>0$, we have $\rA\geq \rBC-2$ if $\rBC$ is sufficiently large.

The proof of this statement is given in Proposition \ref{prop:con}, which applies to general weighted Riemannian manifolds. In the special case where $(M, g(-1), f(-1))$ is a Ricci shrinker, the same conclusion was established in \cite[Theorem 5.2]{li2023rigidity}, building on techniques developed in \cite[Section 3]{colding2021singularities}. Broadly speaking, the argument proceeds by applying a suitable cutoff to the pair $\lc \varphi_B^* g(-1) - \bar g, \varphi_B^* f(-1) - \bar f \rc$, followed by the construction of a modified diffeomorphism that yields improved estimates as a consequence of the rigidity inequality. Although $(M, g(-1), f(-1))$ is not assumed to be a Ricci shrinker in our setting, the assumption \eqref{eq:intro007c} is sufficient to obtain the necessary estimates.

From the above argument, one can derive the following estimate (see Theorem \ref{lojawithradius}), which is based on \eqref{intrlojaW} and the remainder estimate given in Proposition \ref{prop:remainder}:
	\begin{equation}\label{eq:intro007d}
		\left|\WW_{x_0^*}(1)-\Theta_{n-k} \right|\leq C(n, Y) \exp\lc-\frac{3\rBC^2}{16} \rc.
	\end{equation}
To apply \eqref{intrlojaW}, it is necessary to consider a cutoff of the pair $\lc \varphi_B^* g(-1) - \bar g, \varphi_B^* f(-1) - \bar f \rc$. Accordingly, Proposition \ref{prop:remainder} is used to show that the contribution to the $\WW$-functional from outside a compact set of size $L$ decays like $e^{-L^2/4}$.

(II) Suppose $\r_{C, \delta} < (1 - \sigma)\rE$. If $\delta$ is sufficiently small and $\r_{C, \delta}$ is sufficiently large, then the pair $\lc \varphi_C^* g(-1), \varphi_C^* f(-1) \rc$ is close to a Ricci shrinker in the sense that
	\begin{equation}\label{eq:intro007e}
\left[\frac{\varphi_C^* g(-1)}{2}-\varphi_C^*\na^2  f(-1)-\Ric\lc \varphi_C^* g(-1) \rc\right]_{100} \le C(n,Y,\sigma) e^{\frac{\bar f}{2}-\frac{\bar L^2}{8(1-\sigma)}},
	\end{equation}
on $\{\bar b \le \bar L\}$, where $\bar L=\r_{C, \delta}-\delta^{-1}$.

Here, the constant $100$ is not essential and can be replaced by any sufficiently large constant, provided the corresponding parameters are adjusted accordingly. The proof of \eqref{eq:intro007e} will be given in Proposition \ref{pointwiseshrinkeruantity}. This estimate shows that, as long as $\r_{C, \delta}$ does not exceed $\rE$, the Ricci shrinker operator $\Phi$ admits a pointwise bound within $\r_{C, \delta}$-radius. This is somewhat surprising, since the definition of the entropy radius a priori only yields a spacetime $L^2$ control on $\Phi$.

Through a series of delicate estimates, we demonstrate that a pointwise bound is nonetheless achievable. A key step is the observation that $\varphi_C^* f(-1)$ is nearly equal to $\bar f$, up to a multiplicative factor of $1 \pm \epsilon$ (see Proposition \ref{stabilitypotential}), the proof of which relies on an optimal heat kernel estimate (see Theorem \ref{thm:upper}). Then, by introducing an auxiliary function (see Proposition \ref{betterpotential}), we establish an $L^2$ estimate on the time slice $t = -1$, which ultimately leads to the desired pointwise bound in \eqref{eq:intro007e}.

(III) Suppose $\rA < (1 - \sigma)\rE$. Then for any sufficiently large constant $D > 1$, we have $\r_{C, \delta} \ge \rA + D$, provided $\rA$ is large enough. More precisely, the map $\varphi_A$ can be extended to a diffeomorphism $\tilde \varphi_A$ defined on the region $\{ \bar b \le \rA + D \}$, which satisfies the conditions required in the definition of $\r_{C, \delta}$.

Intuitively, this means that the domain of $\varphi_A$ can be extended to a larger region, at the expense of obtaining weaker estimates. This result will be established in Theorem \ref{thm:ext1}, one of the most technically involved parts of the paper. The key idea is to analyze the geometry near the boundary of the region where $\varphi_A$ is defined and to show that the pullback metric $\varphi_A^* g(-1)$ is locally almost splitting. The approximate splitting direction is almost generated by $\varphi_A^* \nabla f(-1)$, after an appropriate rescaling.

To realize this, we employ a limiting argument based on the structure theory of Ricci flow limit spaces developed in \cite{fang2025RFlimit}. The resulting limit space is the standard $\mathbb{R} \times \mathbb{R}^{k-1} \times S^{n-k}$, where the first factor corresponds to the splitting direction. Following the gradient flow of this approximate splitting direction, we are able to extend $\varphi_A$ to a larger region while maintaining the necessary control. A similar idea was used in \cite{LW25} in the context of Ricci shrinkers to identify a splitting direction.

We remark that this extension argument differs from those used in \cite[Theorem 7.1]{colding2021singularities} and \cite[Theorem 5.3]{li2023rigidity}, both of which rely heavily on pseudolocality and the self-similarity of Ricci shrinkers. Although the pseudolocality theorem remains applicable in our setting, the absence of self-similarity prevents us from deriving spatial estimates based on estimates at later times.

(IV) Suppose $\rA < (1 - 2\sigma)\rE$. Then $\rBC \ge \rA + D$, provided that $\rA$ is sufficiently large, where $D$ is a fixed large constant.

This can be viewed as another extension step and will be established in Theorem \ref{thm:ext2}. The overall strategy follows a similar approach to that in \cite[Theorem 5.3]{li2023rigidity} (see also \cite[Theorem 7.1]{colding2021singularities}). One key difference is that the additional neck region obtained in statement (III) is smaller than that in \cite[Theorem 5.3]{li2023rigidity}, where the neck has size $\ep \rA$. Nevertheless, the same gluing argument can be carried out within this narrower neck region.

Another distinction lies in the geometric setting: \cite[Theorem 5.3]{li2023rigidity} deals with a genuine Ricci shrinker, whereas here we do not assume that $(M, g(-1), f(-1))$ is a Ricci shrinker. However, this poses no essential difficulty, as \eqref{eq:intro007e} implies that $(M, g(-1), f(-1))$ is an almost Ricci shrinker in a very strong sense—sufficient for the estimates required in our argument.

Thus, by applying statements (I) and (IV) iteratively, one obtains the estimate
\begin{align}\label{eq:intro008}
\min\{ \mathbf{r}_A, \rBC \} \ge (1 - 3\sigma) \mathbf{r}_E,
\end{align}
provided that $\rA$ or $\rBC$ is sufficiently large. When combined with \eqref{eq:intro007d}, this leads to the Lojasiewicz inequality \eqref{eq:intro004} stated in Theorem \ref{lojaRFcyl}.  \qed
\\
\\
Based on Theorem \ref{lojaRFcyl}, we sketch the proof of how it leads to a strong uniqueness result of Theorem \ref{thm:stronguni2intro}.

\subsection*{Outline of the Proof of Theorem \ref{thm:stronguni2intro}}

In the setting of Theorem \ref{thm:stronguni2intro}, let $\rA(s)$, $\rBC(s)$, and $\rE(s)$ denote the corresponding radius functions associated with the triple $(M, g^z(s), f^z(s))$. By Theorem \ref{lojaRFcyl}, we have $\rE(s) \ge \sqrt{(12 - \ep) \log s}$, provided that $s$ is sufficiently large. Then, applying \eqref{eq:intro008}, we obtain
\begin{align}\label{eq:intro009}
\min\{\rA(s), \rBC(s)\} \ge \sqrt{(12-\ep) \log s}.
	\end{align}
Let $\varphi_s$ denote the diffeomorphism that appears in the definition of $\rA(s)$. Based on estimates \eqref{eq:intro007e} and \eqref{eq:intro009}, one can show that for any $\ep > 0$, there exists a sufficiently large constant $\bar s$ such that for all $s_0 \ge \bar s$, the map $\varphi_{s_0}$ defines a diffeomorphism from the region $\left\{ \bar b \le \sqrt{(8 - \ep)\log s_0} \right\}$ onto a subset of $M$, and for all $s \ge s_0$, the following estimate holds on $\left\{ \bar b \le \sqrt{(8 - \ep)\log s_0} \right\}$:
\begin{align}\label{eq:intro010}
\left[ \varphi_{s_0}^* g^z(s) - \bar g \right]_{[\ep^{-1}]} + \left[ \varphi_{s_0}^* f^z(s) - \bar f \right]_{[\ep^{-1}]} \le C(n, Y, \ep)  e^{\frac{\bar f}{2}} s^{-1 + \ep}.
\end{align}

Note that the estimate \eqref{eq:intro010} remains valid if we replace $s_0$ by $s$, but the key point here is that it also holds with a fixed diffeomorphism $\varphi_{s_0}$, independent of $s \ge s_0$. The full details will be presented in Theorem \ref{thm:stronguni}. Based on \eqref{eq:intro010}, one can construct the sequence of diffeomorphisms $\psi_j$ in Theorem \ref{thm:stronguni2intro} inductively, ensuring they are compatible on overlapping regions. \qed
\\
\\
The main results of this paper—including the strong uniqueness theorem (Theorem \ref{thm:stronguni2intro}) and the Lojasiewicz inequality (Theorem \ref{lojaRFcyl})—can be extended to the setting of quotient cylinders (see Section \ref{sec:lojaquo}). While the overall structure of the proofs remains similar, it is necessary to modify the definitions of $\rA$, $\rBC$, and $\r_{C, \delta}$ to suit the quotient cylinder context; see Definitions \ref{defnradiiquo} and \ref{defnrCdq}.

More precisely, rather than considering a diffeomorphism from the model space $\bar M/\Gamma$—where $\Gamma$ is a finite group acting freely on $\bar M$—into $M$, we instead consider a smooth covering map of degree $|\Gamma|$ from a domain in $\bar M$ onto a subset of $M$. This map is constructed so that the pullbacks of $(g(-1), f(-1))$ satisfy estimates analogous to \eqref{eq:intro007a} or \eqref{eq:intro007b}. A similar formulation was used in \cite[Theorem 5.5]{li2023rigidity}.

Following this modification, we can establish a Lojasiewicz inequality for quotient cylindrical singularities (see Theorem \ref{thm:loquotient}). Furthermore, the strong uniqueness of the associated tangent flow can also be proved (see Theorem \ref{thm:stronguni2quo}). Likewise, an analog of Corollary \ref{cor:sum} can be obtained in this setting (see Corollary \ref{quantisummabilityWq}).

In addition, one may consider ancient solutions to Ricci flow with bounded entropy, where a tangent flow at infinity is isometric to $\bar{\mathcal{C}}^k$ or its finite quotient. In this context, similar strong uniqueness theorems can be established by working with the appropriately modified Ricci flow (see \eqref{eq:mcf1}) and applying the Lojasiewicz inequality. See Theorems \ref{thm:stronguni-infty} and \ref{thm:stronguni-inftyquo} for the precise statements.

\subsection*{Organization of the Paper}

The paper is organized as follows.

\begin{itemize}
\item Section \ref{secprel} introduces the necessary definitions and basic properties of weighted Riemannian manifolds. We also present fundamental conventions and results for closed Ricci flows that will be used throughout the paper. In addition, we review key results on Ricci flow limit spaces from \cite{fang2025RFlimit}, introduce our model spaces, and prove Theorem \ref{thm: weakunique}.

\item Section \ref{sec:locesti} establishes the remainder estimate for integrals.

\item Section \ref{seclojaW} derives the variational inequality for the $\WW$-functional near the cylinder.

\item Section \ref{secradiicontraction} defines various radius functions and proves statement (I) as well as the estimate \eqref{eq:intro007d}.

\item Section \ref{seclojaRF}, which forms the core of the paper, contains the proofs of statements (II), (III), and (IV), thereby completing the proof of Theorem \ref{lojaRFcyl}.

\item Section \ref{sec:scyf} proves Theorem \ref{thm:stronguni2intro}, building on the results of Theorem \ref{lojaRFcyl}.

\item Section \ref{sec:lojaquo} extends the above results to the setting of quotient cylinders.

\item Finally, we include a list of notations for reference.
\end{itemize}

\textbf{Acknowledgements}: Hanbing Fang would like to thank his advisor, Prof. Xiuxiong Chen, for his encouragement and support. Hanbing Fang is supported by the Simons Foundation. Yu Li is supported by National Key R\&D Program of China 2025YFA1018200, NSFC-12522105, YSBR-001 and research funds from University of Science and Technology of China and Chinese Academy of Sciences.

\section{Preliminaries}\label{secprel}

\subsection*{Weighted Riemannian manifolds}

An $n$-dimensional \textbf{weighted Riemannian manifold} $(M^n, g, f)$ is a complete Riemannian manifold $(M^n, g)$ coupled with a smooth function $f: M \to \R$. For a weighted Riemannian manifold $(M^n, g, f)$, we define
\begin{align*}
	\VV(g,f):=(4\pi)^{-\frac{n}{2}}\int_Me^{-f}\,\mathrm{d}V_g.
\end{align*}\index{$\VV(g, f)$}
Here, $\mathrm{d}V_g$ denotes the volume form of $(M,g)$. For simplicity, we set $\mathrm{d}V_f:=(4\pi)^{-\frac{n}{2}}e^{-f}\,\mathrm{d}V_g$\index{$\mathrm{d}V_f$}. We say $(M^n,g,f)$ is \textbf{normalized} if it satisfies the following normalization:
\begin{align}\label{normalziation1}
	\VV(g,f)=1.
\end{align}

\begin{exmp}[Model space: weighted cylinders]\label{exmp:model}
For any $n \ge 3$ and $m\in \{2,\ldots,n\}$, we define 
\begin{align*}
\mathcal C^{n-m}_{-1}:=(\bar M,\bar g,\bar f)=\left(\R^{n-m}\times S^{m}, g_E \times g_{S^m}, \frac{|\vec{x}|^2}{4}+\frac{m}{2}+\Theta_m \right),
\end{align*}\index{$\CC^{n-m}_{-1}$}\index{$(\bar M,\bar g,\bar f)$}
where $g_E$ is the Euclidean metric on $\R^{n-m}$, and $g_{S^m}$ is the round metric on $S^{m}$ such that $\Ric(g_{S^m})=g_{S^m}/2$. The vector $\vec{x}=(x_1,\ldots,x_{n-m})$\index{$\vec{x}$} denotes the standard coordinate function on $\R^{n-m}$. The constant $\Theta_m$ is chosen to ensure that the normalization condition \eqref{normalziation1} is satisfied. More explicitly, it follows from a direct calculation that
\begin{align}\label{eq:entropyexplicit}
\Theta_m=\log\lc \frac{2^{1-\frac m 2} (m-1)^{\frac m 2} e^{-\frac m 2} \sqrt{\pi}}{\Gamma\lc \frac{m+1}{2}\rc} \rc.
\end{align}
\end{exmp}\index{$\Theta_m$}

\begin{defn}\label{defnshrinkerquant}\index{$\mathbf{\Phi}(g,f)$}\index{$\WW(g,f)$}
	For a weighted Riemannian manifold $(M^n,g,f)$, we define:
	\begin{align*}
		\mathbf{\Phi}(g,f):=&\frac{g}{2}-\Ric(g)-\na^2 f, \\
		\WW(g,f):=& \int_{M} 2\Delta f-|\nabla{f}|^2+\scal+f-n \, \mathrm{d}V_f.
	\end{align*}
\end{defn}

It is clear that $\mathbf{\Phi}(g, f)=0$ implies that the weighted Riemannian manifold $(M^n,g,f)$ is given by a Ricci shrinker. On the other hand, $\WW(g,f)$ is nothing but Perelman's $\WW$-functional $\WW(g, f, 1)$ (see \cite{perelman2002entropy}). For the cylinder $\mathcal C^{n-m}_{-1}=(\bar M,\bar g,\bar f)$ as in Example \ref{exmp:model}, we have
	\begin{align*}
		\mathbf{\Phi}(\bar g,\bar f)=0 \quad \text{and} \quad \WW(\bar g,\bar f)=\Theta_m.
	\end{align*}
In other words, $\Theta_m$ denotes the entropy of $(\bar M,\bar g,\bar f)$ as a Ricci shrinker. For more information regarding the entropy of Ricci shrinkers, we refer readers to \cite[Section 5]{li2020heat}.

For later applications, we have the following definition.

\begin{defn}\label{defnoperators}
Given a weighted Riemannian manifold $( M^n, g, f)$, let $T^{r, s} M$ denote the space of $(r, s)$-tensors on $M$, and let $S^2(M)$ denote the space of symmetric $(0, 2)$-tensors. We then define the following:
	\begin{enumerate}[label=\textnormal{(\roman{*})}]
	\item The $L^2$-norm for $(r,s)$-tensors is defined with respect to $\mathrm{d}V_f$, i.e., for any $T\in L^2(T^{r, s} M)$, $\rVert T\rVert_{L^2}^2=\int_{ M}|T|^2\,\mathrm{d}V_f$. The Sobolev spaces $W^{k,2}(T^{r,s} M)$ are defined with respect to $\mathrm{d}V_f$ as well.\index{$\|\cdot\|_{L^2}$}
	
		\item The weighted divergence operator $\Div_{ f}:C^\infty(T^{r,s} M)\to C^\infty (T^{r,s-1} M)$ is defined as 
		\begin{align*}
			(\Div_f T)^{i_1,\cdots,i_r}_{j_1,\cdots,j_{s-1}}:=\na_iT^{i_1,\cdots, i_r}_{i,j_1,\cdots, j_{s-1}}-T^{i_1,\cdots, i_r}_{i,j_1,\cdots, j_{s-1}}f_i,
		\end{align*}\index{$\Div_f$}
		for any $T\in C^\infty (T^{r,s} M)$. In particular, when $f$ is constant, then $\Div_f$ reduces to the standard divergence operator, which we denote by $\Div$\index{$\Div$}.
        We denote by $\Div_f^*$\index{$\Div_f^*$} the $L^2$-adjoint of $\Div_f$.
		
		\item For a tensor field $T$ on $M$, we set
		\begin{align}\label{defnnormtensor}
			[T]_l:=\sum_{k=0}^l \left|\na^k T\right|.
		\end{align}\index{$[\cdot]_l$}
		For any two tensor fields $T_1,T_2$ on $M$, we use the notation $T_1*T_2$ to denote any tensor field that is a real linear combination of terms obtained from $T_1\otimes T_2$ by contracting indices using the metric—i.e., by converting any number of $T^* M$ components to $TM$ components or vice versa.		
		
		\item The weighted Laplacian $\Delta_f$\index{$\Delta_f$} is defined as $\Delta_f:=\Delta-\la\na \cdot,\na f\ra$ on $C^\infty(T^{r,s} M)$. Denote $\LL$ to be the stability operator $\Delta_f+2\Rm$ on $C^{\infty}(S^2(M))$, i.e., for any such tensor $h \in C^{\infty}(S^2(M))$,
		\begin{align*}
			\LL (h)_{ij}:=\Delta_f h_{ij}+2\Rm_{likj}h_{lk}.
		\end{align*}\index{$\LL$}
	\end{enumerate}
\end{defn}

For weighted cylinders, we define the following space.

\begin{defn}\label{defncylinder}
Let $\mathcal C^{n-m}_{-1}=(\bar M,\bar g,\bar f)$ denote the weighted cylinder as described in Example \ref{exmp:model}. We define $\KK_0$ to be the linear space of quadratic Hermite polynomials of the form
		\begin{align*}
v=a_{ij}x_ix_j-2\Tr( a)
		\end{align*}
where $a=(a_{ij})$ is a symmetric matrix.
\end{defn}\index{$\KK_0$}

On the cylinder $\mathcal C^{n-m}_{-1}$, the $L^2$-kernel of the stability operator $\LL$ in $\ker(\Div_{\bar f})$ is $\KK_0 g_{S^m}$; see \cite[Section 6]{colding2021singularities} and \cite[Proposition 4.2]{li2023rigidity}.

We conclude this subsection with the following weighted Bianchi identity, which is well-known to experts.

\begin{lem}\label{weightedBianchilem}
For any weighted Riemannian manifold $( M^n, g, f)$ and constant $\tau>0$, the following identity holds:
	\begin{equation*}
		\nabla \lc \tau(2\Delta f-|\nabla f|^2+\scal)+f-n \rc=2 \Div_f \lc \tau \Ric+\nabla^2(\tau f)-\frac{g}{2} \rc.
	\end{equation*}
\end{lem}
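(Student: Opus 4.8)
The plan is to verify the identity by a direct computation, treating both sides as $1$-forms on $M$ and comparing them component by component. First I would recall the two classical unweighted facts: the contracted second Bianchi identity $\Div(\Ric) = \tfrac12 \nabla \scal$ (equivalently $\nabla_i \scal = 2\nabla_j \Ric_{ij}$), and the Bochner-type commutation formula $\nabla_i \Delta f - \Delta \nabla_i f = -\Ric_{ij}\nabla_j f$, i.e. $\Div(\nabla^2 f) = \nabla \Delta f + \Ric(\nabla f, \cdot)$. These let me compute the ordinary divergence of the right-hand side tensor $\tau\Ric + \nabla^2(\tau f) - g/2$: since $\tau$ is constant, $\Div(\tau\Ric + \tau\nabla^2 f - g/2) = \tfrac{\tau}{2}\nabla\scal + \tau\nabla\Delta f + \tau\Ric(\nabla f,\cdot)$.

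Next I would account for the weighting. By definition $\Div_f T = \Div T - T(\nabla f, \cdot)$ (contracting one slot against $\nabla f$ for a symmetric $2$-tensor), so
\begin{align*}
\Div_f\Big(\tau\Ric + \nabla^2(\tau f) - \tfrac{g}{2}\Big) = \tfrac{\tau}{2}\nabla\scal + \tau\nabla\Delta f + \tau\Ric(\nabla f,\cdot) - \tau\Ric(\nabla f,\cdot) - \tau\nabla^2 f(\nabla f,\cdot) + \tfrac12\nabla f,
\end{align*}
where the $-g/2$ term contributes $\tfrac12 g(\nabla f,\cdot) = \tfrac12\nabla f$. The two $\Ric(\nabla f,\cdot)$ terms cancel, and $\nabla^2 f(\nabla f,\cdot) = \tfrac12\nabla|\nabla f|^2$, so twice the above equals $\tau\nabla\scal + 2\tau\nabla\Delta f - \tau\nabla|\nabla f|^2 + \nabla f = \nabla\big(\tau(2\Delta f - |\nabla f|^2 + \scal) + f\big)$, which is exactly the left-hand side (the constant $-n$ drops under $\nabla$).

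I do not anticipate a genuine obstacle here — the identity is an algebraic consequence of the second Bianchi identity and the Bochner formula, and the only place to be careful is bookkeeping of signs and factors of $\tfrac12$ in the definition of $\Div_f$ on symmetric $2$-tensors and in the terms $\nabla^2 f(\nabla f,\cdot) = \tfrac12\nabla|\nabla f|^2$. The one conceptual point worth stating explicitly is that $\tau$ being a constant is what allows $\nabla^2(\tau f) = \tau\nabla^2 f$ and keeps the computation clean; for a $\tau$ depending on time (as in the conjugate heat equation applications) one would pick up extra terms, but the lemma as stated only asserts the fixed-$\tau$ version. I would present the computation in a single short displayed chain as above, prefaced by the statement of the two classical identities being used.
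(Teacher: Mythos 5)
Your computation is correct and coincides with the paper's own proof: both use the traced Bianchi identity $\Div(\Ric)=\tfrac12\nabla\scal$, the commutation identity $\Div(\nabla^2 f)=\nabla\Delta f+\Ric(\nabla f,\cdot)$, and the definition of $\Div_f$, after which the $\Ric(\nabla f,\cdot)$ terms cancel and $2\nabla^2 f(\nabla f,\cdot)=\nabla|\nabla f|^2$ closes the identity. No differences worth noting.
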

\begin{proof}
By the traced Bianchi identity $\di(\Ric)=\nabla \scal/2$ and the identity $\di(\nabla^2 f)=\nabla \Delta f+\Ric(\nabla f)$, we have
	\begin{align*}
2 \Div_f \lc \tau \Ric+\nabla^2(\tau f)-\frac{g}{2} \rc &=2 \di(\tau \Ric+\tau\nabla^2 f)-2(\tau \Ric+\tau\nabla^2 f-\frac{g}{2})(\nabla f)\\
		&=\tau \nabla \scal+2\tau \nabla\Delta f-2\tau\nabla^2 f(\nabla f)+\nabla f=\nabla \lc \tau(2\Delta f-|\nabla f|^2+\scal)+f-n \rc.
	\end{align*}
\end{proof}

\subsection*{Basic conventions and results for Ricci flows}\label{secprelRF}

Throughout this paper, we consider a closed Ricci flow solution $\XX=\{M^n,(g(t))_{t\in I}\}$, where $M$ is an $n$-dimensional closed manifold, $I \subset \R$ is a closed interval, and $(g(t))_{t\in I}$ is a family of smooth metrics on $M$ satisfying for all $t\in I$ the Ricci flow equation:
\begin{equation*}
	\partial_tg(t)=-2\Ric(g(t)).
\end{equation*}

Following notations in \cite{fang2025RFlimit}, we use $x^*\in\XX$ to denote a spacetime point $x^* \in M \times I$, and define $\t(x^*)$ to be its time component. We denote by $d_t$ the distance function and by $\mathrm{d}V_{g(t)}$ the volume form induced by the metric $g(t)$. For $x^*=(x,t)\in \XX$, we write $B_t(x,r)$ for the geodesic ball centered at $x$ with radius $r$ at time $t$. The Riemannian curvature, Ricci curvature, and scalar curvature are denoted by $\Rm$, $\Ric$, and $\scal$, respectively, with the time parameter omitted when there is no ambiguity. Throughout this paper, we use $\Psi(\ep)$ to denote a function satisfying $\Psi(\ep) \to 0$ if $\ep \to 0$.\index{$\Psi(\ep)$}

For the smooth closed Ricci flow $\XX$, we set $K(x,t;y,s)$ to be the heat kernel\index{$K(x,t;y,s)$}, which is determined by:
  \begin{align*}
  \begin{cases}
    &\square K(\cdot,\cdot;y,s) =0,\\
    &\square^{*} K(x,t;\cdot,\cdot) =0,   \\
    &\lim_{t\searrow s} K(\cdot,t;y,s)=\delta_{y},\\
    &\lim_{s\nearrow t} K(x,t;\cdot,s)=\delta_{x}.
        \end{cases}
  \end{align*}
where $\square:=\partial_t-\Delta$ and $\square^*:=-\partial_t-\Delta+\scal$. 

\begin{defn}\label{def:chkm}
	The \textbf{conjugate heat kernel measure} $\nu_{x^*;s}$\index{$\nu_{x^*;s}$} based at $x^*=(x, t)$ is defined as
	\begin{align*}
		\mathrm{d}\nu_{x^*;s}=\mathrm{d}\nu_{x,t;s}:=K(x,t;\cdot,s)\,\mathrm{d}V_{g(s)}.
	\end{align*}		
	It is clear that $\nu_{x^*;s}$ is a probability measure on $M$. If we set 
	\begin{align*}
		\mathrm{d}\nu_{x^*;s}=(4\pi(t-s))^{-n/2}e^{-f_{x^*}(\cdot,s)}\,\mathrm{d}V_{g(s)},
	\end{align*}	
	then the function $f_{x^*}$\index{$f_{x^*}$} is called the \textbf{potential function} at $x^*$ which satisfies:
	\begin{equation}\label{evolutionoff}
		-\partial_s f_{x^*}=\Delta f_{x^*}-|\nabla f_{x^*}|^2+\scal-\frac{n}{2(t-s)}.
	\end{equation}	
\end{defn}

Next, we recall the definitions of the Nash entropy and the $\WW$-entropy based at a spacetime point $x^*$.

\begin{defn}\label{defnentropy}
	The \textbf{Nash entropy} based at $x^*\in \XX$ is defined by
	\begin{equation*}
		\NN_{x^*}(\tau):= \int_M f_{x^*}\,\mathrm{d}\nu_{x^*;\t(x^*)-\tau}-\frac{n}{2},
	\end{equation*}\index{$\NN_{x^*}(\tau)$}
	for $\tau>0$ with $\t(x^*)-\tau \in I$, where $f_{x^*}$ is the potential function at $x^*$. 
	Moreover, the $\WW$-entropy based at $(x,t)$ is defined by
	\begin{equation*}
		\WW_{x^*}(\tau):= \int_M\tau(2\Delta f_{x^*}-|\na f_{x^*}|^2+\scal)+f_{x^*}-n \,\mathrm{d}\nu_{x^*;\t(x^*)-\tau}.
	\end{equation*}\index{$\WW_{x^*}(\tau)$}
\end{defn}

 The following proposition gives basic properties of Nash entropy (see \cite[Section 5]{bamler2020entropy}):
\begin{prop}\label{propNashentropy}
	For any $x^* \in \XX$ with $\t(x^*)-\tau \in I$ and $\scal(\cdot, \t(x^*)-\tau) \ge R_{\min}$, we have the following inequalities:
	\begin{enumerate}[label=\textnormal{(\roman{*})}]
		\item $\displaystyle -\frac{n}{2\tau}+R_{\min} \leq \frac{\dd}{\dd\tau}\NN_{x^*}(\tau)\leq 0$;
		\item $\displaystyle \frac{\dd}{\dd\tau}\lc\tau \NN_{x^*}(\tau)\rc=\WW_{x^*}(\tau)\leq 0$;
		\item $\displaystyle \frac{\dd^2}{\dd\tau^2}\lc\tau\NN_{x^*}(\tau)\rc=-2\tau\int_M \abs{\Ric+\nabla^2 f_{x^*}-\frac{1}{2\tau}g}^2\,\mathrm{d}\nu_{x^*;\t(x^*)-\tau}\leq 0$.
	\end{enumerate}	
\end{prop}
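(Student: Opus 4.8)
The plan is to reduce the whole proposition to two computations on the conjugate heat kernel density plus one standard asymptotic fact. Throughout write $t_0=\t(x^*)$, $f=f_{x^*}$, and $u=(4\pi\tau)^{-n/2}e^{-f}$ so that $\mathrm{d}\nu_{x^*;t_0-\tau}=u\,\mathrm{d}V_{g(t_0-\tau)}$, and use $\tau$ as the variable with $s=t_0-\tau$. The starting point is the transport identity: since $\square^* u=0$, for any spacetime function $\phi$ one has $\frac{\mathrm{d}}{\mathrm{d}\tau}\int_M\phi\,\mathrm{d}\nu_{x^*;s}=\int_M(\Delta\phi-\partial_s\phi)\,\mathrm{d}\nu_{x^*;s}$, obtained by differentiating under the integral, using $\partial_s(u\,\mathrm{d}V)=-\Delta u\,\mathrm{d}V$ (which is exactly $\square^*u=0$ together with $\partial_s\mathrm{d}V=-\scal\,\mathrm{d}V$), and integrating by parts on the closed manifold $M$.

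First I would prove the identity in (ii) and compute $\frac{\mathrm{d}}{\mathrm{d}\tau}\NN_{x^*}$ directly. Applying the transport identity to $\phi=f$ and inserting the evolution equation \eqref{evolutionoff} gives $\frac{\mathrm{d}}{\mathrm{d}\tau}\int f\,\mathrm{d}\nu=\int(2\Delta f-|\nabla f|^2+\scal)\,\mathrm{d}\nu-\frac{n}{2\tau}$; since $\int\Delta f\,\mathrm{d}\nu=\int|\nabla f|^2\,\mathrm{d}\nu$ after integrating by parts against $e^{-f}$, this equals $\int(|\nabla f|^2+\scal)\,\mathrm{d}\nu-\frac{n}{2\tau}$. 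Hence $\frac{\mathrm{d}}{\mathrm{d}\tau}\big(\tau\NN_{x^*}(\tau)\big)=\NN_{x^*}(\tau)+\tau\frac{\mathrm{d}}{\mathrm{d}\tau}\NN_{x^*}(\tau)=\int\big[\tau(|\nabla f|^2+\scal)+f-n\big]\,\mathrm{d}\nu$, which is exactly $\WW_{x^*}(\tau)$ (again using $\int\Delta f\,\mathrm{d}\nu=\int|\nabla f|^2\,\mathrm{d}\nu$ in the definition of $\WW$). The lower bound in (i) is then immediate, since $\int(|\nabla f|^2+\scal)\,\mathrm{d}\nu\ge\int\scal\,\mathrm{d}\nu\ge R_{\min}$ as $\nu_{x^*;s}$ is a probability measure and $\scal\ge R_{\min}$ at that time slice.

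Next I would establish (iii), which I expect to be the computational heart. Set $v:=\big[\tau(2\Delta f-|\nabla f|^2+\scal)+f-n\big]u$, so $\WW_{x^*}(\tau)=\int_M v\,\mathrm{d}V_{g(s)}$. Perelman's pointwise identity asserts $\square^* v=-2\tau\,\big|\Ric+\nabla^2 f-\frac{1}{2\tau}g\big|^2 u$; this follows from a direct but lengthy computation combining \eqref{evolutionoff} with the standard Ricci-flow evolutions of $\scal$, $\Delta f$ and $|\nabla f|^2$. Granting it, $\frac{\mathrm{d}}{\mathrm{d}\tau}\WW_{x^*}(\tau)=\int_M\square^* v\,\mathrm{d}V_{g(s)}=-2\tau\int_M\big|\Ric+\nabla^2 f-\frac{1}{2\tau}g\big|^2\,\mathrm{d}\nu\le 0$, where the first equality once more uses $\partial_s\mathrm{d}V=-\scal\,\mathrm{d}V$ and $\int_M\Delta v\,\mathrm{d}V=0$. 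Since $\WW_{x^*}(\tau)=\frac{\mathrm{d}}{\mathrm{d}\tau}\big(\tau\NN_{x^*}(\tau)\big)$, this is precisely the formula in (iii), and it shows $\WW_{x^*}$ is non-increasing.

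Finally I would close the loop for the remaining inequalities. Integrating (iii) and using the well-known fact $\lim_{\tau\to 0^+}\WW_{x^*}(\tau)=0$ (equivalently $\lim_{\tau\to 0^+}\NN_{x^*}(\tau)=0$: the near-diagonal asymptotics of the conjugate heat kernel, where the rescaled flow is Euclidean and the Gaussian has vanishing entropy) yields $\WW_{x^*}(\tau)\le 0$, giving (ii). Moreover, from $\frac{\mathrm{d}}{\mathrm{d}\tau}\big(\tau\NN_{x^*}\big)=\WW_{x^*}$ and $\lim_{\tau\to 0^+}\tau\NN_{x^*}(\tau)=0$ we get $\tau\NN_{x^*}(\tau)=\int_0^\tau\WW_{x^*}(\sigma)\,\mathrm{d}\sigma$, so $\NN_{x^*}(\tau)$ is the average over $[0,\tau]$ of the non-increasing function $\WW_{x^*}$, whence $\NN_{x^*}(\tau)\ge\WW_{x^*}(\tau)$; combined with $\frac{\mathrm{d}}{\mathrm{d}\tau}\NN_{x^*}(\tau)=\frac{1}{\tau}\big(\WW_{x^*}(\tau)-\NN_{x^*}(\tau)\big)$ (from (ii)), this gives $\frac{\mathrm{d}}{\mathrm{d}\tau}\NN_{x^*}(\tau)\le 0$, completing (i). The main obstacle is the pointwise identity $\square^* v=-2\tau|\Ric+\nabla^2 f-\frac{1}{2\tau}g|^2 u$ — a standard but intricate Ricci-flow calculation of Perelman — together with the $\tau\to 0$ entropy asymptotics; both are available in \cite{bamler2020entropy}, so in the write-up I would cite them rather than reproduce the details.
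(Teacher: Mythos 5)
Your proposal is correct and follows essentially the standard argument: the paper gives no proof of this proposition, citing \cite[Section 5]{bamler2020entropy}, and your derivation (the transport identity for the conjugate heat kernel measure, the identity $\tfrac{\dd}{\dd\tau}(\tau\NN_{x^*})=\WW_{x^*}$, Perelman's pointwise identity $\square^* v=-2\tau|\Ric+\nabla^2 f-\tfrac{1}{2\tau}g|^2u$, and the $\tau\to 0^+$ asymptotics to get the sign conditions) is exactly the route taken there. The averaging argument $\NN_{x^*}(\tau)=\tfrac{1}{\tau}\int_0^\tau\WW_{x^*}(\sigma)\,\mathrm{d}\sigma\ge\WW_{x^*}(\tau)$ to conclude $\tfrac{\dd}{\dd\tau}\NN_{x^*}\le 0$ is also the standard closing step, so no gaps remain beyond the two facts you correctly propose to cite.
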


As in \cite{fang2025RFlimit}, we have the following definition.

\begin{defn}\label{def:entropybound}
	A closed Ricci flow $\XX=\{M^n,(g(t))_{t\in I}\}$ is said to have entropy bounded below by $-Y$ at $x^*\in \XX$ if 
	\begin{align}\label{entropybd-Y}
		\inf_{\tau>0}\NN_{x^*}(\tau)\geq -Y,
	\end{align}
	where the infimum is taken over all $\tau>0$ whenever the Nash entropy $\NN_{x^*}(\tau)$ is well-defined. 
	
	Moreover, we say the Ricci flow $\XX$ has entropy bounded below by $-Y$ if \eqref{entropybd-Y} holds for all $x^*\in\XX$.
\end{defn} 

Next, we recall the following definition.

\begin{defn}\label{defncurvatureradius}
	For $x^*=(x,t)\in\XX$, the curvature radius $r_{\Rm}$ \index{$r_{\Rm}$}is defined as the supremum over all $r>0$ such that $|\Rm|\leq r^{-2}$ on the parabolic ball $B_t(x,r)\times [t-r^2,t+r^2] \cap I$. 
\end{defn}

Now we recall the following definition of $H$-center from \cite[Definition 3.10]{bamler2020entropy}.

\begin{defn} \label{defh-center}
	A point $(z,t) \in \XX$ is called an \textbf{$H$-center} of $x_0^* \in \XX$ for a constant $H>0$ if $t \in I$, $t<\t(x_0^*)$ and 
	\begin{align*}
		\Var_t(\delta_z,\nu_{x_0^*;t})\leq H(\t(x_0^*)-t).
	\end{align*}
	Here, $\Var_t$ denotes the variance between two probability measures with respect to $g(t)$; see \emph{\cite[Definition 2.1]{fang2025RFlimit}}. Note that by \emph{\cite[Corollary 3.8]{bamler2020entropy}}, an $H_n$-center, where $H_n:=(n-1)\pi^2/4+2$\index{$H_n$}, must exist for any $t<\t(x_0^*)$.
\end{defn}

We have the following result from \cite[Propositions 3.12, 3.13]{bamler2020entropy}.

\begin{prop}\label{existenceHncenter}
Any two $H$-centers $(z_1, t)$ and $(z_2, t)$ of $x_0^*$ satisfy $d_t(z_1, z_2) \le 2 \sqrt{H(\t(x_0^*)-t)}$. Moreover, if $(z,t)$ is an $H$-center of $x_0^* \in \XX$, then for any $L>0$, we have
	\begin{align*}
		\nu_{x^*_0;t}\lc B_t \lc z,\sqrt{LH (\t(x_0^*)-t)}\rc\rc\geq 1-L^{-1}.
	\end{align*}
\end{prop}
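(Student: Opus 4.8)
\textbf{Proof proposal for Proposition \ref{existenceHncenter}.}

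The plan is to prove the two assertions separately, both relying on the quantitative closeness of any $H$-center to the conjugate heat kernel measure $\nu_{x_0^*;t}$ and on Chebyshev's inequality applied to the variance. Throughout, write $\tau:=\t(x_0^*)-t>0$ and let $d:=d_t$ denote the distance at time $t$.

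For the first statement, I would start from the triangle-type inequality for the variance functional proved in \cite{fang2025RFlimit} (roughly, $\Var_t(\delta_{z_1},\delta_{z_2})^{1/2}\le \Var_t(\delta_{z_1},\nu_{x_0^*;t})^{1/2}+\Var_t(\nu_{x_0^*;t},\delta_{z_2})^{1/2}$, or the analogous statement with the $W_1$-distance). Since $\Var_t(\delta_{z_1},\delta_{z_2})=d_t(z_1,z_2)^2$, combining this with the defining bounds $\Var_t(\delta_{z_i},\nu_{x_0^*;t})\le H\tau$ for $i=1,2$ gives $d_t(z_1,z_2)\le 2\sqrt{H\tau}$ directly. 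The only thing to be careful about is which precise metric on probability measures satisfies the triangle inequality with the stated constants; the cleanest route is to use the fact that $z\mapsto\Var_t(\delta_z,\nu_{x_0^*;t})$ is, up to the additive term $\int d_t(\cdot,\bar z)^2$, controlled by the $L^2$-Wasserstein-type quantity, and then invoke the genuine triangle inequality for $W_1$ or for the square-root of the variance as recorded in \cite{fang2025RFlimit}.

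For the second statement, fix an $H$-center $(z,t)$ and let $A:=M\setminus B_t(z,\sqrt{LH\tau})$. On $A$ we have $d_t(x,z)^2\ge LH\tau$, so
\begin{align*}
\nu_{x_0^*;t}(A)\cdot LH\tau\le \int_A d_t(x,z)^2\,\mathrm{d}\nu_{x_0^*;t}(x)\le \int_M d_t(x,z)^2\,\mathrm{d}\nu_{x_0^*;t}(x)=\Var_t(\delta_z,\nu_{x_0^*;t})\le H\tau,
\end{align*}
where the final equality is just the definition of $\Var_t$ against a Dirac mass and the last inequality is the $H$-center condition. Dividing by $LH\tau$ yields $\nu_{x_0^*;t}(A)\le L^{-1}$, i.e. $\nu_{x_0^*;t}(B_t(z,\sqrt{LH\tau}))\ge 1-L^{-1}$, as claimed. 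This is nothing more than Chebyshev's inequality once the variance is interpreted correctly.

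I do not expect a serious obstacle here; the statement is essentially bookkeeping built on the definitions of $H$-center and variance. The only mild subtlety — and hence the "main obstacle," such as it is — is making sure the triangle inequality used in the first part is applied in the correct space of measures so that no spurious factor appears; this is why I would cite the precise variance/Wasserstein inequality from \cite[Section 2]{fang2025RFlimit} rather than reprove it. Everything else is an application of Chebyshev.
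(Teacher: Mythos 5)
Your argument is correct and is exactly the standard proof of this statement, which the paper does not reprove but imports from \cite[Propositions 3.12, 3.13]{bamler2020entropy}: the first part is the triangle inequality for $\Var_t^{1/2}$ applied to $\delta_{z_1}$, $\nu_{x_0^*;t}$, $\delta_{z_2}$ together with $\Var_t(\delta_{z_1},\delta_{z_2})=d_t^2(z_1,z_2)$, and the second part is Chebyshev's inequality for $\Var_t(\delta_z,\nu_{x_0^*;t})\le H\tau$. The only point worth pinning down is the one you flag yourself: the correct tool is the triangle inequality for the square root of the variance functional (not $W_1$), which is precisely what is recorded in the cited reference.
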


The following estimates are proved in \cite[Theorem 2.15]{fang2025RFlimit}, which have improved the corresponding results in \cite{bamler2020entropy} and will play important roles later.

\begin{thm}\label{thm:upper}
	Let $\XX=\{M^n,(g(t))_{t\in I}\}$ be a closed Ricci flow with $[t, t_0] \subset I$. Then for any $\epsilon>0,L>0$ and $(x_0,t_0) \in \XX$, the following statements hold.
		\begin{enumerate}[label=\textnormal{(\roman{*})}]
	\item We have
	\begin{align*}
		\nu_{x_0,t_0;t}\lc M\setminus B_t(z,L)\rc\leq C(n,\ep) \exp\lc-\frac{L^2}{(4+\epsilon)(t_0-t)} \rc. 
	\end{align*}
	
\item If $\scal(\cdot, t) \ge R_{\min}$, then for any $(y, t) \in \XX$,
	\begin{equation*}
		K(x_0,t_0;y,t)\leq \frac{C\lc n,R_{\min}(t_0-t),\ep\rc}{(t_0-t)^{n/2}}\exp\lc -\frac{d_t^2(z,y)}{(4+\epsilon)(t_0-t)}-\NN_{x_0,t_0}(t_0-t) \rc.
	\end{equation*}
\end{enumerate}	
Here, $(z,t)$ is any $H_n$-center of $(x_0,t_0)$.
\end{thm}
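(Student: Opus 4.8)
\textbf{Proof proposal for Theorem \ref{thm:upper}.}

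The plan is to reduce both statements to known Gaussian-type upper bounds for the conjugate heat kernel, sharpened via the Nash entropy, following the circle of ideas in \cite{bamler2020entropy} together with the improvements from \cite{fang2025RFlimit}. First I would recall the basic monotonicity: for a fixed base point $x_0^*=(x_0,t_0)$, the map $t\mapsto\NN_{x_0^*}(t_0-t)$ is monotone by Proposition \ref{propNashentropy}(i), and the $\WW$-entropy controls its derivative. These facts, combined with Perelman's differential Harnack inequality for the conjugate heat kernel and the on-diagonal bound $K(x_0,t_0;y,t)\le C(n)(t_0-t)^{-n/2}\exp(-\NN_{x_0^*}(t_0-t))$, are the analytic backbone. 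The key point is that the Nash entropy replaces the usual $\log$-volume normalization in the heat kernel bound, and this is exactly what makes the estimate scale-invariant and applicable in the collapsing-free regime of Ricci flow limit spaces.

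The second step is to localize in space. For part (i), I would fix an $H_n$-center $(z,t)$ of $(x_0,t_0)$ — which exists by the discussion after Definition \ref{defh-center} — and use the variance bound $\Var_t(\delta_z,\nu_{x_0^*;t})\le H_n(t_0-t)$ together with a concentration/integration argument: the measure $\nu_{x_0^*;t}$ has almost all its mass within distance $\sim\sqrt{H_n(t_0-t)}$ of $z$, and the Gaussian tail decay of $K$ upgrades this to the stated $\exp(-L^2/((4+\ep)(t_0-t)))$ bound on the complement of $B_t(z,L)$. The factor $4+\ep$ rather than $4$ is the price for absorbing lower-order error terms coming from the curvature of the flow (as opposed to the Euclidean model where the constant is exactly $4$); one typically splits $L^2$ as $(1-\eta)L^2+\eta L^2$, uses the sharp Gaussian exponent on the first piece and crude bounds on the second, then optimizes $\eta=\eta(\ep)$. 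For part (ii), I would combine the on-diagonal bound (with the Nash entropy normalization) with the pointwise Gaussian upper bound for $K$ in terms of $d_t^2(z,y)$; the scalar curvature lower bound $\scal\ge R_{\min}$ enters to control the potential function $f_{x_0^*}$ from below and to make the constant depend only on $R_{\min}(t_0-t)$ (a scale-invariant quantity) rather than on finer geometry.

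I expect the main obstacle to be tracking the sharp constant $4+\ep$ through the localization argument while keeping all remaining constants depending only on $n$, $\ep$, and $R_{\min}(t_0-t)$. Naive applications of the Gaussian bounds from \cite{bamler2020entropy} give a constant of the form $4+\Psi(\ep)$ but with the $\Psi$ potentially depending on auxiliary geometric quantities; the improvement claimed here — that the dependence collapses to just $(n,\ep)$ in part (i) and $(n,R_{\min}(t_0-t),\ep)$ in part (ii) — requires the more careful heat kernel analysis of \cite[Theorem 2.15]{fang2025RFlimit}, which I would invoke directly. A secondary technical point is the interchange between the distance to an arbitrary $H_n$-center and the distance to a specific one: by Proposition \ref{existenceHncenter} any two $H_n$-centers at time $t$ are within $2\sqrt{H_n(t_0-t)}$ of each other, so changing centers only perturbs $L$ by an additive $O(\sqrt{t_0-t})$ term, which is harmless after adjusting $\ep$. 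Once these constant-chasing issues are handled, both inequalities follow by assembling the monotonicity of the Nash entropy, the Gaussian tail estimates, and the center concentration bound.
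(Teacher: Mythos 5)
The paper does not prove this theorem at all: it is imported verbatim from \cite[Theorem 2.15]{fang2025RFlimit}, with a remark that it sharpens the corresponding bounds of \cite{bamler2020entropy}. Your proposal ultimately does the same thing — the heuristic sketch of Gaussian tails, $H_n$-center concentration, and Nash-entropy normalization is a reasonable account of the ideas behind that result, but the decisive step (getting the constant $4+\ep$ with dependence only on $n,\ep$ and $R_{\min}(t_0-t)$) is, as you acknowledge, obtained by invoking \cite[Theorem 2.15]{fang2025RFlimit} directly, which is exactly what the paper does.
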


We also need the following two-sided pseudolocality from \cite[Theorem 10.1]{perelman2002entropy} and \cite[Theorem 2.47]{bamler2020structure}:

\begin{thm}[Two-sided pseudolocality theorem]\label{thm:twoside}
Let $\flow$ be a closed Ricci flow. For any $\alpha > 0$, there is an $\ep (n, \alpha) > 0$ such that the following holds.

Given $x_0^*=(x_0, t_0) \in \XX$ and $r > 0$ with $[t_0-r^2,t_0] \subset I$, if $|B_{t_0}(x_0,r)|_{t_0} \geq \alpha r^n$ and $|\Rm| \leq (\alpha r)^{-2}$ on $B_{t_0}(x_0,r)$, then
\begin{align*}
r_{\Rm}(x_0^*) \ge \ep r.
\end{align*}
\end{thm}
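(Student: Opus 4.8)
The final statement to prove is the two-sided pseudolocality theorem (Theorem \ref{thm:twoside}): if at time $t_0$ the ball $B_{t_0}(x_0,r)$ is almost Euclidean — in the sense of a lower volume bound $|B_{t_0}(x_0,r)|_{t_0}\ge \alpha r^n$ together with a curvature bound $|\Rm|\le(\alpha r)^{-2}$ — then the curvature radius at $x_0^*$ is at least $\ep r$ for some $\ep=\ep(n,\alpha)$. This is a known result, so the proof I propose is really an assembly of Perelman's pseudolocality with a backward (older-times) curvature estimate.

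\medskip
\textbf{Plan.}
First, by parabolic rescaling I may assume $r=1$, so the hypotheses read $|B_{t_0}(x_0,1)|_{t_0}\ge\alpha$ and $|\Rm|\le\alpha^{-2}$ on $B_{t_0}(x_0,1)$, and I must produce a definite $\ep=\ep(n,\alpha)$ with $|\Rm|\le\ep^{-2}$ on $B_t(x_0,\ep)\times[t_0-\ep^2,t_0+\ep^2]\cap I$. The forward-in-time half ($t\ge t_0$) is precisely Perelman's pseudolocality theorem \cite[Theorem 10.1]{perelman2002entropy}: a lower bound on the volume of unit balls together with an almost-nonnegativity of scalar curvature (here we even have a two-sided bound $|\Rm|\le\alpha^{-2}$, which is far stronger than the scalar-curvature hypothesis Perelman needs) yields a small $\delta_0(n,\alpha)$ and the curvature estimate $|\Rm|(x,t)\le (\delta_0)^{-2}+ \tfrac{1}{t-t_0}$ on $B_t(x_0,\delta_0)$ for $t\in(t_0,t_0+\delta_0^2]$; combined with the continuity at $t_0$ this gives $|\Rm|\le \ep_1^{-2}$ on $B_t(x_0,\ep_1)\times[t_0,t_0+\ep_1^2]$ for a suitable $\ep_1(n,\alpha)$. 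The backward half ($t\le t_0$) is the content of Bamler's \cite[Theorem 2.47]{bamler2020structure}: roughly, the almost-Euclidean hypothesis at $t_0$ propagates backward as well (by a version of pseudolocality applied to the reverse-time heat flow together with the curvature bound at $t_0$), yielding $|\Rm|\le\ep_2^{-2}$ on $B_t(x_0,\ep_2)\times[t_0-\ep_2^2,t_0]$ for some $\ep_2(n,\alpha)$. Taking $\ep=\min\{\ep_1,\ep_2\}$ finishes the argument.

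\medskip
\textbf{Key steps in order.}
(1) Rescale to normalize $r=1$ and record the rescaled hypotheses. (2) Quote Perelman's pseudolocality to obtain the forward curvature bound on a parabolic neighborhood $B_t(x_0,\ep_1)\times[t_0,t_0+\ep_1^2]$, using only the volume lower bound and $\scal\ge -\alpha^{-2}$ (a weak consequence of $|\Rm|\le\alpha^{-2}$). (3) Quote the backward pseudolocality statement (Bamler) to obtain $|\Rm|\le\ep_2^{-2}$ on $B_t(x_0,\ep_2)\times[t_0-\ep_2^2,t_0]$; here one needs the two-sided curvature bound at $t_0$, not merely a scalar bound, which is why the theorem is stated with $|\Rm|\le(\alpha r)^{-2}$. (4) A small bookkeeping step to reconcile the time-slices: near $t_0$ one uses that $g(t)$ and $g(t_0)$ are uniformly equivalent on the relevant ball (distances distort by at most a factor $2$, say) so that $B_t(x_0,\ep)\subset B_{t_0}(x_0,1)$ for $t$ close enough to $t_0$, allowing the curvature bounds on $g(t_0)$-balls and $g(t)$-balls to be compared. (5) Set $\ep=\min\{\ep_1,\ep_2\}$ (after the above bookkeeping) and undo the rescaling.

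\medskip
\textbf{Main obstacle.}
The only genuine difficulty is the backward-in-time estimate, step (3): pseudolocality is intrinsically a forward statement, and controlling curvature at \emph{earlier} times from data at $t_0$ requires the more delicate argument of Bamler (via heat-kernel/$\mathbb F$-distance monotonicity and a backward pseudolocality principle). Everything else is rescaling, quoting Perelman, and routine metric-equivalence bookkeeping near $t_0$; since both ingredients are cited as black boxes from \cite{perelman2002entropy} and \cite{bamler2020structure}, the proof is short once those are invoked.
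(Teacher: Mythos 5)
Your proposal is correct and coincides with what the paper actually does: Theorem \ref{thm:twoside} is stated in the paper without proof, simply as a combination of Perelman's forward pseudolocality \cite[Theorem 10.1]{perelman2002entropy} and Bamler's backward pseudolocality \cite[Theorem 2.47]{bamler2020structure}, which is exactly the assembly (forward half from Perelman, backward half from Bamler, plus rescaling and metric-comparison bookkeeping) that you describe.
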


\subsection*{Ricci flow limit spaces}

In this subsection, we review the construction of noncollapsed Ricci flow limit spaces and their key properties, as developed in \cite{fang2025RFlimit}.

As in \cite{fang2025RFlimit}, we consider the moduli space $\MM(n, Y, T)$\index{$\MM(n,Y,T)$} of closed Ricci flows defined as follows:

\begin{defn}[Moduli space]
	For fixed constants $T \in (0, +\infty]$ and $Y>0$, the moduli space $\MM(n, Y, T)$ consists of all $n$-dimensional closed Ricci flows $\XX=\{M^n,(g(t))_{t \in \III^{++}}\}$ satisfying
	\begin{enumerate}[label=\textnormal{(\roman{*})}]
		\item $\XX$ is defined on $\III^{++}:=[-T, 0]$.
		
		\item $\XX$ has entropy bounded below by $-Y$ (cf. Definition \ref{def:entropybound}).
	\end{enumerate}
\end{defn}

In addition, we set 
	\begin{align*}
\III^+:=[-0.99T,0], \quad \III:=[-0.98 T,0], \quad \III^-:=(-0.98 T,0].
	\end{align*}
As noted in \cite{fang2025RFlimit}, these intervals can generally be chosen as $[-(1-\sigma)T, 0]$, $[-(1-2\sigma)T,0]$, and $(-(1-2\sigma)T,0]$, respectively, where $\sigma>0$ is an arbitrarily small parameter. For simplicity, we fix $\sigma=1/100$ in the present setting.

For any $\XX \in \MM(n, Y, T)$, we have the following definition of the spacetime distance on $M \times \III^+$:
\begin{defn}\label{defnd*distance}
	For any $x^*=(x,t), y^*=(y,s) \in M \times \III^+$ with $s \le t$, we define
	\begin{align*}
		d^*(x^*,y^*):=\inf_{r \in [\sqrt{t-s}, \sqrt{t+0.99 T})} \left\{r \mid d_{W_1}^{t-r^2} (\nu_{x^*;t-r^2},\nu_{y^*;t-r^2}) \le \ep_0 r \right\}.
	\end{align*}\index{$d^*$}
	If no such $r$ exists, we define $d^*(x^*,y^*):=\ep_0^{-1} d_{W_1}^{-0.99 T} (\nu_{x^*;-0.99 T},\nu_{y^*;-0.99 T})$. 
\end{defn}
Here, $\ep_0 \in (0, 1]$ is a small constant depending on $n$ and $Y$ (see \cite[Definition 3.3]{fang2025RFlimit}). By \cite[Lemma 3.7]{fang2025RFlimit}, $d^*$ defines a distance function on $M \times \III^+$, which coincides with the standard topology on $M \times \III^+$ (see \cite[Corollary 3.11]{fang2025RFlimit}). 

The following weak compactness theorem is proved in \cite[Theorem 1.3]{fang2025RFlimit}.

\begin{thm}[Weak compactness]\label{thm:intro1}
	Given any sequence $\XX^i=\{M_i^n,(g_i(t))_{t  \in \III^{++}}\} \in \MM(n, Y, T)$ with base points $p_i^* \in M_i \times \III$ \emph{(}when $T=+\infty$, we additionally assume $\limsup_{i \to \infty} \t_i(p_i^*)>-\infty$\emph{)}, by taking a subsequence if necessary, we obtain the pointed Gromov--Hausdorff convergence
	\begin{align*}
		(M_i \times \III, d^*_i, p_i^*,\t_i) \xrightarrow[i \to \infty]{\quad \mathrm{pGH} \quad} (Z, d_Z, p_{\infty},\t),
	\end{align*}
	where $d^*_i$ denotes the restriction of the $d^*$-distance on $M_i \times \III$, and $\t_i$ is the standard time-function on $M_i \times \III$. The limit space $(Z, d_Z,\t)$ is a complete, separable, locally compact metric space coupled with a $2$-H\"older continuous time-function $\t:Z \to \III$.
\end{thm}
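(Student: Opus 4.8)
The plan is to verify Gromov's precompactness criterion for pointed metric spaces and then read off the claimed structure of the limit. First I would record two elementary facts: each $(M_i\times\III,d_i^*)$ is compact and $d_i^*$ induces the product topology by \cite[Corollary 3.11]{fang2025RFlimit}, and directly from Definition \ref{defnd*distance} one has $|\t_i(x^*)-\t_i(y^*)|=|t-s|\le d_i^*(x^*,y^*)^2$, so the time functions $\t_i$ are uniformly $2$-H\"older. By Gromov's theorem it then suffices to establish a uniform total boundedness estimate: for every $R>0$ and $\ep>0$ there is $N=N(n,Y,T,R,\ep)$, independent of $i$, such that $B^{d_i^*}(p_i^*,R)$ is covered by at most $N$ balls of radius $\ep$. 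Granting this, a diagonal argument over a countable dense set of radii together with the Gromov--Hausdorff embedding into $\ell^\infty$ yields a subsequence converging in the pointed Gromov--Hausdorff sense to some $(Z,d_Z,p_\infty)$; the uniform covering bound forces $Z$ to be complete, locally compact, and separable, and the uniformly $2$-H\"older time functions pass to a $2$-H\"older limit $\t\colon Z\to\III$. When $T=+\infty$ the hypothesis $\limsup_i\t_i(p_i^*)>-\infty$ confines all relevant spacetime points to a fixed compact time window, and the same reasoning applies.

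For the uniform covering bound I would work entirely at the level of conjugate heat kernel measures, using the estimates recalled above. Fix a reference time $s_0:=\max\{\t_i(p_i^*)-D^2,-0.99T\}$ for a large constant $D=D(n,Y,T,R)$, so that $s_0\in[-0.99T,\t_i(p_i^*))$, and note every point of $B^{d_i^*}(p_i^*,R)$ has time within $R^2$ of $\t_i(p_i^*)$. To each $x^*$ in that ball attach the conjugate heat kernel probability measure $\nu_{x^*;s_0}$ on $(M_i,g_i(s_0))$. Two quantitative inputs are then available. First, \emph{concentration}: by Theorem \ref{thm:upper} and Proposition \ref{existenceHncenter}, for every $\eta>0$ the measure $\nu_{x^*;s_0}$ carries mass at least $1-\eta$ in a ball $B_{s_0}(z_i,L_\eta)$, where $z_i$ is an $H_n$-center of $p_i^*$ at time $s_0$ and $L_\eta=L_\eta(n,Y,T,R,\eta)$, and within $B^{d_i^*}(p_i^*,R)$ the distance $d_i^*$ controls the Wasserstein distance of conjugate heat kernels at every time $\ge s_0$ (a direct consequence of Definition \ref{defnd*distance} and the monotonicity of $W_1$ along the conjugate heat flow, after reducing to $R$ below a definite multiple of $\sqrt{T}$ so that the capped branch of the definition does not occur). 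Second, \emph{noncollapsing}: the entropy lower bound yields a uniform Perelman-type estimate $|B_{s_0}(x,\rho)|_{s_0}\ge c(n,Y,T)\rho^n$ below a fixed scale, complemented by the Gaussian upper density bound of Theorem \ref{thm:upper}(ii).

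The main obstacle, as I see it, is to convert these inputs into a covering number \emph{without} any Ricci lower bound on the slices $(M_i,g_i(s_0))$: the assignment $x^*\mapsto\nu_{x^*;s_0}$ is Lipschitz (up to the constant $\ep_0^{-1}$) into the Wasserstein space but is not bi-Lipschitz, so $d_i^*$-separated points need not give $W_1$-separated measures at the single slice $s_0$. To get around this I would exploit the dyadic-scale structure of $d^*$: if $x^*,y^*\in B^{d_i^*}(p_i^*,R)$ satisfy $d_i^*(x^*,y^*)\ge\ep$, then by Definition \ref{defnd*distance} there is a scale $r_k=2^{-k}R\ge c\ep$ at which $d_{W_1}^{\,t-r_k^2}(\nu_{x^*;t-r_k^2},\nu_{y^*;t-r_k^2})\gtrsim\ep_0 r_k$, while at that time, after rescaling by $r_k$, both measures are concentrated in a ball of controlled size whose volume is bounded below by the noncollapsing input; hence at each of the finitely many scales $r_k\in[c\ep,R]$ only a bounded number of $\ep$-separated points can occur, and summing over scales yields the required $N=N(n,Y,T,R,\ep)$. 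This is the adaptation to the $d^*$-setting of Bamler's compactness argument for metric flows; equivalently, one may phrase it as a Bishop--Gromov-type volume comparison for $d^*$-balls measured against the conjugate heat kernel reference measures, which follows from the almost-monotonicity of the Nash entropy in Proposition \ref{propNashentropy} and gives uniform doubling, hence total boundedness, directly. All the ingredients are developed in \cite{bamler2020entropy} and the earlier sections of \cite{fang2025RFlimit}.
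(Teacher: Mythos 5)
This paper does not prove Theorem \ref{thm:intro1}: it is recalled verbatim from \cite[Theorem 1.3]{fang2025RFlimit}, where the argument runs through Bamler's $\mathbb F$-compactness theory and his covering theorem for $P^*$-parabolic balls. So your proposal has to be measured against that external proof. Your reduction is the right one — Gromov precompactness, the uniform $2$-H\"older bound $|\t_i(x^*)-\t_i(y^*)|\le d_i^*(x^*,y^*)^2$ read off from Definition \ref{defnd*distance}, and a uniform total boundedness estimate for $d_i^*$-balls — and the concentration input via Theorem \ref{thm:upper} and Proposition \ref{existenceHncenter} is correctly identified. You also correctly locate the main obstacle (the map $x^*\mapsto\nu_{x^*;s_0}$ is Lipschitz but not bi-Lipschitz into Wasserstein space). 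The problem is that your resolution of that obstacle does not close.

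Two concrete gaps. First, the asserted ``uniform Perelman-type estimate $|B_{s_0}(x,\rho)|_{s_0}\ge c(n,Y,T)\rho^n$ below a fixed scale'' is not available: Perelman/Bamler noncollapsing gives a volume lower bound only at scales below the curvature scale (e.g.\ under $\scal\le\rho^{-2}$ on the ball), and the curvature of $g_i(s_0)$ is not uniformly bounded in $i$. Without it, the covering number of $B_{s_0}(z_i,L_\eta)$ at scale $\delta$ is not uniformly controlled, and the step from ``pairwise $W_1$-separated measures concentrated on a common ball'' to ``boundedly many'' — which in any case needs a covering estimate for the Wasserstein space over the slice, not merely a volume lower bound — has nothing to rest on. Second, the fallback claim that a Bishop--Gromov-type comparison for $d^*$-balls ``follows from the almost-monotonicity of the Nash entropy \dots and gives uniform doubling, hence total boundedness, directly'' is an overreach: Proposition \ref{propNashentropy} does not directly yield doubling of $d^*$-balls; that statement is essentially the content of Bamler's $P^*$-parabolic-ball covering theorem in \cite{bamler2020structure}, whose proof is a substantial induction on scales and is precisely the step your argument needs to supply. (Smaller issues: $d_i^*(x^*,y^*)\le r$ controls $d_{W_1}$ only at times $\le\t_i(x^*)-r^2$, hence at $s_0$ and earlier — not ``at every time $\ge s_0$''; and since different separated pairs are separated at times depending on the pair, the counting must be organized by time bins of width about $\ep^2$ rather than by a union over dyadic spatial scales.) In short, the reduction to uniform total boundedness is fine, but total boundedness \emph{is} the theorem, and the derivation you offer for it does not go through as written.
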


The limit space $(Z, d_Z,\t)$ is referred to as a \textbf{noncollapsed Ricci flow limit space} over $\III$. Locally, the limit space is closely related to the $\F$-limit developed by Bamler in \cite{bamler2023compactness}. More precisely, for any $z \in Z$, we take a sequence of points $z_i^* \in M_i \times \III$ converging to $z$ in the Gromov--Hausdorff sense. By the theory of $\F$-convergence (see \cite{bamler2023compactness}), there exists a correspondence $\CF$ such that 
\begin{equation}\label{thm:intro2}
	(\XX^i, (\nu_{z_i^*;t})_{t \in [-T, \t^i(z_i^*)]}) \xrightarrow[i \to \infty]{\quad \IF, \CF\quad} (\XX^z, (\nu_{z;t})_{t \in  [-T, \t(z)]})
\end{equation}
such that the metric flow $\XX^z$ is future continuous for all $t \in [-T, \t(z)]$, except possibly at $t=-0.99 T$, where we require that the convergence \eqref{thm:intro2} is uniform. The metric flow $\XX^z$\index{$\XX^z$} is referred to as the \textbf{metric flow associated with $z$} with time-function denoted by $\t^z$. On $\XX^z_{\III^+}$, one can define a spacetime function $d_z^*$ as Definition \ref{defnd*distance}. In general, $d_z^*$ is only a pseudo-distance on $\XX^z_{\III^+}$. However, by passing to the corresponding quotient space $\widetilde{\XX^{z}_\III}$, one obtains an isometric embedding into the limit space $Z$ (cf. \cite[Theorem 1.4]{fang2025RFlimit}):

\begin{thm}\label{thm:iden}
	For any $z \in Z$, there exists an isometric embedding
	\begin{align*}
		\iota_z: (\widetilde{\XX^{z}_\III}, d^*_z) \longrightarrow (Z, d_Z)
	\end{align*}
	such that $\iota_z(z)=z$ and $\t \circ \iota_z=\t^z$, where $\t^z$ is the time-function on $\widetilde{\XX^{z}_\III}$. Moreover, for any $y_i^* \in \XX^i_\III$ and $y_{\infty} \in \XX^{z}_\III$, $y_i^*$ converge to $y_{\infty}$ within $\CF$ if and only if $y_i^* \to \iota_z(\tilde y_{\infty})$ in the Gromov--Hausdorff sense, where $\tilde y_{\infty}$ is the quotient of $y_{\infty}$ from $\XX^{z}_\III$ to $\widetilde{\XX^{z}_\III}$.	
\end{thm}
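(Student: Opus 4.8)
The plan is to construct $\iota_z$ by transporting spacetime points of the approximating flows $\XX^i$ simultaneously through the two limit procedures at hand --- the $\F$-limit over the correspondence $\CF$, which produces $\XX^z$, and the pointed Gromov--Hausdorff limit of $(M_i\times\III,d^*_i)$, which produces $Z$ --- and by showing these two procedures are compatible. The single analytic input that makes this work is the continuity of conjugate heat kernels under $\F$-convergence: if $y_i^*,w_i^*\in\XX^i_\III$ converge within $\CF$ to $y_\infty,w_\infty\in\XX^z_\III$, then $\nu_{y_i^*;s}\to\nu_{y_\infty;s}$ and $\nu_{w_i^*;s}\to\nu_{w_\infty;s}$ within $\CF$ (in $W_1$ on each time-slice), and since both $d^*$ and $d^*_z$ are built solely from the $W_1$-distances between conjugate heat kernels at the times $t-r^2$ through the infimum in Definition \ref{defnd*distance}, this yields the distance-convergence statement
\[
d^*_i(y_i^*,w_i^*)\longrightarrow d^*_z(y_\infty,w_\infty).
\]
I would isolate this as a preliminary lemma. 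Its proof requires knowing that the functions $r\mapsto d_{W_1}^{t_i-r^2}(\nu_{y_i^*;t_i-r^2},\nu_{w_i^*;t_i-r^2})$ converge well enough (using their monotonicity and equicontinuity in $r$ together with the uniform Gaussian bounds of Theorem \ref{thm:upper}) that the infimum defining $d^*$ passes to the limit, including the degenerate case in which no admissible $r$ exists; this is where the good properties of $\XX^z$ furnished by $\F$-convergence --- future-continuity on $\III^+$, $H_n$-concentration, noncollapsedness --- enter.

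Granting this lemma, I define $\iota_z$ as follows. Given $\tilde y_\infty\in\widetilde{\XX^{z}_\III}$, choose a representative $y_\infty\in\XX^z_\III$; since (up to the correspondence) the time-slices of the $\XX^i$ are dense in those of $\XX^z$, there is a sequence $y_i^*\in\XX^i_\III$ converging to $y_\infty$ within $\CF$. Applying the lemma with $w_i^*=z_i^*$ (which converges to $z$ within $\CF$ and to $z$ in $Z$) gives $\sup_i d^*_i(y_i^*,z_i^*)<\infty$, so the $y_i^*$ lie in a bounded region of $(M_i\times\III,d^*_i)$; by the given pointed Gromov--Hausdorff convergence and local compactness of $Z$, a subsequence of $y_i^*$ converges in $Z$ to some $y$, and I set $\iota_z(\tilde y_\infty):=y$. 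Well-definedness --- independence of the subsequence, of the sequence $y_i^*$, and of the representative $y_\infty$ --- follows from the single observation that if $y_i^*\to y_\infty$ and $\hat y_i^*\to\hat y_\infty$ within $\CF$ with $d^*_z(y_\infty,\hat y_\infty)=0$, and $y_i^*\to y$, $\hat y_i^*\to\hat y$ in $Z$ along subsequences, then $d_Z(y,\hat y)=\lim d^*_i(y_i^*,\hat y_i^*)=d^*_z(y_\infty,\hat y_\infty)=0$, so $y=\hat y$; in particular the whole sequence $y_i^*$ converges in $Z$.

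The remaining assertions then come out directly. Isometry: for representatives $y_\infty,w_\infty$ with $\CF$-approximations $y_i^*,w_i^*$, one has $d_Z(\iota_z(\tilde y_\infty),\iota_z(\tilde w_\infty))=\lim d^*_i(y_i^*,w_i^*)=d^*_z(y_\infty,w_\infty)$, which is by definition the quotient distance on $\widetilde{\XX^{z}_\III}$; hence $\iota_z$ is injective. Taking $y_\infty=z$ with its defining sequence $z_i^*$ gives $\iota_z(z)=z$, and since $\t_i\to\t$ under the pointed Gromov--Hausdorff convergence of Theorem \ref{thm:intro1} while $\t_i(y_i^*)\to\t^z(y_\infty)$ along any $\CF$-approximation, we get $\t\circ\iota_z=\t^z$. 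For the final equivalence, the implication ``$y_i^*\to y_\infty$ within $\CF$ $\Rightarrow$ $y_i^*\to\iota_z(\tilde y_\infty)$ in the Gromov--Hausdorff sense'' is precisely the well-definedness argument applied to this sequence; conversely, if $y_i^*\to\iota_z(\tilde y_\infty)$ in $Z$ then the $y_i^*$ are bounded, a subsequence converges within $\CF$ to some $y_\infty'\in\XX^z_\III$, the forward implication gives $y_i^*\to\iota_z(\tilde y_\infty')$ in $Z$, injectivity forces $\tilde y_\infty'=\tilde y_\infty$, and as every subsequential $\CF$-limit of $y_i^*$ then represents $\tilde y_\infty$ one concludes $y_i^*\to y_\infty$ within $\CF$ (this last step understood modulo the identification defining $\widetilde{\XX^{z}_\III}$).

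The main obstacle is the distance-convergence lemma itself: showing the conjugate-heat-kernel-defined quantity $d^*$ is stable under $\F$-convergence, and in particular that the infimum over scales $r$ in Definition \ref{defnd*distance}, with all its threshold and degenerate cases, passes to the limit. This forces one to combine Bamler's compactness theory for metric flows with the uniform heat-kernel and $H_n$-center estimates from \cite{fang2025RFlimit} and the future-continuity of $\XX^z$ on $\III^+$. A secondary technical point is checking that $\F$-limits of $\CF$-convergent spacetime sequences in $\XX^i_\III$ stay inside $\XX^z_\III$ --- no loss of points or escape of mass --- which uses noncollapsedness.
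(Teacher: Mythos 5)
The paper does not actually prove Theorem \ref{thm:iden}: it is recalled in the preliminaries from \cite[Theorem 1.4]{fang2025RFlimit}, so there is no in-paper argument to compare yours against. That said, your outline is the natural proof and, structurally, surely the intended one: reduce everything to the stability of $d^*$ under convergence within the correspondence, define $\iota_z$ by a diagonal approximation, and then read off well-definedness, the isometry property, $\iota_z(z)=z$, $\t\circ\iota_z=\t^z$, and the two-way equivalence of the convergence notions from that one lemma. The logic of the deductions you draw from the lemma is correct (in particular the use of injectivity plus subsequence extraction for the ``only if'' direction at the end).

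The reservation is that essentially all of the analytic content sits inside the deferred lemma $d_i^*(y_i^*,w_i^*)\to d_z^*(y_\infty,w_\infty)$, which you assert with a list of ingredients rather than establish. If you write it out, the key structural point to make explicit is the following: for fixed endpoints, $r\mapsto d_{W_1}^{t-r^2}(\nu_{y^*;t-r^2},\nu_{w^*;t-r^2})$ is non-increasing (monotonicity of $W_1$ under the conjugate heat flow) while $r\mapsto \ep_0 r$ is strictly increasing, so the admissible set in Definition \ref{defnd*distance} is a half-interval whose left endpoint is the unique crossing of a non-increasing function with a strictly increasing one; such a crossing point is continuous under locally uniform convergence of the $W_1$-functions, and the fallback value $\ep_0^{-1}d_{W_1}^{-0.99T}$ matches the crossing value continuously when the crossing occurs at $r=\sqrt{t+0.99T}$, so the two cases of the definition glue. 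The locally uniform convergence of the $W_1$-functions in $r$ is what requires the timewise (here: uniform, by the stipulation at $t=-0.99T$) convergence of conjugate heat kernels within $\CF$ together with the Gaussian concentration of Theorem \ref{thm:upper}. The second point you flag --- that a $d^*$-bounded sequence $y_i^*$ admits a subsequential limit \emph{inside} $\XX^z_\III$ rather than losing mass --- is genuinely needed for the converse direction of the final assertion and does rest on the $H_n$-center bounds and noncollapsedness; it is not automatic from $\F$-convergence alone. With those two points supplied, the argument is complete.
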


The limit space $Z$ contains a regular part $\RR$, which is a dense open subset of $Z_{\III^-}$ (see \cite[Corollary 5.7]{fang2025RFlimit}) and carries the structure of a Ricci flow spacetime $(\RR, \t, \partial_\t, g^Z)$. On this regular part, the convergence described in Theorem \ref{thm:intro1} is smooth, in the following sense (cf. \cite[Theorem 1.5]{fang2025RFlimit}):

\begin{thm}[Smooth convergence]\label{thm:intro3}
	There exists an increasing sequence $U_1 \subset U_2 \subset \ldots \subset \RR$ of open subsets with $\bigcup_{i=1}^\infty U_i = \RR$, open subsets $V_i \subset M_i \times \III$, time-preserving diffeomorphisms $\phi_i : U_i \to V_i$ and a sequence $\ep_i \to 0$ such that the following holds:
	\begin{enumerate}[label=\textnormal{(\alph{*})}]
		\item We have
		\begin{align*}
			\Vert \phi_i^* g^i - g^Z \Vert_{C^{[\ep_i^{-1}]} ( U_i)} & \leq \ep_i, \\
			\Vert \phi_i^* \partial_{\t_i} - \partial_{\t} \Vert_{C^{[\ep_i^{-1}]} ( U_i)} &\leq \ep_i,
		\end{align*}
		where $g^i$ is the spacetime metric induced by $g_i(t)$, and $\partial_{\t_i}$ is the standard time vector field induced by $\t_i$.
		
		\item Let $y \in \RR$ and $y_i^* \in M_i \times \III$. Then $y_i^* \to y$ in the Gromov--Hausdorff sense if and only if $y_i^* \in V_i$ for large $i$ and $\phi_i^{-1}(y_i^*) \to y$ in $\RR$.
		
		\item For $U_i^{(2)}=\{(x,y) \in U_i \times U_i \mid \t(x)> \t(y)+\ep_i\}$, $V_i^{(2)}=\{(x^*,y^*) \in V_i \times V_i \mid \t_i(x^*)> \t_i(y^*)+\ep_i\}$ and $\phi_i^{(2)}:=(\phi_i, \phi_i): U_i^{(2)} \to V_i^{(2)}$, we have
		\begin{align*}
			\Vert  (\phi_i^{(2)})^* K^i-K_Z \Vert_{C^{[\ep_i^{-1}]} ( U_i^{(2)})} \le \ep_i,
		\end{align*}	
		where $K^i$ and $K_Z$\index{$K_Z$} denote the heat kernels on $(M_i \times \III, g_i(t))$ and $(\RR, g^Z)$, respectively.
		
		\item If $z_i^* \in M_i \times \III$ converge to $z \in Z$ in Gromov--Hausdorff sense, then
\begin{align*}
K^i(z_i^*;\phi_i(\cdot)) \xrightarrow[i \to \infty]{C^{\infty}_\mathrm{loc}} K_Z(z;\cdot) \quad \text{on} \quad \RR_{(-\infty,\t(z))}.
\end{align*}	

		\item For each $t \in \III$, there are at most countable connected components of the time-slice $\RR_t$.
	\end{enumerate}
\end{thm}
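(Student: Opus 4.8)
The plan is to realize the regular part $\RR$ as a locus of uniformly bounded geometry inside the approximating flows and then to run a Cheeger--Gromov--Hamilton compactness argument, identifying the resulting smooth limit with the abstractly-defined Ricci flow spacetime $(\RR,\t,\partial_\t,g^Z)$. This is precisely the content of \cite[Theorem 1.5]{fang2025RFlimit}; I outline how I would carry it out.

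First I would prove local curvature bounds along the sequence: for any $z\in\RR$ and any $z_i^*\in M_i\times\III$ with $z_i^*\to z$ in the Gromov--Hausdorff sense, the curvature radius $r_{\Rm}(z_i^*)$ is bounded below by a positive constant depending only on $z$. Indeed, $\RR$ is, by its construction in \cite{fang2025RFlimit}, the set of points of $Z_{\III^-}$ admitting a parabolic neighborhood on which the limiting metric flow is a smooth Ricci flow spacetime; combining the Nash-entropy $\ep$-regularity theorem of Bamler \cite{bamler2020structure,bamler2023compactness,bamler2020entropy} with the entropy lower bound (which forces uniform non-collapsing via Theorem \ref{thm:upper} and Proposition \ref{existenceHncenter}) and the two-sided pseudolocality Theorem \ref{thm:twoside}, the $\F$-convergence \eqref{thm:intro2} forces $|\Rm|\le r^{-2}$ on a definite parabolic ball about $z_i^*$ for $i$ large. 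Shi's estimates then promote this to uniform bounds on all curvature derivatives over a slightly smaller parabolic neighborhood.

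Next I would run the compactness argument. Cover $\RR$ by countably many such parabolic neighborhoods; on each one the pulled-back flows $g_i$ have uniformly bounded, non-collapsed geometry, so local Ricci-flow compactness extracts a smooth limit, which — via the isometric identification of Theorem \ref{thm:iden} together with uniqueness of the spacetime structure on $\RR$ — is canonically the corresponding piece of $(\RR,\t,\partial_\t,g^Z)$. A diagonal procedure over an exhaustion $U_1\subset U_2\subset\cdots$ of $\RR$ by finite unions of these neighborhoods, with a center-of-mass gluing to make the local diffeomorphisms agree on overlaps, produces the global time-preserving $\phi_i:U_i\to V_i$ and the estimates in (a), and (b) is then immediate from Theorem \ref{thm:iden}. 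Parts (c) and (d) follow from (a) by linear parabolic theory: the $K^i$ solve the forward and conjugate heat equations of the $i$-th flow, whose coefficients converge in $C^\infty_\loc$, and are locally uniformly bounded away from the diagonal (Theorem \ref{thm:upper} plus on-diagonal bounds from non-collapsing), so interior parabolic estimates and uniqueness of the heat kernel on $(\RR,g^Z)$ give (c), while for (d) the convergence \eqref{thm:intro2} already identifies the limiting conjugate heat kernel measure, whose density is smooth on $\RR_{(-\infty,\t(z))}$, and Gaussian bounds plus parabolic regularity upgrade weak convergence of measures to $C^\infty_\loc$ convergence of densities. Finally (e) is soft: $Z$ is separable, hence so is the open subset $\RR$ and each slice $\RR_t$; since $\RR$ carries a smooth manifold structure, $\RR_t=\t^{-1}(t)\cap\RR$ is a smooth hypersurface, in particular locally connected, so its components are open, and a second-countable locally connected space has at most countably many components.

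The main obstacle is the first step: transferring regularity of the abstract $\F$-limit near $z$ back into uniform curvature bounds along the sequence. This needs the full weight of Bamler's structure theory — Nash-entropy monotonicity, heat-kernel and $\ep$-regularity estimates, and partial regularity of $\F$-limits — married to the $d^*$-geometry of \cite{fang2025RFlimit}, so that the regular part defined there is exactly the bounded-geometry locus. By comparison, the gluing in the compactness step and the identification of the smooth limit with the pre-existing $(\RR,\t,\partial_\t,g^Z)$ are routine but laborious, relying on continuity of $d^*$ and of the conjugate heat kernels under the convergence, also established in \cite{fang2025RFlimit}.
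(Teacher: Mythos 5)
This theorem is not proved in the present paper at all: it is stated in the Preliminaries purely as imported background, quoted from \cite[Theorem 1.5]{fang2025RFlimit}, so there is no in-paper proof to compare your argument against. Your outline is a reasonable reconstruction of the standard strategy one would expect in that reference --- an $\ep$-regularity/pseudolocality step converting regularity of the $\F$-limit near a point of $\RR$ into uniform lower bounds on the curvature radius along the sequence, followed by Cheeger--Gromov--Hamilton compactness, a diagonal/gluing construction of the $\phi_i$, parabolic regularity for the heat kernels in (c)--(d), and separability plus local connectedness for (e) --- and you correctly identify the genuinely hard step as showing that the regular part defined via the $d^*$-completion coincides with the uniformly-bounded-geometry locus of the approximating flows. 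Since the paper you were given treats this as a black box, verifying your sketch in detail would require consulting \cite{fang2025RFlimit} itself; within the scope of this paper, the appropriate "proof" is simply the citation.
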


For each $z \in Z$, we can assign a conjugate heat kernel measure $\nu_{z;s}:=K_Z(z;\cdot) \,\mathrm{d}V_{g^Z_s}$\index{$\nu_{z;s}$} based at $z$ for $s \le \t(z)$, which is a probability measure on $\RR_s$. All these probability measures together satisfy the reproduction formula (cf. \cite[Equation (5.5)]{fang2025RFlimit}).

Locally, one can define the tangent flow as follows.

\begin{defn}[Tangent flow] \label{def:tf}
	For any $z \in Z_{\III^-}$, a \textbf{tangent flow} at $z$ is a pointed Gromov--Hausdorff limit of $(Z, r_j^{-1} d_Z, z, r_j^{-2}(\t-\t(z)))$ for a sequence $r_j  \searrow 0$.
\end{defn}

It can be proved (cf. \cite[Section 7]{fang2025RFlimit}) that any tangent flow is a noncollapsed Ricci flow limit space, and the regular part admits the structure of Ricci shrinker (cf. \cite[Theorem 1.9]{fang2025RFlimit}).

On $Z_{\III^-}$, we have the following regular-singular decomposition:
\begin{align*}
	Z_{\III^-}=\RR_{\III^-} \sqcup \MS,
\end{align*}
where $\RR_{\III^-}$ denotes the restriction of $\RR$ on $\III^-$. It can be proved (see \cite[Theorem 7.15]{fang2025RFlimit}) that a point $z$ is a regular point if and only if any of its tangent flow is isometric to $(\R^{n} \times \R,d^*_{E, \ep_0},(\vec 0^n, 0),\t)$ or $(\R^{n} \times \R_{-},d^*_{E, \ep_0}, (\vec 0^n,0),\t)$, where $d^*_{E, \ep_0}$ denotes the induced $d^*$-distance on $\R^{n} \times \R$ using the spacetime distance constant $\ep_0$. Equivalently, $z$ is a regular point if and only if $\NN_z(0) \ge -\ep_n$ (see \cite[Proposition 7.7]{fang2025RFlimit}). It can be proved (see \cite[Theorem 1.13]{fang2025RFlimit}) that the Minkowski dimension of $\mathcal S$ with respect to $d_Z$ is at most $n-2$.

\subsection*{Cylindrical and almost cylindrical points}\label{sec:cylpts}

Let $\mathcal C^k_{-1}$ be as defined in Example \ref{exmp:model}. We define $\mathcal C^k=(\bar M, (\bar g(t))_{t<0}, (\bar f(t))_{t<0})$\index{$\mathcal C^k$}\index{$(\bar M, (\bar g(t))_{t<0}, (\bar f(t))_{t<0})$} to be the associated Ricci flow, where $t=0$ corresponds to the singular time, and the potential function is given by
\begin{align*}
\bar f(t):=\frac{|\vec{x}|^2}{4|t|}+\frac{n-k}{4}+\Theta_{n-k},
\end{align*}
where $\Theta_{n-k}$ is the same constant defined in \eqref{eq:entropyexplicit}.

As shown in \cite[Section 9]{fang2025RFlimit}, suppose that $\XX=\{M^n, (g(t))_{t \in [-T,0)}\}$ is a closed Ricci flow with entropy bounded below by $-Y$, where $0$ is the first singular time. We consider the $d^*$-distance on $\XX_{[-0.99T, 0)}$, defined in Definition \ref{defnd*distance}, using a fixed constant $\ep_0=\ep_0(n, Y)>0$. We then define
	\begin{align*}
(Z,d_{Z} ,\t)
	\end{align*}
to be the metric completion of $\XX_{[-0.98T, 0)}$ with respect to $d^*$. By construction, $(Z_{[-0.98T, 0)}, d_Z)=(\XX_{[-0.98T, 0)}, d^*)$; that is, the completion adds only the points in $Z_0$. Notice that $(Z,d_{Z} ,\t)$, which is called the \textbf{completion} of $\XX$, is a noncollapsed Ricci flow limit space over $[-0.98T, 0]$. 

Although the above construction is stated for closed Ricci flows, the same conclusion holds for any complete Ricci flow $\XX=\{M^n, (g(t))_{t \in [-T,0)}\}$ with entropy bounded below by $-Y$, provided the curvature is bounded on every compact time interval within $[-T, 0)$; see \cite[Section 11]{fang2025RFlimit}.

In particular, we set the completion of $(\bar M, (\bar g(t))_{t \in (-\infty, 0)})$ to be $\bar{\mathcal C}^k$\index{$\bar{\CC}^{n-m}$}, equipped with the spacetime distance $d_{\mathcal C}^*$\index{$d_{\mathcal C}^*$}. It is straightforward to verify that the metric completion adds only the singular set $\R^k \times \{0\}$ to $ \mathcal C^k$. We then define the base point $p^*$ as the limit of $(\bar p, t)$ as $t \nearrow 0$ with respect to $d_{\mathcal C}^*$, where $\bar p \in \bar M$ is a minimum point of $\bar f(-1)$. It is clear that $p^*$ is independent of the choice of $\bar p$. Moreover, for any $t<0$,
	\begin{align*}
\nu_{p^*;t}=(4\pi |t|)^{-\frac n 2} e^{-\bar f(t)} \,\mathrm{d}V_{\bar g(t)}.
	\end{align*}

In general, let $\XX=\{M^n, (g(t))_{t \in [-T,0)}\}$ be a closed Ricci flow with entropy bounded below by $-Y$, where $0$ is the first singular time. Suppose $(Z, d_Z, \t)$ is the completion of $\XX$. Fix a point $z \in Z_0$. Then the potential function $f_z$ is smooth for $t<0$. Moreover,
	\begin{align*}
\nu_{z;t}=(4\pi |t|)^{-\frac n 2} e^{-f_z} \,\mathrm{d}V_{g(t)}.
	\end{align*}

\begin{defn}[Modified Ricci flow]\label{def:mrf}
For any $z \in Z_0$, let $\phi_t$ be the family of diffeomorphisms generated by $-\na_{g(t)} f_z(t)$ with $\phi_{-T}=\mathrm{id}$. Then we define 
  \begin{equation*}
  \begin{dcases}
    &g^z (s):= e^s \phi^*_{-e^{-s}} g(-e^{-s}),   \\
    &f^z (s):=\phi^*_{-e^{-s}} f_z(-e^{-s}).\\
        \end{dcases}
  \end{equation*}
$(M, g^z(s),f^z(s))$\index{$(M, g^z(s),f^z(s))$} is called the \textbf{modified Ricci flow with respect to $z$}. It is clear that $(g^z(s),f^z(s))$ satisfies
\begin{align}\label{equationMRF}
  \begin{dcases}
& \partial_s g^z(s)=g^z-2 \Ric(g^z)-2\na^2 f^z, \\
&	\partial_s f^z(s)=\frac{n}{2}-\scal_{g^z}-\Delta f^z.
        \end{dcases}
\end{align}
\end{defn}

Next, we consider a general noncollapsed Ricci flow limit space $(Z, d_Z, \t)$ over $\III$, obtained as the limit of a sequence in $\mathcal M(n, T, Y)$. Then, we have the following definition.

\begin{defn}\label{def:ccc}
A point $z \in Z_{\III^-}$ is called \textbf{a cylindrical point with respect to $\bar{\mathcal C}^k$} if a tangent flow at $z$ (see Definition \ref{def:tf}) is isometric to $\bar{\mathcal C}^k$ for some $k$.
\end{defn}

The following theorem regarding the uniqueness of cylindrical tangent flows was proved in \cite[Theorem 6.2]{li2023rigidity}. Although \cite[Theorem 6.2]{li2023rigidity} is stated for a singular point at the first singular time, the same proof works for any singular point in a Ricci flow limit space. We sketch the proof for readers' convenience.

\begin{thm}[Uniqueness of the cylindrical tangent flow] \label{thm:weakunique}
Let $(Z, d_Z, \t)$ be a noncollapsed Ricci flow limit space over $\III$, obtained as the limit of a sequence in $\mathcal M(n, T, Y)$. For any $z \in Z_{\III^-}$, if a tangent flow at $z$ is isometric to $\bar{\mathcal C}^k$, then any tangent flow at $z$ is also $\bar{\mathcal C}^k$.
\end{thm}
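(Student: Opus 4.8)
The plan is to run a continuity/connectedness argument in the space of tangent flows, using the rigidity of the cylinder $\bar{\mathcal{C}}^k$ as the crucial input. First I would set up the space of rescalings: for each scale $r\in(0,1]$ consider the pointed metric space $(Z, r^{-1}d_Z, z, r^{-2}(\t-\t(z)))$, and let $\mathcal{T}$ be the set of subsequential pointed Gromov--Hausdorff limits as $r\searrow 0$. By the weak compactness theorem (Theorem \ref{thm:intro1}) and the fact that each such limit is itself a noncollapsed Ricci flow limit space over $(-\infty,0]$ whose regular part is a Ricci shrinker (cited from \cite[Section 7, Theorem 1.9]{fang2025RFlimit}), $\mathcal{T}$ is nonempty and precompact. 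The key topological observation — standard in this circle of ideas (cf. \cite{colding2021singularities, li2023rigidity}) — is that $\mathcal{T}$ is \emph{connected} in the pointed Gromov--Hausdorff topology, because the map $r\mapsto(Z,r^{-1}d_Z,z,r^{-2}(\t-\t(z)))$ is ``continuous'' in a suitable sense (nearby scales give nearby rescaled spaces), so the set of limits as $r\to 0$ cannot break into two separated closed pieces.

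Next I would invoke the rigidity of $\bar{\mathcal{C}}^k$. The Ricci shrinker underlying $\bar{\mathcal{C}}^k$ — namely the generalized cylinder $\mathbb{R}^k\times S^{n-k}$ — is rigid (isolated) in the moduli space of Ricci shrinkers by \cite{colding2021singularities} (and \cite{li2023rigidity} in the more general form). Translated into the limit-space language of \cite{fang2025RFlimit}, this says that $\bar{\mathcal{C}}^k$ is an \emph{isolated point} of the space of all possible tangent flows (equipped with the pointed GH / $\mathbb{F}$-topology): there is an $\ep>0$ such that any noncollapsed Ricci flow limit space over $(-\infty,0]$ arising as a tangent flow and lying within $\ep$ of $\bar{\mathcal{C}}^k$ must in fact be isometric to $\bar{\mathcal{C}}^k$. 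This is exactly the kind of statement established in \cite[Theorem 6.2]{li2023rigidity}; here I would cite it (or re-derive the isolation from the rigidity inequality plus the smooth convergence on regular parts from Theorem \ref{thm:intro3}).

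Then I would combine the two facts. By hypothesis $\bar{\mathcal{C}}^k\in\mathcal{T}$. Consider the subset $\mathcal{T}_0\subset\mathcal{T}$ of elements isometric to $\bar{\mathcal{C}}^k$. By the isolation/rigidity statement, $\mathcal{T}_0$ is \emph{open} in $\mathcal{T}$ (everything within $\ep$ of $\bar{\mathcal{C}}^k$ is $\bar{\mathcal{C}}^k$), and it is \emph{closed} in $\mathcal{T}$ (being isometric to a fixed space is a closed condition under GH convergence, using that $\mathcal{T}$ consists of noncollapsed limit spaces so limits cannot collapse). Since $\mathcal{T}_0$ is nonempty and $\mathcal{T}$ is connected, $\mathcal{T}_0=\mathcal{T}$, i.e.\ every tangent flow at $z$ is isometric to $\bar{\mathcal{C}}^k$.

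The main obstacle — and the place where care is genuinely needed — is establishing the \emph{connectedness} of $\mathcal{T}$ and the precise \emph{continuity} of $r\mapsto(Z,r^{-1}d_Z,z,\ldots)$ in the relevant topology, since $Z$ is a singular metric space and the rescaling does not obviously interact well with the $d^*$-distance and the time function near $z$. One has to know that as $r$ varies continuously the rescaled pointed spaces vary continuously in pointed GH distance on every fixed compact (spacetime) scale; this uses the structural results of \cite{fang2025RFlimit} (the weak compactness theorem, properties of $d^*$ and $\t$, and the $\mathbb{F}$-convergence identification in Theorem \ref{thm:iden}) together with the monotonicity of the Nash entropy $\NN_z$ to rule out ``escaping'' behavior. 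A secondary subtlety is importing the rigidity of the cylinder — stated in \cite{colding2021singularities, li2023rigidity} at the level of Ricci shrinker metrics — into an isolation statement at the level of noncollapsed Ricci flow limit spaces; this is handled by the smooth convergence on regular parts (Theorem \ref{thm:intro3}) plus the fact that a tangent flow's regular part is a Ricci shrinker, so GH-closeness of the limit spaces forces $C^\infty$-closeness of the shrinker metrics on the regular parts, where the rigidity inequality applies.
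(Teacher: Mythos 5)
Your proposal is correct in spirit and rests on the same two pillars as the paper's argument (rigidity/isolation of the cylinder plus a continuity-in-scale argument), but the paper implements it by a transfer rather than by re-running the open--closed--connected argument in the pointed Gromov--Hausdorff category. Concretely, the paper first passes to a metric flow $\XX^z$ associated with $z$ and invokes \cite[Theorem 6.2]{li2023rigidity} verbatim: that theorem already contains the connectedness-plus-isolation argument, carried out in Bamler's $\IF$-convergence framework, and yields that \emph{every} tangent metric flow at $z$ is $\mathcal C^k$. The paper then transfers this to the pointed GH tangent flows of $(Z, d_Z)$ using the identification results of \cite{fang2025RFlimit}: Theorem 4.17(2) there says a pointed GH tangent flow's associated metric flow is an $\IF$-tangent metric flow (hence $\mathcal C^k$), Theorem 7.19 upgrades this to $Z'_{(-\infty,0)}=\RR'_{(-\infty,0)}=\mathcal C^k$, and Theorem 7.25 shows $Z'_{(0,+\infty)}=\emptyset$. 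What this buys is that the two genuinely delicate points you flag — continuity of $r\mapsto (Z, r^{-1}d_Z, z, \cdot)$ and connectedness of the set of GH limits for a singular space with the $d^*$-distance, and the isolation of $\bar{\mathcal C}^k$ in the space of noncollapsed limit spaces — never have to be established directly; they are absorbed into the already-proven $\IF$-convergence statement and the correspondence theorems.

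One substantive point your outline omits: for a general $z\in Z_{\III^-}$ (not necessarily at the final time), the rescaled spaces $(Z, r^{-1}d_Z, z, r^{-2}(\t-\t(z)))$ contain the portion of $Z$ \emph{after} time $\t(z)$, so a tangent flow could a priori carry a nontrivial part over $(0,+\infty)$ in addition to the backward cylinder. Showing this future part is empty (the paper's appeal to \cite[Theorem 7.25]{fang2025RFlimit}) is a necessary step to conclude $Z'=\bar{\mathcal C}^k$ on the nose, and your open--closed argument over the backward-in-time structure alone would not rule it out. With that addition, and with the connectedness/isolation steps either proved or properly sourced, your route would go through.
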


\begin{proof}
We consider a metric flow $\XX^z$ associated with $z$. By our assumption, it implies that a tangent metric flow (see \cite[Definition 4.13]{fang2025RFlimit}) at $z$ is given by a cylinder $\mathcal C^k$. By the same proof of \cite[Theorem 6.2]{li2023rigidity}, it implies that any tangent metric flow at $z$ is $\mathcal C^k$. Now, we take a sequence $r_j \to 0$ such that
	\begin{align*}
(Z, r_j^{-1} d_Z, z, r_j^{-2}(\t-\t(z)))\xrightarrow[j \to \infty]{\quad \mathrm{pGH} \quad} (Z', d_{Z'}, z',\t')
	\end{align*}
	in the Gromov--Hausdorff sense.
	
By \cite[Theorem 4.17 (2)]{fang2025RFlimit}, and after passing to a further subsequence if necessary, the rescalings of $(\XX^z, \nu_{z;t})$ by $r_i^{-1}$, $\F$-converge to a tangent metric flow at $z$, which must be given by $\mathcal C^k$ as mentioned above. Thus, we conclude that $\XX^{z'}$, which is a metric flow associated with $z'$, is given by $\mathcal C^k$. 

From \cite[Theorem 7.19]{fang2025RFlimit}, we know that 
	\begin{align*}
Z'_{(-\infty, 0)}=\RR'_{(-\infty, 0)}=\mathcal C^k.
	\end{align*}
Consequently, it follows from \cite[Theorem 7.25]{fang2025RFlimit} that $Z'_{(0, +\infty)}=\emptyset$ and hence $Z'=\bar{\mathcal C}^k$.

In sum, the proof is complete.
\end{proof}

\begin{rem} 
It is clear that Theorem \ref{thm:weakunique} also holds in the case that one tangent flow is isometric to $\R^{k} \times N$ equipped with the standard metric, where $N$ is an $H$-stable Einstein manifold with the obstruction of order $3$.
\end{rem}

Next, we recall the following definition from \cite[Definition 5.37]{fang2025RFlimit}.

\begin{defn}[$\ep$-close]\label{defn:close}
Suppose $(Z, d_Z, z, \t)$ and $(Z', d_{Z'}, z',\t')$ are two pointed noncollapsed Ricci flow limit spaces, with regular parts given by the Ricci flow spacetimes $(\RR, \t, \partial_\t, g^Z)$ and $(\RR', \t', \partial_{\t'}, g^{Z'})$, respectively, such that $J$ is a time interval.

We say that $(Z, d_Z, z, \t)$ is \textbf{$\ep$-close} to $(Z', d_{Z'}, z',\t')$ \textbf{over $J$} if there exists an open set $U \subset \RR'_J$ and a smooth embedding $\phi: U \to \RR_J$ satisfying the following properties.
\begin{enumerate}[label=\textnormal{(\alph{*})}]
\item $\phi$ is time-preserving.

\item $U \subset B^*_{Z'}(z', \ep^{-1}) \bigcap \RR'_J$ and $U$ is an $\ep$-net of $B^*_{Z'}(z', \ep^{-1}) \bigcap Z'_J$ with respect to $d_{Z'}$.

\item For any $x, y \in U$, we have
	\begin{align*}
\abs{d_Z(\phi(x), \phi(y))-d_{Z'}(x, y)} \le \ep.
	\end{align*}
	
\item The $\ep$-neighborhood of $\phi(U)$ with respect to $d_Z$ contains $B^*_{Z}(z, \ep^{-1}-\ep) \bigcap Z_J$.

\item There exists $x_0 \in U$ such that $d_{Z'}(x_0, z') \le \ep$ and $d_{Z}(\phi(x_0), z) \le \ep$.

\item On $U$, the following estimates hold:
  \begin{align*}
  	\rVert \phi^* g^Z-g^{Z'}\rVert_{C^{[\ep^{-1}]}(U)}+\rVert \phi^* \partial_\t-\partial_{\t'} \rVert_{C^{[\ep^{-1}]}(U)} \le \ep.
  \end{align*} 
\end{enumerate}
\end{defn}

Next, we define

\begin{defn}\label{def:almost0}
Let $(Z, d_Z, \t)$ be a noncollapsed Ricci flow limit space over $\III$, obtained as the limit of a sequence in $\mathcal M(n, T, Y)$. A point $z \in Z_{\III^-}$ is called \textbf{$(k,\ep,r)$-cylindrical} if $\t(z)-\ep^{-1} r^2 \in \III^-$ and
	  \begin{align*}
(Z, r^{-1} d_Z, z, r^{-2}(\t-\t(z))) \quad \text{is $\ep$-close to} \quad (\bar{\mathcal C}^k ,d_{\mathcal C}, p^*,\t) \quad \text{over} \quad [-\ep^{-1}, \ep^{-1}].
  \end{align*} 
\end{defn}

The following lemma follows immediately from smooth convergence and Definition \ref{def:almost0}.

\begin{lem} \label{lem:smoothcyl}
Let $(Z, d_Z, \t)$ be a noncollapsed Ricci flow limit space over $\III$, obtained as the limit of a sequence in $\mathcal M(n, T, Y)$. For any $\delta>0$, if $\ep \le \ep(n, Y, \delta)$ and $z \in Z_{\III^-}$ is $(k,\ep,r)$-cylindrical, then there exists a diffeomorphism from $\{\bar b \le \delta^{-1}\} \subset \bar M$, where $\bar b=2\sqrt{\bar f(-1)}$, onto a subset of $\RR$ such that
\begin{align}\label{smoothcylindrical}
\| r^{-2} \varphi^* g^Z_{r^2t}-\bar g(t)\|_{C^{[\delta^{-1}]}}+\| \varphi^* f_z(r^2 t)-\bar f(t)\|_{C^{[\delta^{-1}]}} \le \delta,
\end{align}
on $\{\bar b \le \delta^{-1}\} \times [-\delta^{-1}, -\delta]$.
\end{lem}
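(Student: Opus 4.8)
\emph{Proposal.} The plan is to prove this by a compactness–and–contradiction argument, so that all the substantive content is carried by the weak compactness theorem (Theorem \ref{thm:intro1}) and the smooth convergence theorem (Theorem \ref{thm:intro3}) — this is the sense in which the statement is ``immediate''. Suppose the conclusion fails for some fixed $\delta_0>0$. Then for each $i$ I would produce a noncollapsed Ricci flow limit space $(Z^i,d_{Z^i},\t^i)$ over $\III$, arising as a pGH limit of a sequence in $\MM(n,Y,T)$, together with a $(k,\ep_i,r_i)$-cylindrical point $z_i\in Z^i_{\III^-}$ with $\ep_i\to 0$, such that no diffeomorphism from (the parabolic region over) $\{\bar b\le\delta_0^{-1}\}$ into the regular part $\RR^i$ of $Z^i$ satisfies \eqref{smoothcylindrical} with $\delta=\delta_0$. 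Rescaling, I set $(\hat Z^i,\hat z_i,\hat\t^i):=(Z^i,\,r_i^{-1}d_{Z^i},\,z_i,\,r_i^{-2}(\t^i-\t^i(z_i)))$; since the entropy lower bound is scale invariant, each $\hat Z^i$ is again a noncollapsed Ricci flow limit space (now over a dilated interval), and by Definition \ref{def:almost0} each $\hat Z^i$ is $\ep_i$-close to $\bar{\mathcal C}^k$ over $[-\ep_i^{-1},\ep_i^{-1}]$.

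Next I would pass to a limit. By a standard diagonal argument I replace each $\hat Z^i$ by a closed Ricci flow $\XX^i$ from its defining sequence, rescaled (hence still with entropy bounded below by $-Y$), together with a base point $\hat z_i^*$ with $\hat\t^i(\hat z_i^*)=0$, chosen so that $\XX^i$ approximates $\hat Z^i$ arbitrarily well on the relevant compact regions in the sense of Theorem \ref{thm:intro3}; alternatively one may use directly that rescaled limits of limit spaces are again limit spaces. Applying Theorem \ref{thm:intro1} to $(\XX^i,\hat z_i^*)$ and passing to a subsequence yields a pointed limit $(Z^\infty,d_{Z^\infty},z^\infty,\t^\infty)$, a noncollapsed Ricci flow limit space over $(-\infty,0]$ (the time intervals exhaust $\R$ if $r_i\to 0$, and the case $r_i\not\to 0$ is only easier). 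Because $\ep_i\to 0$, the $\ep_i$-closeness forces $(Z^\infty,z^\infty)$ to be isometric to $(\bar{\mathcal C}^k,p^*)$, so its regular part is $\mathcal C^k$. Theorem \ref{thm:intro3} then provides time-preserving diffeomorphisms $\Phi_i\colon U_i\to V_i\subset\XX^i$ with $U_i\uparrow\mathcal C^k$, $\|\Phi_i^*g^i-g^{\mathcal C}\|_{C^{[\ep_i^{-1}]}(U_i)}\to 0$ and $\|\Phi_i^*\partial_{\t_i}-\partial_{\t}\|_{C^{[\ep_i^{-1}]}(U_i)}\to 0$, and — via part (d) — the rescaled conjugate heat kernels based at $\hat z_i^*$, hence the potential functions, converge smoothly on $\mathcal C^k$ to those of $\bar{\mathcal C}^k$ based at $p^*$, i.e.\ to $\bar f$.

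It remains to descend back to $Z^i$. Composing $\Phi_i$ with the inverse of the rescaled smooth-convergence map $\Psi_i$ associated with $\XX^i\to\hat Z^i$ — defined on $\Phi_i(U_i)$ once $i$ is large — gives a time-preserving diffeomorphism $\varphi_i:=\Psi_i^{-1}\circ\Phi_i$ from $U_i$ into the regular part of $\hat Z^i$, equivalently into $\RR^i$ rescaled, with $\varphi_i^*\!\big(r_i^{-2}g^{Z^i}_{r_i^2\,\cdot}\big)\to\bar g(\cdot)$ and $\varphi_i^*f_{z_i}(r_i^2\,\cdot)\to\bar f(\cdot)$ in $C^{[\ep_i^{-1}]}$ on $U_i$. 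Since $\{\bar b\le\delta_0^{-1}\}\times[-\delta_0^{-1},-\delta_0]$ is a fixed compact subset of $\mathcal C^k$ bounded away from the singular time, it lies in $U_i$ for all large $i$, and there the restriction of $\varphi_i$ satisfies \eqref{smoothcylindrical} with $\delta=\delta_0$ — contradicting the choice of $(Z^i,z_i)$, which completes the argument.

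The step I expect to require the most care is the bookkeeping rather than any genuine obstacle: one must thread a single diagonal subsequence through three nested convergences (the defining closed flows $\to\hat Z^i$, the rescalings, and $\hat Z^i\to\bar{\mathcal C}^k$) so that the composed maps land in $\RR^i$ and not merely in an approximating closed flow, and one must track the potential-function estimate through Theorem \ref{thm:intro3}(d) together with the rescaling of conjugate heat kernels and the normalization of base-point time. A more direct route would unwind Definition \ref{defn:close} to obtain the embedding on the $\ep$-net $U$ and observe that the above compact parabolic region, being bounded away from the singular time, lies in $U$ once $\ep$ is small; this works provided one knows the set $U$ may be taken to be the full regular ball, and I would fall back on the compactness argument above precisely to avoid relying on that point.
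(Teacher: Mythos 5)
Your compactness-and-contradiction argument is correct and is essentially the intended justification: the paper offers no written proof, asserting the lemma "follows immediately from smooth convergence and Definition \ref{def:almost0}," and your scaffold (rescale, extract a limit which must be $\bar{\mathcal C}^k$ by the vanishing closeness parameter, then invoke Theorem \ref{thm:intro3} on an exhausting sequence of domains) is the standard way to make that precise. You correctly identify that the potential-function estimate is the part not already contained in Definition \ref{defn:close} and must come from the heat-kernel convergence in Theorem \ref{thm:intro3}(d), which is exactly where the content lies.
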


\section{Remainder estimates for integral quantities}\label{sec:locesti}

Throughout this section, let $\flow$ be a closed Ricci flow. We fix a spacetime point $x_0^*=(x_0,t_0)\in \XX$ and assume $\XX$ has entropy bounded below by $-Y$ at $x_0^*$ (see Definition \ref{def:entropybound}). Moreover, we set
\begin{align*}
\mathrm{d}\nu_t=\mathrm{d}\nu_{x^*_0;t}=(4\pi\tau)^{-n/2}e^{-f}\,\mathrm{d}V_{g(t)},
\end{align*}
where $\tau=t_0-t$ and $f=f_{x_0^*}$. For simplicity, we define
  \begin{equation*} 
  \begin{dcases}
    &w:= \tau(2\Delta f-|\nabla f|^2+\scal)+f-n,   \\
    &\TT:=\tau \Ric+\nabla^2(\tau f)-\frac{g}{2}.\\
        \end{dcases}
  \end{equation*}
Recall that Perelman's differential Harnack inequality (see \cite[Section 9]{perelman2002entropy}) states that $w \le 0$. For the next proposition, we obtain the remainder estimates for the Nash entropy and $\WW$-entropy.

\begin{prop}\label{prop:remainder}
Suppose $[t,t_0]\subset I$ and $\scal \ge R_{\min}$ on $M \times [t, t_0]$. For any $\epsilon>0$, there exists a constant $C_\ep=C(n,Y,R_{\min}(t_0-t),\ep)$ such that 
	\begin{equation*}
		\int_{M\setminus B_t( z,L)}\tau\lc|\nabla f|^2+|\scal|+|\Delta f|\rc+\abs{f}\,\mathrm{d}\nu_{t}\leq C_\ep \exp\left(-\frac{L^2}{(4+\epsilon)(t_0-t)}\right)
	\end{equation*}
	for any $L >0$, where $(z,t)$ is any $H_n$-center of $x_0^*$ (see Definition \ref{defh-center}). 
\end{prop}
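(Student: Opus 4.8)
The plan is to derive the remainder estimate from the Gaussian concentration estimates in Theorem \ref{thm:upper} together with the differential Harnack inequality $w \le 0$. First, I would split the integrand. Since $w = \tau(2\Delta f - |\nabla f|^2 + \scal) + f - n \le 0$, we may write $\tau(|\nabla f|^2 + |\scal| + |\Delta f|) \le \tau(2\Delta f - |\nabla f|^2 + \scal) + (\text{controllable terms}) + f - n$ rearranged so that each of $\tau|\nabla f|^2$, $\tau|\scal|$, $\tau|\Delta f|$, and $|f|$ is bounded by a sum of a nonnegative quantity whose total $\nu_t$-integral over $M$ is uniformly bounded (by $|\WW_{x_0^*}(\tau)|$, $|\NN_{x_0^*}(\tau)|$, and $\int \scal \, d\nu_t$, all controlled by $Y$ and $R_{\min}(t_0-t)$), plus lower-order pieces. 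The key point is that on the far region $M \setminus B_t(z, L)$ these global bounds must be upgraded to exponential smallness, which is where the Gaussian tail enters.

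The main tool is the pointwise heat kernel bound of Theorem \ref{thm:upper}(ii): $K(x_0,t_0;y,t) \le C(n, R_{\min}(t_0-t),\ep)(t_0-t)^{-n/2}\exp\!\big(-\tfrac{d_t^2(z,y)}{(4+\ep)(t_0-t)} - \NN_{x_0,t_0}(t_0-t)\big)$, which gives $e^{-f} \le C \exp(-d_t^2(z,\cdot)/((4+\ep)(t_0-t)) - \NN)$, hence $f \ge \tfrac{d_t^2(z,\cdot)}{(4+\ep)(t_0-t)} + \NN - C$. This controls $f$ from below pointwise by (essentially) the Gaussian exponent; combined with the lower entropy bound $\NN \ge -Y$ this makes $|f|$ and $e^{-f}$ simultaneously tractable. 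To handle $\tau|\nabla f|^2$, $\tau|\Delta f|$, and $\tau|\scal|$ I would use: (a) Perelman's Harnack $w \le 0$ to bound $\tau(2\Delta f - |\nabla f|^2 + \scal)$ above by $n - f$, which is itself Gaussian-controlled; (b) standard gradient and Laplacian estimates for the conjugate heat kernel (e.g. $\tau|\nabla f|^2 \le$ (something like) $f + C$ type bounds, cf. the estimates underlying \cite{bamler2020entropy}), so that each integrand is dominated by $C(1 + f_+ + f_-)$ with $f_\pm$ the positive/negative parts; and (c) the scalar curvature lower bound $R_{\min}$ together with $\int_M \scal \, d\nu_t$ bounded. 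Then on $M \setminus B_t(z,L)$, each such term is integrated against $e^{-f}\,dV$, and since $f \gtrsim d_t^2(z,\cdot)/((4+\ep)(t_0-t))$ there, the polynomial-times-Gaussian integral over the exterior of the ball of radius $L$ yields the bound $C_\ep \exp(-L^2/((4+\ep')(t_0-t)))$ after adjusting $\ep'$ slightly larger than $\ep$; Theorem \ref{thm:upper}(i) gives the analogous statement for the measure of the far region itself, covering the $|f| \cdot \mathbf{1}$ contribution of the bounded part of $f$.

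The main obstacle I anticipate is getting clean pointwise (or $L^1$-against-the-Gaussian) bounds on $\tau|\nabla f|^2$, $\tau|\Delta f|$, and $\tau\scal_-$ on the far region without circular reliance on integral identities that are themselves only known globally. The differential Harnack inequality $w \le 0$ handles the combination $2\Delta f - |\nabla f|^2 + \scal$ but not the individual terms or their absolute values, so one needs a complementary lower bound — typically of the form $\tau(2\Delta f - |\nabla f|^2 + \scal) \ge -C(1 + f)$ or a gradient estimate $\tau|\nabla f|^2 \le f + C$ valid up to the boundary time — which in this closed-flow-with-entropy-bound setting should follow from the heat kernel estimates of Theorem \ref{thm:upper} and Bamler's differential inequalities, but requires care to track the dependence only on $n$, $Y$, and $R_{\min}(t_0-t)$. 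Once those pointwise controls are in place, the remaining step is a routine computation: integrate $(\text{polynomial in } d_t(z,\cdot)) \times \exp(-d_t^2/((4+\ep)(t_0-t)))$ over $\{d_t(z,\cdot) \ge L\}$ and absorb polynomial factors into a slight worsening of the exponential constant, yielding the stated estimate.
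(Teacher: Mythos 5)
Your treatment of the $|f|$ term and of the measure of the far region is essentially sound and matches the ingredients the paper uses (Theorem \ref{thm:upper}(i)--(ii), the exponential integrability of $f$ from \cite[Proposition 6.5]{bamler2020structure}, and H\"older's inequality). The genuine gap is exactly the one you flag and then dismiss: your argument for the terms $\tau|\nabla f|^2$, $\tau|\Delta f|$, and $\tau|\scal|$ hinges on a pointwise estimate of the form $\tau|\nabla f|^2 \le f+C$ (and an analogous one for $\Delta f$) with constants depending only on $n$, $Y$, $R_{\min}(t_0-t)$. No such pointwise gradient or Laplacian estimate for the conjugate heat kernel is available in this generality --- the only hypotheses are a lower entropy bound and a lower scalar curvature bound, and Perelman's Harnack inequality $w\le 0$ controls only the one-sided \emph{combination} $\tau(2\Delta f-|\nabla f|^2+\scal)\le n-f$, not the individual terms or their absolute values. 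Asserting that the complementary bound "should follow from Theorem \ref{thm:upper} and Bamler's differential inequalities" is precisely the unproved step, and the proposition cannot be closed this way.

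The paper's proof avoids pointwise estimates entirely by localizing with a cutoff and integrating by parts against the weighted measure. Concretely, with $\psi$ a cutoff equal to $1$ outside $B_t(z,L+1)$ and vanishing on $B_t(z,L)$ with $|\nabla\psi|^2\le 100\psi$, one has the identity
\begin{align*}
\int_{M}(|\nabla f|^2+\scal)\psi \,\mathrm{d}\nu_t=\int_{M}(2\Delta f-|\nabla f|^2+\scal)\psi \,\mathrm{d}\nu_t+2\int_{M}\la\na f,\nabla\psi\ra \,\mathrm{d}\nu_t,
\end{align*}
where the first term on the right is bounded by $\int(n-f)\psi\,\mathrm{d}\nu_t$ via $w\le 0$ (hence by the already-established $|f|$ estimate and the Gaussian measure bound), and the cross term is absorbed by Cauchy--Schwarz using $|\nabla\psi|^2/\psi\le 100$. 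This yields the $|\nabla f|^2$ and $|\scal|$ bounds without ever bounding $|\nabla f|$ pointwise; a second integration by parts, again exploiting $|w|=-w$, handles $|\Delta f|$. If you want to salvage your write-up, replace the appeal to pointwise gradient estimates with this cutoff-and-integration-by-parts argument; the rest of your outline then goes through.
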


\begin{proof}
Without loss of generality, we assume $t_0-t=1$.

From \cite[Proposition 6.5]{bamler2020structure}, it follows that for any $\alpha\in [0,\frac{1}{2}]$,
	\begin{equation*}
		\int_{M}e^{\alpha f}\,\mathrm{d}\nu_t\leq e^{(n-R_{\min})\alpha}.
	\end{equation*}
	By Theorem \ref{thm:upper} (ii), it holds that $f\geq -C(n,Y)$, and thus, for any $p\geq 1$,
	\begin{align}\label{localizationentropy2}
		\int_M |f|^p\,\mathrm{d}\nu_t\leq C(n,Y,p) \int _M e^{f/2}\,\mathrm{d}\nu_t\leq C(n,Y,R_{\min},p). 
	\end{align}
	By H\"older's inequality, \eqref{localizationentropy2} and Theorem \ref{thm:upper} (i), we obtain that for $p^{-1}+q^{-1}=1$,
	\begin{align*}
		\int_{M\setminus B_t(z,L)}|f|\,\mathrm{d}\nu_t&\leq \lc\int_{M\setminus B_t(z,L)}|f|^p\,\mathrm{d}\nu_t\rc^{1/p}\lc\int_{M\setminus B_t(z,L)}\,\mathrm{d}\nu_t\rc^{1/q}\\
		&\leq C(n,Y,p,R_{\min},\ep)\exp\left(-\frac{L^2}{q(4+\ep)}\right).
	\end{align*}
	If we choose $q$ to be close to $1$ and modify constants, we obtain
	\begin{equation}\label{eq:local0}
		\int_{M\setminus B_t( z,L)}\abs{f}\,\mathrm{d}\nu_{t}\leq C_\ep \exp\left(-\frac{L^2}{4+\epsilon}\right).
	\end{equation}	
	
Next, we choose a cut-off function $\eta:\R\to [0,1]$ such that $\eta(s)=0$ if $s\leq L$ and $\eta(s)=1$ if $s\geq L+1$. Moreover, $|\eta'|^2 \leq 100 \eta$. For $x\in M$, we also denote $\psi(x):=\eta(d_t(x,z))$.
Using integration by parts, we obtain:
	\begin{align*}
		\int_{M}(|\nabla f|^2+\scal)\psi \,\mathrm{d}\nu_t&=  \int_{ M}(2\Delta f-|\nabla f|^2+\scal)\psi \,\mathrm{d}\nu_t+2\int_{M}\la\na f,\nabla\psi\ra \,\mathrm{d}\nu_t\\ 
		&\leq \int_{M}(n-f) \psi\,\mathrm{d}\nu_t+\frac{1}{2}\int_M|\na f|^2\psi \,\mathrm{d}\nu_t+2\int_M \frac{|\na \psi|^2}{\psi}\,\mathrm{d}\nu_t\\
		&\leq \int_{M\setminus B_t(z,L)}n+|f|\,\mathrm{d}\nu_t+\frac{1}{2}\int_M|\na f|^2\psi \,\mathrm{d}\nu_t+200\int_{M\setminus B_t(z,L)}\,\mathrm{d}\nu_t\\
		&\leq C_\ep \exp\left(-\frac{L^2}{4+\epsilon}\right)+\frac{1}{2}\int_M|\na f|^2\psi \,\mathrm{d}\nu_t,
	\end{align*}
where in the second inequality we used the fact that $w \le 0$, and in the last inequality we applied \eqref{eq:local0} and Theorem \ref{thm:upper} (i). Thus, we obtain
	\begin{align} \label{eq:local1}
		\int_{M\setminus B_t(z,L)}|\nabla f|^2+|\scal|\,\mathrm{d}\nu_t\leq C_\ep \exp\left(-\frac{L^2}{4+\epsilon}\right).
	\end{align}
	By $w \le 0$ again and integration by parts, we have
	\begin{align*}
		2\int_M |\Delta f|\psi \,\mathrm{d}\nu_t&\leq \int_M \lc|w|+|\na f|^2+|\scal|+|f|+n\rc\psi \,\mathrm{d}\nu_t\\
		&=\int_M \lc-w+|\na f|^2+|\scal|+|f|+n\rc\psi \,\mathrm{d}\nu_t\\
		&\leq \int_M \left(-2\Delta f+2|\na f|^2+2|\scal|+2|f|+2n\right)\psi \,\mathrm{d}\nu_t\\
		&\leq 2\int_M (|\na f|^2+|\scal|+|f|+n)\psi \,\mathrm{d}\nu_t +\int_{M\setminus B_t(z,L)} \frac{|\na \psi|^2}{\psi}\mathrm{d}\nu_t \leq C_\ep \exp\left(-\frac{L^2}{4+\epsilon}\right),
	\end{align*}
	where we used \eqref{eq:local0}, \eqref{eq:local1} and Theorem \ref{thm:upper} (i).
	
In sum, the proof is complete.
\end{proof}

\section{Variations of the \texorpdfstring{$\mathcal{W}$}{W}-functional near cylinders}\label{seclojaW}

In this section, we aim to derive the variations of the $\WW$-functional (as in Definition \ref{defnshrinkerquant}) of a weighted Riemannian manifold near a weighted cylinder.

Throughout this section, we fix $n \ge 3$, $m \in \{2, \ldots, n-1\}$, and consider the cylinder\index{$\CC^{n-m}$}\index{$(\bar M,\bar g,\bar f)$}
\begin{align}\label{eq:model}
\mathcal C^{n-m}_{-1}=(\bar M,\bar g,\bar f)=\left(\R^{n-m}\times S^{m}, g_E \times g_{S^m}, \frac{|\vec{x}|^2}{4}+\frac{m}{2}+\Theta_m \right),
\end{align}
as in Example \ref{exmp:model}. In the following, the Sobolev spaces are defined with respect to the weighted measure $\mathrm{d}V_{\bar f}$ (cf. Definition \ref{defnoperators}). The constant $C$ appearing in this section depends only on $n$, and may vary from line to line.

Consider a compactly supported pair $(h,\chi) \in C^2(S^2(\bar M)) \times C^2(\bar M)$ such that 
\begin{enumerate}[label=\textnormal{(\roman{*})}]
	\item $(\bar M, \bar g+h,\bar f+\chi)$ is a weighted Riemannian manifold;
	
\item 	$\displaystyle \|h\|_{C^2}+\|\chi\|_{C^2}:=\delta \ll 1$.
\end{enumerate}	

As in \cite{colding2021singularities} and \cite{li2023rigidity}, we have the following decomposition:
\begin{align}\label{eq:decompose}
h=ug_{S^m}+\zeta,\quad \chi=\frac{m}{2} u+q,
\end{align}
 where $ug_{S^m}$ is the projection onto $\KK_0 g_{S^m}$ and $\KK_0$ is the space of quadratic Hermite polynomials (see Definition \ref{defncylinder}). For simplicity, we set
\begin{align*}
\alpha:=\|u\|_{L^2}.
\end{align*} 
  
For $u \in \KK_0$, the following estimate holds (see \cite[Lemma 6.4]{colding2021singularities}):
\begin{lem}\label{estimateJacobifield}
	There exists constant $C=C(n)$ such that the following holds for $u\in \KK_0$ on $\R^{n-m}$:
	\begin{equation*}
		|u|+|\vec{x}||\nabla u|+(1+|\vec{x}|^2)| \na^2u|\leq C (1+|\vec{x}|^2) \alpha.
	\end{equation*}
\end{lem}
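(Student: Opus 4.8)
The plan is to exploit that $\KK_0$ (recall the explicit form in Definition \ref{defncylinder}) is a finite-dimensional vector space, so that the single quantity $\alpha=\|u\|_{L^2}$ controls every norm on it, and to combine this with completely elementary pointwise bounds for quadratic polynomials.

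First I would parametrize $u\in\KK_0$ as $u=a_{ij}x_ix_j-2\Tr(a)$ with $a=(a_{ij})$ a symmetric matrix, and simply read off the derivatives: $\na u$ is linear in $\vec{x}$ with coefficients $2a_{ij}$, and $\na^2 u$ is the constant tensor $2a$. From this one gets $|\na^2 u|=2|a|$, $|\na u|\le 2|a|\,|\vec{x}|$, and $|u|\le |a|\,|\vec{x}|^2+2|\Tr(a)|\le C(n)|a|(1+|\vec{x}|^2)$ by Cauchy--Schwarz applied to the trace term, where $|a|$ is the Frobenius norm. Summing the three contributions shows the left-hand side of the claimed inequality is $\le C(n)|a|(1+|\vec{x}|^2)$, so the whole statement reduces to the linear-algebra estimate $|a|\le C(n)\alpha$ (the dependence on $m$ is harmless since $m<n$).

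For that estimate I would observe that integrating the weight $\mathrm{d}V_{\bar f}$ over the $S^m$ factor turns $\|u\|_{L^2}^2$ into a Gaussian integral $\int_{\R^{n-m}}u^2\,\mathrm{d}\gamma$, where $\mathrm{d}\gamma$ is (a normalized) Gaussian probability measure $\propto e^{-|\vec{x}|^2/4}\,\mathrm{d}\vec{x}$ on $\R^{n-m}$ — in particular finite for all polynomials. Thus $Q(a):=\|u\|_{L^2}^2=\alpha^2$ is a quadratic form on the finite-dimensional space of symmetric matrices, and it is positive definite: if $Q(a)=0$ then $u\equiv 0$, and matching the homogeneous degree-two and degree-zero parts of the polynomial $a_{ij}x_ix_j-2\Tr(a)$ forces $a=0$. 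Equivalence of norms on a finite-dimensional space then gives $|a|^2\le C(n)Q(a)=C(n)\alpha^2$, which combined with the previous paragraph completes the proof.

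There is no serious obstacle here; the only points demanding a moment's care are verifying the non-degeneracy of $Q$ (equivalently $u\equiv 0\iff a=0$ for this particular Hermite normalization) and the finiteness of the Gaussian moments, after which the result is forced by a dimension count. One could alternatively skip the abstract norm-equivalence step by computing $Q(a)$ explicitly via Gaussian moments and reading off the lower bound, but invoking finite-dimensionality keeps the argument shorter.
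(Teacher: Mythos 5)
Your proof is correct. The paper does not actually prove this lemma — it simply cites \cite[Lemma 6.4]{colding2021singularities} — and your argument (read off the derivatives of the explicit quadratic $a_{ij}x_ix_j-2\Tr(a)$, reduce to $|a|\le C(n)\alpha$, and get that from positive-definiteness of the Gaussian $L^2$ quadratic form on the finite-dimensional space of symmetric matrices) is exactly the standard one underlying the cited reference; all the small points you flag (non-degeneracy, finiteness of Gaussian moments, the harmless $m$-dependence) check out.
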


We also need the following estimate from \cite[Lemma 3.6]{li2023rigidity}:
\begin{lem}\label{concentrationinequ}
	For any $k\geq 0$, there exists a constant $C_k=C_k(n)$ such that for any tensor $T\in W^{k,2}(T^{r,s}\bar M)$,
	\begin{align*}
		\int_{\bar M}\bar f^k|T|^2\,\mathrm{d}V_{\bar f}\leq C_k\rVert T\rVert_{W^{k,2}}^2.
	\end{align*}
\end{lem}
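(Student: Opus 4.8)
\textbf{Proof proposal for Lemma \ref{concentrationinequ}.}

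The plan is to reduce the weighted $L^2$ estimate with polynomial weight $\bar f^k$ to a sequence of ordinary weighted $L^2$ estimates, by exploiting that $\bar f$ is, up to an additive constant, the quadratic function $|\vec x|^2/4$ on the Euclidean factor. The key analytic input is a one-dimensional (or $\R^{n-m}$-dimensional) Gaussian Poincaré-type inequality: for the Gaussian weight $e^{-|\vec x|^2/4}$ on $\R^{n-m}$, one has control of $\int |\vec x|^2 \varphi^2 \, e^{-|\vec x|^2/4}$ by $\int (\varphi^2 + |\na \varphi|^2)\, e^{-|\vec x|^2/4}$, which follows by integrating by parts: write $|\vec x|^2 e^{-|\vec x|^2/4} = -2\,\vec x\cdot \na(e^{-|\vec x|^2/4})\cdot$(sign bookkeeping) and move the derivative onto $\varphi^2$, then absorb with Young's inequality. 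Equivalently, $\Delta_{\bar f}\bar f$ and $|\na \bar f|^2$ are both comparable to $\bar f$ up to constants, so $\bar f$ itself plays the role of a Lyapunov/weight function whose growth is controlled by the drift Laplacian.

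First I would prove the case $k=1$ for scalar functions: for $\varphi \in W^{1,2}$ (with respect to $\dd V_{\bar f}$), show $\int_{\bar M} \bar f\, \varphi^2 \, \dd V_{\bar f} \le C(\|\varphi\|_{L^2}^2 + \|\na\varphi\|_{L^2}^2)$. Since $\bar f = |\vec x|^2/4 + c_{n,m}$ with $c_{n,m}>0$ a constant, it suffices to bound $\int |\vec x|^2 \varphi^2\, \dd V_{\bar f}$. Using $\na(e^{-\bar f}) = -\na\bar f\, e^{-\bar f}$ and $\na \bar f = \vec x/2$ on the Euclidean factor, integrate by parts:
\begin{align*}
\tfrac12\int_{\bar M} |\vec x|^2 \varphi^2\, e^{-\bar f}\, \dd V_{\bar g} = \int_{\bar M} \langle \vec x, \na\bar f\rangle\, \varphi^2\, e^{-\bar f}\, \dd V_{\bar g} = -\int_{\bar M} \langle \vec x, \na(\varphi^2)\rangle\, e^{-\bar f}\, \dd V_{\bar g} + \int_{\bar M} \di_{\bar f}(\vec x)\, \varphi^2\, e^{-\bar f}\, \dd V_{\bar g},
\end{align*}
where $\di_{\bar f}(\vec x) = (n-m) - |\vec x|^2/2$. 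Rearranging gives $\int |\vec x|^2\varphi^2\, \dd V_{\bar f} \le C\int |\vec x||\varphi||\na\varphi|\, \dd V_{\bar f} + C\int \varphi^2\, \dd V_{\bar f}$, and Cauchy--Schwarz plus absorption of $\tfrac12\int |\vec x|^2\varphi^2$ yields the $k=1$ bound. For a general tensor $T$, apply this to $\varphi = |T|$ using Kato's inequality $|\na|T|| \le |\na T|$ (a.e.), which handles the tensor case with no extra work.

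Then I would iterate. For higher $k$, write $\bar f^k \le C(\bar f^{k-1} + 1)\cdot \bar f$ is not quite the right bookkeeping; instead apply the $k=1$ estimate to the function $\bar f^{(k-1)/2}|T|$ (when $k$ is odd, or handle parity by also using $\bar f^{k/2}|T|$), generating terms $\int |\na(\bar f^{(k-1)/2}|T|)|^2\, \dd V_{\bar f}$, which expand into $\int \bar f^{k-1}|\na T|^2 \, \dd V_{\bar f}$ plus lower-order terms involving $|\na\bar f|^2 \le C\bar f$ times $\bar f^{k-2}|T|^2$; the latter is controlled by the inductive hypothesis at level $k-1$ (applied to $|T|$ and to $|\na T|$), so everything closes with constants $C_k(n)$ depending on $k$ and $n$ only. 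The induction is straightforward bookkeeping once $k=1$ is in hand; the only mild subtlety is keeping the Sobolev norm $\|T\|_{W^{k,2}}$ (all derivatives up to order $k$) large enough to absorb the cross terms, but since each integration by parts trades one power of $\bar f$ for at most one derivative, the count works out. The main obstacle, such as it is, is purely the combinatorial bookkeeping of the iteration and the parity cases; the genuine analytic content is entirely contained in the elementary Gaussian integration by parts of the $k=1$ step, so I expect no real difficulty—this is why the paper cites it as a known lemma from \cite{li2023rigidity}.
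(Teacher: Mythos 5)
Your argument is correct and is essentially the standard proof of this concentration inequality, which the paper does not prove but simply imports from \cite[Lemma 3.6]{li2023rigidity}: the whole content is the Gaussian integration by parts against the vector field $\vec x$ on the Euclidean factor (using $\na\bar f=\vec x/2$ and $|\na\bar f|^2\le\bar f$), Kato's inequality to pass to tensors, and the induction via $\varphi=\bar f^{(k-1)/2}|T|$, which closes exactly as you describe since each power of $\bar f$ costs at most one derivative. The only points to tidy up are routine: justify the integration by parts for general $T\in W^{k,2}$ by a cutoff/density argument, and note that fractional powers $\bar f^{(k-1)/2}$ are smooth because $\bar f\ge m/2+\Theta_m>0$, so the parity discussion is unnecessary.
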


Next, we recall that the center of mass vector $\BB(h,\chi)=\left(\BB_1(h,\chi),\cdots,\BB_{n-m}(h,\chi)\right)$\index{$\BB(h,\chi)$} is defined as:
\begin{align}\label{defncentermass}
	\BB_i(h,\chi):=\int_{\bar M}\bigg\la\partial_{x_i},\bar \na \left(\frac{1}{2}\Tr_{\bar g}(h)-\chi\right) \bigg\ra \,\mathrm{d}V_{\bar f}=\frac{1}{2}\int_{\bar M} x_i\left(\frac{1}{2}\Tr_{\bar g}(h)-\chi\right) \,\mathrm{d}V_{\bar f}.
\end{align}

The following two rigidity inequalities are obtained in \cite[Theorem 9.1]{colding2021singularities} (see also \cite[Proposition 4.9, Theorem 4.10]{li2023rigidity}), provided that $\delta$ is sufficiently small. Again, the norms of the Sobolev spaces are defined with respect to $\mathrm{d}V_{\bar f}$.
\begin{prop} \label{prop:tay1} 
	There exists a constant $C=C(n)$ such that 
	\begin{align*} 
		\|\zeta\|_{W^{2,2}}+\|\bar \na q\|_{W^{1,2}} \le C \left( \|\mathbf{\Phi}(\bar g+h, \bar f+\chi)\|_{L^{2}}+\|\Div_{\bar f} h\|_{W^{1,2}}+|\mathcal B(h,\chi)| +\rVert u\rVert_{L^2}^2\right).
	\end{align*}
\end{prop}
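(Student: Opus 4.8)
The plan is to regard the shrinker equation $\mathbf{\Phi}(\bar g+h,\bar f+\chi)=0$ as a perturbation of its linearization on the cylinder, and to combine a bound on the quadratic error with the coercivity of the linearized operator modulo its kernel. Since $\mathbf{\Phi}(\bar g,\bar f)=0$, expanding to second order gives
\[
\mathbf{\Phi}(\bar g+h,\bar f+\chi)=\mathfrak{L}(h,\chi)+\mathfrak{Q}(h,\chi),
\]
where $\mathfrak{L}:=D\mathbf{\Phi}|_{(\bar g,\bar f)}$ is the first variation and $\mathfrak{Q}$ collects all contributions that are at least quadratic in $(h,\chi)$ and its derivatives up to second order. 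With the decomposition $h=ug_{S^m}+\zeta$, $\chi=\tfrac m2 u+q$ of \eqref{eq:decompose}, the pair $(ug_{S^m},\tfrac m2 u)$ lies in $\ker\mathfrak{L}$ for every $u\in\KK_0$ — this is exactly the statement that $\KK_0 g_{S^m}$ is the $L^2$-kernel of the stability operator $\LL$ inside $\ker\Div_{\bar f}$, cf. \cite[Section 6]{colding2021singularities} and \cite[Proposition 4.2]{li2023rigidity} — so by linearity $\mathfrak{L}(h,\chi)=\mathfrak{L}(\zeta,q)$. The explicit variation formulas for $\Ric$ and for $\bar\na^2\bar f$ (using $\bar\na\bar f=\tfrac12\vec{x}$ and that $\Rm(\bar g)$ is carried by the $S^m$-factor) show that, in the gauge $\Div_{\bar f}\zeta=0$, the operator $\mathfrak{L}(\zeta,q)$ equals, modulo lower-order terms, $\tfrac12\LL\zeta-\bar\na^2 q$; in particular $\|\mathfrak{L}(\zeta,q)\|_{L^2}\le C\big(\|\zeta\|_{W^{2,2}}+\|\bar\na q\|_{W^{1,2}}\big)$, the unbounded coefficients coming from $\bar\na\bar f$ being absorbed via Lemma \ref{concentrationinequ}, and all weighted integrals below being finite because $(h,\chi)$ is compactly supported.

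I would next bound $\mathfrak{Q}$. Write $\mathfrak{Q}=\mathfrak{Q}_1+\mathfrak{Q}_2$, where $\mathfrak{Q}_1$ gathers the terms containing at least one factor from $\zeta,q$ or their derivatives, and $\mathfrak{Q}_2$ the terms built purely from $(ug_{S^m},\tfrac m2 u)$ and its derivatives. For $\mathfrak{Q}_1$, estimate the complementary factor in $C^2$ by $\delta=\|h\|_{C^2}+\|\chi\|_{C^2}$ and the $\zeta$- or $q$-factor in $L^2$; using compact support together with Lemma \ref{concentrationinequ} for the coefficients from $\bar\na\bar f$, this yields $\|\mathfrak{Q}_1\|_{L^2}\le C\delta\big(\|\zeta\|_{W^{2,2}}+\|\bar\na q\|_{W^{1,2}}\big)$. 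For $\mathfrak{Q}_2$, Lemma \ref{estimateJacobifield} gives $|u|+|\vec{x}||\bar\na u|+(1+|\vec{x}|^2)|\bar\na^2 u|\le C(1+|\vec{x}|^2)\alpha$ with $\alpha=\|u\|_{L^2}$, so every monomial in $\mathfrak{Q}_2$ is pointwise at most $C(1+|\vec{x}|^2)^2\alpha^2$; since $\mathrm{d}V_{\bar f}$ carries the Gaussian weight $e^{-|\vec{x}|^2/4}$, this integrates to $\|\mathfrak{Q}_2\|_{L^2}\le C\alpha^2$. Combining the two estimates,
\[
\|\mathfrak{L}(\zeta,q)\|_{L^2}\le\|\mathbf{\Phi}(\bar g+h,\bar f+\chi)\|_{L^2}+C\alpha^2+C\delta\big(\|\zeta\|_{W^{2,2}}+\|\bar\na q\|_{W^{1,2}}\big).
\]

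To conclude, I would invoke the coercivity of $\mathfrak{L}$ modulo its kernel. The non-isometric diffeomorphism directions are controlled by the gauge quantity $\Div_{\bar f}h=\Div_{\bar f}\zeta$; the $\KK_0 g_{S^m}$-direction has been eliminated by passing from $h$ to $\zeta$; the remaining kernel — the translational modes (linear functions of $\vec{x}$ entering the $\chi$-component), which arise from translating the cylinder — is registered by the center-of-mass vector $\BB(h,\chi)$ of \eqref{defncentermass}, while a constant shift of $q$ is invisible to $\|\bar\na q\|_{W^{1,2}}$. Since the principal part of $\mathfrak{L}$ is assembled from $\LL$ and the Ornstein--Uhlenbeck operator on $\R^{n-m}$, both of which have discrete spectrum, $\mathfrak{L}$ is Fredholm on the gauge-fixed subspace, and the spectral-gap estimate of \cite[Sections 6, 9]{colding2021singularities} gives
\[
\|\zeta\|_{W^{2,2}}+\|\bar\na q\|_{W^{1,2}}\le C\big(\|\mathfrak{L}(\zeta,q)\|_{L^2}+\|\Div_{\bar f}h\|_{W^{1,2}}+|\BB(h,\chi)|\big).
\]
Substituting the bound on $\|\mathfrak{L}(\zeta,q)\|_{L^2}$ and absorbing the term $C\delta\big(\|\zeta\|_{W^{2,2}}+\|\bar\na q\|_{W^{1,2}}\big)$ into the left-hand side — legitimate once $\delta\le\delta(n)$ — yields the claimed inequality, with the term $\|u\|_{L^2}^2=\alpha^2$ coming entirely from $\mathfrak{Q}_2$.

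The main obstacle is the spectral input in the last step: one must know that the $L^2$-kernel of $\LL$ in $\ker\Div_{\bar f}$ is exactly $\KK_0 g_{S^m}$, and that after further quotienting by the translational modes (detected by $\BB$) the coupled operator $\mathfrak{L}$ has a uniform spectral gap. On the noncompact cylinder this requires carefully isolating the polynomially growing modes and is the technical core of \cite{colding2021singularities}; here it is used as cited. A secondary, bookkeeping-type difficulty is to organize the Taylor expansion so that every $\zeta$- or $q$-term of the remainder genuinely carries a factor $\delta$ (hence can be absorbed), while the Jacobi-direction contribution is isolated cleanly as a term of size $\|u\|_{L^2}^2$.
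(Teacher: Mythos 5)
There is nothing in the paper to compare against here: Proposition \ref{prop:tay1} is stated as an imported result, quoted from \cite[Theorem 9.1]{colding2021singularities} and \cite[Proposition 4.9, Theorem 4.10]{li2023rigidity}, and the paper gives no proof of it. Your sketch is a faithful reconstruction of the strategy of those references: expand $\mathbf{\Phi}$ about the cylinder, observe via Lemma \ref{lem:first1} that the pair $(ug_{S^m},\tfrac m2 u)$ is annihilated by the linearization (since $\LL(ug_{S^m})=0$, $\Div_{\bar f}(ug_{S^m})=0$, and $Q=\tfrac12\Tr(ug_{S^m})-\tfrac m2u=0$), bound the quadratic remainder, and close with a spectral-gap/coercivity estimate on the gauge-fixed, $\BB$-orthogonal complement of the kernel. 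That last estimate, together with the identification of the kernel as $\KK_0 g_{S^m}$ on the noncompact cylinder, is precisely the content of the cited theorems, so your argument ultimately rests on the same external input as the paper's.

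Two cautions on the remainder estimate as you wrote it. First, $\zeta$ and $q$ are \emph{not} compactly supported (only $h$ and $\chi$ are; the Hermite projection $ug_{S^m}$ is a polynomial on all of $\R^{n-m}$), so finiteness of the weighted integrals comes from membership in the weighted Sobolev spaces, not from compact support. Second, the claim $\|\mathfrak{Q}_1\|_{L^2}\le C\delta(\|\zeta\|_{W^{2,2}}+\|\bar\na q\|_{W^{1,2}})$ is not immediate for cross terms of the schematic form $u*\bar\na^2\zeta$: the only pointwise control on $u$ is $|u|\le C(1+|\vec x|^2)\alpha$, so one is led to $\int(1+|\vec x|^2)^2|\bar\na^2\zeta|^2\,\mathrm{d}V_{\bar f}$, and Lemma \ref{concentrationinequ} cannot absorb a weight of order $\bar f^2$ against a tensor controlled only in $L^2$. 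This is exactly the bookkeeping you flag at the end as "secondary," but it is where a naive term-by-term estimate can genuinely break, and resolving it (by organizing which factor carries the pointwise $\delta$-bound versus the polynomially weighted bound, as in \cite[Section 9]{colding2021singularities}) is part of the substance of the cited proof rather than a formality.
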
 

\begin{prop} \label{prop:tay2}
	For any $\ep>0$,
	there exist positive constants $C=C(n),C_{\ep}=C(n,\ep)$ such that 
	\begin{align*}
		\alpha^2 \le C \|(1+|\vec{x}|)^2\mathbf{\Phi}(\bar g+h, \bar f+\chi)\|_{L^1} +C_{\ep} \left( \|\mathbf{\Phi}(\bar g+h, \bar f+\chi)\|^{2-\ep}_{L^2}+\|\Div_{\bar f} h\|^{2-\ep}_{W^{1,2}}+|\mathcal B(h,\chi)|^{2-\ep} \right).
	\end{align*}
\end{prop}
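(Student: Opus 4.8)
The plan is to treat Proposition~\ref{prop:tay2} as a \emph{second order} rigidity estimate for the cylinder and to follow the scheme of \cite[Theorem 9.1]{colding2021singularities} and \cite[Theorem 4.10]{li2023rigidity}. Recall that $\KK_0 g_{S^m}$ is the $L^2$-kernel of $\LL$ inside $\ker\Div_{\bar f}$, so that in the decomposition \eqref{eq:decompose} the pair $\lc ug_{S^m},\tfrac{m}{2}u\rc$ is exactly the projection of $(h,\chi)$ onto the neutral direction of the linearized shrinker operator. Consequently $\alpha=\rVert u\rVert_{L^2}$ is invisible to the linearization of $\mathbf{\Phi}$ and can only be recovered from the \emph{quadratic} part of the Taylor expansion of $\mathbf{\Phi}(\bar g+h,\bar f+\chi)$ around the cylinder. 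As a first step I would invoke Proposition~\ref{prop:tay1} to record that the transverse components $\zeta$ and $\bar\na q$ are already controlled by $\rVert\mathbf{\Phi}(\bar g+h,\bar f+\chi)\rVert_{L^2}+\rVert\Div_{\bar f}h\rVert_{W^{1,2}}+|\BB(h,\chi)|+\alpha^2$, so that in the quadratic analysis every factor of $\zeta$ or $q$ may be paid for by those quantities.

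Concretely, expand $\mathbf{\Phi}(\bar g+h,\bar f+\chi)=L(h,\chi)+Q(h,\chi)+O(\delta^3)$ with $L$ linear and $Q$ quadratic, pair this identity with a test tensor $v\,g_{S^m}$, $v\in\KK_0$, in the $L^2(\mathrm{d}V_{\bar f})$ inner product, and integrate by parts using the self-adjointness of $\Delta_{\bar f}$ (hence of $\LL$). The linear term $\int_{\bar M}\langle L(h,\chi),v\,g_{S^m}\rangle\,\mathrm{d}V_{\bar f}$ then vanishes by $\LL(v\,g_{S^m})=0$ modulo the gauge term $\Div_{\bar f}h$ and the center-of-mass correction $\BB(h,\chi)$, while the quadratic term $\int_{\bar M}\langle Q(h,\chi),v\,g_{S^m}\rangle\,\mathrm{d}V_{\bar f}$ has a pure-$u$ part which, after a computation using the geometry of the $S^m$ factor and the Hermite polynomial structure of $\KK_0$, is a definite quadratic form in the coefficients of $u$, hence bounded below by $c(n)\alpha^2$ for a suitable choice of $v$. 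Rearranging gives a schematic identity
\begin{equation*}
c(n)\,\alpha^2\le\left|\int_{\bar M}\langle\mathbf{\Phi}(\bar g+h,\bar f+\chi),v\,g_{S^m}\rangle\,\mathrm{d}V_{\bar f}\right|+C\big(\rVert\Div_{\bar f}h\rVert_{W^{1,2}}+|\BB(h,\chi)|\big)+(\text{cross and cubic errors}).
\end{equation*}
I would bound the first term by $C\rVert(1+|\vec{x}|)^2\mathbf{\Phi}(\bar g+h,\bar f+\chi)\rVert_{L^1}$ using the polynomial growth bound $|v|\le C(1+|\vec{x}|^2)$ from Lemma~\ref{estimateJacobifield}, estimate the cross terms involving $\zeta$ and $q$ by Proposition~\ref{prop:tay1} together with the concentration inequality of Lemma~\ref{concentrationinequ}, absorb the $\delta\,\alpha^2$-type contributions into the left side using $\delta\ll1$, and treat the remaining borderline terms by a Young-type split; this last step is where the small loss $\ep$ in the exponents of $\rVert\mathbf{\Phi}\rVert_{L^2}^{2-\ep}$, $\rVert\Div_{\bar f}h\rVert_{W^{1,2}}^{2-\ep}$ and $|\BB(h,\chi)|^{2-\ep}$ is spent (one trades an unweighted $L^2$ or $W^{1,2}$ quantity against the coercive $\alpha^2$, or interpolates against a weight slightly heavier than $(1+|\vec{x}|)^2$).

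I expect the coercivity step to be the main obstacle: verifying that, once all the second order contributions from the nonlinearity of $\Ric$ and $\nabla^2 f$ on the product $\R^{n-m}\times S^{m}$ are collected and projected onto $\KK_0 g_{S^m}$, the resulting quadratic form in the coefficients of $u$ is genuinely definite, with a constant depending only on $n$ and not degenerating as the shape of $u$ varies. This relies on the explicit identification of $\KK_0$ as the relevant Ornstein--Uhlenbeck eigenspace together with careful sign tracking, and is precisely the computational heart of \cite[Theorem 9.1]{colding2021singularities}. A secondary, purely bookkeeping difficulty is arranging the numerous error terms so that each is either absorbed by $c(n)\alpha^2$ or fits into one of the three allowed quantities on the right with the $2-\ep$ loss; Proposition~\ref{prop:tay1} and Lemma~\ref{concentrationinequ} are what convert the weighted integrals appearing there into the stated norms.
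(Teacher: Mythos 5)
The paper does not prove this proposition at all: it is quoted verbatim from \cite[Theorem 9.1]{colding2021singularities} (see also \cite[Proposition 4.9, Theorem 4.10]{li2023rigidity}), so there is no internal argument to compare against. Your sketch is a faithful outline of the strategy of those references: pairing $\mathbf{\Phi}(\bar g+h,\bar f+\chi)$ with a kernel element $v\,g_{S^m}$ (with $v\in\KK_0$ chosen depending on $u$), killing the linear term up to $\Div_{\bar f}h$ and $\BB(h,\chi)$ corrections, extracting $c(n)\alpha^2$ from the projected quadratic term, bounding the pairing by $\rVert(1+|\vec{x}|)^2\mathbf{\Phi}\rVert_{L^1}$ via Lemma~\ref{estimateJacobifield}, and spending the $\ep$-loss on the cross terms via Proposition~\ref{prop:tay1}, Lemma~\ref{concentrationinequ} and interpolation. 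The one step you leave unverified --- that the projection of the quadratic part of $\mathbf{\Phi}$ along pure $\KK_0$-directions onto $\KK_0 g_{S^m}$ is nondegenerate, essentially $\rVert P_{\KK_0}(u^2)\rVert_{L^2}\ge c(n)\rVert u\rVert_{L^2}^2$ via the Hermite-polynomial identity for $a\mapsto a^2$ --- is exactly the ``obstruction of order $3$'' computation that constitutes the heart of \cite[Theorem 9.1]{colding2021singularities}; you correctly flag it rather than claim it, so the proposal is an accurate plan but not a self-contained proof.
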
 

For simplicity, we define 
	\begin{align*} 
\mathfrak{X}:=\|\mathbf{\Phi}(\bar g+h, \bar f+\chi)\|_{L^2}+\|\Div_{\bar f} h\|_{W^{1,2}}+|\mathcal B(h,\chi)|.
	\end{align*}\index{$\mathfrak{X}$}

Since $\|h\|_{C^2}+\|\chi\|_{C^2}=\delta$, we can easily get $\alpha+\mathfrak X \leq C\delta\ll 1$ and $|\zeta|+|q|\leq C\delta(1+|\vec x|^2)$. Moreover, by H\"older's inequality, we have
	\begin{align*}
\|(1+|\vec{x}|)^2\mathbf{\Phi}(\bar g+h, \bar f+\chi)\|_{L^1} \le C \|\mathbf{\Phi}(\bar g+h, \bar f+\chi)\|_{L^2} \le C \mathfrak X.
	\end{align*}
Thus, it follows from Propositions \ref{prop:tay1} and \ref{prop:tay2} that

\begin{cor} \label{cor:tay3}
We have
	\begin{align*}
\|\zeta\|_{W^{2,2}}+\|\bar \na q\|_{W^{1,2}} \le C \mathfrak{X} \quad \text{and} \quad \alpha^2 \le C \mathfrak X.
	\end{align*}
\end{cor}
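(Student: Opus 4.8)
The plan is to combine the two rigidity inequalities of Propositions \ref{prop:tay1} and \ref{prop:tay2} with the smallness of $\delta$, so that the estimate closes without any iteration.

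First I would record the elementary bounds already anticipated in the discussion preceding the corollary. The key point is that $\mathrm{d}V_{\bar f}$ is a probability measure on $\bar M$ (the constant $\Theta_m$ is chosen precisely so that $\VV(\bar g,\bar f)=1$) whose density decays like a Gaussian in the $\R^{n-m}$-directions, so that the linearly growing weight $\bar\na\bar f=\tfrac12\vec x$ and the polynomial weights $(1+|\vec x|)^k$ are harmless. Since $(h,\chi)$ is compactly supported with $\|h\|_{C^2}+\|\chi\|_{C^2}=\delta$ and $\mathbf{\Phi}(\bar g,\bar f)=0$, one has $\mathbf{\Phi}(\bar g+h,\bar f+\chi)=0$ off $\supp h\cup\supp\chi$ and $|\mathbf{\Phi}(\bar g+h,\bar f+\chi)|\le C\delta(1+|\vec x|)$ on it, hence $\|\mathbf{\Phi}(\bar g+h,\bar f+\chi)\|_{L^2}\le C\delta$; similarly $\|\Div_{\bar f}h\|_{W^{1,2}}\le C\delta$, $|\BB(h,\chi)|\le C\delta$, and $\alpha=\|u\|_{L^2}\le C\|h\|_{L^2}\le C\delta$, the last because $ug_{S^m}$ is the $L^2(\mathrm{d}V_{\bar f})$-orthogonal projection of $h$. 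Consequently $\alpha+\mathfrak X\le C\delta\ll1$, and $|\zeta|+|q|\le C\delta(1+|\vec x|^2)$ follows from the $C^2$-bound on $(h,\chi)$ together with Lemma \ref{estimateJacobifield}. Cauchy--Schwarz and the Gaussian weight also give $\|(1+|\vec x|)^2\mathbf{\Phi}(\bar g+h,\bar f+\chi)\|_{L^1}\le C\|\mathbf{\Phi}(\bar g+h,\bar f+\chi)\|_{L^2}\le C\mathfrak X$.

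Next I would prove $\alpha^2\le C\mathfrak X$. Fix $\ep=1$ in Proposition \ref{prop:tay2}, so that $C_\ep$ becomes a definite constant $C(n)$ and the exponents $2-\ep$ equal $1$; using the weighted $L^1$-bound above together with the trivial inequalities $\|\mathbf{\Phi}(\bar g+h,\bar f+\chi)\|_{L^2},\ \|\Div_{\bar f}h\|_{W^{1,2}},\ |\BB(h,\chi)|\le\mathfrak X$, Proposition \ref{prop:tay2} gives
\[
\alpha^2\le C\|(1+|\vec x|)^2\mathbf{\Phi}(\bar g+h,\bar f+\chi)\|_{L^1}+C_\ep\left(\|\mathbf{\Phi}(\bar g+h,\bar f+\chi)\|_{L^2}+\|\Div_{\bar f}h\|_{W^{1,2}}+|\BB(h,\chi)|\right)\le C\mathfrak X.
\]
(If one prefers an arbitrary $\ep\in(0,1]$, the extra factor $\mathfrak X^{1-\ep}\le1$ coming from $\mathfrak X\ll1$ turns $\mathfrak X^{2-\ep}$ into $\le\mathfrak X$ just the same.) Feeding $\alpha^2\le C\mathfrak X$ back into Proposition \ref{prop:tay1} then yields
\[
\|\zeta\|_{W^{2,2}}+\|\bar\na q\|_{W^{1,2}}\le C\left(\mathfrak X+\alpha^2\right)\le C\mathfrak X,
\]
which is the remaining claim.

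The only step that needs care is the absorption: one has to pick $\ep$ (or exploit $\mathfrak X\ll1$) so that the subquadratic power $\mathfrak X^{2-\ep}$ in Proposition \ref{prop:tay2} is dominated by $\mathfrak X$, and one must check that the $\alpha^2$ term on the right-hand side of Proposition \ref{prop:tay1} is genuinely of lower order --- but since $\alpha^2\le C\mathfrak X$ is established outright, this is automatic and no mutual iteration of the two inequalities is required. Everything else is finiteness of the weighted measure and Cauchy--Schwarz.
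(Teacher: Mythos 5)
Your proposal is correct and follows essentially the same route as the paper: establish $\alpha+\mathfrak X\le C\delta\ll 1$ and the H\"older bound $\|(1+|\vec x|)^2\mathbf{\Phi}(\bar g+h,\bar f+\chi)\|_{L^1}\le C\|\mathbf{\Phi}(\bar g+h,\bar f+\chi)\|_{L^2}\le C\mathfrak X$, apply Proposition \ref{prop:tay2} (absorbing $\mathfrak X^{2-\ep}\le\mathfrak X$ via $\mathfrak X\ll 1$) to get $\alpha^2\le C\mathfrak X$, and feed this into Proposition \ref{prop:tay1}. The paper's proof is just a terser version of the same argument.
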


Now, we consider the families for $s \in [0, 1]$:
\begin{align*}
	g(s) = \bar g+s h, \quad \text{and} \quad  f(s) = \bar f+s \chi.
\end{align*}
In addition, for the weighted Riemannian manifold $(\bar M,g(s),f(s))$, we define
\begin{align*}
  \begin{dcases}
  		& \mathrm{d}V_s:=\mathrm{d}V_{f(s)},\\
		&V(s)=\int_{\bar M} 1 \,\mathrm{d}V_{s},\\
		&Q(s)=\frac{1}{2} \Tr_{g(s)} h-\chi,\\
		&w(s):= 2\Delta_{g(s)} f(s)-|\nabla_{g(s)} f(s)|^2+\scal\left(g(s)\right)+f(s)-n,\\
		&W(s)=\WW(g(s),f(s))= \int_{\bar M}  w(s) \,\mathrm{d}V_s,\\
		&\Phi(s)=\mathbf{\Phi}(g(s), f(s))=\frac{g(s)}{2}-\Ric(g(s))-\na^2_{g(s)} f(s).
\end{dcases}
\end{align*}
For the rest of this section, all geometric quantities such as $\Ric$, $\na$, $\na^2$, and $\Div_f$ are defined with respect to the pair $(g(s), f(s))$, though we omit the subscript $g(s)$ and the parameter $s$ for simplicity. The norms $\|\cdot\|_{L^1}$, $\|\cdot\|_{L^2}$, $|\cdot|$ and $[\cdot]_l$ (see \eqref{defnnormtensor}) are taken with respect to $\mathrm{d}V_{s}$ and $g(s)$, respectively. Notice that since $\|h\|_{C^2}+\|\chi\|_{C^2}$ is sufficiently small, the volume form $\mathrm{d}V_s$ is uniformly comparable to $\mathrm{d}V_0=\mathrm{d}V_{\bar f}$, and the norms $[\cdot]_l$ are uniformly comparable to the corresponding norms defined with respect to $\bar g$ for $l \le 2$.

First, we recall the following result; see \cite[Lemma 2.3]{cao2012second} for instance.

\begin{lem} \label{lem:first1}
We have
\begin{align*}
	\Phi'(0)=\frac{1}{2}\LL h+\Div_{\bar f}^*\Div_{\bar f}h+\na^2 Q(0).
\end{align*}
\end{lem}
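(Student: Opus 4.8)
The plan is to differentiate the defining expression $\Phi(s)=\tfrac12 g(s)-\Ric(g(s))-\na^2_{g(s)}f(s)$ at $s=0$ and then collect terms. Since $g(s)=\bar g+sh$ and $f(s)=\bar f+s\chi$, we have
\[
\Phi'(0)=\tfrac12 h-D\Ric_{\bar g}(h)-\frac{\dd}{\dd s}\Big|_{s=0}\na^2_{g(s)}\big(\bar f+s\chi\big),
\]
so there are three contributions: the algebraic term $\tfrac12 h$, the linearization of the Ricci tensor, and the variation of the Hessian of the \emph{moving} potential with respect to the \emph{moving} metric. For the last one, writing the Hessian in coordinates as $\partial_i\partial_j f(s)-\Gamma^k_{ij}(g(s))\partial_k f(s)$ and differentiating gives
\[
\frac{\dd}{\dd s}\Big|_{s=0}\na^2_{g(s)}\big(\bar f+s\chi\big)=\na^2_{\bar g}\chi-\dot{\Gamma}^k_{ij}\,\bar f_k,\qquad \dot{\Gamma}^k_{ij}=\tfrac12\bar g^{kl}\big(\na_i h_{jl}+\na_j h_{il}-\na_l h_{ij}\big),
\]
the standard first variation of the Christoffel symbols.

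Next I would insert the classical first variation of Ricci (this is exactly \cite[Lemma 2.3]{cao2012second}, equivalently the linearized DeTurck identity),
\[
-D\Ric_{\bar g}(h)_{ij}=\tfrac12\Delta h_{ij}+R_{ikjl}h^{kl}-\tfrac12\big(R_{ik}h^k_j+R_{jk}h^k_i\big)-\tfrac12\big(\na_i(\Div h)_j+\na_j(\Div h)_i\big)+\tfrac12\na_i\na_j\Tr_{\bar g}h,
\]
and regroup $\Phi'(0)$ into five blocks. First, the purely second-order-in-$(h,\chi)$ terms combine to $\tfrac12\na^2\Tr_{\bar g}h-\na^2\chi=\na^2 Q(0)$. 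Second, $\tfrac12\Delta h_{ij}$ together with the term $-\tfrac12(\na_l h_{ij})\bar f^l$ coming from $\dot{\Gamma}^k_{ij}\bar f_k$ assembles the weighted rough Laplacian $\tfrac12\Delta_{\bar f}h_{ij}$. Third, the curvature term $R_{ikjl}h^{kl}$ is, in the paper's index convention, precisely the operator $\Rm_{likj}h_{lk}$, so the second and third blocks together give exactly $\tfrac12\LL h=\tfrac12\Delta_{\bar f}h+\Rm* h$.

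The remaining two blocks require the shrinker structure of the base. Block four is $-\tfrac12(\na_i(\Div h)_j+\na_j(\Div h)_i)+\tfrac12(\na_i h_{jl}+\na_j h_{il})\bar f^l$; since $(\Div_{\bar f}h)_j=(\Div h)_j-h_{jl}\bar f^l$ and $\Div_{\bar f}^{*}$ is the symmetrized negative covariant derivative (the $\bar f$-terms cancel when forming the adjoint with respect to $\dd V_{\bar f}$), expanding $\Div_{\bar f}^{*}\Div_{\bar f}h$ shows that block four equals $\Div_{\bar f}^{*}\Div_{\bar f}h$ minus the error $\tfrac12\big((\na^2\bar f)_{il}h^l_j+(\na^2\bar f)_{jl}h^l_i\big)$. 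Block five is the leftover algebraic part $\tfrac12 h_{ij}-\tfrac12(R_{ik}h^k_j+R_{jk}h^k_i)$; substituting the shrinker identity $\Ric(\bar g)+\na^2\bar f=\tfrac12\bar g$, which holds because $\mathbf{\Phi}(\bar g,\bar f)=0$ on the cylinder, converts block five into $\tfrac12\big((\na^2\bar f)_{ik}h^k_j+(\na^2\bar f)_{jk}h^k_i\big)$, which exactly cancels the error term from block four. Adding blocks one through five yields $\Phi'(0)=\tfrac12\LL h+\Div_{\bar f}^{*}\Div_{\bar f}h+\na^2 Q(0)$, as claimed. (One can sanity-check the identity on the special cases $h=0$, where both sides equal $-\na^2\chi$ since $Q(0)=-\chi$.)

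I expect the main difficulty to be purely bookkeeping: fixing once and for all the sign and index conventions for the Riemann tensor (so that $R_{ikjl}h^{kl}$ matches the $\Rm_{likj}h_{lk}$ inside $\LL$), for $\Div_{\bar f}$ and its adjoint $\Div_{\bar f}^{*}$, and for the curvature terms in the linearization of $\Ric$, and then keeping careful track that it is the Hessian of the moving potential $\bar f+s\chi$ with respect to the moving metric $g(s)$ that is differentiated — it is the term $\dot{\Gamma}^k_{ij}\bar f_k$ it produces that both completes the weighted Laplacian and supplies the divergence correction. The only place the geometry of the base is used is the shrinker equation $\mathbf{\Phi}(\bar g,\bar f)=0$, which enters exactly once, in the final cancellation of the two leftover blocks.
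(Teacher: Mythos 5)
Your computation is correct: the linearization of $\Ric$, the variation of the Hessian of the moving potential, the identification of $\Div_{\bar f}^{*}$ as the symmetrized negative covariant derivative, and the single use of the shrinker identity $\Ric(\bar g)+\na^2\bar f=\tfrac12\bar g$ to cancel the two leftover blocks all check out, and the $h=0$ sanity check is consistent. The paper itself gives no proof but simply cites \cite[Lemma 2.3]{cao2012second}; your argument is the standard derivation behind that reference, so it is essentially the same approach, just written out in full.
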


Next, we estimate:

\begin{lem}\label{estideriPhi}
	There exist a constant $C=C(n)$ such that the following inequalities hold for any $s\in [0,1]$:
	\begin{enumerate}[label=\textnormal{(\roman{*})}]
		\item 
		$\displaystyle \left|\Phi'(s)\right|\leq C\left([h]_2+[\na\chi]_1+(1+|\vec{x}|)[h]_1\right)$,
		\item 
		$\displaystyle \left|\Phi''(s)\right|\leq C\left( [h]_1^2+[h]_0[h]_2+(1+|\vec{x}|)[h]_0[h]_1+[h]_1[\na\chi]_0\right)$.
	\end{enumerate}
\end{lem}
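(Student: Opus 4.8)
The plan is to differentiate the three summands of $\Phi(s)=\tfrac12 g(s)-\Ric(g(s))-\na^2_{g(s)}f(s)$ separately along the linear path $g(s)=\bar g+sh$, $f(s)=\bar f+s\chi$, and to keep track of how many derivatives of $h$ and $\chi$ appear and where the weight $1+|\vec{x}|$ enters. The term $\tfrac12 g(s)$ contributes $\tfrac12 h$ to $\Phi'$ and nothing to $\Phi''$. For the Ricci term I will use the standard first- and second-variation formulas (compare Lemma \ref{lem:first1} and \cite{cao2012second}): writing $\na=\na_{g(s)}$ throughout, $\partial_s\Ric(g(s))$ is a universal contraction built from $g(s)^{-1}$ and $\na^2 h$ together with a $\Rm(g(s))*h$ term, while $\partial_s^2\Ric(g(s))$ is a universal contraction of the schematic form $g(s)^{-1}*g(s)^{-1}*(\na h*\na h+h*\na^2 h)+\Rm(g(s))*h*h$. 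Since $\|h\|_{C^2}+\|\chi\|_{C^2}=\delta\ll1$, the metric $g(s)$ is uniformly equivalent to $\bar g$, the connection $\na_{g(s)}$ differs from $\bar\na$ by a uniformly bounded $\bar\na h$-term, and $\Rm(g(s))$ is bounded by $C(n)$, because $\Rm(\bar g)$ is bounded on $\R^{n-m}\times S^m$ and $h$ has compact support with small $C^2$-norm. Hence, using also $[h]_0\le[h]_2$, the Ricci term contributes $\le C[h]_2$ to $|\Phi'(s)|$ and $\le C([h]_1^2+[h]_0[h]_2)$ to $|\Phi''(s)|$, with no weight factor.

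The weight $1+|\vec{x}|$ enters only through the Hessian term, and its appearance is governed by $|\na\bar f|=\tfrac12|\vec{x}|\le C(1+|\vec{x}|)$ together with $|\na\chi|\le\delta$, so that $|\na f(s)|=|\na\bar f+s\na\chi|\le C(1+|\vec{x}|)$. Setting $\dot\Gamma:=\partial_s\Gamma(g(s))=g(s)^{-1}*\na h$ (a $(1,2)$-tensor, first order in $h$, hence $|\dot\Gamma|\le C[h]_1$) and $\ddot\Gamma:=\partial_s^2\Gamma(g(s))=g(s)^{-1}*g(s)^{-1}*h*\na h$ (hence $|\ddot\Gamma|\le C[h]_0[h]_1$), and using $\partial_s f(s)=\chi$, a direct computation gives $\partial_s\na^2_{g(s)}f(s)=\na^2_{g(s)}\chi-\dot\Gamma*\na f(s)$, so that $|\partial_s\na^2_{g(s)}f(s)|\le C\big([\na\chi]_1+(1+|\vec{x}|)[h]_1\big)$, since $|\na^2_{g(s)}\chi|\le[\na\chi]_1$ by definition of $[\cdot]_1$. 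Differentiating once more yields $\partial_s^2\na^2_{g(s)}f(s)=-2\,\dot\Gamma*\na\chi-\ddot\Gamma*\na f(s)$, hence $|\partial_s^2\na^2_{g(s)}f(s)|\le C\big([h]_1[\na\chi]_0+(1+|\vec{x}|)[h]_0[h]_1\big)$. Assembling the three contributions — and, where convenient, using $\delta<1$ to absorb, e.g., $[h]_1[\na\chi]_0\le[h]_1\le(1+|\vec{x}|)[h]_1$ — produces exactly the bounds (i) and (ii).

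There is no genuine analytic difficulty here; the main obstacle is purely organizational — recording the schematic $*$-notation forms of the first and second variations of $\Ric$ and of the Hessian cleanly enough that the derivative count and the power of $|\vec{x}|$ are transparent. The two points that require care are: (a) confirming that $\partial_s\na^2_{g(s)}\chi$ and the discrepancy between $\na^2_{g(s)}$ and $\bar\na^2$ produce only terms of the type $[h]_1[\na\chi]_0$ (and comparable ones), never a term involving $\na\bar f$; and (b) checking that it is $\na f(s)$, which is linear in $|\vec{x}|$, not $f(s)$ itself, which is quadratic, that multiplies the Christoffel variations $\dot\Gamma,\ddot\Gamma$, so that no second power of $|\vec{x}|$ ever appears. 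Once these schematic formulas are in place, the estimates follow by inspection from the smallness of $\delta$ and the uniform bounds on $g(s)^{-1}$, $\na_{g(s)}-\bar\na$, and $\Rm(g(s))$.
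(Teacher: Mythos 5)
Your proposal is correct and follows essentially the same route as the paper: a direct computation of the first and second $s$-variations of $\Ric$ and $\na^2 f$ along the linear path, with the Christoffel variations $\dot\Gamma$, $\ddot\Gamma$ bounded by $C[h]_1$ and $C[h]_0[h]_1$ and the weight $(1+|\vec{x}|)$ entering only through $|\na f(s)|\le C(1+|\vec{x}|)$ multiplying those variations. The paper writes the identical computation in explicit local coordinates (its $C^k_{ij}$, $D^k_{ij}$ are your $\dot\Gamma$, $\ddot\Gamma$), so the schematic $*$-notation is merely a presentational difference.
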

\begin{proof}
	Recall the following local expression of $\Ric,\na^2 f$,
	\begin{align*}
		\Ric_{ij}=\partial_l\Gamma^l_{ij}-\partial_i\Gamma^l_{lj}+\Gamma^a_{ij}\Gamma^{l}_{la}-\Gamma^a_{lj}\Gamma^l_{ia},\quad \na_{ij}^2 f=\partial_i\partial_j f-\Gamma^k_{ij}\partial_k f.
	\end{align*}
	Set $C^{k}_{ij}:=\left(\Gamma^k_{ij}\right)_s$ and $D^k_{ij}:=\left(\Gamma^k_{ij}\right)_{ss}$. Since $\Gamma^k_{ij}=\frac{1}{2}g^{kl}\left(\partial_ig_{jl}+\partial_jg_{il}-\partial_lg_{ij}\right)$, we have for $s\in [0,1]$, 
	\begin{align}\label{Cijkestimate}
		C^k_{ij}=&\frac{1}{2}g^{kl}\left(\partial_i(g_s)_{jl}+\partial_j(g_s)_{il}-\partial_l(g_s)_{ij}\right)+\frac{1}{2}(g^{kl})_s\left(\partial_ig_{jl}+\partial_jg_{il}-\partial_lg_{ij}\right)\nonumber\\
		=&\frac{1}{2}g^{kl}\left(\partial_ih_{jl}+\partial_jh_{il}-\partial_lh_{ij}\right)-\frac{1}{2}h^{kl}\left(\partial_ig_{jl}+\partial_jg_{il}-\partial_lg_{ij}\right)
	\end{align}
and
	\begin{align}\label{Dijkestimate}
		D^k_{ij}=&\frac{1}{2}(g^{kl})_s\left(\partial_ih_{jl}+\partial_jh_{il}-\partial_lh_{ij}\right)-\frac{1}{2}(h^{kl})_s\left(\partial_ig_{jl}+\partial_jg_{il}-\partial_lg_{ij}\right)\nonumber\\
		&-\frac{1}{2}h^{kl}\left(\partial_i(g_s)_{jl}+\partial_j(g_s)_{il}-\partial_l(g_s)_{ij}\right)\nonumber\\
		=&-\frac{1}{2}h^{kl}\left(\partial_ih_{jl}+\partial_jh_{il}-\partial_lh_{ij}\right)+h^{kj}h_{ji}g^{il} \left(\partial_ig_{jl}+\partial_jg_{il}-\partial_lg_{ij}\right)\nonumber\\
		&-\frac{1}{2}h^{kl} \left(\partial_ih_{jl}+\partial_jh_{il}-\partial_lh_{ij}\right).
	\end{align}
In particular, we have
	\begin{align}\label{Cijkestimate1}
\abs{C^k_{ij}} \le C[h]_1 \quad \text{and} \quad \abs{D^k_{ij}} \le C[h]_0 [h]_1.
	\end{align}

	Thus, by \eqref{Cijkestimate}, \eqref{Dijkestimate} and \eqref{Cijkestimate1}, we obtain
	\begin{align}\label{Ricestimate2}
		\left|(\Ric_{ij})_s\right|&=\left|\partial_l C^l_{ij}-\partial_iC^l_{lj}+C^{a}_{ij}\Gamma^l_{la}+\Gamma^{a}_{ij}C^l_{la}-C^a_{lj}\Gamma^l_{ia}-\Gamma^a_{lj}C^l_{ia}\right|\leq C[h]_2
	\end{align}
	and
	\begin{align}\label{Ricestimate1}
		\left|(\Ric_{ij})_{ss}\right|&=\left|\partial_lD^l_{ij}-\partial_iD^l_{lj}+D^a_{ij}\Gamma^{l}_{la}+\Gamma^a_{ij}D^l_{la}+2C^a_{ij}C^l_{la}-D^a_{lj}\Gamma^l_{ia}-\Gamma^a_{lj}D^l_{ia}-2C^a_{lj}C^l_{ia}\right|\nonumber\\
		&\leq C\left( [h]_1^2+[h]_0[h]_2\right).
	\end{align}
	
	Similarly, we have
	\begin{align}\label{na2festimate2}
		\left|(f_{ij})_s\right|&=\left|\partial_i\partial_j f_s-C^k_{ij}\partial_k f-\Gamma^k_{ij}\partial_kf_s\right|\leq C\left([\na\chi]_1+(1+|\vec{x}|)[h]_1\right),
	\end{align}
	and
	\begin{align}\label{na2festimate}
		\left|(f_{ij})_{ss}\right|&=\left|\partial_i\partial_j f_{ss}-D^k_{ij}\partial_k f-C^k_{ij}\partial_k f_s-C^k_{ij}\partial_k f_s-\Gamma^k_{ij}\partial_kf_{ss}\right|\nonumber\\
		&=\left|D^k_{ij}\partial_k f+2C^k_{ij}\partial_k f_s\right|\leq C\left((1+|\vec{x}|)[h]_0[h]_1+[h]_1[\na\chi]_0\right).
	\end{align}
	Here, we have used the fact that $|\partial_k \bar f|\leq C |\vec{x}|$. Note that
	\begin{align*}
		\Phi'(s)=\frac{g_{s}}{2}-\Ric_{s}-(\na^2 f)_{s},\quad \Phi''(s)=\frac{g_{ss}}{2}-\Ric_{ss}-(\na^2 f)_{ss}.
	\end{align*}
By \eqref{Ricestimate2}, \eqref{Ricestimate1}, \eqref{na2festimate2} and \eqref{na2festimate}, it follows that
	\begin{align*}
		|\Phi'(s)|&\leq C\left([h]_2+[\na\chi]_1+(1+|\vec{x}|)[h]_1\right),\\
		|\Phi''(s)|&\leq C\left( [h]_1^2+[h]_0[h]_2+(1+|\vec{x}|)[h]_0[h]_1+[h]_1[\na\chi]_0\right).
	\end{align*}
	This completes the proof.
\end{proof}

\begin{cor}\label{keycal1}
	There exists a constant $C=C(n)>0$ such that the following integral estimates hold for any $s\in [0,1]$:
	\begin{enumerate}[label=\textnormal{(\roman{*})}]
		\item $\rVert\Phi''*h\rVert_{L^1}\leq C\left(\alpha^3+\alpha^2 \mathfrak{X}+\delta\mathfrak{X}^2\right)$,
		\item $\rVert\Phi'*h*h\rVert_{L^1}\leq C\left(\alpha^3+\alpha^2 \mathfrak{X}+\delta\mathfrak{X}^2\right)$,
		\item $\rVert\Phi*h*h\rVert_{L^1}\leq C\left(\alpha^3+\alpha^2 \mathfrak{X}+\delta\mathfrak{X}^2\right)$.
	\end{enumerate}
\end{cor}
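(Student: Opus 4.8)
The plan is to handle (i)--(iii) in a single framework. Since $\mathrm dV_s$ is uniformly comparable to $\mathrm dV_{\bar f}$ and the $g(s)$-norms to the $\bar g$-norms for derivatives of order $\le 2$, it suffices to bound, uniformly in $s\in[0,1]$, the integrals $\int_{\bar M}|\Phi''(s)|\,[h]_0\,\mathrm dV_{\bar f}$, $\int_{\bar M}|\Phi'(s)|\,[h]_0^2\,\mathrm dV_{\bar f}$ and $\int_{\bar M}|\Phi(s)|\,[h]_0^2\,\mathrm dV_{\bar f}$. For the third one I write $\Phi(s)=\int_0^s\Phi'(\sigma)\,\mathrm d\sigma$ and invoke Lemma \ref{estideriPhi}(i), obtaining $|\Phi(s)|\le C\bigl([h]_2+[\na\chi]_1+(1+|\vec x|)[h]_1\bigr)$, the same pointwise bound Lemma \ref{estideriPhi}(i) furnishes for $|\Phi'(s)|$, while Lemma \ref{estideriPhi}(ii) controls $|\Phi''(s)|$ by $C\bigl([h]_1^2+[h]_0[h]_2+(1+|\vec x|)[h]_0[h]_1+[h]_1[\na\chi]_0\bigr)$. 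In each case the integrand is therefore dominated by a finite sum of cubic terms of the form $(1+|\vec x|)^a[h]_{i_1}[h]_{i_2}[h]_{i_3}$ or $(1+|\vec x|)^a[h]_{i_1}[h]_{i_2}[\na\chi]_{i_3}$ with $a\in\{0,1\}$ and $i_k\le 2$, in which at most one of the factors has order $2$, and such a factor, when present, carries neither the prefactor $(1+|\vec x|)$ nor any further factor of order $\ge 1$ alongside it.

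Next I substitute the decomposition $h=u\,g_{S^m}+\zeta$, $\chi=\tfrac m2 u+q$ and expand, so that every such term becomes a finite sum of atomic products $(1+|\vec x|)^a A_1A_2A_3$, each $A_k$ being $|\na^l u|$, $|\na^l\zeta|$ $(0\le l\le 2)$ or $|\na^l q|$ $(1\le l\le 2)$. Lemma \ref{estimateJacobifield} gives the pointwise bounds $|\na^l u|\le C(1+|\vec x|)^{2-l}\alpha$; since $\|h\|_{C^2}+\|\chi\|_{C^2}=\delta$, the $\zeta$- and $q$-factors satisfy $|\na^l\zeta|,\,|\na^l q|\le C\delta(1+|\vec x|)^{2-l}$ pointwise, and by Corollary \ref{cor:tay3} together with Lemma \ref{concentrationinequ} they satisfy the weighted $L^2$-bounds $\int_{\bar M}(1+|\vec x|)^{2(2-l)}\bigl(|\na^l\zeta|^2+|\na^l q|^2\bigr)\,\mathrm dV_{\bar f}\le C\mathfrak X^2$ for the admissible $l$. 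I then estimate each atomic integral according to the number $j\in\{0,1,2,3\}$ of its factors that are of $\zeta$- or $q$-type: for $j=0$ all factors are bounded pointwise by $\alpha$ times a polynomial and the Gaussian density $\mathrm dV_{\bar f}$ absorbs the polynomial, giving $\le C\alpha^3$; for $j=1$ the two $u$-factors are taken pointwise and the remaining factor goes into Cauchy--Schwarz with no extra weight, its $L^2$-norm being $\le C\mathfrak X$, which yields $\le C\alpha^2\mathfrak X$; for $j=2$ the single $u$-factor is taken pointwise (producing $\alpha$ times a polynomial of degree $\le 3$) and Cauchy--Schwarz is applied to the two $\zeta/q$-factors, assigning no weight to an order-$2$ factor if one is present and loading the remaining polynomial onto the other, giving $\le C\alpha\mathfrak X^2$; for $j=3$ the factor of highest order is taken pointwise (producing $\delta$ times a polynomial of degree $\le 3$) and Cauchy--Schwarz is applied to the other two as before, giving $\le C\delta\mathfrak X^2$. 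Summing the finitely many atomic contributions and using $\alpha\le C\delta$ yields all three estimates.

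The point requiring genuine care is the weight bookkeeping in the cases $j=2,3$: one must check that after extracting one factor pointwise the leftover polynomial of degree $\le 3$ can be split between the two remaining $L^2$-factors so that a factor $\na^l\zeta$ (or $\na^l q$) is loaded with a weight of degree at most $2-l$, which keeps its Cauchy--Schwarz contribution $\le C\mathfrak X$ by the weighted $L^2$-bounds above. This always works thanks to the structural observation from Lemma \ref{estideriPhi} recorded in the first paragraph: an order-$2$ factor occurs only in a product with order-$0$ factors and never with the prefactor $(1+|\vec x|)$, so it can be assigned zero weight while the accompanying order-$0$ $\zeta$-factor, which accepts weight up to degree $2$, absorbs the leftover (which is then of degree at most $2$); in every other configuration a short case check shows the weight budgets of the two $L^2$-factors — degree $2,1,0$ for orders $0,1,2$ respectively — sum to at least the leftover degree, the only tight cases being exactly those where the leftover degree $3$ forces the two $L^2$-factors to have orders $1$ and $0$. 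Everything else is routine: Gaussian moments of polynomials, Corollary \ref{cor:tay3}, and the elementary bound $\alpha\le C\delta$. The main obstacle is thus not any single estimate but the need to organize this expansion so that the polynomial growth of $u$ and of $\zeta,q$ is always paired against the Gaussian density or against a Sobolev norm with enough room to spare.
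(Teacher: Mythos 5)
Your proof is correct and follows essentially the same route as the paper: bound $|\Phi|,|\Phi'|,|\Phi''|$ via Lemma \ref{estideriPhi} (using $\Phi(0)=0$ for $\Phi$ itself), substitute $h=ug_{S^m}+\zeta$, $\chi=\tfrac m2u+q$, and pair the polynomial growth of the $u$-factors against the Gaussian measure while the $\zeta,q$-factors are handled by Cauchy--Schwarz with the weighted $L^2$ bounds from Corollary \ref{cor:tay3} and Lemma \ref{concentrationinequ}. The only difference is organizational — the paper estimates each term of Lemma \ref{estideriPhi} separately, whereas you classify the atomic products by the number of $\zeta/q$-factors and run a uniform weight-budget check — and your bookkeeping (including the conversion $\alpha\mathfrak X^2\le C\delta\mathfrak X^2$) is sound.
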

\begin{proof}
	
	By Lemma \ref{estideriPhi}, we have
	\begin{align}\label{phiintegral1}
		\rVert\Phi''*h\rVert_{L^1}&\leq C \int_{\bar M}  [h]_1^2[h]_0+[h]_0^2[h]_2+(1+|\vec{x}|)[h]_0^2[h]_1+[h]_0[h]_1[\na\chi]_0 \,\mathrm{d}V_s.
	\end{align}
	
	Recall that $h=ug_{S^m}+\zeta$, $\chi=mu/2+q$ and $\delta=\|h\|_{C^2}+\|\chi\|_{C^2}$. By Corollary \ref{cor:tay3}, we have
	\begin{align}\label{phiintegral1xa}
		\rVert\zeta\rVert_{W^{2,2}}+\rVert\na q\rVert_{W^{1,2}}\leq C\mathfrak{X}.
	\end{align}
	

We first estimate
	\begin{align*}
[h]_1^2[h]_0 \le & C ([u]_1^2+[\zeta]_1^2) [h]_0\le C \delta [\zeta]_1^2+C[u]_1^2 [h]_0 \\
\le &C \delta [\zeta]_1^2+C[u]_1^2 ([u]_0+[\zeta]_0) \\
\le& C \delta [\zeta]_1^2 +C \alpha^3 (1+|\vec{x}|)^6+C \alpha^2 (1+|\vec{x}|)^4 [\zeta]_0,
	\end{align*}
	where we used Lemma \ref{estimateJacobifield}. From \eqref{phiintegral1xa} and H\"older's inequality, it is clear that
		\begin{align}
		\label{phiintegral11}\int_{\bar M}[h]_1^2[h]_0 \,\mathrm{d}V_s \leq C\left(\alpha^3+\alpha^2 \mathfrak{X}+\delta\mathfrak{X}^2\right).
	\end{align}
	
Similarly, we have
	\begin{align*}
[h]_0^2[h]_2 \le & C \delta [\zeta]_0^2 +C [u]_0^2[h]_2 \\
 \le & C \delta [\zeta]_0^2 +C [u]_0^2[u]_2+C [u]_0^2[\zeta]_2 \\
\le& C \delta [\zeta]_0^2 +C \alpha^3 (1+|\vec{x}|)^6+C \alpha^2 (1+|\vec{x}|)^4 [\zeta]_2
	\end{align*}
and hence
\begin{align}
		\label{phiintegral22}\int_{\bar M} [h]_0^2[h]_2 \,\mathrm{d}V_s \leq C\left(\alpha^3+\alpha^2 \mathfrak{X}+\delta\mathfrak{X}^2\right).
	\end{align}	

Moreover, we have
	\begin{align*}
(1+|\vec{x}|)[h]_0^2[h]_1 \le & C \delta (1+|\vec{x}|) [\zeta]_0^2 +C (1+|\vec{x}|) [u]_0^2[h]_1 \\
\le &C \delta (1+|\vec{x}|) [\zeta]_0^2 +C (1+|\vec{x}|) [u]_0^2[u]_1+C (1+|\vec{x}|) [u]_0^2[\zeta]_1 \\
\le & C \delta (1+|\vec{x}|) [\zeta]_0^2+C (1+|\vec{x}|)^7 \alpha^3+C (1+|\vec{x}|)^{5} \alpha^2[\zeta]_1.
	\end{align*}
Since by Lemma \ref{concentrationinequ}, 
	\begin{align*}
\int_{\bar M} (1+|\vec{x}|) [\zeta]_0^2 \,\mathrm{d}V_s \le C \int_{\bar M}\bar f [\zeta]_0^2 \,\mathrm{d}V_s \le C \int_{\bar M} [\zeta]_1^2 \,\mathrm{d}V_s,
	\end{align*}
we obtain
\begin{align}
		\label{phiintegral23}\int_{\bar M} (1+|\vec{x}|)[h]_0^2[h]_1 \,\mathrm{d}V_s \leq C\left(\alpha^3+\alpha^2 \mathfrak{X}+\delta\mathfrak{X}^2\right).
	\end{align}	

In addition, we compute
		\begin{align*}
[h]_0[h]_1[\na\chi]_0 \le & [h]^2_1 [\na\chi]_0 \le C([u]_1^2+[\zeta]_1^2) [\na\chi]_0 \\
\le & C[u]_1^2 [\na\chi]_0+C \delta [\zeta]_1^2 \\
\le & C[u]_1^2 ([\na u]_0+[\na q]_0)+C \delta [\zeta]_1^2 \\
\le & C\alpha^3 (1+|\vec{x}|)^5+C\alpha^2(1+|\vec{x}|)^4[\na q]_0+C \delta [\zeta]_1^2,
	\end{align*}
	which implies
\begin{align}
		\label{phiintegral33}\int_{\bar M} [h]_0[h]_1[\na\chi]_0 \,\mathrm{d}V_s \leq C\left(\alpha^3+\alpha^2 \mathfrak{X}+\delta\mathfrak{X}^2\right).
	\end{align}	

	 Plugging all these estimates \eqref{phiintegral11}, \eqref{phiintegral22}, \eqref{phiintegral23} and \eqref{phiintegral33} into \eqref{phiintegral1}, we get
	\begin{align*}
		\rVert\Phi''*h\rVert_{L^1}\leq C\left(\alpha^3+\alpha^2 \mathfrak{X}+\delta\mathfrak{X}^2\right).
	\end{align*}
	
	For $\rVert\Phi'(s)*h*h\rVert_{L^1} $, it follows from Lemma \ref{estideriPhi} that
	\begin{align}\label{phiintegral3}
		\rVert\Phi'(s)*h*h\rVert_{L^1}\leq C\int_{\bar M} [h]_0^2[h]_2+[h]_0^2[\na\chi]_1+(1+|\vec{x}|)[h]_0^2[h]_1 \,\mathrm{d}V_s.
	\end{align}
	
	Notice that
		\begin{align*}
[h]_0^2[\na\chi]_1 \le & C([u]_0^2+[\zeta]_0^2) [\na\chi]_1\le  C[u]_0^2 [\na\chi]_1+C \delta [\zeta]_0^2 \\
\le & C[u]_0^2 ([\na u]_1+[\na q]_1)+C \delta [\zeta]_0^2 \\
\le & C\alpha^3 (1+|\vec{x}|)^5+C\alpha^2(1+|\vec{x}|)^4[\na q]_1+C \delta [\zeta]_0^2,
	\end{align*}	
which implies	
	\begin{align*}
\int_{\bar M} [h]_0^2[\na\chi]_1 \,\mathrm{d}V_s \leq C\left(\alpha^3+\alpha^2 \mathfrak{X}+\delta\mathfrak{X}^2\right).
	\end{align*}	

Combining this with \eqref{phiintegral22}, \eqref{phiintegral23} and \eqref{phiintegral3}, we obtain
	\begin{align*}
		\rVert\Phi'(s)*h*h\rVert_{L^1}\leq C\left(\alpha^3+\alpha^2 \mathfrak{X}+\delta\mathfrak{X}^2\right).
	\end{align*}
	
Since $\Phi(0)=0$, we have $|\Phi(s)| \leq C\left([h]_2+[\na\chi]_1+(1+|\vec{x}|)[h]_1\right)$ by integration, which implies
	\begin{align*}
		\rVert \Phi* h*h\rVert_{L^1}\leq C\int_{\bar M} [h]_0^2[h]_2+[h]_0^2[\na\chi]_1+(1+|\vec{x}|)[h]_0^2[h]_1 \,\mathrm{d}V_s\leq C\left(\alpha^3+\alpha^2 \mathfrak{X}+\delta\mathfrak{X}^2\right).
	\end{align*}
\end{proof}

Next, we consider the derivatives of $w(s)=2\Delta f-|\nabla f|^2+\scal+f-n$ with respect to $s$. The following variational formula is well-known to experts.

\begin{lem} \label{lem:variaw1}
We have
\begin{align}\label{mu1stderi}
	w'(s)=-2\left(\Delta_f+\frac{1}{2}\right)Q+\la\Phi,h\ra+\Div_f\Div_f h.
\end{align}
\end{lem}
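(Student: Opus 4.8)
The plan is to compute the first variation of $w(s)$ term by term, treating the four summands $2\Delta f$, $-|\nabla f|^2$, $\scal$, and $f$ separately, and then to organize the result into the claimed combination of $Q$, $\langle \Phi, h\rangle$, and $\Div_f \Div_f h$. Recall that all quantities are evaluated along $g(s) = \bar g + sh$, $f(s) = \bar f + s\chi$, so $g'(s) = h$ and $f'(s) = \chi$; also recall the shorthand $Q = Q(s) = \tfrac12 \Tr_{g(s)} h - \chi$, whose time derivative involves $(\Tr_g h)' = -\langle h, h\rangle$ (lower order, irrelevant to the first variation evaluated against a fixed $h$, but note that at each fixed $s$ we only need the formula, not a further derivative).

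First I would record the standard linearized formulas: under $\delta g = h$ one has
\begin{align*}
\delta \scal &= -\Delta (\Tr h) + \Div\Div h - \langle \Ric, h\rangle, \\
\delta (dV_g) &= \tfrac12 (\Tr h)\, dV_g, \\
\delta \Gamma^k_{ij} &= \tfrac12 g^{kl}(\nabla_i h_{jl} + \nabla_j h_{il} - \nabla_l h_{ij}),
\end{align*}
from which $\delta(\Delta f) = \delta(g^{ij}(\partial_{ij} f - \Gamma^k_{ij}\partial_k f))$ splits into $-\langle h, \nabla^2 f\rangle$ (from $\delta g^{ij} = -h^{ij}$), plus $\Delta \chi$ (from varying $f$), plus the connection term $-g^{ij}\,\delta\Gamma^k_{ij}\,\partial_k f = \tfrac12 \langle \nabla(\Tr h), \nabla f\rangle - \langle \Div h, \nabla f\rangle$. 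Likewise $\delta(|\nabla f|^2) = -h(\nabla f, \nabla f) + 2\langle \nabla \chi, \nabla f\rangle$. Assembling $\delta \scal + 2\delta(\Delta f) - \delta|\nabla f|^2 + \delta f$ and then using the definition $\Phi = \tfrac{g}{2} - \Ric - \nabla^2 f$ to absorb $-\langle \Ric, h\rangle - \langle \nabla^2 f, h\rangle = \langle \Phi, h\rangle - \tfrac12 \Tr h$, I expect the raw expression to reorganize into $\langle \Phi, h\rangle$ plus a divergence-type term plus terms built from $\Tr h$, $\chi$, $\Delta(\Tr h)$, $\Delta \chi$, $\langle \nabla f, \nabla(\tfrac12\Tr h - \chi)\rangle$. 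The key algebraic step is to recognize that all the non-$\Phi$, non-pure-divergence terms repackage exactly as $-2(\Delta_f + \tfrac12)Q$ using $\Delta_f = \Delta - \langle \nabla\cdot, \nabla f\rangle$ and $Q = \tfrac12\Tr h - \chi$; the first-order term $-Q$ comes from the $f$-variation $\delta f = \chi$ combined with the $-\tfrac12\Tr h$ left over from the $\Phi$ substitution.

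For the divergence term, I would use the weighted Bianchi identity (Lemma \ref{weightedBianchilem} at $\tau = 1$, or a direct computation) to combine the pieces $\Div\Div h$, $-\langle \Div h, \nabla f\rangle$-type contributions, and $\langle \Ric + \nabla^2 f, h\rangle$ so that they collapse to $\Div_f \Div_f h = \Div_f(\Div h - h(\nabla f, \cdot)) = \Div\Div h - 2\langle \Div h, \nabla f\rangle - \langle h, \nabla^2 f\rangle + h(\nabla f, \nabla f)$; note this last identity is precisely what accounts for the leftover $h(\nabla f, \nabla f)$ from $-\delta|\nabla f|^2$ and the extra $\langle \Div h, \nabla f\rangle$ from the connection variation of $\Delta f$. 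I would double-check signs by specializing to $h = 0$, $\chi \ne 0$ (where $w' = 2\Delta\chi - 2\langle\nabla f,\nabla\chi\rangle - \chi + \chi\cdot\text{something}$... actually $Q = -\chi$, so the formula predicts $w' = 2(\Delta_f + \tfrac12)\chi + 0 + 0 = 2\Delta_f\chi + \chi$, consistent with differentiating $2\Delta f - |\nabla f|^2 + f$ directly) and to $\chi = 0$, $h = \lambda g$ conformal (where $\Div_{\bar f} h$ and $\Phi$ contributions can be checked against the known conformal variation).

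The main obstacle I anticipate is purely bookkeeping: correctly tracking the connection-variation contributions to $\delta(\Delta f)$ and ensuring the $\langle \nabla f, \nabla(\tfrac12 \Tr h)\rangle$ terms distribute correctly between the $-2\langle \nabla\cdot, \nabla f\rangle Q$ piece and the $\Div_f\Div_f h$ piece — these are the two places where first-derivative-of-$f$ terms appear, and a sign error or a factor of $2$ there is the easiest way to derail the identity. Since the variational formula is, as the paper says, well-known to experts, I would present the computation compactly, grouping terms at each stage, and cite the standard linearizations of $\scal$ and $dV_g$ rather than rederiving them; the novelty is entirely in recognizing the final repackaging into $Q$, $\langle\Phi,h\rangle$, and $\Div_f\Div_f h$.
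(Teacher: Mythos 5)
Your proposal is correct and follows essentially the same route as the paper: linearize $\scal$, $\Delta f$, and $|\nabla f|^2$ with the standard formulas, absorb $-\langle \Ric, h\rangle - \langle \nabla^2 f, h\rangle$ into $\langle \Phi, h\rangle - \tfrac12 \Tr h$, and collapse the remaining first-derivative-of-$f$ terms via the identity $\Div_f \Div_f h = \Div\Div h - 2\langle \Div h, \nabla f\rangle - \langle h, \nabla^2 f\rangle + h(\nabla f, \nabla f)$, which is exactly the paper's computation. Your consistency check at $h=0$ is also valid and confirms the signs.
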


\begin{proof}
From direct calculations, we have
\begin{align*}
\scal'=&-\la \Ric, h \ra+\Div\,\Div \, h-\Delta (\Tr\, h),\\ 
\lc|\na f|^2\rc'=&2\la \na f, \na \chi \ra-h(\na f, \na f),\\
\lc \Delta f \rc'=&-\la \Div\,h, \na f \ra-\la h, \na^2 f \ra+\frac{1}{2} \la \na\Tr\, h, \na f \ra+\Delta\chi,
\end{align*}
where $\la \cdot, \cdot \ra$ denotes the inner product induced by $g(s)$. Thus, we have
\begin{align*}
w'=-2\left(\Delta_f+\frac{1}{2}\right)Q+\la\Phi,h\ra+\Div\,\Div\, h-2\la \Div\,h, \na f \ra-\la h, \na^2 f \ra+h(\na f, \na f).
\end{align*}
Since 
\begin{align*}
\Div_f \,\Div_f h=\Div\,\Div\, h-2\la \Div\,h, \na f \ra-\la h, \na^2 f \ra+h(\na f, \na f),
\end{align*}
we obtain \eqref{mu1stderi}.
\end{proof}

\begin{lem}\label{keycal2}
	The following pointwise bounds hold for $C=C(n)$ and any $s\in [0,1]$:
	\begin{enumerate}[label=\textnormal{(\roman{*})}]
		\item $|w'(s)|\leq C\left([h]_2+[\nabla \chi]_1+[Q]_0+[\Phi]_0[h]_0+(1+|\vec{x}|)([h]_1+[\na\chi]_0)+(1+|\vec{x}|^2)[h]_0\right)$,
		\item $|w''(s)|\leq C\left( [h]_2^2+[h]_1[\na\chi]_1+[\na\chi]_0^2+(1+|\vec{x}|)([h]_0[h]_1+[h]_0[\na\chi]_0)+(1+|\vec{x}|^2)[h]_0^2\right)$.
	\end{enumerate}
\end{lem}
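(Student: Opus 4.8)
The strategy is to differentiate the expression $w(s)=2\Delta f-|\nabla f|^2+\scal+f-n$ once and twice in $s$, keeping track of all geometric quantities with respect to the evolving pair $(g(s),f(s))$, and then bound each resulting term by the coordinate-type quantities $[h]_l$, $[\nabla\chi]_l$, $[Q]_0$, and $[\Phi]_0$, using that $g(s)=\bar g+sh$ is a small perturbation of the cylinder and that $|\partial_k\bar f|\le C|\vec x|$, $|\nabla^2\bar f|\le C$. The first derivative $w'(s)$ is already computed in Lemma~\ref{lem:variaw1} as
\[
w'(s)=-2\Bigl(\Delta_f+\tfrac12\Bigr)Q+\langle\Phi,h\rangle+\Div_f\Div_f h .
\]
So for part (i) I would simply estimate the right-hand side term by term. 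The term $\langle\Phi,h\rangle$ contributes $[\Phi]_0[h]_0$. The term $\Div_f\Div_f h$ involves two covariant derivatives of $h$ (giving $[h]_2$) plus terms where the $f$-divergence brings down a factor of $\nabla f=\nabla\bar f+s\nabla\chi$; since $|\nabla\bar f|\le C|\vec x|$ this produces contributions of size $(1+|\vec x|)[h]_1$ and $(1+|\vec x|^2)[h]_0$, and the $\nabla\chi$ part gives $(1+|\vec x|)[h]_1$-type terms controlled within $[\nabla\chi]_1$ and $[h]_1$. Finally $-2(\Delta_f+\tfrac12)Q$ contributes $[Q]_2$-type derivatives; here one has to notice that $Q=\tfrac12\Tr_{g}h-\chi$ so that $\Delta Q$ is already controlled by $[h]_2+[\nabla^2\chi]_0\le [h]_2+[\nabla\chi]_1$, the lower-order term $\tfrac12 Q$ by $[Q]_0$, and the first-order $-\langle\nabla f,\nabla Q\rangle$ piece by $(1+|\vec x|)([h]_1+[\nabla\chi]_0)$. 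Collecting gives exactly the stated bound in (i).

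For part (ii), I would differentiate $w'(s)$ once more. One way is to use the formula for $w'(s)$ directly, noting that $w''(s)$ is the $s$-derivative of $-2(\Delta_f+\tfrac12)Q+\langle\Phi,h\rangle+\Div_f\Div_f h$; since $h=g'(s)$ and $\chi=f'(s)$ are fixed tensors independent of $s$, all $s$-dependence sits in the metric (inverse metric, Christoffel symbols, and $\nabla f$). Differentiating produces: from $\langle\Phi,h\rangle$ a term $\langle\Phi'(s),h\rangle$ plus $h*h*\Phi$-type terms, but $\Phi'(s)$ itself by Lemma~\ref{estideriPhi}(i) is bounded by $[h]_2+[\nabla\chi]_1+(1+|\vec x|)[h]_1$, so $\langle\Phi'(s),h\rangle$ contributes $[h]_0([h]_2+[\nabla\chi]_1+(1+|\vec x|)[h]_1)$, which is absorbed into the claimed RHS by Cauchy--Schwarz ($[h]_0[h]_2\le [h]_2^2+[h]_0^2$ is not quite the form, but the stated bound already contains $[h]_2^2$, $[h]_1[\nabla\chi]_1$, and $(1+|\vec x|)[h]_0[h]_1$ — so one keeps the products unsquared). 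From $\Div_f\Div_f h$ one differentiates the two inverse-metric contractions and the two Christoffel symbols and the $\nabla f$ factors; each differentiated Christoffel symbol is $C^k_{ij}$, bounded by $C[h]_1$ (see \eqref{Cijkestimate1}), and each surviving undifferentiated derivative of $h$ is $[h]_2$, while differentiating $\nabla f$ gives $\nabla\chi$ or, through $C^k_{ij}\partial_k\bar f$, a factor $(1+|\vec x|)[h]_1$. This yields terms of type $[h]_2^2$, $[h]_1^2$, $(1+|\vec x|)[h]_0[h]_1$, $[h]_1[\nabla\chi]_0$, $(1+|\vec x|)^2[h]_0^2$. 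From $-2(\Delta_f+\tfrac12)Q$: differentiating $\Delta_g(\tfrac12\Tr_g h-\chi)$ in $s$ gives $C^k_{ij}$ and inverse-metric derivative terms times second derivatives of $h$ and $\chi$, controlled by $[h]_1[h]_2$ and $[h]_1[\nabla\chi]_1$, plus the $\langle\nabla f,\nabla Q\rangle$ piece differentiating to $(1+|\vec x|)[h]_0[h]_1+(1+|\vec x|)[h]_0[\nabla\chi]_0+[\nabla\chi]_0^2$-type terms (the last from differentiating $\nabla f\cdot\nabla(\tfrac12\Tr h)$ where $\Tr$ is with respect to $g$). Collecting all contributions and using the elementary bound $[h]_0[h]_2\le C([h]_2^2+[h]_1^2)$ where needed (and noting lower order terms like $[h]_1^2$ are dominated by $[h]_2^2$ after possibly enlarging $C$, or simply keeping them since $[h]_2^2$ already appears), one arrives at the stated estimate (ii).

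The main obstacle is purely bookkeeping: there are many terms and one must be careful that every time a covariant derivative hits $f=\bar f+s\chi$ one correctly splits off the $|\nabla\bar f|\le C|\vec x|$ part (producing polynomial weights in $|\vec x|$) from the $\nabla\chi$ part (staying inside $[\nabla\chi]_l$), and similarly that $Q=\tfrac12\Tr_g h-\chi$ must be expanded when one needs its second derivatives — the trace is metric-dependent, so differentiating $Q$ in $s$ is not just $-\chi'=0$ but also picks up $-\tfrac12\langle h,h\rangle$-type terms. One also has to be slightly careful about which norms $|\cdot|$, $[\cdot]_l$ are taken with respect to (they are with respect to $g(s)$, uniformly comparable to $\bar g$ since $\|h\|_{C^2}$ is small, as noted before the lemma), so no loss occurs in passing between them. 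Beyond these two checkpoints, every individual estimate is a routine application of the Leibniz rule together with \eqref{Cijkestimate1}, $|\partial_k\bar f|\le C|\vec x|$, and $|\nabla^2\bar f|\le C$, so the proof reduces to organizing the expansion; I would present it by computing $w''(s)$ schematically as a sum $\Phi'*h + \Phi*h*h + \nabla^2 h*h + \ldots$ and then invoking Lemma~\ref{estideriPhi} and the explicit Christoffel bounds to conclude.
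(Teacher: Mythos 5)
Your plan follows the paper's proof essentially verbatim: part (i) is read off directly from the variational formula in Lemma~\ref{lem:variaw1}, and part (ii) differentiates that formula again, using $Q'=-\tfrac12|h|^2$, the Christoffel bounds \eqref{Cijkestimate1}, Lemma~\ref{estideriPhi}, and the splitting $\nabla f=\nabla\bar f+s\nabla\chi$ with $|\nabla\bar f|\le C|\vec x|$ to organize the terms. The two checkpoints you flag (the metric-dependence of $\Tr_g h$ inside $Q$, and the polynomial weights coming from $\nabla\bar f$) are exactly the points the paper's computation hinges on, so the argument is correct.
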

\begin{proof}
	(i): This follows from \eqref{mu1stderi} directly.
	
	(ii): We calculate the second derivative of $w$ by
	\begin{align*}
		w''(s)=-2(\Delta_f)'Q-2\left(\Delta_f+\frac{1}{2}\right)Q'+\la\Phi',h\ra-2\Phi_{ij}h_{jl}h^{il}+\big(\Div_f \Div_f\big)'h.
	\end{align*}
	
	Note that $Q'=-|h|_{g(s)}^2/2$ and by Lemma \ref{estideriPhi}, $|\Phi'(s)|\leq C\left([h]_2+[\na\chi]_1+(1+|\vec{x}|)[h]_1\right)$. Thus, we have
	\begin{align}\label{muesti1}
		\left|-2\left(\Delta_f+\frac{1}{2}\right)Q'+\la\Phi',h\ra-2\Phi_{ij}h_{jl}h^{il}\right|\leq C\left( [h]_2^2+[h]_1[\na\chi]_1+(1+|\vec{x}|)[h]_0[h]_1\right).
	\end{align}
	
	Under local coordinates, we have
	\begin{align*}
		(\Div_f h)_i=g^{jk}\lc \partial_kh_{ij}-\Gamma_{ki}^l h_{lj}-\Gamma_{kj}^l h_{il} \rc-g^{jk}h_{ij}\partial_k f,
	\end{align*}
	then it follows that
	\begin{align*}
		(\Div_f h)'_i=-h^{jk}\lc \partial_kh_{ij}-\Gamma_{ki}^l h_{lj}-\Gamma_{kj}^l h_{il} \rc-g^{jk}(C_{ki}^lh_{lj}+C_{kj}^lh_{il})+h^{jk}h_{ij}\partial_k f-g^{jk}h_{ij} \partial_k \chi,
	\end{align*}
	where $C_{ij}^k=(\Gamma_{ij}^k)_s$, as in the proof of Lemma \ref{estideriPhi}. By the expression \eqref{Cijkestimate}, this implies
	\begin{align*}
		\left|\Div_f(\Div_fh)'\right|\leq C \lc [h]_2^2+[h]_1[\na \chi]_1+(1+|\vec{x}|)([h]_0[h]_1+[h]_0[\na \chi]_0)+(1+|\vec{x}|^2)[h]_0^2 \rc.
	\end{align*}
	Therefore,
	\begin{align}\label{deldelh}
		\left|(\Div_f\Div_f h)'\right|&=\left|\Div_f(\Div_fh)'+(\Div_f)'(\Div_fh)\right|\nonumber\\
		&\leq C\left( [h]_2^2+[h]_1[\na\chi]_1+(1+|\vec{x}|) ([h]_0[h]_1+[h]_0[\na\chi]_0)+(1+|\vec{x}|^2)[h]_0^2\right).
	\end{align}
	
	Similarly, we have
	\begin{align*}
		(\Delta _f)'Q=-h^{ij}\partial_i\partial_j Q+h^{ij}\Gamma^k_{ij}\partial_k Q-g^{ij}C^k_{ij}\partial_kQ+h^{ij}\partial_if\partial_jQ-g^{ij}\partial_i \chi \partial_j Q
	\end{align*}
	and hence
	\begin{align}\label{dellaplacian}
		\left|(\Delta_f)'Q \right|\leq C\left( [h]_1[\na Q]_1+[\na\chi]_0[\na Q]_0+(1+|\vec{x}|)[h]_0[\na Q]_0\right).
	\end{align}
	
	Combining \eqref{muesti1}, \eqref{deldelh}, \eqref{dellaplacian} and noting that $[\na Q]_0\leq [h]_1+[\na\chi]_0$ and $[\na Q]_1\leq [h]_2+[\na\chi]_1$, we obtain 
	\begin{align*}
		\left|w''(s)\right|\leq C\left( [h]_2^2+[h]_1[\na\chi]_1+[\na\chi]_0^2+(1+|\vec{x}|)([h]_0[h]_1+[h]_0[\na\chi]_0)+(1+|\vec{x}|^2)[h]_0^2\right).
	\end{align*}
	This completes the proof of (ii).
\end{proof}

\begin{lem}\label{keycal3}
	There exists a constant $C=C(n)$ such that
	\begin{equation*}
		\int_{\bar M} Q^2(s)\,\mathrm{d}V_s\leq C\left(\alpha^4+\delta^2\mathfrak{X}^2+\alpha^2 \mathfrak{X}^2\right)+2\int_{\bar M} Q^2(0)\,\mathrm{d}V_0.
	\end{equation*}
\end{lem}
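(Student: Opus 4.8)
The plan is to use the very simple evolution of $Q$ along the linear path. Since $g(s)_{ij}=\bar g_{ij}+sh_{ij}$ one has $\partial_s g(s)^{ij}=-h^{ij}$, hence $Q'(s)=\tfrac12\partial_s\big(\Tr_{g(s)}h\big)=-\tfrac12|h|^2_{g(s)}\le 0$ (the identity $Q'=-|h|^2_{g(s)}/2$ already used in the proof of Lemma \ref{keycal2}). Integrating in $s$,
\[
Q(s)=Q(0)-A(s),\qquad A(s):=\tfrac12\int_0^s|h|^2_{g(r)}\,\mathrm{d}r\ge 0 .
\]
I would first record two elementary consequences of $\|h\|_{C^2}+\|\chi\|_{C^2}=\delta\ll1$: the density satisfies $1-C\delta\le \mathrm{d}V_s/\mathrm{d}V_0=e^{-s\chi}\sqrt{\det(\Id+s\bar g^{-1}h)}\le 1+C\delta$ with $C=C(n)$, and $|h|_{g(r)}\le C(n)|h|_{\bar g}$ for all $r\in[0,1]$.

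Next, for a parameter $\lambda>0$ I would apply $(a-b)^2\le(1+\lambda)a^2+(1+\lambda^{-1})b^2$ together with the density comparison to get
\[
\int_{\bar M}Q^2(s)\,\mathrm{d}V_s\le(1+C\delta)(1+\lambda)\int_{\bar M}Q^2(0)\,\mathrm{d}V_0+(1+C\delta)(1+\lambda^{-1})\int_{\bar M}A^2(s)\,\mathrm{d}V_0 .
\]
Fixing $\lambda=\tfrac12$ and then choosing $\delta$ small enough that $(1+C\delta)(1+\lambda)\le 2$ produces exactly the coefficient $2$ in front of $\int_{\bar M}Q^2(0)\,\mathrm{d}V_0$, while $(1+C\delta)(1+\lambda^{-1})\le C(n)$; so the lemma reduces to
\[
\int_{\bar M}A^2(s)\,\mathrm{d}V_0\le C\big(\alpha^4+\delta^2\mathfrak X^2\big),
\]
which implies the stated estimate since $\alpha^2\mathfrak X^2\le C\delta^2\mathfrak X^2$ by $\alpha\le C\delta$.

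For this last bound, Cauchy--Schwarz in $r$ (using $s\le 1$) gives $A^2(s)\le\tfrac14\int_0^1|h|^4_{g(r)}\,\mathrm{d}r\le C\int_0^1|h|^4_{\bar g}\,\mathrm{d}r$, so $\int_{\bar M}A^2(s)\,\mathrm{d}V_0\le C\int_{\bar M}|h|^4_{\bar g}\,\mathrm{d}V_0\le C\int_{\bar M}|u|^4\,\mathrm{d}V_0+C\int_{\bar M}|\zeta|^4\,\mathrm{d}V_0$ via $h=ug_{S^m}+\zeta$. By Lemma \ref{estimateJacobifield}, $|u|\le C(1+|\vec x|^2)\alpha$, and since $(1+|\vec x|^2)^4e^{-\bar f}$ is integrable on $\bar M$ this yields $\int_{\bar M}|u|^4\,\mathrm{d}V_0\le C\alpha^4$. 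For the $\zeta$-term, the pointwise bound $|\zeta|\le C\delta(1+|\vec x|^2)$ recorded just before Corollary \ref{cor:tay3} gives $|\zeta|^4\le C\delta^2(1+|\vec x|^2)^2|\zeta|^2\le C\delta^2(1+\bar f^2)|\zeta|^2$ (using $|\vec x|^2\le 4\bar f+C$); then Lemma \ref{concentrationinequ} (with $k=0$ and $k=2$) together with Corollary \ref{cor:tay3} gives $\int_{\bar M}|\zeta|^4\,\mathrm{d}V_0\le C\delta^2\|\zeta\|_{W^{2,2}}^2\le C\delta^2\mathfrak X^2$. Combining the two pieces finishes the proof.

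I expect the only real subtlety to be the bookkeeping that keeps the constant exactly $2$ in front of $\int_{\bar M}Q^2(0)\,\mathrm{d}V_0$: one cannot simply use $(a-b)^2\le 2a^2+2b^2$, because comparing $\mathrm{d}V_s$ to $\mathrm{d}V_0$ already costs a factor $1+C\delta$, so a weighted Young inequality with a carefully chosen $\lambda$ plus a smallness condition on $\delta$ is required. Everything else is a routine combination of the Jacobi-field estimate (Lemma \ref{estimateJacobifield}), the pointwise bound on $\zeta$, and the weighted concentration inequality (Lemma \ref{concentrationinequ}).
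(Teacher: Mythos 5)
Your proof is correct, but it follows a genuinely different route from the paper. The paper sets $\eta(s)=\int_{\bar M}Q^2(s)\,\mathrm{d}V_s$ and derives the differential inequality $\eta'\le C(\alpha^2+\delta\mathfrak X+\alpha\mathfrak X)\eta^{1/2}+C\delta\eta$ (using $(\mathrm{d}V_s)'=Q\,\mathrm{d}V_s$, $Q'=-|h|^2/2$, Cauchy--Schwarz, and the same $\int|h|^4$ bound you prove), then integrates it via the substitution $\psi=(e^{-C\delta s}\eta)^{1/2}$; the factor $2$ in front of $\eta(0)$ comes from squaring $\psi(s)\le C(\cdots)+\eta^{1/2}(0)$ for $\delta$ small. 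You instead integrate $Q'=-|h|^2_{g(s)}/2$ in closed form, write $Q(s)=Q(0)-A(s)$, and conclude with a weighted Young inequality plus a direct bound $\int A^2\,\mathrm{d}V_0\le C\int|h|^4_{\bar g}\,\mathrm{d}V_0$. Both arguments ultimately rest on the identical core estimate $\int_{\bar M}|h|^4\lesssim \alpha^4+\delta^2\mathfrak X^2$ obtained from $h=ug_{S^m}+\zeta$, Lemma \ref{estimateJacobifield}, Lemma \ref{concentrationinequ} and Corollary \ref{cor:tay3}, and your observation that the $\alpha^2\mathfrak X^2$ term in the statement is redundant (since $\alpha\le C\delta$) is accurate. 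Your explicit-integration route is more elementary and makes the origin of the constant $2$ transparent; the paper's Gronwall-type argument is the one that transfers verbatim to the weighted variant in Lemma \ref{keycalxetra2}, where the same ODE scheme is reused with the weight $(1+|\vec{x}|)^2$. All the auxiliary facts you invoke (the density comparison $1-C\delta\le \mathrm{d}V_s/\mathrm{d}V_0\le 1+C\delta$, the equivalence $|h|_{g(r)}\le C|h|_{\bar g}$, and the integrability of $(1+|\vec{x}|^2)^4e^{-\bar f}$) are immediate from $\|h\|_{C^2}+\|\chi\|_{C^2}=\delta\ll1$ and the explicit form of $\bar f$, so there is no gap.
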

\begin{proof}
	Set $\eta(s):= \int_{\bar M} Q^2(s) \,\mathrm{d}V_s$. Since $|Q|\leq C\delta$ and $Q'=-|h|^2/2$, we calculate
	\begin{align}\label{Qineq1}
		\eta'(s)=\int_{\bar M} 2Q(s)Q'(s) \,\mathrm{d}V_s+\int_{\bar M} Q^3(s) \,\mathrm{d}V_s\leq -\int_{\bar M} Q(s)|h|^2 \,\mathrm{d}V_s+C\delta \eta(s).
	\end{align}
	By the Cauchy--Schwarz inequality, we have
	\begin{align}\label{Qineq2x}
		\int_{\bar M} |Q(s)||h|^2 \,\mathrm{d}V_s\leq \left(\int_{\bar M}|h|^4 \,\mathrm{d}V_s\right)^{1/2}\eta^{1/2}(s).
	\end{align}
	Note that
		\begin{align*}
|h|^4 \le C(u^4+|\zeta|^4) \le Cu^4+C|\zeta|^2(|h|^2+u^2) \le Cu^4+C \delta^2 |\zeta|^2+C \alpha^2 (1+|\vec{x}|)^4 |\zeta|^2,
	\end{align*}
	which, by using Lemma \ref{concentrationinequ} and Corollary \ref{cor:tay3}, implies
			\begin{align*}
\int_{\bar M} |h|^4 \,\mathrm{d}V_s \le C \lc \alpha^4+\delta^2 \mathfrak{X}^2+ \alpha^2 \mathfrak{X}^2 \rc.
	\end{align*}
	Thus, it follows from \eqref{Qineq2x} that
		\begin{align}\label{Qineq2}
		\int_{\bar M} |Q(s)||h|^2 \,\mathrm{d}V_s\leq C\left(\alpha^2+\delta\mathfrak{X}+\alpha \mathfrak{X} \right)\eta^{1/2}(s).
	\end{align}

	Combining \eqref{Qineq1} and \eqref{Qineq2}, we obtain
	\begin{align*}
		\eta'(s)\leq C\left(\alpha^2+\delta\mathfrak{X}+\alpha \mathfrak{X}\right)\eta^{1/2}(s)+C\delta\eta.
	\end{align*}
	If we set $\psi(s)=(e^{-C\delta s}\eta)^{1/2}$, then
	$$\psi '\leq \frac{C}{2}e^{-\frac{C\delta s}{2}}\left(\alpha^2+\delta\mathfrak{X}+\alpha \mathfrak{X}\right).$$
	Therefore, by integration, we obtain
	$$\psi(s)\leq C\left(\alpha^2+\delta\mathfrak{X}+\alpha \mathfrak{X}\right)+\psi(0)=C\left(\alpha^2+\delta\mathfrak{X}+\alpha \mathfrak{X}\right)+\eta^{1/2}(0).$$
	
Since $\delta$ is sufficiently small, we have
	$$\eta(s)\leq C\left(\alpha^4+\delta^2\mathfrak{X}^2+\alpha^2 \mathfrak{X}^2 \right)+2\eta(0).$$
	This finishes the proof.
\end{proof}

\begin{prop}\label{keycal4}
	There exists a constant $C=C(n)$ such that
	\begin{equation*}
		\int_{\bar M} Q^2(s)\,\mathrm{d}V_s\leq C\left(\mathfrak{X}^2+\left|V(1)-V(0)\right|^2\right).
	\end{equation*}
\end{prop}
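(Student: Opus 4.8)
The plan is to reduce the statement, via Lemma \ref{keycal3}, to an estimate for $\int_{\bar M}Q^2(0)\,\mathrm{d}V_0$, and then to control that quantity by combining a weighted Poincar\'e inequality on the cylinder with a first-order Taylor expansion of the weighted volume $V$. Indeed, Lemma \ref{keycal3} gives $\int_{\bar M}Q^2(s)\,\mathrm{d}V_s\le C(\alpha^4+\delta^2\mathfrak{X}^2+\alpha^2\mathfrak{X}^2)+2\int_{\bar M}Q^2(0)\,\mathrm{d}V_0$, and since $\alpha^2\le C\mathfrak{X}$ by Corollary \ref{cor:tay3} with $\alpha,\delta,\mathfrak{X}\ll1$, the bracketed terms are $\le C\mathfrak{X}^2$; so it suffices to prove $\int_{\bar M}Q^2(0)\,\mathrm{d}V_0\le C(\mathfrak{X}^2+|V(1)-V(0)|^2)$. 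For this, I would first observe that in the decomposition \eqref{eq:decompose} one has $\Tr_{\bar g}h=mu+\Tr_{\bar g}\zeta$, so the $u$-terms cancel in $Q(0)=\tfrac12\Tr_{\bar g}h-\chi$, leaving $Q(0)=\tfrac12\Tr_{\bar g}\zeta-q$; Corollary \ref{cor:tay3} then gives $\|\bar\nabla Q(0)\|_{L^2}\le C(\|\zeta\|_{W^{1,2}}+\|\bar\nabla q\|_{L^2})\le C\mathfrak{X}$. Since $\mathrm{d}V_{\bar f}$ on $\bar M=\R^{n-m}\times S^{m}$ is the product of a Gaussian probability measure on $\R^{n-m}$ and a constant multiple of the round measure on $S^{m}$, it satisfies a Poincar\'e inequality with a constant $C=C(n)$ by tensorization; and since the cylinder is normalized, $V(0)=1$, so $\mathrm{d}V_0$ is a probability measure and $\int_{\bar M}(Q(0)-\bar Q)^2\,\mathrm{d}V_0\le C\mathfrak{X}^2$ with $\bar Q:=\int_{\bar M}Q(0)\,\mathrm{d}V_0$. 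Thus $\int_{\bar M}Q^2(0)\,\mathrm{d}V_0\le C\mathfrak{X}^2+\bar Q^2$, and it remains to bound $\bar Q$.

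To bound $\bar Q$, note that $\tfrac{\mathrm{d}}{\mathrm{d}s}\mathrm{d}V_s=Q(s)\,\mathrm{d}V_s$, so $V'(s)=\int_{\bar M}Q(s)\,\mathrm{d}V_s$ and, with $Q'(s)=-\tfrac12|h|^2_{g(s)}$, $V''(s)=-\tfrac12\int_{\bar M}|h|^2_{g(s)}\,\mathrm{d}V_s+\int_{\bar M}Q^2(s)\,\mathrm{d}V_s$. In particular $\bar Q=V'(0)$, and the first-order Taylor formula gives $V'(0)=(V(1)-V(0))-\int_0^1(1-s)V''(s)\,\mathrm{d}s$. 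Using the $L^2$-orthogonality in \eqref{eq:decompose} and the uniform comparability of $(g(s),\mathrm{d}V_s)$ with $(\bar g,\mathrm{d}V_0)$, one has $\int_{\bar M}|h|^2_{g(s)}\,\mathrm{d}V_s\le C\int_{\bar M}|h|^2_{\bar g}\,\mathrm{d}V_0=C(m\alpha^2+\|\zeta\|_{L^2}^2)\le C(\alpha^2+\mathfrak{X}^2)$, while Lemma \ref{keycal3} together with the previous paragraph gives $\int_{\bar M}Q^2(s)\,\mathrm{d}V_s\le C\mathfrak{X}^2+2\bar Q^2$; hence $|V''(s)|\le C(\alpha^2+\mathfrak{X}^2+\bar Q^2)$ and so $|\bar Q|\le|V(1)-V(0)|+C(\alpha^2+\mathfrak{X}^2+\bar Q^2)$. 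Because $|Q(0)|\le C\delta$ forces $|\bar Q|\le C\delta\ll1$, the quadratic term $C\bar Q^2$ is absorbed into the left side, yielding $|\bar Q|\le2|V(1)-V(0)|+C(\alpha^2+\mathfrak{X}^2)$; squaring and using $\alpha^4\le C\mathfrak{X}^2$ gives $\bar Q^2\le C(\mathfrak{X}^2+|V(1)-V(0)|^2)$, which together with the first paragraph completes the proof.

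The main obstacle is the mild circularity, in which the bound for $\int_{\bar M}Q^2(0)\,\mathrm{d}V_0$ coming from Lemma \ref{keycal3} reappears inside the estimate for $V''$. This is resolved cleanly because all quantities involved are a priori small, so the quadratic feedback term $C\bar Q^2$ can be absorbed into $|\bar Q|$; the only care needed is to keep $\alpha^2$ (rather than prematurely bounding it by $C\mathfrak{X}$) throughout, so that after squaring one can use $\alpha^4\le C\mathfrak{X}^2$ and land on the sharp exponent. A minor secondary point is recording that the weighted Poincar\'e inequality holds on the non-compact cylinder with a uniform constant, which follows from tensorization of the Gaussian and spherical factors.
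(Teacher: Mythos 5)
Your proof is correct and follows essentially the same route as the paper: reduce to $s=0$ via Lemma \ref{keycal3}, use the weighted Poincar\'e inequality on the cylinder (with the cancellation of the $u$-part in $Q(0)=\tfrac12\Tr_{\bar g}\zeta-q$, which is exactly how the paper obtains \eqref{exeq:comput2}) to reduce $\int Q^2(0)\,\mathrm{d}V_0$ to $\bar Q^2$, bound $\bar Q=V'(0)$ by a Taylor expansion of $V$, and absorb the quadratic self-term using $|\bar Q|\le C\delta\ll1$. The one genuine (minor) difference is that you use the first-order Taylor formula with integral remainder, so you only need a uniform bound on $V''(s)$, whereas the paper expands to second order and therefore must also estimate $V'''$ via the $\|h\|_{L^3}$-type bounds in \eqref{eq:excom001}--\eqref{eq:excom003}; your shortcut is legitimate and slightly more economical for this proposition in isolation, though the paper's second-order expansion \eqref{taylorofV2} is not wasted effort since it is reused in the proof of Lemma \ref{keycal5}.
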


\begin{proof}
For the function $V(s)=\int_{\bar M} \,\mathrm{d}V_s$, a direct calculation shows that
	$$V'=\int_{\bar M} Q\,\mathrm{d}V_s,\quad V''=\int_{\bar M} Q'+Q^2\,\mathrm{d}V_s,\quad V'''=\int_{\bar M}Q''+3Q'Q+Q^3\,\mathrm{d}V_s.$$
	Thus,
	$$V'(0)= \int_{\bar M} Q(0)\,\mathrm{d}V_0, \quad V''(0)=-\frac{1}{2}\int_{\bar M} |h|^2\,\mathrm{d}V_0+\int_{\bar M} Q^2(0)\,\mathrm{d}V_0.$$
	
	Using the fact that $Q'=-|h|^2/2$, we have $Q''=h*h*h$, and hence for any $s\in [0,1]$,
	\begin{align} \label{eq:excom000}
		\abs{V'''}\leq C\int_{\bar M} |h|^3+|h|^2|Q|+\delta Q^2\,\mathrm{d}V_s.
	\end{align}

Notice that 
	\begin{align}\label{exeq:comput1}
|h|^3 \le C(|u|^3+|\zeta|^3) \le C|u|^3+C|\zeta|^2(|h|+|u|) \le C|u|^3+C \delta |\zeta|^2+C \alpha (1+|\vec{x}|)^2 |\zeta|^2,
	\end{align}
	which, by using Lemma \ref{concentrationinequ} and Corollary \ref{cor:tay3}, implies
			\begin{align} \label{eq:excom001}
\int_{\bar M} |h|^3 \,\mathrm{d}V_s \le C \lc \alpha^3+\delta \mathfrak{X}^2+ \alpha \mathfrak{X}^2 \rc.
	\end{align}
	
In addition, it follows from \eqref{Qineq2} and Lemma \ref{keycal3} that
				\begin{align} \label{eq:excom002}
		\int_{\bar M} |h|^2|Q(s)| \,\mathrm{d}V_s\leq C\left(\alpha^4+\delta^2\mathfrak{X}^2+\alpha^2 \mathfrak{X}^2\right)+C\int_{\bar M} Q^2(0)\,\mathrm{d}V_0.
	\end{align}
	
Combining \eqref{eq:excom000}, \eqref{eq:excom001}, \eqref{eq:excom002} and Lemma \ref{keycal3}, we obtain
	\begin{align} \label{eq:excom003}
		\abs{V'''}\leq C \lc \alpha^3+\delta \mathfrak{X}^2+ \alpha \mathfrak{X}^2 \rc+C\int_{\bar M} Q^2(0)\,\mathrm{d}V_0.
	\end{align}

	By the Taylor expansion of $V$, we have
	\begin{equation*}
		\left|V(1)-V(0)-V'(0)-\frac{1}{2}V''(0)\right|\leq \frac{1}{6}\sup_{s\in [0,1]}|V'''(s)|.
	\end{equation*}
By using \eqref{eq:excom003}, this implies
	\begin{align}\label{taylorofV2}
		&\left|\int_{\bar M} Q(0)\,\mathrm{d}V_0-\frac{1}{4}\int_{\bar M} |h|^2\,\mathrm{d}V_0+\frac{1}{2}\int_{\bar M} Q^2(0)\,\mathrm{d}V_s\right| \notag \\
		\leq& C \lc \alpha^3+\delta \mathfrak{X}^2+ \alpha \mathfrak{X}^2 \rc+C\int_{\bar M} Q^2(0)\,\mathrm{d}V_0+\left|V(1)-V(0)\right|.
	\end{align}
	
	Since $\int_{\bar M}|h|^2\,\mathrm{d}V_0\leq C(\alpha^2+\mathfrak{X}^2)$, it follows that
	\begin{align}\label{equ:taylorV1}
		\left|\int_{\bar M} Q(0) \,\mathrm{d}V_0\right|\leq C\left(\alpha^2+\mathfrak{X}^2\right)+C\int_{\bar M} Q^2(0)\,\mathrm{d}V_0+\left|V(1)-V(0)\right|.
	\end{align}
	
	On the other hand, by the Poincar\'e inequality on the cylinder, we have
	\begin{align}\label{equ:taylorV2}
		0\leq\int_{\bar M} Q^2(0)\,\mathrm{d}V_0-\left(\int Q(0) \,\mathrm{d}V_0\right)^2\leq C\int_{\bar M} \left|\nabla Q(0)\right|^2 \,\mathrm{d}V_0\leq C\mathfrak{X}^2,
	\end{align}
where the last inequality follows from 
	\begin{align}\label{exeq:comput2}
\int_{\bar M} |\nabla Q(0)|^2 \,\mathrm{d}V_0\leq C \lc \rVert \zeta\rVert_{W^{1,2}}^2+\rVert\na q\rVert_{L^2}^2 \rc \leq C\mathfrak{X}^2.
	\end{align}

	Combining \eqref{equ:taylorV1} with \eqref{equ:taylorV2}, we get
	\begin{align*}
		\left|\int_{\bar M} Q(0) \,\mathrm{d}V_0\right|\leq C\left(\alpha^2+\mathfrak{X}^2\right)+C\left(\int_{\bar M} Q(0)\mathrm{d}V_0\right)^2+\left|V(1)-V(0)\right|.
	\end{align*}
	Since $\left|\int_{\bar M} Q(0) \,\mathrm{d}V_0\right|\leq C\delta$, we conclude that if $\delta\ll 1$,
	\begin{equation*}
		\left|\int_{\bar M} Q(0) \,\mathrm{d}V_0\right|\leq C\left(\alpha^2+\mathfrak{X}^2\right)+\left|V(1)-V(0)\right|.
	\end{equation*}
	Therefore, by Corollary \ref{cor:tay3} and \eqref{equ:taylorV2}, we have
	\begin{align}\label{Qesti}
		\int_{\bar M} Q^2(0) \,\mathrm{d}V_0\leq C\mathfrak{X}^2+\left(\alpha^2+\mathfrak{X}^2+\left|V(1)-V(0)\right|\right)^2\leq C\mathfrak{X}^2+C\left|V(1)-V(0)\right|^2.
	\end{align}
	Then Proposition \ref{keycal4} follows from the combination of Lemma \ref{keycal3} and \eqref{Qesti}.
\end{proof}

\begin{lem}\label{keycalxetra2}
There exists a constant $C=C(n)$ such that for any $s\in [0,1]$,
	\begin{equation*}
		\int_{\bar M} (1+|\vec{x}|)^2 Q^2(s)\,\mathrm{d}V_s\leq C \lc  \delta \alpha^3+\mathfrak X^2 +|V(1)-V(0)|^2 \rc.
	\end{equation*}
\end{lem}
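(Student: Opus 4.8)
The plan is to follow the Gronwall scheme of Lemma \ref{keycal3} and Proposition \ref{keycal4}, but carrying the weight $(1+|\vec{x}|)^2$ through the estimates, and then to control the initial quantity separately. Set $\tilde\eta(s):=\int_{\bar M}(1+|\vec{x}|)^2 Q^2(s)\,\mathrm{d}V_s$. Since $\partial_s\,\mathrm{d}V_s=Q(s)\,\mathrm{d}V_s$ and $Q'(s)=-\tfrac12|h|^2$, differentiating gives
$$\tilde\eta'(s)=-\int_{\bar M}(1+|\vec{x}|)^2 Q|h|^2\,\mathrm{d}V_s+\int_{\bar M}(1+|\vec{x}|)^2 Q^3\,\mathrm{d}V_s\le A^{1/2}\tilde\eta(s)^{1/2}+C\delta\,\tilde\eta(s),$$
where $A:=\sup_{s\in[0,1]}\int_{\bar M}(1+|\vec{x}|)^2|h|^4\,\mathrm{d}V_s$; the first term is handled by Cauchy--Schwarz, the second by $|Q|\le C\delta$. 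As in Lemma \ref{keycal3}, the substitution $\psi(s):=(e^{-C\delta s}\tilde\eta(s))^{1/2}$ turns this into $\psi'\le\tfrac12 A^{1/2}$, and integrating over $[0,1]$ yields $\tilde\eta(s)\le C(\tilde\eta(0)+A)$. It therefore suffices to bound $A$ and $\tilde\eta(0)$.

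For $A$: using the pointwise bound $[h]_0\le C\delta$, write $(1+|\vec{x}|)^2|h|^4\le C\delta(1+|\vec{x}|)^2|h|^3$ and then split $h=ug_{S^m}+\zeta$. For the $u$-part, $\KK_0$ is finite-dimensional, so every polynomially weighted $L^p$ seminorm on it is bounded by $C$ times $\alpha=\|u\|_{L^2}$; hence $\int_{\bar M}(1+|\vec{x}|)^2|u|^3\,\mathrm{d}V_s\le C\alpha^3$. For the $\zeta$-part, Lemma \ref{estimateJacobifield} gives $|u|\le C(1+|\vec{x}|^2)\alpha$, so $|\zeta|^3\le|\zeta|^2(|h|+|u|)\le C\delta|\zeta|^2+C\alpha(1+|\vec{x}|^2)|\zeta|^2$; absorbing the $|\vec{x}|$-powers into $\bar f$ and applying Lemma \ref{concentrationinequ} together with $\|\zeta\|_{W^{2,2}}\le C\mathfrak{X}$ (Corollary \ref{cor:tay3}) gives $\int_{\bar M}(1+|\vec{x}|)^2|\zeta|^3\,\mathrm{d}V_s\le C\mathfrak{X}^2$. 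Combining, $\int_{\bar M}(1+|\vec{x}|)^2|h|^3\,\mathrm{d}V_s\le C(\alpha^3+\mathfrak{X}^2)$, hence $A\le C(\delta\alpha^3+\mathfrak{X}^2)$.

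For $\tilde\eta(0)$: here $Q(0)=\tfrac12\Tr_{\bar g}\zeta-q$, and since Corollary \ref{cor:tay3} only controls $\bar\na q$ (not $q$ itself in $L^2$), one cannot estimate the weighted $L^2$-norm of $Q(0)$ directly; instead I would subtract the mean $c:=\int_{\bar M}Q(0)\,\mathrm{d}V_0$ (recall $V(0)=1$). Writing $(1+|\vec{x}|)^2\le C\bar f$ and applying Lemma \ref{concentrationinequ} with $k=1$, we get $\int_{\bar M}(1+|\vec{x}|)^2(Q(0)-c)^2\,\mathrm{d}V_0\le C\|Q(0)-c\|_{W^{1,2}}^2$; the $L^2$-part is bounded via the Poincar\'e inequality on the cylinder by $\|\bar\na Q(0)\|_{L^2}^2$, which together with the gradient part is $\le C\mathfrak{X}^2$ by \eqref{exeq:comput2}. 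For the mean, Cauchy--Schwarz gives $c^2\le\int_{\bar M}Q^2(0)\,\mathrm{d}V_0\le C(\mathfrak{X}^2+|V(1)-V(0)|^2)$ directly from Proposition \ref{keycal4}, while $\int_{\bar M}(1+|\vec{x}|)^2\,\mathrm{d}V_0$ is a finite constant depending only on $n$. Hence $\tilde\eta(0)\le C(\mathfrak{X}^2+|V(1)-V(0)|^2)$, and plugging the bounds for $A$ and $\tilde\eta(0)$ into $\tilde\eta(s)\le C(\tilde\eta(0)+A)$ completes the proof.

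The main obstacle is the estimate for $\tilde\eta(0)$: the absence of any $L^2$-control on the constant mode of $q$ forces the mean-subtraction, and one must verify that $Q(0)-c$ genuinely lies in the weighted Sobolev space so that Lemma \ref{concentrationinequ} applies --- this uses the Poincar\'e inequality for the Gaussian-type measure $\mathrm{d}V_{\bar f}$ together with $\bar\na Q(0)\in L^2$. A secondary point is ensuring the $|h|^4$-estimate produces the sharp cubic term $\delta\alpha^3$ rather than the cruder $\delta^2\alpha^2$ (which would not be dominated by $\delta\alpha^3$, since only $\alpha\le C\delta$ is available), which is why precisely one pointwise factor $[h]_0\le\delta$ is spent and the remaining $|h|^3$ is distributed between the finite-dimensional piece $u$ and the Sobolev-controlled piece $\zeta$.
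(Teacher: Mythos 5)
Your proof is correct and follows essentially the same route as the paper: the same Gronwall scheme for $\tilde\eta(s)$, the same splitting of $(1+|\vec{x}|)^2|h|^4$ into the finite-dimensional $u$-piece and the Sobolev-controlled $\zeta$-piece via Lemmas \ref{estimateJacobifield} and \ref{concentrationinequ} and Corollary \ref{cor:tay3}, and the same reduction of $\tilde\eta(0)$ to Proposition \ref{keycal4} and \eqref{exeq:comput2}. The only deviation is your mean-subtraction for $\tilde\eta(0)$, which is an unnecessary detour: since Proposition \ref{keycal4} already bounds $\|Q(0)\|_{L^2}^2$ by $C(\mathfrak{X}^2+|V(1)-V(0)|^2)$ (the very bound you invoke for $c^2$), one can apply Lemma \ref{concentrationinequ} with $k=1$ directly to $Q(0)$, as the paper does.
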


\begin{proof}
We set $\eta(s)=\int_{\bar M} (1+|\vec{x}|)^2 Q^2(s) \,\mathrm{d}V_s$. By Lemma \ref{concentrationinequ}, we obtain
	\begin{align}\label{equ:naQestix2}
\eta(0) =\int_{\bar M} (1+|\vec{x}|)^2 Q(0)^2 \,\mathrm{d}V_0 \le C \lc \|Q(0)\|_{L^2}^2+\|\na Q(0)\|^2_{L^2} \rc  \le C \lc \mathfrak{X}^2+|V(1)-V(0)|^2 \rc,
	\end{align}
where we used Proposition \ref{keycal4} and \eqref{exeq:comput2}. By a direct calculation, as in the proof of Lemma \ref{keycal3}, we have
	\begin{align}\label{equ:naQestix3}
\eta'(s) \le \left(\int_{\bar M} (1+|\vec{x}|)^2 |h|^4 \,\mathrm{d}V_s\right)^{1/2}\eta^{1/2}(s)+C \delta \eta(s).
	\end{align}
From \eqref{exeq:comput1}, we have
	\begin{align*}
\int_{\bar M} (1+|\vec{x}|)^2 |h|^4 \,\mathrm{d}V_s \le C \delta \int_{\bar M} (1+|\vec{x}|)^2 \lc  |u|^3+ \delta |\zeta|^2+ \alpha (1+|\vec{x}|)^2 |\zeta|^2 \rc \,\mathrm{d}V_s,
	\end{align*}	
	which, by using Lemma \ref{concentrationinequ} and Corollary \ref{cor:tay3}, implies
			\begin{align} \label{equ:naQestix4}
\int_{\bar M} (1+|\vec{x}|)^2 |h|^4 \,\mathrm{d}V_s \le C \delta \lc \alpha^3+\delta \mathfrak X^2+\alpha  \mathfrak X^2  \rc.
	\end{align}	
	
Combining \eqref{equ:naQestix2}, \eqref{equ:naQestix3}, and \eqref{equ:naQestix4}, we deduce, by the same argument as in the proof of Lemma \ref{keycal3}, that
	\begin{align*}
\eta(s) \le C \lc  \delta \alpha^3+\mathfrak X^2 +|V(1)-V(0)|^2 \rc.
	\end{align*}	
\end{proof}

\begin{lem}\label{keycal6}
There exists a constant $C=C(n)$ such that for any $s\in [0,1]$,
	\begin{align*}
\rVert \Phi' * h* Q\rVert_{L^1}+\rVert \Phi * h * Q\rVert_{L^1} \le C \lc \alpha^2 \mathfrak{X}+\alpha^2\left|V(1)-V(0)\right|+\delta\mathfrak{X}^2 \rc.
	\end{align*}
\end{lem}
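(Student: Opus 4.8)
The plan is to reduce both norms to a single weighted integral and then expand along the $\KK_0$-decomposition $h=ug_{S^m}+\zeta$, $\chi=\frac m2u+q$. Since $\Phi(0)=0$, integrating the bound of Lemma~\ref{estideriPhi}(i) along $s\mapsto\Phi(s)$ shows that $|\Phi(s)|$ obeys the same pointwise estimate as $|\Phi'(s)|$, so with $P:=[h]_2+[\bar\na\chi]_1+(1+|\vec x|)[h]_1$ we get $\|\Phi'*h*Q\|_{L^1}+\|\Phi*h*Q\|_{L^1}\le C\int_{\bar M}P\,[h]_0\,|Q|\,\mathrm dV_s$. By Lemma~\ref{estimateJacobifield} every contribution of $u$ and its derivatives to $P$ and to $[h]_0$ is bounded by $C(1+|\vec x|)^3\alpha$, whereas the $\zeta,q$-contributions reduce to the quantities $[\zeta]_2$, $(1+|\vec x|)[\zeta]_1$, $[\zeta]_0$, $[\bar\na q]_1$, all of which are controlled in $L^2$ — and, via Lemma~\ref{concentrationinequ}, in polynomially weighted $L^2$ — by $C\mathfrak X$ thanks to Corollary~\ref{cor:tay3}. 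Expanding $P\,[h]_0$ and sorting the summands, the integrand splits into (a) pure $u$-terms $\le C(1+|\vec x|)^6\alpha^2$, (b) mixed terms $\le C(1+|\vec x|)^N\alpha\cdot T$ with $T\in\{[\zeta]_2,[\zeta]_1,[\zeta]_0,[\bar\na q]_1\}$, and (c) pure $\zeta/q$-terms $\le CT_1T_2$.

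For the pure $u$-terms I would pull the polynomial into a finite Gaussian constant and apply Cauchy--Schwarz against $\|Q\|_{L^2}$, using $\int_{\bar M}Q^2\,\mathrm dV_s\le C(\mathfrak X^2+|V(1)-V(0)|^2)$ from Proposition~\ref{keycal4}; this already gives $\le C\alpha^2(\mathfrak X+|V(1)-V(0)|)$. For the pure $\zeta/q$-terms the crude pointwise bound $|Q|\le C\delta$ reduces the integral to one of the shapes already treated in the proof of Corollary~\ref{keycal1}, hence $\le C\delta\mathfrak X^2$. The mixed terms of low order in $\zeta$ (those with $T\in\{[\zeta]_0,[\zeta]_1,|\bar\na q|\}$) are handled by keeping a polynomial weight on $T$ through Lemma~\ref{concentrationinequ} and distributing the remaining Gaussian weight between $T$ and $Q$, invoking the weighted estimate $\int_{\bar M}(1+|\vec x|)^2Q^2\,\mathrm dV_s\le C(\delta\alpha^3+\mathfrak X^2+|V(1)-V(0)|^2)$ of Lemma~\ref{keycalxetra2}; together with $\alpha^2\le C\mathfrak X$, $\alpha\le C\delta$, $|V(1)-V(0)|\le C\delta$, and Young's inequality (so that, e.g., $\alpha\mathfrak X|V(1)-V(0)|\le\tfrac12\alpha^2|V(1)-V(0)|+\tfrac12\mathfrak X^2|V(1)-V(0)|$), these contributions stay within $C(\alpha^2\mathfrak X+\alpha^2|V(1)-V(0)|+\delta\mathfrak X^2)$.

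The hard part will be the mixed terms in which $T$ carries the top-order derivative ($[\zeta]_2$, or $|\bar\na^2q|$ coming from $\bar\na^2Q$) and is paired with the polynomially growing mode $u$: since $\zeta$ is controlled only in $W^{2,2}$ no polynomial weight is available on $T$, and the crude bound $|Q|\le C\delta$ produces the borderline quantity $\delta\alpha\mathfrak X$, which is not dominated by the asserted right-hand side. To resolve this I would use the structural identity $\Phi'(s)=\tfrac12\LL h+\Div_{f(s)}^*\Div_{f(s)}h+\na^2Q(s)$ — the analogue at $(g(s),f(s))$ of Lemma~\ref{lem:first1} — so that each second-order operator appearing is formally self-adjoint (or an adjoint pair) with respect to the measure $\mathrm dV_s=\mathrm dV_{f(s)}$. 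Weighted integration by parts against $\mathrm dV_s$ then shifts one derivative off $\bar\na^2\zeta$ onto the factors $u$ and $Q$ \emph{without} generating the measure-derivative weight $|\bar\na\bar f|\sim(1+|\vec x|)$, reducing the worst term to integrals of $|\bar\na\zeta|\,(1+|\vec x|)\alpha\,|Q|$ and $|\bar\na\zeta|\,(1+|\vec x|^2)\alpha\,|\bar\na Q|$ (and an $S^m$-directional term which, since $u$ is constant along $S^m$, reduces to $\int u\,|\na_{S^m}Q|^2\,\mathrm dV_s$). These are closed by Cauchy--Schwarz using the weighted bounds $\int(1+|\vec x|)^2|\bar\na\zeta|^2\,\mathrm dV_s\le C\mathfrak X^2$ (Lemma~\ref{concentrationinequ} applied to $\bar\na\zeta$ with Corollary~\ref{cor:tay3}) and $\int(1+|\vec x|)^2|\bar\na Q(s)|^2\,\mathrm dV_s\le C(\mathfrak X^2+\delta^2\alpha^2)$, the latter from $\|\bar\na Q(0)\|_{W^{1,2}}\le C\mathfrak X$ — which holds because $Q(0)=\tfrac12\Tr_{\bar g}\zeta-q$ gives $\|\bar\na^2Q(0)\|_{L^2}\le C(\|\zeta\|_{W^{2,2}}+\|\bar\na q\|_{W^{1,2}})\le C\mathfrak X$ and by \eqref{exeq:comput2} — together with the evolution $Q'(s)=-\tfrac12|h|^2$, which yields $\|\bar\na Q(s)-\bar\na Q(0)\|_{L^2}\le C\delta(\alpha+\mathfrak X)$. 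Summing the finitely many summands of types (a), (b), (c) then gives the claimed inequality.
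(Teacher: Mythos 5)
Your setup and your treatment of the pure $u$-terms and pure $\zeta/q$-terms coincide with the paper's proof. The divergence is in the ``mixed'' terms, and there you have manufactured a difficulty that the paper never encounters: instead of carrying a product such as $\alpha(1+|\vec x|)^N[\zeta]_2\,|Q|$ into the integral and then being stuck with $\delta\alpha\mathfrak X$, the paper applies $ab\le\tfrac12(a^2+b^2)$ \emph{pointwise} to every cross term (equivalently, it bounds $[h]_2^2\le C\alpha^2(1+|\vec x|^2)^2+C[\zeta]_2^2$, $[h]_0[\na\chi]_1\le C\alpha^2(1+|\vec x|^2)^2+C([\zeta]_0^2+[\na q]_1^2)$, and $(1+|\vec x|)[h]_1^2\le C\alpha^2(1+|\vec x|)^5+C(1+|\vec x|)[\zeta]_1^2$), so that each summand is either a pure $u$-term $\alpha^2(1+|\vec x|)^N|Q|$ --- handled by Cauchy--Schwarz against $\|Q\|_{L^2}\le C(\mathfrak X+|V(1)-V(0)|)$ from Proposition \ref{keycal4} --- or a term quadratic in $([\zeta]_2,[\na q]_1)$ --- handled by $|Q|\le C\delta$ together with Lemma \ref{concentrationinequ} and Corollary \ref{cor:tay3}, giving $\delta\mathfrak X^2$. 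The borderline quantity $\delta\alpha\mathfrak X$ therefore never appears; your observation that it is not dominated by the right-hand side is correct, but the cure is this one-line AM--GM splitting, not integration by parts.

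The workaround you propose for this (avoidable) hard case has genuine gaps. First, the identity $\Phi'(s)=\tfrac12\LL h+\Div_{f(s)}^*\Div_{f(s)}h+\na^2Q(s)$ is established only at $s=0$ (Lemma \ref{lem:first1}), where the background is the exact cylinder soliton; at a general background $(g(s),f(s))$ the linearization of $\mathbf{\Phi}$ acquires additional terms which you would have to derive and control. Second, the lemma bounds $\rVert \Phi' * h* Q\rVert_{L^1}$, i.e.\ the $L^1$-norm of an \emph{arbitrary} contraction; integration by parts is not available for such a quantity (it would only help for specific signed pairings such as $\int\la\Phi',h\ra Q\,\mathrm{d}V_s$), so your argument would not prove the lemma as stated. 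Replace the final paragraph of your proposal by the pointwise AM--GM reduction and the proof closes exactly as in the paper.
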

\begin{proof}
	By Lemma \ref{estideriPhi}, we have $|\Phi(s)|+|\Phi'(s)|\leq C\left([h]_2+[\na\chi]_1+(1+|\vec{x}|)[h]_1\right)$. Thus,
	\begin{align*}
		\rVert \Phi * h * Q\rVert_{L^1}+\rVert \Phi'  * h * Q\rVert_{L^1}\leq C\int_{\bar M}\left( [h]_2^2+[h]_0[\na\chi]_1+(1+|\vec{x}|)[h]_1^2\right)|Q|\,\mathrm{d}V_s.
	\end{align*}
	
	Now we estimate these three terms separately. For the first term, by Lemma \ref{estimateJacobifield} and Proposition \ref{keycal4}, 
	\begin{align}\label{inteesti1}
		\int_{\bar M}[h]_2^2|Q| \,\mathrm{d}V_s&\leq C\alpha^2\int_{\bar M} \left(1+|\vec{x}|^4\right)|Q| \,\mathrm{d}V_s+C\int_{\bar M}[\zeta]_2^2|Q| \,\mathrm{d}V_s\nonumber\\
		&\leq C\alpha^2\rVert Q\rVert_{L^2}+C \delta \|\zeta\|^2_{W^{2, 2}}\nonumber\\
		&\leq C \lc \alpha^2 \mathfrak{X}+\alpha^2\left|V(1)-V(0)\right|+\delta\mathfrak{X}^2 \rc.
	\end{align}
	Similarly, for the second term, it holds that
	\begin{align}\label{inteesti2}
		\int_{\bar M}[h]_0[\na\chi]_1|Q|\,\mathrm{d}V_s\leq& C\alpha^2\int_{\bar M} \left(1+|\vec{x}|^4\right)|Q| \,\mathrm{d}V_s+C\int_{\bar M}\left([\zeta]_0^2+[\na q]_1^2\right)|Q| \,\mathrm{d}V_s\nonumber\\
\le & C \lc \alpha^2 \mathfrak{X}+\alpha^2\left|V(1)-V(0)\right|+\delta\mathfrak{X}^2 \rc.
	\end{align}
	For the third term, by the Cauchy--Schwarz inequality and Lemma \ref{concentrationinequ}, we have
	\begin{align}\label{inteesti3}
		\int_{\bar M} \left(1+|\vec{x}|\right)[h]_1^2|Q| \,\mathrm{d}V_s&\leq C\alpha^2\int_{\bar M}(1+|\vec{x}|)^5|Q|\,\mathrm{d}V_s+C\int_{\bar M}\left(1+|\vec{x}|\right)[\zeta]_1^2|Q| \,\mathrm{d}V_s\nonumber\\
		&\leq C\alpha^2\rVert Q\rVert_{L^2}+C\delta \int_{\bar M}\left(1+|\vec{x}|\right)[\zeta]_1^2 \,\mathrm{d}V_s\nonumber\\
		&\leq C\alpha^2\rVert Q\rVert_{L^2}+C\delta \|\zeta\|_{W^{2, 2}}^2\nonumber\\
&\le  C \lc \alpha^2 \mathfrak{X}+\alpha^2\left|V(1)-V(0)\right|+\delta\mathfrak{X}^2 \rc.
	\end{align}
	Combining \eqref{inteesti1}, \eqref{inteesti2} and \eqref{inteesti3}, we complete the proof.
\end{proof}

\begin{prop}\label{3rdderiative}
	There exists a constant $C=C(n)$ such that
	\begin{equation*}
		\left|W'''(s)\right|\leq C\left(\alpha^3+\alpha^2\mathfrak{X}+\delta\mathfrak{X}^2+\alpha^2\left|V(1)-V(0)\right|+\delta\left|V(1)-V(0)\right|^2\right).
	\end{equation*}
\end{prop}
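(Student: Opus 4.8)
The plan is to compute $W'''(s)$ by differentiating the first variation of $W$ three times and then to bound every resulting term by one of the five quantities on the right-hand side, using the lemmas already proved in this section. It pays to simplify $W'$ first. Differentiating $W(s)=\int_{\bar M}w(s)\,\mathrm{d}V_s$ with $\frac{\mathrm d}{\mathrm ds}\mathrm{d}V_s=Q\,\mathrm{d}V_s$, substituting $w'=-2(\Delta_f+\tfrac12)Q+\langle\Phi,h\rangle+\Div_f\Div_f h$ from Lemma \ref{lem:variaw1}, and using that $\Delta_f$ and $\Div_f$ integrate to zero against $\mathrm{d}V_s$ (legitimate since $h,\chi$ — hence $Q$ — are compactly supported), one obtains
\[
W'(s)=\int_{\bar M}\langle\Phi,h\rangle\,\mathrm{d}V_s+\int_{\bar M}(w-1)Q\,\mathrm{d}V_s=:A(s)+B(s).
\]
The gain is that no third derivative of $w$ (hence no uncontrolled third derivative of $h$ or $\chi$) ever enters $W'''=A''+B''$.

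Differentiating $A$ and $B$ twice, using $\frac{\mathrm d}{\mathrm ds}(\text{inverse metric})=g^{-1}*h*g^{-1}$, $Q'=-\tfrac12|h|^2=h*h$ and $Q''=h*h*h$, gives, up to fixed combinatorial constants,
\[
A''=\int_{\bar M}\big(\Phi''*h+\Phi'*h*h+\Phi*h*h*h+\Phi'*h*Q+\Phi*h*h*Q+\Phi*h*Q^2\big)\,\mathrm{d}V_s,
\]
\[
B''=\int_{\bar M}\big(w''Q+w'Q'+w'Q^2+(w-1)Q''+(w-1)QQ'+(w-1)Q^3\big)\,\mathrm{d}V_s.
\]
For the $A''$-terms, the first two are bounded by Corollary \ref{keycal1}(i),(ii) and $\Phi'*h*Q,\ \Phi*h*Q$ by Lemma \ref{keycal6}; the remaining three carry an extra factor of $h$ or $Q$, pointwise $O(\delta(1+|\vec{x}|^2))$ resp. $O(\delta)$, which is absorbed using Corollary \ref{cor:tay3}, Lemma \ref{concentrationinequ} and the Gaussian weight, so they reduce to the same bounds. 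For the $B''$-terms, using $|w-1|\le C(1+|\vec{x}|^2)$ one has $(w-1)Q''=O((1+|\vec{x}|^2)|h|^3)$, $(w-1)QQ'=O((1+|\vec{x}|^2)|Q||h|^2)$, $(w-1)Q^3=O(\delta(1+|\vec{x}|^2)|Q|^2)$, controlled respectively by the $\int|h|^3$-estimate (cf. \eqref{eq:excom001}), by $\int|Q||h|^2$ via Cauchy--Schwarz from the $\int|h|^4$-estimate (proof of Lemma \ref{keycal3}) and Proposition \ref{keycal4}, and by Proposition \ref{keycal4}. For $w'Q'$ and $w'Q^2$ I would not use the pointwise bound on $w'$ but substitute the formula for $w'$ again and integrate by parts, transferring derivatives onto $Q$: this yields factors $\na Q$ controlled by $\|\na Q\|_{L^2}\le C\mathfrak X$ (cf. \eqref{exeq:comput2}, Corollary \ref{cor:tay3}) against factors $h*\na h$ controlled by the weighted estimates in the proof of Corollary \ref{keycal1}, plus a $\langle\Phi,h\rangle$-piece handled as in $A''$. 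Finally $w''Q$ is estimated from the pointwise bound of Lemma \ref{keycal2}(ii) together with $\|Q\|_{L^2}$ and $\int(1+|\vec{x}|^2)Q^2$ (Proposition \ref{keycal4}, Lemma \ref{keycalxetra2}) and $\|\zeta\|_{W^{2,2}}+\|\bar\na q\|_{W^{1,2}}\le C\mathfrak X$ (Corollary \ref{cor:tay3}).

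Throughout, the decomposition $h=ug_{S^m}+\zeta$, $\chi=\tfrac m2 u+q$ splits each quadratic, cubic or quartic expression into a \emph{$u$-leading} part — a fixed polynomial with a prefactor equal to a power of $\alpha=\|u\|_{L^2}$, integrated directly against $Q$ or the Gaussian via Lemma \ref{estimateJacobifield} — and a part containing at least one factor of $\zeta$ or $\bar\na q$, estimated in $L^2$ via $\|\zeta\|_{W^{2,2}}+\|\bar\na q\|_{W^{1,2}}\le C\mathfrak X$ after absorbing polynomial weights with Lemma \ref{concentrationinequ}; whenever both such factors occur one pairs $\zeta$ (or $\bar\na q$) in $L^2$ with a power of $u$ in $L^2$ rather than using pointwise bounds on $\zeta$, which are too lossy. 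Every contribution is then sorted into the buckets $\alpha^3,\ \alpha^2\mathfrak X,\ \delta\mathfrak X^2,\ \alpha^2|V(1)-V(0)|,\ \delta|V(1)-V(0)|^2$ by Young's inequality and the relations $\alpha^2\le C\mathfrak X$ (Corollary \ref{cor:tay3}) and $\alpha+\mathfrak X+|V(1)-V(0)|\le C\delta\ll1$, so that any bucket term times an additional factor $\alpha,\delta$ or $\mathfrak X$ is harmless. The main obstacle is precisely this bookkeeping: the expansion has many terms and, for several of them, the naive pointwise or Hölder estimate lands just outside the allowed buckets, so one must choose the right decomposition, the right Hölder split, and — for the $w'$- and $\Delta_f$-containing terms — integration by parts, term by term; the reformulation of $W'$ and the weighted concentration inequality are what make this work.
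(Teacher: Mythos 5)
Your proposal is correct and follows essentially the same route as the paper: the same reformulation $W'(s)=\int\langle\Phi,h\rangle\,\mathrm{d}V_s+\int(w-1)Q\,\mathrm{d}V_s$ (so that no third derivatives of $h,\chi$ appear), the same six groups of terms in $W'''$, and the same supporting estimates (Corollary \ref{keycal1}, Lemmas \ref{keycal2} and \ref{keycal6}, Proposition \ref{keycal4}, Corollary \ref{cor:tay3} together with Lemmas \ref{estimateJacobifield} and \ref{concentrationinequ}). The only minor divergence is your treatment of the $\int Q^2w'$ and $\int Q'w'$ terms by re-substituting the formula for $w'$ and integrating by parts, where the paper instead applies the pointwise bound $|w'|\le C\delta(1+|\vec{x}|^2)$ together with the weighted estimate $\int(1+|\vec{x}|^2)Q^2\,\mathrm{d}V_s$ of Lemma \ref{keycalxetra2}; both variants land in the same buckets.
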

\begin{proof}
	Note that $(\mathrm{d}V_s)'=Q\,\mathrm{d}V_s$, we thus have $W'(s)=\int w'+w Q\,\mathrm{d}V_s$ and by Lemma \ref{lem:variaw1}, we can rewrite it as
	\begin{align} \label{eq:first1}
		W'(s)=\int_{\bar M}\la \Phi(s),h\ra \,\mathrm{d}V_s+\int_{\bar M} Q(w-1)\,\mathrm{d}V_s.
	\end{align}
	Then we have
	\begin{align}
		W''(s)=&\int_{\bar M}\la \Phi',h\ra \,\mathrm{d}V_s+\int_{\bar M} -2\Phi_{ij}h_{jl}h^{il} \,\mathrm{d}V_s+\int_{\bar M} \la \Phi,h\ra Q \,\mathrm{d}V_s\quad (=:\mathrm{I}+\mathrm{II}+\mathrm{III}) \notag \\
		&+\int_{\bar M} Q'(w-1) \,\mathrm{d}V_s+\int_{\bar M} Q w' \,\mathrm{d}V_s+\int_{\bar M} Q^2(w-1)\,\mathrm{d}V_s\quad (=:\mathrm{IV}+\mathrm{V}+\mathrm{VI}) \label{eq:second2}.
	\end{align}
	
	We now estimate the third derivative term by term using Corollary \ref{keycal1}, Lemma \ref{keycal2}, and Lemma \ref{keycal6}.
	First, we have
	\begin{align*}
		\abs{\mathrm{I}'}&=\abs{\int_{\bar M} \la \Phi'',h\ra+\Phi'*h*h+\la\Phi',h\ra Q  \,\mathrm{d}V_s}\\
		&\leq \rVert\Phi''*h\rVert_{L^1}+ \rVert\Phi'*h*h\rVert_{L^1}+\rVert \la\Phi', h\ra Q\rVert_{L^1}\\
		&\leq C\left(\alpha^3+\alpha^2\mathfrak{X}+\delta\mathfrak{X}^2+\alpha^2\left|V(1)-V(0)\right|\right),
	\end{align*}
	where in the last inequality, we have used Lemma \ref{keycal1} and Lemma \ref{keycal6}. Similarly,
	\begin{align*}
		\abs{\mathrm{II}'}=&\abs{\int_{\bar M} \Phi'*h*h+\Phi*h*h*h+\Phi'*h*hQ \,\mathrm{d}V_s}\\
		\leq & C\left(\alpha^3+\alpha^2\mathfrak{X}+\delta\mathfrak{X}^2+\alpha^2\left|V(1)-V(0)\right|\right),
	\end{align*}
	where in the last inequality, we have used Lemma \ref{keycal1} and Lemma \ref{keycal6}.
	
In the same way, we have
	\begin{align*}
		\abs{\mathrm{III}'}&=\abs{\int_{\bar M}\la\Phi',h\ra Q+\Phi*h*h Q-\frac{1}{2}\la \Phi,h\ra |h|^2+\la \Phi,h\ra Q^2 \,\mathrm{d}V_s}\\
		&\leq C\left(\alpha^3+\alpha^2\mathfrak{X}+\delta\mathfrak{X}^2+\alpha^2\left|V(1)-V(0)\right|\right).
	\end{align*}
	
	Note that $Q'=-|h|^2/2$ and $Q''=h*h*h$. It follows from Lemma \ref{keycal2} that
	\begin{align*}
		\abs{\mathrm{IV}'}&=\abs{\int_{\bar M}  Q''(w-1)+Q'w'+Q'Q(w-1) \,\mathrm{d}V_s}\\
		&\leq C\int_{\bar M} [h]_0^2([h]_2+[\nabla \chi]_1+[Q]_0+[\Phi]_0[h]_0+(1+|\vec{x}|)([h]_1+[\na\chi]_0)+(1+|\vec{x}|^2)[h]_0) \,\mathrm{d}V_s.
	\end{align*}
	By similar argument as in the proof of Lemma \ref{keycal1} and Lemma \ref{keycal6}, we have
	\begin{align*}
		\int_{\bar M} [h]_0^2\left([h]_2+[\na\chi]_1+[Q]_0+[\Phi]_0[h]_0\right) \,\mathrm{d}V_s\leq C\left(\alpha^3+\alpha^2\mathfrak{X}+\delta\mathfrak{X}^2+\alpha^2\left|V(1)-V(0)\right|\right),
	\end{align*}
	and 
	\begin{align*}
		\int_{\bar M} (1+|\vec{x}|)[h]_0^2([h]_1+[\na\chi]_0) +(1+|\vec{x}|^2)[h]_0^3\,\mathrm{d}V_s\leq C\left(\alpha^3+\alpha^2\mathfrak{X}+\delta\mathfrak{X}^2\right).
	\end{align*}
	Thus, it follows that
	\begin{align*}
		\abs{\mathrm{IV}'}\leq C\left(\alpha^3+\alpha^2\mathfrak{X}+\delta\mathfrak{X}^2+\alpha^2\left|V(1)-V(0)\right|\right).
	\end{align*}
	
	For the fifth term, by Lemma \ref{keycal2} and a similar argument as above, we obtain
	\begin{align*}
		\abs{\mathrm{V}'}=&\abs{\int_{\bar M} Q'w'+Qw''+Q^2 w' \,\mathrm{d}V_s}\\
		\leq& C\int_{\bar M}[h]_0^2\left([h]_2+[\nabla \chi]_1+[Q]_0+[\Phi]_0[h]_0+(1+|\vec{x}|)([h]_1+[\na\chi]_0)+(1+|\vec{x}|^2)[h]_0\right)\\
		&+[Q]_0\left( [h]_2^2+[h]_1[\na\chi]_1+[\na\chi]_0^2+(1+|\vec{x}|)([h]_0[h]_1+[h]_0[\na\chi]_0)+(1+|\vec{x}|^2)[h]_0^2\right)+ Q^2 |w'|\,\mathrm{d}V_s.
	\end{align*}
Using the same argument as in the proof of Lemma \ref{keycal1} and Lemma \ref{keycal6}, we obtain
	\begin{align*}
		\abs{\mathrm{V}'} \le C\left(\alpha^3+\alpha^2\mathfrak{X}+\delta\mathfrak{X}^2+\alpha^2\left|V(1)-V(0)\right|\right)+C\int_{\bar M} Q^2 |w'|\,\mathrm{d}V_s.
	\end{align*}
	
The last integral can be estimated by Lemmas \ref{keycal2} and \eqref{keycalxetra2} as
	\begin{align*}
\int_{\bar M} Q^2 |w'|\,\mathrm{d}V_s \le C \delta \int_{\bar M} (1+|\vec{x}|)^2 Q^2 \,\mathrm{d}V_s \le C \delta \lc  \delta \alpha^3+\mathfrak X^2 +|V(1)-V(0)|^2 \rc.
	\end{align*}
Thus, we have
 	\begin{align*}
		\abs{\mathrm{V}'} \le& C\left(\alpha^3+\alpha^2\mathfrak{X}+\delta\mathfrak{X}^2+\alpha^2\left|V(1)-V(0)\right|+\delta\left|V(1)-V(0)\right|^2\right).
	\end{align*}	
	
Finally, the last term can be similarly estimated as
	\begin{align*}
		\abs{\mathrm{VI}'}&=\abs{\int_{\bar M} 2QQ'(w-1)+Q^2 w'+Q^3(w-1) \,\mathrm{d}V_s}\\
		&\leq C\int_{\bar M} |Q||h|^2+|w'| Q^2+\delta Q^2 \,\mathrm{d}V_s\\
		&\leq C\left(\delta^2 \alpha^3+\alpha^2\mathfrak{X}+\delta\mathfrak{X}^2+\alpha^2\left|V(1)-V(0)\right|+\delta\left|V(1)-V(0)\right|^2\right).
	\end{align*}
	This finishes the proof of Proposition \ref{3rdderiative}.
\end{proof}

Applying the Taylor expansion to $W$, we have
\begin{equation}\label{taylorofF}
	\left|W(1)-W(0)-W'(0)-\frac{1}{2}W''(0)\right|\leq \frac{1}{6}\sup_{s\in[0,1]}|W'''(s)|.
\end{equation}

It is clear from \eqref{eq:first1}, \eqref{eq:second2} and Lemma \ref{lem:first1} for $s=0$ that
	\begin{align} \label{eq:first11}
W'(0)=& (W(0)-1) \int_{\bar M} Q(0) \,\mathrm{d}V_0
	\end{align}
and
\begin{align}
	W''(0)=&\frac{1}{2}\int_{\bar M} \la\LL h+2\Div_{\bar f}^*\,\Div_{\bar f} h+2\nabla^2 Q(0),h\ra \,\mathrm{d}V_0+\int_{\bar M} 2|\nabla Q(0)|^2-Q^2(0)+Q(0)\Div_{\bar f}\,\Div_{\bar f} h \,\mathrm{d}V_0\nonumber\\
	&+(W(0)-1)\int_{\bar M}(Q(0)^2-\frac{1}{2}|h|^2) \,\mathrm{d}V_0. \label{eq:second22}
\end{align}

Next, we prove

\begin{lem}\label{keycal5}
	We can find a constant $C=C(n)$ such that
	\begin{align*}
		\left|W'(0)+\frac{1}{2}W''(0)\right|\leq C\left(\alpha^3+\mathfrak{X}^2+\left|V(1)-V(0)\right|\right).
	\end{align*}
\end{lem}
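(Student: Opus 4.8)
The plan is to expand $W'(0)+\tfrac12 W''(0)$ directly from the explicit expressions \eqref{eq:first11} and \eqref{eq:second22}, separate off the part governed by the weighted volume $V$, and show that the remainder is quadratic in $\mathfrak X$ and $|V(1)-V(0)|$ together with the harmless cubic term $\alpha^3$. First I would collect the terms carrying the factor $W(0)-1$. Since $W(0)=\Theta_m$ depends only on $n$, and, as computed in the proof of Proposition \ref{keycal4}, $\int_{\bar M}Q(0)\,\mathrm{d}V_0=V'(0)$ and $\int_{\bar M}(Q^2(0)-\tfrac12|h|^2)\,\mathrm{d}V_0=V''(0)$, these terms assemble into $(W(0)-1)\bigl(V'(0)+\tfrac12 V''(0)\bigr)$. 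Then the Taylor estimate for $V$ already obtained in that proof — namely $|V(1)-V(0)-V'(0)-\tfrac12 V''(0)|\le\tfrac16\sup_{[0,1]}|V'''|$ combined with \eqref{eq:excom003}, \eqref{Qesti}, and $|V(1)-V(0)|^2\le C\delta|V(1)-V(0)|$ — yields $|V'(0)+\tfrac12 V''(0)|\le C(\alpha^3+\mathfrak X^2+|V(1)-V(0)|)$, which already has the desired shape.

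What is left of $\tfrac12 W''(0)$ is the genuinely second-order expression
\[
\tfrac14\int_{\bar M}\langle\LL h,h\rangle\,\mathrm{d}V_0+\tfrac12\|\Div_{\bar f}h\|_{L^2}^2+\tfrac12\int_{\bar M}\langle\nabla^2 Q(0),h\rangle\,\mathrm{d}V_0+\int_{\bar M}|\nabla Q(0)|^2\,\mathrm{d}V_0-\tfrac12\int_{\bar M}Q^2(0)\,\mathrm{d}V_0+\tfrac12\int_{\bar M}Q(0)\,\Div_{\bar f}\Div_{\bar f}h\,\mathrm{d}V_0.
\]
The divergence term is $\le\tfrac12\mathfrak X^2$ by the definition of $\mathfrak X$; the $|\nabla Q(0)|^2$ term is $\le C\mathfrak X^2$ by \eqref{exeq:comput2}; the $Q^2(0)$ term is $\le C\mathfrak X^2+C|V(1)-V(0)|$ by \eqref{Qesti}. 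Because $Q(0)=\tfrac12\Tr_{\bar g}h-\chi$ is compactly supported, integration by parts gives $\int_{\bar M}\langle\nabla^2 Q(0),h\rangle\,\mathrm{d}V_0=\int_{\bar M}Q(0)\,\Div_{\bar f}\Div_{\bar f}h\,\mathrm{d}V_0=-\int_{\bar M}\langle\nabla Q(0),\Div_{\bar f}h\rangle\,\mathrm{d}V_0$, so the third and sixth terms together contribute $-\int_{\bar M}\langle\nabla Q(0),\Div_{\bar f}h\rangle\,\mathrm{d}V_0$, which is bounded by $\|\nabla Q(0)\|_{L^2}\|\Div_{\bar f}h\|_{L^2}\le C\mathfrak X^2$ via Corollary \ref{cor:tay3} and \eqref{exeq:comput2}.

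The crux is the stability term $\int_{\bar M}\langle\LL h,h\rangle\,\mathrm{d}V_0$: a direct Cauchy--Schwarz estimate would produce $\|h\|_{L^2}^2$, whose leading part is of order $\alpha^2$, which is too large for the inequality we want. Here one exploits the decomposition $h=ug_{S^m}+\zeta$ and the fact, recalled after Definition \ref{defncylinder}, that $ug_{S^m}\in\ker\LL\cap\ker\Div_{\bar f}$ for $u\in\KK_0$. Thus $\LL h=\LL\zeta$, and writing $\int_{\bar M}\langle\LL h,h\rangle\,\mathrm{d}V_0=\int_{\bar M}\langle\LL h,ug_{S^m}\rangle\,\mathrm{d}V_0+\int_{\bar M}\langle\LL\zeta,\zeta\rangle\,\mathrm{d}V_0$, the first integral vanishes since $h$ is compactly supported, $\LL$ is formally self-adjoint with respect to $\mathrm{d}V_{\bar f}$, and $\LL(ug_{S^m})=0$; the second is bounded by $\|\LL\zeta\|_{L^2}\|\zeta\|_{L^2}\le C\|\zeta\|_{W^{2,2}}^2\le C\mathfrak X^2$ by Corollary \ref{cor:tay3}, where the estimate $\|\LL\zeta\|_{L^2}\le C\|\zeta\|_{W^{2,2}}$ uses Lemma \ref{concentrationinequ} to absorb the first-order piece $\langle\nabla\zeta,\nabla\bar f\rangle$ of $\Delta_{\bar f}\zeta$ together with the boundedness of $\Rm$ on the cylinder. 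Summing the volume part and these six quadratic contributions gives the claim. The main obstacle is precisely this last step: it is the vanishing of the second variation of $\WW$ along the kernel direction $ug_{S^m}$ — i.e. the structure of $\ker\LL$ on the cylinder — that keeps the $\LL$-term at order $\mathfrak X^2$ rather than $\alpha^2$, while the cubic term $\alpha^3$ enters only through the $V'''$-bound in the volume estimate.
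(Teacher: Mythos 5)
Your proposal is correct and follows essentially the same route as the paper's proof: isolate the $(W(0)-1)$ terms via the Taylor expansion of $V$ (improved by the $Q^2(0)$ estimate), then bound the remaining quadratic terms by $C\mathfrak X^2+C|V(1)-V(0)|$ using integration by parts, \eqref{exeq:comput2}, \eqref{Qesti}, Corollary \ref{cor:tay3}, and the crucial fact that $\LL(ug_{S^m})=0$ so the stability term reduces to $\int_{\bar M}\la\LL\zeta,\zeta\ra\,\mathrm{d}V_0$. The only cosmetic difference is that the paper bounds this last integral by $C\|\zeta\|_{W^{1,2}}^2$ after one integration by parts rather than by $\|\LL\zeta\|_{L^2}\|\zeta\|_{L^2}\le C\|\zeta\|_{W^{2,2}}^2$; both are controlled by $C\mathfrak X^2$.
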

\begin{proof}
	By the Taylor expansion of $V$ in \eqref{taylorofV2}  (note that here we can improve the estimate in \eqref{taylorofV2} by \eqref{Qesti}), we have
	\begin{align*}
		\left|\int_{\bar M} Q(0)\,\mathrm{d}V_0-\frac{1}{4}\int_{\bar M} |h|^2 \,\mathrm{d}V_0+\frac{1}{2}\int_{\bar M} Q^2(0)\,\mathrm{d}V_0\right|\leq C\left(\alpha^3+\mathfrak{X}^2+\left|V(1)-V(0)\right|\right).
	\end{align*}
	Thus, it follows from \eqref{eq:first11} and \eqref{eq:second22} that
	\begin{align*}
		\left|W'(0)+\frac{1}{2}W''(0)\right|\leq& \left|\frac{1}{2}\int_{\bar M} \la\LL h+2\Div_{\bar f}^*\,\Div_{\bar f} h+2\nabla^2 Q(0),h\ra \,\mathrm{d}V_0\right|\\
		&+\left|\int_{\bar M} \left(2|\nabla Q(0)|^2-Q^2(0)+Q(0)\Div_{\bar f}\,\Div_{\bar f} h\right) \,\mathrm{d}V_0\right|\\
		&+C\left(\alpha^3+\mathfrak{X}^2+\left|V(1)-V(0)\right|\right).
	\end{align*}
	
By Proposition \ref{keycal4} and \eqref{exeq:comput2}, we have
	\begin{align*}
\int_{\bar M} | Q(0)|^2 \,\mathrm{d}V_0\leq C\left(\mathfrak{X}^2+\left|V(1)-V(0)\right|^2\right) \quad \text{and} \quad \int_{\bar M} |\nabla Q(0)|^2 \,\mathrm{d}V_0\leq C\mathfrak{X}^2.
	\end{align*}

Using the integration by parts, which is justified since $h$ is compactly supported, we obtain
	\begin{align*}
		\left|\int_{\bar M} Q(0)\Div_{\bar f}\,\Div_{\bar f} h \,\mathrm{d}V_0\right|=\left|\int_{\bar M}\la \na Q(0),\Div_{\bar f} h\ra \,\mathrm{d}V_0\right|\leq \rVert \na Q(0)\rVert_{L^2}\rVert\Div_{\bar f} h\rVert_{L^2}\leq C\mathfrak{X}^2,
	\end{align*}
where we used $\rVert \Div_{\bar f}h\rVert_{L^2}\leq \mathfrak X$ from the definition of $\mathfrak X$.

Since $\LL(ug_{S^m})=0$, we have
	\begin{align*}
		\left|\int_{\bar M} \la \LL h,h\ra \,\mathrm{d}V_0\right|=\left|\int_{\bar M} \la \LL \zeta, \zeta\ra \,\mathrm{d}V_0\right|\leq C\|\zeta\|_{W^{1,2}}^2 \le C\mathfrak{X}^2.
	\end{align*}
Moreover, using integration by parts, we obtain
	\begin{align*}
		&\left|\int_{\bar M} \la\Div_{\bar f}^*\,\Div_{\bar f}h,h\ra \,\mathrm{d}V_0\right|=\left|\int_{\bar M} \la \Div_{\bar f}h,\Div_{\bar f}h\ra \,\mathrm{d}V_0\right|\leq C\mathfrak{X}^2,\\
		&\left|\int_{\bar M} \la \nabla^2 Q(0),h\ra \,\mathrm{d}V_0\right|=\left|\int_{\bar M} \la \nabla Q(0),\Div_{\bar f}h\ra \,\mathrm{d}V_0\right|\leq C\mathfrak{X}^2.
	\end{align*}
	Since $\left|V(1)-V(0)\right| \ll 1$, we can combine all the above inequalities to conclude that
	$$\left|W'(0)+\frac{1}{2}W''(0)\right|\leq C\left(\alpha^3+\mathfrak{X}^2+\left|V(1)-V(0)\right|\right).$$
\end{proof}

The main result of this section is the following estimate for $\WW(g, f)$.

\begin{thm}\label{lojaforF}
For the weighted Riemannian manifold $(\bar M, g, f)$ such that $(h:=g-\bar g, \chi:=f-\bar f) \in C^2(S^2(\bar M)) \times C^2(\bar M)$ is compactly supported and
	\begin{align*}
\|h\|_{C^2}+\|\chi\|_{C^2} \le \delta_n,
	\end{align*}
for a small constant $\delta_n$ depending only $n$, then we have
	\begin{align*}
		\left|\WW(g,f)-\Theta_m \right|\leq C\left(\rVert u\rVert_{L^2}^3+\rVert \mathbf{\Phi}(g,f)\rVert_{L^2}^2+\rVert\Div_{\bar f} h\rVert_{W^{1,2}}^2+|\BB(h,\chi)|^2+\left|\VV(g,f)-1\right|\right),
	\end{align*}
where $ug_{S^m}$ is the projection of $h$ onto $\KK_0 g_{S^m}$.
\end{thm}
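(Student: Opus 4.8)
The plan is to deduce the estimate directly from the third-order Taylor expansion \eqref{taylorofF}, using the two bounds already in place: Lemma \ref{keycal5} for the low-order term $W'(0)+\tfrac12 W''(0)$, and Proposition \ref{3rdderiative} for the cubic remainder $\sup_{s\in[0,1]}|W'''(s)|$. Since $h,\chi$ are compactly supported and the $\WW$-integrand depends smoothly on $(g,f)$, the map $s\mapsto W(s)=\WW(g(s),f(s))$ is $C^3$ on $[0,1]$, so \eqref{taylorofF} is applicable.

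First I would record the two identifications $W(0)=\WW(\bar g,\bar f)=\Theta_m$ and $V(0)=\VV(\bar g,\bar f)=1$, so that $\WW(g,f)-\Theta_m=W(1)-W(0)$ and $\VV(g,f)-1=V(1)-V(0)$. Inserting Lemma \ref{keycal5} and Proposition \ref{3rdderiative} into \eqref{taylorofF} gives
\[
\left|\WW(g,f)-\Theta_m\right|\le C\Big(\alpha^3+\alpha^2\mathfrak X+\delta\mathfrak X^2+\alpha^2\,|\VV(g,f)-1|+\delta\,|\VV(g,f)-1|^2+\mathfrak X^2+|\VV(g,f)-1|\Big).
\]
Next I would absorb the mixed and higher-order terms. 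From $\|h\|_{C^2}+\|\chi\|_{C^2}\le\delta_n$ one has $\alpha+\mathfrak X\le C\delta_n\ll1$ (and likewise $|\VV(g,f)-1|\ll1$), so that $\delta\mathfrak X^2\le\mathfrak X^2$, $\alpha^2|\VV(g,f)-1|\le|\VV(g,f)-1|$, $\delta|\VV(g,f)-1|^2\le|\VV(g,f)-1|$, and, using $\alpha\mathfrak X\le\tfrac12(\alpha^2+\mathfrak X^2)$ together with $\alpha\le1$, also $\alpha^2\mathfrak X\le\tfrac12\alpha^3+\tfrac12\mathfrak X^2$. Hence the right-hand side collapses to $C\big(\alpha^3+\mathfrak X^2+|\VV(g,f)-1|\big)$.

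Finally I would unpack the abbreviations: $\alpha^3=\|u\|_{L^2}^3$, and since $\mathfrak X=\|\mathbf{\Phi}(g,f)\|_{L^2}+\|\Div_{\bar f}h\|_{W^{1,2}}+|\BB(h,\chi)|$, we get $\mathfrak X^2\le 3\big(\|\mathbf{\Phi}(g,f)\|_{L^2}^2+\|\Div_{\bar f}h\|_{W^{1,2}}^2+|\BB(h,\chi)|^2\big)$; substituting these yields exactly the claimed inequality. I expect no real difficulty in this final assembly — all the genuine work is upstream, in Proposition \ref{3rdderiative}, which rests on the pointwise derivative bounds of Lemmas \ref{estideriPhi} and \ref{keycal2}, the weighted $L^2$-estimates on $Q$ of Lemmas \ref{keycal3}, \ref{keycal4} and \ref{keycalxetra2}, and ultimately on the quadratic rigidity inequalities (Propositions \ref{prop:tay1} and \ref{prop:tay2}) taken from \cite{colding2021singularities}. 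The one point that needs a little care here is verifying that every cross-term is genuinely dominated by the three surviving quantities, and this is precisely where the smallness hypothesis $\|h\|_{C^2}+\|\chi\|_{C^2}\le\delta_n$ is used.
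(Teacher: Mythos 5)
Your proposal matches the paper's proof: the authors likewise obtain the result by combining \eqref{taylorofF} with Lemma \ref{keycal5} and Proposition \ref{3rdderiative}, identifying $W(0)=\Theta_m$ and $V(0)=1$, and absorbing the cross-term (they use $\alpha^2\mathfrak X\le \tfrac{2}{3}\alpha^3+\tfrac{1}{3}\mathfrak X^3$ where you use $\alpha^2\mathfrak X\le\tfrac12\alpha^3+\tfrac12\mathfrak X^2$, an immaterial difference given $\alpha+\mathfrak X\ll1$). The argument is correct and essentially identical to the paper's.
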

\begin{proof}
	Note that $\WW(g,f)=W(1)$, $\WW(\bar g,\bar f)=W(0)=\Theta_m$\index{$\Theta_m$}, $\VV(g,f)=V(1)$ and $\VV(\bar g,\bar f)=1$, then the conclusion follows from the combination of Proposition \ref{3rdderiative}, Lemma \ref{keycal5} and \eqref{taylorofF}, noting that $\alpha^2 \mathfrak{X} \le 2\alpha^3/3+\mathfrak{X}^3/3$.
\end{proof}

\section{Radius functions on weighted Riemannian manifolds}\label{secradiicontraction}

In this section, we introduce several notions of radius functions that serve to characterize regions that are nearly cylindrical on a weighted Riemannian manifold. Our main result, Theorem \ref{lojawithradius}, provides an estimate for the deviation of the pointed $\mathcal{W}$-entropy from that of the standard cylinder, expressed in terms of a suitable curvature radius.

\subsection*{Definitions of radius functions}

Throughout this subsection, the weighted Riemannian manifold $\left(M,g,f\right)$ is always assumed to be normalized (see \eqref{normalziation1}) and satisfies, for some constant $C_V>0$,
\begin{align}\label{normalization22}
	\int_{M\setminus B(p,L)} 1 \,\mathrm{d}V_f \leq C_V e^{-\frac{L^2}{15}}, \quad\forall L>0,
\end{align}
where $p$ is a fixed minimum point of $f$. As in Section \ref{seclojaW}, we fix a weighted cylinder $\left(\bar M,\bar g,\bar f\right)$ as in \eqref{eq:model}, set $\bar b=2\sqrt{\bar f}$\index{$\bar b$}, and fix a base point $\bar p \in \bar M$, which is a minimum point of $\bar f$.

\begin{defn}\label{defnradii}
	For $\sigma\in (0,1/10)$ and a weighted Riemannian manifold $(M,g,f)$ with $\Phi=\mathbf{\Phi}(g, f)=\dfrac{g}{2}-\Ric(g)-\nabla^2 f$, we define
	\begin{enumerate}[label=\textnormal{(\Alph{*})}]
		\item \emph{($\rA$-radius)}\index{$\rA$} $\rA$ as the largest number $L$ such that there exists a diffeomorphism $\varphi_A$ from $\{\bar b \le L\} \subset \bar M$ onto a subset of $M$ such that
		\begin{align*}
			\left[\bar g-\varphi_A^* g\right]_5+\left[\bar f-\varphi_A^* f\right]_5 \leq e^{\frac{\bar f}{4}-\frac{L^2}{16}},
		\end{align*}
		\item \emph{($\rBC$-radius)}\index{$\rBC$} $\rBC$ as the largest number $L$ such that there exists a diffeomorphism $\varphi_B$ from $\{\bar b \le L\} \subset \bar M$ onto a subset of $M$ such that
		\begin{align*}
			\left[\bar g-\varphi_B^* g\right]_0+\left[\bar f-\varphi_B^* f\right]_0\leq e^{-\frac{L^2}{33}},
		\end{align*}
		and
		\begin{equation}\label{equ:integralphi}
			\int_{\{\bar b\le L\}}\left|\varphi_B^*\Phi\right|^2 \,\mathrm{d}V_{\bar f}\leq e^{-\frac{L^2}{4-\sigma}}.
		\end{equation}
Moreover, for all $k \in [1, 10^{10} n\sigma^{-1}]$, the $C^k$-norms of $\bar g-\varphi_B^* g$ and $\bar f-\varphi_B^* f$ are bounded by $1$. 
	\end{enumerate}
Here and for the remainder of the paper, all norms $[\cdot]_l$ are taken with respect to $\bar g$, unless explicitly stated otherwise.
\end{defn}

In the following, we will frequently use the interpolation inequalities from \cite[Section 3.2]{kryelliptic} and \cite[Lemma B.1]{colding2015uniqueness}. Although originally stated for Euclidean space, these inequalities remain valid on Riemannian manifolds with bounded geometry. In particular, they apply to $(\bar M, \bar g)$. Let $a_{l,n}:=\frac{l}{l+n}$ denote the constants introduced in \cite[Lemma B.1]{colding2015uniqueness}. Unless otherwise specified, all Sobolev norms considered in this section are weighted norms with respect to $\mathrm{d}V_{\bar f}$.

\subsection*{Comparison of radius functions I}

The following proposition is similar to \cite[Theorem 5.2]{li2023rigidity}:
\begin{prop}\label{prop:con}
	With assumption \eqref{normalization22}, there exists a constant $L_1=L_1(n,C_V, \sigma)>0$ such that if $\rBC\geq L_1$, then
	\begin{equation*}
		\rA\geq \rBC-2.
	\end{equation*}
\end{prop}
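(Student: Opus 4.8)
The plan is to realise the $\rBC$-data as a compactly supported perturbation of the model cylinder, improve it by a gauge change that exploits the rigidity inequalities of Section~\ref{seclojaW}, and then promote the resulting weighted $L^2$-estimates to the pointwise $C^5$-estimate that defines $\rA$; the loss of the fixed amount $2$ in the radius will absorb a cutoff and the usual loss in elliptic regularity. First I would set $L:=\rBC$, take $\varphi_B$ as in Definition~\ref{defnradii}, and record the perturbation $h:=\varphi_B^*g-\bar g$, $\chi:=\varphi_B^*f-\bar f$ on $\{\bar b\le L\}$: one has $[h]_0+[\chi]_0\le e^{-L^2/33}$, the $C^k$-norms of $h,\chi$ are $\le 1$ for $k$ up to $\sim n\sigma^{-1}$, and --- since $\mathbf{\Phi}$ is natural --- $\int_{\{\bar b\le L\}}|\mathbf{\Phi}(\bar g+h,\bar f+\chi)|^2\,\mathrm{d}V_{\bar f}\le e^{-L^2/(4-\sigma)}$. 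Interpolating between the $C^0$-bound and the $C^k$-bounds (via \cite[§3.2]{kryelliptic}, \cite[Lemma~B.1]{colding2015uniqueness}) upgrades this to $[h]_l+[\chi]_l\le C_l\,e^{-(1-\Psi(\sigma))L^2/33}$ for all $l$ below a large fixed number, on a slightly smaller region; these bounds suffice to control cutoff and nonlinear terms, but --- crucially --- they are by themselves far too weak to give $\rA$, so the improvement of the exponent must come from the integral control of $\mathbf{\Phi}$ together with the rigidity of the cylinder.

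Next I would cut $(h,\chi)$ off in $\bar b$ to a compactly supported pair $(\hat h,\hat\chi)$ agreeing with $(h,\chi)$ on $\{\bar b\le L-1\}$ and with $\|\hat h\|_{C^2}+\|\hat\chi\|_{C^2}\ll1$; since the weighted volume of the annulus $\{L-1\le\bar b\le L\}$ is $\lesssim e^{-(L-1)^2/4}$, the cutoff contributes to $\mathbf{\Phi}$, $\Div_{\bar f}\hat h$, and $\BB$ only by amounts negligible next to $e^{-L^2/(4-\sigma)}$. Following \cite[§3]{colding2021singularities} and \cite[Theorem~5.2]{li2023rigidity} I would then construct a diffeomorphism $\Psi$ of $\bar M$, close to the identity and localised near $\{\bar b\le L\}$, by iteratively solving the weighted elliptic gauge equation $\Div_{\bar f}(\mathcal L_X\bar g)=-\Div_{\bar f}(\text{current perturbation})$ together with a finite-dimensional adjustment (a small translation in the $\R^{n-m}$-factor, which moves $\BB$ linearly), arranged so that the new perturbation $\tilde h:=(\varphi_B\circ\Psi)^*g-\bar g$, $\tilde\chi:=(\varphi_B\circ\Psi)^*f-\bar f$ obeys $\|\Div_{\bar f}\tilde h\|_{W^{1,2}}+|\BB(\tilde h,\tilde\chi)|\le C\|\mathbf{\Phi}(\bar g+\tilde h,\bar f+\tilde\chi)\|_{L^2}$. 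Because $\mathbf{\Phi}$ is diffeomorphism-equivariant the right-hand side is still governed by \eqref{equ:integralphi}, so $\mathfrak X\le Ce^{-L^2/(8-2\sigma)}$; here is where the argument departs from \cite{li2023rigidity, colding2021singularities}, in which $(M,g,f)$ is a genuine shrinker and \eqref{equ:integralphi} is vacuous, whereas for us it is precisely the substitute, with the sharp exponent $1/(4-\sigma)$, that makes the rigidity inputs applicable. Writing $\tilde h=u g_{S^m}+\zeta$, $\tilde\chi=\tfrac m2 u+q$, Corollary~\ref{cor:tay3} then gives $\|\zeta\|_{W^{2,2}}+\|\bar\na q\|_{W^{1,2}}\le C\mathfrak X$ and $\alpha=\|u\|_{L^2}\le C\mathfrak X^{1/2}$, while Proposition~\ref{keycal4} controls $\|q\|_{L^2}$ in terms of $\mathfrak X$ and the volume defect $|\VV(\bar g+\tilde h,\bar f+\tilde\chi)-1|$, which is small by construction of the cutoff.

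Finally I would pass to pointwise bounds: interpolating the weighted $W^{2,2}$-control of $\zeta$ and the weighted control of $q,\bar\na q$ against the a priori $C^k$-bounds, and using the Sobolev embedding on unit balls --- which turns a weighted $L^2$-norm into a local unweighted one at the cost of a factor $e^{\bar f/2}$ --- yields $[\zeta]_5+[q]_5\le Ce^{\bar f/2}e^{-L^2/(8-2\sigma)}$, whereas Lemma~\ref{estimateJacobifield} gives $[u g_{S^m}]_5+|\tfrac m2 u|\le C(1+|\vec x|^2)\alpha\le Ce^{\bar f/4}e^{-L^2/(16-4\sigma)}$. Hence, with $\varphi_A:=\varphi_B\circ\Psi$ on $\{\bar b\le L-2\}$,
\begin{equation*}
[\bar g-\varphi_A^*g]_5+[\bar f-\varphi_A^*f]_5\le Ce^{\bar f/2}e^{-\frac{L^2}{8-2\sigma}}+Ce^{\bar f/4}e^{-\frac{L^2}{16-4\sigma}};
\end{equation*}
on $\{\bar b\le L-2\}$ one has $\bar f\le (L-2)^2/4$, and using $\tfrac1{8-2\sigma}>\tfrac18$ and $\tfrac1{16-4\sigma}>\tfrac1{16}$ (the margin furnished by $\sigma$) one verifies that for $L\ge L_1(n,C_V,\sigma)$ the right-hand side is $\le e^{\bar f/4-(L-2)^2/16}$, which is exactly the condition defining $\rA$ at radius $L-2$; this proves $\rA\ge\rBC-2$.

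I expect the main obstacle to be the construction of the modified diffeomorphism in the second step, on the \emph{non-compact} cylinder: the natural weighted function spaces let the gauge vector field grow like $e^{\bar f/2}$ towards spatial infinity, so the construction has to be localised to the region where $\bar f$ is not too large and married to the already strong $\rBC$-bound $e^{-L^2/33}$ further out --- which, near $\bar b\approx L$, dominates the essentially trivial bound $e^{\bar f/4-L^2/16}\approx1$ that $\rA$ asks for there --- while keeping $\Psi$ close enough to the identity for Corollary~\ref{cor:tay3} to apply and for $\varphi_A$ to remain a genuine diffeomorphism out to $\{\bar b\le L-2\}$. Everything else amounts to bookkeeping with Gaussian weights, in which the numerical constants in \eqref{equ:integralphi} and in the definitions of $\rA$, $\rBC$ are tuned so that the estimates close.
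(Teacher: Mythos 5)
Your overall architecture matches the paper's: cut off the $\rBC$-data, fix the gauge as in \cite[Theorem 5.2]{li2023rigidity} so that $\Div_{\bar f}h$ and $\BB$ are dominated by $\|\mathbf{\Phi}\|_{L^2}$, feed the integral hypothesis \eqref{equ:integralphi} into Propositions \ref{prop:tay1}--\ref{prop:tay2} to get $\alpha\le Ce^{-L^2/(16-4\sigma)}$ and $\|\zeta\|_{W^{2,2}}+\|\bar\na q\|_{W^{1,2}}\le Ce^{-L^2/(8-2\sigma)}$, and interpolate against the a priori $C^k$-bounds to convert weighted $L^2$ into pointwise estimates with an $e^{\bar f/2}$ loss. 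Your assembly of the $u$- and $\zeta$-terms, and the identification of the gauge construction on the non-compact cylinder as the delicate step, are all consistent with the paper.

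There is, however, a genuine gap in your control of $q$ itself (as opposed to $\bar\na q$). You propose to bound $\|q\|_{L^2}$ via Proposition \ref{keycal4} and then interpolate to the pointwise claim $[q]_5\le Ce^{\bar f/2}e^{-L^2/(8-2\sigma)}$. But Proposition \ref{keycal4} controls $\|Q(0)\|_{L^2}$ only up to the volume defect $|V(1)-V(0)|$, and under the standing hypothesis \eqref{normalization22} this defect is \emph{not} "small by construction of the cutoff": the mass of $M$ outside the image of $\varphi_B$ is controlled only at the rate $e^{-L^2/15}$ (and Remark \ref{rem:constan1} shows the threshold is essentially $16-5\sigma$, so this cannot be improved). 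Hence the best weighted bound your route yields is $\|q\|_{L^2}\lesssim e^{-L^2/15}$, not $e^{-L^2/(8-2\sigma)}$, and after undoing the weight the pointwise bound is only $|q|\lesssim e^{\bar f/2-L^2/15}$. Comparing with the target $e^{\bar f/4-(L-2)^2/16}$ at radius $\bar b=r$ (so $\bar f=r^2/4$), this requires $r^2/16\le L^2/15-L^2/16+O(L)=L^2/240+O(L)$, i.e.\ $r\lesssim L/\sqrt{15}\approx 0.26L$; meanwhile the a priori bound $e^{-L^2/34}$ coming from interpolating the $\rBC$-data only meets the target for $r\gtrsim 3L/\sqrt{17}\approx 0.73L$. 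In the intermediate annulus neither estimate closes. The paper avoids this by never converting a global weighted $L^2$ bound on $q$ into a pointwise one: it uses the volume normalization to produce a \emph{single point} $z_0\in\{\bar b<L/2\}$ with $|q(z_0)|\le C'L^2e^{-L^2/(16-4\sigma)}$ (see \eqref{contreq3}), and then integrates the pointwise gradient bound $|\na q|\le Ce^{\bar f/2-L^2/(8-1.8\sigma)}$ along paths from $z_0$, which yields \eqref{contractionineq5} and closes on the whole region. You should replace your $L^2$-interpolation step for $q$ by this "one good point plus gradient integration" argument; with that substitution the rest of your proof goes through.
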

\begin{proof}
For simplicity, we set $L=\rBC$ and let $\varphi_B$ denote the diffeomorphism corresponding to $\rBC$.  

Throughout the proof, we denote by $C$ constants depending only on $n$, and by $C'$ constants depending only on $n,C_V$. All these constants can be different line by line.
	
Choose a cut-off function $\eta$ that is supported in $\{\bar b<L-1/2\}$ and is identically $1$ on $\{\bar b\leq L-1\}$. By Definition \ref{defnradii} (B) and the multiplicative interpolation (see \cite[Theorem 3.2.1]{kryelliptic}) on each ball of size $1/4$, we have on $\{\bar b\leq L-1/4\}$,
	\begin{align}
		\label{highorder1}\rVert \bar g-\varphi_B^* g\rVert_{C^{k, \frac 1 2}}+\rVert\bar f-\varphi_B^* f\rVert_{C^{k, \frac 1 2}}&\leq C e^{-\frac{L^2}{34}},
	\end{align}
for any $k \in [1, 10^8 n\sigma^{-1}]$, provided that $L$ is sufficiently large.

As in the proof of \cite[Theorem 5.2]{li2023rigidity} (see \cite[Equations (5.15), (5.16)]{li2023rigidity}), there exists a diffeomorphism $\varphi_A$, which is a modification of $\varphi_B$ by some diffeomorphism, from $\{\bar b \le L-1/2\}$ to a subset of $M$ such that for $h:=\eta \big(\varphi_A^{*}g-\bar{g}\big),\chi:=\eta\left(\varphi_A^*f-\bar{f}\right)$,
	\begin{align}
\sup_{\bar b \le L-1/2}\lc\rVert h\rVert_{C^{k, \frac 1 2}}+\rVert\chi\rVert_{C^{k, \frac 1 2}}\rc \leq e^{-\frac{L^2}{34}} \quad \text{and} \quad
 \sup_{\bar b \le L-6}\rVert \Div_{\bar f} h\rVert_{C^{2, \frac 1 2}}+\left|\BB(h,\chi)\right| \leq e^{-\frac{(L-6)^2}{8}-\frac{L^2}{34}}, 		\label{improvedgauge}
	\end{align}
for any $k \in [1, 10^6 n\sigma^{-1}]$, provided that $L$ is sufficiently large.
		
	In particular, by Lemma \ref{estideriPhi} (i), we see that for $k \in [1, 10^6 n\sigma^{-1}]$,
		\begin{equation}\label{highorderx1}
\rVert\mathbf{\Phi}(\bar g+h, \bar f+\chi)\rVert_{C^{k-2}}\leq CL e^{-\frac{L^2}{34}}.
	\end{equation}

	Now, we write as in \eqref{eq:decompose}:
		\begin{align*}
h=ug_{S^m}+\zeta \quad \text{and} \quad \chi=\frac{m}{2}u+q
	\end{align*}
	such that $ug_{S^m}$ is the projection of $h$ to $\KK_0 g_{S^m}$. Then, it follows from Proposition \ref{prop:tay2} that
	\begin{align}\label{contractionineq1}
		\alpha^2:= \rVert u\rVert_{L^2}^2\leq C\rVert (1+|\vec{x}|^2)\mathbf{\Phi}(\bar g+h, \bar f+\chi)\rVert_{L^1}+C_\ep \left(\rVert\mathbf{\Phi}(\bar g+h, \bar f+\chi)\rVert_{L^2}^{2-\ep}+|\BB(h,\chi)|^{2-\ep}+\rVert\Div_{\bar f}h\rVert_{W^{1,2}}^{2-\ep}\right).
	\end{align}
	Since $\mathbf{\Phi}(\bar g+h, \bar f+\chi)= \varphi_A^* \Phi$ on $\{\bar b \le L-1\}$, we have
			\begin{align*}
\int_{\{\bar b<L-1\}}|\mathbf{\Phi}(\bar g+h, \bar f+\chi)|^2 \,\mathrm{d}V_{\bar f}= \int_{\{\bar b<L-1\}}|\varphi_A^* \Phi|^2 \,\mathrm{d}V_{\bar f} \le  C \int_{\{\bar b<L-1\}}|\varphi_A^* \Phi|^2 \,\mathrm{d}V_{\varphi_A^* f} =  C \int_{\varphi_A(\{\bar b<L-1\})}|\Phi|^2 \,\mathrm{d}V_{f},
	\end{align*}
	where we used \eqref{improvedgauge} for the inequality. Since $\varphi_A$ is equal to $\varphi_B$ up to an exponential error (see \cite[Equations (5.9)]{li2023rigidity}), it follows from \eqref{equ:integralphi} and \eqref{highorderx1} that
			\begin{align*}
\int_{\{\bar b<L\}}|\mathbf{\Phi}(\bar g+h, \bar f+\chi)|^2 \,\mathrm{d}V_{\bar f}\leq & C\int_{\{\bar b<L\}} |\varphi_B^*\Phi|^2 \,\mathrm{d}V_{\bar f}+\int_{\{L-1 \le \bar b<L\}}|\mathbf{\Phi}(\bar g+h, \bar f+\chi)|^2 \,\mathrm{d}V_{\bar f} \\
\le & C\left(e^{-\frac{L^2}{4-\sigma}}+L^{n+2} e^{-\frac{(L-1)^2}{4}-\frac{L^2}{17}}\right) \le Ce^{-\frac{L^2}{4-\sigma}}. \label{eq:extrainte}
	\end{align*}

Moreover, by H\"older's inequality,
	\begin{align*}
		\rVert(1+|\vec{x}|^2)\mathbf{\Phi}(\bar g+h, \bar f+\chi)\rVert_{L^1(\{\bar b< L\})}&\leq C\rVert \mathbf{\Phi}(\bar g+h, \bar f+\chi)\rVert_{L^2(\{\bar b<L\})}\leq Ce^{-\frac{L^2}{2(4-\sigma)}}.
	\end{align*}
	Using \eqref{improvedgauge}, \eqref{contractionineq1} and choosing $\ep$ to be small, we obtain
	\begin{equation}\label{contreq2}
		\alpha\leq Ce^{-\frac{L^2}{4(4-\sigma)}}.
	\end{equation}
	Moreover, by Proposition \ref{prop:tay1}, it follows that
	\begin{align}\label{contreq1}
		\rVert \zeta\rVert_{W^{2,2}}+\rVert \nabla q\rVert_{W^{1,2}}\leq C\left(\rVert\mathbf{\Phi}(\bar g+h, \bar f+\chi)\rVert_{L^2}+\rVert\Div_{\bar f}h\rVert_{W^{1,2}}+|\BB(h,\chi)|+\alpha^2\right)\leq Ce^{-\frac{L^2}{2(4-\sigma)}}.
	\end{align}
	
For any $x\in \{\bar b<L-1\}$, applying the interpolation (see \cite[Lemma B.1]{colding2015uniqueness}) to the function $U:=([\zeta]_2^2+[\nabla q]_1^2)e^{-\bar f}$ on $B(x,1/2)$, we get that for $k=10^6 n\sigma^{-1}-2$,
	\begin{align}\label{interploation1}
		\rVert U\rVert_{L^\infty(B(x,1/4))}
		\leq& C \rVert U\rVert_{L^1(B(x,1/2))}+C \rVert U\rVert_{L^1(B(x,1/2))}^{a_{k,n}}\rVert\nabla ^kU\rVert_{L^\infty(B(x,1/2))}^{1-a_{k,n}}\nonumber\\
		\leq& C e^{-\frac{L^2}{4-\sigma}}+CL^k e^{-a_{k,n}\frac{L^2}{4-\sigma}-(1-a_{k,n})\frac{L^2}{17}} 
		\leq Ce^{-\frac{L^2}{4-0.9\sigma}}.
	\end{align}
 Here, in the second inequality, we have used \eqref{highorder1} and \eqref{contreq1}. Note that in \eqref{interploation1}, the Sobolev norms are unweighted. 
	
	It follows from \eqref{contreq2} and \eqref{interploation1} that on $\{\bar b<L-1\}$,
	\begin{equation}\label{improvedestimate1}
		[\zeta]_2+[\nabla q]_1\leq Ce^{\frac{\bar f}{2}-\frac{L^2}{8-1.8\sigma}},\quad [u]_2\leq C \bar f \alpha\leq C \bar f e^{-\frac{L^2}{4(4-\sigma)}}.   
	\end{equation}
	
	To get the estimate for $q$ itself, we use the normalization condition and the assumption \eqref{normalization22}. In fact, by \eqref{normalization22}, we have
		\begin{align*}
\int_{\{b>L-2\}} e^{-f} \,\mathrm{d}V_{g} \leq C' e^{-\frac{L^2}{4(4-\sigma)}},
	\end{align*}
	thus, it follows from the normalization condition that
	\begin{align}\label{contractionineq2}
		\left| \int_{\{\bar b<L-1\}} e^{-\bar f} \,\mathrm{d}V_{\bar g}-\int_{\{\bar b<L-1\}}e^{-\varphi_A^*f} \,\mathrm{d}V_{\varphi_A^*g}\right|\leq C' e^{-\frac{L^2}{4(4-\sigma)}}.
	\end{align}
	Since $\left|\,\mathrm{d}V_{\bar g}-\,\mathrm{d}V_{\varphi_A^*g}\right|\leq C[ h]_0\,\mathrm{d}V_{\bar g}\leq C\left([\zeta]_0+[u]_0\right)\,\mathrm{d}V_{\bar g}$, it follows that
	\begin{align}
		&\left|\int_{\{\bar b<L-1\}}e^{-\varphi_A^*f} \,\mathrm{d}V_{\bar g}-\int_{\{\bar b<L-1\}}e^{-\varphi_A^*f} \,\mathrm{d}V_{\varphi_A^*g}\right| \notag\\
		\leq & \left|\int_{\{\bar b<L-1\}}e^{-\chi-\bar f} \,\lc \mathrm{d}V_{\bar g}-\mathrm{d}V_{\varphi_A^*g} \rc \right| \notag\\
		\leq & C\left(\rVert\zeta\rVert_{L^1\left(\{\bar b\leq L-1\}\right)}+\rVert u\rVert_{L^1\left(\{\bar b\leq L-1\}\right)}\right) \le Ce^{-\frac{L^2}{4(4-\sigma)}}.\label{contractionineq3}
	\end{align}
	
	On the other hand, by definition, 
	\begin{align*}
		\int_{\{\bar b<L-1\}}e^{-\bar f} \,\mathrm{d}V_{\bar g}-\int_{\{\bar b<L-1\}}e^{-\varphi_A^*f} \,\mathrm{d}V_{\bar g}=\int_{\{\bar b<L-1\}}(e^{-\chi}-1) e^{-\bar f}\,\mathrm{d}V_{\bar g},
	\end{align*}
	we thus obtain from \eqref{contractionineq2} and \eqref{contractionineq3} that
	\begin{equation}\label{contractionineq4a}
\left|\int_{\{\bar b<L-1\}}(e^{-\chi}-1) e^{-\bar f}\,\mathrm{d}V_{\bar g} \right|\leq C'e^{-\frac{L^2}{4(4-\sigma)}}.
	\end{equation}

By \eqref{improvedgauge}, we have
	\begin{equation}\label{contractionineq4b}
\left|\int_{\{L/2 \le \bar b<L-1\}}(e^{-\chi}-1) e^{-\bar f}\,\mathrm{d}V_{\bar g} \right|\leq C L^n e^{-\frac{L^2}{34}-\frac{L^2}{16}}.
	\end{equation}

Combinining \eqref{contractionineq4a} and \eqref{contractionineq4b}, we obtain
	\begin{align*}
\left|\int_{\{\bar b<L/2\}}(e^{-\chi}-1) e^{-\bar f}\,\mathrm{d}V_{\bar g} \right| \leq C'e^{-\frac{L^2}{4(4-\sigma)}}.
	\end{align*}
Therefore, there exists a point $z_0 \in\{\bar b<L/2\}$ such that $|\chi(z_0)| \le C' e^{-\frac{L^2}{4(4-\sigma)}}$. By \eqref{improvedestimate1}, this implies
	\begin{equation}\label{contreq3}
		\left|q(z_0)\right|\leq |\chi(z_0)|+C |u(z_0)| \leq  C' L^2 e^{-\frac{L^2}{4(4-\sigma)}}.
	\end{equation}
	Combining \eqref{contreq3} with \eqref{improvedestimate1} again, it follows that on $\{\bar b<L-1\}$, 
	\begin{equation}\label{contractionineq5}
		|q|\leq C\bar b e^{\frac{\bar f}{2}-\frac{L^2}{8-1.8\sigma}}+C' L^2 e^{-\frac{L^2}{4(4-\sigma)}}.
	\end{equation}
Indeed, \eqref{contractionineq5} is obviously true on $\{L/2 \le \bar b<L-1\}$. On the other hand, if $z \in \{\bar b<L/2\}$, it follows from \eqref{improvedestimate1} and \eqref{contreq3} that
	\begin{equation*}
		|q(z)|\leq C L e^{\frac{L^2}{32}-\frac{L^2}{8-1.8\sigma}}+|q(z_0)| \le C' L^2 e^{-\frac{L^2}{4(4-\sigma)}}.
	\end{equation*}

By \eqref{improvedestimate1} and \eqref{contractionineq5}, we can conclude that on $\{\bar b<L-1\}$,                    
	\begin{equation}\label{improvedestimate3}
		[h]_2+[\chi]_2\leq C\bar b e^{\frac{\bar f}{2}-\frac{L^2}{8-1.8\sigma}}+C \bar f e^{-\frac{L^2}{4(4-\sigma)}}+C' L^2 e^{-\frac{L^2}{4(4-\sigma)}}\leq C'e^{\frac{\bar f}{4}-\frac{L^2}{16-3\sigma}}, 
	\end{equation}
	where in the last inequality, we have used the following facts: 
	\begin{align*}
		\bar f \leq C e^{\frac{\bar f}{4}} \quad \text{and} \quad  \bar b e^{\frac{\bar f}{4}-\frac{L^2}{8-1.8\sigma}+\frac{L^2}{16-3\sigma}}\leq CL e^{\frac{(L-1)^2}{16}-\frac{L^2}{16}}\leq C.
	\end{align*}
	
	Using \eqref{highorder1}, \eqref{improvedestimate3} and another interpolation at the scale of $L^{-1}$, we conclude that on $\{\bar b \le L-2\}$
		\begin{equation}\label{improvedestimate3a}
		[h]_5+[\chi]_5\leq e^{\frac{\bar f}{4}-\frac{(L-2)^2}{16}}.
	\end{equation}

In summary, the diffeomorphism $\varphi_A$ together with the estimates in \eqref{improvedestimate3a} ensures that the conditions in the definition of $\rA$ are satisfied. 
\end{proof}

\begin{rem}\label{rem:constan1}
From the proof of Proposition \ref{prop:con}, it is clear that the constant $15$ in \eqref{normalization22} can be replaced with any positive constant smaller than $16-5\sigma$.
\end{rem}

The proof of Proposition \ref{prop:con} also yields the following result:

\begin{thm}\label{lojawithradius}
There exist constants $L_2= L_2(n,Y, \sigma)>0$ and $C=C(n,Y)>0$ such that the following holds.
	Let $\XX=\{M,(g(t))_{t\in I}\}$ be a closed Ricci flow with entropy bounded below by $-Y$. Assume $x_0^*=(x_0,t_0)\in \XX$ and $[t_0-2r^2,t_0]\subset I$. If the weighted Riemannian manifold $\left(M,r^{-2}g(t_0-r^2),f_{x_0^*}(t_0-r^2)\right)$ satisfies $\rBC\geq L_2 $, then
	\begin{equation*}
		\left|\WW_{x_0^*}(r^2)-\Theta_m\right|\leq C \exp\lc-\frac{3\rBC^2}{16} \rc,
	\end{equation*}
	where $\WW_{x_0^*}$ is the pointed $\WW$-entropy at $x_0^*$; see Definition \ref{defnentropy}.
\end{thm}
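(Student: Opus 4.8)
The strategy is to reduce Theorem \ref{lojawithradius} to the variational inequality of Theorem \ref{lojaforF}, using the gauge-improvement machinery from the proof of Proposition \ref{prop:con} together with the remainder estimate of Proposition \ref{prop:remainder} to control the contributions from outside a compact set. Write $L=\rBC$ for the $\rBC$-radius of the rescaled weighted manifold $(M, r^{-2}g(t_0-r^2), f_{x_0^*}(t_0-r^2))$; by parabolic rescaling we may assume $r=1$, $t_0=0$, so we work with $(M,g(-1),f)$ where $f=f_{x_0^*}(-1)$ and $\WW_{x_0^*}(1)=\WW(g(-1),f)$ in the notation of Definition \ref{defnshrinkerquant}. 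The entropy lower bound $-Y$ together with Theorem \ref{thm:upper}(i) and the monotonicity in Proposition \ref{propNashentropy} give the Gaussian concentration \eqref{normalization22} (indeed with a better exponent than $1/15$, cf. Remark \ref{rem:constan1}), and scalar curvature is bounded below on $[-2,0]$ by a constant depending on $Y$ via the $\WW\le 0$ / Perelman-type estimates, so Proposition \ref{prop:remainder} applies.

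\textbf{Key steps.} First I would run the gauge-improvement step exactly as in the proof of Proposition \ref{prop:con}: starting from the diffeomorphism $\varphi_B$ in the definition of $\rBC$, modify it to a diffeomorphism $\varphi_A$ on $\{\bar b\le L-1/2\}$ and cut off to produce a compactly supported pair $h=\eta(\varphi_A^*g(-1)-\bar g)$, $\chi=\eta(\varphi_A^*f-\bar f)$ satisfying \eqref{improvedgauge}, namely $\|h\|_{C^2}+\|\chi\|_{C^2}\le e^{-L^2/34}$, $\|\Div_{\bar f}h\|_{W^{1,2}}+|\BB(h,\chi)|\le e^{-(L-6)^2/8-L^2/34}$, and — crucially — the integral bound $\int |\mathbf\Phi(\bar g+h,\bar f+\chi)|^2\,\mathrm dV_{\bar f}\le Ce^{-L^2/(4-\sigma)}$ obtained by combining \eqref{equ:integralphi} with the outer-region estimate \eqref{eq:extrainte}. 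From \eqref{contreq2} this also gives $\alpha=\|u\|_{L^2}\le Ce^{-L^2/(4(4-\sigma))}$. Second, I would apply Theorem \ref{lojaforF} to $(\bar M,\bar g+h,\bar f+\chi)$: since $\|h\|_{C^2}+\|\chi\|_{C^2}\le\delta_n$ for $L$ large, this yields
\begin{align*}
\left|\WW(\bar g+h,\bar f+\chi)-\Theta_m\right|\le C\left(\alpha^3+\mathfrak X^2+\left|\VV(\bar g+h,\bar f+\chi)-1\right|\right),
\end{align*}
and all three terms on the right are bounded by $Ce^{-L^2/(2(4-\sigma))}\le Ce^{-3L^2/16}$ once $\sigma<1/10$: the $\alpha^3$ and $\mathfrak X^2$ terms directly from the estimates above, and the weighted-volume term because $\VV(\bar g+h,\bar f+\chi)$ differs from $\VV(\varphi_A^*g(-1),\varphi_A^*f)=1$ only by the cutoff region $\{L-1\le\bar b\le L-1/2\}$ and by the normalization/concentration bound \eqref{normalization22}, both of which are exponentially small — this is exactly the content of estimates \eqref{contractionineq2}–\eqref{contractionineq4b} in the proof of Proposition \ref{prop:con}. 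Third, I would compare $\WW(\bar g+h,\bar f+\chi)$ with $\WW_{x_0^*}(1)=\WW(g(-1),f)$: on $\varphi_A(\{\bar b\le L-1\})$ the pulled-back metric and potential agree with $(\bar g+h,\bar f+\chi)$, so the difference between the two $\WW$-values is an integral of $w$ (in the notation of Section \ref{sec:locesti}) over the complement of a ball of radius $\sim L/2$ in $(M,g(-1))$ plus the analogous tail on the cylinder side; both are bounded by $Ce^{-L^2/4}$ by Proposition \ref{prop:remainder} applied at $x_0^*$ and the explicit Gaussian decay on the cylinder. Combining the three steps gives $|\WW_{x_0^*}(1)-\Theta_m|\le Ce^{-3L^2/16}$, and undoing the rescaling gives the statement with $\rBC$ in place of $L$.

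\textbf{Main obstacle.} The routine-but-delicate part is matching the exponents so that everything is genuinely dominated by $e^{-3\rBC^2/16}$: the worst term is $\mathfrak X^2\sim e^{-L^2/(2(4-\sigma))}$, and $\tfrac{1}{2(4-\sigma)}>\tfrac{3}{16}$ precisely because $\sigma<1/10$, so one must be careful that the cutoff annulus $\{L-1\le\bar b\le L\}$ — where $\Phi$ jumps from the controlled region to the merely $C^0$-controlled region — does not contribute worse than $e^{-L^2/4}$; this is handled by \eqref{eq:extrainte}, using that on that annulus $|\mathbf\Phi|\le CLe^{-L^2/34}$ from \eqref{highorderx1} and the measure of the annulus is $\le L^{n+2}e^{-(L-1)^2/4}$. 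The conceptually nontrivial point, which distinguishes this from the Ricci-shrinker case in \cite{li2023rigidity}, is that $(M,g(-1),f)$ is \emph{not} a Ricci shrinker, so $\mathbf\Phi\ne 0$; but the extra integral hypothesis \eqref{equ:integralphi} built into the definition of $\rBC$ is exactly what is needed to make the $\|\mathbf\Phi(\bar g+h,\bar f+\chi)\|_{L^2}$ appearing in Theorem \ref{lojaforF} and in Propositions \ref{prop:tay1}–\ref{prop:tay2} exponentially small, and no self-similarity of the flow is used anywhere in the argument.
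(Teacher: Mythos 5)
Your proposal follows essentially the same route as the paper's proof: reduce to $r=1$, $t_0=0$, run the gauge improvement of Proposition \ref{prop:con} to produce the compactly supported pair $(h,\chi)$ satisfying \eqref{improvedgauge} and the integral bound on $\mathbf{\Phi}$ coming from \eqref{equ:integralphi}, apply Theorem \ref{lojaforF}, and control the cutoff and tail contributions via Proposition \ref{prop:remainder} and Theorem \ref{thm:upper}(i). The one substantive issue is an arithmetic slip in your exponent bookkeeping in the ``main obstacle'' paragraph: you assert that the worst term is $\mathfrak{X}^2\sim e^{-L^2/(2(4-\sigma))}$ and that $\tfrac{1}{2(4-\sigma)}>\tfrac{3}{16}$ because $\sigma<1/10$. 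The latter inequality is false: $\tfrac{1}{2(4-\sigma)}=\tfrac{1}{8-2\sigma}<\tfrac{3}{16}$ for every $\sigma<4/3$, so a term of genuine size $e^{-L^2/(8-2\sigma)}$ would \emph{not} be absorbed into $e^{-3L^2/16}$ and the proof as you state it would not close. What saves the argument is that \eqref{contreq1} bounds $\mathfrak{X}$ itself (not $\mathfrak{X}^2$) by $Ce^{-L^2/(2(4-\sigma))}$, whence $\mathfrak{X}^2\le Ce^{-L^2/(4-\sigma)}$, comfortably below $e^{-3L^2/16}$; the term that actually dictates the exponent $3/16$ is $\alpha^3\le Ce^{-3L^2/(4(4-\sigma))}$, exactly as in the paper's \eqref{2ndloja7}--\eqref{2ndloja8}. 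With that correction your argument is complete and coincides with the paper's.
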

\begin{proof}
	Without loss of generality, we assume $t_0=0$, $r=1$ and choose a small parameter $\ep\ll 1$ to be determined later. Also, we set $L=\rBC$, $g_0=g(-1)$ and $f_0=f_{x_0^*}(-1)$ for simplicity. 
	
	By Definition \ref{defnradii}, we can find a diffeomorphism $\varphi_B$ from $\{\bar b \le L\}$ onto a subset of $M$ such that
	\begin{align*}
		\left[\bar g-\varphi_B^* g_0\right]_0+\left[\bar f-\varphi_B^* f_0\right]_0\leq e^{-\frac{L^2}{33}},
	\end{align*}
	and
	\begin{equation*}
\int_{\{\bar b<L\}}\left|\varphi_B^*\Phi\right|^2 \,\mathrm{d}V_{\bar f} \le e^{-\frac{L^2}{4-\sigma}},
	\end{equation*}
	where $\Phi=g_0/2-\Ric\left(g_0\right)-\na^2 f_0$. Furthermore, all $C^k$-norms of $\bar g-\varphi_B^* g_0$ and $\bar f-\varphi_B^* f_0$ are bounded by $1$ for $k \in [1, 10^{10} n \sigma^{-1}]$. By Theorem \ref{thm:upper} (i), the assumption \eqref{normalization22} holds for $C_V=C_V(n,Y)$. Now we choose $L_2=L_1(n, C_V)$, where $L_1$ is the same constant as in Proposition \ref{prop:con} so that it applies to the weighted Riemannian manifold $\left(M, g(-1),f(-1)\right)$.
	
	Choose a cut-off function $\eta$ on $\bar M$ such that $\eta=0$ outside $\{\bar b<L-1\}$ and $\eta =1$ on $\{\bar b<L-2\}$. Let $g_1=\bar g+\eta (\varphi_B^*g_0-\bar g), f_1=\bar f+\eta (\varphi_B^*f_0-\bar f)$. By Proposition \ref{prop:remainder} and construction, we have
	\begin{equation}\label{2ndloja2}
		\left|\WW(g_0,f_0)-\WW(g_1,f_1)\right|\leq C(n,Y,\ep) e^{-\frac{(1-\ep)L^2}{4}}.
	\end{equation}
	
	By the proof of Proposition \ref{prop:con} (see \eqref{improvedgauge}), we can find a new diffeomorphism $\varphi_A$ such that for $h=\eta \left(\varphi_A^*g_1-\bar g\right), \chi=\eta \left(\varphi_A^*f_1-\bar f\right)$, we have
	\begin{align*}
		\sup_{\bar b \le L-6}\rVert \Div_{\bar f} h\rVert_{C^{2, \frac 1 2}}+\left|\BB(h,\chi)\right|&\leq e^{-\frac{(L-6)^2}{8}-\frac{L^2}{34}}.
	\end{align*}
	
	Set $g_2=\bar g+h,f_2=\bar f+\chi$ and write $h=ug_{S^m}+\zeta, \chi=mu/2+q$, where $ug_{S^m}$ is the projection of $h$ to $\KK_0 g_{S^m}$. Note that $h$ and $\chi$ are both supported on $\{\bar b<L\}$, then by Theorem \ref{lojaforF}, 
	\begin{align}\label{2ndloja4}
		\left|\WW(g_2,f_2)-\Theta_m\right|\leq C(n)\left(\alpha^3+\mathfrak{X}^2+\left|\VV(g_2,f_2)-1\right|\right),
	\end{align}
	where $\alpha=\rVert u\rVert_{L^2}$ and $\mathfrak{X}=\|\mathbf{\Phi}(g_2,f_2)\|_{L^2}+\|\Div_{\bar f} h\|_{W^{1,2}}+|\mathcal B(h,\chi)|$. By Theorem \ref{thm:upper} (i) again, we know that
	\begin{align}\label{voldiff}
		\left|\VV(g_2,f_2)-1\right|\leq C(n,Y,\ep)e^{-\frac{(1-\ep)L^2}{4}}.
	\end{align}
	
	As in the proof of Proposition \ref{prop:con} (see \eqref{contreq2} and \eqref{contreq1}), we conclude that
	\begin{align}\label{2ndloja7}
		\mathfrak{X}+\alpha^2 \le C(n) e^{-\frac{L^2}{8}}.
	\end{align}
Plugging \eqref{voldiff} and \eqref{2ndloja7} into \eqref{2ndloja4}, we obtain
	\begin{align}\label{2ndloja8}
		\left|\WW(g_2,f_2)-\Theta_m\right|\leq C(n)(e^{-\frac{3L^2}{16}}+e^{-\frac{L^2}{4}})+C(n,Y,\ep)e^{-\frac{(1-\ep)L^2}{4}} \leq C(n,Y) e^{-\frac{3L^2}{16}},
	\end{align}
	provided that $\ep$ is chosen to be small. 
	
	By constructions of $\varphi_A$ and $\varphi_B$, we have
	\begin{align}\label{2ndloja9}
		\left|\WW(g_2,f_2)-\WW(g_1,f_1)\right|\leq C(n,Y,\ep)e^{-\frac{(1-\ep)L^2}{4}}.
	\end{align}
	Combining \eqref{2ndloja2}, \eqref{2ndloja8} and \eqref{2ndloja9}, we finally conclude by choosing a small $\ep>0$ that
	\begin{align*}
		\left|\WW(g_0,f_0)-\Theta_m\right|\leq C(n,Y) e^{-\frac{3L^2}{16}}.
	\end{align*}
	Since $L=\rBC$ and $\WW(g_0, f_0)=\WW_{x_0^*}(1)$, this completes the proof.
\end{proof}



\section{Lojasiewicz inequalities for cylindrical geometries}\label{seclojaRF}

In this section, we consider a closed Ricci flow $\XX=\{M^n,(g(t))_{t\in I} \}$ with entropy bounded below by $-Y$ such that $[-10,0]\subset I$. Throughout, we fix a spacetime point $x_0^*=(x_0,0)$, define $\tau=-t$, and set
\begin{align*}
  \begin{dcases}
  & \mathrm{d}\nu_t=\mathrm{d}\nu_{x^*_0;t}=(4\pi \tau)^{-\frac n 2}e^{-f} \,\mathrm{d}V_{g(t)}, \\
		& \Phi= \frac{g}{2}-\tau \left( \Ric+\na^2 f\right),\\
		& F=\tau f.
        \end{dcases}
\end{align*}
Moreover, we define $b=2\sqrt{\left|f(-1)\right|}$.

As before, the model space we consider is the weighted cylinder:
\begin{align*}
\mathcal C^{n-m}_{-1}=(\bar M,\bar g(-1),\bar f(-1))=\left(\R^{n-m}\times S^{m}, g_E \times g_{S^m}, \frac{|\vec{x}|^2}{4}+\frac{m}{2}+\Theta_m \right),
\end{align*}
where $m \in \{2, \ldots, n-1\}$. We set $(\bar M, \bar g(t), \bar f(t))$ to be the induced Ricci flow such that $t=0$ is the singular time coupled with $\bar f(t):=|\vec{x}|^2/4\tau+m/2+\Theta_m$. We set $\bar F=|\vec{x}|^2/4$, $\bar b=2\sqrt{\bar f(-1)}$, and fix $\bar p$ to be a minimum point of $\bar b$.

\begin{defn}[$\r_{C, \delta}$-radius]\label{defnrCd}\index{$\rCd$}
	For the weighted Riemannian manifold $\left(M,g(-1),f(-1)\right)$,
	$\r_{C,\delta}$ is defined as the largest $L$ such that there exists a diffeomorphism $\varphi_C$ from $\{\bar b \le L\}$ of $\bar M$ onto a subset of $M$ such that $f \lc \varphi_C(\bar p), -1 \rc \le n$ and
	\begin{align*}
		\left[\bar g-\varphi_C^* g(-1)\right]_2\leq \delta.
	\end{align*}
\end{defn}

\begin{prop}\label{stabilitymetric}
	For any small $\ep>0$, there exists $\bar \delta=\bar \delta(n,\ep)>0$ such that if $\delta \le \bar \delta$ and $\mathbf{r}_{C,\delta} \ge \bar \delta^{-2}$, then
	\begin{align*}
		\left[\varphi_C^*g(t)-\bar g(t)\right]_{[\ep^{-1}]} \le \ep
	\end{align*}
	on $\left\{\bar b \le \mathbf{r}_{C,\delta}-\bar \delta^{-1}\right\} \times [-9 , -\ep]$, where the norm $[\cdot]_{[\ep^{-1}]}$ is taken with respect to $\bar g(t)$.
\end{prop}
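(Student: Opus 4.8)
The plan is to argue by contradiction and compactness. Suppose the statement fails for some $\ep>0$; then one obtains closed Ricci flows $\XX_i=\{M_i^n,(g_i(t))_{t\in[-10,0]}\}$ with entropy bounded below by $-Y$, points $x_{0,i}^*=(x_{0,i},0)$, and diffeomorphisms $\varphi_i$ from $\{\bar b\le L_i\}\subset\bar M$ onto subsets of $M_i$ with $L_i\to\infty$, such that $[\bar g(-1)-\varphi_i^*g_i(-1)]_2\le\delta_i\to 0$ on $\{\bar b\le L_i\}$, while for some $y_i\in\{\bar b\le L_i-\delta_i^{-1}\}$ and $t_i\in[-9,-\ep]$ one has $[\varphi_i^*g_i(t_i)-\bar g(t_i)]_{[\ep^{-1}]}(y_i)>\ep$, the norm being with respect to $\bar g(t_i)$. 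Since $\bar g$ and $\bar f$ are invariant under translations of the Euclidean factor of $\bar M=\R^{n-m}\times S^m$, I would first precompose each $\varphi_i$ with such a translation carrying $y_i$ into $\{\bar b\le C(n,m)\}$; this preserves all the hypotheses above after shrinking $L_i$ to a radius that still tends to $\infty$, and the auxiliary condition $f_i(\varphi_i(\bar p),-1)\le n$, which plays no role in the conclusion, may then be discarded. Hence one may assume $\bar b(y_i)\le C(n,m)$ for every $i$, so that $(y_i,t_i)$ lies in a fixed compact subset of $\bar M\times[-9,-\ep]$.

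Next I would establish curvature bounds near $t=-1$. On $[-9,-\ep]$ the cylinder metric $\bar g(t)$ satisfies $|\Rm_{\bar g(t)}|\le C(n)/\ep$ and is noncollapsed at every scale, so for $i$ large the pulled-back metric $\varphi_i^*g_i(-1)$ has $|\Rm|\le 2C(n)$ and a Euclidean-type volume lower bound on $\varphi_i(\{\bar b\le L_i\})$. Applying the two-sided pseudolocality theorem (Theorem~\ref{thm:twoside}) at a fixed scale $r_0=r_0(n,\ep)$ --- chosen small enough that $[-1-r_0^2,-1]\subset[-10,0]$ and that these hypotheses hold with an admissible constant $\alpha=\alpha(n,\ep)>0$ --- yields $r_{\Rm}(\varphi_i(x),-1)\ge\rho_0(n,\ep)>0$ for all $x$ with $\bar b(x)\le L_i-C(n)$; hence $|\Rm_{g_i}|\le\rho_0^{-2}$ on $\varphi_i(\{\bar b\le L_i-C(n)\})\times[-1-\rho_0^2,-1+\rho_0^2]$, and Shi's local estimates bound all covariant derivatives of the curvature on the concentric half-size parabolic region.

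Then I would propagate this control over all of $[-9,-\ep]$ by iteration. From the bounds just obtained on the exhausting regions $\{\bar b\le L_i-C(n)\}$, together with the $C^2_{\loc}$ convergence $\varphi_i^*g_i(-1)\to\bar g(-1)$, Hamilton's compactness theorem gives, along a subsequence, $C^\infty_{\loc}$ convergence $\varphi_i^*g_i\to g_\infty$ on $\bar M\times(-1-\rho_0^2,-1+\rho_0^2)$ to a complete Ricci flow of bounded curvature with $g_\infty(-1)=\bar g(-1)$; by forward uniqueness (Hamilton, Chen--Zhu) and backward uniqueness (Kotschwar) of Ricci flow, $g_\infty\equiv\bar g$ on this slab. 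Consequently $\varphi_i^*g_i(t_*)\to\bar g(t_*)$ in $C^\infty_{\loc}$ for every $t_*$ in the slab, so the hypotheses of the previous step are again met with base time $t_*$ and cylinder $\bar g(t_*)$, the constant $\delta_i$ being replaced by a sequence that tends to $0$ on each fixed $\{\bar b\le R\}$. Iterating with base times $-1\mp\tfrac12\rho_0^2,\ -1\mp\rho_0^2,\dots$ --- finitely many steps, their number depending only on $n$ and $\ep$, cover $[-9,-\ep]\subset(-10,0)$, and the scales $r_0,\rho_0$ do not degenerate along the way because $\bar g(t)$ stays uniformly noncollapsed with bounded curvature on $[-9,-\ep]$ --- one obtains, along a diagonal subsequence, $\varphi_i^*g_i\to\bar g$ in $C^\infty_{\loc}$ on $\bar M\times[-9,-\ep]$. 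Since $(y_i,t_i)$ stays in a fixed compact set, this forces $[\varphi_i^*g_i(t_i)-\bar g(t_i)]_{[\ep^{-1}]}(y_i)\to0$, contradicting the choice of $(y_i,t_i)$.

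The hard part will be the backward-in-time propagation: because the geometry near the cylinder has unit-size curvature, pseudolocality is only available at a bounded scale and therefore controls a parabolic neighbourhood of only bounded temporal radius, so no single application reaches $t=-9$. The iteration circumvents this, but its essential ingredients are backward uniqueness of Ricci flow (to identify the limit on each freshly added time interval as the cylinder) and the quantitative fact that $\bar g(t)$ remains uniformly noncollapsed with bounded curvature on $[-9,-\ep]$, so that the step size $\tfrac12\rho_0^2$ is bounded below. As an alternative to the bare-hands compactness, one may instead feed the flows based at $(\varphi_i(\bar p),-1)$ into the weak and smooth convergence theorems for Ricci flow limit spaces (Theorems~\ref{thm:intro1} and~\ref{thm:intro3}), which produce a single limit space over $[-9.8,0]$ whose time-$(-1)$ slice near the base point is the cylinder, reducing the problem to identifying the regular part of that limit with the cylinder flow.
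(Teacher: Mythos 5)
Your proposal is correct and follows essentially the same route as the paper: both arguments combine two-sided pseudolocality and Shi's estimates to upgrade the $C^2$-closeness at $t=-1$ to local curvature-derivative bounds, then run a contradiction/compactness argument identifying the limit flow with the cylinder via forward (Chen) and backward (Kotschwar) uniqueness of Ricci flow, and finally iterate in time with a step size depending only on $n$ and $\ep$ while shrinking the spatial domain by a controlled amount at each stage. The only cosmetic difference is that you normalize the bad points $y_i$ by a Euclidean translation of the cylinder before passing to the limit, whereas the paper takes the pointed limit at $q_i$ directly in the fixed coordinate system; both devices exploit the same translation invariance of $\bar M$.
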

\begin{proof}
For simplicity, we set $L=\mathbf{r}_{C,\delta}$ and $g_0(t)=\varphi_C^*g(t)$, which is regarded as a Ricci flow on $\{\bar b \le L\} \times [-10, 0]$.

It follows from the two-sided pseudolocality theorem (see Theorem \ref{thm:twoside}) and Shi’s local estimates (see \cite{shi89}) that for a fixed $\ep>0$, there exist constants $\theta=\theta(n, \ep) \in (0, \ep)$ such that if
	\begin{align*}
		\left[g_0(t)-\bar g(t)\right]_2\leq \theta
	\end{align*}
	on $\{\bar b \le r\}$, for some $r \in (1, L)$ and some $t \in [-9, -\ep]$, then
	\begin{align}\label{eq:rad01}
		\left|\na^k\Rm(g_0)\right| \le C(n,k , \ep)
	\end{align}
holds on the region $\left\{\bar b \le r-1 \right\} \times [t-10\theta, t+2\theta]$.

We first claim that 
\begin{align*}
\left[g_0(t)-\bar g(t)\right]_{[\theta^{-1}]} \le \theta
	\end{align*}
on $\left\{\bar b \le L-\bar \delta^{-1}/2 \right\} \times [-1-9 \theta , -1+\theta]$, provided that $\bar \delta$ is sufficiently small. Suppose, for contradiction, that the conclusion fails. Then, there exist sequences $L_i \to +\infty$ and $\ep_i \to 0$ with $L_i \ge \ep_i^{-2}$, and a sequence of Ricci flows $g_i(t)$ such that
		\begin{align} \label{eq:rad12}
		\left[g_i(-1)-\bar g(-1)\right]_{2} \le \ep_i
	\end{align}
on $\{\bar b \le L_i\}$. By \eqref{eq:rad01}, we obtain for large $i$
			\begin{align} \label{eq:rad11}
\left|\na^k\Rm(g_i(t))\right| \le C(n,k , \ep)
	\end{align}
on the region $\left\{\bar b \le L_i-1 \right\} \times [t-10\theta, t+2\theta]$. By our assumption, there exist $q_i$ with $\bar b(q_i) \le L_i-\ep_i^{-1}/2$ and $t_i \in [-1-9\theta, -1+\theta]$ such that
		\begin{align} \label{eq:rad13}
\left[g_i(t_i)-\bar g(t_i)\right]_{[\theta^{-1}]}(q_i) > \theta.
	\end{align}

It follows from \eqref{eq:rad12} and \eqref{eq:rad11} that, after taking a subsequence, the following convergence holds for $t \in (t-10\theta, t+2\theta)$:
	\begin{align*}
		\left(\bar M, g_i(t), q_i\right) \xrightarrow[i\to\infty]{\quad C_{\loc}^{\infty}\quad}\left(\bar M, g_{\infty}(t), q_{\infty}\right),
	\end{align*}
where the smooth convergence is in the fixed coordinate system given by $\bar M$. Moreover, $g_{\infty}(-1)=\bar g(-1)$. By the two-sided uniqueness of the Ricci flow (see \cite[Theorem 1.1]{chen2006uniqueness} and \cite[Theorem 1]{kotschwar2010backwards}), this implies that $g_{\infty}(t)=\bar g(t)$ for any $t \in [-1-9\theta,-1+\theta]$. However, this contradicts \eqref{eq:rad13} if $i$ is sufficiently large.

By the same argument with adjusting constants and induction, we can prove that for any $j \in \mathbb N$ with $j \le 
(1-\ep)/\theta$, we have 
\begin{align*}
\left[g_0(t)-\bar g(t)\right]_{[\theta^{-1}]} \le \theta
	\end{align*}
on $\left\{\bar b \le L-\bar \delta^{-1} \lc 2^{-1}+\cdots+2^{-j} \rc \right\} \times [-1-9j\theta , -1+j\theta]$, provided that $\bar \delta$ is sufficiently small. 
	
Since $\theta<\ep$, this completes the proof.
\end{proof}

\begin{prop}\label{stabilitypotential}
Under the same assumptions of Proposition \ref{stabilitymetric}, we have
	\begin{equation}\label{stabpotential}
		(1-100\ep)\bar F(x)-C(n,Y,\ep)\leq F(\varphi_C(x),t)\leq (1+100\ep)\bar F(x)+C(n,Y,\ep)
	\end{equation}
for $(x,t)\in \left\{\bar b\leq \r_{C,\delta}-\bar \delta^{-1}\right\}\times [-8,-1/8]$.
\end{prop}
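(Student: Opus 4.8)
The plan is to compare $F(\varphi_C(\cdot),t)$ with $\bar F$ by deriving a differential inequality for the difference along the conjugate heat equation and then invoking the pointwise metric control from Proposition \ref{stabilitymetric}. Write $u:=F\circ\varphi_C - \bar F$ on the product region $\{\bar b\le L-\bar\delta^{-1}\}\times[-9,-\ep]$, where $L=\r_{C,\delta}$. Since $F=\tau f$ and $f=f_{x_0^*}$ satisfies the conjugate-heat-type equation \eqref{evolutionoff}, a direct computation (carried out with respect to the pulled-back metric $g_0(t)=\varphi_C^*g(t)$, which by Proposition \ref{stabilitymetric} is $\ep$-close to $\bar g(t)$ in $C^{[\ep^{-1}]}$) gives that $F\circ\varphi_C$ solves, modulo terms controlled by $\|g_0(t)-\bar g(t)\|$, the same drift-heat equation satisfied by $\bar F=|\vec x|^2/4$ on the model. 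Subtracting, $u$ satisfies $\partial_t u = \Delta_{\bar g(t)} u - \langle\nabla u,\nabla(\text{something})\rangle + E$, where the error $E$ is linear in the $C^2$-difference of the metrics and hence bounded by $\Psi(\ep)(1+\bar F)$ on the region in question (using $|\nabla\bar F|+|\nabla^2\bar F|\le C(1+|\vec x|)$, $|\bar F|\le C\bar b^2$).

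The key analytic step is then a localized maximum-principle / barrier argument. First I would establish a rough a priori bound $|u|\le C(n,Y,\ep)(1+\bar F)$ on a slightly larger region: this follows by combining the Gaussian heat-kernel upper bound in Theorem \ref{thm:upper}(ii) (which controls $f$ hence $F$ from above in terms of $d_t^2$ and $\NN$, and thus in terms of $\bar b$ via the metric comparison) with the lower bound $f\ge -C(n,Y)$ from Theorem \ref{thm:upper}(ii) and the fact that $\varphi_C(\bar p)$ is a near-minimum point, $f(\varphi_C(\bar p),-1)\le n$. Next, to upgrade from "$|u|\le C(1+\bar F)$" to the sharp two-sided estimate "$|u|\le 100\ep\,\bar F + C$", I would use the function $w_\pm := u \mp 100\ep\,\bar F \mp C(n,Y,\ep)$ as a sub/supersolution: on the parabolic boundary of the region, $w_+\le 0$ by the rough bound and the choice of $C$ large (absorbing $\bar b$-boundary contributions which are $\le C L^2$, negligible relative to the exponential slack built into $\r_{C,\delta}$), while in the interior $(\partial_t - \Delta_{\bar g(t)} + \langle\nabla\cdot,\nabla(\cdot)\rangle)w_+ \le E - 100\ep(\partial_t-\Delta+\dots)\bar F$; since $\bar F$ satisfies the model drift equation exactly, $(\partial_t-\Delta_{\bar g(t)}+\dots)\bar F$ equals a strictly negative multiple of $\bar F$ plus a positive constant (this is precisely the statement $w(\bar g,\bar f)\le 0$ specialized to the cylinder), so the $100\ep\,\bar F$ term dominates the error $E=O(\Psi(\ep)(1+\bar F))$ once $\ep$ is small. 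The maximum principle on $\{\bar b\le L-\bar\delta^{-1}\}\times[-9,-\ep]$ then forces $w_+\le 0$ throughout; the same with $w_-$ gives the lower bound.

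The main obstacle I expect is handling the \emph{spatial boundary} $\{\bar b = L-\bar\delta^{-1}\}$ of the region: unlike the closed case, the maximum principle picks up a contribution there, and one must ensure the rough bound $|u|\le C(1+\bar F)$ holds with a constant independent of $L$ up to that boundary. This is where the definition of $\r_{C,\delta}$ is used crucially — the $[\bar g-\varphi_C^*g(-1)]_2\le\delta$ control on all of $\{\bar b\le L\}$ (not just a shrunk region) feeds into Proposition \ref{stabilitymetric}, and the Gaussian concentration from Theorem \ref{thm:upper}(i) guarantees that $\nu_t$-mass, hence the relevant integral quantities, decay like $e^{-\bar b^2/4}$ so that boundary terms at radius $\sim L$ are exponentially small compared to the polynomial loss $C L^2$ one can tolerate. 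A secondary technical point is that the drift term in the equation for $u$ involves $\nabla f$ itself (not just $\nabla\bar f$), so one must first know $|\nabla f|$ is comparable to $|\nabla\bar f|\sim|\vec x|$ on the region; this again follows from Proposition \ref{stabilitymetric} together with interior gradient estimates for $f$, and can be bootstrapped in tandem with the estimate for $u$ itself.
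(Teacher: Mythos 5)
Your proposal has two genuine gaps, and the second one is fatal to the upper bound in \eqref{stabpotential}.

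First, the direction of the heat-kernel estimate. Since $K(x_0,0;\cdot,t)=(4\pi\tau)^{-n/2}e^{-f}$, the Gaussian \emph{upper} bound of Theorem \ref{thm:upper}(ii) yields a \emph{lower} bound on $f$ (hence on $F$), not an upper bound. Your "rough a priori bound $|u|\le C(1+\bar F)$" therefore has no source for its upper half: the inequality $F(\varphi_C(x),t)\le (1+100\ep)\bar F(x)+C$ is precisely the statement that the conjugate heat kernel admits a Gaussian \emph{lower} bound on the almost-cylindrical region, and this is the genuinely delicate part of the argument. The paper obtains it from the reproduction formula together with a pointwise lower bound $K(w,-\ep;x,t)\ge c\exp\bigl(-\tfrac{(1+A)d_{-\ep}^2(w,x)}{4A(-\ep-t)}\bigr)$, which in turn rests on Perelman's Harnack inequality via the reduced distance (to get $K(x,-\ep;x,t)\ge c(n)$ using the curvature bound from Proposition \ref{stabilitymetric}) and the Harnack-type comparison of \cite[Corollary 6]{li2020heat}, plus concentration of $\nu_{-\ep}$ at an $H_n$-center. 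None of these ingredients appears in your proposal, and your fallback of "bootstrapping" $|\nabla f|\sim|\vec x|$ from interior gradient estimates is circular, since those estimates (as in Corollary \ref{highestimatepotential1}) already presuppose an upper bound on $f$.

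Second, the barrier argument does not close at the spatial boundary. A maximum principle on $\{\bar b\le L-\bar\delta^{-1}\}\times[-9,-\ep]$ needs the \emph{pointwise} inequality $u\le 100\ep\,\bar F+C$ on $\{\bar b=L-\bar\delta^{-1}\}$; your rough bound $|u|\le C(n,Y,\ep)(1+\bar F)$ only gives $u\le C\bar F$ there with a constant that is not $\le 100\ep$, and Gaussian decay of the $\nu_t$-mass at radius $\sim L$ is irrelevant to a pointwise boundary condition — there is no "exponential slack" to absorb a term of size $CL^2$ against one of size $25\ep L^2$. For comparison, the paper avoids the PDE route entirely for this proposition: it propagates the $H$-center property of $p=\varphi_C(\bar p)$ across $[-9,-\ep]$ (using Wasserstein monotonicity and the curvature bound from Proposition \ref{stabilitymetric}), converts the two-sided Gaussian heat-kernel bounds into two-sided bounds on $F$ in terms of $d_t^2(p,\cdot)/4$, and then proves a geodesic distance comparison $(1-\ep)\bar d_t\le d_t\circ\varphi_C\le(1+\ep)\bar d_t$ on the region. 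If you want to salvage a PDE approach, note that the paper does use the maximum principle in Lemma \ref{estimateforbetterpotential}, but only in the one direction where Perelman's inequality $w\le 0$ gives a global sign ($\square F\ge\square\tilde F$ on all of the closed manifold $M$, so there is no spatial boundary), and even there the lower bound for $\tilde F$ is obtained by integrating against the conjugate heat kernel, not by a barrier.
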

\begin{proof}
We set $p=\varphi_C(\bar p)$. By Definition \ref{defnrCd}, since $f \lc p, -1 \rc \le n$, it follows from Theorem \ref{thm:upper} (ii) that $(p, -1)$ is an $H(n,Y)$-center of $x_0^*$. 

	\begin{claim}\label{extrahcenter1}
For any $t \in [-9, -\ep]$, $(p, t)$ is an $H$-center of $x_0^*$, where $H=H(n, Y, |t|)>0$ is a constant.
	\end{claim}
	
For $t \in [-9, -1)$, it follows from Proposition \ref{stabilitymetric} that $|\Rm| \le C(n)$ along ${p} \times [t, -1]$. Hence, by \cite[Proposition 2.21(i)]{fang2025RFlimit}, we conclude that $(p, t)$ is an $H(n, Y)$-center of $(p, -1)$.
By the monotonicity property (see \cite[Proposition 2.12]{fang2025RFlimit}), we have
	\begin{align*}
C(n, Y) \ge d_{W_1}^{-1}(\nu_{-1}, \delta_{p}) \ge d_{W_1}^{t}(\nu_{t}, \nu_{p, -1;t}) \ge d_{W_1}^{t}(\nu_{t}, \delta_{p})-d_{W_1}^{t}(\delta_{p}, \nu_{p, -1;t}) \ge d_{W_1}^{t}(\nu_{t}, \delta_{p})-C(n, Y),
	\end{align*}
from which it follows that $(p, t)$ is an $H(n, Y)$-center of $x_0^*$.

For $t \in (-1, -\ep]$, by the reproduction formula we have
	\begin{align*}
		K(x_0, 0;p, -1)=\int_M K(w,t;p, -1)  \,\mathrm{d}\nu_{t}(w).
	\end{align*}
Since $K(x_0, 0; p, -1) \ge c(n) > 0$ by assumption, it follows from Theorem \ref{thm:upper}, Proposition \ref{stabilitymetric}, and \cite[Proposition 9.16(b)]{bamler2023compactness} that any $H_n$-center $(z, t)$ of $x_0^*$ must satisfy
	\begin{align*}
d_t(p, z) \le C(n, Y, |t|),
	\end{align*}
which completes the proof of Claim \ref{extrahcenter1}.

In particular, $(p,t)$ is an $H(n, Y)$-center of $x_0^*$ for any $t\in [-8,-1/8]$. By Theorem \ref{thm:upper} (ii), we conclude that
	\begin{align}\label{potential1}
		K(x_0, 0; x, t) \le C(n,Y,\ep) \tau^{-\frac n 2} \exp\left(-\frac{d_t^2(p,x)}{4(1+\ep)\tau}\right)
	\end{align}
for any $(x, t) \in M \times [-8, -1/8]$. Since $K(x_0,0;\cdot,t)=(4\pi\tau)^{-n/2}e^{-f}$, it follows from \eqref{potential1} that for any $(x, t) \in M \times [-8, -1/8]$,
	\begin{align*}
f(x, t) \ge \frac{d_t^2(p, x)}{4(1+\ep) \tau}-C(n, Y, \ep),
	\end{align*}
which implies on $M \times [-8, -1/8]$,
	\begin{align}\label{potential102}
F(x, t) \ge \frac{d_t^2(p, x)}{4(1+\ep)}-C(n, Y, \ep).
	\end{align}

	\begin{claim}\label{extraclaim2}
For any $(x,t)\in \{\bar b\leq \r_{C,\delta}-\bar \delta^{-1}\}\times [-9, -\ep]$, 
	\begin{align}\label{potential104}
		(1-\ep)\bar d_t(\bar p, x) \leq d_t(p,\varphi_C(x))\leq 	(1+\ep) \bar d_t(\bar p, x).
	\end{align}
	\end{claim}

The upper bound in \eqref{potential104} is straightforward. Indeed, fix $t \in [-9, -\ep]$ and let $\tilde \gamma$ be a minimizing geodesic from $\bar p$ to $x$ with respect to $\bar g(t)$. Since $x \in \{\bar b\leq \r_{C,\delta}-\bar \delta^{-1}\}$, the entire curve $\tilde \gamma$ lies in $\{\bar b \le \r_{C,\delta}\}$. By Proposition \ref{stabilitymetric}, the length of $\varphi_C(\tilde \gamma)$ with respect to $g(t)$ is at most $(1+\ep)\bar d_t(\bar p, x)$, yielding the upper bound.

For the lower bound in \eqref{potential104}, fix $t \in [-9, -\ep]$ and let $\gamma(s)$ for $s \in [0, L]$ be a minimizing geodesic from $p$ to $\varphi_C(x)$ with respect to $g(t)$, where $L=d_t(p, \varphi_C(x))$. If $\gamma$ lies entirely in $\varphi_C \lc \{\bar b\leq \r_{C,\delta}\} \rc$, then applying $\varphi_C^{-1}$ and using the previous argument gives the lower bound. Otherwise, let $s_0$ be the smallest $s \in [0, L]$ for which $\gamma(s_0)$ lies on the boundary of $\varphi_C \lc \{\bar b\leq \r_{C,\delta}\} \rc$. Define $\gamma'=\varphi_C^{-1}(\gamma \vert_{[0, s_0]})$. By the definition of $s_0$, its length with respect to $\bar g(t)$ is at least $\r_{C,\delta}$. Thus, by Proposition \ref{stabilitymetric},
	\begin{align*}
d_t(p,\varphi_C(x)) \ge (1-\ep) \r_{C,\delta} \ge (1-\ep) \bar d_t(\bar p, x),
	\end{align*}
which establishes the lower bound in \eqref{potential104} and hence finishes the proof of Claim \ref{extraclaim2}.
	
By the definition of $\bar F$, it is easy to show that for any $(x, t) \in \bar M \times [-9, -\ep]$,
		\begin{align}\label{potential106}
	\bar F(x) \le \frac{\bar d_t^2( \bar p, x)}{4} \le  \bar F(x) +C(n).
	\end{align}
	
Combining \eqref{potential102}, \eqref{potential104} and \eqref{potential106}, we conclude that
	\begin{align*}
		F(\varphi_C(x),t)\geq & \frac{d_t^2(p,\varphi_C(x))}{4(1+\ep)} -C(n,Y,\ep) \ge  \frac{(1-\ep)^2}{4(1+\ep)} \bar d_t^2(\bar p, x) -C(n,Y,\ep) \ge \frac{(1-\ep)^2}{1+\ep}\bar F(x)-C(n, Y, \ep),
	\end{align*}
	which implies the lower bound in \eqref{stabpotential}.
	
	For the upper bound in \eqref{stabpotential}, we first choose an $H_n$-center $(z, -\ep)$ of $x_0^*$. From the reproduction formula, we have for $(x, t) \in M \times [-8, -1/8]$,
	\begin{align}\label{lowerpotential1}
		K(x_0, 0;x, t)\geq \int_{B_{-\ep}(z,\sqrt{2H_n\ep})} K(w,-\ep;x,t) \,\mathrm{d}\nu_{-\ep}(w).
	\end{align}
	 Since $(p,-\ep)$ is an $H(n,Y, \ep)$-center of $x_0^*$ by Claim \ref{extrahcenter1}, it follows that
	\begin{align}\label{distHncenter}
		d_{-\ep}(p,z)\leq C(n,Y, \ep).
	\end{align} 
	
	For $w\in B_{-\ep}\big(z,\sqrt{2H_n\ep}\big)$, we claim that for any $(x,t)\in \varphi_C\lc \{\bar b\leq \r_{C,\delta}-\bar \delta^{-1}\} \rc \times [-8,-1/8]$, it holds that
	\begin{align}\label{lowerpotential2}
		K(w,-\ep; x,t) \ge c(n,Y,\ep) \exp\left(-\frac{d^2_{-\ep}(w,x)}{4(1-\ep)(-\ep-t)}\right)>0.
	\end{align}
	In fact, by \cite[Corollary 9.5]{perelman2002entropy}, we have
	\begin{align*}
		K(x,-\ep;x,t)\geq \left(4\pi(-\ep-t)\right)^{-n/2}e^{-l_{x,-\ep}(x,t)} \ge c(n) e^{-l_{x,-\ep}(x,t)}>0,
	\end{align*}
	where $l_{x,-\ep}(x,t)$ is the reduced distance. By Proposition \ref{stabilitymetric}, $(1-t)|\Rm|\leq C(n)$ along $\{x\}\times [-9,-\ep]$, thus,
	\begin{align*}
		l_{x,-\ep}(x,t)\leq \frac{1}{2\sqrt{-\ep-t}}\int_{t}^{-\ep}\sqrt{-\ep-s}\scal(x,s) \,\mathrm{d}s\leq C(n),
	\end{align*}
	which implies 
		\begin{align}\label{potential105}
K(x,-\ep;x,t)\geq c(n)>0.
	\end{align}

By the same argument as in \cite[Corollary 6]{li2020heat} and Theorem \ref{thm:upper} (ii), we see that for any $A>1$,
	\begin{align*}
		K(x,-\ep;x,t)\leq C(n,Y, \ep, A)K(w,-\ep;x,t)^{\frac{1}{1+A}}\exp\left(\frac{d^2_{-\ep}(w,x)}{4A(-\ep-t)}\right).
	\end{align*}
Combined with \eqref{potential105}, we obtain
	\begin{align*}
		K(w,-\ep;x,t)\geq c(n,Y,A,\ep) \exp\left(-\frac{(1+A)d^2_{-\ep}(w,x)}{4A(-\ep-t)}\right)>0.
	\end{align*}
	By choosing $A :=\ep^{-1}$, we obtain \eqref{lowerpotential2}. 
	
	Combining \eqref{distHncenter} and \eqref{lowerpotential2}, it follows that for $(x,t)\in \varphi_C\lc \{\bar b\leq \r_{C,\delta}-\bar \delta^{-1}\} \rc \times [-8,-1/8]$,
	\begin{align}\label{lowerpotential}
		K(w,-\ep;x,t) \ge c(n,Y,\ep) \exp\left(-\frac{d^2_{-\ep}(p,x)}{4(1-\ep)(-\ep-t)}\right)>0,
	\end{align}
	by slightly adjusting $\ep$. From \eqref{lowerpotential1}, \eqref{lowerpotential} and the fact that $\nu_{-\ep}\left(B_{-\ep}(z, \sqrt{2H_n \ep})\right)\geq 1/2$ (cf. Proposition \ref{existenceHncenter}), we obtain
	\begin{align*}
		K(x_0, 0;x, t) \geq c(n,Y,\ep) \exp\left(-\frac{d^2_{-\ep}(p,x)}{4(1-\ep)(-\ep-t)}\right)>0
	\end{align*}
	for $(x,t)\in \varphi_C\lc \{\bar b\leq \r_{C,\delta}-\bar \delta^{-1}\} \rc \times [-8,-1/8]$. Combining this with \eqref{potential104} and \eqref{potential106},	we obtain for $(x, t) \in \{\bar b\leq \r_{C,\delta}-\bar \delta^{-1}\} \times [-8,-1/8]$
	\begin{align*}
		F(\varphi_C(x),t)\leq &\frac{(-t)}{(1-\ep)(-\ep-t)} \frac{d^2_{-\ep}(p,\varphi_C(x))}{4}+C(n,Y,\ep)\leq \frac{(1+\ep)^2(-t)}{(1-\ep)(-\ep-t)} \frac{\bar d^2_{-\ep}(\bar p,x)}{4}+C(n,Y,\ep)\nonumber\\
		\leq &\frac{(1+\ep)^2(-t)}{(1-\ep)(-\ep-t)} \bar F(x)+C(n,Y,\ep)\le (1+100 \ep) \bar F(x)+C(n, Y, \ep),
	\end{align*}
	which yields the upper bound in \eqref{stabpotential}.
	
	In sum, the proof is complete.
\end{proof}

\begin{cor}\label{highestimatepotential1}
Under the same assumptions of Proposition \ref{stabilitymetric}, for any $1 \le l \le \ep^{-1}$, we have
	\begin{align*}
		[\varphi_C^*F]_{l} \le C(n,Y,\ep) \exp \lc 10^5 \ep l \bar F \rc
	\end{align*}
	on $\{\bar b \le \r_{C,\delta}-2 \bar \delta ^{-1}\} \times [-4 ,-1/4]$.
\end{cor}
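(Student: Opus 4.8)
The plan is to establish the estimate pointwise. Fix $(x_1,t_0)$ with $\bar b(x_1)\le \r_{C,\delta}-2\bar\delta^{-1}$ and $t_0\in[-4,-1/4]$, and write $R:=\bar F(x_1)$. If $R\le \ep^{-1}$ then $x_1$ lies in a fixed bounded region $\{\bar b\le C(\ep)\}$, on which Propositions \ref{stabilitymetric} and \ref{stabilitypotential} directly bound all the relevant quantities by $C(n,Y,\ep)$, so the estimate is trivial; hence assume $R\ge\ep^{-1}$. The key object is the conjugate heat kernel $u:=K(x_0,0;\cdot,\cdot)=(4\pi\tau)^{-n/2}e^{-f}$, which solves the \emph{linear} equation $\square^* u=0$. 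Set $\rho:=\sqrt\ep$ and consider the small parabolic cylinder $Q:=\varphi_C\big(B_{\bar g(t_0)}(x_1,\rho)\big)\times[t_0,t_0+\rho^2]$. Since $t_0\in[-4,-1/4]$ and $\ep$ is small, $[t_0,t_0+\rho^2]\subset[-8,-1/8]\cap[-9,-\ep]$, and since $\bar b$ oscillates by at most $C\rho<\bar\delta^{-1}$ over $B_{\bar g(t_0)}(x_1,\rho)$, the cylinder $Q$ lies in the region where Propositions \ref{stabilitymetric} and \ref{stabilitypotential} apply.

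There are two ingredients. First, by Proposition \ref{stabilitymetric} the metrics $g(t)$ are $C^{[\ep^{-1}]}$-close on $Q$ to the (bounded-geometry) cylinder metrics $\bar g(t)$, so, together with the Ricci flow equation, the coefficients of $\square^*$ — the metric, the connection terms and $\scal$ — are uniformly parabolic and bounded in $C^{[\ep^{-1}]-2}$ in the coordinate charts inherited from the product structure of $\bar M$. Interior parabolic estimates at the top time of $Q$ then give, for each $k\le[\ep^{-1}]-2$,
\[
\big|\nabla_{g(t_0)}^k u(\cdot,t_0)\big|\big(\varphi_C(x_1)\big)\le C(n,k)\,\rho^{-k}\sup_Q u .
\]
Second, by Proposition \ref{stabilitypotential} the function $F=\tau f$ satisfies $(1-100\ep)\bar F-C\le F\le(1+100\ep)\bar F+C$ on $Q$; since $\bar F$ oscillates by at most $\lesssim\sqrt{R}\,\rho\le\ep R$ over $B_{\bar g(t_0)}(x_1,\rho)$ and $\tau\in[\tfrac18,4]$ oscillates by $\rho^2=\ep$, we get that $f=F/\tau$, and hence $\log u=-f-\tfrac n2\log(4\pi\tau)$, oscillates over $Q$ by at most $C\ep R+C(n,Y,\ep)$, i.e.
\[
\frac{\sup_Q u}{\inf_Q u}\le C(n,Y,\ep)\,e^{C\ep R}.
\]
Combining the two displays and dividing by $u(\varphi_C(x_1),t_0)\ge\inf_Q u$ yields $\big|\nabla^k u\big|/u$ at $(\varphi_C(x_1),t_0)$ bounded by $C(n,k)\,\ep^{-k/2}C(n,Y,\ep)e^{C\ep R}$. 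Applying the Fa\`a di Bruno formula to $f=-\log u+(\text{spatially constant})$ expresses $\nabla_{g(t_0)}^l f$ as a finite sum of products of at most $l$ factors $\nabla^{|B|}u/u$ (plus bounded curvature terms), so for $l\le[\ep^{-1}]-2$ and using that $\ep^{-l/2}$ and the number of partitions are $\le C(\ep)$,
\[
\big|\nabla_{g(t_0)}^l f\big|\big(\varphi_C(x_1)\big)\le C(n,Y,\ep)\,e^{C l\ep R}.
\]
Since $\nabla_{g(t_0)}^l F=\tau\,\nabla_{g(t_0)}^l f$ and $\tau\le4$, the same bound holds for $F$; finally, by Proposition \ref{stabilitymetric} the $C^{[\ep^{-1}]}$-closeness of $\varphi_C^*g(t_0)$ to $\bar g(t_0)$ lets one pass from $\varphi_C^*(\nabla^i_g F)$ to $\nabla^i_{\bar g(t_0)}(\varphi_C^*F)$ at the cost of a factor $1+O(\ep)$ and lower-order terms. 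Summing over $0\le i\le l$ gives $[\varphi_C^*F]_l\le C(n,Y,\ep)\,e^{Cl\ep R}$ at $x_1$, and since $R=\bar F(x_1)$ and the absolute constant $C$ is far below $10^5$, this is the claimed bound; as $x_1$ ranges over $\{\bar b\le\r_{C,\delta}-2\bar\delta^{-1}\}$ and $t_0$ over $[-4,-1/4]$, the proof is complete.

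The main obstacle is the oscillation step: because $u\sim\tau^{-n/2}e^{-\bar F/\tau}$ is exponentially small where $\bar F$ is large, the naive interior estimate $|\nabla^k u|\le C\rho^{-k}\sup u$ is useless after dividing by $u$ unless one knows that $u$ varies little over the cylinder it is applied on. This is exactly what forces the choice of a \emph{small} parabolic cylinder of radius $\rho=\sqrt\ep$ — chosen so that both the spatial oscillation $\sim\sqrt{\bar F}\,\rho$ and the temporal oscillation $\sim\bar F\,\rho^2$ of $f\approx\bar F/\tau$ are $O(\ep\bar F)$ (using $R\ge\ep^{-1}$) — and the resulting loss $\rho^{-k}=\ep^{-k/2}$ is harmlessly absorbed into $C(n,Y,\ep)$ thanks to the restriction $l\le\ep^{-1}$. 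One must also check, as noted above, that the short time-slab $[t_0,t_0+\ep]$ used for the interior estimate stays inside $[-8,-1/8]$, which holds because $t_0\in[-4,-1/4]$ and $\ep$ is small.
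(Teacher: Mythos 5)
Your proposal is correct and follows essentially the same route as the paper: exploit that $K=(4\pi\tau)^{-n/2}e^{-f}$ solves the linear conjugate heat equation, apply interior parabolic estimates on a small parabolic cylinder whose coefficients are controlled by Proposition \ref{stabilitymetric}, bound the oscillation of $f$ on that cylinder by $C\ep\bar F+C(n,Y,\ep)$ via Proposition \ref{stabilitypotential}, and convert derivative bounds on $K$ into bounds on $\nabla^l f$ through the logarithm (the paper's $*$-product identity \eqref{highorderesti1} is exactly your Fa\`a di Bruno step). The only cosmetic difference is the size of the cylinder — the paper uses spatial balls of radius $1$ with time slabs of length $O(\ep)$, while you shrink the spatial radius to $\sqrt\ep$ as well; both work, since only the temporal smallness is genuinely needed (spatial oscillation of $\bar F$ over an $O(1)$-ball is $O(\sqrt{\bar F})\le\ep\bar F+C(\ep)$).
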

\begin{proof}
We set $K=K(x_0, 0;\cdot, \cdot)=(4\pi \tau)^{-\frac n 2} e^{-f}$. Then it is clear from $-\partial_t K=\Delta K-\scal K$ and the standard interior estimates that if $\delta\leq \bar \delta$, then for any $(x,t)\in \varphi_C\lc\{\bar b \le \r_{C,\delta}-2 \bar \delta^{-1}\} \rc \times [-4 ,-1/4]$,
	\begin{align*}
		\rVert K\rVert_{C^l\left( B_t(x,1)\times (t-\ep,t+\ep)\right)}\leq C(n,\ep, l)\rVert K\rVert_{C^0\left( B_t(x,2)\times (t-2\ep,t+2\ep)\right)}.
	\end{align*}
Thus, for any $l \in \mathbb N$,
\begin{align}\label{highorderesti1}
		\abs{\sum_{i_1+\cdots+i_k=l} \na^{i_1} f*\cdots * \na^{i_k}f(x, t)}\leq C(n,\ep,l)\exp\left(f(x,t)-\inf_{B_t(x,2)\times (t-2\ep,t+2\ep)}f\right).
	\end{align}
From Proposition \ref{stabilitypotential}, we obtain
	\begin{align}\label{highorderesti2}
		\left|f(x,t)-\inf_{B_t(x,2)\times (t-2\ep,t+2\ep)}f\right|\leq 10^5 \ep \bar F(\varphi_C^{-1}(x))+C(n,Y,\ep).
	\end{align}
	Combining \eqref{highorderesti1} with \eqref{highorderesti2}, it follows from induction that on $\varphi_C\lc\{\bar b \le \r_{C,\delta}-2 \bar \delta^{-1}\} \rc \times [-4 ,-1/4]$,
		\begin{align*}
		[F]_l \leq C(n,Y,\ep, l) \exp\lc 10^5 \ep l \bar F\circ\varphi_C^{-1} \rc,
	\end{align*}
for any $l \ge 1$. Thus, the conclusion holds from Proposition \ref{stabilitymetric}.
\end{proof}

Next, we define a function $\tilde F$ on $M$ such that
\begin{align} \label{eq:modiF}
	\square \tilde F=-\frac{n}{2} \quad \text{and} \quad \tilde F=F \quad \text{on} \quad t=-2.
\end{align}

\begin{prop} \label{betterpotential}
We have
	\begin{align*}
		\sup_{t \in [-2,-1/2]} \int_{M} \left|\na(\tilde F-F)\right|^2 \,\mathrm{d}\nu_t+\int_{-2}^{-1/2} \int_M \left|\na^2 (\tilde F-F)\right|^2\,\mathrm{d}\nu_t \mathrm{d}t \le 6 \left( \mathcal W_{x_0^*}(1/2)-\mathcal W_{x_0^*}(2) \right).
	\end{align*}
	In particular, 
	\begin{align*}
		\int_M \left|\na(\tilde F-F)\right|^2 \,\mathrm{d}\nu_{-1}+\int_{-2}^{-1/2} \int_M \left|\frac{g}{2}-\na^2 \tilde F-\tau \Ric\right|^2\,\mathrm{d}\nu_t \mathrm{d}t \le 7 \left( \mathcal W_{x_0^*}(1/2)-\mathcal W_{x_0^*}(2) \right).
	\end{align*}
\end{prop}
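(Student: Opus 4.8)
The plan is to introduce $u:=F-\tilde F$ and to monitor the weighted Dirichlet energy $\int_M|\na u|^2\,\mathrm{d}\nu_t$ along the flow. First I would record the two facts that make the argument run. By \eqref{eq:modiF}, $\square\tilde F=-\tfrac{n}{2}$ and $\tilde F=F$ on $\{t=-2\}$, while a direct computation from the evolution equation \eqref{evolutionoff} for $f$ together with $F=\tau f$ shows $\square F=-w-\tfrac{n}{2}$, where $w:=\tau(2\Delta f-|\na f|^2+\scal)+f-n$. Hence $\square u=-w$ --- in particular $u\ge 0$ on $[-2,0)$ by the maximum principle, though this will not be needed --- and $u\equiv 0$ on $\{t=-2\}$, so $\int_M|\na u|^2\,\mathrm{d}\nu_{-2}=0$. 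Second, since $\Phi=\tfrac{g}{2}-\tau\Ric-\na^2F$ and $\na^2u=\na^2F-\na^2\tilde F$, one has the pointwise identity
\begin{align*}
\na^2u+\Phi=\frac{g}{2}-\na^2\tilde F-\tau\Ric=:A,
\end{align*}
and it is this exact cancellation that lets a square complete below.

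Next I would compute $\frac{\mathrm{d}}{\mathrm{d}t}\int_M|\na u|^2\,\mathrm{d}\nu_t$. The Bochner identity under Ricci flow gives $\square|\na u|^2=2\la\na u,\na\square u\ra-2|\na^2u|^2$, and the weighted second Bianchi identity of Lemma \ref{weightedBianchilem} states precisely that $\na w=-2\Div_f\Phi$, so $\na\square u=-\na w=2\Div_f\Phi$. Using $\frac{\mathrm{d}}{\mathrm{d}t}\int_M\psi\,\mathrm{d}\nu_t=\int_M\square\psi\,\mathrm{d}\nu_t$ and integrating by parts in space against $\mathrm{d}\nu_t$ (no boundary terms, $M$ being closed), $\int_M\la\na u,\Div_f\Phi\ra\,\mathrm{d}\nu_t=-\int_M\la\na^2u,\Phi\ra\,\mathrm{d}\nu_t$, and completing the square via the identity above yields
\begin{align*}
\frac{\mathrm{d}}{\mathrm{d}t}\int_M|\na u|^2\,\mathrm{d}\nu_t=-4\int_M\la\na^2u,\Phi\ra\,\mathrm{d}\nu_t-2\int_M|\na^2u|^2\,\mathrm{d}\nu_t=-2\int_M|A|^2\,\mathrm{d}\nu_t+2\int_M|\Phi|^2\,\mathrm{d}\nu_t.
\end{align*}

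Third, on the entropy side, Proposition \ref{propNashentropy}(ii)--(iii) combined with $\Ric+\na^2f-\tfrac{1}{2\tau}g=-\tfrac{1}{\tau}\Phi$ gives $\frac{\mathrm{d}}{\mathrm{d}\tau}\mathcal W_{x_0^*}(\tau)=-\tfrac{2}{\tau}\int_M|\Phi|^2\,\mathrm{d}\nu_{-\tau}$, so integrating over $\tau\in[1/2,2]$ and substituting $\tau=-t$,
\begin{align*}
\mathcal W_{x_0^*}(1/2)-\mathcal W_{x_0^*}(2)=\int_{-2}^{-1/2}\frac{2}{|t|}\int_M|\Phi|^2\,\mathrm{d}\nu_t\,\mathrm{d}t\ \ge\ \int_{-2}^{-1/2}\int_M|\Phi|^2\,\mathrm{d}\nu_t\,\mathrm{d}t,
\end{align*}
using $|t|\le 2$ on the interval. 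Integrating the energy identity from $-2$ to $t_2\in[-2,-1/2]$ and using $\int_M|\na u|^2\,\mathrm{d}\nu_{-2}=0$ gives
\begin{align*}
\int_M|\na u|^2\,\mathrm{d}\nu_{t_2}+2\int_{-2}^{t_2}\int_M|A|^2\,\mathrm{d}\nu_t\,\mathrm{d}t=2\int_{-2}^{t_2}\int_M|\Phi|^2\,\mathrm{d}\nu_t\,\mathrm{d}t\le 2\bigl(\mathcal W_{x_0^*}(1/2)-\mathcal W_{x_0^*}(2)\bigr).
\end{align*}
Taking the supremum over $t_2$ bounds $\sup_{[-2,-1/2]}\int_M|\na u|^2\,\mathrm{d}\nu_t$ by $2(\mathcal W_{x_0^*}(1/2)-\mathcal W_{x_0^*}(2))$ and $\int_{-2}^{-1/2}\int_M|A|^2\,\mathrm{d}\nu_t\,\mathrm{d}t$ by $\mathcal W_{x_0^*}(1/2)-\mathcal W_{x_0^*}(2)$; since $\na^2u=A-\Phi$, the bound $|\na^2u|^2\le 2|A|^2+2|\Phi|^2$ gives $\int_{-2}^{-1/2}\int_M|\na^2u|^2\,\mathrm{d}\nu_t\,\mathrm{d}t\le 4(\mathcal W_{x_0^*}(1/2)-\mathcal W_{x_0^*}(2))$, and adding the supremum bound proves the first inequality. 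The second (``in particular'') inequality follows by taking $t_2=-1$ and recalling $A=\tfrac{g}{2}-\na^2\tilde F-\tau\Ric$.

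I do not expect a serious analytic obstacle: $M$ is closed, the flow is smooth on $[-10,0)$, and $\tilde F$ solves a forward heat equation with smooth bounded data at $t=-2$, so $u$ is smooth and bounded on $[-2,-1/2]$ and the differentiations under the integral sign and spatial integrations by parts are routine. The only substantive points are structural: recognizing that $\na^2u+\Phi$ is exactly the regularized tensor $A$ (so that the cross term in $\frac{\mathrm{d}}{\mathrm{d}t}\int_M|\na u|^2\,\mathrm{d}\nu_t$ completes a square), and feeding in the weighted Bianchi identity $\na w=-2\Div_f\Phi$ so that this cross term is precisely the one produced by the Bochner formula. Once these are in place, the whole estimate reduces to the monotonicity $\frac{\mathrm{d}}{\mathrm{d}\tau}\mathcal W_{x_0^*}=-\tfrac{2}{\tau}\int_M|\Phi|^2\,\mathrm{d}\nu$.
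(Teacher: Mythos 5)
Your proposal is correct and follows essentially the same route as the paper: introduce $u=\pm(\tilde F-F)$ with $\square u=\mp w$ and $u\equiv 0$ at $t=-2$, differentiate $\int_M|\na u|^2\,\mathrm{d}\nu_t$ via the Bochner formula and the weighted Bianchi identity $\na w=2\Div_f\TT$, integrate by parts in space and time, and close with the entropy monotonicity $\tfrac{\dd}{\dd\tau}\WW_{x_0^*}(\tau)=-\tfrac{2}{\tau}\int_M|\TT|^2\,\mathrm{d}\nu_{-\tau}$. The only cosmetic difference is that you complete the square exactly in the cross term (yielding the same or slightly sharper intermediate constants) where the paper applies Cauchy--Schwarz in the form $-4\la\TT,\na^2u\ra\le 4|\TT|^2+|\na^2u|^2$.
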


\begin{proof}
We set $w= \tau(2\Delta f-|\nabla f|^2+\scal)+f-n$ and $u=\tilde F-F$. Then the following evolution equation holds from \eqref{evolutionoff}:
	\begin{align*}
		\square u=w \quad \text{and} \quad u=0 \quad \mathrm{at}\quad t=-2.
	\end{align*}
	By the weighted Bianchi identity (see Lemma \ref{weightedBianchilem}), we calculate 
	\begin{align*}
		\frac{\dd}{\dd t}\int_{M} \left|\nabla u\right|^2\,\mathrm{d}\nu_t&=\int_{M} \square \left|\nabla u\right|^2\,\mathrm{d}\nu_t\nonumber\\
		&=2\int_{M} \la\nabla\square u,\nabla u\ra \,\mathrm{d}\nu_t-2\int_{M} \left|\nabla^2 u\right|^2\,\mathrm{d}\nu_t\nonumber\\
		&=2\int_{M} \la\nabla w,\nabla u\ra \,\mathrm{d}\nu_t-2\int_{M} \left|\nabla^2 u\right|^2\,\mathrm{d}\nu_t\nonumber\\
		&=4\int_{M} \la \Div_f\TT,\nabla u\ra \,\mathrm{d}\nu_t-2\int_{M} \left|\nabla^2 u\right|^2\,\mathrm{d}\nu_t,
	\end{align*}
	where $\TT=\tau \Ric+\nabla^2(\tau f)-\frac{g}{2}$. Using integration by parts and integrating in time, we obtain that for any $t_1\in [ -2, 0]$,
	\begin{align}\label{splitting3}
		2\int_{-2}^{t_1}\int_{M} \left|\na^2u \right|^2\,\mathrm{d}\nu_t \mathrm{d}t&=-4\int_{-2}^{t_1}\int_{M}\la\TT,\nabla^2 u\ra \,\mathrm{d}\nu_t \mathrm{d}t-\int_{M}\left|\nabla u\right|^2\,\mathrm{d}\nu_{t_1} \leq \int_{-2}^{t_1}\int_{M}4\left|\mathcal{T}\right|^2+\left|\nabla^2u \right|^2\,\mathrm{d}\nu_t \mathrm{d}t
	\end{align}
	and thus we get
	\begin{equation}\label{constructsplittingmap1}
		\int_{-2}^{-1/2}\int_{M} \left|\nabla^2u \right|^2\,\mathrm{d}\nu_t \mathrm{d}t \leq 4 \int_{-2}^{-1/2}\int_{M} \left|\TT\right|^2\,\mathrm{d}\nu_t \mathrm{d}t.
	\end{equation}
	
	Note that by Proposition \ref{propNashentropy} (iii),
		\begin{align*}
\WW_{x_0^*}(1/2)-\WW_{x_0^*}(2)=	2 \int_{-2}^{-1/2}\int_M\tau^{-1}\left|\TT\right|^2\,\mathrm{d}\nu_t \mathrm{d}t \ge \int_{-2}^{-1/2}\int_M\left|\TT\right|^2\,\mathrm{d}\nu_t \mathrm{d}t.
	\end{align*}
Combining this with \eqref{constructsplittingmap1} and using the definition of $u$ and $\TT$, we have
	\begin{align*}
\int_{-2}^{-1/2}\int_{M} \left|\tau \Ric+\nabla^2 \tilde F-\frac{g}{2}\right|^2\,\mathrm{d}\nu_t \mathrm{d}t  \le  5 \int_{-2}^{-1/2}\int_{M} \left|\TT\right|^2\,\mathrm{d}\nu_t \mathrm{d}t \le 5 \left(\mathcal{W}_{x_0^*}( 1/2)-\mathcal{W}_{x_0^*}(2)\right).
	\end{align*}	
	
Moreover, by \eqref{splitting3}, for any $t_1\in [ -2,-1/2]$,
	\begin{align*}
		\int_{M}\left|\nabla u\right|^2\,\mathrm{d}\nu_{t_1}&\leq -4\int_{-2}^{t_1}\int_{M}\la\TT,\nabla^2 u\ra \,\mathrm{d}\nu_t \mathrm{d}t-2\int_{-2}^{t_1}\int_{M} \left|\na^2u \right|^2\,\mathrm{d}\nu_t \mathrm{d}t\\
		&\leq 2\int_{-2}^{-1/2}\int_M |\mathcal{T}|^2\,\mathrm{d}\nu_t \mathrm{d}t	 \le 2 \left(\mathcal{W}_{x_0^*}( 1/2)-\mathcal{W}_{x_0^*}(2)\right).
	\end{align*}
	
Thus, it is clear that the estimates hold.
\end{proof}

\begin{lem} \label{estimateforbetterpotential}
In the same setting as Proposition \ref{stabilitymetric}, we have
	\begin{equation}\label{stabpotentialbetter}
		(1-300\ep)\bar F(x)-C(n,Y,\ep)\leq \tilde F(\varphi_C(x),t)\leq (1+300\ep)\bar F(x)+C(n,Y,\ep).
	\end{equation}
for any $(x,t)\in \left\{\bar b\leq \r_{C,\delta}-\bar \delta^{-1}\right\}\times [-2,-1/2]$. Moreover, for any $1 \le l \le \ep^{-1}$, we have on $\{\bar b \le \mathbf{r}_{C,\delta}- 2 \bar \delta^{-1}\} \times [-2, -1/2]$,
	\begin{align*}
		[\varphi_C^*\tilde F]_{l} \le C(n,Y,\ep) \exp\lc 10^5 \ep l \bar F \rc.
	\end{align*}
\end{lem}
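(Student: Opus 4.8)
The goal is to transfer the estimates for $F = \tau f$ established in Proposition \ref{stabilitypotential} and Corollary \ref{highestimatepotential1} to the modified potential $\tilde F$ defined by \eqref{eq:modiF}, which agrees with $F$ at $t=-2$ and solves $\square \tilde F = -n/2$. The heart of the matter is to control $u := \tilde F - F$, for which we already have, from Proposition \ref{betterpotential}, a good integral bound: $\sup_{t \in [-2,-1/2]} \int_M |\nabla u|^2 \,\mathrm{d}\nu_t + \int_{-2}^{-1/2}\int_M |\nabla^2 u|^2 \,\mathrm{d}\nu_t\,\mathrm{d}t \le 6(\mathcal W_{x_0^*}(1/2) - \mathcal W_{x_0^*}(2))$. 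Since in the present setting $\mathbf r_{C,\delta}$ is large and $\delta$ small, the entropy radius $\mathbf r_E$ is automatically large too, so this integral quantity is small; but to get the \emph{pointwise} bound \eqref{stabpotentialbetter} I need to upgrade this $L^2$-in-measure control to a pointwise estimate on a region of controlled size.

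The plan is as follows. First I would note that $u = \tilde F - F$ satisfies $\square u = w \le 0$ (Perelman's Harnack), with $u = 0$ at $t = -2$, so by the maximum principle $u \ge 0$ on $[-2, 0)$; this already gives the lower bound half of \eqref{stabpotentialbetter} for free once we know $\tilde F \ge F$: namely $\tilde F(\varphi_C(x), t) \ge F(\varphi_C(x), t) \ge (1 - 100\ep)\bar F(x) - C$ by Proposition \ref{stabilitypotential}, which is stronger than the claimed lower bound. For the upper bound on $\tilde F$, I would control $u$ from above pointwise. The key point: on the region $\varphi_C(\{\bar b \le \mathbf r_{C,\delta} - \bar\delta^{-1}\}) \times [-9, -\ep]$ we have bounded geometry with all derivatives of curvature bounded (Proposition \ref{stabilitymetric}) and, crucially, a lower heat-kernel bound $K(x_0,0;\cdot,\cdot) \ge c(n,Y,\ep) > 0$ established inside the proof of Proposition \ref{stabilitypotential} — equivalently $f \le C(n,Y,\ep) + \frac{d^2}{4(1-\ep)\tau}$, hence $f$ is comparable to $\bar F$ up to constants on parabolic balls of fixed size. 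Therefore $\mathrm{d}\nu_t = K\,\mathrm{d}V_{g(t)}$ is bounded below by $c(n,Y,\ep)e^{-C\bar F}\,\mathrm{d}V$ on such balls, so the $L^2(\mathrm{d}\nu_t)$ bound on $u$ from Proposition \ref{betterpotential} converts into an unweighted $L^2$ bound on a ball $B_t(x,1)$: $\int_{B_t(x,1)} |\nabla u|^2 + |\nabla^2 u|^2 \le C(n,Y,\ep) e^{C\bar F(\varphi_C^{-1}(x))}\big(\mathcal W_{x_0^*}(1/2) - \mathcal W_{x_0^*}(2)\big)$. The RHS is of the form $e^{C\bar F}\cdot\varepsilon_0$ where $\varepsilon_0 = \mathcal W_{x_0^*}(1/2) - \mathcal W_{x_0^*}(2) = \exp(-\mathbf r_E^2/4)$ is exponentially small in $\mathbf r_E^2$. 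Since $\mathbf r_{C,\delta} \le C\mathbf r_E$ (this holds in the regime of the proposition; otherwise we'd be outside the hypotheses — more carefully, one uses that being $(k,\delta,1)$-cylindrical forces $\mathbf r_E$ large), and $\bar F \le \frac{1}{4}\bar b^2 \le \frac14 \mathbf r_{C,\delta}^2$, the product $e^{C\bar F}\varepsilon_0$ is still controlled — this is exactly the balance that makes the argument work. Then I combine this with a parabolic interior estimate (Schauder / $L^2$-to-$C^0$, using $\square u = w$ and the $C^0$ control of $w$, which follows from Proposition \ref{stabilitymetric} and Corollary \ref{highestimatepotential1} bounding $|w| \le C(n,Y,\ep)e^{C\ep\bar F}$) to get a pointwise bound $|u|(x,t) \le C(n,Y,\ep)e^{C\ep\bar F(\varphi_C^{-1}(x))}$ on $\{\bar b \le \mathbf r_{C,\delta} - 2\bar\delta^{-1}\} \times [-2,-1/2]$, possibly after first establishing a crude bound and bootstrapping; absorbing the $\ep$-dependent exponents into the claimed $300\ep$ gives \eqref{stabpotentialbetter}.

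For the higher-order estimate, once \eqref{stabpotentialbetter} is in hand, I would argue exactly as in Corollary \ref{highestimatepotential1}: $\tilde F$ satisfies the linear parabolic equation $\square \tilde F = -n/2$, and on $\varphi_C(\{\bar b \le \mathbf r_{C,\delta} - 2\bar\delta^{-1}\}) \times [-2,-1/2]$ the metric has bounded geometry (Proposition \ref{stabilitymetric}); interior parabolic estimates then bound $\|\tilde F\|_{C^l(B_t(x,1)\times(t-\ep,t+\ep))}$ by $C(n,\ep,l)\|\tilde F\|_{C^0(B_t(x,2)\times(t-2\ep,t+2\ep))}$, and the $C^0$ bound from \eqref{stabpotentialbetter} is $\le C(n,Y,\ep)\exp(10^5\ep\,\bar F \circ \varphi_C^{-1})$ (enlarging constants and shrinking $\ep$). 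Pulling back by $\varphi_C$ and using that $\varphi_C^*\bar g(t)$ is $C^{[\ep^{-1}]}$-close to $\bar g(t)$, one converts the bound on derivatives with respect to $g(t)$ into one with respect to $\bar g$, shrinking the domain by another $\bar\delta^{-1}$ if necessary, which gives $[\varphi_C^*\tilde F]_l \le C(n,Y,\ep)\exp(10^5\ep l\,\bar F)$.

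The main obstacle I anticipate is the upgrade from the weighted $L^2$ bound of Proposition \ref{betterpotential} to a pointwise bound on $u$: one must carefully track how the exponential factor $e^{C\bar F}$ arising from converting $\mathrm{d}\nu_t$-integrals to ordinary integrals interacts with the exponentially small entropy drop $\varepsilon_0 = \exp(-\mathbf r_E^2/4)$, and verify that on the truncated domain $\{\bar b \le \mathbf r_{C,\delta} - 2\bar\delta^{-1}\}$ (where $\bar F \lesssim \mathbf r_{C,\delta}^2$ and $\mathbf r_{C,\delta} \lesssim \mathbf r_E$ in the relevant regime) the product remains small enough — or at least, of the form $Ce^{c\ep\bar F}$ — to feed into the parabolic regularity machinery. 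A secondary technical point is justifying that the hypotheses of Proposition \ref{stabilitymetric} indeed force $\mathbf r_E$ to be comparable to (or at least not much smaller than) $\mathbf r_{C,\delta}$, which may require invoking Theorem \ref{lojawithradius} or a direct comparison; if this fails one must instead phrase the estimate so that the $\bar F$-dependent constant is acceptable as written in the statement.
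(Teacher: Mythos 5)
The central step of your argument contains a sign error that inverts which half of \eqref{stabpotentialbetter} comes ``for free''. With $u=\tilde F-F$ one has $\square u=w\le 0$, so $u$ is a \emph{subsolution} of the heat equation with $u(\cdot,-2)=0$; the parabolic maximum principle therefore gives $u\le 0$, i.e.\ $\tilde F\le F$, not $\tilde F\ge F$ as you claim (equivalently, $\square F=-n/2-w\ge\square\tilde F$ makes $F-\tilde F$ a supersolution vanishing at $t=-2$). Consequently the maximum principle yields the \emph{upper} bound of \eqref{stabpotentialbetter}, via the upper bound on $F$ from Proposition \ref{stabilitypotential}, and it is the \emph{lower} bound that requires work. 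Your elaborate second argument is aimed at bounding $u$ from above, which is precisely the direction the maximum principle already handles, so the direction actually needed is left unproved.

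Moreover, the $L^2$-to-pointwise upgrade you propose could not deliver the missing half even if redirected. First, it hinges on the product $e^{C\bar F}\cdot\lc\WW_{x_0^*}(1/2)-\WW_{x_0^*}(2)\rc$ being controlled, which requires a comparison of the form $\mathbf{r}_{C,\delta}\lesssim\rE$; but Lemma \ref{estimateforbetterpotential} is stated purely under the hypotheses of Proposition \ref{stabilitymetric}, with no assumed relation between $\mathbf{r}_{C,\delta}$ and $\rE$ (the assumption $\bar L\le(1-\sigma)\rE$ only enters from Lemma \ref{lem:007} onward, and the present lemma must hold without it). Second, a bound of the form $|u|\le Ce^{C\ep\bar F}$ is far weaker than what \eqref{stabpotentialbetter} asserts: the admissible error is $O(\ep\bar F)+C$, and $e^{C\ep\bar F}$ is not $O(\ep\bar F)$ for large $\bar F$. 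Third, Proposition \ref{betterpotential} controls only $\na u$ and $\na^2 u$, not $u$ itself. The paper's proof of the lower bound avoids Proposition \ref{betterpotential} entirely: since $\square(\tilde F+nt/2)=0$, the reproduction formula gives $\tilde F(x,t)+nt/2=\int_M\lc F(\cdot,-2)-n\rc\mathrm{d}\nu_{x,t;-2}$; concentration of $\nu_{x,t;-2}$ on a ball of controlled radius about an $H_n$-center $(z,-2)$ with $d_{-2}(x,z)\le C(n,Y)$ (Proposition \ref{existenceHncenter} plus the curvature bound from Proposition \ref{stabilitymetric}), the bound $F(\cdot,-2)\ge(1-100\ep)\bar F\circ\varphi_C^{-1}-C$ on that ball from Proposition \ref{stabilitypotential}, and the global bound $F(\cdot,-2)\ge-C(n,Y)$ from Theorem \ref{thm:upper} on its complement then give $\tilde F(x,t)\ge(1-300\ep)\bar F(\varphi_C^{-1}(x))-C$. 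Your treatment of the higher-order estimates (parabolic regularity for $\square(\tilde F+nt/2)=0$ combined with the $C^0$ bound, as in Corollary \ref{highestimatepotential1}) is correct and matches the paper.
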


\begin{proof}
Since $w= \tau(2\Delta f-|\nabla f|^2+\scal)+f-n\leq0$, it follows from
	\begin{align*}
		\square F=-\frac{n}{2}-w \ge \square \tilde F,
	\end{align*}
and the maximum principle that
	\begin{align*}
		\tilde F(x,t) \leq F(x,t)
	\end{align*}
	for any $(x, t) \in M \times [-2, 0]$. By Proposition \ref{stabilitypotential}, this yields the upper bound of \eqref{stabpotentialbetter}.
	
	On the other hand, for any $(x,t)\in \varphi_C\lc\{\bar b \le \mathbf{r}_{C,\delta}- \bar \delta^{-1}\} \rc \times [-2, -1/2]$ and any its $H_n$-center $(z,-2)$, it follows from $\square \left( \tilde F+nt/2 \right)=0$ that
	\begin{align}\label{betterpoten2}
		\tilde F(x,t)+\frac{n}{2}t=\int_M F(\cdot,-2)-n \,\mathrm{d}\nu_{x,t;-2}		\geq \int_{B_{-2}\left(z, \sqrt{\ep^{-1}H_n(t+2)}\right)}F(\cdot,-2) \,\mathrm{d}\nu_{x,t;-2}-C(n, Y),
	\end{align}
	where we used the fact $F(\cdot, -2) \ge -C(n, Y)$, which follows from Theorem \ref{thm:upper} (ii).
	
	By Proposition \ref{existenceHncenter}, we have
	\begin{align}\label{betterpoten1}
		\nu_{x,t;-2}\left(B_{-2}\big(z, \sqrt{\ep^{-1}H_n(t+2)}\big)\right)\geq 1-\ep.
	\end{align} 
From Proposition \ref{stabilitymetric} and \cite[Proposition 2.21(i)]{fang2025RFlimit}, we have $d_{-2}(x,z)\leq C(n,Y)$. Thus, for $y\in B_{-2}\left(z, \sqrt{\ep^{-1}H_n(t+2)}\right)$, it follows from Proposition \ref{stabilitypotential} that
	\begin{align}\label{betterpoten3}
		F(y,-2)\geq (1-100\ep)\bar F(\varphi_C^{-1}(y))-C(n,Y,\ep)\geq (1-200 \ep)\bar F(\varphi_C^{-1}(x))-C(n,Y,\ep).
	\end{align}
	Plugging \eqref{betterpoten1} and \eqref{betterpoten3} into \eqref{betterpoten2}, it follows that
	\begin{align*}
		\tilde{F}(x,t)\geq (1-300 \ep)\bar F(\varphi_C^{-1}(x))-C(n,Y,\ep).
	\end{align*}
	Therefore, we obtain the lower bound for $\tilde F$ in \eqref{stabpotentialbetter}. Since $\square \left( \tilde F+nt/2 \right)=0$, the higher order estimates follow from standard parabolic regularity and a similar argument as in Corollary \ref{highestimatepotential1}.
\end{proof}

\begin{defn}[Entropy radius]\label{defnrE}
	The \textbf{entropy radius} $\rE$ is defined as
	\begin{align*}
		\exp\left(-\frac{\mathbf{r}^2_E}{4}\right):=\mathcal W_{x_0^*}(1/2)-\mathcal W_{x_0^*}(2).
	\end{align*}\index{$\rE$}
	Here, we implicitly assume that $\mathcal W_{x_0^*}(1/2)-\mathcal W_{x_0^*}(2) <1$ so that $\rE$ is well-defined.
\end{defn}

We now fix a small constant $\sigma\in (0,1/10)$, an integer $l \in \mathbb N$ and define:
	\begin{align*}
\bar \delta_l := \frac{1}{2}\bar \delta \lc n, 10^{-100}n^{-1} l^{-1}\sigma \rc,
	\end{align*}\index{$\bar \delta_l$}
for $l \in \mathbb N$, where $\bar \delta$ is the function defined in Proposition \ref{stabilitymetric}.

In the following, we always assume $\r_{C, \bar \delta_l} \ge \bar \delta_l^{-2}$, and we identify $g$, $f$, $F$, and $\tilde F$ with their pullbacks $\varphi_C^* g$, $\varphi_C^*f$, $\varphi_C^* F$, and $\varphi_C^* \tilde F$, respectively, where $\varphi_C$ is the diffeomorphism corresponding to $\r_{C, \bar \delta_l}$ in Definition \ref{defnrCd}. Furthermore, we set
	\begin{align*}
\bar L:=\r_{C,\bar \delta_l}-\bar \delta_l^{-1}
	\end{align*}
so that the preceding estimates hold on the set $\{\bar b\leq \bar L\}\times [-2,-1/2]$ with constant $\ep$ replaced by $10^{-100}n^{-1} l^{-1}\sigma$.

\begin{lem} \label{lem:007}
	If $\bar L\leq (1-\sigma)\rE$, then on $\{\bar b\leq\bar L-1\}\times [-2,-1/2]$, we have
	\begin{align}\label{betterpoten9}
		\left[\na(\tilde F-F)\right]_{l}\leq C(n,Y,  \sigma, l) e^{\frac{\bar f}{2}-\frac{\bar L^2}{8(1-\sigma)}}.
	\end{align}
\end{lem}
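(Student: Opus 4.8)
The strategy is to upgrade the spacetime $L^2$-bound on $\na(\tilde F-F)$ and $\na^2(\tilde F-F)$ furnished by Proposition \ref{betterpotential}, together with the hypothesis $\bar L\le(1-\sigma)\rE$ that converts this $L^2$-bound into the exponentially small quantity $e^{-\bar L^2/4(1-\sigma)}$ via Definition \ref{defnrE}, into a pointwise $C^l$-estimate with the stated Gaussian weight $e^{\bar f/2}$. The plan is first to localize the conjugate heat kernel measure $\nu_t$ on $\{\bar b\le\bar L\}$ using the heat kernel upper bound from Theorem \ref{thm:upper}(ii) and the fact (from Proposition \ref{stabilitypotential}, in the form available here with $\ep$ replaced by $10^{-100}n^{-1}l^{-1}\sigma$) that the potential $f$ is pinched between $(1\pm C\ep)\bar F$; this shows that on a region $\{b_0\le \bar b\le \bar b_0+1\}$ with $\bar b_0\le\bar L$, the measure $\,\mathrm{d}\nu_t$ is comparable to $c(n)e^{-(1\mp C\sigma)\bar f}\,\mathrm{d}V_{\bar g}$ up to controlled constants, so that an $L^2(\,\mathrm{d}\nu_t)$-bound on a tensor localized near $\{\bar b\approx\bar b_0\}$ gives an unweighted $L^2(\,\mathrm{d}V_{\bar g})$-bound on unit balls there, at the cost of a factor $e^{(1+C\sigma)\bar f/2}\le Ce^{\bar f/2}e^{C\sigma\bar f}$.

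Second, I would set $v:=\tilde F-F$, so that $\square v=w\le0$ and $v=0$ at $t=-2$, and run the standard parabolic $L^2$-to-$C^l$ interior estimate for $v$ on parabolic unit cylinders $B_t(x,1)\times(t-\theta,t+\theta)$ contained in $\varphi_C(\{\bar b\le\bar L\})\times[-2,-1/2]$; the forcing term $w$ is itself controlled pointwise (and in $L^2(\,\mathrm{d}\nu_t)$) by the remainder estimate Proposition \ref{prop:remainder} together with the shrinker-operator control coming from Proposition \ref{stabilitymetric} and the relation $\TT=\tau\Ric+\na^2(\tau f)-g/2$, so $w$ contributes an error of the same exponential order. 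Here I must also invoke Proposition \ref{stabilitymetric} to know the metrics $g(t)$ on $\{\bar b\le\bar L\}\times[-2,-1/2]$ are $\ep$-close to $\bar g(t)$ in $C^{[\ep^{-1}]}$, so that unit balls and the geometry of the parabolic cylinders are uniformly controlled, and the derivatives $\na_{\bar g}$ and $\na_{g(t)}$ are interchangeable up to constants for $l\le\ep^{-1}$. Combining the interior estimate with the measure comparison of the first step gives, on $\{\bar b\le\bar L-1\}\times[-2,-1/2]$,
\[
[\na v]_l \le C(n,Y,\sigma,l)\, e^{\frac{\bar f}{2}}\, e^{C\sigma \bar f}\, \Big(\int_{\{\bar b\le\bar L\}}|\na v|^2+|\na^2 v|^2+|w|^2\,\mathrm{d}\nu_t\,\mathrm{d}t\Big)^{1/2} \le C\, e^{\frac{\bar f}{2}+C\sigma\bar f}\, e^{-\frac{\bar L^2}{8}},
\]
where the last step uses Proposition \ref{betterpotential}, Definition \ref{defnrE}, and $\bar L\le(1-\sigma)\rE$ to replace $(\mathcal W_{x_0^*}(1/2)-\mathcal W_{x_0^*}(2))^{1/2}=e^{-\rE^2/8}\le e^{-\bar L^2/8(1-\sigma)}$.

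Third, I would absorb the spurious factor $e^{C\sigma\bar f}$ into the denominator exponent: since $\bar b\le\bar L-1$ means $\bar f\le(\bar L-1)^2/4$, one has $e^{C\sigma\bar f}\le e^{C\sigma\bar L^2/4}$, and choosing the implicit constants in $\bar\delta_l$ small enough (which is exactly why $\bar\delta_l$ was defined with the factor $10^{-100}n^{-1}l^{-1}\sigma$) forces $C\sigma<\frac{1}{8}\big(\frac{1}{1-\sigma}-1\big)$ after relabeling, hence
\[
e^{\frac{\bar f}{2}+C\sigma\bar f}\,e^{-\frac{\bar L^2}{8(1-\sigma)}}\le e^{\frac{\bar f}{2}}\,e^{-\frac{\bar L^2}{8(1-\sigma)}+\frac{C\sigma\bar L^2}{4}}\le e^{\frac{\bar f}{2}-\frac{\bar L^2}{8(1-\sigma)}},
\]
possibly after shrinking $\sigma$ in the exponent by a harmless amount, which is permissible since $\sigma$ is a fixed small constant and we may always pass to a slightly larger exponent denominator. (Alternatively one keeps $\frac{1}{8(1-\sigma)}$ as written and checks the arithmetic works with the stated constants; the bookkeeping is routine.) The main obstacle is the bookkeeping in the first and third steps: making the measure-comparison loss $e^{C\sigma\bar f}$ genuinely sub-dominant to the gain $e^{-\bar L^2/8(1-\sigma)}$ requires being careful that every application of the potential pinching Proposition \ref{stabilitypotential} is done with the small parameter $10^{-100}n^{-1}l^{-1}\sigma$ (not a fixed $\ep$), and that the parabolic interior estimate is applied only on cylinders strictly inside $\{\bar b\le\bar L\}$ so that Proposition \ref{stabilitymetric} and Corollary \ref{highestimatepotential1} apply with room to spare—this is precisely the role of the extra loss of one unit ($\bar b\le\bar L-1$) and of the earlier loss $\bar L=\r_{C,\bar\delta_l}-\bar\delta_l^{-1}$ built into the definition of $\bar L$. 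Once the scales and constants are tracked correctly, the estimate \eqref{betterpoten9} follows by combining Propositions \ref{stabilitymetric}, \ref{stabilitypotential}, \ref{betterpotential}, Corollary \ref{highestimatepotential1}, Lemma \ref{estimateforbetterpotential}, and the parabolic interior estimates for $\square v=w$.
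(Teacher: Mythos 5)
There is a genuine gap in the central analytic step. Your plan is to treat $v=\tilde F-F$ as a solution of $\square v=w$ and run a ``standard parabolic $L^2$-to-$C^l$ interior estimate,'' claiming that the forcing term $w$ ``contributes an error of the same exponential order.'' This claim is false. Near the cylinder one computes $w=\tau(2\Delta f-|\na f|^2+\scal)+f-n\approx\Theta_m\neq 0$, i.e.\ $w$ is of order one pointwise on $\{\bar b\le\bar L\}$ (its integral against $\mathrm{d}\nu_t$ is the $\WW$-entropy, which converges to $\Theta_m$, not to $0$); Proposition \ref{prop:remainder} only controls the tail of integrals of $w$-type quantities outside a large ball, not the size of $w$ on the cylindrical region. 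Consequently a parabolic interior estimate of the form $\|\na v\|_{C^l(Q_{1/2})}\lesssim \|\na v\|_{L^2(Q_1)}+\|w\|_{(\cdot)}$ is dominated by the forcing and yields nothing exponentially small. The reason $\na v$ is nevertheless tiny is that $w$ is nearly spatially constant: $\na w=2\Div_f\TT$ by the weighted Bianchi identity, and $\TT$ is controlled only in spacetime $L^2(\mathrm{d}\nu_t\,\mathrm{d}t)$. Differentiating the equation to exploit this puts $\Div_f\TT$ (and, upon bootstrapping, higher derivatives of $\TT$) on the right-hand side in norms you do not control, so the parabolic bootstrap cannot close. The smallness of $\sup_t\int|\na v|^2\,\mathrm{d}\nu_t$ is extracted in Proposition \ref{betterpotential} precisely by an integration by parts against the weight that converts $\langle\na w,\na v\rangle$ into $\langle\TT,\na^2 v\rangle$; this structure is invisible to an interior Schauder/energy estimate that treats $w$ as generic forcing.

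The paper's mechanism, which your proposal is missing, is elliptic interpolation on time slices in the style of Colding--Minicozzi: set $U=|\na(F-\tilde F)|^2e^{-f}$, bound $\|U\|_{L^1(2B)}$ by the exponentially small quantity $Ce^{-\bar L^2/4(1-\sigma)^2}$ from Proposition \ref{betterpotential}, bound $\|\na^kU\|_{L^\infty(2B)}$ \emph{crudely} by $C\exp(\bar L^2-\inf_{2B}f)$ using the sub-exponential derivative bounds of Corollary \ref{highestimatepotential1} and Lemma \ref{estimateforbetterpotential} (with the parameter $10^{-100}n^{-1}l^{-1}\sigma$), and choose $k=10^{10}n\sigma^{-1}l$ so large that the interpolation exponent $1-a_{k,n}=n/(k+n)$ renders the crude factor harmless. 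The higher derivatives in $[\na(\tilde F-F)]_l$ are then obtained by a second interpolation between this $C^0$ bound and the same crude high-order bounds. Your first and third steps (measure comparison via the potential pinching, and absorbing the $e^{C\sigma\bar f}$ loss using the smallness of $\bar\delta_l$) are consistent with what the paper does, but the step that actually upgrades $L^2$ to $L^\infty$ needs to be replaced by this interpolation argument; as written, your proof does not go through.
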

\begin{proof}
By the definition of $\bar L$ and Lemma \ref{betterpotential}, we have
	\begin{align}\label{betterpoten4}
		\sup_{t\in [-2,-1/2]}\int_M \left|\nabla (F-\tilde F)\right|^2 
		\,\mathrm{d}\nu_t\leq Ce^{-\frac{\bar L^2}{4(1-\sigma)^2}}.
	\end{align}
	By Corollary \ref{highestimatepotential1} and Lemma \ref{estimateforbetterpotential}, we have
	\begin{align}\label{betterpoten5}
		\left[F-\tilde F\right]_i \leq C(n,Y,  \sigma, l) \exp \lc 10^{-90} i n^{-1} l^{-1} \sigma\bar L^2 \rc
	\end{align}
on $\{\bar b\leq\bar L-1\}\times [-2,-1/2]$, for any $0 \le i \le 10^{20}n\sigma^{-1} l$.
	
For any $(x,t)\in\{\bar b<\bar L-1\}\times [-2,-1/2]$, we set $B=B_t(x,1/2)$ and $2B=B_t(x,1)$. Applying the interpolation inequality \cite[Lemma B.1]{colding2015uniqueness} to $U=|\nabla (F-\tilde F)|^2e^{-f}$, we get
	\begin{align}\label{betterpoten6}
		\rVert U\rVert_{L^\infty(B)}&\leq C(n,k)\left(\rVert U\rVert_{L^1(2B)}+\rVert U\rVert_{L^1(2B)}^{a_{k,n}}\rVert \nabla^{k}U\rVert_{L^\infty(2B)}^{1-a_{k,n}}\right),
	\end{align}
	where the norms are unweighted. By \eqref{betterpoten4}, $\rVert U\rVert_{L^1(B)}\leq Ce^{-\frac{\bar L^2}{4(1-\sigma)^2}}$. Moreover, it follows from \eqref{betterpoten5} and Corollary \ref{highestimatepotential1} that for $k=10^{10}n\sigma^{-1}l$,
		\begin{align*}
\rVert \na^k U\rVert_{L^\infty(2B)}\leq C(n,Y,  \sigma, l) \exp\left(\bar  L^2-\inf_{2B}f\right).
	\end{align*}
Plugging these estimates into \eqref{betterpoten6}, it follows that
	\begin{align}\label{betterpoten7}
		\rVert U\rVert_{L^\infty(B)}\leq C(n,Y, \sigma, l) \lc \exp\left(-\frac{\bar L^2}{4(1-\sigma)^2}\right)+\exp\left(-\frac{a_{k,n}\bar L^2}{4(1-\sigma)^2}+(1-a_{k,n})(\bar L^2-\inf_{2B}f)\right) \rc.
	\end{align}
	
Since
		\begin{align*}
1-a_{k,n}=\frac{n}{k+n}=\frac{1}{10^{10}\sigma^{-1}l+1},
	\end{align*}	
it is clear by a direct calculation that
	\begin{align*}
		-\frac{a_{k,n}\bar L^2}{4(1-\sigma)^2}+(1-a_{k,n})\bar L^2\leq -\frac{\bar L^2}{4(1-3\sigma/2)}.
	\end{align*}
	Note that by Proposition \ref{stabilitypotential},
	\begin{align*}
\left|f(x,t)-\inf_{2B}f\right|+\left|f(x,t)-\bar f(x,t)\right| \le 10^{-90}n^{-1} l^{-1}\sigma \bar L^2.
	\end{align*}	
	
	Thus, \eqref{betterpoten7} yields:
	\begin{align}\label{betterpoten8}
		\left|\na (F-\tilde{F})\right|\leq C(n,Y,  \sigma, l) \exp\left(\frac{f}{2}-\frac{\bar L^2}{8(1-3\sigma/2)}\right)\leq C(n,Y,  \sigma, l) \exp\left(\frac{\bar f}{2}-\frac{\bar L^2}{8(1-1.1 \sigma)}\right).
	\end{align}
	Consequently,  \eqref{betterpoten9} follows from the interpolation based on \eqref{betterpoten5} and \eqref{betterpoten8}.	

\end{proof}

\begin{lem} \label{timesliceshrinkerquantity}
	If $\bar L \le (1-\sigma) \rE$, then for any $t\in [-3/2,-1/2]$ we have
	\begin{align*}
		\int_{\left\{\bar b \le\bar  L-1\right\}} \left|\frac{g}{2}-\na^2 \tilde F-\tau \Ric\right|^2\,\mathrm{d}\nu_t \le C(n)e^{-\frac{ \bar L^2}{4(1-\sigma)^2}}.
	\end{align*}
\end{lem}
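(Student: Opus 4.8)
The plan is to deduce the time-slice estimate from the spacetime integral bound in Proposition \ref{betterpotential} by combining it with the pointwise estimates available on the almost-cylindrical region, namely Proposition \ref{stabilitymetric}, Corollary \ref{highestimatepotential1}, and Lemma \ref{estimateforbetterpotential}. Abbreviate $\Psi:=\tfrac{g}{2}-\na^2\tilde F-\tau\Ric$ and recall from Proposition \ref{betterpotential} that
\begin{align*}
\int_{-2}^{-1/2}\int_M |\Psi|^2\,\mathrm{d}\nu_t\,\mathrm{d}t \le 7\lc \mathcal W_{x_0^*}(1/2)-\mathcal W_{x_0^*}(2)\rc = 7 e^{-\rE^2/4} \le 7 e^{-\bar L^2/(4(1-\sigma)^2)},
\end{align*}
where the last inequality uses the hypothesis $\bar L \le (1-\sigma)\rE$ together with Definition \ref{defnrE}. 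Thus the spacetime integral of $|\Psi|^2$ already has exactly the desired exponential decay; the whole point is to upgrade it to a bound on a single time-slice. For this one needs parabolic regularity: the quantity $\Psi$ is controlled in, say, $C^1$ on $\{\bar b \le \bar L-1\}\times[-2,-1/2]$ by Proposition \ref{stabilitymetric} (which gives bounds on $g$ and its derivatives), Shi's estimates for curvature, and Corollary \ref{highestimatepotential1} plus Lemma \ref{estimateforbetterpotential} (which give bounds on $\na^2\tilde F$ and $\na^2 F$, hence on $\Psi$, with controlled exponential weight $e^{10^5\ep l\bar F}$). Since we may take the parameter $\ep$ (which is $10^{-100}n^{-1}l^{-1}\sigma$ in the conventions fixed before Lemma \ref{lem:007}) as small as we wish, that weight is a negligible perturbation of the Gaussian weight in $\mathrm{d}\nu_t$.

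First I would fix $t_0 \in [-3/2,-1/2]$ and estimate $\int_{\{\bar b\le \bar L-1\}}|\Psi|^2\,\mathrm{d}\nu_{t_0}$ by comparing it with the average over a short time interval. Concretely, write $\mathrm{d}\nu_t = (4\pi\tau)^{-n/2}e^{-f}\,\mathrm{d}V_{g(t)}$; since $t_0$ ranges over a compact subinterval of $(-2,-1/2)$ and the heat kernel densities are mutually comparable on $\{\bar b\le\bar L-1\}$ over the time interval $[t_0-c,t_0+c]$ for a fixed small $c=c(n)>0$ (using Proposition \ref{stabilitymetric} to bound the metric distortion and Proposition \ref{stabilitypotential}/Corollary \ref{highestimatepotential1} to bound the variation of $f$ with controlled weight), we get
\begin{align*}
\int_{\{\bar b\le \bar L-1\}} |\Psi|^2\,\mathrm{d}\nu_{t_0} \le C(n) \int_{t_0-c}^{t_0+c}\int_{\{\bar b\le\bar L\}} |\Psi|^2 (1+e^{Cc}) \,\mathrm{d}\nu_t\,\mathrm{d}t + (\text{error from the weight}).
\end{align*}
A cleaner route, and the one I would actually carry out, is to show directly that the spatial integral $A(t):=\int_{\{\bar b\le\bar L-1\}}|\Psi|^2\,\mathrm{d}\nu_t$ has bounded time-derivative on $[-3/2,-1/2]$: differentiating, one gets terms of the form $\int (\square|\Psi|^2)\,\mathrm{d}\nu_t$ plus a cutoff error from the fact that the domain $\{\bar b\le\bar L-1\}$ is $t$-dependent and $\Psi$ is not defined on all of $M$. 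Since $\square|\Psi|^2 = 2\la\square\Psi,\Psi\ra - 2|\na\Psi|^2$ and $\Psi$ together with $\square\Psi$ and $\na\Psi$ are pointwise bounded (with a mild $e^{\ep l\bar F}$ weight, which is absorbed into $\mathrm{d}\nu_t$ since $\ep$ is as small as we wish), we obtain $|A'(t)| \le C(n) \int_{\{\bar b\le\bar L\}}(1+|\Psi|^2)\,\mathrm{d}\nu_t$ modulo a negligible boundary contribution concentrated near $\bar b = \bar L-1$, which decays like $e^{-\bar L^2/4}$ by Theorem \ref{thm:upper}(i). Integrating $A'$ over $[-3/2,-1/2]$ and invoking the spacetime bound above then forces $A(t_0)\le C(n)e^{-\bar L^2/(4(1-\sigma)^2)}$ for all $t_0\in[-3/2,-1/2]$; indeed if $A(t_0)$ were larger at some $t_0$, then since $|A'|$ is controlled in terms of $\int |\Psi|^2\,\mathrm{d}\nu_t$ itself, $A$ would stay comparably large on a definite time interval, contradicting $\int_{-2}^{-1/2}A(t)\,\mathrm{d}t \le 7e^{-\bar L^2/(4(1-\sigma)^2)}$.

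The main obstacle I anticipate is the bookkeeping of the exponential weights: $\Psi$ is only bounded pointwise with a factor $e^{10^5\ep l\bar F}$ (Corollary \ref{highestimatepotential1} and Lemma \ref{estimateforbetterpotential}), and the cutoff/boundary terms must be shown to decay strictly faster than $e^{-\bar L^2/(4(1-\sigma)^2)}$ so that they do not spoil the final estimate. This is handled by choosing the parameter $\ep = 10^{-100}n^{-1}l^{-1}\sigma$ small enough—already built into the conventions fixed before Lemma \ref{lem:007}—so that $e^{10^5\ep l\bar F} \le e^{\sigma\bar f/10^{80}}$, which combines with the Gaussian decay of $\mathrm{d}\nu_t$ (and with the support condition $\bar b\le\bar L$, hence $\bar f \le \bar L^2/4$) to leave the exponent strictly dominated by $-\bar L^2/(4(1-\sigma)^2)$. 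The boundary term near $\{\bar b=\bar L-1\}$ is of order $e^{-(\bar L-1)^2/4}\cdot e^{\text{(polynomial)}}$ by Theorem \ref{thm:upper}(i), which is again of lower order, so these contributions are harmless. With the weights under control, the argument reduces to the elementary fact that a nonnegative function on $[-2,-1/2]$ with small integral and derivative controlled by a constant times the function itself must be pointwise small on the interior subinterval $[-3/2,-1/2]$.
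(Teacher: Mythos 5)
Your overall strategy (upgrade the spacetime $L^2$ bound of Proposition \ref{betterpotential} to a time-slice bound via a differential inequality for $A(t)=\int\eta^2|\tilde\Phi|^2\,\mathrm{d}\nu_t$) is the right one and is what the paper does, but your proposal is missing the one structural fact that makes the argument close, and without it the estimate degrades. The point of introducing $\tilde F$ via $\square\tilde F=-n/2$ is precisely that $\tilde\Phi:=\frac{g}{2}-\tau\Ric-\na^2\tilde F$ satisfies the Lichnerowicz heat equation $(\partial_t-\Delta_L)\tilde\Phi=0$, whence
\begin{align*}
\square|\tilde\Phi|^2\le -2|\na\tilde\Phi|^2+C(n)|\Rm|\,|\tilde\Phi|^2,
\end{align*}
with a right-hand side that is \emph{quadratic} in $\tilde\Phi$. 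Your version replaces this by ``$\square\Psi$ is pointwise bounded,'' so the cross term $\langle\square\Psi,\Psi\rangle$ only gives $|A'(t)|\le C\bigl(A(t)^{1/2}+1\bigr)$ (or, as you wrote it, $|A'|\le C\int(1+|\Psi|^2)\,\mathrm{d}\nu_t$, whose ``$1$'' contributes $O(1)$, not anything exponentially small). Your concluding ``$A$ would stay comparably large'' argument is a Gronwall argument and is only valid for $|A'|\le CA$; with $|A'|\lesssim A^{1/2}$ it yields at best $A\le Ce^{-\bar L^2/(8(1-\sigma)^2)}$, i.e.\ half the exponent, and with $|A'|\le C$ it yields nothing. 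This is a genuine gap, not bookkeeping.

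A second, related error: your claim that the cutoff/boundary contribution is ``of order $e^{-(\bar L-1)^2/4}$, hence of lower order'' is false in the direction that matters. Since $(1-\sigma)^2<1$, the target $e^{-\bar L^2/(4(1-\sigma)^2)}$ is \emph{smaller} than $e^{-\bar L^2/4}$, so a boundary term of size $e^{-(\bar L-1)^2/4}$ would dominate and ruin the stated bound. The paper avoids ever needing such an estimate: it multiplies by a cutoff $\eta$ supported in $\{\bar b\le\bar L\}$ with $|\na\eta|+|\na^2\eta|\le C(n)$ (using Proposition \ref{stabilitymetric}), and absorbs the terms $(\Delta\eta^2)|\tilde\Phi|^2$ and $\langle\na\eta^2,\na|\tilde\Phi|^2\rangle$ into $2\eta^2|\na\tilde\Phi|^2+C|\na\eta|^2|\tilde\Phi|^2$ by Cauchy--Schwarz, so every error term remains quadratic in $\tilde\Phi$ and is controlled by the same spacetime integral with the full exponent. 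Note also that the correct argument never uses Corollary \ref{highestimatepotential1} or Lemma \ref{estimateforbetterpotential}, which is why the constant in the lemma is $C(n)$ rather than $C(n,Y,\sigma,l)$; your route would at best produce the weaker constant and the weaker exponent.
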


\begin{proof}
	We set $\tilde \Phi:=g/2-\tau \Ric-\na^2 \tilde F$. It follows from a direct calculation that $\left(\partial_t -\Delta_L\right) \tilde \Phi=0$, where
	\begin{align*}
		\left(\Delta_L h\right)_{ij}:=\Delta h_{ij}+2\Rm_{ikjl}h_{kl}-\Ric_{ik}h_{jk}-\Ric_{jk}h_{ik}
	\end{align*}
	denotes the Lichnerowicz Laplacian with respect to $g$. Thus, we have
	\begin{align}\label{timesliceeq1}
		(\partial_t-\Delta)|\tilde \Phi|^2\le -2|\na \tilde \Phi|^2+C(n)|\Rm| |\tilde \Phi|^2.
	\end{align}
	
	Choose a cut-off function $\eta$ such that $\eta=1$ on $\{\bar b<\bar L-1\}$ and $\eta =0$ outside $\{\bar b\leq \bar L\}$. By Lemma \ref{stabilitymetric}, we may assume that $|\na \eta|+|\na^2 \eta| \le C(n)$. Moreover, $|\Rm|$ is bounded by $C(n)$ on the support of $\eta$.
	
According to \eqref{timesliceeq1}, we obtain
	\begin{align*}
\square \left(\eta^2|\tilde \Phi|^2\right)&\leq -2\eta^2|\na \tilde \Phi|^2+C(n) \eta^2|\tilde \Phi|^2-\left(\Delta\eta^2\right)|\tilde \Phi|^2-2\la \nabla \eta^2,\nabla|\tilde \Phi|^2\ra\\
		&\leq -2\eta^2|\na \tilde \Phi|^2+C(n) \chi_{\spt(\eta)}|\tilde \Phi|^2+2\eta^2|\nabla \tilde \Phi|^2+8|\nabla\eta|^2|\tilde \Phi|^2 \leq C(n)\chi_{\spt(\eta)}|\tilde \Phi|^2,
	\end{align*}
	where $\chi_{\spt(\eta)}$ denotes the characteristic function of $\spt(\eta)$, and	in the second inequality, we have used 
	\begin{align*}
		\left|2\la\nabla \eta^2,\nabla |\tilde \Phi|^2\ra\right| \le 8|\eta||\na \eta| |\tilde \Phi| |\na \tilde \Phi| \le 2\eta^2 |\na \tilde \Phi|^2+8|\na \eta|^2 |\tilde \Phi|^2.
	\end{align*}
Thus, we have
	\begin{align*}
		\frac{\mathrm{d}}{\mathrm{d}t}\int_M \eta^2|\tilde \Phi|^2 \,\mathrm{d}\nu_t=\int_M \square(\eta^2|\tilde \Phi|^2) \,\mathrm{d}\nu_t\le C(n)\int_{\spt(\eta)}|\tilde \Phi|^2 \,\mathrm{d}\nu_t.
	\end{align*}
For any $t\in [-3/2,-1/2]$, it follows that
	\begin{align}\label{betterpoten12}
		\int_{\left\{\bar b<\bar L-1\right\}}|\tilde \Phi|^2 \,\mathrm{d}\nu_t \leq& \min_{t\in [-2,-\frac{3}{2}]}\int_{\left\{\bar b< \bar L\right\}}|\tilde \Phi|^2 \,\mathrm{d}\nu_t+C(n)\int_{-2}^{-\frac{1}{2}}\int_{\left\{\bar b<\bar L\right\}}|\tilde \Phi|^2 \,\mathrm{d}\nu_t \mathrm{d}t \notag \\
		\leq & C(n) \int_{-2}^{-\frac{1}{2}}\int_{\left\{\bar b< \bar L\right\}}|\tilde \Phi|^2 \,\mathrm{d}\nu_t \mathrm{d}t.
	\end{align}
	
By Proposition \ref{betterpotential}, we get
	\begin{align*}
		\int_{-2}^{-\frac{1}{2}}\int_{\left\{\bar b< \bar L\right\}}|\tilde \Phi|^2 \,\mathrm{d}\nu_t \mathrm{d}t\leq  C e^{-\frac{\bar L^2}{4(1-\sigma)^2}},
	\end{align*}
	which, by \eqref{betterpoten12}, implies that for any $t\in [-3/2,-1/2]$, we have
	\begin{align*}
		\int_{\left\{\bar b< \bar L-1\right\}}|\tilde \Phi|^2 \,\mathrm{d}\nu_t\leq C(n)e^{-\frac{\bar L^2}{4(1-\sigma)^2}}.
	\end{align*}
	This finishes the proof.
\end{proof}

Combining Lemma \ref{lem:007} and Lemma \ref{timesliceshrinkerquantity}, we obtain the following time slice estimate:

\begin{cor} \label{cor:002}
	If $\bar L \le (1-\sigma) \rE$, then for any $t\in [-3/2,-1/2]$,
	\begin{align*}
		\int_{\left\{\bar b \le \bar L-1\right\}} \abs{\frac{g}{2}-\Ric-\nabla^2 F}^2\,\mathrm{d}\nu_t \le  C(n,Y,  \sigma, l) e^{-\frac{\bar L^2}{4}}.
	\end{align*}
\end{cor}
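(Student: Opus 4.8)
The plan is to bound the shrinker quantity $\Phi=\frac{g}{2}-\tau\Ric-\na^2 F$ (where $F=\tau f$, as in the section setup) by splitting off the correction coming from replacing $F$ with the modified potential $\tilde F$, via the decomposition
\[
\Phi=\tilde\Phi+\na^2(\tilde F-F),\qquad \tilde\Phi:=\frac{g}{2}-\tau\Ric-\na^2\tilde F,
\]
and estimating the two pieces separately. Squaring and using $|a+b|^2\le 2|a|^2+2|b|^2$, the claim for $t\in[-3/2,-1/2]$ reduces to bounds on $\int_{\{\bar b\le\bar L-1\}}|\tilde\Phi|^2\,\mathrm{d}\nu_t$ and $\int_{\{\bar b\le\bar L-1\}}|\na^2(\tilde F-F)|^2\,\mathrm{d}\nu_t$. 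The first integral is precisely what Lemma \ref{timesliceshrinkerquantity} controls: since $\bar L\le(1-\sigma)\rE$, it is at most $C(n)e^{-\bar L^2/(4(1-\sigma)^2)}\le C(n)e^{-\bar L^2/4}$ because $(1-\sigma)^2<1$.

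For the second integral I would feed in the pointwise Hessian estimate of Lemma \ref{lem:007}. Assuming $l\ge1$ (otherwise apply that lemma with $l=1$), $|\na^2(\tilde F-F)|\le[\na(\tilde F-F)]_l\le C(n,Y,\sigma,l)\,e^{\bar f/2-\bar L^2/(8(1-\sigma))}$ on $\{\bar b\le\bar L-1\}\times[-2,-1/2]$, hence $|\na^2(\tilde F-F)|^2\le C\,e^{\bar f-\bar L^2/(4(1-\sigma))}$ there. Thus the second integral is at most $C\,e^{-\bar L^2/(4(1-\sigma))}\int_{\{\bar b\le\bar L-1\}}e^{\bar f(\cdot,t)}\,\mathrm{d}\nu_t$, and the task becomes estimating the $e^{\bar f}$-weighted $\nu_t$-mass of $\{\bar b\le\bar L-1\}$. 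Writing $\mathrm{d}\nu_t=(4\pi\tau)^{-n/2}e^{-f}\,\mathrm{d}V_{g(t)}$, I would note that on $\{\bar b\le\bar L\}\times[-3/2,-1/2]$ the $\bar f$- and $f$-exponents nearly cancel: Proposition \ref{stabilitypotential} (with small parameter $\bar\epsilon:=10^{-100}n^{-1}l^{-1}\sigma$) gives $|\bar f(\cdot,t)-f(\cdot,t)|\le C\bar\epsilon\bar L^2+C$, Proposition \ref{stabilitymetric} gives $\mathrm{d}V_{g(t)}\le C\,\mathrm{d}V_{\bar g(t)}$, and since $(4\pi\tau)^{-n/2}\le C$ for $\tau\in[1/2,3/2]$ while the $\bar g(t)$-volume of $\{\bar b\le\bar L-1\}$ grows at most polynomially in $\bar L$, this yields $\int_{\{\bar b\le\bar L-1\}}e^{\bar f(\cdot,t)}\,\mathrm{d}\nu_t\le C(n)\bar L^{n}e^{C\bar\epsilon\bar L^2}$. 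Consequently the second integral is $\le C(n,Y,\sigma,l)\,\bar L^{n}e^{C\bar\epsilon\bar L^2-\bar L^2/(4(1-\sigma))}$.

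The final step is arithmetic: because $\bar\epsilon$ is chosen far smaller than $\sigma$, the exponent obeys $C\bar\epsilon-\tfrac{1}{4(1-\sigma)}\le-\tfrac14-\eta$ for some $\eta=\eta(n,Y,\sigma,l)>0$, so $\bar L^{n}e^{C\bar\epsilon\bar L^2-\bar L^2/(4(1-\sigma))}=\big(\bar L^{n}e^{-\eta\bar L^2}\big)e^{-\bar L^2/4}\le C(n,Y,\sigma,l)e^{-\bar L^2/4}$; combining with the first-term bound gives the corollary. The step I expect to be the main obstacle is the control of the $e^{\bar f}$-weighted mass in the second term: $e^{\bar f}$ is of the same exponential order as the density $e^{f}$ of $\nu_t$ on the cylinder, so that weighted mass is \emph{not} small, and the estimate survives only because Lemma \ref{lem:007} produces the strictly improved exponent $\tfrac{1}{4(1-\sigma)}>\tfrac14$; this gain, together with $\bar\epsilon\ll\sigma$, is exactly what dominates both the $O(\bar\epsilon\bar L^2)$ discrepancy between $\bar f$ and $f$ and the polynomial volume factor. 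The remaining care is bookkeeping: one must record that $\bar L=\r_{C,\bar\delta_l}-\bar\delta_l^{-1}$ with $\r_{C,\bar\delta_l}\ge\bar\delta_l^{-2}$, so that Propositions \ref{stabilitymetric} and \ref{stabilitypotential} and Lemmas \ref{lem:007} and \ref{timesliceshrinkerquantity} all apply on $\{\bar b\le\bar L\}\times[-3/2,-1/2]$ with parameter $\bar\epsilon$.
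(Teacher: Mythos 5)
Your proof is correct and follows essentially the same route as the paper's: the same decomposition $\Phi=\tilde\Phi+\na^2(\tilde F-F)$, with Lemma \ref{timesliceshrinkerquantity} handling the first term and Lemma \ref{lem:007} combined with Proposition \ref{stabilitypotential} (to cancel $e^{\bar f}$ against the density $e^{-f}$ of $\nu_t$ up to a factor $e^{C\bar\epsilon\bar L^2}$) handling the second, the polynomial volume factor and the $O(\bar\epsilon\bar L^2)$ discrepancy being absorbed exactly as you say by the strict exponent gain $\tfrac{1}{4(1-\sigma)}>\tfrac14$. The only cosmetic remark is that you (correctly) carry the factor $\tau$ on $\Ric$, which is what Lemmas \ref{timesliceshrinkerquantity} and \ref{lem:007} actually control; the corollary's displayed formula omits it, evidently a typo, since for $t\neq -1$ the difference $(\tau-1)\Ric$ is merely bounded rather than exponentially small.
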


\begin{proof}
	By Proposition \ref{stabilitypotential} and Lemma \ref{lem:007}, we have
	\begin{align*}
		|\na^2(\tilde{F}- F)|^2e^{-f}\leq C(n,Y,  \sigma, l) \exp\left(10^{-90}n^{-1} l^{-1}\sigma \bar L^2 -\frac{\bar L^2}{4(1-\sigma)}\right) \leq C(n,Y,  \sigma, l) \exp\left(-\frac{\bar L^2}{4(1-0.5\sigma)}\right),
	\end{align*} 
	which implies for any $t\in [-3/2,-1/2]$,
	\begin{align}\label{timesliceine2}
		\int_{\left\{\bar b \le \bar L-1\right\}}|\na^2(\tilde{F}- F)|^2 \,\mathrm{d}\nu_t\leq C(n,Y,  \sigma, l) \bar L^n\exp\left(-\frac{\bar L^2}{4(1-0.5\sigma)}\right)\leq C(n,Y,  \sigma, l) e^{-\frac{\bar L^2}{4}}.
	\end{align}
	Consequently, the conclusion follows from Lemma \ref{timesliceshrinkerquantity} and \eqref{timesliceine2}.
\end{proof}

\begin{prop} \label{pointwiseshrinkeruantity}
	If $\bar L \le (1-\sigma) \rE$, then on $\{\bar b\leq \bar L-2\}\times [-3/2,-1/2]$,
	\begin{align*}
		\left[\frac{g}{2}-\na^2 \tilde F-\tau \Ric\right]_{l}+\left[\frac{g}{2}-\na^2 F-\tau \Ric\right]_{l} \le C(n,Y,  \sigma, l) e^{\frac{\bar f}{2}-\frac{\bar L^2}{8(1-\sigma)}}.
	\end{align*}
\end{prop}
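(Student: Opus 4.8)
The strategy is to upgrade the $L^2$ time-slice bound in Corollary \ref{cor:002} to a pointwise $C^l$ bound using the same interpolation machinery that appeared in the proof of Lemma \ref{lem:007}, and then to convert between $\tilde F$ and $F$ using Lemma \ref{lem:007}. First I would introduce the tensor $\tilde\Phi := g/2 - \tau\Ric - \na^2\tilde F$ on $\{\bar b \le \bar L\}\times[-2,-1/2]$, exactly as in Lemma \ref{timesliceshrinkerquantity}, and recall from that proof that $\tilde\Phi$ satisfies a heat-type equation $(\partial_t - \Delta_L)\tilde\Phi = 0$, and hence $(\partial_t - \Delta)|\tilde\Phi|^2 \le -2|\na\tilde\Phi|^2 + C(n)|\tilde\Phi|^2$ on the region where $|\Rm| \le C(n)$ (which, by Proposition \ref{stabilitymetric}, holds on $\{\bar b \le \bar L\}\times[-2,-1/2]$ since we are in the setting with $\ep$ replaced by $10^{-100}n^{-1}l^{-1}\sigma$). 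By Shi-type local estimates applied on unit parabolic balls, the higher covariant derivatives $\na^i\tilde\Phi$ for $0\le i \le 10^{10}n\sigma^{-1}l$ are all bounded by $C(n,Y,\sigma,l)$ on $\{\bar b \le \bar L-1\}\times[-7/4,-1/4]$; here I would also record the crude pointwise bound $[\tilde\Phi]_i \le C(n,Y,\sigma,l)\exp(10^{-90}in^{-1}l^{-1}\sigma\bar L^2)$, which follows from the $C^5$-control on $\varphi_C^*g-\bar g$ in Definition \ref{defnrCd} together with Proposition \ref{stabilitymetric}, Corollary \ref{highestimatepotential1}, and Lemma \ref{estimateforbetterpotential} — exactly the analog of \eqref{betterpoten5}.

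Next I would run the interpolation argument of \cite[Lemma B.1]{colding2015uniqueness} verbatim from Lemma \ref{lem:007}: fix $(x,t)\in\{\bar b < \bar L-2\}\times[-3/2,-1/2]$, set $B = B_t(x,1/2)$, $2B = B_t(x,1)$, and apply the interpolation inequality to $U := |\tilde\Phi|^2 e^{-f}$. Corollary \ref{cor:002} (applied on the slightly larger region $\{\bar b\le\bar L-1\}$, which contains $2B$ when $\bar b(x)<\bar L-2$) gives $\|U\|_{L^1(2B)} \le C(n,Y,\sigma,l)e^{-\bar L^2/4}$; the crude higher-order bound gives $\|\na^k U\|_{L^\infty(2B)} \le C(n,Y,\sigma,l)\exp(\bar L^2 - \inf_{2B}f)$ for $k = 10^{10}n\sigma^{-1}l$. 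Combining these with $1-a_{k,n} = n/(k+n) = 1/(10^{10}\sigma^{-1}l+1)$ and the estimate $|f(x,t)-\inf_{2B}f| + |f(x,t)-\bar f(x,t)| \le 10^{-90}n^{-1}l^{-1}\sigma\bar L^2$ from Proposition \ref{stabilitypotential}, the same arithmetic as in \eqref{betterpoten7}–\eqref{betterpoten8} yields $|\tilde\Phi|(x,t) \le C(n,Y,\sigma,l)\exp(\bar f/2 - \bar L^2/(8(1-1.1\sigma)))$. An interpolation between this $C^0$ bound and the crude $C^k$ bound then upgrades it to $[\tilde\Phi]_l \le C(n,Y,\sigma,l)\exp(\bar f/2 - \bar L^2/(8(1-\sigma)))$ on $\{\bar b\le\bar L-2\}\times[-3/2,-1/2]$, after shrinking the loss constant appropriately; this is the first of the two asserted bounds.

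For the second bound, write $g/2 - \na^2 F - \tau\Ric = \tilde\Phi - \na^2(\tilde F - F)$ (the $\tau\Ric$ and $g/2$ terms are common). Lemma \ref{lem:007} gives $[\na(\tilde F - F)]_l \le C(n,Y,\sigma,l)e^{\bar f/2 - \bar L^2/(8(1-\sigma))}$ on $\{\bar b\le\bar L-1\}\times[-2,-1/2]$, so in particular $[\na^2(\tilde F - F)]_{l-1} \le [\na(\tilde F-F)]_l$ is bounded by the same quantity; together with the triangle inequality and the bound on $[\tilde\Phi]_l$ just established, the second estimate follows on $\{\bar b\le\bar L-2\}\times[-3/2,-1/2]$ (one may need to decrease the differentiation order by one, or equivalently absorb it into $l$ by replacing $l$ with $l+1$ at the start, which only changes the constants and the definition of $\bar\delta_l$). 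The main obstacle is bookkeeping: one must verify that all the regions and time intervals nest correctly so that Proposition \ref{stabilitymetric}, Corollary \ref{highestimatepotential1}, Lemma \ref{estimateforbetterpotential}, Corollary \ref{cor:002}, and Lemma \ref{lem:007} are all simultaneously applicable with the parameter $\ep = 10^{-100}n^{-1}l^{-1}\sigma$, and that the repeated exponential-exponent bookkeeping (the interplay between $\bar L^2/4$, $\bar L^2/(8(1-\sigma))$, and the small error $10^{-90}n^{-1}l^{-1}\sigma\bar L^2$) produces exactly the clean exponent $\bar L^2/(8(1-\sigma))$ in the end — but this is the same arithmetic already carried out in Lemma \ref{lem:007}, so no genuinely new difficulty arises.
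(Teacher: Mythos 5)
Your route is the same as the paper's: combine the $L^2$ time-slice bound on $\tilde\Phi:=g/2-\tau\Ric-\na^2\tilde F$ with a crude high-order bound and interpolate exactly as in Lemma \ref{lem:007}, then pass from $\tilde F$ to $F$ by another application of Lemma \ref{lem:007} (with $l$ shifted by one, as you note). One correction is needed: the $L^1$ input for the interpolation must be Lemma \ref{timesliceshrinkerquantity}, which concerns $\tilde\Phi$ and gives $\int_{\{\bar b\le\bar L-1\}}|\tilde\Phi|^2\,\mathrm{d}\nu_t\le C(n)e^{-\bar L^2/(4(1-\sigma)^2)}$, not Corollary \ref{cor:002}, which concerns $g/2-\Ric-\na^2F$ and only gives $e^{-\bar L^2/4}$. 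The factor $(1-\sigma)^{-2}$ in the exponent is precisely what absorbs the interpolation loss $(1-a_{k,n})(\bar L^2-\inf_{2B}f)$ and still lands on the target exponent $\bar L^2/(8(1-\sigma))$; starting from $e^{-\bar L^2/4}$, the same arithmetic only yields $e^{\bar f/2-\bar L^2/8+\varepsilon\bar L^2}$, which is strictly weaker than the stated conclusion. Finally, your parenthetical claim that $\na^i\tilde\Phi$ is bounded by a constant for $i\le 10^{10}n\sigma^{-1}l$ is an overstatement (the derivatives of $\tilde F$ are only controlled by $Ce^{10^5\ep l\bar F}$ via Lemma \ref{estimateforbetterpotential}, whence the paper's crude bound $Ce^{\bar L^2}$ in its analogue of your step), but since the bound you actually feed into the interpolation is of the correct form, this does not affect the argument.
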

\begin{proof}
	It suffices to prove the conclusion for $[g/2-\na^2 \tilde F-\tau \Ric]_{l}$ since the estimate for $[g/2-\na^2 F-\tau \Ric]_{l}$ follows from Lemma \ref{lem:007}. By Lemma \ref{timesliceshrinkerquantity}, we obtain
	\begin{align}\label{timeslice1}
		\int_{\left\{\bar b \le \bar L-1\right\}} \left|\frac{g}{2}-\na^2 \tilde F-\tau \Ric\right|^2\,\mathrm{d}\nu_t \le C(n)e^{-\frac{ \bar L^2}{4(1-\sigma)^2}}.
	\end{align}
	
	On the other hand, it follows from Proposition \ref{stabilitymetric} and Lemma \ref{estimateforbetterpotential} that on $\{\bar b\leq \bar L-2\}\times [-3/2,-1/2]$,
	\begin{align}\label{timeslice2}
		\left[\frac{g}{2}-\na^2 \tilde F-\tau \Ric\right]_{10^{10}n\sigma^{-1} l-2}\leq C(n,Y,  \sigma, l) e^{\bar L^2}.
	\end{align}
	Combining \eqref{timeslice1} and \eqref{timeslice2}, we can follow the interpolation argument as in the proof of Lemma \ref{lem:007} to conclude the proof.
\end{proof}

\subsection*{Comparison of radius functions II}

Next, we compare the radius functions $\rA$ and $\mathbf{r}_{C,\bar \delta_l}$ and their corresponding diffeomorphisms. Here, $\rA,\rCd$ denote the radius functions for $(M, g(-1), f(-1))$.

\begin{thm}\label{thm:ext1}
For any $\sigma\in (0, 1/10)$, $D>1$ and $l \in \mathbb N$, there exists a large constant $L'=L'(n,Y,  \sigma, l, D)>1$ satisfying the following property.

Let $\varphi_A$ be a diffeomorphism corresponding to $\rA$ in Definition \ref{defnradii}. If $\mathbf{r}_A \in [ L', (1-\sigma) \mathbf{r}_E]$, then there exists another diffeomorphism $\varphi$ from $\{\bar b \le \mathbf{r}_A+D \}$ onto a subset of $M$ such that $\varphi=\varphi_A$ on $\{\bar b \le \rA-2\bar \delta_l^{-1}\}$ and
	\begin{align*}
		\left[\varphi^* g(-1)-\bar g\right]_2\leq \bar \delta_l
	\end{align*}
on	$\{\bar b \le \mathbf{r}_A+D \}$. In particular, we have
	\begin{align*} 
		\mathbf{r}_{C,\bar \delta_l} \ge \mathbf{r}_A+D.
	\end{align*}
\end{thm}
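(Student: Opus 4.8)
The goal is to extend $\varphi_A$, originally defined on $\{\bar b\le\rA\}$, to a slightly larger region $\{\bar b\le\rA+D\}$ while retaining only a weaker $C^2$-closeness bound, so that the extended map qualifies for the definition of $\r_{C,\bar\delta_l}$. The strategy is to analyze the geometry of $g(-1)$ in a collar neighborhood of the boundary $\{\bar b=\rA\}$ and show it is, after rescaling, almost a static cylinder $\R\times\R^{k-1}\times S^{n-k}$, with the $\R$-factor (the extension direction) almost generated by $\varphi_A^*\na f(-1)$. Once this splitting picture is established, flowing along (an appropriately normalized version of) that vector field for a bounded parameter range extends the diffeomorphism by a fixed amount $D$ in the $\bar b$-coordinate, and the control on how far we flow gives the $C^2$-bound by $\bar\delta_l$.

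\textbf{Key steps, in order.} First I would set up a contradiction/limiting argument: suppose the statement fails, so there is a sequence of closed Ricci flows $\XX^i$ with entropy bounded below by $-Y$, base points $x_{0,i}^*$, radii $\rA^i\to\infty$ with $\rA^i\le(1-\sigma)\rE^i$, but no extension to $\{\bar b\le\rA^i+D\}$ with the required $C^2$-bound. Second, using the definition of $\rA^i$ (inequality \eqref{eq:intro007a}), the curvature radius is uniformly controlled on $\varphi_{A,i}(\{\bar b\le\rA^i\})$, and the two-sided pseudolocality theorem (Theorem \ref{thm:twoside}) together with Shi's estimates propagates this control to a definite collar $\{\rA^i-\Lambda\le\bar b\le\rA^i\}$ for a large fixed $\Lambda$, and forward/backward in time, so that one can invoke the structure theory of \cite{fang2025RFlimit}. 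Third, pick points $q_i$ on $\{\bar b=\rA^i-1\}$ (say), rescale the flows so that the geometry near $q_i$ is unit scale, and extract an $\F$-limit (or smooth Cheeger--Gromov limit, since curvature is bounded in the collar) of $(\XX^i,q_i)$; the key claim is that the limit is the static $\R\times\R^{k-1}\times S^{n-k}$, with the first $\R$-factor being a genuine splitting direction. This is where the entropy-radius hypothesis $\rA^i\le(1-\sigma)\rE^i$ enters: it forces the Gaussian-weighted $L^2$-norm of the shrinker operator $\Phi$ — equivalently $\WW_{x_0^*}(1/2)-\WW_{x_0^*}(2)=\exp(-\rE^2/4)$ — to dominate the exponential weight $\exp(-\rA^2/4)$ from the collar, so passing to the limit kills $\Phi$ there and produces a static Ricci flow; splitting then follows from the standard soliton/Bochner argument applied to the limit of $\varphi_A^*\na f(-1)$, which is a nontrivial parallel-like vector field. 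Fourth, on the limit the splitting direction is exactly $\partial_{x_1}$ (after reordering), and its pre-image under the smooth convergence gives, for large $i$, an almost-splitting vector field $X_i$ on the collar of $\XX^i$ whose normalized gradient flow $\Psi^i_s$ moves the slice $\{\bar b=\rA^i\}$ outward; composing $\varphi_{A,i}$ near its boundary with this flow (and interpolating with the original $\varphi_{A,i}$ on $\{\bar b\le\rA^i-2\bar\delta_l^{-1}\}$ via a cutoff) yields the extended diffeomorphism $\varphi_i$ on $\{\bar b\le\rA^i+D\}$. Fifth, the closeness estimate: on the new region $\{\rA^i-2\bar\delta_l^{-1}\le\bar b\le\rA^i+D\}$, the pullback $\varphi_i^*g(-1)$ differs from $\bar g$ by $\Psi(1/i)$ because of smooth convergence to the cylinder, so for $i$ large it is $\le\bar\delta_l$ in $C^2$; on the rest it agrees with $\varphi_{A,i}$ and the bound is even stronger. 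This contradicts the failure assumption, proving the theorem; the final assertion $\r_{C,\bar\delta_l}\ge\rA+D$ is then immediate, provided one also checks the normalization $f(\varphi(\bar p),-1)\le n$, which holds because $\varphi=\varphi_A$ near $\bar p$ and the $\rA$-estimate controls $f$ there.

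\textbf{Main obstacle.} The hard part is the third step: identifying the blow-up limit in the collar as the \emph{static, split} cylinder and, in particular, showing that $\varphi_A^*\na f(-1)$ (suitably rescaled) converges to a nonzero parallel vector field that realizes the splitting. Two issues must be handled carefully. (a) $(M,g(-1),f(-1))$ is not assumed to be a Ricci shrinker, so one cannot use self-similarity to transfer estimates between times; instead one must work with the genuine Ricci flow structure and the $\F$-convergence machinery of \cite{fang2025RFlimit}, extracting the limit metric \emph{flow} and only afterward using that $\Phi\to 0$ in the relevant region to conclude staticity. (b) One must verify the limit vector field is not identically zero and is tangent to the Euclidean directions rather than the spherical ones — this uses that $\na f(-1)$ has size comparable to $|\vec x|\sim\rA$ on $\{\bar b=\rA\}$, so after dividing by $\rA$ it has unit size, and that its $S^{n-k}$-components are controlled by the shrinker operator (which vanishes in the limit), leaving a nontrivial $\R^k$-component; the splitting then identifies one such direction. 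Getting the bookkeeping of scales right — the collar width $\Lambda$, the exponential weights, the rescaling factor $\rA^{-1}$, and the fixed extension amount $D$ — is the technically delicate but conceptually routine remainder. The interpolation between $\varphi_A$ and the flow-extended map (step four) is also somewhat delicate but follows the template of \cite[Theorem 5.3]{li2023rigidity} adapted to the narrower neck.
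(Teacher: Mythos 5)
Your overall architecture matches the paper's proof: a contradiction/compactness argument, identification of the collar as an almost-split cylinder with the splitting direction generated by the (normalized) gradient of the potential, extension of $\varphi_A$ by flowing along the almost-splitting vector field, and a cutoff gluing with the original map. However, there are two genuine problems in the crucial third step.

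First, the limit in the collar is \emph{not} static: killing $\mathbf{\Phi}$ produces a shrinking soliton, and the limit flow is the standard shrinking cylinder $\R\times\R^{n-m-1}\times S^m$ with evolving spherical factor (see \eqref{eq:keyine102a} in the paper, where the target carries the time-dependent metrics $g_c(t)$). Staticity is neither true nor needed; what is needed is a \emph{spatial} splitting of one $\R$-factor, obtained from a function $F_\infty$ with $\partial_\t F_\infty=\na^2F_\infty=0$ and $|\na F_\infty|=1$ via \cite[Theorem 4.19]{fang2025RFlimit}. Second, and more seriously, your route to the splitting function is underdetermined. You propose to show that $\na f(-1)/\rA$ becomes parallel via a Bochner argument, which requires pointwise control of $\mathbf{\Phi}$ and of $f$ on the region $B_{g(-1)}(\Sigma_i,100D_1)$ \emph{outside} the current domain of $\varphi_A$ --- precisely where the extension must take place and where no a priori closeness to the cylinder is available. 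The entropy-radius hypothesis only gives a conjugate-heat-kernel-weighted spacetime $L^2$ bound on $\tau\Ric+\na^2 f-g/2$, and this quantity does not satisfy a clean parabolic equation, so there is no direct upgrade to smooth convergence there. The paper's device is the auxiliary potential $\tilde F$ solving $\square\tilde F=-n/2$ with $\tilde F=F$ at $t=-2$ (Proposition \ref{betterpotential}): the tensor $\tau\Ric+\na^2\tilde F-g/2$ satisfies the Lichnerowicz heat equation, the Harnack inequality along spacetime curves bounds $f_i$ by $(1+2\ep)L_i^2/4+C$ on compact subsets of the limit's regular part (so the weighted $L^2$ bound becomes an unweighted one decaying like $e^{-\sigma L_i^2/4}$), and parabolic regularity then yields smooth convergence of $\na^2\tilde F_i$ \emph{beyond} the collar. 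The normalized function $F_i'=2(\tilde F_i-\tilde F_i(q_i,-1))/L_i$ then has Hessian $2\na_i^2\tilde F_i/L_i\to 0$ simply because $\na_i^2\tilde F_i$ stays bounded while $L_i\to\infty$ --- no Bochner argument or vanishing of $\mathbf{\Phi}$ outside the collar is required. Without this (or an equivalent mechanism for controlling the potential and its Hessian in the not-yet-parametrized region), your step three does not close, and the subsequent flow-line extension has no vector field with verified estimates on the region it is supposed to sweep out.
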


\begin{proof}
By our definition of $\bar \delta_l$, the conclusions of Lemma \ref{lem:007}, Lemma \ref{timesliceshrinkerquantity}, and Proposition \ref{pointwiseshrinkeruantity} hold.

By the definitions of $\rA$ and $\r_{C, \bar \delta_l}$, we conclude that $\r_{C, \bar \delta_l} \ge \mathbf{r}_A -1$, provided that $L'$ is sufficiently large.

Suppose the conclusion fails. Then there exists a sequence of Ricci flows $\XX^i=\{M^n_i, (g_i(t))_{t\in [-10,0]}\}$ with entropy bounded below by $-Y$. Moreover, there exist base points $x_{0,i}^*=(x_{0,i},0)\in\XX^i$ such that $\rA^i \to +\infty$ and $\rA^i \le (1-\sigma)\mathbf{r}^i_E$. For each $i$, there exists a diffeomorphism $\varphi_{A,i}$ from $\{\bar b \le \rA^i\}$ onto a subset of $M$ corresponding to $\rA^i$ in Definition \ref{defnradii}. However, it is not possible to find a diffeomorphism from $\{\bar b \le \rA^i+D_0\} \to M_i$ for some constant $D_0>0$ that satisfies the required properties. Here and throughout, we use subscript or superscript $i$ to indicate the corresponding quantities associated with $\XX^i$. 

We define $D_1:=D_0+2\bar \delta_l^{-1}$ and $L_i:=\rA^i-2 \bar \delta_l^{-1}$. Then, we define the hypersurface $\Sigma_i:=\varphi_{A, i} \lc \{\bar b=L_i\} \rc$. Moreover, we choose a base point $q_i^*=(q_i, -1/2) \in \XX^i$ such that $q_i \in \Sigma_i$.

Define the time intervals $\III^{++}=[-10, 0]$, $\III^+=[-9.9, 0]$, $\III=[-9.8, 0]$ and $\III^-=(-9.8, 0]$. Then, by Theorem \ref{thm:intro1}, passing to a subsequence if necessary, we have
	\begin{align}\label{extenconv}
		\left(\XX_\III^i,d_i^*,q_i^*,\t_i\right)\xrightarrow[i\to\infty]{\quad \mathrm{pGH}\quad} \left(Z, d_Z, q,\t\right),
	\end{align}
	where $d_i^*$ denotes the spacetime distance induced by $g_i(t)$ (see Definition \ref{defnd*distance}). In particular, $ \left(Z, d_Z,q,\t\right)$ is a noncollapsed Ricci flow limit space over $\III$. In addition, the convergence \eqref{extenconv} is smooth on the regular part $\RR$ of $Z$ in the sense of Theorem \ref{thm:intro3}. Note that $\RR$ carries a structure of Ricci flow spacetime $(\RR, \t, \partial_\t, g^Z)$. Let $\phi_i$ denote the diffeomorphisms given in Theorem \ref{thm:intro3}.

It follows from Proposition \ref{stabilitymetric} that there exists a constant $c_0=c_0(n)>0$ such that
	\begin{align*} 
r^i_{\Rm}(x^*) \ge c_0
	\end{align*}
for any $x^* \in \Omega_i:=\varphi_{A, i} \lc \{\bar b \le L_i\} \rc \times [-2, -1/2]$, where $r^i_{\Rm}$ denotes the curvature radius of $\XX^i$ (see Definition \ref{defncurvatureradius}). Therefore, by the definition of $\rA$, the convergence \eqref{extenconv} is smooth on $\Omega_i$, and the domains $(\Omega_i, g_i(t), q_i^*)$, via the diffeomorphisms $\phi_i$, converge smoothly to a domain $\Omega \subset Z$ containing the point $q$. Moreover, it follows from our construction that the maps $\varphi_{A, i}^{-1} \circ \phi_i: \Omega \to \bar M$ converge smoothly to an isometry:
	\begin{align} \label{eq:keyine102a}
\psi: \Omega \longrightarrow (\R_-\times\R^{n-m-1}\times S^m) \times [-2, -1/2],
	\end{align}
where the half-cylinder $\R_-\times\R^{n-m-1}\times S^m$ is equipped with a family of standard metrics $g_c(t)$ for $t \in [-2, -1/2]$. For simplicity, we set $q(t) \in \Omega_t$ to be the flow line of $\partial_\t$ from $q$.

Next, we consider a metric flow $\XX^q$ associated with $q$ (cf. Theorem \ref{thm:iden}) and set $\RR'=\iota_q(\RR^q) \subset \RR$, where $\RR^q$ denotes the regular part of $\XX^q$. It is clear from our construction that $\Omega_{[-2, -1/2)} \subset \RR'$.

We consider an auxiliary function:
	\begin{align}\label{eq:auxi}
		F_i'(x,t):=2\frac{\tilde F_i(x,t)-\tilde F_i(q_i,-1)}{L_i}.
	\end{align}
	
	By a direct computation (see \eqref{eq:modiF}), we have
		\begin{align} \label{eq:keyine102b}
\square F_i'=-\frac{n}{L_i}.
	\end{align}
	
We aim to show that the limit of $F_i'$ exists on $\RR'_{(-3/2, -1+\ep)}$, where $\ep:=10^{-5} \sigma$.
	
First, it follows from Proposition \ref{betterpotential} and our assumption on $L_i$ that
	\begin{equation}\label{integralbd1}
		\int_{-2}^{-1/2}\int_{M_i} \left|\tau\Ric(g_i)+\nabla_i^2\tilde F_i-\frac{g_i}{2}\right|^2\,\mathrm{d}\nu_{x_{0,i}^*;t} \mathrm{d}t\leq Ce^{-\frac{L_i^2}{4(1-\sigma)^2}}.
	\end{equation}
	
	\begin{claim}\label{extension2}
		For any compact subset $S$ of $\RR'_{(-3/2, -1+\ep)}$, we have smooth convergence of $\phi_i^*\big(\na_i^2 \tilde{F}_i\big)$ as symmetric $2$-tensors. 
	\end{claim}
	
	For any $z \in S$, there exists a smooth spacetime curve $\gamma(s) \subset \RR'_{s}$ for $s \in [\t(z), -1+\ep]$, such that $\gamma(\t(z))=z$ and $\gamma(-1+\ep)=q(-1+\ep)$. Since $\RR'_{(-3/2, -1+\ep]}$ is connected, such a curve always exists. Moreover, because $S$ is compact, one can choose $\gamma$ so that
	\begin{align*} 
\int_{\t(z)}^{-1+\ep} |\scal_{g^Z}(\gamma(s))|+|\gamma'(s)|^2_{g^Z_s} \,\mathrm{d}s \le C(S),
	\end{align*}
	where $C(S)$ depends only on the compact $S$ and the geometry of $\RR'$, but is independent of the choice of $z$ in $S$. Since $\phi_i^{-1}((q_i, -1+\ep)) \to q(-1+\ep)$ and $r_{\Rm}^i((q_i, -1+\ep)) \ge c_0$, the curve $\gamma$ can be used to construct a corresponding spacetime curve $\gamma_i(s) \in \XX^i_s$ for $s \in [\t(z), -1+\ep]$ so that $\gamma_i(\t(z))=\phi_i(z)$, $\gamma_i(-1+\ep)=(q_i, -1+\ep)$, and
		\begin{align*}
\int_{\t(z)}^{-1+\ep} |\scal_i(\gamma_i(s))|+|\gamma_i'(s)|^2_{g_i(s)} \,\mathrm{d}s \le C(S),
	\end{align*}
	which is independent of $i$. In fact, $\gamma_i$ can be obtained by concatenating $\phi_i(\gamma)$ with a short curve in a neighborhood of $(q_i, -1+\ep)$.
	
By Perelman's differential Harnack inequality (see \cite[Corollary 9.4]{perelman2002entropy}), we obtain that if $(x, \t(z))=\phi_i(z)$, then
	\begin{align*}
\sqrt{|\t(z)|}f_i(x, \t(z)))\leq & \sqrt{1-\ep} f_i(q_i, -1+\ep)+\int_{\t(z)}^{-1+\ep}\sqrt{|s|}\left(|\scal_i(\gamma_i(s))|+|\gamma_i'(s)|_{g_i(t)}^2\right) \,\mathrm{d}s \\
 \le & \sqrt{1-\ep} f_i(q_i, -1+\ep)+C(S).
	\end{align*}
	
Thus, for any $(x,t)\in \phi_i(S)$, we have 
		\begin{align*}
f_i(x,t)\leq f_i(q_i,-1+\ep)+C(S)\leq \frac{(1+2\ep)L_i^2}{4}+C(S),
	\end{align*}
	where we used Proposition \ref{stabilitypotential} for the last inequality. It implies
	\begin{align*}
		\mathrm{d}\nu_{x_{0,i}^*;t}(x)= (4\pi \tau)^{-n/2}e^{-f_i(x,t)}\,\mathrm{d}V_{g_{i}(t)}\geq  c(n)\exp\left(-\frac{(1+2\ep)L_i^2}{4}-C(S)\right)\,\mathrm{d}V_{g_{i}(t)}.
	\end{align*}
	Combining this with \eqref{integralbd1}, we obtain
	\begin{align}\label{extension4}
		&\iint_{\phi_i(S)} \left|\tau\Ric(g_i)+\nabla_i^2\tilde F_i-\frac{g_i}{2}\right|^2 \,\mathrm{d}V_{g_{i}(t)} \mathrm{d}t\nonumber\\
		\leq& C\exp\left(-\frac{L_i^2}{4(1-\sigma)^2}+\frac{(1+2\ep)L_i^2}{4}+C(S)\right) \leq C(S) e^{-\sigma L_i^2/4},
	\end{align}
	where the last inequality holds since $\ep=10^{-5} \sigma$.

We set $\TT_i:=\tau\Ric_i+\nabla_i^2\tilde F_i-g_i/2$. Then we have $(\partial_t-\Delta_{i,L}) \TT_i=0$, where $\Delta_{i,L}$ denotes the Lichnerowicz Laplacian with respect to $g_i(t)$. Therefore, it follows from the standard parabolic regularity theory and the unweighted $L^2$-bound in \eqref{extension4} that $\phi_i^*(\TT_i)$ converge smoothly to $0$ on $\RR'_{(-3/2, -1+\ep)}$. In particular, $\phi_i^*(\nabla_i^2\tilde F_i)$ converge smoothly to a smooth symmetric $2$-tensor on $\RR'_{(-3/2, -1+\ep)}$. This completes the proof of the Claim \ref{extension2}.

Since $\nabla^2_i F_i'=2 \nabla_i^2 \tilde F_i/L_i$ and $L_i\to\infty$, it follows from Claim \ref{extension2} and \eqref{eq:keyine102b} that
	\begin{align}\label{limiteuqation1}
		\nabla^2_i F_i'\to 0 \quad \mathrm{and}\quad \square_i F_i'\to 0\quad \text{smoothly on}\quad\RR'_{(-3/2,-1+\ep)}.
	\end{align}
	\begin{claim}\label{extension5}
		The following equations hold:
		\begin{align*}
			F_i'(q_i,-1)=0 \quad \mathrm{and}\quad  \left|\na_i F_i'(q_i,-1)\right|= 1+\Psi(L_i^{-1}).
		\end{align*}
	\end{claim}
	The first equality follows from the definition. For the second, first note that for the standard potential function $\bar F$ on the cylinder, we have $2\left|\nabla \bar F(q_i)\right|/L_i=1+\Psi(L_i^{-1})$. By Definition \ref{defnradii} (A), we have
	\begin{align*}
		\left|\,|\na \bar F(q_i)|-|\na_i F_i(q_i,-1)|\,\right|\leq \Psi(L_i^{-1}).
	\end{align*}
	Thus, we also have 
	\begin{align}\label{closenpoten1}
		2\frac{\left|\nabla_i F_i(q_i,-1)\right|}{L_i}= 1+\Psi(L_i^{-1}).
	\end{align}
	By Lemma \ref{lem:007}, we have
	\begin{align}\label{closepoten2}
		\abs{\nabla_i (F_i -\tilde F_i)(q_i, -1)} \leq \Psi(L_i^{-1}).
	\end{align}
	
	 Combining \eqref{closenpoten1} and \eqref{closepoten2}, we conclude that 
	\begin{align*}
		2\frac{\left|\nabla_i \tilde F_i(q_i,-1)\right|}{L_i}=1+\Psi(L_i^{-1}),
	\end{align*}
	which gives the second inequality. This finishes the proof of the Claim \ref{extension5}.
	
	By \eqref{limiteuqation1} and Claim \ref{extension5}, we conclude that $\phi_i^*(F_i')$ converge smoothly to a smooth function $F_\infty$ on $\RR'_{(-3/2, -1+\ep)}$. It follows from \eqref{limiteuqation1} that
		\begin{align*}
\partial_\t F_\infty=\na^2 F_\infty=0
	\end{align*}
	on $\RR'_{(-3/2, -1+\ep)}$. Moreover, we have by Claim \ref{extension5}
			\begin{align*}
F_\infty(q(-1))=0 \quad \text{and} \quad |\na F_\infty|(q(-1))=1.
	\end{align*}
	Thus, it follows from \cite[Theorem 4.19]{fang2025RFlimit} (see also \cite[Theorem 15.50]{bamler2020structure}) that $\na F_\infty$ induces a splitting direction on $\RR'_{(-3/2, -1+\ep)}$. On the other hand, by our construction of $F_\infty$, we conclude that $\psi_*(\na F_\infty)$ agrees with the splitting direction of $\R_-$ in $(\R_-\times \R^{n-m-1}\times S^m) \times (-3/2, -1+\ep)$; see \eqref{eq:keyine102a}. Thus, there exists a domain $\Omega' \subset \RR'_{(-3/2, -1+\ep)}$ containing $\Omega_{(-3/2, -1+\ep)}$ so that $\psi$ can be extended to an isometry:
		\begin{align*}
\psi: \Omega' \longrightarrow (\R \times\R^{n-m-1}\times S^m) \times (-3/2, -1+\ep).
	\end{align*}
Since $\Omega'_t$ is complete for any $t \in (-3/2, -1+\ep)$, we conclude that 
			\begin{align*}
\Omega'=\RR'_{(-3/2, -1+\ep)}=\iota_q(\XX^q_{(-3/2, -1+\ep)}).
	\end{align*}	
	
	From the definition \eqref{eq:auxi} of $F_i'$, we conclude that before we reach the limit, 
			\begin{align*}
V_i:=\na_i F_i'=2\frac{\na_i \tilde F_i}{L_i}
	\end{align*}		
is almost a splitting vector field. It is important to note that although the definition of $F_i'$ depends on the choice of base point $q_i$, the definition of $V_i$ is independent of $q_i$.

Since the above argument holds uniformly for any choice of $q_i \subset \Sigma_i$, we conclude that on $B_{g_i(-1)}(\Sigma_i, 100 D_1)$, we have
		\begin{align}\label{eq:keyine105b}
	\abs{|V_i|-1} \le \Psi(i^{-1}) \quad \text{and} \quad  [\na_i V_i]_{100} \le \Psi(i^{-1}).
	\end{align} 

On the other hand, we set $\bar V$ to be the vector field $\partial_r$ on $\bar M$, where $r=|\vec{x}|$. We set the flow lines of $V_i$ and $\bar V$ by $\psi^i_s$ and $\bar \psi_s$, respectively, such that $\psi^i_0=\bar \psi_0=\mathrm{id}$. Using the flow lines, we obtain a diffeomorphism 
		\begin{align*}
	\kappa_i:\Sigma_i \times [0, 50D_1] \to M_i
	\end{align*} 
defined by $\kappa_i(w, s)=\psi^i_s(w)$. It is clear from \eqref{eq:keyine105b} that if $i$ is sufficiently large, $\kappa_i$ is $C^{10}$-close to an isometry up to an error $\Psi(i^{-1})$. Similarly, we define
		\begin{align*}
	\bar \kappa_i : \{\bar b=L_i\}  \times [0, 50D_1] \to \bar M
	\end{align*} 
defined by $\bar \kappa_i(w, s)=\bar \psi_s(w)$. Now, we can define $\varphi_i':\{L_i\leq\bar b\leq L_i+40 D_1\}\to M_i$ by
		\begin{align}\label{gluemetric0}
\varphi_i':=\kappa_i  \circ \hat \iota\circ (\bar \kappa_i)^{-1},
	\end{align} 
where $\hat \iota(w, s):=(\varphi_{A, i}(w), s)$ for any $w \in \{\bar b=L_i\}$ is $C^{5}$-close to an isometry up to an error of $\Psi(i^{-1})$. From our construction, it is clear that $\varphi'_i$ is $C^{5}$-close to an isometry up to an error $\Psi(i^{-1})$.
	
	Choose a cut-off function $\eta:\bar M\to [0,1]$ such that $\eta=1$ on $\{\bar b\leq L_i+1/2\}$ and $\spt(\eta)\subset \{\bar b\leq L_i+1\}$, then we can glue $\varphi_i'$ with $\varphi_{A, i}$ by defining 
	\begin{align}\label{gluemetric1}
		\varphi_i''(x)=\eta (x)\varphi_{A, i}(x)+\left(1-\eta(x)\right)\varphi_i'(x),\quad \forall x\in \{\bar b \leq L_i+20 D_1\}.
	\end{align}

	Since $\varphi_{A, i}$ is $C^{5}$-close to an isometry on $\{\bar b\leq L_i+10\}$, we can assume that $\varphi_{A, i}(x)$ and $\varphi_i'(x)$ lie in a chart of a fixed coordinate system for $x\in \{L_i+1/2\leq\bar b\leq L_i+1\}$. Therefore, \eqref{gluemetric1} is well-defined.  
	From the construction, we see that on $\{\bar b \leq L_i+20 D_1\}$,
	\begin{align*}
		\left[(\varphi_i'')^*g_{i}(-1)-\bar g(-1)\right]_2\leq \Psi(i^{-1}).
	\end{align*}
Since $L_i+20 D_1 > \rA^i+D_0$, we obtain a contradiction for sufficiently large $i$.
	
	In sum, the proof is complete.
\end{proof}

Next, for fixed $\sigma \in (0, 1/10)$ and $l \in \mathbb N$, we define
	\begin{align} \label{eq:dconstant}
\bar D_l=\bar D_l(n, \sigma)=10^4 \bar \delta_l^{-1} \gg 1.
	\end{align}\index{$\bar D_l$}
By Theorem \ref{thm:ext1}, we fix a diffeomorphism $\tilde \varphi_A$ from $\{\bar b \le \rA+\bar D_l\}$ onto a subset of $M$ such that
	\begin{align} \label{eq:fix01}
		\left[\tilde \varphi_A^* g(-1)-\bar g\right]_2\leq \bar \delta_l
	\end{align}
on	$\{\bar b \le \mathbf{r}_A+\bar D_l \}$, and
	\begin{align} \label{eq:fix02}
\left[\bar g-\tilde \varphi_A^* g(-1)\right]_5+\left[\bar f-\tilde \varphi_A^* f(-1)\right]_5\leq e^{\frac{\bar f}{4}-\frac{\rA^2}{16}}
	\end{align}
on	$\{\bar b \le \mathbf{r}_A-2\bar \delta_l^{-1} \}$.

\begin{cor} \label{almostshrinkerlarger}
There exists a large constant $L''=L''(n, Y,  \sigma, l)>1$ such that if $\mathbf{r}_A \in [ L'', (1-2\sigma) \mathbf{r}_E]$, then on $\{\bar b \le \rA+0.5\bar D_l\} \times \{-1\}$,
	\begin{align*}
		\left[\tilde \varphi_A^* \lc \frac{g}{2}-\na^2 f-\Ric \rc \right]_{l} \le \exp \lc\frac{\bar f}{2}-\frac{(\rA+0.5\bar D_l)^2}{8(1-\sigma)}\rc,
	\end{align*}
where the norm is with respect to $\bar g$. Moreover, on $\tilde\varphi_A\lc \left\{2\sqrt{\bar f(-1)} \le \rA-2.5 \bar \delta_l^{-1} \right\}\rc$,
	\begin{align} \label{eq:absestimate}
		\left[ \frac{g}{2}-\na^2 f-\Ric \right]_{l} \le \exp \lc\frac{f}{2}-\frac{(\rA+0.4\bar D_l)^2}{8(1-\sigma)}\rc,
	\end{align}
where the norm is with respect to $g(-1)$.
\end{cor}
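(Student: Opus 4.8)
## Proof proposal for Corollary \ref{almostshrinkerlarger}

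The plan is to combine the extension Theorem \ref{thm:ext1} with the pointwise shrinker estimate of Proposition \ref{pointwiseshrinkeruantity}, bridging the gap by noting that $\tilde\varphi_A$ agrees with $\varphi_C$ on a large common domain and that both $\rA+\bar D_l$ and $\bar L = \mathbf{r}_{C,\bar\delta_l} - \bar\delta_l^{-1}$ are controlled from below by $\rA$.

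\textbf{Step 1: Activate the extension.} Since $\mathbf{r}_A \le (1-2\sigma)\mathbf{r}_E \le (1-\sigma)\mathbf{r}_E$, Theorem \ref{thm:ext1} (applied with the constant $D = \bar D_l$) gives, for $\rA$ larger than some $L'(n,Y,\sigma,l,\bar D_l)$, the diffeomorphism $\tilde\varphi_A$ fixed in \eqref{eq:fix01}--\eqref{eq:fix02}, together with the conclusion $\mathbf{r}_{C,\bar\delta_l} \ge \rA + \bar D_l$. Hence $\bar L = \mathbf{r}_{C,\bar\delta_l} - \bar\delta_l^{-1} \ge \rA + \bar D_l - \bar\delta_l^{-1} \ge \rA + 0.9\bar D_l$ by the choice \eqref{eq:dconstant} of $\bar D_l = 10^4\bar\delta_l^{-1}$. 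Moreover the diffeomorphism $\varphi_C$ realizing $\mathbf{r}_{C,\bar\delta_l}$ can be taken to be $\tilde\varphi_A$ itself (this is exactly the content of Theorem \ref{thm:ext1}: $\tilde\varphi_A$ restricted to $\{\bar b \le \rA+\bar D_l\}$ satisfies the defining inequality for $\mathbf{r}_{C,\bar\delta_l}$, and the potential normalization $f(\tilde\varphi_A(\bar p),-1)\le n$ holds because $\tilde\varphi_A = \varphi_A$ near $\bar p$ and $\varphi_A$ already satisfies the $\rA$-estimate \eqref{eq:fix02}, forcing $\varphi_A^*f(-1)$ close to $\bar f$, in particular $\le \bar f(\bar p) + 1 = (n-m)/2 + \Theta_m + 1 \le n$).

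\textbf{Step 2: Verify $\bar L \le (1-\sigma)\mathbf{r}_E$.} To apply Proposition \ref{pointwiseshrinkeruantity} we must check its hypothesis $\bar L \le (1-\sigma)\mathbf{r}_E$. Since $\mathbf{r}_A \le (1-2\sigma)\mathbf{r}_E$, we get $\bar L = \mathbf{r}_{C,\bar\delta_l} - \bar\delta_l^{-1}$, and from Step 1 the extension only produced a diffeomorphism on $\{\bar b \le \rA + \bar D_l\}$, so we may and do \emph{define} the particular $\varphi_C$ of Definition \ref{defnrCd} to have domain $\{\bar b \le \rA+\bar D_l\}$, i.e. work with the value $\mathbf{r}_{C,\bar\delta_l}^{\mathrm{eff}} := \rA+\bar D_l$ in place of the true (possibly larger) radius; this is legitimate since all estimates of Section \ref{seclojaRF} only require a lower bound on the radius and a diffeomorphism on the corresponding set. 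Then $\bar L = \rA + \bar D_l - \bar\delta_l^{-1}$, and the condition $\bar L \le (1-\sigma)\mathbf{r}_E$ becomes $\rA + \bar D_l - \bar\delta_l^{-1} \le (1-\sigma)\mathbf{r}_E$; since $\rA \le (1-2\sigma)\mathbf{r}_E$ this holds once $\sigma\mathbf{r}_E \ge \bar D_l$, i.e. for $\rA$ (hence $\mathbf{r}_E \ge \rA/(1-2\sigma)$) sufficiently large, say $\rA \ge L'' = L''(n,Y,\sigma,l)$. With this choice of $L''$ both Theorem \ref{thm:ext1} and Proposition \ref{pointwiseshrinkeruantity} apply simultaneously.

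\textbf{Step 3: Read off the pointwise bound.} Proposition \ref{pointwiseshrinkeruantity} now gives, on $\{\bar b \le \bar L - 2\} \times [-3/2,-1/2] \supset \{\bar b \le \rA + 0.5\bar D_l\}\times\{-1\}$ (using $\bar L - 2 = \rA + \bar D_l - \bar\delta_l^{-1} - 2 \ge \rA + 0.5\bar D_l$, again by \eqref{eq:dconstant}),
\[
\left[\frac{g}{2} - \na^2 F - \tau\Ric\right]_l \le C(n,Y,\sigma,l)\, e^{\frac{\bar f}{2} - \frac{\bar L^2}{8(1-\sigma)}},
\]
where here $g$, $F$, $\Ric$ denote the $\tilde\varphi_A$-pullbacks and $\tau = 1$ so that $F = f$ and $\na^2 F = \na^2 f$ on the slice $t=-1$. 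Since $\bar L \ge \rA + 0.9\bar D_l > \rA + 0.5\bar D_l$, the exponent satisfies $-\bar L^2/(8(1-\sigma)) \le -(\rA+0.5\bar D_l)^2/(8(1-\sigma))$ with room to spare; absorbing the constant $C(n,Y,\sigma,l)$ into the exponential gap between $\bar L$ and $\rA + 0.5\bar D_l$ (which is at least $0.4\bar D_l \cdot \rA$, growing linearly in $\rA$) yields, for $\rA \ge L''$ possibly enlarged,
\[
\left[\tilde\varphi_A^*\!\left(\frac{g}{2} - \na^2 f - \Ric\right)\right]_l \le \exp\!\left(\frac{\bar f}{2} - \frac{(\rA + 0.5\bar D_l)^2}{8(1-\sigma)}\right)
\]
on $\{\bar b \le \rA + 0.5\bar D_l\}\times\{-1\}$, which is the first assertion.

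\textbf{Step 4: Transfer to the image and change the reference metric.} For the second estimate \eqref{eq:absestimate}, I would push the first estimate forward by $\tilde\varphi_A$. On the set $\tilde\varphi_A(\{2\sqrt{\bar f(-1)} \le \rA - 2.5\bar\delta_l^{-1}\})$, which lies well inside the domain where \eqref{eq:fix02} holds (note $\rA - 2.5\bar\delta_l^{-1} < \rA - 2\bar\delta_l^{-1}$), the metrics $\bar g$ and $\tilde\varphi_A^* g(-1)$ and the functions $\bar f$ and $\tilde\varphi_A^* f(-1)$ are exponentially close in $C^5$; in particular $[\cdot]_l$-norms with respect to $\bar g$ and with respect to $g(-1)$ are comparable (for $l$ up to $5$; for larger $l$ one invokes instead the $C^l$-bounds from Proposition \ref{stabilitymetric}, valid since $\mathbf{r}_{C,\bar\delta_l}$ is large and we are inside $\{\bar b \le \bar L - 2\bar\delta_l^{-1}\}$), and $e^{\bar f/2}$ is comparable to $e^{f/2}$ up to an exponentially small multiplicative error — indeed $|f - \bar f| \le 10^{-90}n^{-1}l^{-1}\sigma\,\bar L^2$ on this region by Proposition \ref{stabilitypotential}, which is far smaller than the gap we have. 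Finally, to go from the weight $e^{\bar f/2 - (\rA+0.5\bar D_l)^2/(8(1-\sigma))}$ to $e^{f/2 - (\rA+0.4\bar D_l)^2/(8(1-\sigma))}$ one uses that the difference of the two numerical exponents, $((\rA+0.5\bar D_l)^2 - (\rA + 0.4\bar D_l)^2)/(8(1-\sigma))$, grows like $0.1\bar D_l \cdot \rA$, which dominates both the constant $C(n,Y,\sigma,l)$ and the $O(\sigma\bar L^2 \cdot 10^{-90})$ discrepancy between $f$ and $\bar f$ — here one must keep the coefficient bookkeeping honest, choosing $0.4$ (rather than $0.5$) precisely to leave this linear-in-$\rA$ margin.

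\textbf{Main obstacle.} The substantive difficulty is not in any single step but in the consistency of the coefficient arithmetic across the chain $\mathbf{r}_E \to \rA \to \bar L \to \rA + 0.5\bar D_l \to \rA + 0.4\bar D_l$: one must verify that each inequality $\bar L \le (1-\sigma)\mathbf{r}_E$, $\bar L - 2 \ge \rA + 0.5\bar D_l$, and the final exponent comparison all hold simultaneously for a single threshold $L''$, using only $\bar D_l = 10^4\bar\delta_l^{-1}$ and $\mathbf{r}_A \le (1-2\sigma)\mathbf{r}_E$. The key structural point that makes everything work is that the gains from the extension ($+\bar D_l$ to the radius) are a \emph{constant} $\bar D_l$ while the losses in the various inequalities are \emph{fractions} of $\bar D_l$ plus terms that are lower order in $\rA$, so there is always a positive, linearly-growing-in-$\rA$ margin to absorb the dimensional constants. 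Once this is laid out carefully, every other step is routine interpolation and change-of-reference-metric bookkeeping already developed in Propositions \ref{stabilitymetric}, \ref{stabilitypotential}, and \ref{pointwiseshrinkeruantity}.
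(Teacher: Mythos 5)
Your proposal follows essentially the same route as the paper: verify $\bar L=\rA+\bar D_l-\bar\delta_l^{-1}\le(1-\sigma)\rE$ from $\rA\le(1-2\sigma)\rE$ and largeness, apply Proposition \ref{pointwiseshrinkeruantity} at $t=-1$ (where $F=f$), absorb the constant into the gap between $\bar L^2$ and $(\rA+0.5\bar D_l)^2$, and transfer via \eqref{eq:fix02} and Proposition \ref{stabilitymetric}.

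One quantitative slip in Step 4: you bound $|f-\bar f|$ by $10^{-90}n^{-1}l^{-1}\sigma\bar L^2$ via Proposition \ref{stabilitypotential} and assert this is dominated by the exponent gap $\sim 0.1\bar D_l\rA$; since $\bar D_l$ is a fixed constant while $\rA$ is unbounded above (only $\rA\le(1-2\sigma)\rE$ is assumed), a term of order $\sigma\bar L^2\sim\sigma\rA^2$ eventually exceeds any linear-in-$\rA$ margin, so that comparison fails for very large $\rA$. The fix is the one the paper uses and that you already invoke for the metric: on $\{\bar b\le\rA-2.5\bar\delta_l^{-1}\}$ the estimate \eqref{eq:fix02} gives $\abs{\bar f-\tilde\varphi_A^*f(-1)}\le e^{\bar f/4-\rA^2/16}\le 1$, so only $O(1)$ multiplicative factors need to be absorbed and the passage from $0.5\bar D_l$ to $0.4\bar D_l$ is ample.
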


\begin{proof}
It is straightforward from our parameter choices that $\rA + \bar D_l - \bar\delta_l^{-1} \le (1 - \sigma) \mathbf{r}_E$, provided that $L''$ is sufficiently large. Therefore, the first conclusion follows from Proposition \ref{pointwiseshrinkeruantity} by setting $\bar L = \rA + \bar D_l - \bar\delta_l^{-1}$. The second conclusion then follows from the first together with Proposition \ref{stabilitymetric} and \eqref{eq:fix02}.
\end{proof}

\subsection*{Comparison of radius functions III}

Next, we prove the following crucial extension result.

\begin{thm}\label{thm:ext2}
There exists a large constant $\hat L=\hat L(n, Y, \sigma)>1$ such that if $\mathbf{r}_A \in [ \hat L, (1-2\sigma) \mathbf{r}_E]$, then
	\begin{align*}
		\rBC \ge \mathbf{r}_A+\bar D_{100}/10.
	\end{align*}
	Here, both $\rA$ and $\rBC$ denote the radius functions for $(M, g(-1), f(-1))$ (cf. Definition \ref{defnradii}), and $\bar D_{100}$ is the constant defined in \eqref{eq:dconstant} with $l=100$.
\end{thm}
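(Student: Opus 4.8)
\textbf{Proof proposal for Theorem \ref{thm:ext2}.}

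The plan is to glue the $\rA$-diffeomorphism (more precisely, its extension $\tilde\varphi_A$ constructed via Theorem \ref{thm:ext1} and fixed in \eqref{eq:fix01}--\eqref{eq:fix02}) to the \emph{standard} cylinder across a neck region of fixed width $\sim\bar D_{100}$, reproducing the strategy of \cite[Theorem 5.3]{li2023rigidity} and \cite[Theorem 7.1]{colding2021singularities}, but with two modifications dictated by our weaker hypotheses. First, the neck in which the gluing happens has fixed finite width $\bar D_{100}$ rather than width $\ep\rA$; second, $(M,g(-1),f(-1))$ is not a genuine Ricci shrinker, so self-similarity is unavailable and we must instead feed in the pointwise almost-shrinker estimate \eqref{eq:absestimate} from Corollary \ref{almostshrinkerlarger} as a substitute for the shrinker identity wherever the Colding--Minicozzi/Li--Zhang argument uses $\mathbf\Phi=0$. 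Since $\rA\le(1-2\sigma)\rE$, we have $\rA+\bar D_{100}-\bar\delta_{100}^{-1}\le(1-\sigma)\rE$ for $\rA$ large, so Proposition \ref{pointwiseshrinkeruantity} and Corollary \ref{almostshrinkerlarger} apply on the full neck $\{\bar b\le\rA+\bar D_{100}\}$; this is the key quantitative input.

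The steps, in order, are: (1) Start from $\tilde\varphi_A$ on $\{\bar b\le\rA+\bar D_{100}\}$ satisfying \eqref{eq:fix01}, \eqref{eq:fix02} and the pointwise almost-shrinker bound \eqref{eq:absestimate}. (2) On the annular neck $\{\rA-C\bar\delta_{100}^{-1}\le\bar b\le\rA+\bar D_{100}\}$, decompose $h=\tilde\varphi_A^*g(-1)-\bar g$ and $\chi=\tilde\varphi_A^*f(-1)-\bar f$ and apply the gauge-improvement / cutoff construction of \cite[\S5]{li2023rigidity}: choose a diffeomorphism modifying $\tilde\varphi_A$ so that, after cutting off, $\Div_{\bar f}h$, $\BB(h,\chi)$ and the off-$\KK_0$ part $\zeta,q$ become exponentially small in terms of $\rA$ and the $L^2$-norm of $\mathbf\Phi(\bar g+h,\bar f+\chi)$ — here the crucial point is that \eqref{eq:absestimate} gives the needed pointwise control of $\mathbf\Phi$ on the neck, which replaces the spacetime $L^2$ bound used in \cite{li2023rigidity}. (3) Invoke the rigidity inequalities Propositions \ref{prop:tay1}, \ref{prop:tay2} (Corollary \ref{cor:tay3}) to upgrade these into the bound $[\bar g-\varphi_B^*g(-1)]_0+[\bar f-\varphi_B^*f(-1)]_0\le e^{-L^2/33}$ with $L=\rA+\bar D_{100}/10$, together with the integral estimate \eqref{equ:integralphi}, $\int_{\{\bar b\le L\}}|\varphi_B^*\Phi|^2\,\mathrm dV_{\bar f}\le e^{-L^2/(4-\sigma)}$, which now follows by integrating \eqref{eq:absestimate} against $\mathrm dV_{\bar f}$ over $\{\bar b\le L\}$ and using $\bar f\sim\bar b^2/4$ so that the weight $e^{f/2-L^2/8(1-\sigma)}$ times $e^{-\bar f}$ integrates to something bounded by $e^{-L^2/(4-\sigma)}$. (4) The auxiliary $C^k$-bounds for $k\in[1,10^{10}n\sigma^{-1}]$ come from the interpolation machinery (\cite[Lemma B.1]{colding2015uniqueness}, \cite[\S3.2]{kryelliptic}) exactly as in Proposition \ref{prop:con} and Theorem \ref{thm:ext1}. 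Assembling (2)--(4) verifies all the conditions in Definition \ref{defnradii} (B) for the value $L=\rA+\bar D_{100}/10$, giving $\rBC\ge\rA+\bar D_{100}/10$.

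The main obstacle I expect is Step (2)--(3): in \cite{li2023rigidity} the gluing neck has width proportional to $\rA$, which gives a lot of room to absorb cross-terms and to run the rigidity estimates with comfortable exponential decay rates; here the neck width $\bar D_{100}=10^4\bar\delta_{100}^{-1}$ is merely a (large) constant, so one must check that the same exponents still close — i.e. that the exponential losses incurred in the cutoff (of order $e^{-(\text{neck width})^2/8}$ type) are dominated by the gain from $L$ being only slightly larger than $\rA$, namely $\bar D_{100}/10$. This is precisely the ``narrower neck'' issue flagged in statement (IV) of the outline; the resolution is that the relevant inequality $a_{k,n}/(4-\sigma)+(1-a_{k,n})>1/(4-0.9\sigma)$-type comparisons, plus the choice $\bar D_l=10^4\bar\delta_l^{-1}$, were arranged to leave exactly enough slack, and one verifies this by the same bookkeeping as in \eqref{interploation1}--\eqref{improvedestimate3a}. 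A secondary technical point is that because $(M,g(-1),f(-1))$ is only an almost-shrinker, one cannot use self-similarity to propagate estimates from later times to $t=-1$; but \eqref{eq:absestimate} is a \emph{time-slice} pointwise estimate at $t=-1$ already, so this causes no difficulty — the argument of \cite[Theorem 5.3]{li2023rigidity} is applied verbatim on the single slice $t=-1$ with $\mathbf\Phi$ replaced by the right-hand side of \eqref{eq:absestimate}.
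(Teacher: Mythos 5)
Your opening framing (follow \cite[Theorem 5.3]{li2023rigidity} on the single slice $t=-1$, with the pointwise almost-shrinker bound \eqref{eq:absestimate} substituting for the exact shrinker identity) is the correct high-level idea, and your observation in step (3) that the integral condition \eqref{equ:integralphi} follows by integrating \eqref{eq:absestimate} against $\mathrm{d}V_{\bar f}$ is right (this is how the paper verifies it at the end). But the concrete mechanism you give in steps (2)--(3) is not the Li--Zhang argument; it is the contraction argument of Proposition \ref{prop:con} run in the wrong direction, and it fails. On the extended neck $\{\rA\le\bar b\le\rA+\bar D_{100}\}$ the only available input is $[\tilde\varphi_A^*g(-1)-\bar g]_2\le\bar\delta_{100}$, a fixed small \emph{constant}; the exponentially small bound \eqref{eq:fix02} holds only on $\{\bar b\le\rA-2\bar\delta_{100}^{-1}\}$. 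The gauge-fixing of \cite[\S 5]{li2023rigidity} cannot make $\Div_{\bar f}h$ and $\BB(h,\chi)$ exponentially small when $h$ itself is of constant size there, and even granting the rigidity inequalities, Propositions \ref{prop:tay1}--\ref{prop:tay2} only control $\zeta,q,u$ in Gaussian-weighted $L^2$; converting that to pointwise bounds on $\{\bar b\approx L\}$ (as in \eqref{interploation1}--\eqref{improvedestimate3a}) costs a factor $e^{\bar f/2}\approx e^{L^2/8}$, which is compatible with the $e^{\bar f/4}$-weighted $\rA$-condition but destroys the \emph{uniform} bound $[\bar g-\varphi_B^*g]_0\le e^{-L^2/33}$ demanded by Definition \ref{defnradii} (B). No local perturbation/rigidity manipulation upgrades an $O(\bar\delta)$ error on the neck to an exponentially small one.

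The missing idea — the actual heart of the paper's proof — is a global spectral argument on a glued manifold. One first builds maps $\psi_1,\psi_2,\psi_3$ (flowing along $\na b/|\na b|^2$) so that $\bar b=\psi_3^*b$ on the neck with only the weak bound $[\bar g-\psi_3^*g]_2\le CL^{-1/2}$ there (Claim \ref{keyclaim1}), then caps off with the standard cylinder beyond $\{\bar b\le L+0.4\bar D\}$ to obtain a complete weighted manifold $(\bar M,g',f')$. Claim \ref{conditionappen1} verifies the hypotheses of the growth estimate \cite[Proposition A.1]{li2023rigidity}; one then takes the first $n-m$ eigenfunctions $v_i$ of $\Delta_{f'}$ and shows, via the min-max principle with test functions $\eta_2\bar\omega_i$ supported in the \emph{inner} (exponentially accurate) region, the Bochner formula \eqref{eignvalueestimate1c}, and the slow-exponential growth bound $I_{\na v_i}(r)\le Ce^{CL^{3/2}}\mu_i$, that $|\mu_i-1/2|+\|\na^2v_i\|^2_{L^2}\le Ce^{-L^2/16}$ (Claim \ref{claimsexa0}). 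The new diffeomorphism $\psi_5$ is then rebuilt on the whole of $\{\bar b\le L+0.3\bar D\}$ from the flows of $\na u_i$ and the level set $\{f_0=m/2\}$, and it is this construction — not a refinement of $\tilde\varphi_A$ — that carries the exponential accuracy across the constant-width neck. Your proposal omits this entire mechanism, so as written it does not prove the theorem.
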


The proof of this theorem follows the same overall strategy as in \cite[Theorem 5.3]{li2023rigidity}, but with two key differences. 

First, while the weighted Riemannian manifold considered in \cite[Theorem 5.3]{li2023rigidity} is an exact Ricci shrinker, in our case $(M, g(-1), f(-1))$ is only an almost Ricci shrinker, as ensured by Corollary \ref{almostshrinkerlarger}.

Second, in \cite[Theorem 5.3]{li2023rigidity}, the authors used the pseudolocality theorem and the self-similarity of Ricci shrinkers to produce a neck region with bounded geometry. That approach does not directly apply in our setting. However, Theorem \ref{thm:ext1} allows us to obtain a similar neck region, albeit of smaller size compared to that in \cite[Theorem 5.3]{li2023rigidity}. 
Nevertheless, this smaller neck region suffices for our proof.

We now sketch the main ideas of the argument. As in \cite[Theorem 5.3]{li2023rigidity}, one can glue the end of $\bar M$ to $M$ through the neck region, resulting in a weighted Riemannian manifold $\left(\bar M,g',f'\right)$ that is almost a Ricci shrinker. Following the strategy of \cite{colding2021singularities}, we then use the first $n-m$ eigenfunctions of the weighted $\Delta_{f'}$ along with their growth estimates (see \cite[Proposition A.1]{li2023rigidity} and \cite[Theorem 4.1]{colding2021singularities}) to extend the almost $(n-m)$-splitting to a larger region. Finally, we construct a diffeomorphism from the cylinder to this extended region such that the estimates in the definition of $\rBC$ are satisfied.

\begin{proof}
We choose $\hat L \ge \max\{L'(n,Y,   \sigma,100,\bar D_{100}), L''(n, Y, \sigma, 100)\}$, where $L'$ and $L''$ are from Theorem \ref{thm:ext1} and Corollary \ref{almostshrinkerlarger}, respectively. In the proof, the constant $C$ may be different from line to line, depending on $n$, $Y$, and $\sigma$. We also use the notations $C_k$ to represent constants that depend on $n$, $Y$, $\sigma$ and $k$.

We set $(g, f)=(g(-1), f(-1))$, $L=\mathbf{r}_A-3\bar \delta_{100}^{-1}$, and $\bar D=\bar D_{100}$ for simplicity. Moreover, we consider the diffeomorphism $\tilde \varphi_A$ satisfying \eqref{eq:fix01} and \eqref{eq:fix02}. In particular, we have
	\begin{align}\label{keyext3}
		\left[\tilde \varphi_A^*g -\bar g\right]_5+\left[\tilde \varphi_A^* f -\bar f\right]_5\leq e^{\frac{\bar f}{4}-\frac{L^2}{16}}, \quad \mathrm{on}\quad \{\bar b\leq L\}.
	\end{align}

It follows from Proposition \ref{stabilitypotential} and Corollary \ref{highestimatepotential1} that
	\begin{align}\label{keyext1}
		(1-\bar \ep)\bar F-C\leq \tilde \varphi_A^*f \leq (1+\bar \ep)\bar F+C.
	\end{align}
	and
	\begin{align}\label{keyext1a}
		[\tilde \varphi_A^* f]_{100} \le C e^{\bar \ep \bar F},
	\end{align}
	on $\{\bar b\leq \rA+0.5\bar D\}$, where $\bar \ep=10^{-50} n^{-1} \sigma$.
	
Let $\phi^t, \bar \phi^t$ be the family of diffeomorphisms generated by $V:=\frac{\na b}{|\na b|^2}, \bar V:=\frac{\bar \na \bar b}{|\bar \na \bar b|^2}$, respectively, with $\phi^0=\bar \phi^0=\mathrm{id}$. Define a map $\psi_1:\{L-2\leq\bar b\leq L-1\}\subset \bar M\to \left\{L-2\leq b\leq L-1\right\}$ by:
\begin{equation*}
	\psi_1(x):=   \phi^{\bar b(x)-b(\tilde \varphi_A(x))}(\tilde \varphi_A(x))
	\end{equation*}
	Note that $\bar b=\psi_1^* b$. Moreover, it follows from \eqref{keyext3} that $\bar b$ and $\tilde \varphi_A^* b$ are $C^5$-close. Moreover, $V$ and $\bar V$ are $C^4$-close under $\mathrm{d}\varphi_A$ on $\{\bar b \le L-1\}$. Therefore, $\psi_1$ is a diffeomorphism such that
	\begin{align*}
		\left[\bar g-\psi_1^*g \right]_4 \leq Ce^{-CL}.
	\end{align*}
Next, we fix a cut-off function $\eta$ on $\R$ such that $\eta=1$ on $(-\infty,L-2]$ and $\eta=0$ on $[L-3/2,\infty)$. Then we define $\psi_2:\{\bar b\leq L-1\}\to \left\{b\leq L-1\right\}$ by
	\begin{equation*}
		\psi_2(x):= \eta\left(\bar b(x)\right)\tilde \varphi_A(x)+\left(1-\eta(\bar b(x))\right)\psi_1(x).
	\end{equation*}
	Since $\psi_1$ is close to $\tilde \varphi_A$ in the $C^5$ sense on $\{L-2\leq b\leq L-1\}$, we can always assume that $\psi_1(x)$ and $x$ lie in a chart of a fixed coordinate system so that the above definition is well-defined. Moreover, we conclude that $\psi_2$ is a diffeomorphism such that
		\begin{align*}
		\left[\bar g-\psi_2^*g \right]_4+\left[\bar f-\psi_2^*f\right]_4\leq e^{\frac{\bar f}{4}-\frac{L^2}{16}}
	\end{align*}
	on $\{b \le L-2\}$ and
			\begin{align*}
		\left[\bar g-\psi_2^*g \right]_4+\left[\bar f-\psi_2^*f\right]_4\leq C e^{-CL}
	\end{align*}
		on $\{L-2 \le b \le L-1\}$.
	
By Corollary \ref{almostshrinkerlarger}, we see that on $\{\bar b\leq L+0.5 \bar D\}$,
	\begin{align}\label{extensionineq1}
		\left[ \tilde \varphi_A^* \lc \frac{g}{2}-\na^2 f- \Ric\rc\right]_{100} \le \exp\left(\frac{\bar f}{2}-\frac{(L+0.5 \bar D)^2}{8(1-\sigma)}\right)\leq e^{-\frac{\sigma L^2}{10}}.
	\end{align}
	
Set $\Omega:=\tilde \varphi_A \lc \{\bar b \le L+0.5 \bar D\} \rc$ and $\Phi:=g/2-\na^2 f- \Ric$. By the weighted Bianchi identity (see Lemma \ref{weightedBianchilem}), we have
	\begin{align*}
		\frac{1}{2}\na \left(f-\scal-|\na f|^2\right)=\na \left(\Tr(\Phi)\right)-\Div_f(\Phi).
	\end{align*}
	From \eqref{keyext1a}, \eqref{extensionineq1}, and the fact that $\bar \ep \ll \sigma$, we obtain on $\Omega$,
		\begin{align*}
		\abs{\na \left(f-\scal-|\na f|^2\right) } \le e^{-\frac{\sigma L^2}{20}}.
	\end{align*}	
Thus, it follows from \eqref{keyext3} that there exists a constant $a_0$ with $|a_0| \le C(n)$ such that on $\Omega$,
		\begin{align}\label{shrinkerquantesti2}
		\abs{f-\scal-|\na f|^2-a_0} \le e^{-\frac{\sigma L^2}{30}}.
	\end{align}	

From this, we obtain
	\begin{align}\label{shrinkerquantesti2a}
		\left|\frac{1}{|\na b|^2}-1\right|=\left|\frac{f}{|\na f|^2}-1\right|=\left|\frac{f}{f-\scal+a_0+\Psi(L^{-1})}-1\right|\leq C(1+|f|)^{-1},
	\end{align}
where we used \eqref{keyext1} for the last inequality. 

Now, we extend $\psi_2$ to a diffeomorphism $\psi_3$ from $\{\bar b\leq L+0.4\bar D\}$ onto a subset of $\Omega$ by
	\begin{equation} \label{eq:definpsi3}
		\psi_3(x):= \phi^{\bar b(x)-L+1}\circ \psi_1\circ\bar\phi^{L-1-\bar b(x)}(x)
	\end{equation}
	for any $x \in \{L-1 \le \bar b \le L+0.4\bar D\}$. Notice that by \eqref{eq:fix01} and \eqref{shrinkerquantesti2a}, $\psi_3$ is well-defined. Moreover, by our construction, we know that $\bar b=\psi_3^* b$ on $\{L-1\leq \bar b\leq L+0.4\bar D\}$.
	
		\begin{claim}\label{keyclaim1}
		The above $\psi_3$ satisfies:
		\begin{align*}
			&\left[\bar g-\psi_3^*g\right]_4+\left[\bar f-\psi_3^* f\right]_4\leq Ce^{\frac{\bar f}{4}-\frac{L^2}{16}},\quad \mathrm{on}\quad \{\bar b\leq L-2\};\nonumber\\
			&\left[\bar g-\psi_3^*g\right]_4+\left[\bar f-\psi_3^* f\right]_4\leq C e^{-CL} \quad\mathrm{on}\quad \{L-2\leq \bar b\leq L-1\};\nonumber\\
			&\left[\bar g-\psi_3^*g\right]_2\leq CL^{-\frac 1 2},\quad \bar f=\psi_3^* f \quad \mathrm{on} \quad \{ L-1\leq\bar b\leq L+0.4 \bar D\}.
		\end{align*}
	\end{claim}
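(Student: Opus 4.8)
The plan is to verify the three estimates in Claim \ref{keyclaim1} by analyzing the behavior of $\psi_3$ on each of the three indicated regions, relying on the fact that $\psi_3$ agrees with $\psi_2$ on $\{\bar b \le L-1\}$ and is built from the flows $\phi^t$, $\bar\phi^t$ together with $\psi_1$ on the collar $\{L-1 \le \bar b \le L+0.4\bar D\}$. On $\{\bar b \le L-2\}$ and on the transition collar $\{L-2 \le \bar b \le L-1\}$, the map $\psi_3$ literally coincides with $\psi_2$, so the first two estimates follow immediately from the two estimates on $\psi_2$ established just above (which in turn came from \eqref{keyext3}, the definition of $\psi_1$, and the cutoff construction of $\psi_2$). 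So the real content is the third estimate on the outer collar $\{L-1 \le \bar b \le L+0.4\bar D\}$.

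For the outer collar, I would proceed as follows. First, the identity $\bar b = \psi_3^* b$ on $\{L-1 \le \bar b \le L+0.4\bar D\}$ is immediate from the construction \eqref{eq:definpsi3}: pushing forward along $\bar\phi^{t}$ decreases $\bar b$ by $t$, $\psi_1$ preserves the level $b = L-1$, and $\phi^{t}$ increases $b$ by $t$. Combining this with \eqref{keyext1}, we then get $\bar f = \psi_3^* f$ on this region, since $\bar f = \bar b^2/4 + \text{(const depending only on the sphere factor)}$ and $f$ restricted to the collar is, up to the normalization, a function of $b$ alone — or, more carefully, one uses that $b = 2\sqrt{f}$ by definition, hence $\bar f = \psi_3^* f$ is exactly the statement $\bar b^2/4 = (\psi_3^* b)^2/4$, i.e. $\bar b = \psi_3^* b$, which we already have. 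Next, for the metric estimate $[\bar g - \psi_3^* g]_2 \le C L^{-1/2}$: the map $\psi_3$ carries the foliation of $\bar M$ by level sets of $\bar b$ to the foliation of $\Omega$ by level sets of $b$, and the normal direction $\bar V = \bar\na\bar b / |\bar\na\bar b|^2$ to $V = \na b/|\na b|^2$. The key point is that, by \eqref{shrinkerquantesti2a}, $|\na b|^2 = 1 + O((1+|f|)^{-1}) = 1 + O(L^{-2})$ on the collar, so $V$ has almost unit length in the $b$-gradient direction; and by \eqref{extensionineq1} the second fundamental form of the level sets $\{b = \text{const}\}$ is close to that of the cylinder up to the exponentially small error $e^{-\sigma L^2/10}$. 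Propagating these facts along the flow of $V$ over a bounded parameter length $0.4\bar D$ via the evolution equations for $\psi_3^* g$ (a standard ODE-along-the-flow argument, where the right-hand side is controlled by $|\na V|$, $|\na^2 b|$, and the curvature, all of which are within $CL^{-1/2}$ or better of the cylinder values), one obtains the $C^2$-estimate $[\bar g - \psi_3^* g]_2 \le C L^{-1/2}$. The exponent $1/2$ rather than something sharper comes from the weakest of the pointwise bounds one has to integrate, namely the $O((1+|f|)^{-1}) \approx O(L^{-2})$ control on $|\na b|^2$ combined with taking square roots and summing up to two derivatives, together with the initial matching error of size $Ce^{-CL}$ at $\bar b = L-1$ which is negligible.

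The main obstacle I anticipate is the bookkeeping for the $C^2$ (i.e. two-derivative) control of $\psi_3^* g$ on the outer collar. The map $\psi_3$ is defined by composing the flow of $V$ on the target with $\psi_1$ and the reverse flow of $\bar V$ on the source; differentiating this composition twice introduces terms involving up to two derivatives of $V$, hence up to three derivatives of $f$ (since $V$ involves $\na b$ and $b = 2\sqrt f$), and up to two derivatives of the Christoffel symbols of $g$. One must check that all these are controlled: the derivatives of $f$ up to order relevant here are bounded by the almost-shrinker estimate \eqref{extensionineq1} together with \eqref{keyext1a} and the Bianchi-type identity \eqref{shrinkerquantesti2}, which pins down $f - \scal - |\na f|^2$ up to an exponentially small error plus a bounded constant $a_0$; differentiating that identity and using \eqref{extensionineq1} gives control on $\na^k f$ for $k \le 3$ in terms of $e^{\bar f/2}$ times exponentially decaying factors, which on the collar $\{b \le L+0.4\bar D\}$ of bounded parameter width reduces to $CL^{-1/2}$-type bounds after dividing by the appropriate powers of $|\na b| \approx 1$. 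I would also need to be slightly careful that $\psi_3$ is genuinely a diffeomorphism onto its image — this is where \eqref{eq:fix01} (the $\bar\delta_l$-closeness of $\tilde\varphi_A^* g$ to $\bar g$ on $\{\bar b \le \rA + \bar D_l\}$) and \eqref{shrinkerquantesti2a} are invoked, ensuring the flow of $V$ stays within $\Omega$ over the required parameter range and that $\mathrm{d}\psi_3$ is invertible. Once these pointwise and non-degeneracy bounds are in hand, the three displayed estimates of Claim \ref{keyclaim1} follow by assembling the region-by-region analysis. The remaining work in the proof of the theorem — extending the almost $(n-m)$-splitting using the first $n-m$ eigenfunctions of $\Delta_{f'}$ and constructing the final diffeomorphism realizing $\rBC \ge \rA + \bar D_{100}/10$ — proceeds as in \cite[Theorem 5.3]{li2023rigidity}, now fed by Claim \ref{keyclaim1} and Corollary \ref{almostshrinkerlarger} in place of the exact-shrinker input used there.
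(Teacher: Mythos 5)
Your proposal is correct and follows essentially the same route as the paper: the first two estimates are immediate since $\psi_3=\psi_2$ on $\{\bar b\le L-1\}$, the identity $\bar f=\psi_3^*f$ follows from $\bar b=\psi_3^*b$, and the metric estimate on the outer collar is obtained by flowing along $V=\na b/|\na b|^2$, using \eqref{shrinkerquantesti2a} to control the normal direction and the almost-shrinker bound \eqref{extensionineq1} to control $\mathcal L_V g$. The only difference worth noting is the passage from $C^0$ to $C^2$: the paper derives $|\mathcal L_Vg|\le CL^{-1}$, hence a $C^0$ bound of order $L^{-1}$, and then obtains the $C^2$ bound $CL^{-1/2}$ by interpolation against bounded higher-order norms coming from two-sided pseudolocality and Shi's estimates, rather than from the mechanism you describe (the $L^{-2}$ control on $|\na b|^2$ and direct differentiation of the flow composition), so your account of where the exponent $1/2$ originates is slightly off even though the conclusion is the same.
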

	
	The first two estimates follow from the construction directly, so we focus on the third estimate. 
	
	Set $\Sigma=\{b=L-1\} \cap \Omega$ and let $g_\Sigma$ denote the induced metric of $g$ on $\Sigma$. Define $\psi_4:\Sigma\times [L-1,L+0.4 \bar D]\to \{L-1 \leq b\leq L+0.4 \bar D\} \cap \Omega$ by 
	\begin{align*}
		\psi_4(w,r)=\phi^{r-L+1}(w)
	\end{align*}
	for any $w \in \Sigma$.	Then we claim that on $\Sigma\times [L-1,L+0.4 \bar D]$, 
	\begin{align}\label{extensionineq2}
		\rVert \psi_4^* g-(\mathrm{d} r^2+g_\Sigma)\rVert_{C^2}\leq C L^{-\frac 1 2}.
	\end{align}
	In fact, it follows from \eqref{shrinkerquantesti2a} that
	\begin{align*}
		\left|\psi_4^*g(\partial_r,\partial_r)-1\right|\leq CL^{-2}.
	\end{align*}
	This implies that $\left|\psi_4^*g-(\mathrm{d}r^2+g_r)\right|\leq CL^{-2}$,
	where we set $g_r$ to be the induced metric on $\Sigma\times\{r\}$ of $\psi_4^* g$. On the other hand, we compute
	\begin{align*}
		\left|\mathcal L_Vg\right|=\left|\mathcal L_{\frac{\sqrt{f}}{|\na f|^2}\na f}g \right|\leq \frac{\sqrt{f}}{|\na f|^2} \left|\mathcal L_{\na f}g\right|+C|\na f|\left|\na \frac{\sqrt{f}}{|\na f|^2}\right|.
	\end{align*}
	Since $\mathcal L_{\na f} g=2\na^2 f$, which is bounded by $C(n)$ from \eqref{extensionineq1}, it follows from \eqref{shrinkerquantesti2} that on $\Omega$,
		\begin{align*}
		\left|\mathcal L_Vg\right| \le C L^{-1}.
	\end{align*}
	Thus, we obtain $|\partial_r g_r| \le C L^{-1}$, which implies
	\begin{align*}
		|g_r-g_\Sigma|\leq C L^{-1}.
	\end{align*}
Consequently, we have on $\Sigma \times [L-1, L+0.45 \bar D]$,
	\begin{align*}
		\rVert \psi_4^* g-(\mathrm{d} r^2+g_\Sigma)\rVert_{C^0}\leq C L^{-1},
	\end{align*}
	which implies the $C^0$-estimate in \eqref{extensionineq2}. The higher-order estimates follow from the standard interpolation, since by the two-sided pseudolocality theorem (see Theorem \ref{thm:twoside}) and Shi’s local estimates, we have the higher-order curvature estimates for both $g$ and $g_{\Sigma}$.
	
Since $\|\psi_3^* g-\bar g\|_{C^2} \le C e^{-CL}$ on $\{\bar b  \le L-1\}$, it follows from the definition of $\psi_3$ (see \eqref{eq:definpsi3}) and \eqref{extensionineq2} that
		\begin{align*}
\|\psi_3^* g-\bar g\|_{C^2} \le C e^{-CL}+C L^{-\frac 1 2} \le C L^{-\frac 1 2}
	\end{align*}
	on $\{L-1 \le \bar b  \le L+0.4 \bar D\}$. This completes the proof of Claim \ref{keyclaim1}.

	Choose a cut-off function $\eta_1: \R\to\R$ such that $\eta_1=1$ on $(-\infty, L+0.3 \bar D]$ and $\eta_1=0$ on $[L+0.4 \bar D,\infty)$. Then we construct a complete weighted Riemannian manifold $(\bar M,g',f')$ by:
	\begin{equation*}
		g':=\eta_1(\bar b)\psi_3^*g+(1-\eta_1(\bar b))\bar g,\quad f':=\eta_1(\bar b)\psi_3^*f+\left(1-\eta_1(\bar b)\right)\bar f.
	\end{equation*}
	For the rest of the proof, we set $b':=2\sqrt{|f'|}$, and the underlying metric is $g'$ by default.
	
	
	\begin{claim}\label{conditionappen1}
		Define two functions $R_1:=n/2-\Delta f'$ and $R_2:=f'-|\na f'|^2$. On $(\bar M,g',f')$, the following estimates hold: 
		\begin{equation*}
			|R_1|+|R_2|\leq C L^{-\frac 1 2} (b')^2+C.
		\end{equation*}
	\end{claim}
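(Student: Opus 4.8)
\textbf{Proof plan for Claim \ref{conditionappen1}.}

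The strategy is to exploit the three-region structure of $(\bar M, g', f')$ dictated by the cut-off $\eta_1$, together with the almost-shrinker estimate \eqref{shrinkerquantesti2} and Claim \ref{keyclaim1}. On the region $\{\bar b \le L+0.3\bar D\}$ the pair $(g', f')$ coincides with $(\psi_3^* g, \psi_3^* f)$, which is an honest pullback of $(g(-1), f(-1))$; on $\{\bar b \ge L + 0.4\bar D\}$ it coincides with $(\bar g, \bar f)$, the exact cylinder, where $R_1 = 0$ and $R_2 = a_0'$ for an explicit constant; and on the transition annulus $\{L+0.3\bar D \le \bar b \le L+0.4\bar D\}$, which has fixed width $0.1\bar D = 10^3\bar\delta_{100}^{-1}$, the interpolation introduces error terms that are controlled using the $C^2$-closeness from Claim \ref{keyclaim1} (third estimate) together with bounds on the cut-off and its derivatives. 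The key point is that all error contributions are at worst $O(L^{-1/2})$ times a factor polynomial in $b'$, since $\bar f$ itself grows only quadratically in $\bar b$, and on the relevant region $b'$ and $\bar b$ are comparable up to lower-order terms by \eqref{keyext1}.

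First I would handle the exact-cylinder region: there $\Delta \bar f = (n-m)/2$ for the standard potential on $\R^{n-m}\times S^m$ (using $\Delta |\vec x|^2 = 2(n-m)$ and the $1/(4\cdot 1)$ coefficient), so $R_1 = n/2 - (n-m)/2 = m/2$, a bounded constant; and $\bar f - |\bar\nabla \bar f|^2 = m/2 + \Theta_m$, again bounded. Hence the claimed inequality holds trivially there. Second, on the pullback region $\{\bar b \le L+0.3\bar D\}$, I would push forward via $\psi_3$ and work on $\Omega \subset M$ with $(g,f) = (g(-1), f(-1))$. Here $n/2 - \Delta f$ and $f - |\nabla f|^2 - \scal$ are controlled: the latter equals $a_0 + O(e^{-\sigma L^2/30})$ by \eqref{shrinkerquantesti2}, while $\scal = \Tr(\Ric) = \Tr(g/2 - \nabla^2 f - \Phi)$, so $\scal = n/2 - \Delta f - \Tr\Phi$. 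Combining, $R_1 = n/2 - \Delta f$ and $\scal$ differ by $\Tr\Phi$, which is exponentially small by \eqref{extensionineq1}; so $R_1 = \scal + O(e^{-\sigma L^2/10})$, and since $|\scal|$ is bounded (by pseudolocality/Shi on $\Omega$, or more directly from \eqref{extensionineq1} and \eqref{keyext1a}), $|R_1| \le C$. Similarly $R_2 = f - |\nabla f|^2 = \scal + a_0 + O(e^{-\sigma L^2/30})$ by \eqref{shrinkerquantesti2}, hence $|R_2| \le C$ there as well. Third, on the transition annulus I would write $g' = \bar g + \eta_1(\bar b)(\psi_3^*g - \bar g)$ and $f' = \bar f + \eta_1(\bar b)(\psi_3^* f - \bar f)$, expand $\Delta_{g'} f'$ and $|\nabla_{g'} f'|^2_{g'}$ using the product/chain rule, and collect terms: the difference between $\Delta_{g'} f'$ and the convex combination $\eta_1 \Delta_{g}(\psi_3^* f) + (1-\eta_1)\Delta_{\bar g}\bar f$ involves $\eta_1'$, $\eta_1''$, $\bar\nabla(\psi_3^* f - \bar f)$, and $(\psi_3^* g - \bar g)$ contracted against second derivatives of $\bar f$. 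By Claim \ref{keyclaim1}, $\|\psi_3^* g - \bar g\|_{C^2} \le CL^{-1/2}$ and $\psi_3^* f = \bar f$ on this annulus (third estimate of Claim \ref{keyclaim1}), so in fact $f' = \bar f$ on the annulus and only the metric interpolation matters; the resulting error is bounded by $CL^{-1/2}$ times factors involving $|\bar\nabla^2 \bar f| + |\bar\nabla \bar f|^2 \le C(1 + \bar b^2) \le C(1 + (b')^2)$, giving the stated bound. (Note $b' = \bar b$ on the annulus since $f' = \bar f$ there, so $(b')^2$ and $\bar b^2$ are literally equal.)

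The main obstacle — though it is really a matter of careful bookkeeping rather than a genuine difficulty — is tracking the quadratic growth factor through the transition-region estimate while ensuring the constant stays of the form $CL^{-1/2}$ rather than, say, $C$ or worse. The point to be careful about is that $|\bar\nabla \bar f|^2 = \bar F \asymp \bar b^2$ grows, so any term of the form (error in $g'$) $\times |\bar\nabla \bar f|^2$ contributes $\sim L^{-1/2} \bar b^2 \le L^{-1/2}(b')^2$, which is exactly what the claim allows; one must simply verify that no term picks up an extra factor of $\bar b$ without a compensating smallness, which follows because $\bar\nabla^2 \bar f$ is bounded (the Hessian of $|\vec x|^2/4$ is $\tfrac12 g_E$) and $\eta_1', \eta_1''$ are $O(\bar D^{-1}) = O(1)$. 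Once the three regions are patched, taking $C$ to be the maximum of the constants from each region and $L$ large enough (so that the exponentially small terms are absorbed into $CL^{-1/2}$) completes the proof.
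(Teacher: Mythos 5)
Your proof is correct and follows essentially the same route as the paper: split into the exact-cylinder end, the region where $(g',f')=(\psi_3^*g,\psi_3^*f)$, and the transition annulus, and control $R_1,R_2$ perturbatively using Claim \ref{keyclaim1} together with the quadratic growth of $|\na f'|$ and the boundedness of the Hessian data (the paper compresses this into ``a standard variational argument''). The only minor variation is that on the deep interior you invoke the almost-shrinker identities \eqref{shrinkerquantesti2} and \eqref{extensionineq1} on $M$ rather than comparing directly with the cylinder values via the exponential closeness in Claim \ref{keyclaim1}; both yield the needed $O(1)$ bound there, and your bookkeeping on the annulus (where $f'=\bar f$, so $b'=\bar b$) correctly produces the $CL^{-1/2}(b')^2$ term.
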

	Note that on the standard cylinder, we have $n/2-\bar \Delta \bar f=\bar f-|\bar \na \bar f|^2=m/2$. Thus, the conclusion holds for $\{b' \ge L +0.4 \bar D\}$.
	
On $\{b' < L +0.4 \bar D\}$, it follows from \eqref{na2festimate2}, Claim \ref{keyclaim1} and our construction that
		\begin{equation*}
|\na f'| \le C b'+C \quad \text{and} \quad |\na^2 f'| \le C L^{\frac 1 2}.
		\end{equation*}
Thus, by Claim \ref{keyclaim1} and a standard variational argument, we obtain
		\begin{equation*}
\abs{R_1-m/2} \le C e^{-CL} b'+CL^{-\frac 1 2}b'+C \le C L^{-\frac 1 2} b'+C
		\end{equation*}
and
		\begin{equation*}
\abs{R_2-m/2} \le C e^{-CL} (b')^2+CL^{-\frac 1 2}(b')^2+C \le C L^{-\frac 1 2} (b')^2+C.
		\end{equation*}
This completes the proof of Claim \ref{conditionappen1}.

Claim \ref{conditionappen1} allows us to apply \cite[Proposition A.1]{li2023rigidity} to get slow exponential growth of eigentensors. Let $0<\mu_1\leq\mu_2\cdots$ be the eigenvalue of $\De_{f'}$ with respect to $\mathrm{d}V_{f'}$ counted with multiplicities. And we choose $(n-m)$-orthonormal eigenfunctions $v_i$ with $\De_{f'} v_i+\mu_iv_i=0$ for $1 \le i \le n-m$ with 
	\begin{align}\label{orthoeigen1}
		\int_{\bar M}v_iv_j\,\mathrm{d}V_{f'}=\delta_{ij},\quad \int_{\bar M}|\na v_i|^2\,\mathrm{d}V_{f'}=\mu_i.
	\end{align}
	
	\begin{claim}\label{claimsexa0}
		For $i \in \{1,\ldots,n-m\}$, 
		\begin{equation}\label{extesti1}
			\rVert \na^2 v_i\rVert_{L^2}^2+\left|\mu_i-1/2\right|\leq Ce^{-\frac{L^2}{16}}.
		\end{equation}
	\end{claim}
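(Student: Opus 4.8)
The goal is to show that the first $n-m$ eigenfunctions $v_i$ of $\Delta_{f'}$ on the glued manifold $(\bar M, g', f')$ are exponentially close to the model: their eigenvalues $\mu_i$ are within $Ce^{-L^2/16}$ of $1/2$, and their Hessians are exponentially small in $L^2$. The strategy, following \cite[Theorem 4.1]{colding2021singularities} and \cite[Proposition A.1]{li2023rigidity}, has two halves. First I would produce \emph{test functions} to upper bound $\mu_{n-m}$: the coordinate functions $x_1,\ldots,x_{n-m}$ on the Euclidean factor of $\bar M$ are genuine eigenfunctions of $\bar\Delta_{\bar f}$ with eigenvalue $1/2$. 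Since $g'=\bar g$ and $f'=\bar f$ outside $\{b'\le L+0.4\bar D\}$, and since on the transition region they differ by $Ce^{-CL}$ plus $CL^{-1/2}$ errors, one can feed $x_i$ (suitably cut off, or composed with $\psi_3^{-1}$ where the gluing occurs) into the Rayleigh quotient. The contributions from the region where $g'\ne\bar g$ are suppressed by the Gaussian weight $e^{-\bar f}$: on $\{b'\ge L-2\}$ we have $e^{-\bar f}\le e^{-L^2/16}$ (roughly), and $|x_i|$, $|\na x_i|$ grow only polynomially there, so the error in both numerator and denominator of the Rayleigh quotient is $\le Ce^{-L^2/16}$. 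This gives $\mu_{n-m}\le 1/2 + Ce^{-L^2/16}$.

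For the matching lower bound and the Hessian estimate I would invoke the key analytic input: \cite[Proposition A.1]{li2023rigidity}, which applies precisely because Claim \ref{conditionappen1} gives $|R_1|+|R_2|\le CL^{-1/2}(b')^2+C$ (so $(\bar M, g', f')$ is an ``almost Ricci shrinker'' in the weak, drift-Laplacian sense that proposition requires). That proposition yields slow (subquadratic-exponential) growth bounds for eigentensors, which let one integrate by parts and exploit the Bochner-type / Lichnerowicz identity for $\Delta_{f'}$ on $1$-forms: for an eigenfunction $v$ with $\Delta_{f'}v=-\mu v$, one has the weighted identity $\int |\na^2 v|^2 + \mu\int|\na v|^2 = $ (curvature terms) $+ \int \mu^2 v^2$, schematically $\int|\na^2 v|^2 = (\mu^2-\mu/2)\int v^2 + (\text{error from }\Phi,\ R_1,\ R_2)$. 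On the exact cylinder the curvature term forces $\mu=1/2$ and $\na^2 v=0$ for these $n-m$ functions; here the deviation is controlled by $\|\Phi\|$ and the defect functions $R_1,R_2$, which are $Ce^{-\sigma L^2/10}$-small on the large region $\{b'\le L+0.3\bar D\}$ by \eqref{extensionineq1} and vanish (equal their cylinder values) outside. Combining the upper bound $\mu_i\le 1/2+Ce^{-L^2/16}$ with the identity $\int|\na^2 v_i|^2=(\mu_i^2-\mu_i/2)+\text{error}$ and a lower bound $\mu_i\ge 1/2-Ce^{-L^2/16}$ (obtained by a Poincaré/gap argument on the cylinder using that $v_i$ is $L^2$-orthogonal to constants and the next cylinder eigenvalue above the $\KK_0$-related one is strictly bigger) yields \eqref{extesti1}.

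In carrying this out the order would be: (1) recall the exact cylinder spectrum and the role of $x_1,\ldots,x_{n-m}$; (2) construct the $n-m$-dimensional test space by transplanting these via $\psi_3$ on the neck and estimating the Rayleigh quotient errors using the Gaussian-weight decay on $\{b'\gtrsim L\}$ together with Claim \ref{keyclaim1}; (3) apply \cite[Proposition A.1]{li2023rigidity} to get the growth bound on the $v_i$; (4) run the weighted Bochner/Lichnerowicz identity, plugging in \eqref{extensionineq1} and Claim \ref{conditionappen1} to bound the error terms by $Ce^{-L^2/16}$ after absorbing the $L^{-1/2}(b')^2$ weight against the measure $\mathrm{d}V_{f'}$ (here Lemma \ref{concentrationinequ}-type concentration, i.e.\ $\int (b')^k e^{-f'}\lesssim 1$ in the cylindrical region, is what tames the polynomial loss); (5) combine to extract both the eigenvalue estimate and the Hessian bound.

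\textbf{Main obstacle.} The delicate point is step (4): making sure the polynomially-growing error weight $CL^{-1/2}(b')^2$ coming from the neck region does not overwhelm the exponential smallness. This requires that the growth estimate from \cite[Proposition A.1]{li2023rigidity} is genuinely subexponential-in-$(b')^2$ (so that $\int (b')^N |\na^2 v_i|^2 e^{-f'}$ is still finite and controlled), and that the neck region where $g'\ne\bar g$ or $R_1,R_2$ are merely $O(L^{-1/2}(b')^2)$ sits at $b'\gtrsim L$, where the weight $e^{-f'}\le e^{-L^2/16+\epsilon L^2}$ beats any fixed polynomial in $L$. Balancing these exponents — the $e^{-L^2/16}$ from the Gaussian at scale $L$ against the $e^{+\bar\epsilon \bar F}$ and $e^{-\sigma L^2/10}$ factors, with $\bar\epsilon\ll\sigma$ — is the crux, and is exactly why the constants $\bar\delta_l$, $\bar D_l$ were chosen with the particular numerology in \eqref{eq:dconstant} and in the definition of $\bar\delta_l$. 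A secondary subtlety is the lower bound $\mu_i\ge 1/2-Ce^{-L^2/16}$: one must rule out the possibility of spurious low eigenvalues created by the gluing, which is handled by a variational comparison showing any eigenfunction with eigenvalue much below $1/2$ would, after restriction to the cylindrical end, violate the spectral gap there.
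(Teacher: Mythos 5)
Your proposal follows essentially the same route as the paper: cut off the cylinder coordinate functions $x_i$ (times the normalizing constant) as test functions for the min-max upper bound $\sum\mu_i\le\frac{n-m}{2}+Ce^{-L^2/16}$, then use Claim \ref{conditionappen1} to invoke \cite[Proposition A.1]{li2023rigidity} for the $e^{CL^{3/2}}$ growth bound on $|\na v_i|^2$, and run the integrated weighted Bochner identity $\int|\na^2 v_i|^2=\int h_i(\na v_i,\na v_i)$ with the error split over the neck $S_2$ (where $|h_i|\le CL^{1/2}$ but the weight gives $e^{-L^2/4}$) and the bulk $S_3$ (where $\int|\Phi'|^2\le Ce^{-L^2/4}$). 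The one place you diverge is the eigenvalue lower bound: the paper does not need a separate Poincar\'e/spectral-gap comparison, since non-negativity of $\int|\na^2 v_i|^2$ in the same Bochner identity directly yields $\mu_i-1/2\ge -C\exp(CL^{3/2}-L^2/8)$ (the error term is proportional to $\mu_i$, so one simply divides by $\mu_i$); your proposed gap argument would require a quantitative spectral stability statement for the glued drift Laplacian that is harder to justify than what you already have in hand.
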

	
	\eqref{extesti1} can be proved in a similar way as \cite[Lemma 7.4]{colding2021singularities} and \cite[Theorem 5.3]{li2023rigidity}. Choose $\bar c$ such that $\int_M \bar c^2dV_{\bar f}=1$. Let $\bar \omega_0=\bar c$, $\bar \omega_i=\frac{\bar c}{\sqrt 2}x_i$, for $1 \le i \le n-m$. By direct computation, we have
	\begin{align}\label{orthocondition1}
		\left|\delta_{ij}-\int_{\bar b<L}\bar\omega_i\bar\omega_j\,\mathrm{d}V_{\bar f}\right|&\leq CL^{n-m}e^{-\frac{L^2}{4}}, \quad \forall i,j\in \{0,1,\ldots,n-m\},
\nonumber\\
		\left|\frac{1}{2}\delta_{ij}-\int_{\bar b<L}\la \na\bar\omega_i,\na\bar\omega_j\ra \,\mathrm{d}V_{\bar f}\right|&\leq CL^{n-m} e^{-\frac{L^2}{4}}, \quad  \forall i,j\in \{1,\ldots,n-m\}.
	\end{align}
	Choose a cut-off function $\eta_2$ which equals to 1 on $\left\{b'<L-1\right\}$ and equals to 0 outside $\left\{b'<L\right\}$. Set $\omega_i=\eta_2 \bar \omega_i$ and define $a_{ij}$ and $b_{ij}$ by
	$$a_{ij}:=\int_{\bar M} \omega_i\omega_j\,\mathrm{d}V_{f'},\quad b_{ij}:=\int_{\bar M} \la\na \omega_i,\na\omega_j\ra \,\mathrm{d}V_{f'}.$$
	
	By \eqref{keyext3} and \eqref{orthocondition1} (see also \cite[Section 7]{colding2021singularities}), we obtain
	\begin{align*}
		\sup_{i,j\geq 0}\left|a_{ij}-\delta_{ij}\right|+\sup_{i,j\geq 1}\left|2b_{ij}-\delta_{ij}\right|+\left|b_{00}\right|\leq Ce^{-\frac{L^2}{16}}.
	\end{align*}
From the min-max principle, we have
	\begin{align}\label{eignvalueestimate1}
		\sum_{i=1}^{n-m} \mu_i\leq \sum_{i,j}a^{ij}b_{ij}\leq \frac{n-m}{2}+Ce^{-\frac{L^2}{16}},
	\end{align}
where $(a^{ij})$ is the inverse matrix of $(a_{ij})$.

	Set $h_i=\mu_i g'-\Ric(g')-\na^2 f'$ and $\Phi':=\mathbf{\Phi}(g', f')=g'/2-\Ric(g')-\na^2 f'$. By our construction, $h_i=(\mu_i-1/2)g'$ on $S_1:=\left\{b'> L+0.4 \bar D\right\}$, and $|h_i|\leq C L^{\frac 1 2}$ on $S_2:=\left\{L-2\leq b'\leq L+0.4 \bar D\right\}$. On $S_3:=\left\{b'< L-2\right\}$, it holds that $h_i=\Phi'+(\mu_i-1/2)g'$, and by Corollary \ref{almostshrinkerlarger} we see that 
	\begin{align}\label{keyext8}
		\int_{S_3}\left|\Phi'\right|^2\,\mathrm{d}V_{f'}\leq Ce^{-\frac{L^2}{4}}.
	\end{align}
	 
Recall the following Bochner formula:
	\begin{align}\label{eignvalueestimate1c}
\frac{1}{2}\De_{f'}|\na v_i|^2=|\na^2 v_i|^2-h_i(\na v_i,\na v_i).
	\end{align}

Since $|\na v_i|^2$ is integrable by \eqref{orthoeigen1}, we can obtain from \eqref{eignvalueestimate1c} that $|\na^2 v_i|^2$ is integrable. In fact, for $A>100$, we choose a cutoff function $\eta_A:\bar M\to [0,1]$ such that $\eta_A$ is supported in $\left\{\sqrt{f'}\leq A+1\right\}$ and equals to $1$ on $\left\{\sqrt{f'}\leq A\right\}$. Moreover, it holds that $|\na \eta_A|^2\leq 100\eta_A$. Applying integration by parts to \eqref{eignvalueestimate1c}, we obtain
\begin{align*}
	2\int_{\bar M}\eta_A|\na^2 v_i|^2\, \mathrm{d}V_{f'}&=2\int_{\bar M}\eta_Ah_i(\na v_i,\na v_i)\,\mathrm{d}V_{f'}-\int_{\bar M}\la \na \eta_A,\na |\na v_i|^2\ra\,\mathrm{d}V_{f'}\\
	&\leq C L^{\frac 1 2} \mu_i+2\int_{\bar M}|\na\eta_A||\na v_i||\na^2 v_i|\,\mathrm{d}V_{f'}\\
	&\leq CL^{\frac 1 2} \mu_i+\int_{\bar M}\eta_A^{-1}|\na \eta_A|^2|\na v_i|^2\,\mathrm{d}V_{f'}+\int_{\bar M}\eta_A|\na^2 v_i|^2\,\mathrm{d}V_{f'}\\
	&\leq (CL^{\frac 1 2}+100)\mu_i+\int_{\bar M}\eta_A|\na^2 v_i|^2\,\mathrm{d}V_{f'},
\end{align*} 
which, by sending $A\to\infty$, implies that $|\na^2 v_i|^2$ is integrable.

Thus, we can use integration by parts to \eqref{eignvalueestimate1c} to obtain
	\begin{align}\label{bochnerintegration1}
		0\leq&\int_{\bar M}|\na^2 v_i|^2\,\mathrm{d}V_{f'}=\int_{\bar M} h_i(\na v_i,\na v_i)\,\mathrm{d}V_{f'}\nonumber\\
		=&(\mu_i-1/2)\int_{S_1\cup S_3}|\na v_i|^2\,\mathrm{d}V_{f'}+\int_{S_2}h_i(\na v_i,\na v_i) \,\mathrm{d}V_{f'}+\int_{S_3} \Phi'(\na v_i,\na v_i)\,\mathrm{d}V_{f'} \notag\\
		= &(\mu_i-1/2) \mu_i-(\mu_i-1/2)\int_{S_2}|\na v_i|^2 \,\mathrm{d}V_{f'}+\int_{S_2}h_i(\na v_i,\na v_i) \,\mathrm{d}V_{f'}+\int_{S_3} \Phi'(\na v_i,\na v_i)\,\mathrm{d}V_{f'} \notag \\
		\le & (\mu_i-1/2) \mu_i+CL^{\frac 1 2} \int_{S_2}|\na v_i|^2 \,\mathrm{d}V_{f'}+\int_{S_3} \abs{\Phi'(\na v_i,\na v_i)}\,\mathrm{d}V_{f'}.
	\end{align}
	Note that on $\bar M$, we have $\De_{f'}\na v_i=-h_i(\na v_i)$, thus,
	$$\la \De_{f'} \na v_i,\na v_i\ra=-h_i(\na v_i,\na v_i)\geq -C L^{\frac 1 2} |\na v_i|^2$$
	and $\De_{f'} \na v_i\in L^2(\,\mathrm{d}V_{f'})$. Now we can use Claim \ref{conditionappen1} and apply \cite[Proposition A.1]{li2023rigidity} to conclude that for any $r\in [C, L+0.4 \bar D]$,
	\begin{align*}
		I_{\na v_i}(r):=r^{1-n} \int_{b'=r}|\na v_i|^2|\na b'|\,\mathrm{d}V_{g'}\leq C L^{CL^{\frac 1 2}}e^{C L^{\frac 3 2}}\mu_i \le C e^{C L^{\frac 3 2}}\mu_i.
	\end{align*}
		
	Since $|\na b'|$ is almost 1, we obtain
	\begin{align}\label{keyext5}
		L^{\frac 1 2}\int_{S_2}|\na v_i|^2 \,\mathrm{d}V_{f'}\leq C \exp \lc C L^{\frac 3 2}-\frac{L^2}{4} \rc \mu_i.
	\end{align}
By the standard elliptic estimate on $B(x,L^{-1})$, we obtain, for any $l \in \mathbb N$,
	\begin{align}\label{keyext7}
		\sup_{B(x,L^{-1}/2)}\rVert\na  v_i\rVert^2_{C^{l,1/2}}&\leq C_lL^{2l+n+1}\int_{B(x,L^{-1})}|\na v_i|^2 \,\mathrm{d}V_{g'}\nonumber\\
		&\leq C_lL^{2l+2n}\sup_{r\in [b'(x)-L^{-1},b'(x)+L^{-1}]}I_{\na v_i}(r) \leq C_lL^{2l+2n} e^{C L^{\frac 3 2}}\mu_i.
	\end{align}
	Combining \eqref{keyext8} with \eqref{keyext7}, we get
	\begin{align}\label{keyext9}
		\int_{S_3}\abs{\Phi'(\na v_i,\na v_i)} \,\mathrm{d}V_{f'}\leq C \exp \lc C L^{\frac 3 2}-\frac{L^2}{8} \rc \mu_i.
	\end{align}

	Plugging \eqref{keyext5} and \eqref{keyext9} into \eqref{bochnerintegration1}, we obtain
	\begin{align*}
\mu_i-1/2 \ge -C \exp \lc C L^{\frac 3 2}-\frac{L^2}{8} \rc,
	\end{align*}
	which, when combined with \eqref{eignvalueestimate1}, implies
	\begin{align*}
		\left|\mu_i-1/2\right|\leq Ce^{-\frac{L^2}{16}}.
	\end{align*}
Using \eqref{bochnerintegration1}, we can conclude that
	\begin{align*}
		\int_{\bar M}|\na^2 v_i|^2\,\mathrm{d}V_{f'}\leq Ce^{-\frac{L^2}{16}}.
	\end{align*}
	Thus, the claim is proved.
	
	\begin{claim}\label{claimsexa}
		There exist $n-m$ functions $u_i$ such that $\int u_i \,\mathrm{d}V_{f'}=0$ and on $\{b\leq L+0.4 \bar D-1\}$,
		\begin{equation}\label{extensionineq3}
			\left|\delta_{ij}-\la \na u_i,\na u_j\ra \right|+\rVert \na^2 u_i\rVert_{C^3}+ \left|2\la \na u_i,\na f'\ra-u_i\right|\leq C \exp \lc C L^{\frac 3 2}-\frac{L^2}{32} \rc.
		\end{equation}
		Moreover, $\rVert u_i\rVert_{C^0}\leq C e^{CL^{\frac 3 2}}$ and for any $k\in [4, 10^{12}n\sigma^{-1}]$, $\rVert \na^2 u_i\rVert_{C^k}\leq e^{-\frac{L^2}{33}}$ on $\{b'\leq L+0.4 \bar D-1\}$. 
	\end{claim}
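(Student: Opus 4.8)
## Proof proposal for Claim \ref{claimsexa}

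The plan is to construct the functions $u_i$ as suitable linear combinations of the eigenfunctions $v_1,\ldots,v_{n-m}$ from Claim \ref{claimsexa0}, following the strategy of \cite[Lemma 7.5]{colding2021singularities} and \cite[Theorem 5.3]{li2023rigidity}. The key point is that on the standard cylinder the coordinate functions $\bar\omega_i=\bar c\,x_i/\sqrt2$ satisfy exactly $\bar\Delta_{\bar f}\bar\omega_i+\tfrac12\bar\omega_i=0$, $\langle\bar\nabla\bar\omega_i,\bar\nabla\bar\omega_j\rangle=\tfrac12\delta_{ij}$, and $2\langle\bar\nabla\bar\omega_i,\bar\nabla\bar f\rangle=\bar\omega_i$; on $(\bar M,g',f')$ we have perturbed eigenfunctions with $\mu_i$ exponentially close to $1/2$ and $\|\nabla^2 v_i\|_{L^2}$ exponentially small by \eqref{extesti1}. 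I would first promote the $L^2$-smallness of $\nabla^2 v_i$ to pointwise smallness on the region $\{b'\le L+0.4\bar D-1\}$ via the frequency/monotonicity bound from \cite[Proposition A.1]{li2023rigidity} (which applies by Claim \ref{conditionappen1}), exactly as in the proof of \eqref{keyext7}: the monotonicity of $I_{\nabla^2 v_i}(r)$ combined with $\|\nabla^2 v_i\|_{L^2}^2\le Ce^{-L^2/16}$ gives $\sup_{\{b'\le L+0.4\bar D-1\}}|\nabla^2 v_i|^2\le C e^{CL^{3/2}}e^{-L^2/16}\le Ce^{-L^2/17}$, and a further elliptic bootstrap on balls of radius $L^{-1}$ upgrades this to all $C^k$-norms with $k\le 10^{12}n\sigma^{-1}$ after absorbing polynomial factors. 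The same monotonicity also gives the crude bound $\|v_i\|_{C^0}\le Ce^{CL^{3/2}}$ on the relevant region.

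Next I would diagonalize. Since $\nabla^2 v_i$ is (pointwise and hence in $L^2$) exponentially small, the Bochner identity $\Delta_{f'}\nabla v_i=-\mu_i\nabla v_i+2\nabla^2 v_i(\nabla\cdot)$ — wait, more precisely $\Delta_{f'}\nabla v_i=-h_i(\nabla v_i)$ where $h_i=\mu_i g'-\mathrm{Ric}(g')-\nabla^2 f'$ — shows that the matrix $G_{ij}:=\langle\nabla v_i,\nabla v_j\rangle$ is, up to exponentially small error, parallel and constant; combined with the normalization $\int\langle\nabla v_i,\nabla v_j\rangle\,\mathrm dV_{f'}=\mu_i\delta_{ij}=\tfrac12\delta_{ij}+O(e^{-L^2/16})$, one deduces $G_{ij}=\tfrac12\delta_{ij}+O(e^{-L^2/32})$ pointwise on $\{b'\le L+0.4\bar D-1\}$ (the loss from $L^2/16$ to $L^2/32$ comes from the interpolation/frequency argument, with the extra $e^{CL^{3/2}}$ factors harmless). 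Then set $u_i:=\sqrt2\,v_i$ (after subtracting a constant so that $\int u_i\,\mathrm dV_{f'}=0$, which costs only an exponentially small shift since the $v_i$ are $L^2(\mathrm dV_{f'})$-orthogonal to the constant eigenfunction $v_0$). This gives $|\delta_{ij}-\langle\nabla u_i,\nabla u_j\rangle|\le Ce^{-L^2/32}$ (allowing for the stated $CL^{3/2}$ slack), and $\|\nabla^2 u_i\|_{C^3}=\sqrt2\|\nabla^2 v_i\|_{C^3}\le Ce^{CL^{3/2}-L^2/32}$ together with the higher-order bound $\|\nabla^2 u_i\|_{C^k}\le e^{-L^2/33}$ for $k\le 10^{12}n\sigma^{-1}$, all from the bootstrap above.

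Finally, for the drift relation $2\langle\nabla u_i,\nabla f'\rangle-u_i$: I would differentiate this quantity and show $\nabla\big(2\langle\nabla v_i,\nabla f'\rangle-v_i\big)=2\nabla^2 f'(\nabla v_i)+2\nabla^2 v_i(\nabla f')-\nabla v_i=2h_i(\nabla v_i)+(2\mu_i-1)\nabla v_i-2\mathrm{Ric}(\nabla v_i)+2\nabla^2 v_i(\nabla f')+\cdots$, and combine: $h_i(\nabla v_i)=\Delta_{f'}\nabla v_i$ has exponentially small $L^2$-norm, $2\mu_i-1$ is exponentially small, $\mathrm{Ric}(g')$ is exponentially small in $L^2$ on $\{b'\le L-2\}$ by Corollary \ref{almostshrinkerlarger} and small everywhere on the neck by Claim \ref{keyclaim1}, and $\nabla^2 v_i(\nabla f')$ is controlled by the pointwise bound on $\nabla^2 v_i$ times $|\nabla f'|\le Cb'+C\le C(L+\bar D)$. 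So the gradient of $2\langle\nabla v_i,\nabla f'\rangle-v_i$ is $O(e^{CL^{3/2}-L^2/32})$ pointwise on $\{b'\le L+0.4\bar D-1\}$; integrating from a point where the function is small (using $\int(2\langle\nabla v_i,\nabla f'\rangle-v_i)\,\mathrm dV_{f'}=\int(2\Delta_{f'}v_i\cdot\text{stuff})$ — more simply, $\int v_i\,\mathrm dV_{f'}=0$ and $\int\langle\nabla v_i,\nabla f'\rangle\,\mathrm dV_{f'}=-\int v_i\Delta_{f'}f'\,\mathrm dV_{f'}=O(\cdot)$ using Claim \ref{conditionappen1}) over the connected region, picking up a factor polynomial in $L+\bar D\le CL$, yields the stated bound. \textbf{The main obstacle} is bookkeeping the competition between the slowly-growing exponential errors $e^{CL^{3/2}}$ (from the frequency-function growth estimate applied over a region of radius $\sim L$) and the Gaussian gains $e^{-cL^2}$: one must verify at each step that $L^{3/2}\ll L^2$ absorbs all polynomial and $e^{CL^{3/2}}$ factors so that the final constants land at $-L^2/32$, $-L^2/33$ as claimed; this is the same mechanism as in \cite{colding2021singularities} but must be re-checked because our $f'$ is only an \emph{almost} shrinker potential and the Ricci error, rather than vanishing, must be estimated via Corollary \ref{almostshrinkerlarger} on $S_3$ and via the neck geometry on $S_2$.
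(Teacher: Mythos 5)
Your overall strategy --- build the $u_i$ from the eigenfunctions $v_i$ of Claim \ref{claimsexa0}, exploit the smallness of $\|\nabla^2 v_i\|_{L^2}$ and $|\mu_i-1/2|$, and upgrade weighted $L^2$ control to pointwise control via the growth estimate of \cite[Proposition A.1]{li2023rigidity} together with elliptic estimates on balls of radius $L^{-1}$ --- is the same mechanism the paper relies on (it defers to \cite[Claim 4]{li2023rigidity}, where the $u_i$ are a Gram--Schmidt orthogonalization of the $v_i$; your simpler choice $u_i=\sqrt2\,v_i$ is acceptable for the approximate orthonormality asserted here, and $\int u_i\,\mathrm{d}V_{f'}=0$ holds automatically since $\mu_i>0$). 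One caveat on the first two estimates: you cannot interpolate the \emph{weighted} $L^2$ bound on $\nabla^2 v_i$ against local $C^k$ bounds near $b'\approx L$ (the conversion costs a factor $e^{L^2/4}$); everything must go through the unweighted spherical quantity $I_T(r)$ as in \eqref{keyext5}--\eqref{keyext7}, and applying Proposition A.1 to the tensor $T=\nabla^2 v_i$ requires verifying its differential-inequality hypothesis after commuting $\Delta_{f'}$ past $\nabla$, which produces $\nabla\Rm*\nabla v_i$ and $\nabla^3 f'*\nabla v_i$ inhomogeneities that must themselves be estimated. This is doable but is not ``exactly as in the proof of \eqref{keyext7}.''

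The genuine gap is in the third estimate of \eqref{extensionineq3}. Writing $y_i=2\la\na v_i,\na f'\ra-v_i$, one has
\begin{equation*}
\na y_i=-2h_i(\na v_i)+(2\mu_i-1)\na v_i-2\Ric(\na v_i)+2\na^2 v_i(\na f'),
\end{equation*}
and you dismiss the term $-2\Ric(\na v_i)$ by asserting that $\Ric(g')$ is exponentially small by Corollary \ref{almostshrinkerlarger}. That corollary controls the shrinker operator $\tfrac{g}{2}-\na^2 f-\Ric$, not $\Ric$ itself; the Ricci curvature of the (almost-)cylinder equals $\tfrac12 g_{S^m}$ on the spherical directions and is of order one. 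Hence $|\Ric(\na v_i)|$ is comparable to the spherical component of $\na v_i$, and showing that this is exponentially small is essentially equivalent to the statement being proved (that $\na v_i$ is almost tangent to the Euclidean factor). Your path-integration argument therefore collapses at this point. The standard repair --- and what the cited proof carries out --- is to use the drift--Bochner commutation identity for $y_i$, in which curvature enters only through the combination $\Ric+\na^2 f'=\tfrac{g'}{2}-\mathbf{\Phi}(g',f')$ controlled by the almost-shrinker estimate, yielding $\Delta_{f'}y_i=-\mu_i y_i+E_i$ with $E_i$ exponentially small in $L^1(\mathrm{d}V_{f'})$; one then needs a separate $L^2(\mathrm{d}V_{f'})$ bound on $y_i$ (via expansion in the eigenbasis, the spectral gap, and explicit integrations by parts), after which the growth estimate and elliptic estimates give the pointwise bound. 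Without this, the bound on $\left|2\la\na u_i,\na f'\ra-u_i\right|$ is not established.
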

	
	The proofs of \eqref{extensionineq3} and the estimate for $\rVert u_i\rVert_{C^0}$ are identical to the proof of \cite[Claim 4, page 56]{li2023rigidity}. Notice that the $\{u_1, \cdots, u_{n-m}\}$ can be regarded as a Gram-Schmidt orthogonalization of $\{v_1, \cdots, v_{n-m}\}$. Moreover, the constant $\theta_2$ in \cite[Claim 4, page 56]{li2023rigidity} can be replaced with $CL^{-\frac 1 2}$ in our case, thanks to Claim \ref{conditionappen1}. For the higher estimates for $\rVert \na^2 u_i\rVert_{C^k}$, by the same argument as in \cite[Equation (5.62)]{li2023rigidity}, we have on $\{b'\leq L+0.4 \bar D-1\}$,
		\begin{align*}
\rVert \na^2 u_i\rVert_{C^k} \le C_k L^{m_k} \exp \lc C L^{\frac 3 2}-\frac{L^2}{32} \rc
	\end{align*}
	for $k \in [4, 10^{12}n\sigma^{-1}]$. Thus, if $\hat L$ is sufficiently large, we have for $k \in [4, 10^{12}n\sigma^{-1}]$,
			\begin{align*}
\rVert \na^2 u_i\rVert_{C^k} \le e^{-\frac{L^2}{33}}
	\end{align*}
on $\{b'\leq L+0.4 \bar D-1\}$. This completes the proof of Claim \ref{claimsexa}.

As in \cite{li2023rigidity}, we define
	\begin{align*}
f_0:=\frac{m}{2}+\frac{1}{4}\sum_{i=1}^{n-m} u_i^2.
	\end{align*}

Following the same proof as \cite[Claim 5, page 58]{li2023rigidity}, we obtain
	\begin{claim} \label{claimsexa1}
		On $\{b'\leq L+0.4 \bar D-2\}$,
		\begin{equation*}
			\left|f'-f_0 \right|\leq C \exp \lc C L^{\frac 3 2}-\frac{L^2}{32} \rc,
		\end{equation*}
		and for any $k \in [1, 10^{12}n\sigma^{-1}]$, $\rVert f'-f_0\rVert_{C^k}\leq e^{-\frac{L^2}{33}}$.
	\end{claim}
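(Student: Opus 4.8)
The plan is to follow the scheme of \cite[Claim~5, p.~58]{li2023rigidity}, adapted to the present almost-shrinker situation: first extract from the eigenfunction estimates of Claim~\ref{claimsexa} the fact that $f_0$ satisfies the cylinder's first- and second-order structural identities up to an error of size $\exp(CL^{3/2}-L^2/32)$; then compare these with the corresponding identities for $f'$, which on the core $\{b'<L-2\}$ come from the pointwise almost-shrinker bound of Corollary~\ref{almostshrinkerlarger} (via \eqref{keyext8}) combined with the weighted Bianchi identity (Lemma~\ref{weightedBianchilem}, cf.\ \eqref{shrinkerquantesti2}) and exponential closeness to the cylinder, while on the neck $\{L-2\le b'\le L+0.4\bar D\}$ they are exact because $f'=\bar f$ there by Claim~\ref{keyclaim1}; and finally integrate the resulting differential relation for $w:=f'-f_0$ from the central region out to all of $\{b'\le L+0.4\bar D-2\}$, bootstrapping higher derivatives afterwards. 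Because $\|u_i\|_{C^0}\le Ce^{CL^{3/2}}$, every algebraic combination of the $u_i$ carries a loss of at most $e^{CL^{3/2}}$, which is exactly what the factor $e^{CL^{3/2}}$ in the target estimate accommodates, and why the exponent is relaxed from $L^2/32$ to $L^2/33$ for the $C^k$ bound. As in \cite{colding2021singularities,li2023rigidity}, the constant $\tfrac m2$ in $f_0$ is to be read in the normalization of this paper, so that $w\equiv 0$ on the exact cylinder.

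For the structural identities, from $f_0=\tfrac m2+\tfrac14\sum_i u_i^2$ one has $\na f_0=\tfrac12\sum_i u_i\na u_i$ and $\na^2 f_0=\tfrac12\sum_i(\na u_i\otimes\na u_i+u_i\na^2 u_i)$; plugging in $|\delta_{ij}-\langle\na u_i,\na u_j\rangle|$, $\|\na^2 u_i\|_{C^3}$, $|2\langle\na u_i,\na f'\rangle-u_i|\le C\exp(CL^{3/2}-L^2/32)$ from Claim~\ref{claimsexa} and $|\mu_i-\tfrac12|\le Ce^{-L^2/16}$ from Claim~\ref{claimsexa0} gives, on $\{b'\le L+0.4\bar D-1\}$,
\begin{equation*}
\big||\na f_0|^2-(f_0-\tfrac m2)\big|+\big|\langle\na f_0,\na f'\rangle-(f_0-\tfrac m2)\big|+\big|\Delta_{f'}f_0-(\tfrac n2-f_0)\big|+\big\|\na^2 f_0-\tfrac12\textstyle\sum_i\na u_i\otimes\na u_i\big\|\le C\exp\!\big(CL^{3/2}-\tfrac{L^2}{32}\big).
\end{equation*}
On the core $\{b'<L-2\}$, Corollary~\ref{almostshrinkerlarger} with \eqref{keyext8} yields $|\,g'/2-\Ric(g')-\na^2 f'\,|\le Ce^{-cL^2}$ pointwise, and $g'$ is exponentially close to the cylinder there, so $\scal(g')=\tfrac m2+O(e^{-cL^2})$ and $\na^2 f'=\tfrac12(g'-g_{S^m})+O(e^{-cL^2})$; combining with \eqref{shrinkerquantesti2} gives $|\na f'|^2=f'-\scal(g')+O(1)+O(e^{-cL^2})$ and hence $\Delta_{f'}f'=\tfrac n2-f'+O(e^{-cL^2})$ in this normalization. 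Subtracting the two families of identities, $w=f'-f_0$ satisfies on $\{b'<L-2\}$ a system of the form
\begin{equation*}
\langle\na w,\na u_i\rangle=E_i,\qquad |\na w|^2\le|w|+E,\qquad \Delta_{f'}w+w=E',\qquad |E_i|+|E|+|E'|\le C\exp\!\big(CL^{3/2}-\tfrac{L^2}{32}\big),
\end{equation*}
and on the neck the same system holds with the weaker remainder $CL^{-1/2}(b')^2+C$, coming from Claim~\ref{conditionappen1} and the estimate $[\bar g-\psi_3^*g]_2\le CL^{-1/2}$ of Claim~\ref{keyclaim1}.

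To conclude, note first that $|w(\bar p)|\le C\exp(CL^{3/2}-L^2/32)$: one has $f'(\bar p)=\bar f(\bar p)+O(e^{-cL^2})$, while $|u_i(\bar p)|$ is exponentially small since $\|\na^2 u_i\|_{C^0}$ is exponentially small and $\int_{\bar M}u_i\,\mathrm{d}V_{f'}=0$ forces the affine approximant of $u_i$ to vanish, up to an exponentially small error, at the weighted center of mass $\bar p$. Propagating outward, along an integral curve of $\na b'$ from $\bar p$ to a point of $\{b'\le L+0.4\bar D-2\}$ (of length $\le CL$) the relation $|\na w|\le\sqrt{|w|+E}$ together with the a priori bound $|w|\le CL^{3/2}$ (from Claim~\ref{conditionappen1} and the crude growth of the $u_i$) closes a Gr\"onwall estimate; equivalently one may apply the maximum principle to $\Delta_{f'}w+w=E'$ on the neck using the boundary value of $w$ on $\{b'=L-2\}$ supplied by the core analysis. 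Either way one obtains $|f'-f_0|\le C\exp(CL^{3/2}-L^2/32)$ on $\{b'\le L+0.4\bar D-2\}$. The bound $\|f'-f_0\|_{C^k}\le e^{-L^2/33}$ for $k\in[1,10^{12}n\sigma^{-1}]$ then follows by the same interior elliptic and interpolation bootstrap used in Lemma~\ref{lem:007}, fed by the $C^0$ bound, the equation $\Delta_{f'}w+w=E'$, the estimates $\|\na^2 u_i\|_{C^k}\le e^{-L^2/33}$ of Claim~\ref{claimsexa}, and the $C^k$-control of $\na^2 f'$ (from the almost-shrinker estimate on the core and from Claim~\ref{keyclaim1} on the neck), the polynomial-in-$L$ losses being absorbed when passing from $e^{-L^2/32}$ to $e^{-L^2/33}$.

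The hard part is the last step on the neck region $\{L-2\le b'\le L+0.4\bar D\}$, where $g'$ is only $O(L^{-1/2})$-close to the model and the exponentially sharp identities available on the core are unavailable; there one must transport the smallness of $w$ through the neck purely via the gradient (or maximum-principle) estimate, carefully tracking the polynomial-in-$L$ errors so that they remain dominated by the $e^{CL^{3/2}}$ factor; a secondary bookkeeping point, already implicit in \cite{li2023rigidity}, is to fix the correct additive normalization constant so that the comparison between $f'$ and $f_0$ closes.
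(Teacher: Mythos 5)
The paper offers no argument for this claim beyond ``following the same proof as \cite[Claim 5, page 58]{li2023rigidity}'', so what matters is whether your reconstruction of that argument closes. It does not: the propagation step is broken. Your two mechanisms for transporting the smallness of $w=f'-f_0$ from $\bar p$ out to radius $\sim L+0.4\bar D$ are (a) integrating $|\na w|\le\sqrt{|w|+E}$ along curves of length $O(L)$, and (b) a maximum principle for $\Delta_{f'}w+w=E'$. For (a), the comparison ODE $w'=\sqrt{w}$ with $w(0)=\epsilon$ gives $w(s)=(\sqrt{\epsilon}+s/2)^2\sim s^2/4$, so after distance $L$ one only gets $|w|\lesssim L^2$; exponential smallness is destroyed after the first unit of propagation, already on the core. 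For (b), the operator $\Delta_{f'}+1$ has the wrong sign in its zeroth-order coefficient for the maximum principle (indeed $\Delta_{f'}u_i+\mu_iu_i=0$ with $\mu_i\approx 1/2>0$), and even a formal comparison would only return $\sup|E'|$, which by your own accounting is $O(L^{3/2})$ on the neck. Neither route can produce the bound $C\exp(CL^{3/2}-L^2/32)$.

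The mechanism that actually works, and is what \cite{li2023rigidity} and \cite{colding2021singularities} use, is precisely the first identity in your displayed system, $\la\na w,\na u_j\ra=E_j$ --- but with two corrections. First, this identity is exponentially precise on the \emph{entire} region $\{b'\le L+0.4\bar D-1\}$, neck included: writing $2\la\na(f'-f_0),\na u_j\ra=\bigl(2\la\na f',\na u_j\ra-u_j\bigr)-\sum_i u_i\bigl(\la\na u_i,\na u_j\ra-\delta_{ij}\bigr)$, every term on the right is controlled by Claim \ref{claimsexa} times $\rVert u_i\rVert_{C^0}\le Ce^{CL^{3/2}}$; the quantity $R_2=f'-|\na f'|^2$, which is only $O(L^{-1/2}(b')^2+1)$ on the neck and which led you to degrade the whole system there, never enters. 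Second, one must integrate this identity along the flows $\kappa_i^t$ of $\na u_i$ (the map $\psi_4$), i.e.\ in the splitting directions, starting from the cross-section $N_0=\{u_1=\cdots=u_{n-m}=0\}$, where $f_0=m/2$ and $f'$ is exponentially close to $m/2$ by the core estimate; since every point of the region is $\psi_4(\vec{x},w_0)$ with $|\vec{x}|\le L+0.3\bar D$, this yields $|w|\le CL\sup_j|E_j|\le C\exp(CL^{3/2}-L^2/32)$ everywhere. Your radial transport along $\na b'$ cannot substitute for this, because in the radial direction the only available identity involves $R_2$ and is merely polynomial on the neck. (A secondary point: the cleanest way to get $|u_i(\bar p)|\le C\exp(CL^{3/2}-L^2/32)$ is to evaluate $2\la\na u_i,\na f'\ra-u_i$ at $\bar p$, where $|\na f'|$ is exponentially small; your center-of-mass argument would itself require an exponentially precise localization of the barycenter of $\mathrm{d}V_{f'}$, which you do not supply.) Your final $C^k$ interpolation bootstrap is fine once the $C^0$ bound is in place.
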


Now, we can follow the same argument as in \cite[Theorem 5.3]{li2023rigidity} to conclude the proof. For readers' convenience, we sketch the rest of the proof as follows.

We set $N_0:=\{f_0=m/2\}$ and let $\kappa_i^t$ be the family of diffeomorphisms generated by $\na u_i$. We define the map $\psi_4: \R^{n-m} \times N_0  \to \bar M$ by
	\begin{align*}
\psi_4 (\vec{x}, w)=\kappa_1^{x_1}\circ \cdots \circ \kappa_{n-m}^{x_{n-m}}(w),
	\end{align*}
where $\vec{x}=(x_1, \cdots, x_{n-m}) \in \R^{n-m}$ and $w \in N_0$. Then it follows from Claim \ref{claimsexa} that $\psi_4$ is a diffeomorphism from $B_{g_E}(0, L+0.3 \bar D) \times N_0$ onto its image and it satisfies
	\begin{align*}
\|\psi_4^* g'-g_E\times g_{N_0}\|_{C^3} \le C \exp \lc C L^{\frac 3 2}-\frac{L^2}{32} \rc,
	\end{align*}
where $g_{N_0}$ is the restriction of $g'$ on $N_0$, and for any $k \in [4, 10^{11}n\sigma^{-1}]$,
	\begin{align*}
\|\psi_4^* g'-g_E\times g_{N_0}\|_{C^k} \le e^{-\frac{L^2}{33}}.
	\end{align*}

In addition, we can define a diffeomorphism $\phi: S^m \to N_0$ (cf. \cite[Equations (5.82) and (5.83)]{li2023rigidity}) so that
	\begin{align*}
\|\phi^* g_{N_0}-g_{S^m}\|_{C^{2, 1/2}} \le C \exp \lc C L^{\frac 3 2}-\frac{L^2}{32} \rc
	\end{align*}
and for any $k \in [3, 10^{11}n\sigma^{-1}]$, 
	\begin{align*}
\|\phi^* g_{N_0}-g_{S^m}\|_{C^{k}} \le e^{-\frac{L^2}{33}}.
	\end{align*}

Finally, we define a map $\psi_5: B_{g_E}(0, L+0.3 \bar D) \times S^m \to M$ by
	\begin{align*}
\psi_5(\vec{x}, w):= \psi_3 \circ \psi_4 (\vec{x}, \phi(w))
	\end{align*}
where $\vec{x} \in B_{g_E}(0, L+0.3 \bar D) \subset \R^{n-m}$ and $w \in S^m$. By our construction, it is not hard to show that
	\begin{align} \label{eq:wang1}
\abs{\psi_5^* g-\bar g} \le C \exp \lc C L^{\frac 3 2}-\frac{L^2}{32} \rc
	\end{align}
and for any $k \in [1, 10^{11}n\sigma^{-1}]$,
	\begin{align}\label{eq:wang2}
\|\psi_5^* g-\bar g \|_{C^k} \le e^{-\frac{L^2}{33}}.
	\end{align}
Moreover, we have
	\begin{align}\label{eq:wang3}
\abs{\psi_5^* f-\bar f} \le C \exp \lc C L^{\frac 3 2}-\frac{L^2}{32} \rc
	\end{align}
and for any $k \in [1, 10^{11}n\sigma^{-1}]$,
	\begin{align}\label{eq:wang4}
\|\psi_5^* f-\bar f \|_{C^k} \le e^{-\frac{L^2}{33}}.
	\end{align}
	
On the other hand, we claim that
	\begin{align}\label{eq:wang4xa}
\int_{\{\bar b\leq L \}}\left|\psi_5^*\Phi\right|^2_{\bar g}\,\mathrm{d}V_{\bar f}\leq e^{-\frac{(\rA+0.2\bar D)^2}{4-3\sigma}}.
	\end{align}
Indeed, by our definition of $L$, \eqref{eq:wang2}, \eqref{eq:wang4} and \eqref{eq:absestimate} in Corollary \ref{almostshrinkerlarger}, we have
	\begin{align*}
\left|\psi_5^*\Phi\right|^2_{\bar g} \le C\left|\psi_5^*\Phi\right|^2_{\psi_5^* g} \le C e^{\frac{\psi_5^* f}{2}-\frac{(\rA+0.5\bar D)^2}{8(1-\sigma)}} \le C e^{\frac{\bar f}{2}-\frac{(\rA+0.5\bar D)^2}{8(1-\sigma)}}
	\end{align*}
on $\{\bar b\leq L \}$. Thus, we can estimate
	\begin{align*}
\int_{\{\bar b\leq L \}}\left|\psi_5^*\Phi\right|^2_{\bar g}\,\mathrm{d}V_{\bar f} \le C\abs{\{\bar b\leq L \}} e^{-\frac{(\rA+0.5\bar D)^2}{4(1-\sigma)}} \le e^{-\frac{(\rA+0.5\bar D)^2}{4-3\sigma}},
	\end{align*}
provided that $\hat L$ is sufficiently large. 

Moreover, we obtain from \eqref{eq:wang2} and \eqref{eq:wang4} that
	\begin{align}\label{eq:wang4xb}
\int_{\{L \le \bar b\leq L+0.2 \bar D \}}\left|\psi_5^*\Phi\right|^2_{\bar g}\,\mathrm{d}V_{\bar f}\leq C e^{-\frac{2 L^2}{33}-\frac{L^2}{4}}.
	\end{align}
From \eqref{eq:wang4xa} and \eqref{eq:wang4xb}, we conclude that \eqref{equ:integralphi} holds for $\psi_5$ with radius $L+0.2 \bar D$.

Combining this with \eqref{eq:wang1}, \eqref{eq:wang2}, \eqref{eq:wang3} and \eqref{eq:wang4}, we conclude that $\rBC \ge L+0.2 \bar D$ by using the diffeomorphism $\psi_5$, if $\hat L$ is sufficiently large. By our definition of $L$ and $\bar D$, it follows that $\rBC \ge \rA+0.1 \bar D$.

In sum, the proof of the theorem is complete.
\end{proof}

By using Proposition \ref{prop:con} and Theorem \ref{thm:ext2} iteratively, we obtain the following result. Note that by Theorem \ref{thm:upper} (i), the assumption \eqref{normalization22} holds for a constant $C_V=C_V(n,Y)$.

\begin{thm} \label{thm:ra}
	Let $\XX=\{M^n,(g(t))_{t\in I}\}$ be a closed Ricci flow with entropy bounded below by $-Y$. Assume $x_0^*=(x_0,0)\in \XX$ and $[-10,0]\subset I$. For any small $\sigma\in (0,1/10)$, there exists a constant $L=L(n,Y,\sigma)$ such that if the weighted Riemannian manifold $\left(M,g(-1),f_{x_0^*}(-1)\right)$ satisfies $\rA\geq L$, then
	\begin{align*}
		\min\{\mathbf{r}_A, \rBC\} \ge (1-3\sigma) \mathbf{r}_E.
	\end{align*}
\end{thm}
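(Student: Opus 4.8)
The plan is to run a bootstrapping iteration between the two comparison results that have already been assembled: Proposition~\ref{prop:con}, which upgrades a $\rBC$-bound into a $\rA$-bound (losing only an additive constant), and Theorem~\ref{thm:ext2}, which conversely extends a $\rA$-bound into a strictly larger $\rBC$-bound (gaining the definite additive amount $\bar D_{100}/10$), provided $\rA \le (1-2\sigma)\rE$. Since $\bar D_{100}/10 \gg 1$, each full cycle of ``$\rA \Rightarrow \rBC \Rightarrow \rA$'' strictly increases $\rA$ by at least, say, $\bar D_{100}/20 - 2 > 1$, so the iteration cannot stall; it must terminate, and the only way it can terminate is by violating the hypothesis $\rA \le (1-2\sigma)\rE$ of Theorem~\ref{thm:ext2}. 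This will force $\rA$ (and then, via one more application of Proposition~\ref{prop:con} run in reverse-direction bookkeeping, $\rBC$) past the threshold $(1-3\sigma)\rE$. The slack between $(1-2\sigma)$ and $(1-3\sigma)$ is exactly what absorbs the additive constants lost in each comparison step.

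The concrete steps I would carry out are: first, fix $l = 100$ throughout and let $\bar D = \bar D_{100}$; choose $L = L(n,Y,\sigma)$ larger than all the thresholds $L_1(n, C_V, \sigma)$ (from Proposition~\ref{prop:con}, with $C_V = C_V(n,Y)$ coming from Theorem~\ref{thm:upper}(i) so that \eqref{normalization22} holds), $\hat L(n, Y, \sigma)$ (from Theorem~\ref{thm:ext2}), and $L'', L'$ (from Corollary~\ref{almostshrinkerlarger} and Theorem~\ref{thm:ext1}). Second, set up the induction: suppose $\rA \ge L$; I claim that if $\rA < (1 - 3\sigma)\rE$ then in fact $\min\{\rA, \rBC\} \ge \rA + \bar D/20$. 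To see this, note $\rA < (1-3\sigma)\rE < (1-2\sigma)\rE$, so Theorem~\ref{thm:ext2} gives $\rBC \ge \rA + \bar D/10$; then Proposition~\ref{prop:con} applied to this $\rBC$ gives a \emph{new} value $\rA^{\mathrm{new}} \ge \rBC - 2 \ge \rA + \bar D/10 - 2 \ge \rA + \bar D/20$ (using $\bar D \gg 1$). Third, iterate: as long as the current $\rA$ stays below $(1-3\sigma)\rE$, each cycle increases it by a fixed amount $\bar D/20$, so after finitely many steps $\rA \ge (1-3\sigma)\rE$ must hold. Fourth, once $\rA \ge (1-3\sigma)\rE$ (and $\rA \ge L$ still, since $\rA$ only grew), one final application of Theorem~\ref{thm:ext2}---legitimate as long as at the last step we still had $\rA \le (1-2\sigma)\rE$, which one arranges by stopping the iteration at the first moment the threshold is crossed and controlling the overshoot by $\bar D/10$, itself bounded in terms of $\sigma$---together with Proposition~\ref{prop:con} yields $\rBC \ge (1-3\sigma)\rE$ as well, completing the proof.

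The main subtlety---and what I expect to be the delicate bookkeeping point rather than a deep obstacle---is handling the ``last step'' of the iteration cleanly: Theorem~\ref{thm:ext2} requires $\rA \le (1-2\sigma)\rE$, so I cannot simply apply it when $\rA$ has already passed $(1-3\sigma)\rE$. The cure is to track two cases. Either the iteration produces some $\rA$ with $(1-3\sigma)\rE \le \rA \le (1-2\sigma)\rE$---then apply Theorem~\ref{thm:ext2} once more to get $\rBC \ge \rA + \bar D/10 \ge (1-3\sigma)\rE$, and we are done---or the iteration jumps from some $\rA < (1-3\sigma)\rE$ directly above $(1-2\sigma)\rE$ in a single cycle. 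But a single cycle increases $\rA$ by at most $\bar D/10$ (the gain in Theorem~\ref{thm:ext2}) plus the $O(1)$ from Proposition~\ref{prop:con}, which is a quantity depending only on $n, Y, \sigma$; hence for $\rE$ large (equivalently $L$ large, using that $\rA \ge L$ forces $\rE \ge \rA/(1-3\sigma)$ large) the gap $(1-2\sigma)\rE - (1-3\sigma)\rE = \sigma \rE$ exceeds $\bar D/10 + O(1)$, so no such jump over the entire window $[(1-3\sigma)\rE, (1-2\sigma)\rE]$ is possible, and we are always in the first case. A final detail is that Proposition~\ref{prop:con} is stated for weighted Riemannian manifolds satisfying \eqref{normalization22}, and Theorems~\ref{thm:ext1}, \ref{thm:ext2}, Corollary~\ref{almostshrinkerlarger} are stated for the Ricci-flow time-slice $(M, g(-1), f_{x_0^*}(-1))$; one must check---using Theorem~\ref{thm:upper}(i) exactly as in the proof of Theorem~\ref{lojawithradius}---that the normalization hypothesis \eqref{normalziation1} and the volume decay \eqref{normalization22} hold for this time-slice with $C_V = C_V(n,Y)$, so that all the cited results genuinely apply to the same object.
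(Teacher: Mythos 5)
Your proposal is correct and is essentially the paper's own argument: the paper proves Theorem \ref{thm:ra} by exactly this iteration, alternating Theorem \ref{thm:ext2} (gaining $\bar D_{100}/10$ in $\rBC$ while $\rA \le (1-2\sigma)\rE$) with Proposition \ref{prop:con} (losing only $2$ going back to $\rA$), concluding $\min\{\rA,\rBC\} \ge (1-2\sigma)\rE - 2 \ge (1-3\sigma)\rE$, with the normalization \eqref{normalization22} supplied by Theorem \ref{thm:upper}(i) just as you note. Your extra bookkeeping on the terminal step of the iteration only makes explicit what the paper compresses into the word ``iteratively.''
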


\begin{proof}
We choose $L :=100\max\{L_1(n, C_V, \sigma)=L_1(n, Y,\sigma), \hat L(n, Y, \sigma), \sigma^{-1}\}$, where $L_1$ and $\hat L$ are from Proposition \ref{prop:con} and Theorem \ref{thm:ext2}, respectively. 

By Theorem \ref{thm:ext2}, if $L\leq \rA\leq (1-2\sigma)\rE$, then $\rBC\geq\rA+\bar D_{100}/10$. 
By Proposition \ref{prop:con}, we obtain $\rA \ge \rBC-2$. If $(1-2\sigma) \mathbf{r}_E < L$, the conclusion obvious holds. Otherwise, we can use Proposition \ref{prop:con} and Theorem \ref{thm:ext2} iteratively to conclude that
	\begin{align*}
		\min\{\mathbf{r}_A, \rBC\} \ge (1-2\sigma) \mathbf{r}_E-2 \ge (1-3\sigma) \mathbf{r}_E.
	\end{align*}
\end{proof}

By the same argument, we also have:
	\begin{thm}\label{thm:rasing}
		Let $(Z, d_Z, \t)$ be the completion of a closed Ricci flow $\XX=\{M^n, (g(t))_{t \in [-10,0)}\}$ with entropy bounded below by $-Y$. Fix $z \in Z_0$. For any small $\sigma\in (0,1/10)$, there exists a constant $L=L(n,Y,\sigma)$ such that if the weighted Riemannian manifold $\left(M,g(-1),f_z(-1)\right)$ satisfies $\rA\geq L$, then
		\begin{align*}
			\min\{\mathbf{r}_A, \rBC\} \ge (1-3\sigma) \mathbf{r}_E.
		\end{align*}
	\end{thm}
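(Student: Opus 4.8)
The plan is to repeat, essentially verbatim, the iteration scheme used in the proof of Theorem \ref{thm:ra}, but now working with the potential function $f_z(-1)$ coming from the completion $(Z,d_Z,\t)$ of $\XX$ at a point $z\in Z_0$, rather than with the potential function $f_{x_0^*}(-1)$ of a genuine spacetime base point. The first step is to observe that all the input estimates used in the proof of Theorem \ref{thm:ra} go through unchanged for $f_z$: since $\XX$ has entropy bounded below by $-Y$ and $z\in Z_0$, the conjugate heat kernel measure based at $z$ satisfies $\nu_{z;t}=(4\pi|t|)^{-n/2}e^{-f_z}\,\mathrm{d}V_{g(t)}$ for all $t<0$ (this is recorded in the subsection on cylindrical points), and by Theorem \ref{thm:upper}(i) applied to the flow $\XX$ together with the characterization of $\nu_{z;t}$ as a limit of $\nu_{z_i^*;t}$ for $z_i^*\to z$, the Gaussian concentration estimate
\begin{align*}
\nu_{z;t}\bigl(M\setminus B_t(w,L)\bigr)\le C(n,\ep)\exp\Bigl(-\tfrac{L^2}{(4+\ep)|t|}\Bigr)
\end{align*}
continues to hold, where $(w,t)$ is any $H_n$-center of $z$. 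In particular the normalization hypothesis \eqref{normalization22} holds for $(M,g(-1),f_z(-1))$ with $C_V=C_V(n,Y)$, exactly as in the remark preceding Theorem \ref{thm:ra}.

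Next I would check that each of the three comparison results — Proposition \ref{prop:con}, Theorem \ref{thm:ext2}, and the intermediate Theorems \ref{thm:ext1} and \ref{thm:rad?}-type statements (Proposition \ref{pointwiseshrinkeruantity}, Corollary \ref{almostshrinkerlarger}) — is stated either for a general weighted Riemannian manifold satisfying \eqref{normalization22} (as is the case for Proposition \ref{prop:con}) or for a closed Ricci flow with entropy bounded below by $-Y$ together with a conjugate heat kernel potential. In the latter category, the only place the argument genuinely uses that the base point is a spacetime point rather than a point of $Z_0$ is through the heat-kernel and $H$-center estimates of Section \ref{secprel}, the monotonicity formulas of Proposition \ref{propNashentropy}, and the $\F$-limit/structure theory of \cite{fang2025RFlimit}. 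All of these are available for $\nu_{z;t}$: the monotonicity of $\WW_z(\tau)$ and the identity $\tfrac{\dd^2}{\dd\tau^2}(\tau\NN_z(\tau))=-2\tau\int|\Ric+\na^2 f_z-\tfrac{g}{2\tau}|^2\,\mathrm{d}\nu_{z;-\tau}$ hold by passing to the limit in Proposition \ref{propNashentropy}; the two-sided pseudolocality theorem (Theorem \ref{thm:twoside}) and Shi's estimates are statements about the flow $\XX$ itself and are unaffected; and the Ricci-flow-limit-space compactness and splitting results from \cite{fang2025RFlimit} apply to $\XX$ with any sequence of base points, in particular to a sequence approximating $z$. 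Thus Proposition \ref{prop:con} (with $C_V=C_V(n,Y)$), Proposition \ref{pointwiseshrinkeruantity}, Theorem \ref{thm:ext1}, Corollary \ref{almostshrinkerlarger}, and Theorem \ref{thm:ext2} all hold with $(M,g(-1),f_z(-1))$ in place of $(M,g(-1),f_{x_0^*}(-1))$, and the entropy radius $\rE$ is defined via $\exp(-\rE^2/4)=\WW_z(1/2)-\WW_z(2)$.

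With these ingredients in hand, the iteration is identical to that in the proof of Theorem \ref{thm:ra}: set $L:=100\max\{L_1(n,Y,\sigma),\hat L(n,Y,\sigma),\sigma^{-1}\}$; if $(1-2\sigma)\rE<L$ the conclusion is vacuous, so assume $\rA\ge L$ and $\rA\le(1-2\sigma)\rE$; alternately apply Theorem \ref{thm:ext2} (which gives $\rBC\ge\rA+\bar D_{100}/10$ whenever $\rA\in[\hat L,(1-2\sigma)\rE]$) and Proposition \ref{prop:con} (which gives $\rA\ge\rBC-2$ whenever $\rBC\ge L_1$), increasing $\rA$ by a fixed increment at each stage until the hypothesis $\rA\le(1-2\sigma)\rE$ fails; this forces $\min\{\rA,\rBC\}\ge(1-2\sigma)\rE-2\ge(1-3\sigma)\rE$. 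I do not anticipate a serious obstacle here: the entire content is checking that none of the quoted machinery secretly requires the base point to be interior in time, and this is guaranteed by the fact that $f_z$ is smooth on $M\times(-10,0)$ with $\nu_{z;t}$ a genuine conjugate heat kernel measure for $t<0$, so every estimate is applied only on a time interval like $[-10,-\ep]$ bounded away from the singular time. The mild bookkeeping point to be careful about is that the constant $15$ in \eqref{normalization22} must be compatible with the required margin $16-5\sigma$ (Remark \ref{rem:constan1}), which holds since $\sigma<1/10$, and that the various large constants $\hat L$, $L_1$, $\bar D_{100}$ depend only on $n$, $Y$, $\sigma$ and not on $z$.
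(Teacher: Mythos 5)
Your proposal is correct and follows exactly the route the paper takes: the paper proves Theorem \ref{thm:rasing} simply "by the same argument" as Theorem \ref{thm:ra}, i.e.\ by iterating Proposition \ref{prop:con} and Theorem \ref{thm:ext2} after noting that the conjugate heat kernel measure $\nu_{z;t}$ and its potential $f_z$ are smooth for $t<0$ and satisfy all the heat-kernel, concentration, and monotonicity estimates used for an interior base point. Your explicit verification that none of the quoted machinery requires the base point to be interior in time is precisely the (unwritten) content of the paper's one-line justification.
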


\subsection*{Lojasiewicz inequality and summability of entropy gaps}

Combining Theorem \ref{lojawithradius} and Theorem \ref{thm:ra}, we obtain the desired Lojasiewicz inequality.

\begin{thm}[Lojasiewicz inequality] \label{thm:lo}
For any $\beta \in (0,3/4)$, there exist constants $C=C(n,Y)$ and $L_\beta=L(n,Y,\beta)$ such that the following property holds.

Let $\XX=\{M^n,(g(t))_{t\in I} \}$ be a closed Ricci flow with entropy bounded below by $-Y$. Assume $x_0^*=(x_0,t_0)\in\XX$ and $[t_0-10 r^2,t_0]\subset I$. If the weighted Riemannian manifold $\left(M,r^{-2}g(t_0-r^2),f_{x_0^*}(t_0-r^2)\right)$ satisfies $\rA \geq L_\beta$, then
	\begin{align*}
		\left|\mathcal W_{x_0^*}(r^2)-\Theta_m\right| \le C \left( \mathcal W_{x_0^*}(r^2/2)-\mathcal W_{x_0^*}(2 r^2) \right)^{\beta}.
	\end{align*}
\end{thm}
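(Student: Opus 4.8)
The plan is to combine the two main estimates already available: the entropy-deviation bound of Theorem \ref{lojawithradius}, which controls $|\WW_{x_0^*}(r^2)-\Theta_m|$ in terms of $\exp(-3\rBC^2/16)$, and the radius-comparison chain of Theorem \ref{thm:ra}, which guarantees $\rBC \ge (1-3\sigma)\rE$ once $\rA$ is large. After the usual parabolic rescaling we may assume $t_0=0$, $r=1$, so that the hypothesis reads $\rA \ge L_\beta$ for the weighted manifold $(M,g(-1),f_{x_0^*}(-1))$, and the goal becomes
\begin{align*}
\left|\WW_{x_0^*}(1)-\Theta_m\right| \le C\left(\WW_{x_0^*}(1/2)-\WW_{x_0^*}(2)\right)^\beta.
\end{align*}

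First I would fix a small $\sigma=\sigma(\beta)\in(0,1/10)$, to be chosen at the end so that $(3/16)(1-3\sigma)^2 \ge \beta/4$; this is possible precisely because $\beta<3/4$, which forces $(3/16)\cdot 4\beta/3 = \beta/4 < 3/16$ and leaves room for the $(1-3\sigma)^2$ factor. With this $\sigma$, apply Theorem \ref{thm:ra} (taking $L_\beta$ at least as large as the constant $L(n,Y,\sigma)$ there, and also at least $L_2(n,Y,\sigma)$ from Theorem \ref{lojawithradius}): since $\rA \ge L_\beta \ge L(n,Y,\sigma)$, we obtain $\min\{\rA,\rBC\} \ge (1-3\sigma)\rE$. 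In particular $\rBC \ge (1-3\sigma)\rE \ge L_2$, so Theorem \ref{lojawithradius} applies and gives
\begin{align*}
\left|\WW_{x_0^*}(1)-\Theta_m\right| \le C(n,Y)\exp\left(-\frac{3\rBC^2}{16}\right) \le C(n,Y)\exp\left(-\frac{3(1-3\sigma)^2\rE^2}{16}\right).
\end{align*}

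Next I would unwind the definition of the entropy radius. By Definition \ref{defnrE}, $\exp(-\rE^2/4) = \WW_{x_0^*}(1/2)-\WW_{x_0^*}(2)$, hence
\begin{align*}
\exp\left(-\frac{3(1-3\sigma)^2\rE^2}{16}\right) = \left(\exp\left(-\frac{\rE^2}{4}\right)\right)^{\frac{3(1-3\sigma)^2}{4}} = \left(\WW_{x_0^*}(1/2)-\WW_{x_0^*}(2)\right)^{\frac{3(1-3\sigma)^2}{4}}.
\end{align*}
With $\sigma$ chosen so that $\tfrac34(1-3\sigma)^2 \ge \beta$ (equivalently $(1-3\sigma)^2 \ge 4\beta/3$, which holds for $\sigma$ small since $4\beta/3<1$), and noting that $\WW_{x_0^*}(1/2)-\WW_{x_0^*}(2)<1$ by the monotonicity in Proposition \ref{propNashentropy}(ii) together with the definition of $\rE$, we may replace the exponent $\tfrac34(1-3\sigma)^2$ by the smaller $\beta$ at the cost of nothing (a number in $(0,1)$ raised to a smaller power is larger, so the inequality is preserved). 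This yields the claimed bound with $C=C(n,Y)$ and $L_\beta = L_\beta(n,Y,\beta)$ absorbing both $L(n,Y,\sigma(\beta))$ and $L_2(n,Y,\sigma(\beta))$.

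The only genuinely delicate point is the bookkeeping of constants: one must verify that the scale hypothesis $[t_0-10r^2,t_0]\subset I$ matches the $[-10,0]\subset I$ requirement of Theorem \ref{thm:ra} after rescaling, that the entropy-lower-bound $-Y$ is scale-invariant (it is, being defined through the Nash entropy which is invariant under parabolic rescaling), and that $\rE$ is well-defined, i.e. $\WW_{x_0^*}(1/2)-\WW_{x_0^*}(2)<1$ — this last fact follows because after rescaling the flow still has entropy bounded below by $-Y$, and the total drop of the $\WW$-entropy over the relevant time window is controlled; if it is not $<1$ the inequality is vacuous after enlarging $C$. Since no new analytic input is needed beyond Theorems \ref{lojawithradius} and \ref{thm:ra}, I do not expect a substantive obstacle here — the work is entirely in arranging the exponents and constants as above.
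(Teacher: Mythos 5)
Your proposal is correct and follows essentially the same route as the paper's proof: rescale to $t_0=0$, $r=1$, chain Theorem \ref{thm:ra} into Theorem \ref{lojawithradius} to bound $|\WW_{x_0^*}(1)-\Theta_m|$ by $C\exp(-3(1-3\sigma)^2\rE^2/16)$, and unwind Definition \ref{defnrE}; the paper simply sets $\beta=3(1-3\sigma)^2/4$ rather than lowering the exponent afterward, and dispatches the case $\WW_{x_0^*}(1/2)-\WW_{x_0^*}(2)\ge 1$ exactly as you do.
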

\begin{proof}
By translation and rescaling, we may assume $t_0=0$ and $r=1$.

If $\mathcal W_{x_0^*}(1/2)-\mathcal W_{x_0^*}(2) \ge 1$, the conclusion obviously holds. Thus, we assume $\mathcal W_{x_0^*}(1/2)-\mathcal W_{x_0^*}(2)<1$ so that $\rE$ is well-defined.

Thus, it follows from Theorems \ref{lojawithradius} and \ref{thm:ra} that for $\sigma>0$,
	\begin{align}\label{proofloja1}
	\left|\mathcal W_{x_0^*}(1)-\Theta_m\right| \le C(n,Y) \exp\lc-\frac{3\rBC^2}{16} \rc \leq C(n,Y)\exp\lc-\frac{3(1-3\sigma)^2\rE^2}{16}\rc,
	\end{align}
provided that $\rA \ge L(n, Y, \sigma)$. By Definition \ref{defnrE}, \eqref{proofloja1} implies
	\begin{align*}
		\left|\mathcal W_{x_0^*}(1)-\Theta_m\right| \le  C(n,Y)\left( \mathcal W_{x_0^*}(1/2)-\mathcal W_{x_0^*}(2) \right)^{\frac{3(1-3\sigma)^2}{4}}.
	\end{align*}
	This completes the proof by choosing $\beta=3(1-3\sigma)^2/4$.
\end{proof}

Similarly, from Theorem \ref{thm:rasing}, we have:
\begin{thm}\label{thm:losing}
	For any $\beta \in (0,3/4)$, there exist constants $C=C(n,Y)$ and $L_\beta=L(n,Y,\beta)$ such that the following property holds.	
	Let $(Z, d_Z, \t)$ be the completion of a closed Ricci flow $\XX=\{M^n, (g(t))_{t \in [-10r^2,0)}\}$ with entropy bounded below by $-Y$. Fix $z \in Z_0$. If the weighted Riemannian manifold $\left(M,r^{-2}g(-r^2),f_z(-r^2)\right)$ satisfies $\rA \geq L_\beta$, then
	\begin{align*}
		\left|\mathcal W_{z}(r^2)-\Theta_m\right| \le C \left( \mathcal W_{z}(r^2/2)-\mathcal W_{z}(2 r^2) \right)^{\beta}.
	\end{align*}
\end{thm}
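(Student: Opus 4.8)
The plan is to reduce Theorem~\ref{thm:losing} to Theorem~\ref{thm:lo} by passing from the Ricci flow limit space $(Z,d_Z,\t)$ to the underlying smooth Ricci flow $\XX$ and the conjugate heat kernel based at the singular point $z\in Z_0$. By parabolic rescaling we may assume $r=1$, so that $\XX=\{M^n,(g(t))_{t\in[-10,0)}\}$ and $z\in Z_0$. Recall from the discussion preceding Definition~\ref{def:mrf} that for $t<0$ the potential function $f_z$ is smooth and $\mathrm{d}\nu_{z;t}=(4\pi|t|)^{-n/2}e^{-f_z}\,\mathrm{d}V_{g(t)}$; moreover the Nash entropy $\NN_z$ and the $\WW$-entropy $\WW_z$ are defined exactly as in Definition~\ref{defnentropy}, now based at the ``virtual'' spacetime point $z$ rather than an honest point of $\XX$. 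The key observation is that all the estimates used to prove Theorem~\ref{thm:lo}---namely Theorem~\ref{lojawithradius} and Theorem~\ref{thm:ra}---only require the existence of a conjugate heat kernel measure $(\nu_{z;t})_{t}$ with the monotonicity properties of Proposition~\ref{propNashentropy}, the Gaussian kernel bounds of Theorem~\ref{thm:upper}, the two-sided pseudolocality of Theorem~\ref{thm:twoside}, and the compactness theory of \cite{fang2025RFlimit}. Each of these is available at a point $z\in Z_0$ of the completion: the monotonicity formulas pass to the limit, the heat kernel estimates of Theorem~\ref{thm:upper} are stated for closed Ricci flows but descend to $\nu_{z;t}$ since $z$ is a limit of honest spacetime points $z_i^*\in\XX_{[-0.98T,0)}$ with uniformly bounded Nash entropy, and pseudolocality is intrinsic to $\XX$ itself.

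Concretely, I would proceed as follows. First, observe that $\inf_{\tau>0}\NN_z(\tau)\ge -Y$ since $\XX$ has entropy bounded below by $-Y$ and $z$ is a limit of spacetime points inheriting this bound; consequently Theorem~\ref{thm:upper}(i) applies to $\nu_{z;-1}$ with an $H_n$-center of $z$ at time $t=-1$, which gives precisely the volume-decay hypothesis \eqref{normalization22} for the weighted Riemannian manifold $(M,g(-1),f_z(-1))$ with $C_V=C_V(n,Y)$ (as already noted just before Theorem~\ref{thm:ra}). Second, I would run verbatim the proof of Theorem~\ref{thm:rasing}: under the standing assumption $\rA\ge L$, Theorem~\ref{thm:ext2} (whose proof uses only $\XX$, its pseudolocality, and Corollary~\ref{almostshrinkerlarger}, all of which are insensitive to whether the base point is honest or a limit point) combined with Proposition~\ref{prop:con} yields $\min\{\rA,\rBC\}\ge(1-3\sigma)\rE$. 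Third, I would invoke Theorem~\ref{lojawithradius} applied to $(M,g(-1),f_z(-1))$ with $x_0^*$ replaced by $z$---again its proof only uses Proposition~\ref{prop:remainder}, Theorem~\ref{lojaforF}, and Theorem~\ref{thm:upper}(i), all valid at $z$---to obtain
\begin{align*}
\left|\WW_z(1)-\Theta_m\right|\le C(n,Y)\exp\!\left(-\frac{3\rBC^2}{16}\right)\le C(n,Y)\exp\!\left(-\frac{3(1-3\sigma)^2\rE^2}{16}\right).
\end{align*}
Finally, by Definition~\ref{defnrE}, $\exp(-\rE^2/4)=\WW_z(1/2)-\WW_z(2)$, so raising to the appropriate power and choosing $\beta=3(1-3\sigma)^2/4<3/4$ gives
\begin{align*}
\left|\WW_z(1)-\Theta_m\right|\le C(n,Y)\left(\WW_z(1/2)-\WW_z(2)\right)^{\beta},
\end{align*}
which is the claim after undoing the rescaling (the case $\WW_z(1/2)-\WW_z(2)\ge 1$ being trivial).

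\textbf{Main obstacle.} The only genuine point requiring care is the justification that Theorems~\ref{thm:upper}, \ref{lojawithradius}, \ref{thm:ext1}, \ref{thm:ext2} and \ref{thm:ra}, all stated and proved for an honest base point $x_0^*\in\XX$, remain valid when the base point is a point $z\in Z_0$ of the metric completion. The cleanest way to handle this is to note that $z=\lim_{i}z_i^*$ for spacetime points $z_i^*=(z_i,t_i)\in\XX_{[-0.98T,0)}$ with $t_i\nearrow 0$; one applies each theorem to $z_i^*$ at the fixed spacetime scale, and then passes to the limit $i\to\infty$ using the smooth convergence $\nu_{z_i^*;t}\to\nu_{z;t}$ on compact subsets of $M\times[-10,0)$ (which follows from the reproduction formula and standard parabolic regularity, since the kernels $K(z_i^*;\cdot)$ converge locally smoothly away from $t=0$). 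Since all quantities appearing---$\WW$-entropies, radius functions $\rA,\rBC,\rE$, and the relevant integrals---are continuous under this convergence, the inequalities survive the limit. Alternatively, and more in the spirit of the paper, one simply remarks that the proof of Theorem~\ref{thm:rasing} (already stated in the excerpt) is identical to that of Theorem~\ref{thm:ra}, so Theorem~\ref{thm:losing} follows from Theorem~\ref{thm:rasing} and Theorem~\ref{lojawithradius} exactly as Theorem~\ref{thm:lo} follows from Theorem~\ref{thm:ra} and Theorem~\ref{lojawithradius}; I would present the proof in this abbreviated form, pointing to Theorem~\ref{thm:rasing} for the radius estimate and repeating only the final two-line entropy computation.
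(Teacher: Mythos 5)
Your proposal is correct and matches the paper's approach: the paper proves Theorem \ref{thm:losing} exactly by the reduction you describe in your final paragraph, namely by invoking Theorem \ref{thm:rasing} (the analogue of Theorem \ref{thm:ra} for a singular base point $z\in Z_0$, which the paper obtains "by the same argument") together with Theorem \ref{lojawithradius}, and then repeating the two-line entropy computation from the proof of Theorem \ref{thm:lo} with $\beta=3(1-3\sigma)^2/4$. Your discussion of why the honest-base-point estimates persist at $z\in Z_0$ (smoothness of $f_z$ for $t<0$ and the limit of conjugate heat kernels from spacetime points $z_i^*\to z$) is consistent with how the paper justifies this in Subsection \ref{sec:cylpts}.
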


As a corollary, we have the following summability result. For simplicity, we only state it for smooth closed Ricci flows. The corresponding generalization to noncollapsed Ricci flow limit spaces will be proved in \cite{FLrec25}.

\begin{cor}\label{quantisummabilityW}
	For any small $0<\ep\leq\ep(n,Y)$, $\alpha\in (1/4,1)$ and $\delta\leq\delta(n,Y,\ep,\alpha)$, the following holds. Let $\XX=\{M^n,(g(t))_{t\in I}\}$ be a closed Ricci flow with entropy bounded below by $-Y$. Fix $x_0^*=(x_0,t_0)\in \XX$ and constants $s_2>s_1>0$. If
	\begin{align}\label{quansum1W}
		\left|\WW_{x_0^*}(s_1)-\WW_{x_0^*}(s_2)\right|<\delta,
	\end{align}
and, for any $s\in [s_1,s_2]$, $x_0^*$ is $(n-m,\delta,\sqrt{s})$-cylindrical (see Definition \ref{def:almost0}), then
	\begin{align*}
		\sum_{s_1\leq r_j=2^{-j} \leq s_2}\left|\WW_{x_0^*}(r_j)-\WW_{x_0^*}(r_{j-1})\right|^{\alpha}<\ep.
	\end{align*}
\end{cor}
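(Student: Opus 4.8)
The plan is to derive the summability estimate from the Lojasiewicz inequality (Theorem \ref{thm:losing}, or rather its smooth-flow analogue obtained by the same argument) applied at each dyadic scale $r_j \in [s_1, s_2]$. First I would set up the dyadic sum. For each $j$ with $r_j = 2^{-j} \in [s_1, s_2]$, abbreviate $W_j := \WW_{x_0^*}(r_j)$ and $a_j := W_{j-1} - W_j \ge 0$ (monotonicity, Proposition \ref{propNashentropy}(ii)), so that the telescoping sum $\sum_j a_j = \WW_{x_0^*}(s_1') - \WW_{x_0^*}(s_2')$ over the relevant range is bounded by $\delta$ via \eqref{quansum1W}. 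The goal is $\sum_j a_j^\alpha < \ep$.

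The key mechanism is to convert the Lojasiewicz inequality into a recursion. Since $x_0^*$ is $(n-m,\delta,\sqrt{s})$-cylindrical for every $s \in [s_1,s_2]$, Lemma \ref{lem:smoothcyl} (applied with an appropriate smaller parameter) guarantees that the rescaled weighted manifold $(M, s^{-1}g(t_0-s), f_{x_0^*}(t_0-s))$ has $\rA$ as large as we wish, provided $\delta$ is small; in particular $\rA \ge L_\beta(n,Y,\beta)$ for a chosen $\beta \in (1/4, 3/4)$ with $\beta > \alpha$. Hence Theorem \ref{thm:losing} (smooth version) applies at each scale $r_j$ and gives
\begin{align*}
|W_j - \Theta_m| \le C\, (W_{j+1} - W_{j-1})^\beta = C(a_{j+1} + a_j)^\beta.
\end{align*}
Writing $d_j := W_j - \Theta_m \ge 0$ (note $W_j \ge \Theta_m$ since $\Theta_m$ is the $\WW$-entropy of the cylinder and the pointed entropy decreases to it; this also follows from the inequality itself), we have $d_{j-1} - d_j = a_j$ and $d_j \le C(a_{j+1}+a_j)^\beta$, i.e. $a_{j+1} + a_j \ge c\, d_j^{1/\beta}$. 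This is exactly the discrete differential inequality for which the standard Lojasiewicz-type summability argument works: since $1/\beta < 1$, the sequence $d_j \to 0$ and $\sum (d_{j-1} - d_j)^\alpha = \sum a_j^\alpha$ is controlled. Concretely, I would follow the Colding--Minicozzi iteration (as in \cite{colding2015uniqueness, colding2019regularity}): split the index set into blocks where $d_j$ decreases by a definite fraction; on each block, $\sum a_j^\alpha \le (\sum a_j)^\alpha \cdot (\#\text{block})^{1-\alpha}$ by Hölder, and the block length is controlled because $a_{j+1}+a_j \ge c\,d_j^{1/\beta}$ forces geometric-type decay once $d_j$ is small. Summing a convergent geometric series of block contributions, each proportional to a power of the total entropy gap $\delta$, yields $\sum a_j^\alpha \le \Psi(\delta)$, which is $< \ep$ for $\delta$ small.

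There is one preliminary reduction I would handle first: the hypotheses of Theorem \ref{thm:losing}/\ref{thm:lo} require a fixed-size parabolic neighborhood $[t_0 - 10 r_j^2, t_0]$ inside $I$ and the radius bound at scale $r_j$, both of which follow from the $(n-m,\delta,\sqrt{s})$-cylindricity for $s = r_j$ together with $s_2 > s_1 > 0$ (so that $t_0 - \delta^{-1} s$ stays in range). The main obstacle, I expect, is the bookkeeping in the block decomposition — one must ensure the constant $C$ from the Lojasiewicz inequality is uniform (it is, depending only on $n, Y$), verify $W_j \ge \Theta_m$ so that $d_j \ge 0$ and the telescoping is clean, and check that the choice of $\beta$ (hence $L_\beta$, hence the required smallness of $\delta$ via Lemma \ref{lem:smoothcyl}) can be made to depend only on $n, Y, \ep, \alpha$. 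Everything else is the now-standard discrete Lojasiewicz summation, and I would cite \cite{colding2015uniqueness} for the elementary lemma converting $a_{j+1}+a_j \ge c\,d_j^{1/\beta}$ with $\sum a_j \le \delta$ into $\sum a_j^\alpha \le \Psi(\delta | n, Y, \ep, \alpha)$ rather than reproving it.
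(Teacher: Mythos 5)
Your overall route is the same as the paper's: apply the discrete Lojasiewicz inequality (Theorem \ref{thm:lo} in the smooth setting, with $\rA\ge L_\beta$ supplied by Lemma \ref{lem:smoothcyl} and the cylindricity hypothesis) at every dyadic scale, convert it into polynomial decay of $|\WW_{x_0^*}(r_j)-\Theta_m|$, and finish with a weighted H\"older/summation argument plus a $2k\delta$ remainder for finitely many indices. However, there are two genuine gaps in the details. First, your claim that $W_j\ge \Theta_m$ is unjustified and false in general, and the justification you give is based on a reversed monotonicity: since $\WW_{x_0^*}(\tau)$ is non-increasing in $\tau$ and $r_j=2^{-j}$ decreases in $j$, the sequence $W_j$ is non-decreasing in $j$ (so your $a_j=W_{j-1}-W_j$ is $\le 0$, not $\ge 0$), and it may cross $\Theta_m$ once inside $[s_1,s_2]$; nothing in the hypotheses pins down the sign of $W_j-\Theta_m$. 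The paper handles this by introducing the crossing index $j_0$ (the largest $j$ with $\WW_{x_0^*}(r_j)\le\Theta_m$) and running the polynomial-decay iteration from the two endpoints $N'$ and $N$ toward $j_0$ separately (estimates \eqref{polydecay} and \eqref{polydecay1}). Your single-sided recursion $d_{j-1}-d_j=a_j$, $d_j\le C(a_{j+1}+a_j)^\beta$ with $d_j\ge 0$ is only valid on one of the two monotone pieces; as written the argument breaks on the other piece. The fix is routine (apply the same iteration to $|d_j|$ on each side of $j_0$), but it is a missing step, not a cosmetic one.

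Second, the exponent bookkeeping is off, and this is exactly where the hypothesis $\alpha\in(1/4,1)$ enters. You write ``since $1/\beta<1$'' (it is $>1$, as $\beta<3/4$), and you propose choosing $\beta>\alpha$; the condition actually needed for the final sum $\sum b_j^{\alpha}$ to be controlled is, in the paper's notation, the existence of $q$ with $\max\{1,(1-\alpha)/\alpha\}<q<\theta^{-1}$ where $\beta=1/(1+\theta)$, i.e.\ essentially $\alpha>1-\beta=\theta/(1+\theta)$. Since the Lojasiewicz exponent is capped at $\beta<3/4$, this forces $\alpha>1/4$; for $\alpha$ near $1/4$ one must take $\beta$ near $3/4$ (not merely $\beta>\alpha$). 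Without this relation the weighted H\"older step (or your block decomposition, which would otherwise be an acceptable substitute for the paper's Abel-summation computation \eqref{summability1}) fails to produce a convergent series. You should also record that the leftover finitely many terms contribute at most $2k\delta^{\alpha}$, so the order of quantifiers is ``first choose $k$ large, then $\delta$ small,'' which is consistent with $\delta\le\delta(n,Y,\ep,\alpha)$.
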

\begin{proof}
Without loss of generality, we assume $t_0=0$. For a given $\alpha \in (1/4, 1)$, we choose constants $\theta \in (1/3, 1)$ and $q$ such that 
	\begin{align} \label{eq:constchoice}
		\max \left\{1, \frac{1-\alpha}{\alpha} \right\}<q< \theta^{-1}.
	\end{align}
Notice that by our assumption on $\alpha$, such choices are possible.

	By assumption, for any $s\in [s_1,s_2]$, $x_0^*$ is $(n-m,\delta,\sqrt{s})$-cylindrical (see \eqref{smoothcylindrical}). Thus, if $\delta \leq \ep(n,Y , \theta)$, it follows from Lemma \ref{lem:smoothcyl} that the weighted Riemannian manifold $(M, s^{-1}g(-s), f(-s))$ satisfies $\rBC \ge L_{\beta}$, where $\beta:=1/(1+\theta)$, $\sigma$ is defined by $\beta=3(1-3\sigma)^2/4$, and $L_{\beta}$ is the same constant in Theorem \ref{thm:lo}. Thus, for any $j$ with $s_1\leq r_j \leq s_2$, we have 
	\begin{align*}
		\left|\WW_{x_0^*}(r_j)-\Theta_m\right|^{1+\theta}\leq C(n,Y) \left(\WW_{x_0^*}(r_{j+1})-\WW_{x_0^*}(r_{j-1})\right).
	\end{align*}

Define $N$ to be the largest number so that $r_N \ge s_1$. Similarly, define $N'$ to be the smallest number so that $r_{N'} \le s_2$. Moreover, we set $j_0 \in [N', N]$ to be the largest number such that $\WW_{x_0^*}(r_j) \le \Theta_m$. If no such $j_0$ exists, we set $j_0=N'$.

By the proof of \cite[Claim 3.3]{fang2025volume} (see also \cite[Lemma 6.9]{colding2015uniqueness}), we obtain 
	\begin{align}\label{polydecay}
		|\WW_{x_0^*}(r_j)-\Theta_m|\leq C(n,Y, \theta) (j+1-N')^{-\frac{1}{\theta}}
	\end{align}
for $j \in [N', j_0]$ and
	\begin{align}\label{polydecay1}
		|\WW_{x_0^*}(r_j)-\Theta_m|\leq C(n,Y, \theta) (N+1-j)^{-\frac{1}{\theta}}
	\end{align}
for $j \in [j_0+1, N]$.
		
	Now we set $b_j:=\WW_{x_0^*}(r_{j})-\WW_{x_0^*}(r_{j-1})$ and $a_j=|\WW_{x_0^*}(r_j)-\Theta_m|$. Moreover, we choose a large number $k \in \mathbb N$ to be determined later.	
	
By a direct calculation, we have
	 	\begin{align}
		\sum_{l=N'+k}^{j_0} b_l (l-N')^q&=\sum_{l=N'+k}^{j_0}(a_{l-1}-a_{l})(l-N')^q\nonumber\\
		&=a_{N'+k-1} k^q-a_{j_0} (j_0-N')^q+\sum_{l=N'+k}^{j_0-1}a_l\left((l+1-N')^q-(l-N')^q\right)\nonumber\\
		&\leq a_{N'+k-1} k^q+q\sum_{l=N'+k}^{j_0-1} a_l (l+1-N')^{q-1} \notag \\
		& \leq  C(n, Y, \theta) \lc k^{q-\theta^{-1}} +q\sum_{l=N'+k}^{j_0-1} (l+1-N')^{q-1-\theta^{-1}} \rc \leq C(n, Y, \theta,q) k^{q-\theta^{-1}}, \label{summability1}
	\end{align}
	 where we used \eqref{polydecay} for the last line. Similarly, by using \eqref{polydecay1}, we obtain
		 	\begin{align*}
		\sum_{l=j_0+1}^{N-k} b_l (N-l)^q&=\sum_{l=j_0+1}^{N-k} (a_{l}-a_{l-1})(N-l)^q\nonumber\\
		&=a_{N-k} k^q -a_{j_0}(N-j_0)^q+\sum_{l=j_0+1}^{N-k-1} a_l\left((N+1-l)^q-(N-l)^q\right)\nonumber\\
		& \leq  C(n, Y, \theta) \lc k^{q-\theta^{-1}} +q\sum_{l=j_0+1}^{N-k-1} (N-l+1)^{q-1-\theta^{-1}} \rc \le C(n, Y, \theta,q) k^{q-\theta^{-1}}.
	\end{align*} 
	
Now, we conclude 
		\begin{align}
	\sum_{l=N'+k}^{j_0} b_l^{\alpha}=&\sum_{l=N'+k}^{j_0}\left(b_l^\alpha (l-N')^{\alpha q}\right) (l-N')^{-\alpha q} \notag\\
	\le& \lc \sum_{l=N'+k}^{j_0} b_l (l-N')^{q} \rc^{\alpha} \lc \sum_{l=N'+k}^{j_0} (l-N')^{-\frac{\alpha q}{1-\alpha}} \rc^{1-\alpha} \notag\\
    \le & C(n, Y, \theta,q,\alpha) k^{1-\alpha(1+ \theta^{-1})}, \label{extrasumma}
	\end{align}
	where the last inequality holds by our choice \eqref{eq:constchoice} and \eqref{summability1}.
	
Similarly, we have
			\begin{align*}
	\sum_{l=j_0+1}^{N-k} b_l^{\alpha} \le C(n, Y, \theta,q,\alpha) k^{1-\alpha(1+ \theta^{-1})}.
	\end{align*}
	
	Combining all these estimates with \eqref{quansum1W}, we obtain
				\begin{align*}
	\sum_{l=N'}^{N} b_l^{\alpha} \le C(n, Y, \theta,q,\alpha) k^{1-\alpha(1+ \theta^{-1})}+2k \delta.
	\end{align*}
	Thus, we can first choose a large $k$ and then choose a small $\delta$ such that the sum above is bounded by $\ep$.
	
	In sum, the proof is complete.
\end{proof}

\section{Strong uniqueness for cylindrical tangent flows} \label{sec:scyf}

Let $\XX=\{M^n, (g(t))_{t \in [-T,0)}\}$ be a closed Ricci flow with entropy bounded below by $-Y$, where $0$ is the first singular time. Suppose $(Z, d_Z, \t)$ is the completion of $\XX$ (see Subsection \ref{sec:cylpts}). Fix a point $z \in Z_0$ and consider the modified Ricci flow $(M, g^z(s),f^z(s))$ with respect to $z$ (see Definition \ref{def:mrf}).

\begin{thm} \label{thm:modi}
Suppose $z$ is a cylindrical singularity (see Definition \ref{def:ccc}). Denote by $\rA(s)$ and $\rBC(s)$ the radius functions in Definition \ref{defnradii} with respect to $(M, g^z(s),f^z(s))$. Then, for any $\ep>0$,
\begin{align*}
\min\{\rA(s), \rBC(s)\} \ge \sqrt{(12-\ep) \log s},
	\end{align*}
	where $\sigma=10^{-10} \ep$, provided that $s$ is sufficiently large.
\end{thm}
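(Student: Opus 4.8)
The plan is to combine the Lojasiewicz inequality for the pointed $\WW$-entropy (Theorem \ref{thm:losing}) with the radius comparison Theorem \ref{thm:rasing}, applied to the modified Ricci flow $(M, g^z(s), f^z(s))$. First I would recall that the pointed $\WW$-entropy is invariant under the parabolic rescaling and gauge fixing that define the modified flow, so that $\WW_z$ evaluated at parameter $s$ of the modified flow equals $\WW_z(e^{-s})$ in the original time variable. Since $z$ is a cylindrical singularity, Theorem \ref{thm:weakunique} and the smooth convergence statement \eqref{eq:intro001} (equivalently Lemma \ref{lem:smoothcyl} applied to all small scales) guarantee that for every sufficiently large $s$, the rescaled weighted manifold $(M, g^z(s), f^z(s))$ is $(n-m,\delta,1)$-cylindrical for any prescribed small $\delta$, hence $\rA(s)$ is large and tends to $+\infty$ as $s \to \infty$; this is what licenses the hypotheses of Theorems \ref{thm:rasing} and \ref{thm:losing}.

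The heart of the argument is to bound the entropy radius $\rE(s)$ from below, since by Theorem \ref{thm:rasing} we have $\min\{\rA(s),\rBC(s)\} \ge (1-3\sigma)\rE(s)$ once $\rA(s) \ge L(n,Y,\sigma)$. By Definition \ref{defnrE} (transported to the modified flow), $\exp(-\rE(s)^2/4) = \WW_z(e^{-s}/2)-\WW_z(2e^{-s})$, so a lower bound on $\rE(s)$ is equivalent to an upper bound on this $\WW$-entropy difference over one dyadic scale around $e^{-s}$. To get such a bound I would run the standard ODE-comparison argument: set $a_s := |\WW_z(e^{-s})-\Theta_m| \ge 0$, which is nonincreasing (up to the monotonicity of $\tau\NN_z$, cf. Proposition \ref{propNashentropy}) and tends to $0$. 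The Lojasiewicz inequality (Theorem \ref{thm:losing}) gives $a_s^{1/\beta} \le C (\WW_z(e^{-s}/2)-\WW_z(2e^{-s}))$ for any $\beta<3/4$; picking $\beta$ close to $3/4$ and feeding this into the discrete/continuous gradient-flow decay estimate (exactly as in the proof of Corollary \ref{quantisummabilityW}, or \cite[Claim 3.3]{fang2025volume}, \cite[Lemma 6.9]{colding2015uniqueness}) yields the polynomial decay $a_s \le C s^{-\beta/(1-\beta)}$. With $\beta$ near $3/4$, $\beta/(1-\beta)$ is near $3$, so $a_s \lesssim s^{-3+\Psi(\sigma)}$, and then the one-scale difference $\WW_z(e^{-s}/2)-\WW_z(2e^{-s}) \le a_{s-\log 2}+a_{s+\log 2} \le C s^{-3+\Psi(\sigma)}$. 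Taking logarithms, $\rE(s)^2 = 4\log\big((\WW_z(e^{-s}/2)-\WW_z(2e^{-s}))^{-1}\big) \ge (12-\Psi(\sigma))\log s$ for $s$ large.

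Combining the two ingredients, $\min\{\rA(s),\rBC(s)\} \ge (1-3\sigma)\rE(s) \ge (1-3\sigma)\sqrt{(12-\Psi(\sigma))\log s} \ge \sqrt{(12-\ep)\log s}$ once $\sigma = 10^{-3}\ep$ is small enough and $s$ is large enough; the hypothesis $\rA(s) \ge L(n,Y,\sigma)$ needed to invoke Theorem \ref{thm:rasing} is automatic since $\rA(s) \to \infty$.

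The main obstacle I anticipate is the bookkeeping of the gauge- and scaling-invariance of $\WW_z$ and the verification that the quantities $\rA(s), \rBC(s), \rE(s)$ attached to the modified flow $(M,g^z(s),f^z(s))$ coincide with the corresponding quantities of the rescaled conjugate-heat-kernel picture at scale $e^{-s}$ — i.e.\ that Theorems \ref{thm:rasing} and \ref{thm:losing} apply verbatim after pulling back by the diffeomorphisms $\phi_t$ defining the modified flow. This requires checking that the potential function $f^z$ is precisely the pullback of $f_z$ and that the Ricci shrinker operator and $\WW$-entropy are unchanged, which is essentially formal but must be stated carefully. A secondary technical point is ensuring the polynomial decay exponent is genuinely as close to $3$ as needed; this forces $\beta$ (hence the Lojasiewicz exponent) to be taken close to its supremum $3/4$, and one must track that the constant $12$ — rather than something smaller — indeed emerges from $4 \cdot \beta/(1-\beta) \cdot (1-3\sigma)^2 \to 12$ as $\beta \to 3/4$, $\sigma \to 0$.
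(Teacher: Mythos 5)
Your proposal is correct and follows essentially the same route as the paper's proof: the paper likewise sets $W(s)=\WW(g^z(s),f^z(s))=\WW_z(e^{-s})$, invokes Theorem \ref{thm:weakunique} to get $W(s)\to\Theta_{n-k}$ and $\rA(s)\to\infty$, applies Theorem \ref{thm:losing} with $\beta=3(1-3\sigma)^2/4$, deduces the polynomial decay $|W(s)-\Theta_{n-k}|\le C s^{-\beta/(1-\beta)}$ by the argument of Corollary \ref{quantisummabilityW}, and concludes via Theorem \ref{thm:rasing} exactly as you describe, with the exponent bookkeeping $4\cdot\tfrac{\beta}{1-\beta}\cdot(1-3\sigma)^2\to 12$ matching the paper's $(1-3\sigma)\sqrt{\tfrac{12(1-3\sigma)^2}{4-3(1-3\sigma)^2}\log s-C}$.
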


\begin{proof}
We set $W(s):=\WW(g^z(s), f^z(s))$. By Theorem \ref{thm:weakunique}, any tangent flow at $z$ is isometric to $\bar{\mathcal C}^k$. In particular, we have
\begin{align*}
\lim_{s \to +\infty} W(s)=\Theta_{n-k}.
	\end{align*}
Moreover, for any $L>1$, we have
\begin{align*}
\min\{\rA(s), \rBC(s)\} \ge L,
	\end{align*}
if $s$ is sufficiently large. By Theorem \ref{thm:losing}, this implies for $\beta=3(1-3\sigma)^2/4$, we have
\begin{align*}
\abs{W(s)-\Theta_{n-k}} \le C(n, Y) \lc W(s+\log 2)-W(s-\log 2) \rc^{\beta}
	\end{align*}
for sufficiently large $s$. By the same argument as in the proof of Corollary \ref{quantisummabilityW} (see \eqref{polydecay}), we conclude that 
\begin{align*}
\abs{W(s)-\Theta_{n-k}} \le C(n, Y, \ep) s^{-\frac{\beta}{1-\beta}}=C(n, Y, \ep) s^{-\frac{3(1-3\sigma)^2}{4-3(1-3\sigma)^2}}
	\end{align*}
	for large $s$, which implies
	\begin{align} \label{eq:esti101}
\exp\lc -\frac{\rE^2(s)}{4}\rc:=\abs{W(s+\log 2)-W(s-\log 2)} \le C(n, Y, \ep) s^{-\frac{3(1-3\sigma)^2}{4-3(1-3\sigma)^2}}.
	\end{align}
By Theorem \ref{thm:rasing} and \eqref{eq:esti101}, it follows that
\begin{align*}
\min\{\rA(s), \rBC(s)\} \ge (1-3\sigma) \sqrt{\frac{12 (1-3\sigma)^2}{4-3(1-3\sigma)^2} \log s-C(n, Y, \ep)}.
	\end{align*}	
	Thus, the conclusion holds by our choice of $\sigma$, if $s$ is sufficiently large.	
\end{proof}

Based on Theorem \ref{thm:modi}, we prove the following strong uniqueness result.
\begin{thm}[Strong uniqueness of the cylindrical tangent flow I]\label{thm:stronguni}
	Suppose $z$ is a cylindrical singularity. Then for any small $\ep>0$, there exists a large constant $\bar s$ such that for any $s_0 \ge \bar s$, there exists a diffeomorphism $\varphi_{s_0}$ from $\left\{\bar b  \le \sqrt{(8-\ep)\log s_0}\right\}$ onto a subset of $M$ such that on $\left\{\bar b \le \sqrt{(8-\ep)\log s_0}\right\}$ and for all $s\geq s_0$, we have
	\begin{align*}
	\left[\varphi_{s_0}^*g^z(s)-\bar g \right]_{[\ep^{-1}]}+\left[\varphi_{s_0}^*f^z(s)-\bar f \right]_{[\ep^{-1}]}\leq C(n,Y,\ep)e^{\frac{\bar f}{2}} s^{-1+\ep}.
	\end{align*} 
	Here, the $C^{[\ep^{-1}]}$-norms are with respect to $\bar g$.
\end{thm}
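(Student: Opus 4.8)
The plan is to combine the radius lower bound of Theorem \ref{thm:modi} with the pointwise almost-shrinker estimate of Corollary \ref{almostshrinkerlarger} (translated into the modified-flow time parameter $s$) and the stability estimates of Propositions \ref{stabilitymetric} and \ref{stabilitypotential}. Fix a small $\ep>0$ and set $\sigma=10^{-3}\ep$. For each large $s_0$, let $\varphi_{s_0}:=\tilde\varphi_A$ be the extended diffeomorphism associated with $\rA(s_0)$ as fixed in \eqref{eq:fix01}--\eqref{eq:fix02}, applied to the weighted Riemannian manifold $(M,g^{z}(s_0),f^{z}(s_0))$. By Theorem \ref{thm:modi}, $\min\{\rA(s_0),\rBC(s_0)\}\ge \sqrt{(12-\ep)\log s_0}$, so in particular $2\sqrt{(1-\ep)\log s_0}$ is well inside $(1-\sigma)\rA(s_0)$, and the domain $\{\bar b\le 2\sqrt{(1-\ep)\log s_0}\}$ is genuinely contained in the region where the estimates of Section \ref{seclojaRF} apply.

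\textbf{Key steps.} First I would record, via Corollary \ref{almostshrinkerlarger} and Proposition \ref{pointwiseshrinkeruantity}, that at time parameter $s_0$ the pulled-back pair $(\varphi_{s_0}^*g^{z}(s_0),\varphi_{s_0}^*f^{z}(s_0))$ is an almost Ricci shrinker on $\{\bar b\le 2\sqrt{(1-\ep)\log s_0}\}$ with Ricci-shrinker-operator norm bounded by $C e^{\bar f/2}\exp(-\rE^2(s_0)/8(1-\sigma))\le C e^{\bar f/2} s_0^{-1/2+\ep/3}$, using the entropy-radius estimate \eqref{eq:esti101}. Second, the modified Ricci flow \eqref{equationMRF} is, up to the time reparametrization $s\mapsto e^s$, precisely the Ricci flow modified by the gauge $-\na f_z$; so the vanishing of the Ricci shrinker operator is exactly the fixed point, and the flow $(g^z(s),f^z(s))$ converges to $(\bar g,\bar f)$ in $C^\infty_{\loc}$ by the weak uniqueness Theorem \ref{thm:weakunique}. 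Third — the crux — I would run a parabolic linear-stability / energy argument for the difference $(\varphi_{s_0}^*g^z(s)-\bar g,\varphi_{s_0}^*f^z(s)-\bar f)$ on the fixed domain $\Omega_0:=\{\bar b\le 2\sqrt{(1-\ep)\log s_0}\}$ for $s\ge s_0$: the linearization of \eqref{equationMRF} at the cylinder is governed by the stability operator $\LL=\De_{\bar f}+2\Rm$ (Definition \ref{defnoperators}) on $\Omega_0$, whose relevant spectral gap on the unstable/neutral modes is exactly $1/2$ in the $S^{n-k}$ direction after accounting for the Hermite-polynomial kernel $\KK_0 g_{S^{n-k}}$. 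Using cutoffs supported in $\Omega_0$ (with the remainder terms decaying like $e^{-\bar L^2/4}$ à la Proposition \ref{prop:remainder}), I would derive a differential inequality forcing $\int_{\Omega_0}([\varphi_{s_0}^*g^z(s)-\bar g]_0^2+[\varphi_{s_0}^*f^z(s)-\bar f]_0^2)\,dV_{\bar f}\lesssim s^{-1+2\ep}$; the neutral (Jacobi-field) modes are controlled directly by the center-of-mass and $\Div_{\bar f}$ terms appearing in the rigidity inequalities of Section \ref{seclojaW}, which decay at the same rate by the entropy-radius estimate. Finally, I would upgrade this $L^2$ bound to the pointwise weighted $C^{[\ep^{-1}]}$ bound using the interior parabolic regularity / interpolation machinery already used in Lemma \ref{lem:007} and Proposition \ref{pointwiseshrinkeruantity}, picking up the factor $e^{\bar f/2}$.

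\textbf{Alternative, cleaner route.} Rather than a from-scratch energy estimate, it is cleaner to observe that the pointwise almost-shrinker bound \eqref{eq:absestimate} of Corollary \ref{almostshrinkerlarger}, applied at \emph{every} sufficiently large time slice $s\ge s_0$, already gives $\big[\tfrac{g^z(s)}{2}-\na^2 f^z(s)-\Ric(g^z(s))\big]_{[\ep^{-1}]}\le C e^{\bar f/2}\exp(-(1-\sigma)\rE^2(s)/8)$ on a region of size $\sim\rA(s)\gtrsim\sqrt{(12-\ep)\log s}$ (pulled back by the diffeomorphism $\varphi_s$ adapted to time $s$). The remaining task is then to compare the \emph{moving} diffeomorphisms $\varphi_s$ with the \emph{fixed} one $\varphi_{s_0}$: one integrates the gauge vector field along $s\in[s_0,\infty)$, showing that the ODE generating the change of gauge has right-hand side controlled by the Ricci-shrinker-operator norm (since at the genuine fixed point the gauge is stationary), hence the total displacement of the diffeomorphism is bounded by $\int_{s_0}^\infty C e^{\bar f/2} s^{-1/2+\ep/3}\,ds$, which converges and is $\lesssim e^{\bar f/2} s_0^{-1/2+\ep}$ — and this is precisely the bound claimed, absorbed into the statement after relabeling $\ep$. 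I expect the \textbf{main obstacle} to be making this gauge-comparison quantitative and uniform: one must show the gauge ODE does not drift the domain out of the region where all the Section \ref{seclojaRF} estimates hold, and that the spatial derivatives up to order $[\ep^{-1}]$ of the displacement field are controlled — this requires the higher-order estimates of Proposition \ref{stabilitymetric} and Corollary \ref{highestimatepotential1} together with a Grönwall argument in $s$, and care that the constant $C(n,Y,\ep)$ does not degenerate as $s_0\to\infty$. The compatibility statement $\psi_{j+1}=\psi_j$ on $\Omega^j$ of Theorem \ref{thm:stronguni2intro} then follows by taking $\psi_j:=\varphi_j|_{\Omega^j}$ and using that the constructed $\varphi_{s_0}$ for different $s_0$ agree on common domains up to the negligible tail, after a diagonal/telescoping adjustment.
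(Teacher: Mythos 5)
Your proposal assembles the right ingredients (Theorem \ref{thm:modi}, Corollary \ref{almostshrinkerlarger}, integrability in $s$), but both routes you sketch have genuine gaps, and neither matches the mechanism the paper actually uses. The paper's proof is much more direct: since the modified flow satisfies $\partial_s g^z = 2\mathbf{\Phi}(g^z,f^z)$ exactly (see \eqref{equationMRF}), and Corollary \ref{almostshrinkerlarger} combined with the radius bound $\rA(s)\ge\sqrt{(12-\ep)\log s}$ gives the pointwise bound $[\mathbf{\Phi}(g^z(s),f^z(s))]_l \le C e^{\bar f/2} s^{-3/2+\ep/8}$ on $\Omega_s$, one only needs (i) to show that the \emph{fixed} domain $\Omega^{s_0}=\varphi_{s_0}(\{\bar b\le 2\sqrt{(1-\ep/2)\log s_0}\})$ remains inside $\Omega_s$ for all $s\ge s_0$ (the paper's Claim \ref{claimpotential}, a continuity argument using the evolution of $f^z$ and the integrability of the trace of $\mathbf{\Phi}$), and then (ii) integrate $\partial_s(\varphi_{s_0}^* g^z(s))=2\varphi_{s_0}^*\mathbf{\Phi}$ in $s$. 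No energy estimate and no gauge-comparison ODE between $\varphi_s$ and $\varphi_{s_0}$ is needed: $\varphi_{s_0}$ is fixed, it commutes with $\partial_s$, and the pointwise tensor bound on a subset of $M$ does not depend on which diffeomorphism parametrizes that subset.

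Concretely: (1) Your main route's linear-stability/energy argument would fail as described. The stability operator $\LL$ on the cylinder has a genuine unstable mode ($\LL(g_{S^m})=g_{S^m}$) as well as the neutral kernel $\KK_0 g_{S^m}$, and asserting that these are "controlled directly by the center-of-mass and $\Div_{\bar f}$ terms, which decay at the same rate by the entropy-radius estimate" is circular — controlling those modes is exactly what the Lojasiewicz machinery behind the entropy-radius bound accomplishes, and once that bound is invoked the spectral analysis is superfluous. (2) Your alternative route carries a wrong exponent: $\exp(-\rE^2(s)/(8(1-\sigma)))$ is of order $s^{-3/2+O(\ep)}$, not $s^{-1/2+\ep/3}$, and as written your integral $\int_{s_0}^\infty s^{-1/2+\ep/3}\,\mathrm{d}s$ diverges. (3) Neither route addresses why the $s\to\infty$ limit of $\varphi_{s_0}^*g^z(s)$ is $\bar g$ itself rather than merely some nearby limit metric $g_\infty$: Theorem \ref{thm:weakunique} only gives convergence modulo diffeomorphisms, so one must still show (as in the paper's Claim \ref{cla:limit}) that $(g_\infty,f_\infty)=(\varphi^*\bar g,\varphi^*\bar f)$ for an isometry $\varphi$ of the cylinder preserving $\bar f$, and then replace $\varphi_{s_0}$ by $\varphi_{s_0}\circ\varphi^{-1}$. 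This identification is essential to obtain the stated estimate against $\bar g$ and $\bar f$ and cannot be skipped.
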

\begin{proof}
Throughout the proof, we fix a small constant $\ep>0$ and set $\sigma=10^{-10} \ep$.
	
	Let $\rA(s)$ and $\rE(s)$ denote the radius functions in Definitions \ref{defnradii} and \ref{defnrE} with respect to $(M,g^z(s),f^z(s))$. By Theorem \ref{thm:modi}, for sufficiently large $s$, we have
	\begin{align*}
		\rA(s) \geq \sqrt{(12-\ep)\log s}.
	\end{align*}
	Let $\varphi_s$ be the diffeomorphism corresponding to $\rA(s)$ in Definition \ref{defnradii}. Then by Corollary \ref{almostshrinkerlarger} (and its proof) and Theorem \ref{thm:rasing}, for any $l \in \mathbb{N}$, on the set $\{\bar b \leq \sqrt{(8 - \ep/3)\log s}\}$, we have
	\begin{align}
		&\left[\varphi_s^*\lc \frac{g^z(s)}{2}-\na^2f^z(s)-\Ric(g^z(s))\rc\right]_{l} \notag \\
        \leq & C(n,Y,\ep,l)\exp\lc \frac{\bar f}{2}-\frac{\rA(s)^2}{8}\rc\nonumber\\
        \leq & C(n,Y,\ep,l)\exp\lc \frac{\bar f}{2}-\frac{(1-3\sigma)^2\rE(s)^2}{8}\rc\nonumber\\
		= & C(n,Y,\ep,l) e^{\frac{\bar f}{2}} \abs{W(s+\log 2)-W(s-\log 2)}^{\frac{(1-3\sigma)^2}{2}},\label{estishrinkerquanex001a}
	\end{align}
provided $s$ is sufficiently large. Here, as before, we set $W(s):=\WW(g^z(s), f^z(s))$.

Next, we set $\Omega_s:=\varphi_s\lc \left\{\bar b\leq \sqrt{(8-\ep/3)\log s}\right\}\rc$. Then it follows from \eqref{estishrinkerquanex001a} that for sufficiently large $s$, we have on $\Omega_s$,
		\begin{align}\label{estishrinkerquanex001}
		\left[ \frac{g^z(s)}{2}-\na^2f^z(s)-\Ric(g^z(s))\right]_{l} \leq &  C(n,Y,\ep,l) e^{\frac{\bar f \circ \varphi^{-1}_s}{2}} \abs{W(s+\log 2)-W(s-\log 2)}^{\frac{(1-3\sigma)^2}{2}} \notag\\
        \le & C(n,Y,\ep,l) e^{\frac{f^z(s)}{2}} \abs{W(s+\log 2)-W(s-\log 2)}^{\frac{(1-3\sigma)^2}{2}},
	\end{align}	
	where the norm is taken with respect to $g^z(s)$. Taking the trace yields, on $\Omega_s$,
	\begin{align}\label{estishrinkerquan3}
		\left[ \frac{n}{2}-\Delta f^z(s)-\scal(g^z(s))\right]_l \leq C_1(n,Y,\ep,l) e^{\frac{f^z(s)}{2}} \abs{W(s+\log 2)-W(s-\log 2)}^{\frac{(1-3\sigma)^2}{2}}.
	\end{align}

\begin{claim}\label{claimsummation}
For any constants $\alpha \in (1/4,1)$ and sufficiently large $s$, we have
		\begin{align}
    \label{equ:integrability}\int_{s}^\infty \left|W(z+\log 2)-W(z-\log 2)\right|^\alpha \,\mathrm{d} z\leq C(n,Y,\alpha,\ep) s^{1-\frac{4\alpha}{4-3(1-3\sigma)^2}}.	
    \end{align}
	\end{claim}

Indeed, for any sufficiently large constant $s$, it follows from the same argument as in the proof of Corollary \ref{quantisummabilityW} (see \eqref{extrasumma}) that
\begin{align*}
\sum_{j \ge s}^\infty |W((j+1)\log 2)-W(j\log 2)|^\alpha \le C(n, Y,\alpha, \ep) s^{1-\frac{4\alpha}{4-3(1-3\sigma)^2}}.
\end{align*}
As $W(s)$ is increasing, \eqref{equ:integrability} immediately follows, and hence Claim \ref{claimsummation} is proved.

By Definition \ref{defnradii}, any $H_n$-center of $(4\pi)^{-n/2}e^{-f^z(s)}\,\mathrm{d}V_{g(s)}$ lies in $\varphi_s\lc\{\bar b\leq C(n)\}\rc$. Therefore, for sufficiently large $s$, it follows from Theorem \ref{thm:upper} (ii) that any $x\notin \Omega_s$ satisfies
\begin{align}\label{equ:fout}
f^z(x,s)\geq (2-\ep/10)\log s.
\end{align}
Now, we fix $l=[\ep^{-1}]$ and take a constant $\bar s$, which will be determined in Claim \ref{claimpotential}, sufficiently large such that \eqref{estishrinkerquanex001}, \eqref{estishrinkerquan3} and Claim \ref{claimsummation} hold for any $s\geq \bar s$. Moreover, we set $C_1= C_1(n, Y, \ep, [\ep^{-1}])$, which is the constant from \eqref{estishrinkerquan3}. For the rest of the proof, we fix $s_0 \ge \bar s$.	
	
	\begin{claim}\label{claimpotential}
	We have
	\begin{align*}
\Omega^{s_0}:=\varphi_{s_0}\lc\left\{\bar b \leq \sqrt{(8-\ep/2)\log s_0}\right\}\rc \subset \Omega_s, \quad \forall s \ge s_0.
	\end{align*}	
	Moreover, for any $x \in \Omega^{s_0}$,
		\begin{align}\label{estipotential1}
\abs{\exp \lc -\frac{f^z(x, s)}{2} \rc-\exp \lc -\frac{f^z(x, s_0)}{2} \rc} \le C(n, Y, \ep) s_0^{-1+\frac{\ep}{100}}.
		\end{align}
	\end{claim}
	Fix $x\in \Omega^{s_0}$, and suppose $s_1>s_0$ is the maximal time $s$ such that $x\in \Omega_{s}$. If $s_1<\infty$, then by \eqref{estishrinkerquan3} and the evolution equation for $f^z$ (see \eqref{equationMRF}), we obtain
	\begin{align}\label{estipotential2a}
		|\partial_s f^z(x, s)| \le C_1 \exp \lc -\frac{f^z(x, s)}{2} \rc \abs{W(s+\log 2)-W(s-\log 2)}^{\frac{(1-3\sigma)^2}{2}},
	\end{align}
    for any $s \in [s_0, s_1]$. By solving this ordinary differential inequality, we obtain
	\begin{align*}
\exp \lc -\frac{f^z(x, s_1)}{2} \rc \ge \exp \lc -\frac{f^z(x, s_0)}{2} \rc-\frac{C_1}{2} \int_{s_0}^{s_1} \abs{W(s+\log 2)-W(s-\log 2)}^{\frac{(1-3\sigma)^2}{2}}\, \mathrm{d}s.
	\end{align*}
Using \eqref{equ:fout} and Claim \ref{claimsummation} with the choice $\alpha=(1-3\sigma)^2/2$, we obtain
	\begin{align}\label{estipotential2b}
s_1^{-1+\frac{\ep}{20}} \ge s_0^{-1+\frac{\ep}{16}}-C(n, Y, \ep) s_0^{\frac{4-5(1-3\sigma)^2}{4-3(1-3\sigma)^2}}.
	\end{align}
By our choice of $\sigma$, we derive a contradiction from \eqref{estipotential2b} if $\bar s$ is sufficiently large. Hence $s_1=\infty$, and \eqref{estipotential1} follows from \eqref{estipotential2a}. This completes the proof of Claim \ref{claimpotential}.
	
By Claim \ref{claimpotential}, we conclude that $\Omega^{s_0} \subset \Omega_{s}$ for any $s\geq s_0$, so the estimates \eqref{estishrinkerquanex001} and \eqref{estishrinkerquan3} hold on $\Omega^{s_0}$. Thus, it follows from \eqref{estishrinkerquanex001}, \eqref{estishrinkerquan3}, the evolution equation \eqref{equationMRF}, and Claim \ref{claimsummation} that on $\left\{\bar b \leq \sqrt{(8-\ep/2)\log s_0}\right\}$, $\lc \varphi_{s_0}^*g^z(s),\varphi_{s_0}^*f^z(s)\rc$ converges smoothly to $\lc g_\infty, f_\infty\rc$ as $s \to \infty$. Moreover, we obtain for all $s \ge s_0$,
		\begin{align}\label{equ:decay1}
		\left[\varphi_{s_0}^*g^z(s)-g_\infty \right]_{[\ep^{-1}]}+\left[\varphi_{s_0}^*f^z(s)-f_\infty \right]_{[\ep^{-1}]} \leq C(n,Y,\ep)e^{\frac{\bar f}{2}} s^{-1+\frac{\ep}{10}}.
	\end{align}

	\begin{claim}\label{cla:limit}
There exists a self-diffeomorphism $\varphi$ of $\left\{\bar b \le \sqrt{(8-\ep)\log s_0}\right\}$ such that
		\begin{align*}
			g_\infty=\varphi^*\bar g,\quad f_\infty=\varphi^*\bar f.
		\end{align*}
	\end{claim}
By Lemma \ref{lem:smoothcyl}, we have for a sequence $s_i \to \infty$,
\begin{align*}
		\lc M, g^z(s_i),f^z(s_i), p_i\rc\xrightarrow[i\to\infty]{\quad\text{pointed-Cheeger--Gromov} \quad}\lc \bar M,\bar g,\bar f,\bar p\rc.
	\end{align*}
	Here $p_i$ and $\bar p$ are minimum points of $f^z(s_i)$ and $\bar f$, respectively. Hence, for sufficiently large $i$, there exists a diffeomorphism $\psi_{i}:U_{i}\subset \bar M\to M$, such that $U_{i}\subset U_{i+1}$, and $\bigcup_{i}U_{i}=\bar M$, satisfying
	\begin{align}\label{equ:convcyl}
		\psi_{i}(\bar p)=p_i, \quad \psi_{i}^*g^z(s_i)\xrightarrow[i\to\infty]{C_{\loc}^\infty}\bar g,\quad \psi_{i}^*f^z(s_i)\xrightarrow[i\to\infty]{C_{\loc}^\infty}\bar f.
	\end{align}
From \eqref{equ:convcyl}, we obtain
	\begin{align*}
	\lc \psi_{i}^{-1}\circ \varphi_{s_0}\rc^*\bar g \xrightarrow[i\to\infty]{C^\infty_{\mathrm{loc}}} g_\infty \quad \text{and} \quad 
	\lc \psi_{i}^{-1}\circ \varphi_{s_0}\rc^*\bar f \xrightarrow[i\to\infty]{C^\infty_{\mathrm{loc}}} f_\infty.
	\end{align*}
Therefore, by passing to a subsequence, the maps $\psi_{i}^{-1}\circ\varphi_{s_0}$ converge smoothly to a diffeomorphism $\varphi$ from $\left\{\bar b \le \sqrt{(8-\ep)\log s_0}\right\}$ onto itself satisfying $g_\infty=\varphi^*\bar g$ and $f_\infty=\varphi^*\bar f$. This completes the proof of Claim \ref{cla:limit}.

	If we replace $\varphi_{s_0}$ by $\varphi_{s_0}\circ\varphi^{-1}$ (still denoted by $\varphi_{s_0}$), then combining \eqref{equ:decay1} with Claim \ref{cla:limit} yields
	\begin{align*}
	\left[\varphi_{s_0}^*g^z(s)-\bar g \right]_{[\ep^{-1}]}+\left[\varphi_{s_0}^*f^z(s)-\bar f \right]_{[\ep^{-1}]}\leq C(n,Y,\ep)e^{\frac{\bar f}{2}} s^{-1+\frac{\ep}{10}}
	\end{align*}
	for all $s\geq s_0$, on $\left\{\bar b \le \sqrt{(8-\ep)\log s_0}\right\}$. This completes the proof after adjusting constants.
\end{proof}

\begin{lem}\label{lem:gluediff}
In the same setting of Theorem \ref{thm:stronguni}, for any $s_2>s_1 \ge \bar s$, there exists an isometry $\varphi_{s_2,s_1}:(\bar M,\bar g)\to (\bar M,\bar g)$ such that
	\begin{align*}
		\varphi_{s_2,s_1}^*\bar f=\bar f,\quad \varphi_{s_2}\circ\varphi_{s_2,s_1}=\varphi_{s_1}
	\end{align*}
on $\left\{\bar b \le \sqrt{(8-\ep)\log s_1}\right\}$.
\end{lem}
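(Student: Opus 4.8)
The plan is to show that the \emph{fixed} composition $\Lambda:=\varphi_{s_2}^{-1}\circ\varphi_{s_1}$ is a local isometry of $(\bar M,\bar g)$ preserving $\bar f$, defined on all of $\{\bar b\le R_1\}$ (here $R_i:=2\sqrt{(1-\ep)\log s_i}$, so $R_1<R_2$), and then to take $\varphi_{s_2,s_1}$ to be its extension to a global isometry of $\bar M$. First I would check that $\Lambda$ is defined near $\bar p$: by Definition \ref{defnradii} the value $f^z(\varphi_{s_i}(\bar p),s)$ stays bounded (indeed tends to $\bar f(\bar p)$) as $s\to\infty$, so by Theorem \ref{thm:upper}(ii) the points $(\varphi_{s_i}(\bar p),s)$ are $H(n,Y)$-centers of the conjugate heat kernel at $z$ for all large $s$, hence at uniformly bounded $g^z(s)$-distance from one another; since $\varphi_{s_2}^*g^z(s)$ is uniformly close to $\bar g$ on $\{\bar b\le R_2\}$ by Theorem \ref{thm:stronguni} and $R_2$ is large, $\varphi_{s_1}(\bar p)$ lies in $\varphi_{s_2}(\{\bar b<R_2\})$, so $\Lambda$ is a diffeomorphism from a neighborhood of $\bar p$ onto its image.

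The core observation is that, wherever $\Lambda$ is defined, one has the $s$-independent identities $\Lambda^*(\varphi_{s_2}^*g^z(s))=\varphi_{s_1}^*g^z(s)$ and $\Lambda^*(\varphi_{s_2}^*f^z(s))=\varphi_{s_1}^*f^z(s)$. By Theorem \ref{thm:stronguni}, as $s\to\infty$ both sides of the first identity converge to $\bar g$ (and of the second to $\bar f$) uniformly on the relevant regions, so passing to the limit forces $\Lambda^*\bar g=\bar g$ and $\Lambda^*\bar f=\bar f$. Thus $\Lambda$ is a local isometry of $(\bar M,\bar g)$ preserving $\bar f$. Since $(\bar M,\bar g)=(\R^{n-m}\times S^m,g_E\times g_{S^m})$ is complete, real-analytic and simply connected (recall $m\ge 2$), a classical continuation argument of Cartan--Ambrose--Hicks type shows that $\Lambda$ extends uniquely to a global isometry $\varphi_{s_2,s_1}$ of $\bar M$; by real-analyticity $\bar f\circ\varphi_{s_2,s_1}-\bar f$ vanishes identically, so $\varphi_{s_2,s_1}^*\bar f=\bar f$. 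In particular $\varphi_{s_2,s_1}$ preserves $\bar b$, hence $\varphi_{s_2,s_1}(\{\bar b\le R_1\})=\{\bar b\le R_1\}\subset\{\bar b\le R_2\}$ and $\varphi_{s_2}\circ\varphi_{s_2,s_1}$ is a well-defined smooth map on $\{\bar b\le R_1\}$ that agrees with $\varphi_{s_1}$ near $\bar p$.

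It then remains to promote this local agreement to agreement on the whole closed slice. I would let $D\subset\{\bar b\le R_1\}$ be the open set where $\Lambda$ is defined and $\Omega_0$ the connected component of $D$ through $\bar p$; uniqueness of the extension gives $\varphi_{s_2,s_1}=\Lambda$ on $\Omega_0$, hence $\varphi_{s_1}=\varphi_{s_2}\circ\varphi_{s_2,s_1}$ there. The point is that $\Omega_0$ is also closed in $\{\bar b\le R_1\}$: for a limit point $x_0$ of $\Omega_0$ one has $\varphi_{s_1}(x_0)=\varphi_{s_2}(\varphi_{s_2,s_1}(x_0))\in\varphi_{s_2}(\{\bar b\le R_1\})\subset\varphi_{s_2}(\{\bar b<R_2\})$ — using crucially that $R_1<R_2$ — so $\varphi_{s_2}^{-1}$ is defined near $\varphi_{s_1}(x_0)$, whence $\Lambda$ extends over a neighborhood of $x_0$ and $x_0\in\Omega_0$. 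Since $\{\bar b\le R_1\}=\{\bar f\le R_1^2/4\}$ is connected (a closed Euclidean ball times $S^m$, once $\bar s$ is large) and $\Omega_0$ is nonempty and clopen, $\Omega_0=\{\bar b\le R_1\}$, which gives $\varphi_{s_2}\circ\varphi_{s_2,s_1}=\varphi_{s_1}$ throughout $\{\bar b\le R_1\}$.

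The hard part will not be the ``soft'' step — once the $s$-dependent pullbacks converge, the fixed composition is automatically an isometry preserving $\bar f$ — but rather the domain bookkeeping in the first and last steps: making sure $\varphi_{s_1}$ genuinely maps the full \emph{closed} slice $\{\bar b\le R_1\}$ into the image of $\varphi_{s_2}$, for which the essential structural inputs are that this slice lies in the interior of the larger domain $\{\bar b\le R_2\}$ (so the clopen argument closes) and that the slices are connected.
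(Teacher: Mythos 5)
Your proposal is correct and follows essentially the same route as the paper: form $\varphi_{s_2}^{-1}\circ\varphi_{s_1}$, use the two convergence statements from Theorem \ref{thm:stronguni} to pass to the limit and conclude it is a local isometry of $(\bar M,\bar g)$ preserving $\bar f$, and then extend it to a global isometry (an element of $\mathrm{O}(n-m)\times\mathrm{Iso}(S^m)$). The only difference is that the paper simply asserts that $\varphi_{s_2}^{-1}\circ\varphi_{s_1}$ is well-defined on the whole slice $\{\bar b\le 2\sqrt{(1-\ep)\log s_1}\}$, whereas you justify this with the $H$-center localization and the clopen/connectedness argument — a legitimate filling-in of a detail the paper leaves implicit.
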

\begin{proof}
By Theorem \ref{thm:stronguni}, we know for $\Omega:=\left\{\bar b \le \sqrt{(8-\ep)\log s_1}\right\}$,
	$$\lc\Omega, \varphi_{s_1}^*g^z(s),\varphi_{s_1}^*f^z(s)\rc\xrightarrow[s\to\infty]{C^\infty_\loc }\lc \Omega, \bar g,\bar f\rc,$$
	and
	$$\lc \Omega, \varphi_{s_2}^*g^z(s),\varphi_{s_2}^*f^z(s)\rc\xrightarrow[s\to\infty]{ C^\infty_\loc }\lc  \Omega, \bar g,\bar f\rc.$$
	
Therefore, $\varphi:=\varphi_{s_2}^{-1}\circ\varphi_{s_1}$ is well-defined on $\Omega$ such that
	\begin{align}\label{equ:gluediff3}
		\varphi^*\bar g=\varphi_{s_1}^*\circ\lc \varphi_{s_2}^{-1}\rc^*\bar g=\lim_{s\to\infty}\varphi_{s_1}^*\circ\lc \varphi_{s_2}^{-1}\rc^*\circ\varphi_{s_2}^*g^z(s)=\lim_{s\to\infty}\varphi_{s_1}^*g^z(s)=\bar g
	\end{align}
and similarly
	\begin{align}\label{equ:gluediff4}
	\varphi^*\bar f=\bar f.
	\end{align}
	
	By \eqref{equ:gluediff3} and \eqref{equ:gluediff4}, $\varphi$ is an isometry of $\Omega$, which preserves $\bar f$. Thus, $\varphi$ is the restriction of a map $\varphi_{s_2, s_1}$ in $\mathrm{O}(n-m) \times \mathrm{Iso}(S^m)$, which is an isometry of $(\bar M, \bar g)$ preserving $\bar f$. This completes the proof.
\end{proof}

\begin{thm}[Strong uniqueness of the cylindrical tangent flow II]\label{thm:stronguni2}
Suppose $z$ is a cylindrical singularity. Then for any small $\ep>0$, there exists a large constant $\bar s$ such that for any $j \ge \bar s$, there exists a diffeomorphism $\psi_{j}$ from $\Omega^j:=\left\{\bar b \le \sqrt{(8-\ep)\log j}\right\}$ onto a subset of $M$ such that
	\begin{align*}
		\psi_{j+1}=\psi_j \quad \mathrm{on}\quad \Omega^j,
	\end{align*}
and for all $s\geq j$, we have on $\Omega^j$,
	\begin{align*}
	\left[\psi_{j}^*g^z(s)-\bar g \right]_{[\ep^{-1}]}+\left[\psi_{j}^*f^z(s)-\bar f \right]_{[\ep^{-1}]}\leq C(n,Y,\ep)e^{\frac{\bar f}{2}} s^{-1+\ep}.
	\end{align*}
\end{thm}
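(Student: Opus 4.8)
The plan is to build the diffeomorphisms $\psi_j$ by induction on the integer $j\ge\bar s$, where $\bar s$ is taken larger than the thresholds appearing in Theorem \ref{thm:stronguni} and Lemma \ref{lem:gluediff}. The two ingredients are: the diffeomorphisms $\varphi_{s_0}$ from Theorem \ref{thm:stronguni}, defined on $\Omega^{s_0}=\{\bar b\le 2\sqrt{(1-\ep)\log s_0}\}=\{\bar f\le(1-\ep)\log s_0\}$ for every integer $s_0\ge\bar s$ and satisfying the required decay estimate there for all $s\ge s_0$; and the isometries $\varphi_{s_2,s_1}\in\mathrm{O}(n-m)\times\mathrm{Iso}(S^m)$ from Lemma \ref{lem:gluediff}, which satisfy $\varphi_{s_2}\circ\varphi_{s_2,s_1}=\varphi_{s_1}$ on $\Omega^{s_1}$ and preserve $\bar g$ and $\bar f$.

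First I would set $\psi_{\bar s}:=\varphi_{\bar s}$ and, more generally, maintain the inductive hypothesis that $\psi_j=\varphi_j\circ\Gamma_j$ for some $\Gamma_j\in\mathrm{O}(n-m)\times\mathrm{Iso}(S^m)$ with $\Gamma_j^*\bar g=\bar g$ and $\Gamma_j^*\bar f=\bar f$ (take $\Gamma_{\bar s}=\mathrm{id}$). Since $\Omega^j$ is a sublevel set of $\bar f$, it is invariant under $\Gamma_j$, so $\psi_j$ is indeed a diffeomorphism of $\Omega^j$ onto a subset of $M$. To pass from $j$ to $j+1$, define $\Gamma_{j+1}:=\varphi_{j+1,j}\circ\Gamma_j\in\mathrm{O}(n-m)\times\mathrm{Iso}(S^m)$, which again preserves $\bar g$ and $\bar f$, and set $\psi_{j+1}:=\varphi_{j+1}\circ\Gamma_{j+1}$. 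As $\varphi_{j+1}$ is defined on $\Omega^{j+1}$ and $\Gamma_{j+1}(\Omega^{j+1})=\Omega^{j+1}$, the map $\psi_{j+1}$ is a well-defined diffeomorphism from $\Omega^{j+1}$ onto a subset of $M$. The compatibility condition is then immediate: on $\Omega^j$ Lemma \ref{lem:gluediff} gives $\varphi_{j+1}\circ\varphi_{j+1,j}=\varphi_j$, hence
\[
\psi_{j+1}=\varphi_{j+1}\circ\varphi_{j+1,j}\circ\Gamma_j=\varphi_j\circ\Gamma_j=\psi_j\quad\text{on }\Omega^j.
\]

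For the decay estimate, write $\psi_{j+1}^*g^z(s)-\bar g=\Gamma_{j+1}^*\bigl(\varphi_{j+1}^*g^z(s)-\bar g\bigr)$ and likewise for $f^z$, using $\Gamma_{j+1}^*\bar g=\bar g$, $\Gamma_{j+1}^*\bar f=\bar f$. Since $\Gamma_{j+1}$ is a $\bar g$-isometry it commutes with $\na_{\bar g}$ and preserves $|\cdot|_{\bar g}$, so $\bigl[\psi_{j+1}^*g^z(s)-\bar g\bigr]_{[\ep^{-1}]}(x)=\bigl[\varphi_{j+1}^*g^z(s)-\bar g\bigr]_{[\ep^{-1}]}\bigl(\Gamma_{j+1}(x)\bigr)$, and the bound $C(n,Y,\ep)e^{\bar f/2}s^{-1/2+\ep}$ from Theorem \ref{thm:stronguni} is invariant under $x\mapsto\Gamma_{j+1}(x)$ because $\bar f\circ\Gamma_{j+1}=\bar f$. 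Thus the estimate for $\varphi_{j+1}$ on $\Omega^{j+1}$ transfers verbatim to $\psi_{j+1}$ on $\Omega^{j+1}$ for all $s\ge j+1$; applied with index $j$ this gives the asserted bound for $\psi_j$ on $\Omega^j$ for all $s\ge j$, which is exactly what the theorem requires after adjusting constants.

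The construction is essentially bookkeeping. The only point to be careful about is that the modifying maps $\Gamma_j$ stay inside the isometry group $\mathrm{O}(n-m)\times\mathrm{Iso}(S^m)$ and are produced recursively as $\Gamma_{j+1}=\varphi_{j+1,j}\circ\Gamma_j$, so that no cocycle or coherence condition on the family $\{\varphi_{s_2,s_1}\}$ of Lemma \ref{lem:gluediff} is needed — one simply composes step by step. The genuine analytic content has already been absorbed into Theorem \ref{thm:modi}, Theorem \ref{thm:stronguni}, and Lemma \ref{lem:gluediff}; I do not expect any further obstacle beyond checking that $\bar s$ can be chosen uniformly so that every invocation of those results is legitimate for all $j\ge\bar s$.
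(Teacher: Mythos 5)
Your proposal is correct and follows essentially the same route as the paper: the paper also defines $\psi_j$ inductively, setting $\psi_{j+1}:=\varphi_{j+1}\circ\varphi$ where $\varphi$ is the $\bar f$-preserving isometry supplied by Lemma \ref{lem:gluediff}, and transfers the decay estimate from Theorem \ref{thm:stronguni} using the invariance of $\bar g$, $\bar f$, and the norms under that isometry. Your explicit bookkeeping of the cumulative isometry $\Gamma_{j+1}=\varphi_{j+1,j}\circ\Gamma_j$ is just a slightly more detailed record of the same induction.
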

\begin{proof}
	We can define $\psi_j$ inductively. First, we choose $j_0$ with $j_0-1 < \bar s \le j_0$ and set $\psi_{j_0}=\varphi_{j_0}$ as in Theorem \ref{thm:stronguni}. Once $\psi_{j}$ is defined, we can apply Lemma \ref{lem:gluediff} to obtain an isometry $\varphi$ of $(\bar M, \bar g)$ so that on $\Omega^{j}$, we have
		\begin{align*}
		\varphi^*\bar f=\bar f,\quad \varphi_{j+1}\circ\varphi=\psi_{j}.
	\end{align*}
	Then, we set $\psi_{j+1}:=\varphi_{j+1}\circ\varphi$, which is well-defined on $\Omega^{j+1}$. Thus, the conclusion follows from the induction and Theorem \ref{thm:stronguni}.
\end{proof}

Our results also apply to complete ancient solutions to Ricci flow with cylindrical tangent flow at infinity (see \cite[Example 11.5]{fang2025RFlimit}). First, we have the following definition.

\begin{defn}
Let $\XX=\{M^n, (g(t))_{t \in (-\infty, 0]}\}$ be a complete Ricci flow with bounded curvature on any compact time interval of $(-\infty, 0]$ and with entropy bounded below. $\XX$ is said to have a cylindrical tangent flow at infinity if one of the tangent flows at infinity is isometric to $\bar{\mathcal C}^k$.
\end{defn}

The following result is parallel to Theorem \ref{thm:weakunique}, which is proved by the same method using \cite[Theorem 6.5]{li2023rigidity}.

\begin{thm}[Uniqueness of the cylindrical tangent flow at infinity]\label{thm:uniqcyl} 
Let $\XX=\{M^n, (g(t))_{t \in (-\infty, 0]}\}$ be a complete Ricci flow with bounded curvature on any compact time interval of $(-\infty, 0]$ and with entropy bounded below. If $\XX$ has $\bar{\mathcal C}^k$ as a tangent flow at infinity, then any tangent flow at infinity is isometric to $\bar{\mathcal C}^k$. 
\end{thm}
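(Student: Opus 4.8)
The plan is to reduce the statement to Theorem \ref{thm:weakunique} by recasting the ``tangent flow at infinity'' of a complete ancient solution as a ``tangent flow at a point'' in the sense of Definition \ref{def:tf} for an appropriate Ricci flow limit space, and then to quote \cite[Theorem 6.5]{li2023rigidity} in place of \cite[Theorem 6.2]{li2023rigidity} for the metric-flow-level rigidity. More precisely, I would first observe that by \cite[Example 11.5]{fang2025RFlimit} (or the construction in \cite[Section 11]{fang2025RFlimit}), the complete ancient flow $\XX=\{M^n,(g(t))_{t\in(-\infty,0]}\}$, having entropy bounded below and bounded curvature on compact time intervals, gives rise to a noncollapsed Ricci flow limit space and, via rescaling, to a metric flow $\XX^{x}$ based at any fixed spacetime point $x\in\XX$. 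The tangent flows at infinity of $\XX$ are then, after passing to a subsequence, realized as $\F$-limits of parabolic rescalings $r_i^{-1}\XX$ with $r_i\to\infty$, and by \cite[Theorem 4.17]{fang2025RFlimit} these coincide (up to the quotient identification of Theorem \ref{thm:iden}) with tangent metric flows of $\XX^{x}$ in the sense used in the proof of Theorem \ref{thm:weakunique}.

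The key steps, in order, are: (1) fix a base point $x_0^*\in\XX$ and form the metric flow $\XX^{x_0^*}$; check that a tangent metric flow at infinity of $\XX^{x_0^*}$ is given by the cylinder $\mathcal C^k$ whenever one tangent flow at infinity of $\XX$ is $\bar{\mathcal C}^k$, using the $\F$-compactness and the smooth convergence on regular parts from Theorems \ref{thm:intro1} and \ref{thm:intro3}. (2) Invoke \cite[Theorem 6.5]{li2023rigidity}: once one tangent metric flow at infinity is the cylinder $\mathcal C^k$, \emph{every} tangent metric flow at infinity is $\mathcal C^k$ — this is the rigidity input, and it is the analogue for ancient solutions of \cite[Theorem 6.2]{li2023rigidity}. (3) Given an arbitrary sequence $r_i\to\infty$ realizing a tangent flow at infinity $(Z',d_{Z'},z',\t')$, pass to a subsequence so that the rescaled metric flows $\F$-converge; by step (2) the limit metric flow is $\mathcal C^k$. (4) Conclude, exactly as in the proof of Theorem \ref{thm:weakunique}, using \cite[Theorems 7.19, 7.25]{fang2025RFlimit} that $Z'_{(-\infty,0)}=\RR'_{(-\infty,0)}=\mathcal C^k$, that $Z'_{(0,+\infty)}=\emptyset$, and hence $Z'=\bar{\mathcal C}^k$. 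Since the sequence $r_i$ was arbitrary, every tangent flow at infinity is isometric to $\bar{\mathcal C}^k$.

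The main obstacle I anticipate is purely bookkeeping: ensuring that the notion of ``tangent flow at infinity'' of the ancient solution $\XX$ — defined via parabolic rescaling and Gromov--Hausdorff/$\F$-limits of $\XX$ itself — matches the notion of ``tangent metric flow at infinity'' of the associated metric flow $\XX^{x_0^*}$ to which \cite[Theorem 6.5]{li2023rigidity} directly applies. This is the same identification that underlies the proof of Theorem \ref{thm:weakunique} (there it is the local statement near a singular point), and it is handled by \cite[Theorem 4.17(2)]{fang2025RFlimit} together with Theorem \ref{thm:iden}; one only needs to check that the noncompactness in the time variable (the flow is defined on $(-\infty,0]$ rather than a finite interval ending at a singular time) causes no trouble, which is exactly the content of \cite[Section 11]{fang2025RFlimit}. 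Everything else is a routine transcription of the argument in the proof of Theorem \ref{thm:weakunique}, with ``tangent flow at $z\in Z_0$'' replaced by ``tangent flow at infinity'' and \cite[Theorem 6.2]{li2023rigidity} replaced by \cite[Theorem 6.5]{li2023rigidity}; accordingly, I would simply write: ``The proof is identical to that of Theorem \ref{thm:weakunique}, using \cite[Theorem 6.5]{li2023rigidity} in place of \cite[Theorem 6.2]{li2023rigidity} and \cite[Example 11.5, Section 11]{fang2025RFlimit} to realize the ancient solution as a Ricci flow limit space; we omit the details.''
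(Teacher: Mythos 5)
Your proposal is correct and coincides with the paper's own treatment: the paper proves Theorem \ref{thm:uniqcyl} exactly by transcribing the argument of Theorem \ref{thm:weakunique}, substituting \cite[Theorem 6.5]{li2023rigidity} for \cite[Theorem 6.2]{li2023rigidity} and using \cite[Section 11]{fang2025RFlimit} to realize the ancient solution with bounded curvature on compact time intervals as a noncollapsed Ricci flow limit space. The bookkeeping identification of tangent flows at infinity with tangent metric flows via $\F$-convergence that you flag is indeed the only point requiring care, and it is handled exactly as you describe.
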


Now, we fix a base point $p^* \in \XX_0$ and set $\mathrm{d}\nu_{p^*;t}=(4\pi|t|)^{-n/2}e^{-f(t)}\,\mathrm{d}V_{g(t)}$. Let $\phi_t$ be the family of diffeomorphisms generated by $-\na_{g(t)} f(t)$ with $\phi_{0}=\mathrm{id}$. Then we define 
  \begin{equation} \label{eq:mcf1}
  \begin{dcases}
    &g' (s):= e^s \phi^*_{-e^{-s}} g(-e^{-s}),   \\
    &f' (s):=\phi^*_{-e^{-s}} f_z(-e^{-s}).
        \end{dcases}
  \end{equation}\index{$(g'(s), f'(s))$}
Clearly, $(g'(s), f'(s))$ satisfies \eqref{equationMRF}. Similar to Theorem \ref{thm:modi} and Theorem \ref{thm:stronguni2}, we have the following results by the same proof.

\begin{thm} \label{thm:modi1}
Denote by $\rA(s)$ and $\rBC(s)$ the radius functions in Definition \ref{defnradii} with respect to $(M, g'(s),f'(s))$. Under the same assumptions of Theorem \ref{thm:uniqcyl}, for any $\ep>0$, we have
\begin{align*}
\min\{\rA(s), \rBC(s)\} \ge \sqrt{(12-\ep) \log |s|},
	\end{align*}
where $\sigma=10^{-10} \ep$, provided that $|s|$ is sufficiently large.
\end{thm}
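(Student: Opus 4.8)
The plan is to follow the proof of Theorem \ref{thm:modi} essentially verbatim, with the cylindrical singularity $z$ replaced by the cylindrical tangent flow at infinity and Theorem \ref{thm:weakunique} replaced by Theorem \ref{thm:uniqcyl}. The only structural change is that here the relevant limit is $s\to-\infty$, which under the substitution $\tau=e^{-s}$ corresponds to $\tau\to+\infty$, i.e.\ to blowing down $\XX$.

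First I would set $W(s):=\WW(g'(s),f'(s))$. By the scaling and diffeomorphism invariance of Perelman's $\WW$--functional at scale $1$ one has $W(s)=\WW_{p^*}(e^{-s})$, so by Proposition \ref{propNashentropy} the map $s\mapsto W(s)$ is nondecreasing and, by Theorem \ref{thm:uniqcyl}, $W(s)\searrow\Theta_{n-k}$ as $s\to-\infty$. In particular $W(s+\log 2)-W(s-\log 2)\ge 0$ and the entropy radius $\rE(s)$ of Definition \ref{defnrE} is well defined once $|s|$ is large. Next I would record that for every $L>1$ one has $\min\{\rA(s),\rBC(s)\}\ge L$ for all sufficiently negative $s$: this is the analog of Lemma \ref{lem:smoothcyl}, and follows because the blow--downs of $\XX$ all coincide with $\bar{\mathcal C}^k$, so by Theorem \ref{thm:intro3} and the limit--space machinery of \cite{fang2025RFlimit} — which, as recalled after Theorem \ref{thm:intro3}, applies to complete Ricci flows with entropy bounded below whose curvature is bounded on compact time intervals — the rescaled modified flow $(M,g'(s),f'(s))$ is $C^{[\ep^{-1}]}$--close to $(\bar M,\bar g,\bar f)$ on arbitrarily large cylindrical regions.

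With these two facts in hand I would apply the Lojasiewicz inequality. For the rescaled flow at parameter $s$ the backward time interval required by Theorem \ref{thm:lo} is automatically contained in $(-\infty,0]$, so for any $\beta\in(0,3/4)$, once $|s|$ is large enough that $\rA(s)\ge L_\beta$,
\[
\left|W(s)-\Theta_{n-k}\right|\le C(n,Y)\bigl(W(s+\log 2)-W(s-\log 2)\bigr)^{\beta}.
\]
Feeding this discrete Lojasiewicz inequality, together with the monotonicity of $W$, into the difference--inequality argument from the proof of Corollary \ref{quantisummabilityW} (cf.\ \cite[Claim 3.3]{fang2025volume}, \cite[Lemma 6.9]{colding2015uniqueness}) gives the polynomial decay $|W(s)-\Theta_{n-k}|\le C(n,Y,\ep)\,|s|^{-\beta/(1-\beta)}$, hence by Definition \ref{defnrE}
\[
\exp\!\left(-\frac{\rE^2(s)}{4}\right)=\bigl|W(s+\log 2)-W(s-\log 2)\bigr|\le C(n,Y,\ep)\,|s|^{-\frac{3(1-3\sigma)^2}{4-3(1-3\sigma)^2}}.
\]
Finally, applying Theorem \ref{thm:rasing} (more precisely the proof of Theorem \ref{thm:ra}, which transfers to the present setting) to $(M,g'(s),f'(s))$ yields $\min\{\rA(s),\rBC(s)\}\ge(1-3\sigma)\rE(s)$; combining this with the last display and the choice $\sigma=10^{-3}\ep$ gives $\min\{\rA(s),\rBC(s)\}\ge\sqrt{(12-\ep)\log|s|}$ for $|s|$ large, which is the claim.

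The step I expect to demand the most care — though it is bookkeeping rather than a genuine obstacle — is checking that Theorems \ref{thm:lo}, \ref{thm:losing}, \ref{thm:ra} and \ref{thm:rasing}, all phrased for base points in closed Ricci flows on a fixed finite backward interval, carry over to a base point $p^*$ in a complete noncompact ancient flow. This holds because each ingredient used there (the $H_n$--centers and heat kernel bounds of Theorem \ref{thm:upper}, the remainder estimates of Proposition \ref{prop:remainder}, the radius comparisons of Section \ref{seclojaRF}) is either local in space or works at a fixed finite scale, and the ancient solution is assumed to have entropy bounded below and bounded curvature on compact time intervals, which is exactly the generality in which \cite[Section 11]{fang2025RFlimit} supplies the required limit--space machinery. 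The remaining point, that $\rA(s),\rBC(s)\to\infty$, is then handled precisely as in the proof of Theorem \ref{thm:modi} once Theorem \ref{thm:uniqcyl} is available.
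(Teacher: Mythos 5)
Your proposal is correct and follows exactly the route the paper takes: the paper states Theorem \ref{thm:modi1} with the remark that it follows "by the same proof" as Theorem \ref{thm:modi}, substituting Theorem \ref{thm:uniqcyl} for Theorem \ref{thm:weakunique} and working with $s\to-\infty$ (equivalently $\tau=e^{-s}\to+\infty$). Your additional care about transferring Theorems \ref{thm:lo}--\ref{thm:rasing} to the complete noncompact ancient setting via \cite[Section 11]{fang2025RFlimit} is exactly the bookkeeping the paper implicitly relies on.
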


\begin{thm}[Strong uniqueness of cylindrical tangent flow at infinity]\label{thm:stronguni-infty}
Under the same assumptions of Theorem \ref{thm:uniqcyl}, for any small $\ep>0$, there exists a large constant $\bar s$ such that for any $j \ge \bar s$, there exists a diffeomorphism $\psi_{-j}$ from $\Omega^j=\left\{\bar b \le  \sqrt{(8-\ep)\log j}\right\}$ onto a subset of $M$ such that
\begin{align*}
	\psi_{-j-1}=\psi_{-j} \quad \mathrm{on}\quad \Omega^j,
\end{align*}
and for any $s \le -j$, we have on $\Omega^j$,
\begin{align*}
	\left[\psi_{-j}^*g'(s)-\bar g \right]_{[\ep^{-1}]}+\left[\psi_{-j}^*f'(s)-\bar f \right]_{[\ep^{-1}]}\leq C(n,Y,\ep)e^{\frac{\bar f}{2}} |s|^{-1+\ep}.
\end{align*} 
\end{thm}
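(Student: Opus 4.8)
\textbf{Proof proposal for Theorem \ref{thm:stronguni-infty}.}

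The plan is to mirror, almost verbatim, the argument used to prove Theorem \ref{thm:stronguni} and Theorem \ref{thm:stronguni2}, with the single change that the modified flow now runs over $s \in (-\infty, \bar s_0]$ for some $\bar s_0$, so that ``large $s$'' in the earlier proofs becomes ``$s \to -\infty$''. First I would invoke Theorem \ref{thm:uniqcyl} to guarantee that every tangent flow at infinity of $\XX$ is isometric to $\bar{\mathcal C}^k$; this is the analogue of the appeal to Theorem \ref{thm:weakunique} in the closed-flow case, and it gives $\lim_{s \to -\infty} \WW(g'(s), f'(s)) = \Theta_{n-k}$. Then Theorem \ref{thm:modi1} supplies the radius lower bound $\min\{\rA(s), \rBC(s)\} \ge \sqrt{(12-\ep)\log|s|}$ for $|s|$ large, which is exactly the input that the proof of Theorem \ref{thm:stronguni} extracted from Theorem \ref{thm:modi}. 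With $\sigma = 10^{-3}\ep$ fixed, Corollary \ref{almostshrinkerlarger} (applied to $(M, g'(s), f'(s))$, which is legitimate since the radius functions, the entropy radius, and the pointed $\WW$-entropy are all defined the same way for the flow \eqref{eq:mcf1} satisfying \eqref{equationMRF}) yields, on $\{\bar b \le 2\sqrt{(1-\ep/3)\log|s|}\}$, the decay
\begin{align*}
\left[\varphi_s^*\lc \tfrac{g'(s)}{2} - \na^2 f'(s) - \Ric(g'(s)) \rc\right]_{[\ep^{-1}]} \le C(n,Y,\ep) e^{\frac{\bar f}{2}} |s|^{-\frac{3}{2}+\frac{\ep}{8}},
\end{align*}
where $\varphi_s$ is the diffeomorphism realizing $\rA(s)$.

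Next I would run the same barrier/maximality argument as in Claim \ref{claimpotential}: using Theorem \ref{thm:upper}(ii) to force $f'(\cdot, s) \ge (1-\ep/2.5)\log|s|$ outside $\Omega_s := \varphi_s(\{\bar b \le 2\sqrt{(1-\ep/3)\log|s|}\})$, and using the evolution equation for $f'$ (from \eqref{equationMRF}) together with the trace of the above decay estimate, one shows that for fixed $s_0$ with $|s_0|$ large the smaller region $\Omega^{s_0} := \varphi_{s_0}(\{\bar b \le 2\sqrt{(1-\ep/2)\log|s_0|}\})$ stays inside $\Omega_s$ for all $s \le s_0$, and that $|f'(x, s_0) - f'(x, s)| \le 1$ there. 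Because the right-hand side $C|s|^{-3/2 + \ep/8}$ is integrable in $s$ over $(-\infty, s_0]$, integrating \eqref{equationMRF} shows $(\varphi_{s_0}^* g'(s), \varphi_{s_0}^* f'(s))$ converges smoothly as $s \to -\infty$ to a limit $(g_\infty, f_\infty)$, with the quantitative bound
\begin{align*}
[\varphi_{s_0}^* g'(s) - g_\infty]_{[\ep^{-1}]} + [\varphi_{s_0}^* f'(s) - f_\infty]_{[\ep^{-1}]} \le C(n,Y,\ep) e^{\frac{\bar f}{2}} |s|^{-\frac12 + \frac{\ep}{8}}.
\end{align*}
Lemma \ref{lem:smoothcyl} (which applies to ancient solutions with $\bar{\mathcal C}^k$ as tangent flow at infinity, exactly as it applies to cylindrical singularities) then identifies $(g_\infty, f_\infty)$ with a pullback $(\varphi^* \bar g, \varphi^* \bar f)$ for some self-diffeomorphism $\varphi$ of the region, just as in Claim \ref{cla:limit}; replacing $\varphi_{s_0}$ by $\varphi_{s_0} \circ \varphi^{-1}$ gives the single-scale statement. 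Finally, the gluing of the diffeomorphisms across dyadic scales is obtained exactly as in Lemma \ref{lem:gluediff} and Theorem \ref{thm:stronguni2}: the composition $\varphi_{s_2}^{-1} \circ \varphi_{s_1}$ is forced to be an isometry of $(\bar M, \bar g)$ preserving $\bar f$ (hence an element of $\mathrm{O}(n-m) \times \mathrm{Iso}(S^m)$), so one builds $\psi_{-j}$ inductively starting from $\psi_{-j_0} = \varphi_{-j_0}$ and precomposing each $\varphi_{-(j+1)}$ with the appropriate isometry to match on $\Omega^j$.

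The only genuinely new point — and hence the main thing to check carefully rather than the main obstacle — is that all the machinery of Sections \ref{secradiicontraction}–\ref{seclojaRF} (the comparison of $\rA$, $\rBC$, $\r_{C,\delta}$, and $\rE$, culminating in Theorem \ref{thm:rasing} and Theorem \ref{thm:losing}) was stated for time slices of a closed Ricci flow, whereas here the relevant weighted manifold $(M, g'(s), f'(s))$ arises from an ancient flow via \eqref{eq:mcf1}. This is not an actual obstacle: for each fixed $s$, $(M, g'(s), f'(s)) = (M, e^s g(-e^{-s}), f(-e^{-s}) \circ \phi_{-e^{-s}})$ up to a diffeomorphism is precisely the rescaled time slice of the underlying Ricci flow based at $p^*$, the conjugate heat kernel based at $p^*$ is preserved by the gauge, and the entropy lower bound $-Y$ is assumed; so Theorem \ref{thm:lo}, Theorem \ref{thm:losing}, Theorem \ref{thm:ra}, Theorem \ref{thm:rasing} all apply to these slices after the parabolic rescaling that sends $|s| \to 1$. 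This is exactly the passage already used in proving Theorem \ref{thm:modi1}, so no further work is needed beyond recording that the constants are uniform in $s$. With that observation in place, the proof is a line-by-line transcription of the proofs of Theorems \ref{thm:stronguni}, \ref{thm:stronguni2} with $s \to +\infty$ replaced by $s \to -\infty$ and $s$ replaced by $|s|$ throughout. \qed
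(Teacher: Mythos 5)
Your proposal is correct and follows essentially the same route as the paper: the paper itself dispatches Theorem \ref{thm:stronguni-infty} with the remark that it follows "by the same proof" as Theorems \ref{thm:modi} and \ref{thm:stronguni2}, using Theorem \ref{thm:uniqcyl} in place of Theorem \ref{thm:weakunique} and Theorem \ref{thm:modi1} for the radius bound, exactly as you describe. Your closing observation — that the radius-comparison machinery applies to the rescaled time slices of the ancient flow because the gauge preserves the conjugate heat kernel based at $p^*$ and the entropy bound — is precisely the point already absorbed into the proof of Theorem \ref{thm:modi1}, so nothing further is needed.
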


\section{Generalization to quotient cylindrical geometries}\label{sec:lojaquo}

In this subsection, we prove the Lojasiewicz inequality near quotient cylinders. First, we set as before
\begin{align*}
\mathcal C^{n-m}_{-1}=(\bar M,\bar g,\bar f)=\left(\R^{n-m}\times S^{m}, g_E \times g_{S^m}, \frac{|\vec{x}|^2}{4}+\frac{m}{2}+\Theta_m \right).
\end{align*}
For any finite group $\Gamma \leqslant \mathrm{Iso}(\mathcal C^{n-m}_{-1})$ acting freely on $\bar M$, we set $\pi: \bar M \to \bar M/\Gamma$ to be the natural quotient map and define
\begin{align} \label{eq:quoc}
\mathcal C^{n-m}(\Gamma)_{-1}=(\bar M_{\Gamma}, \bar g_{\Gamma},\bar f_{\Gamma}),
\end{align}\index{$\mathcal C^{n-m}(\Gamma)_{-1}$}\index{$(\bar M_{\Gamma}, \bar g_{\Gamma},\bar f_{\Gamma})$}
where $\bar M_{\Gamma}=\bar M/\Gamma$, $\bar g_{\Gamma}$ is the quotient metric of $\bar g$, and $\bar f_{\Gamma}$ is defined so that
\begin{align*}
\pi^* \bar f_{\Gamma}=\bar f-\log |\Gamma|.
\end{align*}

It is clear by a direct calculation that
\begin{align*}
\Theta_m(\Gamma):=\WW(\bar g_{\Gamma}, \bar f_{\Gamma})=\Theta_m-\log |\Gamma|.
\end{align*}\index{$\Theta_m(\Gamma)$}

Similar to Definition \ref{defnradii}, we have the following definitions. As before, we set $\bar b=2\sqrt{|\bar f|}$ and $b=2\sqrt{|f|}$.

\begin{defn}\label{defnradiiquo}
	For $\sigma\in (0,1/10)$ and a weighted Riemannian manifold $(M,g,f)$ with $\Phi=\mathbf{\Phi}(g, f)=\dfrac{g}{2}-\Ric(g)-\nabla^2 f$, we define
	\begin{enumerate}[label=\textnormal{(\Alph{*})}]
		\item \emph{($\rA^{|\Gamma|}$-radius)}\index{$\rA^{\vert\Gamma\vert}$} $\rA^{|\Gamma|}$ to be the largest number $L$ for which there exists a smooth covering map $\varphi_A$ of degree $|\Gamma|$ from a domain $U$ containing $\{\bar b \le L\}$ onto a subset of $M$ such that
		\begin{align*}
			\left[\bar g-\varphi_A^* g\right]_5+\left[\bar f-\log |\Gamma|-\varphi_A^* f\right]_5 \leq e^{\frac{\bar f}{4}-\frac{L^2}{16}}.
		\end{align*}
		\item \emph{($\rBC^{|\Gamma|}$-radius)}\index{$\rBC^{\vert\Gamma\vert}$} $\rBC^{|\Gamma|}$ to be the largest number $L$ for which there exists a smooth covering map $\varphi_B$ of degree $|\Gamma|$ from a domain $U$ containing $\{\bar b \le L\}$ onto a subset of $M$ such that
		\begin{align*}
			\left[\bar g-\varphi_B^* g\right]_0+\left[\bar f-\log |\Gamma|-\varphi_B^* f\right]_0\leq e^{-\frac{L^2}{33}},
		\end{align*}
		and
		\begin{equation*}
			\int_{\{\bar b\le L\}}\left|\varphi_B^*\Phi\right|^2 \,\mathrm{d}V_{\bar f}\leq e^{-\frac{L^2}{4-\sigma}}.
		\end{equation*}
		Furthermore, for all $k \in [1, 10^{10} n \sigma^{-1}]$, the $C^k$-norms of $\bar g-\varphi_B^* g$ and $\bar f-\log|\Gamma|-\varphi_B^* f$ are bounded by $1$. 
	\end{enumerate}
\end{defn}

Similar to Proposition \ref{prop:con}, we have the following proposition; see also \cite[Theorem 5.6]{li2023rigidity}.

\begin{prop}\label{contractionquotient}
Let $(M,g,f)$ be a normalized weighted Riemannian manifold such that
\begin{align}\label{normalquo}
\int_{M\setminus B(p,L)} 1 \,\mathrm{d}V_f\leq C_V e^{-\frac{L^2}{15}}, \quad \forall L>1,
\end{align}
where $p$ is a fixed minimum point of $f$. Then there exists a constant $L_1=L_1(n,C_V, |\Gamma|, \sigma)$ such that if $\rBC^{|\Gamma|}\geq L_1$ for $(M,g,f)$, then
	\begin{align*}
		\rA^{|\Gamma|}\geq \rBC^{|\Gamma|}-3.
	\end{align*}
\end{prop}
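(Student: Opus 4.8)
The plan is to lift the problem to the cylinder $\bar M$ and then reduce it to Proposition \ref{prop:con}. Given the smooth covering map $\varphi_B: U \to M$ of degree $|\Gamma|$ realizing $L := \rBC^{|\Gamma|}$, we first observe that $\bar g - \varphi_B^* g$ and $\bar f - \log|\Gamma| - \varphi_B^* f$ control the geometry of the pullback. Writing $\tilde g := \varphi_B^* g$ and $\tilde f := \varphi_B^* f + \log|\Gamma|$, the pair $(\tilde g, \tilde f)$ is a weighted Riemannian metric on $U \supset \{\bar b \le L\}$ which is $e^{-L^2/33}$-close to $(\bar g, \bar f)$, and on which the weighted measure $\mathrm{d}V_{\tilde f} = (4\pi)^{-n/2} e^{-\tilde f}\,\mathrm{d}V_{\tilde g}$ equals $|\Gamma|^{-1} \varphi_B^*(\mathrm{d}V_f)$. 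The integral condition on $\varphi_B^* \Phi$ transfers directly: $\mathbf{\Phi}(\tilde g, \tilde f) = \varphi_B^* \Phi$ since $\mathbf{\Phi}$ is invariant under the additive shift $\tilde f = \varphi_B^* f + \log|\Gamma|$ (adding a constant does not change $\nabla^2 f$), so $\int_{\{\bar b \le L\}} |\mathbf{\Phi}(\tilde g, \tilde f)|^2\,\mathrm{d}V_{\bar f} \le e^{-L^2/(4-\sigma)}$. Thus $(\{\bar b \le L\}, \tilde g, \tilde f)$ is exactly a ``weighted cylinder perturbation'' in the sense of the hypothesis of Proposition \ref{prop:con}, with the identity map playing the role of $\varphi_B$ there — except for two points that must be checked: the normalization/volume growth assumption \eqref{normalization22}, and the fact that $U$ may be strictly larger than $\{\bar b \le L\}$ but need not contain the whole of $\bar M$.

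Next I would verify the volume growth hypothesis \eqref{normalization22} for $(\tilde g, \tilde f)$ on the relevant region. Since $\varphi_B$ is a degree-$|\Gamma|$ covering onto its image and $M$ itself satisfies \eqref{normalquo} with constant $C_V$, the pullback measure satisfies $\int_{U \setminus \{\bar b \le L'\}} \mathrm{d}V_{\tilde f} \le |\Gamma| \cdot C_V e^{-(L')^2/15}$ for $L' \le L - O(1)$, because $\{\bar b \le L'\}$ pulls forward to a set whose complement in $\varphi_B(U)$ is covered (with multiplicity $|\Gamma|$) by the corresponding complement in $M$; here one uses that on $\{\bar b \le L\}$ the function $b$ and the pullback of the distance-to-minimum on $M$ are comparable up to the exponentially small error. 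So \eqref{normalization22} holds for $(\tilde g, \tilde f)$ with $C_V' = |\Gamma| C_V$ on the range of radii needed, which is why the constant $L_1$ is allowed to depend on $|\Gamma|$. By Remark \ref{rem:constan1}, the exponent $15$ in \eqref{normalization22} can be weakened, so the slight loss in the error term (an extra $\log|\Gamma|$ shift, which only affects the measure by the factor $|\Gamma|$) is harmless.

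Then I would run the argument of Proposition \ref{prop:con} verbatim on $(\{\bar b \le L\}, \tilde g, \tilde f)$: cut off $(\tilde g - \bar g, \tilde f - \bar f)$ by a function supported in $\{\bar b < L - 1/2\}$, apply the gauge-fixing modification (the construction of $\varphi_A$ from $\varphi_B$ in \cite[Section 5]{li2023rigidity}, which is purely local and hence lifts), use the decomposition $h = u g_{S^m} + \zeta$, $\chi = \tfrac{m}{2}u + q$, invoke Propositions \ref{prop:tay1} and \ref{prop:tay2}, and carry out the interpolation estimates exactly as in \eqref{improvedgauge}--\eqref{improvedestimate3a}. This produces a diffeomorphism $\tilde \varphi_A$ of $\{\bar b \le L - 3/2\}$ (say) into $U$ with $[\bar g - \tilde\varphi_A^* \tilde g]_5 + [\bar f - \tilde\varphi_A^* \tilde f]_5 \le e^{\bar f/4 - (L-3/2)^2/16}$. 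Composing: $\varphi_A := \varphi_B \circ \tilde\varphi_A$ is then a degree-$|\Gamma|$ covering map from $\{\bar b \le L - 3/2\}$ (which we may shrink to $\{\bar b \le L - 3\}$ to absorb the difference between $\tilde f$ and $\varphi_B^* f = \tilde f - \log|\Gamma|$ in the estimates, recovering the $-\log|\Gamma|$ shift required in Definition \ref{defnradiiquo}(A)) onto a subset of $M$ satisfying $[\bar g - \varphi_A^* g]_5 + [\bar f - \log|\Gamma| - \varphi_A^* f]_5 \le e^{\bar f/4 - (L-3)^2/16}$, i.e. $\rA^{|\Gamma|} \ge L - 3 = \rBC^{|\Gamma|} - 3$.

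The main obstacle I anticipate is purely bookkeeping rather than conceptual: one must be careful that the domain $U$ of the covering map is only assumed to contain $\{\bar b \le L\}$, so all the cutoff, interpolation, and modification steps of Proposition \ref{prop:con} must be confined to a slightly smaller region (hence the loss from $-2$ to $-3$), and one must track the constant additive shift $\log|\Gamma|$ consistently through every estimate — it cancels in $\mathbf{\Phi}$ and in $\nabla^2 f$, contributes a benign $|\Gamma|$ factor in every weighted integral, and reappears exactly where Definition \ref{defnradiiquo} demands it. A secondary point is confirming that the local gauge-fixing construction of \cite[Section 5]{li2023rigidity}, which is stated for diffeomorphisms, lifts to covering maps; but since that construction modifies $\varphi_B$ by post-composition with diffeomorphisms of $M$ supported near the image and by pre-composition with diffeomorphisms of $\bar M$, and covering maps are stable under such operations, there is no real difficulty. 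I would also remark, as in Remark \ref{rem:constan1}, that the constant $15$ in \eqref{normalquo} may be replaced by anything below $16 - 5\sigma$, which gives the room needed for the $|\Gamma|$-dependent error.
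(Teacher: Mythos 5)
Your proposal is correct and follows essentially the same route as the paper: pull back to the cylinder via $\varphi_B$ (with the $\log|\Gamma|$ shift, under which $\mathbf{\Phi}$ is invariant), verify the volume-growth hypothesis \eqref{normalization22} for the pullback with a $|\Gamma|$-dependent constant and slightly weakened exponent (absorbed via Remark \ref{rem:constan1}), run the gauge-fixing argument of Proposition \ref{prop:con} to produce a self-diffeomorphism of a slightly smaller region, and compose with $\varphi_B$ to obtain the covering map realizing $\rA^{|\Gamma|}\ge \rBC^{|\Gamma|}-3$. The domain bookkeeping you flag (shrinking to $\{\bar b\le L-3\}$ because $U$ is only assumed to contain $\{\bar b\le L\}$) is exactly how the paper accounts for the loss of $3$ rather than $2$.
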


\begin{proof}
We set $L=\rBC^{|\Gamma|} \ge L_1$. By the definition, there exists a smooth covering map $\varphi_B$ of degree $|\Gamma|$ from a domain $U$ containing $\{\bar b \le L\}$ onto a subset of $M$ such that
	\begin{align*}
		\left[\bar{g} - \varphi_B^* g\right]_0 + \left[\bar{f}-\log |\Gamma|- \varphi_B^* f \right]_0 \leq e^{-\frac{L^2}{33}},
	\end{align*}
	and for $\Phi=g/2-\Ric(g)-\na^2 f$, we have 
	\begin{align*}
\int_{\{\bar b\le L\}}\left|\varphi_B^*\Phi\right|^2 \,\mathrm{d}V_{\bar f}\leq e^{-\frac{L^2}{4-\sigma}}.
	\end{align*}
	Furthermore, all $C^k$-norms of $\bar g-\varphi_B^* g$ and $\bar f-\log|\Gamma|-\varphi_B^* f$ are bounded by $1$ for $k \in [1, 10^{10} n\sigma^{-1}]$. It follows from the assumption \eqref{normalquo} that 
\begin{align} \label{normalquo1}
\int_{\{\bar b \le L-1/3\} \setminus B_{\varphi_B^* g}(p',D)} \,\mathrm{d}V_{\varphi_B^* f}\leq C(n, C_V, |\Gamma|) e^{-\frac{D^2}{15.1}}, \quad \forall D>1,
\end{align}	
	where $p'$ is a fixed point in $\{\varphi_B^{-1}(p)\}$. Indeed, by the definition of $\rB^{\Gamma}$, 
		\begin{align*}
\varphi_B^{-1}(B(p, D)) \subset B_{\varphi_B^* g}(p',D+C(n)).
	\end{align*}	
	Thus, \eqref{normalquo1} follows from \eqref{normalquo} by adjusting $D$.	
	
By Proposition \ref{prop:con} (see \eqref{improvedestimate3a} and Remark \ref{rem:constan1}), we can find a self-diffeomorphism $\psi_1$ of $\{\bar b \le L-1\}$ such that
	\begin{align*}
		\left[\psi_1^*\varphi_B^*g-\bar g\right]_5+\left[\psi_1^*\varphi_B^*f-\bar f-\log |\Gamma| \right]_5\leq e^{\frac{\bar f}{4}-\frac{(L-2)^2}{16}}
	\end{align*}
	on $\{ \bar b \le L-2\}$. We denote the submersion $\varphi_B \circ \psi_1$ by $\varphi_A$. 
	
Then we consider the subset $\{b \le L-5/2\} \bigcap \varphi_B(U)$ and set $U':=\psi_1^{-1}\circ\varphi_B^{-1} \lc \{b \le L-5/2\} \bigcap \varphi_B(U) \rc$. Since $\psi_1$ is almost an identity, it is clear by the definition of $\rBC^{|\Gamma|}$ that
	\begin{align*}
\{ \bar b \le L-3\} \subset U' \subset \{ \bar b \le L-2\}.
	\end{align*}
	
From our construction, the map $\varphi_A$ from $U'$ is a covering map of degree $|\Gamma|$. Consequently, $\varphi_A$ satisfies the conclusions for $\rA^{|\Gamma|}$ in Definition \ref{defnradiiquo}, and hence we have 
	\begin{align*}
\rA^{|\Gamma|}\geq \rBC^{|\Gamma|}-3.
	\end{align*}
\end{proof}

Similar to Theorem \ref{lojawithradius}, we next prove

\begin{thm}\label{lojawithradiusquotient}
There exist constants $L_2= L_2(n, Y,|\Gamma|)>0$ and $C=C(n,Y)>0$ such that the following holds.
	Let $\XX=\{M^n,(g(t))_{t\in I} \}$ be a closed Ricci flow with entropy bounded below by $-Y$. Assume $x_0^*=(x_0,t_0)\in\XX$ and $[t_0-2r^2,t_0]\subset I$. If the weighted Riemannian manifold $\left(M,r^{-2}g(t_0-r^2),f_{x_0^*}(t_0-r^2)\right)$ satisfies $\rBC^{|\Gamma|} \geq L_2$, then
	\begin{equation*}
		\left|\WW_{x_0^*}(r^2)-\Theta_m(\Gamma) \right|\leq C \exp\lc-\frac{3(\rBC^{|\Gamma|})^2}{16}\rc.
	\end{equation*}
\end{thm}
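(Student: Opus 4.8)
The plan is to reduce the quotient case to the already-established non-quotient statement (Theorem \ref{lojawithradius}) by passing to the $|\Gamma|$-fold cover. First I would use the smooth covering map $\varphi_B$ of degree $|\Gamma|$ from the definition of $\rBC^{|\Gamma|}$ to produce, via the natural quotient structure, an honest diffeomorphism onto a cylindrical model. Concretely: set $L=\rBC^{|\Gamma|}$, $g_0=r^{-2}g(t_0-r^2)$, $f_0=f_{x_0^*}(t_0-r^2)$, and let $\varphi_B:U\to M$ be the degree-$|\Gamma|$ covering map with $\{\bar b\le L\}\subset U$, satisfying the estimates in Definition \ref{defnradiiquo}(B). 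The key observation is that $\varphi_B$ factors: since the geometry on $\{\bar b\le L\}$ is $C^0$-close (and $C^k$-controlled) to the cylinder $(\bar M,\bar g,\bar f-\log|\Gamma|)$ and $\varphi_B$ has degree $|\Gamma|$, the image $\varphi_B(\{\bar b\le L\})$ is an almost-quotient-cylindrical region whose lift is exactly $\{\bar b\le L\}$ with the pulled-back metric and potential close to $(\bar g,\bar f-\log|\Gamma|)$. Thus $\varphi_B$ itself plays the role of the diffeomorphism $\varphi_B$ in Definition \ref{defnradii}(B), but now with reference cylinder $(\bar M,\bar g,\bar f-\log|\Gamma|)$ rather than $(\bar M,\bar g,\bar f)$.

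Next I would run the argument of Theorem \ref{lojawithradius} verbatim on the cover, noting that the $\WW$-functional transforms in a controlled way under the logarithmic shift of the potential. Since $\WW(\bar g,\bar f-\log|\Gamma|)=\Theta_m-\log|\Gamma|=\Theta_m(\Gamma)$ by the direct computation recorded just before Definition \ref{defnradiiquo}, the cutoff-and-modify scheme of Theorem \ref{lojawithradius}---cut off $(\varphi_B^*g_0-\bar g,\varphi_B^*f_0-\bar f+\log|\Gamma|)$ outside $\{\bar b\le L-2\}$, apply Proposition \ref{prop:con}'s construction to improve the gauge (Proposition \ref{contractionquotient} is the quotient analogue, giving the needed $\rA^{|\Gamma|}\ge\rBC^{|\Gamma|}-3$), then invoke the variational inequality Theorem \ref{lojaforF} with the shifted reference cylinder---produces
\begin{align*}
\left|\WW(g_2,f_2)-\Theta_m(\Gamma)\right|\le C(n,Y)e^{-\frac{3L^2}{16}},
\end{align*}
where $(g_2,f_2)$ is the compactly supported modification. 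Here I must check that Theorem \ref{lojaforF} applies with $\bar f$ replaced by $\bar f-\log|\Gamma|$: this is immediate because $\mathbf\Phi$, $\Div_{\bar f}$, the Hermite space $\KK_0$, and all the quadratic-rigidity inputs (Propositions \ref{prop:tay1}, \ref{prop:tay2}) are unaffected by adding a constant to $\bar f$, while $\mathrm dV_{\bar f}$ just gets rescaled by $|\Gamma|^{-1}$, which only changes the constant $C=C(n)$ into $C=C(n,|\Gamma|)$. The remainder estimate (Proposition \ref{prop:remainder}) controls the $\WW$-contribution from the region $\{\bar b>L-2\}$ by $C(n,Y,\ep)e^{-(1-\ep)L^2/4}$, exactly as before, using that $\scal\ge R_{\min}=R_{\min}(n,Y)$ and entropy bounded below by $-Y$ on the original flow. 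Finally, $|\VV(g_2,f_2)-|\Gamma|^{-1}|$ (the weighted volume of the shifted cylinder is $|\Gamma|^{-1}$) is exponentially small by Theorem \ref{thm:upper}(i), so the $\left|\VV-1\right|$-type term in Theorem \ref{lojaforF}, suitably normalized, is absorbed.

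Assembling: $\left|\WW(g_0,f_0)-\WW(g_2,f_2)\right|$ is exponentially small by the cutoff and gauge-modification construction (as in \eqref{2ndloja2} and \eqref{2ndloja9}), and $\WW(g_0,f_0)=\WW_{x_0^*}(r^2)$ after undoing the rescaling, yielding $\left|\WW_{x_0^*}(r^2)-\Theta_m(\Gamma)\right|\le C(n,Y)\exp(-3(\rBC^{|\Gamma|})^2/16)$, with $L_2=L_2(n,Y,|\Gamma|)$ chosen as $L_1(n,C_V(n,Y),|\Gamma|,\sigma)$ from Proposition \ref{contractionquotient} together with the lower bounds needed for Theorem \ref{lojaforF} and the interpolation estimates. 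The main obstacle I anticipate is bookkeeping the logarithmic shift consistently through every normalization: the weighted measures $\mathrm dV_{\bar f}$ and $\mathrm dV_{\bar f-\log|\Gamma|}$ differ by the factor $|\Gamma|$, so each $L^2$-norm, each application of Lemma \ref{concentrationinequ}, and the normalization condition $\VV=1$ versus $\VV=|\Gamma|^{-1}$ must be tracked; none of this is conceptually hard, but it is where a sign or a factor of $|\Gamma|$ could slip. A secondary point requiring care is verifying that the covering map $\varphi_B$ descends/lifts compatibly with the cutoff $\eta$ (which lives on $\bar M$, upstairs), so that the modified pair $(g_2,f_2)$ is a genuine compactly supported perturbation of the cylinder on $\bar M$ to which Theorem \ref{lojaforF} directly applies---but since the cutoff is performed upstairs on $\{\bar b\le L-2\}\subset\bar M$, this is automatic.
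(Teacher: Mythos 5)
Your overall strategy coincides with the paper's: cut off the pulled-back data upstairs on the cover, improve the gauge via the quotient analogue of Proposition \ref{prop:con} (Proposition \ref{contractionquotient}), apply the variational inequality of Theorem \ref{lojaforF}, and control the tail with Proposition \ref{prop:remainder}. However, the $\log|\Gamma|$ bookkeeping in your assembly is wrong at three places, and these are not cosmetic: as written, two of your intermediate inequalities are false, and your final bound only comes out right because the two errors cancel.

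First, $\WW(\bar g,\bar f-\log|\Gamma|)$ computed on the \emph{cover} $\bar M$ is not $\Theta_m(\Gamma)$: shifting the potential by $-\log|\Gamma|$ multiplies $\mathrm{d}V_{\bar f}$ by $|\Gamma|$ and shifts the integrand by $-\log|\Gamma|$, so $\WW(\bar g,\bar f-\log|\Gamma|)=|\Gamma|\lc\Theta_m-\log|\Gamma|\rc$ and $\VV(\bar g,\bar f-\log|\Gamma|)=|\Gamma|$ (not $|\Gamma|^{-1}$). The identity $\WW=\Theta_m(\Gamma)$ holds only on the quotient $\bar M/\Gamma$, where the factor $|\Gamma|^{-1}$ from the covering multiplicity cancels the factor $|\Gamma|$ from the shift. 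Second, and consequently, your two closing claims are incompatible: with the cutoff you actually display, $f_1=\bar f+\eta\lc\varphi_B^*f_0-\bar f+\log|\Gamma|\rc$ is a compactly supported perturbation of the \emph{normalized} cylinder $(\bar g,\bar f)$, so the unshifted Theorem \ref{lojaforF} applies verbatim (no $|\Gamma|$-dependent constants enter, which is needed since the theorem asserts $C=C(n,Y)$) and yields $\abs{\WW(g_1,f_1)-\Theta_m}\le C(n,Y)e^{-3L^2/16}$ — closeness to $\Theta_m$, not to $\Theta_m(\Gamma)$. The quantity $\abs{\WW(g_0,f_0)-\WW(g_1,f_1)}$ is then \emph{not} exponentially small: on the bulk region the integrand satisfies $w_1=\varphi_B^*w_0+\log|\Gamma|$ while $\mathrm{d}V_{f_1}=|\Gamma|^{-1}\varphi_B^*\,\mathrm{d}V_{f_0}$, so summing over the $|\Gamma|$ sheets gives $\WW(g_1,f_1)=\WW(g_0,f_0)+\log|\Gamma|\cdot\VV(g_0,f_0)$ up to the remainder error, i.e. the correct statement is $\abs{\WW(g_0,f_0)-\WW(g_1,f_1)+\log|\Gamma|}\le C(n,Y,\ep)e^{-(1-\ep)L^2/4}$, which is exactly how the paper routes the shift. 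Combining this with $\abs{\WW(g_1,f_1)-\Theta_m}\le C e^{-3L^2/16}$ gives $\abs{\WW_{x_0^*}(r^2)-\Theta_m(\Gamma)}=\abs{\WW(g_0,f_0)-\Theta_m+\log|\Gamma|}\le Ce^{-3L^2/16}$. With this correction your argument is the paper's proof.
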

\begin{proof}
Without loss of generality, we assume $t_0=0$, $r=1$ and choose a small parameter $\ep\ll 1$ to be determined later. Also, we set $L=\rBC^{|\Gamma|}$, $g_0=g(-1)$ and $f_0=f_{x_0^*}(-1)$ for simplicity. 
	
	By Definition \ref{defnradiiquo}, we can find a smooth covering map $\varphi_B$ of degree $|\Gamma|$ from a domain $U$ containing $\{\bar b \le L\}$ onto a subset of $M$ such that
	\begin{align*}
		\left[\bar g-\varphi_B^* g_0\right]_0+\left[\bar f-\log|\Gamma|-\varphi_B^* f_0\right]_0\leq e^{-\frac{L^2}{33}},
	\end{align*}
	and
	\begin{align*}
\int_{\{\bar b\le L\}}\left|\varphi_B^*\Phi\right|^2 \,\mathrm{d}V_{\bar f}\leq e^{-\frac{L^2}{4-\sigma}},
	\end{align*}
	where $\Phi=g_0/2-\Ric\left(g_0\right)-\na^2 f_0$. Furthermore, all $C^k$-norms of $\bar g-\varphi_B^* g_0$ and $\bar f-\log|\Gamma|-\varphi_B^* f_0$ are bounded by $1$ for $k \in [1, 10^{10} n \sigma^{-1}]$. By Theorem \ref{thm:upper} (i), the assumption \eqref{normalquo} holds for $C_V=C_V(n,Y)$. Now we choose $L_2=L_1(n, C_V, |\Gamma|)$, where $L_1$ is the same constant as in Proposition \ref{contractionquotient} so that it applies to the weighted Riemannian manifold $\left(M, g(-1),f(-1)\right)$.
	
Choose a cut-off function $\eta$ on $\bar M$ such that $\eta=0$ outside $\{\bar b<L-1\}$ and $\eta =1$ on $\{\bar b<L-2\}$. Let $g_1=\bar g+\eta (\varphi_B^*g_0-\bar g), f_1=\bar f+\eta (\varphi_B^*f_0-\bar f+\log |\Gamma|)$. Then it follows from the proof of Theorem \ref{lojawithradius} that
		\begin{align*}
		\left|\WW(g_1,f_1)-\Theta_m\right|\leq C(n,Y) e^{-\frac{3L^2}{16}}.
	\end{align*}
	
	By Proposition \ref{prop:remainder}, we have
	\begin{equation*}
		\left|\WW(g_0,f_0)-\WW(g_1,f_1)+\log |\Gamma|\right|\leq C(n,Y,\ep) e^{-\frac{(1-\ep)L^2}{4}}.
	\end{equation*}
	
	Combining these estimates, we conclude by choosing a small $\ep>0$ that
	\begin{align*}
		\left|\WW(g_0,f_0)-\Theta_m(\Gamma)\right|\leq C(n,Y) e^{-\frac{3 L^2}{16}}.
	\end{align*}
	Since $L=\rBC^{|\Gamma|}$ and $\WW(g_0, f_0)=\WW_{x_0^*}(1)$, this completes the proof.
\end{proof}

Next, we consider a closed Ricci flow $\XX=\{M^n,(g(t))_{t\in I} \}$ with entropy bounded below by $-Y$ such that $[-10,0]\subset I$. Throughout, we fix a spacetime point $x_0^*=(x_0,0)$, define $\tau=-t$, and set
\begin{align*}
  \begin{dcases}
  & \mathrm{d}\nu_t=\mathrm{d}\nu_{x^*_0;t}=(4\pi \tau)^{-\frac n 2}e^{-f} \,\mathrm{d}V_{g(t)}, \\
		& \Phi= \frac{g}{2}-\tau \left( \Ric+\na^2 f\right),\\
		& F=\tau f.
        \end{dcases}
\end{align*}
Moreover, we define $b=2\sqrt{\left|f(-1)\right|}$.

\begin{defn}[$\r_{C, \delta}^{|\Gamma|}$-radius]\index{$\r_{C, \delta}^{\vert\Gamma\vert}$}\label{defnrCdq}
	For the weighted Riemannian manifold $\left(M,g(-1),f(-1)\right)$, $\r_{C, \delta}^{|\Gamma|}$ is defined as the largest number $L$ for which there exists a smooth covering map $\varphi_C$ of degree $|\Gamma|$ from a domain $U$ containing $\{\bar b \le L\}$ onto a subset of $M$ such that $f \lc \varphi_C(\bar p), -1 \rc \le n$ and
	\begin{align}\label{equ:rCclose}
		\left[\bar g-\varphi_C^* g(-1)\right]_2\leq \delta.
	\end{align}
Moreover, for $x\in U$ with $\bar f(x)\geq 10 n+\log|\Gamma|$, we have
	\begin{align}\label{equ:rClevel}
		(1-\delta)\lc\bar f(x)-\log|\Gamma|\rc\leq f(-1) \circ \varphi_C(x) \leq (1+\delta) \lc \bar f (x)-\log|\Gamma|\rc.
		\end{align} 
\end{defn}

By similar arguments as in Propositions \ref{stabilitymetric}, \ref{stabilitypotential} and Corollary \ref{highestimatepotential1}, we have the following result.

\begin{prop}\label{stabilitymetricq}
	For any small $\ep>0$, there exists $\bar \delta=\bar \delta(n,\ep, |\Gamma|)>0$ such that if $\delta \le \bar \delta$ and $\r^{|\Gamma|}_{C,\delta} \ge \bar \delta^{-2}$, then the following statements hold.
	\begin{enumerate}[label=\textnormal{(\roman{*})}]
	\item On $\left\{\bar b \le \mathbf{r}^{|\Gamma|}_{C,\delta}- \bar \delta^{-1}\right\} \times [-9 , -\ep]$,
		\begin{align*}
		\left[\varphi_C^*g(t)-\bar g(t)\right]_{[\ep^{-1}]} \le \ep,
	\end{align*}
where the norm $[\cdot]_{[\ep^{-1}]}$ is taken with respect to $\bar g(t)$.

\item For $(x,t)\in \left\{\bar b \le \mathbf{r}^{|\Gamma|}_{C,\delta}- \bar \delta^{-1}\right\} \times [-1-\bar \delta , -1+\bar \delta]$, we have
	\begin{align}\label{equ:bdpotquo}
		(1-\ep)\bar F(x)-C(n,Y,\ep)\leq F(\varphi_C(x),t)\leq (1+\ep)\bar F(x)+C(n,Y,\ep).
	\end{align}
Moreover, for any $1 \le l \le \ep^{-1}$,
		\begin{align*}
[\varphi_C^*F]_{l} \le C(n,Y,\ep) e^{\ep l \bar F}
	\end{align*}	
on $\{\bar b \le \r^{|\Gamma|}_{C,\delta}- 2 \bar \delta^{-1}\} \times [-1-\bar \delta ,-1+\bar \delta]$.	
	\end{enumerate}
\end{prop}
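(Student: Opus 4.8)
The proof of Proposition \ref{stabilitymetricq} follows the same structure as the corresponding results in the non-quotient case, adapted to account for the factor $\log|\Gamma|$ appearing in the potential function and for the fact that $\varphi_C$ is now a covering map rather than a diffeomorphism. Since the covering map $\varphi_C$ is a local diffeomorphism, all pointwise estimates can be pulled back and analyzed exactly as before; the only subtlety is in tracking global quantities such as $H$-centers and heat kernel lower bounds, where the degree $|\Gamma|$ enters.

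\textbf{Step (i): metric stability.} The plan is to repeat the proof of Proposition \ref{stabilitymetric} verbatim. Set $g_0(t)=\varphi_C^*g(t)$, which, since $\varphi_C$ is a local diffeomorphism, is a genuine Ricci flow on $U \times [-10,0]$. The two-sided pseudolocality theorem (Theorem \ref{thm:twoside}) and Shi's local estimates are purely local statements and apply without change; one then runs the same contradiction-and-compactness argument, invoking two-sided uniqueness of Ricci flow (\cite{chen2006uniqueness,kotschwar2010backwards}) to identify the limit with the standard cylinder evolution $\bar g(t)$. The constant $\bar\delta$ now depends additionally on $|\Gamma|$ only through the later steps; for (i) alone it does not, but we keep the dependence for uniformity of the statement.

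\textbf{Step (ii): potential stability.} This is the analog of Proposition \ref{stabilitypotential}, and here $\log|\Gamma|$ must be carried through. Using \eqref{equ:rClevel} and the assumption $f(\varphi_C(\bar p),-1)\le n$, we first verify via Theorem \ref{thm:upper} (ii) that $(\varphi_C(\bar p),-1)$ is an $H(n,Y)$-center of $x_0^*$; note that $\Theta_m(\Gamma)=\Theta_m-\log|\Gamma|$ and $\bar f_\Gamma = \bar f - \log|\Gamma|$, so the level sets of $\varphi_C^*f$ and of $\bar F=|\vec x|^2/4$ are comparable up to the additive constant $\log|\Gamma|$, which is absorbed into the $C(n,Y,\ep)$ term in \eqref{equ:bdpotquo}. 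The upper heat kernel bound of Theorem \ref{thm:upper} (ii) gives the lower bound $F \ge d_t^2/4(1+\ep) - C$ as before; the matching lower heat kernel bound, obtained by the reproduction formula together with an $H_n$-center and Perelman's estimate \cite[Corollary 9.5]{perelman2002entropy} exactly as in the proof of Proposition \ref{stabilitypotential}, gives the upper bound. The distance-comparison Claims (the analogs of Claims \ref{extrahcenter1} and \ref{extraclaim2}) go through because geodesics in $U$ project isometrically under $\varphi_C$; the only place $|\Gamma|$ matters is in bounding the number of sheets, which affects only the constants $C(n,Y,\ep,|\Gamma|)$—and since these enter only additively in \eqref{equ:bdpotquo}, they can be folded into $C(n,Y,\ep)$. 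The higher-order estimate $[\varphi_C^*F]_l \le C e^{\ep l \bar F}$ then follows exactly as in Corollary \ref{highestimatepotential1} from interior parabolic regularity applied to the heat kernel $K=(4\pi\tau)^{-n/2}e^{-f}$, using the $C^0$-bound just established to control $\sup_B f - \inf_B f$.

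\textbf{Main obstacle.} The one genuinely new point, as opposed to a mechanical translation, is ensuring that the various heat-kernel lower bounds and $H$-center statements remain quantitatively controlled in terms of $|\Gamma|$ rather than silently depending on the geometry of $M$ away from the neck. Concretely, in the lower heat kernel estimate one must check that the point $\varphi_C(\bar p)$ lies within a controlled $g(-\ep)$-distance of an $H_n$-center of $x_0^*$; this requires the reproduction formula together with the fact that $K(x_0,0;\varphi_C(\bar p),-1)\ge c(n)>0$, which in turn follows from $f(\varphi_C(\bar p),-1)\le n$. Once this is in place—and it is, by the same Claim \ref{extrahcenter1}-type argument—the rest is routine. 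I do not anticipate any essential difficulty beyond careful bookkeeping of the $\log|\Gamma|$ shift and the covering degree in the constants.
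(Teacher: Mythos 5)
There is a genuine gap, and it sits exactly where you wave it through: the lower bound in \eqref{equ:bdpotquo}. You assert that the analog of Claim \ref{extraclaim2} "goes through because geodesics in $U$ project isometrically under $\varphi_C$, the only place $|\Gamma|$ matters is in bounding the number of sheets." That is true for the \emph{upper} distance bound $d_t(p,\varphi_C(x))\le(1+\ep)\bar d_t(\bar p,x)$ (push forward a geodesic from $\bar p$ to $x$), hence for the upper bound on $F$. But the lower bound on $F$ requires the reverse inequality $d_t(p,\varphi_C(x))\ge(1-\ep)\bar d_t(\bar p,x)-C$, and for that you must \emph{lift} a minimizing geodesic in $M$ from $p$ to $\varphi_C(x)$. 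Since $\varphi_C$ is only a covering map, the lift starting at $\bar p$ need not end at $x$: it ends at some preimage $x'\in\varphi_C^{-1}(\varphi_C(x))$, and nothing in steps (i) or in the metric-closeness condition \eqref{equ:rCclose} alone prevents $x'$ from being much closer to $\bar p$ than $x$ is. If that happened, $F(\varphi_C(x),t)$ would be small while $\bar F(x)$ is large, and the lower bound would fail. This is precisely why Definition \ref{defnrCdq} carries the extra hypothesis \eqref{equ:rClevel} (absent from Definition \ref{defnrCd}): it pins $f(-1)\circ\varphi_C$ to $\bar f-\log|\Gamma|$ up to a factor $1\pm\delta$, which forces all preimages of $\varphi_C(x)$ to satisfy $\bar d_{-1}(\bar p,x')=(1\pm3\delta)\bar d_{-1}(\bar p,x)$, and only then does the lifted-geodesic argument close. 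Your proposal never invokes \eqref{equ:rClevel}, so the key step is unproved.

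A related symptom: because \eqref{equ:rClevel} is a hypothesis only at $t=-1$, the paper first proves the lower distance bound at $t=-1$ and then extends it to $t\in[-1-\bar\delta,-1+\bar\delta]$ by distance distortion; this is why part (ii) is stated on that short time interval rather than on $[-8,-1/8]$ as in Proposition \ref{stabilitypotential}. Your "repeat verbatim" plan silently claims the full time range. Finally, the obstacle you flag as the main one (controlling $H$-centers and heat-kernel lower bounds in terms of $|\Gamma|$) is not where the difficulty lies—that part does carry over as in Claim \ref{extrahcenter1}; the real issue is the multivaluedness of $\varphi_C^{-1}$ described above.
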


	\begin{proof}
(i) follows from the same argument as in Proposition \ref{stabilitymetric}, so we focus on (ii).
		It suffices to prove \eqref{equ:bdpotquo}; once established, the higher-order estimates follow exactly as in Corollary \ref{highestimatepotential1}.

By the same reasoning as in the proof of Proposition \ref{stabilitypotential} (see Claim \ref{extraclaim2}), we have
			\begin{align*}
d_t(p,\varphi_C(x))\leq 	(1+\ep) \bar d_t(\bar p, x),
		\end{align*}	
	for any $(x, t) \in \left\{\bar b \le \mathbf{r}^{|\Gamma|}_{C,\delta}- \bar \delta^{-1}\right\} \times [-9,-\ep]$, where $p=\varphi_C(\bar p)$. This immediately yields the upper bound in \eqref{equ:bdpotquo} by the same argument as in Proposition \ref{stabilitypotential}.
		
However, the lower bound in \eqref{equ:bdpotquo} cannot be obtained directly. Instead, we establish the following: for any $(x, t) \in \left\{\bar b \le \mathbf{r}^{|\Gamma|}_{C,\delta}- \bar \delta^{-1}\right\} \times [-1-\bar \delta,-1+\bar \delta]$,
		\begin{align}\label{potential104quo}
			d_t(p,\varphi_C(x)) \ge (1-\ep)\bar d_t(\bar p, x)-C(n,Y,\ep).
		\end{align}

We first treat the case $t=-1$. It suffices to consider $x \in \left\{\bar b \le \mathbf{r}^{|\Gamma|}_{C,\delta}- \bar \delta^{-1}\right\}$ with $\bar d_{-1}(\bar p,x)\gg 1$. For such $x$, \eqref{equ:rClevel} implies that any $x'\in \varphi_C^{-1}(\varphi_C(x))$ satisfies
		\begin{align*}
(1-2\delta)\bar f(x')\leq \bar f(x)\leq (1+2\delta)\bar f(x'),
	\end{align*}
which implies
		\begin{align}\label{equ:distquo2}
			(1-3\delta)\bar d_{-1}(\bar p,x)\leq \bar d_{-1}(\bar p,x')\leq (1+3\delta)\bar d_{-1}(\bar p,x),
		\end{align}
		provided $\bar d_{-1}(\bar p,x)\geq C(n,Y, \delta)$. 
		
Let $\gamma(s)$, for $s \in [0, L]$, be a minimizing geodesic with respect to $g(-1)$ from $p$ to $\varphi_C(x)$. Set $s_0 \in [0, L]$ to be the smallest parameter so that $\gamma(s_0)$ lies on the boundary of $\varphi_C \lc \{\bar b \le \mathbf{r}^{|\Gamma|}_{C,\delta} \} \rc$; if no such $s_0$ exists, set $s_0=L$. Let $\tilde \gamma$ denote the lift of $\gamma \vert_{[0, s_0]}$ starting from $\bar p$ under $\varphi_C$. 

If $s_0<L$, it is clear from our definition of $s_0$ that the length of $\tilde \gamma$ with respect to $\bar g(-1)$ is at least $\mathbf{r}^{|\Gamma|}_{C,\delta}$. Thus, we obtain by \eqref{equ:rCclose}
		\begin{align*}
d_{-1}(p,\varphi_C(x)) \ge (1-\delta) \mathbf{r}^{|\Gamma|}_{C,\delta} \ge (1-\delta) \bar d_{-1}(\bar p, x).
	\end{align*}

If $s_0=L$, then $\tilde \gamma$ is a curve from $\bar p$ to some $x' \in \varphi_C^{-1}(\varphi_C(x))$. Thus, it follows from \eqref{equ:rCclose} and \eqref{equ:distquo2} that
		\begin{align*}
d_{-1}(p,\varphi_C(x)) \ge (1-\delta) \bar d_{-1}(\bar p, x') \ge (1-3\delta)^2 \bar d_{-1}(\bar p, x).
	\end{align*}

Therefore, \eqref{potential104quo} holds for $t=-1$ if $\delta \le \delta(\ep)$. The general case for $t \in [-1-\bar \delta, -1+\bar \delta]$ follows from the standard distance distortion estimate on $\varphi_C(U) \times [-1-\bar \delta, -1+\bar \delta]$. Here, it suffices to consider minimizing geodesics from $p$ to $x$ lying entirely within $\varphi_C \lc \{\bar b \le \mathbf{r}^{|\Gamma|}_{C,\delta} \} \rc$; otherwise, \eqref{potential104quo} follows by the same argument as above.

Having \eqref{potential104quo}, we can obtain the lower bound in \eqref{equ:bdpotquo} as in the proof of Proposition \ref{stabilitypotential}.
	\end{proof}

Next, we define\index{$\bar \delta_l$}
		\begin{align*}
\bar \delta_l:=\frac{1}{2}\bar \delta (n,10^{-100}n^{-1}l^{-1} \sigma, |\Gamma|),
	\end{align*}
where $\bar \delta$ is from Proposition \ref{stabilitymetricq}. By this choice, we can generalize Theorem \ref{thm:ext1} for the current case.

\begin{thm}\label{thm:ext1q}
For any constants $D>1$ and $\sigma \in (0, 1/10)$, then there exists a large constant $L'=L'(n, Y, \sigma, |\Gamma|, l, D)>1$ satisfying the following property.

Let $\varphi_A$ be the map corresponding to $\r_{A}^{|\Gamma|}$ in Definition \ref{defnradii}. If $\mathbf{r}^{|\Gamma|}_A \in [ L', (1-\sigma) \mathbf{r}_E]$, where $\r_E$ is defined as in Definition \ref{defnrE}, then there exists a covering map $\varphi$ of degree $|\Gamma|$ from a domain $U$ containing $\{\bar b \le \mathbf{r}^{|\Gamma|}_A+D \}$ onto a subset of $M$ such that $\varphi=\varphi_A$ on $\{\bar b \le \rA^{|\Gamma|}-2 \bar \delta_l^{-1}\}$ and
	\begin{align*}
		\left[\varphi^* g(-1)-\bar g\right]_2\leq \bar \delta_l
	\end{align*}
on	$U$. Moreover, if $x\in U$ satisfies $\bar f(x)\geq 10n+\log|\Gamma|$, then
\begin{align*}
	(1-\bar \delta_l)\lc\bar f(x)-\log|\Gamma|\rc \leq f(-1)\circ \varphi_C(x)\leq (1+\bar \delta_l) \lc \bar f(x)-\log|\Gamma|\rc.
\end{align*}
In particular, we have
	\begin{align*} 
		\mathbf{r}^{|\Gamma|}_{C,\bar \delta_l} \ge \mathbf{r}^{|\Gamma|}_A+D.
	\end{align*}
\end{thm}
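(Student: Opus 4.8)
\textbf{Proof plan for Theorem \ref{thm:ext1q}.}
The plan is to mimic the proof of Theorem \ref{thm:ext1} almost verbatim, carrying the deck transformation group $\Gamma$ along as an extra symmetry that is preserved at every stage. First I would record the preliminary reductions: by the definitions of $\rA^{|\Gamma|}$ and $\r_{C,\bar \delta_l}^{|\Gamma|}$ (Definitions \ref{defnradiiquo} and \ref{defnrCdq}), it follows that $\r_{C,\bar\delta_l}^{|\Gamma|} \ge \rA^{|\Gamma|} - 1$ once $L'$ is large, since a covering map witnessing $\rA^{|\Gamma|}$ automatically satisfies \eqref{equ:rCclose}, and the level-set comparison \eqref{equ:rClevel} follows from the analog of Proposition \ref{stabilitypotential} applied on the universal cover (Proposition \ref{stabilitymetricq}(ii), whose proof is already written to accommodate the $\log|\Gamma|$ shift). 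Then, arguing by contradiction, I would take a sequence of closed Ricci flows $\XX^i$ with entropy bounded below by $-Y$, base points $x_{0,i}^*$, with $\rA^{i,|\Gamma|} \to \infty$, $\rA^{i,|\Gamma|} \le (1-\sigma)\rE^i$, and covering maps $\varphi_{A,i}$ of degree $|\Gamma|$ from domains containing $\{\bar b \le \rA^{i,|\Gamma|}\}$, but for which no degree-$|\Gamma|$ covering map from a domain containing $\{\bar b \le \rA^{i,|\Gamma|} + D_0\}$ satisfying the required estimates exists.

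The key point is that \emph{locally} a covering map looks like a diffeomorphism, so the entire analytic machinery of the proof of Theorem \ref{thm:ext1} goes through on the pullback. Concretely: set $L_i := \rA^{i,|\Gamma|} - 2\bar\delta_l^{-1}$, consider the hypersurface $\Sigma_i := \varphi_{A,i}(\{\bar b = L_i\})$ — which is now a disjoint union of $|\Gamma|$ sheets if one pulls back, but is a genuine embedded hypersurface downstairs after quotienting — and pick $q_i^* = (q_i, -1/2)$ with $q_i \in \Sigma_i$. Using Theorem \ref{thm:intro1} on $\XX^i_\III$ based at $q_i^*$, one extracts a noncollapsed Ricci flow limit space $(Z, d_Z, q, \t)$, and exactly as in Theorem \ref{thm:ext1} the pullbacks $\varphi_{A,i}^{-1}\circ\phi_i$ converge smoothly to an isometry of a neighborhood of $q$ onto a half-cylinder $(\R_- \times \R^{n-m-1}\times S^m) \times [-2,-1/2]$ — the $\Gamma$-action does not enter because we are working in a small ball around $q$ that lifts homeomorphically. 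I would then introduce the auxiliary function $F_i' := 2(\tilde F_i - \tilde F_i(q_i,-1))/L_i$ with $\square \tilde F_i = -n/2$, $\tilde F_i = F_i$ at $t = -2$, verify $\square F_i' = -n/L_i$, and run Claims \ref{extension2}, \ref{extension5} verbatim: the integral bound \eqref{integralbd1} comes from Proposition \ref{betterpotential} applied to $\XX^i$ (a statement about the Ricci flow, independent of any quotient structure), and the pointwise potential bound needed to convert the weighted $L^2$-estimate into an unweighted one is Proposition \ref{stabilitypotential} on $\XX^i$, again unaffected by $\Gamma$. This produces a splitting function $F_\infty$ on $\RR'_{(-3/2,-1+\ep)}$ with $\na^2 F_\infty = \partial_\t F_\infty = 0$ and $|\na F_\infty| = 1$, hence by \cite[Theorem 4.19]{fang2025RFlimit} an isometric splitting, identifying $\RR'_{(-3/2,-1+\ep)}$ with the full cylinder $(\R \times \R^{n-m-1} \times S^m) \times (-3/2, -1+\ep)$.

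The final step is the gluing, and this is where the $\Gamma$-structure requires genuine care — I expect it to be the main obstacle, though a mild one. In Theorem \ref{thm:ext1} one flows the (near-splitting) vector field $V_i := 2\na_i\tilde F_i/L_i$ and the Euclidean $\partial_r$ to produce $\kappa_i$, $\bar\kappa_i$, then sets $\varphi_i' := \kappa_i \circ \hat\iota \circ (\bar\kappa_i)^{-1}$ and cuts off against $\varphi_{A,i}$. Here the subtlety is that $\varphi_{A,i}$ is a $|\Gamma|$-fold covering map rather than a diffeomorphism, so I must check: (a) the flow $\kappa_i$ of $V_i$, starting from $\Sigma_i$ (the downstairs hypersurface, which is the image of $\{\bar b = L_i\}$ under the covering, hence itself carries a free $\Gamma$-action induced on a tubular neighborhood), extends the covering structure — this works because $V_i$ is $\Gamma$-invariant, being defined from the canonical potential $\tilde F_i$ on $M$, not from any choice of sheet; (b) the Euclidean model map $\bar\kappa_i : \{\bar b = L_i\} \times [0, 50D_1] \to \bar M$ is $\Gamma$-equivariant, which holds because $\Gamma \le \mathrm{Iso}(\mathcal C^{n-m}_{-1})$ commutes with the radial flow $\bar\phi^t$; (c) consequently the glued map $\varphi_i''$ defined on $\{\bar b \le L_i + 20D_1\}$ descends to (equivalently, remains) a covering map of degree $|\Gamma|$ onto its image. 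Once equivariance is in place the estimate $[(\varphi_i'')^* g_i(-1) - \bar g]_2 \le \Psi(i^{-1})$ and the level-set comparison $|(\varphi_i'')^* f_i(-1) - \bar f + \log|\Gamma||$ being small follow exactly as before (using \eqref{keyext1}-type bounds from Proposition \ref{stabilitymetricq}(ii)), and since $L_i + 20D_1 > \rA^{i,|\Gamma|} + D_0$ we reach the desired contradiction. The conclusion $\r_{C,\bar\delta_l}^{|\Gamma|} \ge \rA^{|\Gamma|} + D$ is then immediate from the definition of $\r_{C,\bar\delta_l}^{|\Gamma|}$, having checked that the constructed $\varphi$ satisfies both \eqref{equ:rCclose} and \eqref{equ:rClevel}, with the parameter $D_0$ chosen in terms of $D$ at the outset.
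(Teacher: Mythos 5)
Your overall architecture matches the paper's proof of Theorem \ref{thm:ext1q}: the same preliminary reduction, the same contradiction setup, the same limit-space and splitting-function machinery carried over from Theorem \ref{thm:ext1}, and the same flow-and-glue construction of the extended covering map. Your points (a)--(c) about the $\Gamma$-equivariance of the flows of $V_i$ and of $\partial_r$ are correct; the paper treats that part as routine (it simply says $\varphi_i'$ and $\varphi_i''$ are "defined in exactly the same way" as in \eqref{gluemetric0} and \eqref{gluemetric1}), so your extra care there is harmless but not where the new difficulty lies.

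The genuine gap is your one-clause treatment of the level-set condition \eqref{equ:rClevel} on the extended domain, which is in fact the main new content of the quotient case. You claim it "follows exactly as before (using Proposition \ref{stabilitymetricq}(ii))", but Proposition \ref{stabilitymetricq} is proved under the hypothesis that the covering map already satisfies \eqref{equ:rClevel} on its domain --- that condition is part of Definition \ref{defnrCdq} --- so invoking it for $\varphi_i''$ on the collar $\{L_i \le \bar b \le L_i+20D_1\}$ is circular. The condition is in the definition precisely because, for a covering map, $C^2$-closeness of metrics does not control the potential from below: a minimizing geodesic downstairs from $p$ to $\varphi(x)$ lifts to a curve ending at \emph{some} preimage $x'$ of $\varphi(x)$, not necessarily $x$, so one cannot deduce $d_{-1}(p,\varphi(x)) \ge (1-\ep)\,\bar d_{-1}(\bar p, x)$ without already knowing that all preimages of $\varphi(x)$ have comparable $\bar f$. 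The paper supplies the missing argument in \eqref{equ:stapot3}: it chooses $\Sigma_i$ to be a level set of the \emph{downstairs} potential $f_i$ intersected with the image of the covering (not, as you do, the image of $\{\bar b = L_i\}$; the point of the paper's choice is that $f_i$ is then exactly $(L_i-1)^2/4$ on $\Sigma_i$), obtains the distance lower bound $d_{i,-1}(\varphi_i''(\bar p), \varphi_i''(w)) \ge (1-\Psi(i^{-1}))L_i$ for points of $\Sigma_i$ by the lifting argument of \eqref{potential104quo} applied to the original map $\varphi_{A,i}$ (where the strong potential estimate from the definition of $\rA^{|\Gamma|}$ is available and there is no circularity), propagates this through the collar using that $V_i$ is an almost-unit vector field, and finally converts it into the lower bound on $f_i$ via the heat-kernel estimate \eqref{equ:stapot4}. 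That chain of reasoning needs to be written out; without it the constructed $\varphi_i''$ has not been shown to satisfy Definition \ref{defnrCdq}, and the contradiction does not close. (Relatedly, the additive smallness of $(\varphi_i'')^*f_i(-1)-\bar f+\log|\Gamma|$ that your last sentence suggests is not available on the collar --- only the multiplicative comparison \eqref{equ:rClevel} holds there.)
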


\begin{proof}
By the definitions of $\rA^{|\Gamma|}$ and $\r^{|\Gamma|}_{C, \bar \delta_l}$, we conclude that $\r^{|\Gamma|}_{C, \bar \delta_l} \ge \mathbf{r}^{|\Gamma|}_A -1$, provided that $L'$ is sufficiently large.

Suppose the conclusion fails. Then there exists a sequence of Ricci flows $\XX^i=\{M^n_i, (g_i(t))_{t\in [-10,0]}\}$ with entropy bounded below by $-Y$. Moreover, there exist base points $x_{0,i}^*=(x_{0,i},0)\in\XX^i$ such that $\rA^{|\Gamma|, i}\to +\infty$ and $\rA^{|\Gamma|, i} \le (1-\sigma)\mathbf{r}^i_E$. For each $i$, there exists a covering map $\varphi_{A,i}$ of degree $|\Gamma|$ from a domain $U_i$ containing $\{\bar b \le \rA^{|\Gamma|, i}\}$ onto a subset of $M$ corresponding to $\rA^{|\Gamma|, i}$ in Definition \ref{defnradiiquo}. However, it is not possible to find a covering map from a domain containing $\{\bar b \le \rA^{|\Gamma|, i}+D_0\}$ onto a subset of $M_i$ for some constant $D_0>0$ that satisfies the required properties. 

We define $D_1:=D_0+2 \bar \delta_l^{-1}$, $L_i:=\rA^{|\Gamma|, i}-2 \bar \delta_l^{-1}$ and the hypersurface $\Sigma_i:=\varphi_{A, i}(U_i) \cap \lc \{b_i=L_i-1\} \rc$, where $b_i:=2\sqrt{|f_i(-1)|}$. Moreover, we set $\Sigma_i':=\varphi_{A, i}^{-1}(\Sigma_i)$ and denote by $\Omega_i(-1)$ the domain enclosed by $\Sigma_i$. Let $\Omega_i=\Omega_i(-1) \times [-2, -1/2]$. It is clear that
	\begin{align*}
 \Sigma_i' \subset \{\bar b \le L_i\} \setminus \{\bar b \ge L_i-2\}.
	\end{align*}
We choose a base point $q_i^*=(q_i, -1/2) \in \XX^i$ such that $q_i \in \Sigma_i$.

Define the time intervals $\III^{++}=[-10, 0]$, $\III^+=[-9.9, 0]$, $\III=[-9.8, 0]$ and $\III^-=(-9.8, 0]$. Then, by Theorem \ref{thm:intro1}, passing to a subsequence if necessary, we have
	\begin{align*}
		\left(\XX_\III^i,d_i^*,q_i^*,\t_i\right)\xrightarrow[i\to\infty]{\quad \mathrm{pGH}\quad} \left(Z, d_Z, q,\t\right),
	\end{align*}
	where $d_i^*$ denotes the spacetime distance induced by $g_i(t)$ (see Definition \ref{defnd*distance}). We assume that the regular part $\RR$ of $Z$ carries a structure of Ricci flow spacetime $(\RR, \t, \partial_\t, g^Z)$. Let $\phi_i$ denote the diffeomorphisms given in Theorem \ref{thm:intro3}.

As in the proof of Theorem \ref{thm:ext1}, $(\Omega_i, g_i(t), q_i^*)$, via the diffeomorphisms $\phi_i$, converge smoothly to a domain $\Omega \subset Z$ containing the point $q$. Moreover, for any $q_i' \in \varphi_{A, i}^{-1}(q_i)$, we have
	\begin{align*}
\lc \varphi_{A, i}^{-1}\lc \Omega_i \rc , \varphi_{A, i}^* (\phi_i^{-1})^* g^Z_t, q_i' \rc \xrightarrow[i\to\infty]{C^{\infty}} \lc \R_-\times\R^{n-m-1}\times S^m \times [-2, -1/2], g_c(t), q_\infty' \rc,
	\end{align*}
where $g_c(t)$ is a family of standard metrics on $\R_-\times\R^{n-m-1}\times S^m$. By the same argument as in the proof of Theorem \ref{thm:ext1}, we conclude that $Z_{(-1-\ep,-1+\ep)}=\RR'_{(-1-\ep, -1+\ep)}$, where $\RR':=\iota_q(\RR^q)$ and $\ep$ is a small constant, depending on $n$, $Y$, $|\Gamma|$, $\sigma$ and $l$. 

In addition, there exists a vector field $V_i$ such that on $B_{g_i(-1)}(\Sigma_i, 100 D_1)$ such that
		\begin{align*}
	\abs{|V_i|-1} \le \Psi(i^{-1}) \quad \text{and} \quad  [\na_i V_i]_{100} \le \Psi(i^{-1}).
	\end{align*} 
Moreover, on $B_{g_i(-1)}(\Sigma_i, 100D_1)$, the vector $\lc \varphi_{A, i} \rc^{-1}_*V_i$ converge smoothly to the direction in the first $\R$ of $\R \times\R^{n-m-1}\times S^m$. By using $V_i$, one can define $\varphi'_i$ and $\varphi_i''$ in exactly the same way as \eqref{gluemetric0} and \eqref{gluemetric1}. Thus, it follows from the construction that $\varphi_i''$ is a smooth covering map of degree $|\Gamma|$ from a domain containing $\{\bar b \le L_i+20D_1\}$ onto a subset of $M_i$ such that
	\begin{align}\label{equ:extmetric}
		\left[(\varphi_i'')^*g_{i}(-1)-\bar g(-1)\right]_2\leq \Psi(i^{-1}).
	\end{align}
Moreover, for any $y$ in the domain of $\varphi_i''$ with $\varphi_i''(y) \in B_{g_i(-1)}(\Sigma_i, 100D_1)$, we have
		\begin{align}\label{equ:stapot3}
(1-\Psi(i^{-1})) \frac{L_i^2}{4} \le f_i(\varphi_i''(y), -1) \leq (1+\Psi(i^{-1})) \frac{L_i^2}{4}.
	\end{align} 
Indeed, the upper bound in \eqref{equ:stapot3} follows from \eqref{equ:extmetric} together with the argument used in the proof of Proposition \ref{stabilitymetricq} (ii). For the lower bound, the proof of Proposition \ref{stabilitypotential} shows that for any $y$ in the domain of $\varphi_i''$ with $\varphi_i''(y) \in B_{g_i(-1)}(\Sigma_i, 100D_1)$, we have
		\begin{align}\label{equ:stapot4}
f_i(\varphi_i''(y), -1) \ge (1-\Psi(i^{-1})) \frac{d^2_{i,-1}(\varphi_i''(\bar p), \varphi_i''(y))}{4},
	\end{align} 
	where $d_{i, -1}$ denotes the distance function of $g_i(-1)$. Moreover, for any $w$ with $\varphi_i''(w) \in \Sigma_i$, our definition of $L_i$ yields
		\begin{align*}
f_i(\varphi_i''(w), -1)=\frac{(L_i-1)^2}{4}.
	\end{align*} 	
In addition, by the same reasoning as in the derivation of \eqref{potential104quo}, we have
		\begin{align*}
d_{i, -1}(\varphi_i''(\bar p), \varphi_i''(w)) \ge (1-\Psi(i^{-1})) \bar d_{-1}(\bar p, w) \ge (1-\Psi(i^{-1})) L_i.
	\end{align*} 	
	
Since $V_i$ is almost a unit vector field, combining this with \eqref{equ:stapot4} gives
\begin{align*}
f_i(\varphi_i''(y), -1) \ge (1-\Psi(i^{-1})) \frac{L_i^2}{4},
\end{align*}
which establishes the desired lower bound in \eqref{equ:stapot3}.
		
Since $L_i+20 D_1 > \rA^{|\Gamma|, i}+D_0$, combining \eqref{equ:extmetric}, \eqref{equ:stapot3} and using the fact that $\varphi_i''$ is an extension of $\varphi_{A,i}$, we obtain a contradiction for sufficiently large $i$.
\end{proof}

Next, we define
	\begin{align} \label{eq:dconstantquo}
\bar D_l=\bar D_l(n, \sigma)=10^4 \bar \delta_l^{-1} \gg 1.
	\end{align}\index{$\bar D_l$}
By Theorem \ref{thm:ext1q}, we fix a smooth covering map $\tilde \varphi_A$ of degree $|\Gamma|$ from a domain $U$ containing $\{\bar b \le \rA^{|\Gamma|}+\bar D_l\}$ onto a subset of $M$ such that
	\begin{align*}
		\left[\tilde \varphi_A^* g(-1)-\bar g\right]_2\leq \bar \delta_l,
	\end{align*}
on	$U$, and for any $x$ with $\bar f(x)\geq 10n+\log|\Gamma|$, it holds 
\begin{align*}
	(1-\bar \delta_l)\lc \bar f(x)-\log|\Gamma|\rc \leq f(-1)\circ \tilde \varphi_A^*(x)\leq (1+\bar \delta_l) \lc\bar f(x)-\log|\Gamma|\rc.
\end{align*}
And	on	$\{\bar b \le \mathbf{r}^{|\Gamma|}_A-3 \bar \delta_l^{-1} \}$, we have
\begin{align*}
\left[\bar g-\tilde \varphi_A^* g(-1)\right]_5+\left[\bar f-\log|\Gamma|-\tilde \varphi_A^* f(-1)\right]_5\leq e^{\frac{\bar f}{4}-\frac{\rA^2}{16}}.
	\end{align*}

Next, we extend Theorem \ref{thm:ext2}; see also \cite[Theorem 5.7]{li2023rigidity}.

\begin{thm}\label{thm:ext2q}
There exists a large constant $\hat L=\hat L(n, Y, \sigma, |\Gamma|)>1$ such that if $\mathbf{r}^{|\Gamma|}_A \in [ \hat L, (1-2\sigma) \mathbf{r}_E]$, then
	\begin{align*}
		\rBC^{|\Gamma|} \ge \mathbf{r}^{|\Gamma|}_A+ \bar D_{100}/10.
	\end{align*}
	Here, both $\rA^{|\Gamma|}$ and $\rBC^{|\Gamma|}$ denote the radius functions for $(M, g(-1), f(-1))$ (cf. Definition \ref{defnradiiquo}), and $\bar D_{100}$ is the constant defined in \eqref{eq:dconstantquo} with $l=100$.
\end{thm}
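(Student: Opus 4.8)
The plan is to follow the same overall strategy as in Theorem \ref{thm:ext2} and \cite[Theorem 5.7]{li2023rigidity}, making the necessary modifications for the quotient setting. We first choose $\hat L$ large enough so that the conclusions of Theorem \ref{thm:ext1q} and the quotient analog of Corollary \ref{almostshrinkerlarger} are available; that is, we may fix a smooth covering map $\tilde\varphi_A$ of degree $|\Gamma|$ from a domain $U$ containing $\{\bar b \le \rA^{|\Gamma|}+\bar D_{100}\}$ onto a subset of $M$, satisfying the estimates displayed after \eqref{eq:dconstantquo}. The key new input over \cite{li2023rigidity} is that $(M, g(-1), f(-1))$ is only an \emph{almost} Ricci shrinker, but the pointwise shrinker-operator estimate (the quotient analog of Proposition \ref{pointwiseshrinkeruantity}, obtained through Corollary \ref{almostshrinkerlarger}) gives us a bound on $\Phi=g/2-\na^2 f-\Ric$ with a Gaussian decay factor $e^{\bar f/2-c L^2}$, which is more than sufficient to substitute for exactness wherever the argument in \cite{li2023rigidity} uses $\Phi=0$. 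As in Theorem \ref{thm:ext2}, the second difference is that the neck region produced here by Theorem \ref{thm:ext1q} has fixed size $\sim \bar D_{100}$ rather than size $\sim\ep\rA$, but since the gluing and eigenfunction arguments are carried out within an a priori narrow collar, this causes no difficulty.

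The concrete steps are as follows. Step one: using the almost-splitting vector field $V=\na b/|\na b|^2$ and its cylindrical counterpart $\bar V$, glue the end of $\bar M/\Gamma$ onto $M$ through the neck, producing a complete weighted Riemannian manifold $(\bar M_\Gamma, g', f')$—working upstairs on $\bar M$ via the covering map $\pi$, exactly as in \cite[Theorem 5.7]{li2023rigidity}—that is almost a Ricci shrinker; here one uses the weighted Bianchi identity (Lemma \ref{weightedBianchilem}), the estimate \eqref{shrinkerquantesti2}-type consequence $|f-\scal-|\na f|^2-a_0|\le e^{-cL^2}$, and Claim \ref{keyclaim1}-type control of $\psi_3$ to obtain $|R_1|+|R_2| \le CL^{-1/2}(b')^2+C$. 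Step two: invoke \cite[Proposition A.1]{li2023rigidity} and \cite[Theorem 4.1]{colding2021singularities} to get slow exponential growth of the first $n-m$ eigenfunctions $v_i$ of $\Delta_{f'}$, and run the Bochner/min-max argument of Claim \ref{claimsexa0} to show $\|\na^2 v_i\|_{L^2}^2+|\mu_i-1/2| \le C e^{-L^2/16}$, where the interior $L^2$-bound on $\Phi'$ on the almost-shrinker core is supplied by Corollary \ref{almostshrinkerlarger} instead of vanishing identically. Step three: Gram--Schmidt to orthogonal functions $u_i$ with the growth and norm estimates of Claim \ref{claimsexa}, set $f_0=\frac m2+\frac14\sum u_i^2$, and prove $\|f'-f_0\|_{C^k}$ is exponentially small (Claim \ref{claimsexa1}). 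Step four: flow along $\na u_i$ to build a product diffeomorphism $\psi_4$ on $B_{g_E}(0, L+0.3\bar D_{100})\times N_0$, straighten $N_0$ to $S^m$ via a diffeomorphism $\phi$, and compose with $\psi_3$ and the quotient projection to obtain a covering map $\psi_5$ of degree $|\Gamma|$ from a domain containing $\{\bar b \le L+0.2\bar D_{100}\}$ onto a subset of $M$, with $\|\psi_5^*g-\bar g\|$ and $\|\psi_5^* f-\bar f+\log|\Gamma|\|$ exponentially small. Step five: verify the integral bound $\int_{\{\bar b \le L+0.2\bar D_{100}\}}|\psi_5^*\Phi|^2\,\mathrm{d}V_{\bar f} \le e^{-(L+0.1\bar D_{100})^2/(4-3\sigma)}$ by splitting into the shrinker core (where \eqref{eq:absestimate}-type pointwise control applies) and the neck (where the $e^{-L^2/33}$ metric/potential estimates suffice), exactly as in \eqref{eq:wang4xa}--\eqref{eq:wang4xb}; conclude $\rBC^{|\Gamma|} \ge L+0.2\bar D_{100}$, and recalling $L=\rA^{|\Gamma|}-3\bar\delta_{100}^{-1}$ and $\bar D_{100}=10^4\bar\delta_{100}^{-1}$, this gives $\rBC^{|\Gamma|} \ge \rA^{|\Gamma|}+\bar D_{100}/10$.

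The main obstacle is making sure the covering-map structure of degree $|\Gamma|$ is preserved at every gluing and every application of the flow-line diffeomorphisms. Concretely, when one glues along the neck and when one builds $\psi_4$ out of the eigenfunction flows, all constructions must be performed equivariantly (or, equivalently, upstairs on the universal-type cover $\bar M$ and then descended), and one must confirm that the level set $N_0=\{f_0=m/2\}$ and the straightening $\phi\colon S^m\to N_0$ respect the covering. This is the same technical point handled in \cite[Theorem 5.7]{li2023rigidity}, and the cited argument carries over verbatim once the eigenfunction and potential estimates above are in place; the only genuinely new bookkeeping is tracking the additive shift $\log|\Gamma|$ through the potential estimates, which is routine given Theorem \ref{thm:ext1q} and Proposition \ref{stabilitymetricq}. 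A secondary point worth checking is that the narrower neck $\sim\bar D_{100}$ is still wide enough that the interpolation steps in Claims \ref{claimsexa}--\ref{claimsexa1} retain enough room; since all those estimates are local at unit or $L^{-1}$ scale, a collar of fixed width $\bar D_{100}=10^4\bar\delta_{100}^{-1}$ is ample. Consequently, the proof is a direct adaptation, and I would present it by citing Theorem \ref{thm:ext2} for the non-quotient skeleton, \cite[Theorem 5.7]{li2023rigidity} for the $|\Gamma|$-equivariant bookkeeping, and Theorem \ref{thm:ext1q} together with the quotient analog of Corollary \ref{almostshrinkerlarger} for the inputs that replace exactness.
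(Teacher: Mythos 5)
Your overall strategy is the right one, but the paper's actual proof is a much shorter reduction than what you propose, and your framing of the ``main obstacle'' contains a genuine pitfall. The paper does not redo the eigenfunction construction at all: it simply applies the proof of Theorem \ref{thm:ext2} verbatim to the \emph{pullback} data $(g_0,f_0)=\lc \tilde\varphi_A^* g(-1),\,\tilde\varphi_A^* f(-1)+\log|\Gamma|\rc$ on the domain $U\subset\bar M$ supplied by Theorem \ref{thm:ext1q}. That produces a diffeomorphism $\psi_5$ of $B_{g_E}(0,L+0.3\bar D)\times S^m$ into $\bar M$ satisfying \eqref{eq:wang1}--\eqref{eq:wang4}, and the required degree-$|\Gamma|$ covering map is then just $\varphi_B:=\tilde\varphi_A\circ\psi_5$, restricted to $\psi_5^{-1}(U')$ where $U'=\tilde\varphi_A^{-1}\lc\{b\le L+0.2\bar D\}\cap\tilde\varphi_A(U)\rc$ is a saturated (full-preimage) set — saturation is the one point you must check so that the composition of a diffeomorphism with $\tilde\varphi_A$ is again a covering of degree $|\Gamma|$, and your proposal does not mention it.

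The pitfall: you identify equivariance as the key issue and suggest the construction can be ``performed equivariantly (or, equivalently, upstairs on $\bar M$ and then descended),'' asserting the level set $N_0$ and the straightening $\phi$ ``respect the covering.'' In general they do not, and no descent is possible. Since $\Gamma\le \mathrm{O}(n-m)\times\mathrm{Iso}(S^m)$ may rotate the Euclidean factor (e.g.\ $\Gamma=\Z_2$ acting antipodally on both factors), the coordinate functions $x_i$ are not $\Gamma$-invariant; hence on the quotient $\bar M/\Gamma$ the eigenvalue-$1/2$ eigenspace of $\Delta_{f'}$ can have dimension strictly less than $n-m$, and your Step two would fail to produce $n-m$ almost-splitting functions if carried out downstairs. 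The entire analysis must be done on the cover $\bar M$ with the non-quotient pullback data — where the target weight is $\bar f$, not $\bar f_\Gamma$ — and nothing is ever descended; the covering structure enters only through the final post-composition with $\tilde\varphi_A$. With that correction (and the saturation check), your argument coincides with the paper's.
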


\begin{proof}
We set $L=\mathbf{r}^{|\Gamma|}_A-3\bar \delta_{100}^{-1}$ and $\bar D=\bar D_{100}$. Applying the same proof of Theorem \ref{thm:ext2} to 
	\begin{align*}
(g_0, f_0):=\lc \tilde \varphi_A^* g(-1), \tilde \varphi_A^* f(-1)+\log|\Gamma| \rc,
	\end{align*}
we obtain a diffeomorphism $\psi_5$ from $B_{g_E}(0, L+0.3 \bar D) \times S^m$ into $\bar M$ (see \eqref{eq:wang1}, \eqref{eq:wang2}, \eqref{eq:wang3} and \eqref{eq:wang4}) such that
	\begin{align*} 
\abs{\psi_5^* g_0-\bar g}+\abs{\psi_5^* f_0-\bar f} \le C \exp \lc C L^{\frac 3 2}-\frac{L^2}{32} \rc
	\end{align*}
and for any $k \in [1, 10^{11}n\sigma^{-1}]$,
	\begin{align*} 
\|\psi_5^* g_0-\bar g \|_{C^k}+\|\psi_5^* f_0-\bar f \|_{C^k} \le e^{-\frac{L^2}{33}}.
	\end{align*}

We consider $\Omega:=\{b \le L+0.2 \bar D\} \cap \tilde \varphi_A\lc U \rc$, where $b:=2\sqrt{|f(-1)|}$. Then it is clear that $U':=\{\tilde \varphi_A^{-1} \lc \Omega \rc\}$ satisfies
	\begin{align*} 
\{\bar b \le L+0.1 \bar D\} \subset U' \subset \{\bar b \le L+0.3 \bar D\}.
	\end{align*}

Consequently, if we define $\varphi_B:=\tilde \varphi_A \circ \psi_5$ from $\psi_5^{-1}(U')$, then it is clear that $\varphi_B$ satisfies the conclusions for $\rBC^{|\Gamma|}$ in Definition \ref{defnradiiquo}, and hence we have 
	\begin{align*}
\rBC^{|\Gamma|}\geq \rA^{|\Gamma|}+0.1 \bar D.
	\end{align*}

In sum, the proof is complete.	
\end{proof}

The following result is parallel to Theorem \ref{thm:ra} by using Proposition \ref{contractionquotient} and Theorem \ref{thm:ext2q} iteratively.

\begin{thm} 
	Let $\XX=\{M^n,(g(t))_{t\in I}\}$ be a closed Ricci flow with entropy bounded below by $-Y$. Assume $x_0^*=(x_0,0)\in \XX$ and $[-10,0]\subset I$.  For any small $\sigma\in (0,1/10)$, there exists a constant $L=L(n,Y,\sigma, |\Gamma|)$ such that if the weighted Riemannian manifold $\left(M,g(-1),f_{x_0^*}(-1)\right)$ satisfies $\rA^{|\Gamma|}\geq L$, then
	\begin{align*}
		\min\left\{\mathbf{r}^{|\Gamma|}_A, \rBC^{|\Gamma|}\right\} \ge (1-3\sigma) \mathbf{r}_E.
	\end{align*}
\end{thm}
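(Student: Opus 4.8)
The plan is to mirror the proof of Theorem \ref{thm:ra} verbatim, replacing each tool used there with its quotient-cylinder counterpart established earlier in this section. Fix $\sigma \in (0,1/10)$. By Theorem \ref{thm:upper} (i), the weighted Riemannian manifold $(M, g(-1), f_{x_0^*}(-1))$ satisfies the decay condition \eqref{normalquo} with a constant $C_V = C_V(n,Y)$. This is the input needed for Proposition \ref{contractionquotient}, which plays the role that Proposition \ref{prop:con} played in the smooth case. The constant $\bar D_{100}$ is the one defined in \eqref{eq:dconstantquo} with $l=100$, and it depends only on $n$ and $\sigma$.

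First I would choose $L := 100 \max\left\{ L_1(n, C_V, |\Gamma|, \sigma),\ \hat L(n, Y, \sigma, |\Gamma|),\ \sigma^{-1} \right\}$, where $L_1$ is the threshold from Proposition \ref{contractionquotient} (applied with $C_V = C_V(n,Y)$, so effectively $L_1 = L_1(n, Y, |\Gamma|, \sigma)$) and $\hat L$ is the threshold from Theorem \ref{thm:ext2q}. Suppose now that $\rA^{|\Gamma|} \ge L$. If $(1-2\sigma)\mathbf{r}_E < L$, then since $\rA^{|\Gamma|} \ge L > (1-2\sigma)\mathbf{r}_E \ge (1-3\sigma)\mathbf{r}_E$ and $\rBC^{|\Gamma|} \ge \rA^{|\Gamma|} - 3$ by Proposition \ref{contractionquotient}, one still has to check $\rBC^{|\Gamma|} \ge (1-3\sigma)\mathbf{r}_E$; but in this regime $\mathbf{r}_E$ is itself bounded by $L/(1-2\sigma)$, a fixed constant, and a short argument (identical to the one in Theorem \ref{thm:ra}) handles it, so the conclusion holds. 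Otherwise $(1-2\sigma)\mathbf{r}_E \ge L$, and I would run the bootstrap: whenever $L \le \rA^{|\Gamma|} \le (1-2\sigma)\mathbf{r}_E$, Theorem \ref{thm:ext2q} gives $\rBC^{|\Gamma|} \ge \rA^{|\Gamma|} + \bar D_{100}/10$, and then Proposition \ref{contractionquotient} gives back $\rA^{|\Gamma|} \ge \rBC^{|\Gamma|} - 3 \ge \rA^{|\Gamma|} + \bar D_{100}/10 - 3 > \rA^{|\Gamma|}$ (since $\bar D_{100} \gg 1$). Iterating this strictly increasing step, both radii grow past any bound short of $(1-2\sigma)\mathbf{r}_E$; the iteration terminates precisely when $\rA^{|\Gamma|} > (1-2\sigma)\mathbf{r}_E$, at which point Proposition \ref{contractionquotient} yields
\begin{align*}
\min\left\{\mathbf{r}^{|\Gamma|}_A, \rBC^{|\Gamma|}\right\} \ge (1-2\sigma)\mathbf{r}_E - 3 \ge (1-3\sigma)\mathbf{r}_E,
\end{align*}
using $\mathbf{r}_E \ge L/(1-2\sigma) \ge \sigma^{-1}$ to absorb the additive constant $3$ into the multiplicative loss from $1-2\sigma$ to $1-3\sigma$.

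I do not expect any genuine obstacle here, since all the hard analysis has been done: Proposition \ref{contractionquotient} is the quotient analogue of the contraction estimate, Theorem \ref{thm:ext1q} supplies the neck extension (from $\rA^{|\Gamma|}$ to $\rCd^{|\Gamma|}$) needed inside Theorem \ref{thm:ext2q}, and Theorem \ref{thm:ext2q} supplies the gluing extension (from $\rA^{|\Gamma|}$ to $\rBC^{|\Gamma|}$). The only point requiring mild care is bookkeeping of the constants: one must confirm that the threshold $\hat L$ in Theorem \ref{thm:ext2q} was itself chosen large enough to invoke Theorem \ref{thm:ext1q} and the quotient version of Corollary \ref{almostshrinkerlarger} with $l = 100$ and $D = \bar D_{100}$, which is exactly how $\hat L$ is defined in that theorem's proof. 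Thus the present statement is a formal consequence, and its proof is a one-paragraph assembly of the cited results.
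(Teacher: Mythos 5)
Your proposal follows the paper's route exactly: the paper gives this theorem no separate proof, presenting it as the verbatim quotient analogue of Theorem \ref{thm:ra}, obtained by choosing $L$ as a large multiple of the thresholds from Proposition \ref{contractionquotient} and Theorem \ref{thm:ext2q} and running the same contradiction/iteration, so your assembly is the intended one.

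One concrete correction: you twice invoke Proposition \ref{contractionquotient} as if it gave $\rBC^{|\Gamma|}\ge \rA^{|\Gamma|}-3$, but it states the reverse implication, namely that $\rBC^{|\Gamma|}\ge L_1$ forces $\rA^{|\Gamma|}\ge \rBC^{|\Gamma|}-3$. This matters at your final step: Proposition \ref{contractionquotient} can never produce the lower bound on $\rBC^{|\Gamma|}$ appearing in $\min\{\rA^{|\Gamma|},\rBC^{|\Gamma|}\}$. The correct bookkeeping is: if $\rA^{|\Gamma|}\le(1-2\sigma)\rE$ then Theorem \ref{thm:ext2q} gives $\rBC^{|\Gamma|}\ge\rA^{|\Gamma|}+\bar D_{100}/10$ and Proposition \ref{contractionquotient} gives $\rA^{|\Gamma|}\ge\rBC^{|\Gamma|}-3>\rA^{|\Gamma|}$, a contradiction; hence $\rA^{|\Gamma|}>(1-2\sigma)\rE$, and the bound on $\rBC^{|\Gamma|}$ then comes from one more application of Theorem \ref{thm:ext2q} at an admissible sub-radius in $[\hat L,(1-2\sigma)\rE]$ (legitimate because the $\rA^{|\Gamma|}$-conditions are downward closed in the radius), yielding $\rBC^{|\Gamma|}\ge(1-2\sigma)\rE$. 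With that repair, and granting the degenerate case $(1-2\sigma)\rE<L$ the same brief treatment the paper itself gives it in Theorem \ref{thm:ra}, your argument is complete.
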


From the above results, we obtain the following Lojasiewicz inequality near quotient cylinders, whose proof is almost identical to that of Theorem \ref{thm:lo}.

\begin{thm}[Lojasiewicz inequality] \label{thm:loquotient}
For any $\beta \in (0,3/4)$, there exist constants $C=C(n,Y)$ and $L_\beta=L(n,Y,\beta, |\Gamma|)$ such that the following property holds.

Let $\XX=\{M,g(t)\}_{t\in I}$ be a closed Ricci flow with entropy bounded below by $-Y$. Assume $x_0^*=(x_0,t_0)\in\XX$ and $[t_0-10 r^2,t_0]\subset I$. If the weighted Riemannian manifold $\left(M,r^{-2}g(t_0-r^2),f_{x_0^*}(t_0-r^2)\right)$ satisfies $\rA^{|\Gamma|} \geq L_\beta$, then
	\begin{align*}
		\left|\mathcal W_{x_0^*}(r^2)-\Theta_m(\Gamma)\right| \le C \left( \mathcal W_{x_0^*}(r^2/2)-\mathcal W_{x_0^*}(2 r^2) \right)^{\beta}.
	\end{align*}
\end{thm}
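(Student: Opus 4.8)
The plan is to mirror, almost verbatim, the proof of Theorem \ref{thm:lo} (the Lojasiewicz inequality in the smooth, non-quotient case), replacing every ingredient by its quotient analogue. The two tools that do all the work are already in place: the entropy estimate in terms of the $\rBC^{|\Gamma|}$-radius (Theorem \ref{lojawithradiusquotient}) and the chain of radius comparisons that, iterated, yields $\min\{\rA^{|\Gamma|},\rBC^{|\Gamma|}\}\ge(1-3\sigma)\rE$ (the last displayed theorem before this statement, obtained from Proposition \ref{contractionquotient} and Theorem \ref{thm:ext2q}). So the proof is a short assembly of these two facts.

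Concretely, I would proceed as follows. First, by parabolic rescaling and time translation we may assume $t_0=0$ and $r=1$; this is harmless because all the radius functions, the $\WW$-entropy based at a point, and the quantity $\Theta_m(\Gamma)$ scale correctly. Second, if $\mathcal W_{x_0^*}(1/2)-\mathcal W_{x_0^*}(2)\ge 1$ then the asserted inequality holds trivially (the left side is bounded by a dimensional constant times $Y$, while the right side is $\ge C$ after adjusting $C$), so we may assume this entropy gap is $<1$, which makes $\rE$ well-defined via Definition \ref{defnrE}. Third, fix $\sigma\in(0,1/10)$ small, to be chosen at the end in terms of $\beta$; assuming $\rA^{|\Gamma|}\ge L(n,Y,\sigma,|\Gamma|)$ for the constant in the iterated radius-comparison theorem, we get $\min\{\rA^{|\Gamma|},\rBC^{|\Gamma|}\}\ge(1-3\sigma)\rE$. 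Fourth, feed $\rBC^{|\Gamma|}\ge(1-3\sigma)\rE$ into Theorem \ref{lojawithradiusquotient} (whose hypothesis $\rBC^{|\Gamma|}\ge L_2(n,Y,|\Gamma|)$ is met once $L_\beta$ is large enough) to obtain
\begin{align*}
\left|\mathcal W_{x_0^*}(1)-\Theta_m(\Gamma)\right|\le C(n,Y)\exp\left(-\frac{3(\rBC^{|\Gamma|})^2}{16}\right)\le C(n,Y)\exp\left(-\frac{3(1-3\sigma)^2\rE^2}{16}\right).
\end{align*}
Finally, substituting $\exp(-\rE^2/4)=\mathcal W_{x_0^*}(1/2)-\mathcal W_{x_0^*}(2)$ converts this into
\begin{align*}
\left|\mathcal W_{x_0^*}(1)-\Theta_m(\Gamma)\right|\le C(n,Y)\left(\mathcal W_{x_0^*}(1/2)-\mathcal W_{x_0^*}(2)\right)^{3(1-3\sigma)^2/4},
\end{align*}
and choosing $\sigma$ so that $3(1-3\sigma)^2/4=\beta$ (possible for any $\beta\in(0,3/4)$) and setting $L_\beta$ to be the maximum of the constants appearing in Theorems \ref{lojawithradiusquotient} and the iterated comparison theorem finishes the proof.

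There is essentially no new obstacle in this final step: every hard estimate—the variational inequality for $\WW$ near the cylinder, the extension of almost-cylindrical regions, the pointwise almost-shrinker bound inside $\rE$, and the eigenfunction-growth gluing argument—has already been carried out, with the quotient bookkeeping handled in Theorems \ref{thm:ext1q} and \ref{thm:ext2q} and Proposition \ref{contractionquotient}. The only points requiring a line of care are: (i) checking that the trivial case $\text{gap}\ge 1$ really does dominate, which uses the uniform lower bound $\mathcal W_{x_0^*}\ge -C(n,Y)$ coming from the entropy bound and Theorem \ref{thm:upper}; and (ii) confirming that the constant $\Theta_m(\Gamma)=\Theta_m-\log|\Gamma|$ is exactly the $\WW$-value produced by Theorem \ref{lojawithradiusquotient}, which is immediate from the definition in \eqref{eq:quoc}. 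Thus the proof is a direct transcription of the argument for Theorem \ref{thm:lo}, and I would present it as such, writing only the rescaling reduction, the trivial case, the two invocations, and the algebraic substitution.
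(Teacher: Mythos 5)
Your proposal is correct and is exactly the argument the paper intends: the paper states that Theorem \ref{thm:loquotient} follows "from the above results" with a proof "almost identical to that of Theorem \ref{thm:lo}", i.e.\ precisely your assembly of the iterated radius comparison $\min\{\rA^{|\Gamma|},\rBC^{|\Gamma|}\}\ge(1-3\sigma)\rE$ with Theorem \ref{lojawithradiusquotient} and the substitution $\exp(-\rE^2/4)=\mathcal W_{x_0^*}(1/2)-\mathcal W_{x_0^*}(2)$, choosing $\beta=3(1-3\sigma)^2/4$. Your handling of the trivial case and the reduction to $t_0=0$, $r=1$ also matches the proof of Theorem \ref{thm:lo}.
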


We remark that Theorems \ref{thm:rasing} and \ref{thm:losing} also hold in the quotient case and leave the precise statements to the interested readers.

\subsection*{Strong uniqueness for quotient cylindrical singularities}

Let $\mathcal C^k(\Gamma)_{-1}$ be a quotient cylinder as in \eqref{eq:quoc}. We define $\mathcal C^k(\Gamma):=(\bar M, (\bar g_{\Gamma}(t))_{t<0}, (\bar f_{\Gamma}(t))_{t<0})$ to be the associated Ricci flow, where $t=0$ corresponds to the singular time, and the potential function is given by
\begin{align*}
\pi^* \bar f_{\Gamma}(t)=\frac{|\vec{x}|^2}{4|t|}+\frac{n-k}{4}+\Theta_{n-k}-\log |\Gamma|,
\end{align*}
where $\pi$ is the quotient map from $\mathcal C^k$ to $\mathcal C^k(\Gamma)$.

Let $\bar{\mathcal C}^k(\Gamma)$ be the completion of $\mathcal C^k(\Gamma)$, equipped with the spacetime distance $d_{\mathcal C}^*$. We also define the base point $p^*$ as the limit of $(\bar p, t)$ as $t \nearrow 0$ with respect to $d_{\mathcal C}^*$, where $\bar p \in \mathcal C^k(\Gamma)$ is a minimum point of $\bar f_{\Gamma}(-1)$. For any $t<0$, we have
	\begin{align*}
\nu_{p^*;t}=(4\pi |t|)^{-\frac n 2} e^{-\bar f_{\Gamma}(t)} \,\mathrm{d}V_{\bar g_{\Gamma}(t)}.
	\end{align*}

Next, we consider a general noncollapsed Ricci flow limit space $(Z, d_Z, \t)$ over $\III$, obtained as the limit of a sequence in $\mathcal M(n, T, Y)$. Then, we have the following definition similar to Definition \ref{def:ccc}.

\begin{defn}
A point $z \in Z_{\III^-}$ is called \textbf{a quotient cylindrical point with respect to $\bar{\mathcal C}^k(\Gamma)$} if a tangent flow at $z$ is $\bar{\mathcal C}^k(\Gamma)$ for some $k$ and $\Gamma$.
\end{defn}

The following theorem regarding the uniqueness of the quotient cylindrical tangent flow can be proved as Theorem \ref{thm:weakunique} by using \cite[Theorem 6.2, Remark 6.3]{li2023rigidity}.

\begin{thm}[Uniqueness of the quotient cylindrical tangent flow] 
Let $(Z, d_Z, \t)$ be a noncollapsed Ricci flow limit space over $\III$, obtained as the limit of a sequence in $\mathcal M(n, T, Y)$. For any $z \in Z_{\III^-}$, if a tangent flow at $z$ is $\bar{\mathcal C}^k(\Gamma)$, then any tangent flow at $z$ is also $\bar{\mathcal C}^k(\Gamma)$.
\end{thm}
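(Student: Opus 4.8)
The plan is to mimic the proof of Theorem \ref{thm:weakunique} verbatim, replacing the cylinder $\mathcal C^k$ by its quotient $\mathcal C^k(\Gamma)$ and invoking the quotient analogues of the results from \cite{fang2025RFlimit} and \cite{li2023rigidity}. As in the proof of Theorem \ref{thm:weakunique}, the starting point is to pass from the limit space $Z$ to a metric flow $\XX^z$ associated with $z$ (via Theorem \ref{thm:iden} and the $\F$-convergence in \eqref{thm:intro2}). The hypothesis that one tangent flow at $z$ is isometric to $\bar{\mathcal C}^k(\Gamma)$ implies, after unwinding the definition of tangent flow (Definition \ref{def:tf}) and the identification between Gromov--Hausdorff blow-ups and $\F$-limits of the rescaled metric flows, that one tangent metric flow at $z$ is the quotient cylinder $\mathcal C^k(\Gamma)$.

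First I would apply \cite[Theorem 6.2, Remark 6.3]{li2023rigidity}: this rigidity result, stated in \cite{li2023rigidity} for generalized cylinders and their quotients, shows that if one tangent metric flow at $z$ is $\mathcal C^k(\Gamma)$, then \emph{every} tangent metric flow at $z$ is $\mathcal C^k(\Gamma)$ — exactly the step where in the unquotiented case one uses the plain version of \cite[Theorem 6.2]{li2023rigidity}. Next, for an arbitrary sequence $r_j \searrow 0$ realizing a tangent flow $(Z', d_{Z'}, z', \t')$ as the pointed Gromov--Hausdorff limit of $(Z, r_j^{-1} d_Z, z, r_j^{-2}(\t - \t(z)))$, I would invoke \cite[Theorem 4.17(2)]{fang2025RFlimit} to arrange (after passing to a further subsequence) that the $r_j^{-1}$-rescalings of $(\XX^z, \nu_{z;t})$ $\F$-converge to a tangent metric flow at $z$, which by the previous step must be $\mathcal C^k(\Gamma)$. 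Therefore $\XX^{z'}$, the metric flow associated with $z'$, equals $\mathcal C^k(\Gamma)$.

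Finally, to conclude $Z' = \bar{\mathcal C}^k(\Gamma)$ as a Ricci flow limit space (and not merely that its metric flow matches), I would argue as in the last two paragraphs of the proof of Theorem \ref{thm:weakunique}: using \cite[Theorem 7.19]{fang2025RFlimit} one gets $Z'_{(-\infty,0)} = \RR'_{(-\infty,0)} = \mathcal C^k(\Gamma)$, and then \cite[Theorem 7.25]{fang2025RFlimit} forces $Z'_{(0,+\infty)} = \emptyset$, so that $Z'$ is exactly the metric completion $\bar{\mathcal C}^k(\Gamma)$. The only real point of caution — and what I would treat as the main obstacle — is verifying that the cited results of \cite{fang2025RFlimit} (especially Theorems 7.19 and 7.25, which concern the structure of the regular part and the absence of a positive-time part) apply verbatim when the model is a \emph{quotient} cylinder rather than a product cylinder; one must check that the finite free $\Gamma$-action does not disrupt the noncollapsing, the regularity, or the reproduction-formula arguments underlying those theorems. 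Since $\Gamma$ acts freely by isometries, the quotient is a smooth noncollapsed shrinker soliton with the same dimension and the same entropy gap structure up to the additive constant $-\log|\Gamma|$, so these arguments go through, but this is the step that needs to be spelled out. The rest is a routine transcription of the proof of Theorem \ref{thm:weakunique}.
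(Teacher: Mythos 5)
Your proposal matches the paper's intended argument: the paper proves this by repeating the proof of Theorem \ref{thm:weakunique} verbatim, with the rigidity input upgraded to \cite[Theorem 6.2, Remark 6.3]{li2023rigidity} to cover quotient cylinders, exactly as you describe. Your additional caution about checking that the structural results of \cite{fang2025RFlimit} apply to the quotient model is sensible but unproblematic, since $\Gamma$ acts freely by isometries and the quotient remains a smooth noncollapsed shrinker.
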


Next, we define almost quotient cylindrical points similar to Definition \ref{def:almost0}.

\begin{defn}
Let $(Z, d_Z, \t)$ be a noncollapsed Ricci flow limit space over $\III$, obtained as the limit of a sequence in $\mathcal M(n, T, Y)$. A point $z \in Z_{\III^-}$ is called \textbf{$(\ep, r)$-close to $\bar{\mathcal C}^k(\Gamma)$} if $\t(z)-\ep^{-1} r^2 \in \III^-$ and
	\begin{align*}
		\big(Z, r^{-1}d_Z,z, r^{-2}(\t-\t(z)) \big) \quad \text{is $\ep$-close to}\quad \big(\bar{\mathcal C}^k(\Gamma),d_{\mathcal C}^*, p^*,\t \big) \quad \text{over} \quad [-\ep^{-1}, \ep^{-1}].
	\end{align*}
\end{defn}

By the identical proofs, we obtain the following results parallel to Corollary \ref{quantisummabilityW}, Theorems \ref{thm:modi} and \ref{thm:stronguni2}, respectively.

 \begin{cor}\label{quantisummabilityWq}
	For any constants $0<\ep\leq\ep(n,Y)$, $\alpha\in (1/4,1)$ and $\delta\leq\delta(n,Y,\ep,\alpha)$, the following holds. Let $\XX=\{M^n,(g(t))_{t\in I}\}$ be a closed Ricci flow with entropy bounded below by $-Y$. Fix $x_0^*=(x_0,t_0)\in \XX$ and constants $s_2>s_1>0$. If
	\begin{align*}
		\left|\WW_{x_0^*}(s_1)-\WW_{x_0^*}(s_2)\right|<\delta,
	\end{align*}
and, for any $s\in [s_1,s_2]$, $x_0^*$ is $(\delta, \sqrt{s})$-close to $\bar{\mathcal C}^k(\Gamma)$, then
	\begin{align*}
		\sum_{s_1\leq r_j=2^{-j} \leq s_2}\left|\WW_{x_0^*}(r_j)-\WW_{x_0^*}(r_{j-1})\right|^{\alpha}<\ep.
	\end{align*}
\end{cor}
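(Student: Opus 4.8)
The statement is the quotient-cylinder analogue of Corollary \ref{quantisummabilityW}, and the plan is to follow that proof essentially verbatim, replacing each tool for the genuine cylinder with its quotient counterpart established earlier in Section \ref{sec:lojaquo}. Without loss of generality assume $t_0=0$. First I would fix $\alpha \in (1/4,1)$ and choose auxiliary constants $\theta\in(1/3,1)$ and $q$ so that $\max\{1,(1-\alpha)/\alpha\}<q<\theta^{-1}$, exactly as in the proof of Corollary \ref{quantisummabilityW}; this is possible precisely because $\alpha>1/4$. Next, using the smooth-convergence characterization of $(\delta,\sqrt{s})$-closeness to $\bar{\mathcal C}^k(\Gamma)$, I would show that, provided $\delta\le\delta(n,Y,\theta)$, the weighted Riemannian manifold $(M, s^{-1}g(-s), f_{x_0^*}(-s))$ satisfies $\rA^{|\Gamma|}\ge L_\beta$ for $\beta:=1/(1+\theta)$, where $L_\beta$ is the constant from the quotient Lojasiewicz inequality (Theorem \ref{thm:loquotient}). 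Here one needs a quotient version of Lemma \ref{lem:smoothcyl} (smooth closeness to the quotient cylinder forces the covering-map radius $\rA^{|\Gamma|}$, not just $\rBC^{|\Gamma|}$, to be large); this is routine given that $\pi:\bar M\to \bar M_\Gamma$ is a finite covering and the closeness is measured in the lifted picture on $\bar M$. Then Theorem \ref{thm:loquotient} gives, for every $j$ with $s_1\le r_j=2^{-j}\le s_2$,
\begin{align*}
\left|\WW_{x_0^*}(r_j)-\Theta_m(\Gamma)\right|^{1+\theta}\le C(n,Y)\left(\WW_{x_0^*}(r_{j+1})-\WW_{x_0^*}(r_{j-1})\right).
\end{align*}

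From this point the argument is identical to the proof of Corollary \ref{quantisummabilityW}: set $a_j:=|\WW_{x_0^*}(r_j)-\Theta_m(\Gamma)|$ and $b_j:=\WW_{x_0^*}(r_j)-\WW_{x_0^*}(r_{j-1})$, let $N,N'$ be the extreme indices with $r_N\ge s_1$, $r_{N'}\le s_2$, and let $j_0\in[N',N]$ be the last index with $\WW_{x_0^*}(r_{j_0})\le\Theta_m(\Gamma)$ (taking $j_0=N'$ if none exists). The monotonicity $\frac{\dd}{\dd\tau}(\tau\NN)\le 0$ (Proposition \ref{propNashentropy}(ii)) keeps $b_j\ge 0$, so the discrete Lojasiewicz recursion yields the polynomial decay
\begin{align*}
a_j\le C(n,Y,\theta)\,(j+1-N')^{-1/\theta}\quad\text{for }j\in[N',j_0],\qquad a_j\le C(n,Y,\theta)\,(N+1-j)^{-1/\theta}\quad\text{for }j\in[j_0+1,N],
\end{align*}
via the summation lemma used in the proof of \cite[Claim 3.3]{fang2025volume}. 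Summing $b_l(l-N')^q$ (resp. $b_l(N-l)^q$) telescopically and bounding the error tail by the polynomial decay gives, for any fixed truncation parameter $k$,
\begin{align*}
\sum_{l=N'+k}^{j_0}b_l(l-N')^q\le\Psi(k^{-1}),\qquad \sum_{l=j_0+1}^{N-k}b_l(N-l)^q\le\Psi(k^{-1}).
\end{align*}
A Hölder split with exponents chosen via $q>(1-\alpha)/\alpha$ then converts this into $\sum_{l=N'+k}^{j_0}b_l^\alpha\le\Psi(k^{-1})$ and the symmetric bound near $N$, and the innermost $2k$ terms are controlled by the hypothesis $|\WW_{x_0^*}(s_1)-\WW_{x_0^*}(s_2)|<\delta$ together with $b_l^\alpha\le b_l+1$ crudely (or, more carefully, $\sum|b_l|^\alpha\le (2k)^{1-\alpha}\delta^\alpha$). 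Choosing $k$ large, then $\delta$ small depending on $k$, makes the total sum less than $\ep$.

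\textbf{Main obstacle.} The only genuinely new input is the quotient version of Lemma \ref{lem:smoothcyl}: one must verify that $\ep$-closeness to $\bar{\mathcal C}^k(\Gamma)$ in the Gromov–Hausdorff/smooth sense of Definition \ref{defn:close} produces a degree-$|\Gamma|$ smooth covering map from a large region of the \emph{unquotiented} cylinder $\bar M$ into $M$ with $C^{[\delta^{-1}]}$-small deviations of the pulled-back metric and of $f_{x_0^*}$ from $\bar g$ and $\bar f-\log|\Gamma|$, so that in particular $\rA^{|\Gamma|}$ is large. This is mild—it follows by lifting the approximating diffeomorphism through the covering $\pi$, using that $\pi$ is a local isometry and that the deck group $\Gamma$ acts freely—but it is where all the quotient bookkeeping lives, and it is the step I would write out with the most care. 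Everything downstream is a word-for-word transcription of the proof of Corollary \ref{quantisummabilityW}, with $\Theta_m$ replaced by $\Theta_m(\Gamma)=\Theta_m-\log|\Gamma|$ and the constants now additionally depending on $|\Gamma|$ through $L_\beta$ (which can in fact be absorbed since the conclusion only records dependence on $n,Y,\ep,\alpha$ after $\delta$ is choosed small).
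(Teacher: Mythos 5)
Your proposal is correct and matches the paper's proof, which is stated simply as "by the identical proofs" to Corollary \ref{quantisummabilityW}, with the quotient Lojasiewicz inequality (Theorem \ref{thm:loquotient}), the quotient radius functions, and $\Theta_m(\Gamma)$ substituted throughout. You correctly identify that the only new ingredient is the quotient analogue of Lemma \ref{lem:smoothcyl} converting $(\delta,\sqrt{s})$-closeness to $\bar{\mathcal C}^k(\Gamma)$ into a lower bound on $\rA^{|\Gamma|}$, which is exactly the step the paper leaves implicit.
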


Let $\XX=\{M^n, (g(t))_{t \in [-T,0)}\}$ be a closed Ricci flow with entropy bounded below by $-Y$, where $0$ is the first singular time. Suppose $(Z, d_Z, \t)$ is the completion of $\XX$. Fix a point $z \in Z_0$ and consider the modified Ricci flow $(M, g^z(s),f^z(s))$ with respect to $z$. We denote by $\rA^{|\Gamma|}(s)$ and $\rBC^{|\Gamma|}(s)$ the radius functions as in Definition \ref{defnradiiquo} with respect to $(M, g^z(s),f^z(s))$.

\begin{thm} \label{thm:radiusquo}
Suppose $z$ is a quotient cylindrical singularity with respect to $\bar{\mathcal C}^k(\Gamma)$. Then for any $\ep>0$, we have
\begin{align*}
\min \left\{\rA^{|\Gamma|}(s), \rBC^{|\Gamma|}(s) \right\} \ge \sqrt{(12-\ep) \log s},
	\end{align*}
where $\sigma=10^{-10} \ep$, provided that $s$ is sufficiently large.
\end{thm}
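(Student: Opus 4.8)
The plan is to mimic exactly the argument of Theorem~\ref{thm:modi}, replacing every ingredient by its quotient counterpart established in Section~\ref{sec:lojaquo}. Write $W(s):=\WW(g^z(s),f^z(s))$. The first step is to observe, using the uniqueness theorem for the quotient cylindrical tangent flow (the quotient analogue of Theorem~\ref{thm:weakunique}), that every tangent flow at $z$ is isometric to $\bar{\mathcal C}^k(\Gamma)$; hence $\lim_{s\to+\infty} W(s)=\Theta_{n-k}(\Gamma)$. Moreover, by Lemma~\ref{lem:smoothcyl} applied in the quotient setting (i.e.\ using that $z$ is $(\ep',r)$-close to $\bar{\mathcal C}^k(\Gamma)$ for all $r$ small), one has $\min\{\rA^{|\Gamma|}(s),\rBC^{|\Gamma|}(s)\}\ge L$ for any fixed $L>1$ once $s$ is large enough.

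Next I would invoke the quotient Lojasiewicz inequality. Although the excerpt states Theorem~\ref{thm:loquotient} for spacetime points, the remark following it asserts that the analogues of Theorems~\ref{thm:rasing} and \ref{thm:losing} hold in the quotient case; in particular, setting $\beta=3(1-3\sigma)^2/4$ with $\sigma=10^{-3}\ep$, one gets
\begin{align*}
\bigl|W(s)-\Theta_{n-k}(\Gamma)\bigr|\le C(n,Y)\bigl(W(s+\log 2)-W(s-\log 2)\bigr)^{\beta}
\end{align*}
for all sufficiently large $s$, provided $\rA^{|\Gamma|}(s)\ge L_\beta$ (which holds by the previous paragraph). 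Feeding this discrete Lojasiewicz inequality into the summability machinery of Corollary~\ref{quantisummabilityWq} (equivalently, the telescoping/ODE-comparison argument in the proof of Corollary~\ref{quantisummabilityW}) yields the polynomial decay
\begin{align*}
\bigl|W(s)-\Theta_{n-k}(\Gamma)\bigr|\le C(n,Y,\ep)\,s^{-\frac{\beta}{1-\beta}}=C(n,Y,\ep)\,s^{-\frac{3(1-3\sigma)^2}{4-3(1-3\sigma)^2}}.
\end{align*}

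The third step is to convert this decay into a lower bound on the entropy radius and then on $\rA^{|\Gamma|}$ and $\rBC^{|\Gamma|}$. By definition of $\rE(s)$ (Definition~\ref{defnrE}, which is insensitive to the quotient since it only involves $\WW_{x_0^*}$), one has
\begin{align*}
\exp\!\left(-\frac{\rE^2(s)}{4}\right)=\bigl|W(s+\log 2)-W(s-\log 2)\bigr|\le C(n,Y,\ep)\,s^{-\frac{3(1-3\sigma)^2}{4-3(1-3\sigma)^2}},
\end{align*}
so $\rE^2(s)\ge \frac{12(1-3\sigma)^2}{4-3(1-3\sigma)^2}\log s-C(n,Y,\ep)$. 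Then the quotient version of Theorem~\ref{thm:rasing} (comparison of radii through the iteration of Proposition~\ref{contractionquotient} and Theorem~\ref{thm:ext2q}) gives $\min\{\rA^{|\Gamma|}(s),\rBC^{|\Gamma|}(s)\}\ge(1-3\sigma)\rE(s)$, and combining these two estimates and using $\sigma=10^{-3}\ep$ produces $\min\{\rA^{|\Gamma|}(s),\rBC^{|\Gamma|}(s)\}\ge\sqrt{(12-\ep)\log s}$ for $s$ large, as claimed.

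I do not expect a genuinely new obstacle here, since the quotient analogues of all the structural results (the covering-map radius functions $\rA^{|\Gamma|},\rBC^{|\Gamma|},\r^{|\Gamma|}_{C,\delta}$, the extension theorems \ref{thm:ext1q} and \ref{thm:ext2q}, and the quotient Lojasiewicz inequality) have already been set up in Section~\ref{sec:lojaquo}. The one point that requires mild care is the bookkeeping of the shifted entropy constant $\Theta_{n-k}(\Gamma)=\Theta_{n-k}-\log|\Gamma|$ throughout—in particular that the quotient Lojasiewicz inequality is centered at $\Theta_{n-k}(\Gamma)$ rather than $\Theta_{n-k}$—and the verification that the summability argument of Corollary~\ref{quantisummabilityWq}, which is phrased for spacetime base points, transfers verbatim to the modified-flow quantity $W(s)$ exactly as in the proof of Theorem~\ref{thm:modi}. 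Both are routine, so the proof reduces to citing the quotient analogues of Theorem~\ref{thm:rasing}, Theorem~\ref{thm:losing}, and Corollary~\ref{quantisummabilityWq} and repeating the computation of Theorem~\ref{thm:modi}.
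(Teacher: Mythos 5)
Your proposal is correct and follows exactly the route the paper takes: the paper proves Theorem \ref{thm:radiusquo} by declaring it "parallel to Theorem \ref{thm:modi}" with "identical proofs," i.e.\ precisely the chain (quotient uniqueness of the tangent flow $\Rightarrow$ $W(s)\to\Theta_{n-k}(\Gamma)$ and large radii) $\Rightarrow$ quotient discrete Lojasiewicz inequality $\Rightarrow$ polynomial decay of $|W(s)-\Theta_{n-k}(\Gamma)|$ $\Rightarrow$ lower bound on $\rE(s)$ $\Rightarrow$ quotient version of Theorem \ref{thm:rasing}, that you lay out. Your exponent bookkeeping with $\sigma=10^{-3}\ep$ also matches the computation at the end of the proof of Theorem \ref{thm:modi}.
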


Next, we prove the strong uniqueness of the quotient cylindrical tangent flow. For simplicity, we set $\bar b_{\Gamma}:=2\sqrt{|\bar f_{\Gamma}|}$ on $\mathcal C^k(\Gamma)$.\index{$\bar b_{\Gamma}$}

\begin{thm}[Strong uniqueness of the quotient cylindrical tangent flow]\label{thm:stronguni2quo}
Suppose $z$ is a quotient cylindrical singularity with respect to $\bar{\mathcal C}^k(\Gamma)$. Then for any small $\ep>0$, there exists a large constant $\bar s$ such that for any $j \ge \bar s$, there exists a diffeomorphism from $\Omega^j:=\left\{\bar b_{\Gamma} \leq \sqrt{(8-\ep)\log j}\right\} \subset \bar M/\Gamma$ onto a subset of $M$ such that
	\begin{align*}
		\psi_{j+1}=\psi_j \quad \mathrm{on}\quad \Omega^j,
	\end{align*}
and for all $s\geq j$, we have on $\Omega^j$,
	\begin{align*}
	\left[\psi_{j}^*g^z(s)-\bar g_{\Gamma} \right]_{[\ep^{-1}]}+\left[\psi_{j}^*f^z(s)-\bar f_{\Gamma} \right]_{[\ep^{-1}]}\leq C(n,Y,\ep)e^{\frac{\bar f_{\Gamma}}{2}} s^{-1+\ep}.
	\end{align*}
Here, the $C^{[\ep^{-1}]}$-norms are with respect to $\bar g_{\Gamma}$.
\end{thm}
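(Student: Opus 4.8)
The plan is to follow the same two-step strategy used in the proof of Theorem~\ref{thm:stronguni2}, replacing the cylinder $\bar{\mathcal C}^k$ and the radius functions $\rA,\rBC,\r_{C,\delta}$ by their quotient analogs $\bar{\mathcal C}^k(\Gamma)$ and $\rA^{|\Gamma|},\rBC^{|\Gamma|},\r_{C,\delta}^{|\Gamma|}$. First I would prove the quotient analog of Theorem~\ref{thm:stronguni}: for any small $\ep>0$ there is a large $\bar s$ so that for every $s_0\ge \bar s$ there is a covering map $\varphi_{s_0}$ of degree $|\Gamma|$ from a domain containing $\{\bar b\le 2\sqrt{(1-\ep)\log s_0}\}\subset\bar M$ onto a subset of $M$, with the stated decay estimate holding on $\{\bar b_\Gamma \le 2\sqrt{(1-\ep)\log s_0}\}$ for all $s\ge s_0$. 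The input here is Theorem~\ref{thm:radiusquo}, which gives $\min\{\rA^{|\Gamma|}(s),\rBC^{|\Gamma|}(s)\}\ge\sqrt{(12-\ep)\log s}$ for large $s$, together with the quotient version of Corollary~\ref{almostshrinkerlarger} (an almost-Ricci-shrinker estimate on an enlarged region), which follows from the quotient analogs of Propositions~\ref{pointwiseshrinkeruantity}, \ref{stabilitymetricq}, and Theorem~\ref{thm:ext1q}. The argument is verbatim that of Theorem~\ref{thm:stronguni}: one shows the enlarged domain $\Omega^{s_0}$ is preserved under the flow for $s\ge s_0$ (via the evolution of $f^z$ and integrability in $s$ of the shrinker-error estimate), deduces that $\varphi_{s_0}^*(g^z(s),f^z(s))$ converges smoothly as $s\to\infty$ to a limit $(g_\infty,f_\infty)$, and then, using Lemma~\ref{lem:smoothcyl} adapted to quotient cylinders (pointed Cheeger--Gromov convergence of $g^z(s)$ to $(\bar M/\Gamma,\bar g_\Gamma,\bar f_\Gamma)$), identifies $(g_\infty,f_\infty)$ with the pullback of $(\bar g_\Gamma,\bar f_\Gamma)$ under a (local) isometry; composing $\varphi_{s_0}$ with the inverse of that isometry gives the desired covering map with the sharp decay.

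Second, I would prove the quotient analog of the gluing lemma~\ref{lem:gluediff}: for $s_2>s_1\ge\bar s$ there is an isometry $\varphi_{s_2,s_1}$ of $(\bar M/\Gamma,\bar g_\Gamma)$ preserving $\bar f_\Gamma$ such that $\varphi_{s_2}\circ\varphi_{s_2,s_1}=\varphi_{s_1}$ on $\{\bar b_\Gamma\le 2\sqrt{(1-\ep)\log s_1}\}$. The proof is unchanged: since both $\varphi_{s_1}^*(g^z(s),f^z(s))$ and $\varphi_{s_2}^*(g^z(s),f^z(s))$ converge to $(\bar g_\Gamma,\bar f_\Gamma)$ on the common domain, the composition $\varphi=\varphi_{s_2}^{-1}\circ\varphi_{s_1}$ is forced to be the restriction of a global isometry of the quotient cylinder preserving $\bar f_\Gamma$ (an element of the normalizer of $\Gamma$ in $\mathrm{O}(n-k)\times\mathrm{Iso}(S^{n-k})$ modulo $\Gamma$, acting on $\bar M/\Gamma$). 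Finally, I would assemble the diffeomorphisms $\psi_j$ inductively exactly as in the proof of Theorem~\ref{thm:stronguni2}: set $\psi_{j_0}=\varphi_{j_0}$ for the smallest integer $j_0\ge\bar s$, and given $\psi_j$, apply the gluing lemma to obtain an isometry $\varphi$ with $\varphi_{j+1}\circ\varphi=\psi_j$ on $\Omega^j$, then put $\psi_{j+1}=\varphi_{j+1}\circ\varphi$, which is defined on $\Omega^{j+1}\supset\Omega^j$ and agrees with $\psi_j$ there. The decay estimate on $\Omega^j$ for all $s\ge j$ is then inherited from that of $\varphi_{j+1}$ (equivalently $\varphi_j$) in the first step, after absorbing constants.

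One point that needs slightly more care than in the non-quotient case is that $\varphi_{s_0}$ is a covering map of degree $|\Gamma|$ rather than a diffeomorphism, so to produce an actual diffeomorphism from $\Omega^j\subset\bar M/\Gamma$ onto a subset of $M$ one should descend: since $\varphi_{s_0}^*g^z(s)$ and $\varphi_{s_0}^*f^z(s)$ are $\Gamma$-invariant up to the exponentially small errors and in the limit are exactly $\Gamma$-invariant (being pullbacks from $\bar M/\Gamma$), the covering map factors, for large $s_0$, through a genuine embedding $\bar\psi_{s_0}$ of a neighborhood of $\{\bar b_\Gamma\le 2\sqrt{(1-\ep)\log s_0}\}$ in $\bar M/\Gamma$ into $M$ — this is already implicit in the proof of Theorem~\ref{thm:ext2q} and Definition~\ref{defnradiiquo}, where the relevant maps on the quotient are used; one just records that the limiting map is $\Gamma$-equivariant and hence descends. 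With that observation, everything else is a transcription of Section~\ref{sec:scyf}.

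\textbf{Main obstacle.} I expect the only genuinely non-routine step to be establishing the quotient version of Theorem~\ref{thm:radiusquo}'s consequences in the form needed here — specifically, ensuring that the almost-Ricci-shrinker estimate of (the quotient) Corollary~\ref{almostshrinkerlarger} holds on a domain \emph{strictly larger} than $\{\bar b_\Gamma\le 2\sqrt{(1-\ep)\log s}\}$ uniformly in $s$, so that the enlarged region $\Omega^{s_0}$ is preserved under the modified flow. This is exactly where the extension theorems~\ref{thm:ext1q} and~\ref{thm:ext2q} enter, and where the level-set comparison~\eqref{equ:rClevel} for $\r_{C,\delta}^{|\Gamma|}$ — which has no analog in the smooth cylinder case — must be propagated through the limiting/gluing argument; the proof of Theorem~\ref{thm:ext1q} already handles this via the potential estimate~\eqref{equ:stapot3}, so the work is to check that those estimates feed cleanly into the flow-preservation argument of Theorem~\ref{thm:stronguni}. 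Everything downstream (the convergence as $s\to\infty$, the rigidity of the limit, the inductive gluing) is then formal.
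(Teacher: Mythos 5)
Your overall strategy coincides with the paper's: the paper also deduces Theorem \ref{thm:stronguni2quo} from a quotient analog of Theorem \ref{thm:stronguni} (a fixed-gauge convergence statement for each $s_0\ge\bar s$), followed by the same gluing-and-induction scheme as in Lemma \ref{lem:gluediff} and Theorem \ref{thm:stronguni2}. The radius input (Theorem \ref{thm:radiusquo}), the shrinker-error integrability, the flow-preservation of $\Omega^{s_0}$, and the identification of the limit $(g_\infty,f_\infty)$ via pointed Cheeger--Gromov convergence to $(\bar M/\Gamma,\bar g_\Gamma,\bar f_\Gamma)$ all match.

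The one step where your argument, as written, does not go through is the ``descending'' of the degree-$|\Gamma|$ covering map $\varphi_{s_0}$ to a diffeomorphism out of $\bar M/\Gamma$. You claim that $\Gamma$-invariance (exact in the limit, approximate before it) of $\varphi_{s_0}^*g^z(s)$ and $\varphi_{s_0}^*f^z(s)$ forces $\varphi_{s_0}$ to factor through $\bar M/\Gamma$. That inference is false: invariance of the pulled-back tensors under $\Gamma$ does not make the map constant on $\Gamma$-orbits, and in fact the deck transformation group of $\varphi_{s_0}$ is only abstractly isomorphic (conjugate by a near-isometry) to $\Gamma$, not equal to $\Gamma\subset\mathrm{Iso}(\bar M)$; nor is the limit $g_\infty$ literally $\Gamma$-invariant --- it is invariant under the deck group of the limiting covering. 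The paper sidesteps this entirely: it takes the Cheeger--Gromov identification maps $\psi_i:U_i\subset\bar M/\Gamma\to M$ at a sequence $s_i\to\infty$, shows via a geodesic-lifting argument (Claim \ref{cla:limitqo1}, which controls metric balls around $p_s$ in terms of $\bar b$-sublevel sets upstairs) that $\{\bar b_\Gamma\le 2\sqrt{(1-\ep)\log s_0}\}\subset\psi_i^{-1}(\Omega^{s_0})$ for large $i$, and then defines $\varphi^\Gamma_{s_0}$ as the smooth limit of $\psi_i$ restricted to that set. This is the correct replacement for your descending step, and it is also where the quotient-specific domain comparison (which you correctly anticipate as the main obstacle, via \eqref{equ:rClevel}) actually enters --- not only in the extension theorems but in the final identification of the quotient domain. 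With that substitution, the rest of your outline is a faithful transcription of Section \ref{sec:scyf}.
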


Theorem \ref{thm:stronguni2quo} follows, by the same argument as in Theorem \ref{thm:stronguni2}, immediately from the next result.

\begin{thm}
Suppose $z$ is a quotient cylindrical singularity with respect to $\bar{\mathcal C}^k(\Gamma)$. Then for any small $\ep>0$, there exists a large constant $\bar s$ such that for any $s_0 \ge \bar s$, there exists a diffeomorphism $\varphi^{\Gamma}_{s_0}$ from $\left\{\bar b_{\Gamma}  \le \sqrt{(8-\ep)\log s_0}\right\}$ onto a subset of $M$ such that on $\left\{\bar b_{\Gamma} \le \sqrt{(8-\ep)\log s_0}\right\}$ and for all $s\geq s_0$, we have
	\begin{align*}
	\left[(\varphi^{\Gamma}_{s_0})^*g^z(s)-\bar g_{\Gamma} \right]_{[\ep^{-1}]}+\left[(\varphi^{\Gamma}_{s_0})^*f^z(s)-\bar f_{\Gamma} \right]_{[\ep^{-1}]}\leq C(n,Y,\ep)e^{\frac{\bar f_{\Gamma}}{2}} s^{-1+\ep}.
	\end{align*} 
	Here, the $C^{[\ep^{-1}]}$-norms are with respect to $\bar g_{\Gamma}$.
\end{thm}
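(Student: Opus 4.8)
The plan is to mimic the proof of Theorem \ref{thm:stronguni} (the non-quotient case), replacing diffeomorphisms from $\bar M$ by covering maps of degree $|\Gamma|$, and using the quotient versions of the radius comparison results established earlier in this section. Concretely, fix $\ep>0$ and set $\sigma=10^{-3}\ep$. Let $\rA^{|\Gamma|}(s)$, $\rBC^{|\Gamma|}(s)$ be the radius functions from Definition \ref{defnradiiquo} associated with the modified Ricci flow $(M, g^z(s), f^z(s))$, and let $\rE(s)$ be the entropy radius from Definition \ref{defnrE}. By Theorem \ref{thm:radiusquo}, for $s$ sufficiently large we have $\min\{\rA^{|\Gamma|}(s),\rBC^{|\Gamma|}(s)\}\geq\sqrt{(12-\ep)\log s}$. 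Let $\varphi_s$ be the covering map of degree $|\Gamma|$ corresponding to $\rA^{|\Gamma|}(s)$. Then Corollary \ref{almostshrinkerlarger} applied in the quotient setting (obtained by composing with the local covering $\bar M\to\bar M/\Gamma$, since all estimates in Proposition \ref{pointwiseshrinkeruantity} and Corollary \ref{almostshrinkerlarger} are pointwise and the covering is a local isometry) yields, on $\Omega_s:=\varphi_s(\{\bar b\le 2\sqrt{(1-\ep/3)\log s}\})$ and for large $s$,
\begin{align*}
\left[\frac{g^z(s)}{2}-\na^2 f^z(s)-\Ric(g^z(s))\right]_l+\left[\frac{n}{2}-\Delta f^z(s)-\scal(g^z(s))\right]_l\leq C(n,Y,\ep,l)\, s^{-1-\frac{\ep}{24}},
\end{align*}
where the norms are with respect to $g^z(s)$. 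The key point is that the right-hand side is integrable in $s$, so that $(g^z(s),f^z(s))$ converges without modulo diffeomorphism once a region is fixed.

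\textbf{Fixing the domain and the convergence argument.} Next I would run the trapping argument of Claim \ref{claimpotential}: using Theorem \ref{thm:upper} (ii) and the definition of $\rA^{|\Gamma|}$, any $H_n$-center of $(4\pi)^{-n/2}e^{-f^z(s)}\,\mathrm{d}V_{g^z(s)}$ lies in $\varphi_s(\{\bar b\le C(n)\})$, hence $f^z(x,s)\geq (1-\ep/2.5)\log s$ for $x\notin\Omega_s$. Choosing $\bar s$ so large that $\int_{\bar s}^\infty C_1 s^{-1-\ep/24}\,\mathrm{d}s<1$ and all the above estimates hold for $s\ge\bar s$ with $l=[\ep^{-1}]$, one shows that $\Omega^{s_0}:=\varphi_{s_0}(\{\bar b\le 2\sqrt{(1-\ep/2)\log s_0}\})\subset\Omega_s$ for all $s\ge s_0$, and that $|f^z(x,s_0)-f^z(x,s)|\le 1$ on $\Omega^{s_0}$. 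Then the evolution equations \eqref{equationMRF} and the integrable bounds force $\varphi_{s_0}^* g^z(s)$ and $\varphi_{s_0}^* f^z(s)$ to converge smoothly on $\{\bar b\le 2\sqrt{(1-\ep/2)\log s_0}\}$ to some $(g_\infty,f_\infty)$, with the quantitative decay
\begin{align*}
[\varphi_{s_0}^* g^z(s)-g_\infty]_{[\ep^{-1}]}+[\varphi_{s_0}^* f^z(s)-f_\infty]_{[\ep^{-1}]}\leq C(n,Y,\ep)\,e^{\frac{\bar f}{2}}s^{-\frac12+\frac{\ep}{8}}.
\end{align*}

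\textbf{Identifying the limit.} The remaining point is to show $(g_\infty,f_\infty)$ is isometric to $(\bar g,\bar f)$ pulled back by a $|\Gamma|$-covering, so that after composing $\varphi_{s_0}$ with a suitable local isometry it factors through $\bar M/\Gamma$. For this I would invoke the quotient analog of Lemma \ref{lem:smoothcyl} together with Theorem \ref{thm:uniqcyl}'s quotient counterpart: since $z$ is a quotient cylindrical singularity, $(M, g^z(s_i), f^z(s_i), p_i)$ converges in the pointed Cheeger–Gromov sense to $(\bar M/\Gamma, \bar g_\Gamma, \bar f_\Gamma, \bar p)$ along a sequence $s_i\to\infty$, with approximating diffeomorphisms $\psi_i: U_i\subset\bar M/\Gamma\to M$. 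Composing $\psi_i^{-1}\circ\varphi_{s_0}$ (lifted appropriately through the local covering $\bar M\to\bar M/\Gamma$) and passing to a subsequence produces a covering map $\varphi$ of degree $|\Gamma|$ from $\{\bar b_\Gamma\le 2\sqrt{(1-\ep)\log s_0}\}$ such that $g_\infty=\varphi^*\bar g_\Gamma$, $f_\infty=\varphi^*\bar f_\Gamma$. Replacing $\varphi_{s_0}$ by $\varphi_{s_0}\circ\varphi^{-1}$ and relabeling gives the desired diffeomorphism $\varphi_{s_0}^\Gamma$ from $\{\bar b_\Gamma\le 2\sqrt{(1-\ep)\log s_0}\}\subset\bar M/\Gamma$ onto a subset of $M$, with the stated decay estimate (after adjusting $\ep$). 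The main obstacle I anticipate is purely bookkeeping: one must check that the covering maps $\varphi_s$ from Definition \ref{defnradiiquo} can be arranged compatibly with the limiting local isometry, i.e. that the degree-$|\Gamma|$ covering structure is preserved under the Cheeger–Gromov limit and under the gauge change $\varphi\mapsto\varphi_{s_0}\circ\varphi^{-1}$; this requires observing that any limit of $|\Gamma|$-coverings of a fixed region of $\bar M/\Gamma$ remains a $|\Gamma|$-covering (true because the deck group is finite and the region has the homotopy type of $S^m$, whose relevant fundamental group data is rigid under small perturbations), and that the isometry $\varphi$ of $\bar M/\Gamma$ obtained in the limit lies in a compact isometry group, exactly as in Lemma \ref{lem:gluediff}. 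Granting this, the argument is a line-by-line transcription of the proof of Theorem \ref{thm:stronguni}, and the subsequent inductive gluing to produce the compatible sequence $\psi_j$ proceeds exactly as in Theorem \ref{thm:stronguni2quo}'s reduction via the quotient analog of Lemma \ref{lem:gluediff}.
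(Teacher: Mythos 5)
Your proposal follows the same route as the paper: quotient radius estimates from Theorem \ref{thm:radiusquo}, the pointwise almost-shrinker bounds, the trapping argument to fix the domain, integration of the evolution equation to get $(g_\infty,f_\infty)$ with the rate $s^{-1/2+\ep}$, and identification of the limit via the Cheeger--Gromov diffeomorphisms $\psi_i:U_i\subset\bar M/\Gamma\to M$. Two points deserve more than the "bookkeeping" label you give them. First, writing $\varphi_{s_0}\circ\varphi^{-1}$ when $\varphi$ is a degree-$|\Gamma|$ covering is an abuse: $\varphi$ is not invertible, and what you actually need is that $\varphi_{s_0}$ is constant on the fibers of $\varphi$ so that it descends to a genuine diffeomorphism on $\varphi(U)\subset\bar M/\Gamma$. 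This does follow from your own construction — since $\varphi$ arises as a limit of $\psi_i^{-1}\circ\varphi_{s_0}$ and each $\psi_i^{-1}$ is injective, the fibers of $\psi_i^{-1}\circ\varphi_{s_0}$ coincide with those of $\varphi_{s_0}$, and both coverings have degree $|\Gamma|$ — but the factorization should be stated, not hidden in an inverse that does not exist. (The paper sidesteps this entirely by taking the limit of the maps $\psi_i$ themselves, restricted to $\Omega'=\{\bar b_{\Gamma}\le 2\sqrt{(1-\ep)\log s_0}\}$, to directly produce $\varphi^{\Gamma}_{s_0}$ as a diffeomorphism defined on the quotient.)

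Second, and more substantively, to know that the final map is defined on all of $\{\bar b_{\Gamma}\le 2\sqrt{(1-\ep)\log s_0}\}$ you must show that $\psi_i$ of this set lands inside $\Omega^{s_0}=\varphi_{s_0}(\{\bar b\le 2\sqrt{(1-\ep/2)\log s_0}\})$ for large $i$. The paper proves this via a separate claim: the metric ball $\{x: d_{g^z(s)}(x,p_s)\le 2\sqrt{(1-0.6\ep)\log s}\}$ is contained in $\varphi_s(\{\bar b<2\sqrt{(1-\ep/2)\log s}\})$, established by lifting minimizing geodesics from $p_s$ through the covering $\varphi_s$ and controlling their length upstairs. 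This step is genuinely new in the quotient setting (in Theorem \ref{thm:stronguni} the map is a global diffeomorphism and no such lifting is needed), and your trapping argument — which only gives a lower bound on $f^z$ outside $\Omega_s$ — does not by itself supply it, because the relation between sublevel sets of $f^z$ and the image of the degree-$|\Gamma|$ covering requires the level-set comparison \eqref{equ:rClevel} together with the geodesic lift. With these two points supplied, your argument is complete and matches the paper's.
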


\begin{proof}
Because the proof is similar to that of Theorem \ref{thm:stronguni}, we sketch the argument and detail only the new ingredients.

Throughout the proof, fix a small constant $\ep>0$ and set
	\begin{align*}
\Omega_s:=\varphi_s\lc \left\{\bar b\leq  \sqrt{(8-\ep/3)\log s}\right\}\rc \subset M,
	\end{align*} 
where $\varphi_s$ denotes the map, restricted to $\left\{\bar b\leq  \sqrt{(8-\ep/3)\log s}\right\}$, corresponding to $\rA^{|\Gamma|}(s)$ in Definition \ref{defnradiiquo}.

From Definition \ref{defnradiiquo} and Theorem \ref{thm:radiusquo}, for all sufficiently large $s$,
		\begin{align}\label{eq:extraqo001}
\left[\bar g-\varphi_s^* g^z(s)\right]_5+\left[\bar f-\log |\Gamma|-\varphi_s^* f^z(s) \right]_5 \leq e^{\frac{\bar f}{4}} s^{-\frac{3}{4}+\ep}.
		\end{align}

Let $\bar p$ be a minimum point of $\bar f$, and set $p_s=\varphi_s(\bar p)$. Then, by \eqref{eq:extraqo001}, we have $f^z(s)(p_s) \le C(n, Y)$ for all large $s$.

\begin{claim}\label{cla:limitqo1}
For all sufficiently large $s$, we have
		\begin{align*}
\left\{x \mid d_{g^z(s)}(x, p_s) \le  \sqrt{(8-0.6 \ep) \log s}\right\} \subset \varphi_s\lc \left\{\bar b < \sqrt{(8-\ep/2)\log s}\right\}\rc.
		\end{align*}
	\end{claim}

Fix $x$ with $d_{g^z(s)}(x, p_s) \le \sqrt{(8-0.6 \ep) \log s}$, and let $\gamma(z):[0, L] \to M$ be a minimizing geodesic with respect to $g^z(s)$ from $p_s$ to $x$, where $L=d_{g^z(s)}(x, p_s)$. Suppose, toward a contradiction, that
		\begin{align*}
x \notin \varphi_s\lc \left\{\bar b<  \sqrt{(8-\ep/2)\log s}\right\}\rc.
		\end{align*}
Define $z_0$ to be the largest number $z \in [0, L]$ so that $\gamma$, when restricted to $[0, z]$, is contained in $\varphi_s\lc \left\{\bar b <  \sqrt{(8-\ep/2)\log s}\right\}\rc$. Since $\varphi_s$ is a local diffeomorphism on $\left\{\bar b<  \sqrt{(8-\ep/2)\log s}\right\}$, we can lift $\gamma \vert_{[0, z_0)}$ to a curve $\tilde \gamma$ in $\left\{\bar b <  \sqrt{(8-\ep/2)\log s}\right\}$ starting from $\bar p$. By our assumption and \eqref{eq:extraqo001}, the length of $\tilde \gamma$, with respect to $\bar g$, is at most $\sqrt{(8-0.55 \ep) \log s}$, for sufficiently large $s$. However, this implies, by \eqref{eq:extraqo001} again, that $\tilde \gamma$ is contained in $\left\{\bar b\leq  \sqrt{(8-0.51\ep)\log s} \right\}$. This contradicts the definition of $z_0$, proving Claim \ref{cla:limitqo1}.

As in the proof of Theorem \ref{thm:stronguni}, Theorem \ref{thm:radiusquo} yields, on $\Omega_s$,
		\begin{align}\label{estishrinkerquanex001qo}
		\left[ \frac{g^z(s)}{2}-\na^2f^z(s)-\Ric(g^z(s))\right]_{l} \leq  C(n,Y,\ep,l) s^{-1-\frac{\ep}{24}},
	\end{align}	
	and
	\begin{align}\label{estishrinkerquan3qo}
		\left[ \frac{n}{2}-\Delta f^z(s)-\scal(g^z(s))\right]_l \leq  C_1(n,Y,\ep,l) s^{-1-\frac{\ep}{24}}
	\end{align}
	for any $l \in \mathbb N$, provided that $s$ is sufficiently large.
	
Choose $\bar s$ large so that \eqref{estishrinkerquanex001qo}, \eqref{estishrinkerquan3qo} and Claim \ref{cla:limitqo1} hold with $l = [\ep^{-1}]$ for all $s \geq \bar s$. Moreover, as in Claim \ref{claimpotential}, we may assume that for a fixed $s_0 \ge \bar s$, 
	\begin{align*}
\Omega^{s_0}:=\varphi_{s_0}\lc\left\{\bar b \leq \sqrt{(8-\ep/2)\log s_0}\right\}\rc \subset \Omega_s, \quad \forall s \ge s_0.
	\end{align*}	

Thus, $\lc \varphi_{s_0}^*g^z(s),\varphi_{s_0}^*f^z(s)\rc$ converges smoothly to $\lc g_\infty, f_\infty\rc$ on $\left\{\bar b \leq \sqrt{(8-\ep/2)\log s_0}\right\}$ as $s \to \infty$. In addition, as in the proof of Theorem \ref{thm:stronguni} (see \eqref{equ:decay1}), we obtain for all $s \ge s_0$,
		\begin{align}\label{equ:decay1qo}
		\left[\varphi_{s_0}^*g^z(s)-g_\infty \right]_{[\ep^{-1}]}+\left[\varphi_{s_0}^*f^z(s)-f_\infty\right]_{[\ep^{-1}]}\leq C(n,Y,\ep)e^{\frac{\bar f}{2}} s^{-1+\frac{\ep}{10}}.
	\end{align}

Take a sequence $s_i \to \infty$, then by our assumption, 
\begin{align*}
		\lc M, g^z(s_i),f^z(s_i), p_{s_i}\rc\xrightarrow[i\to\infty]{\quad\text{pointed-Cheeger--Gromov} \quad}\lc \bar M/\Gamma,\bar g_{\Gamma},\bar f_{\Gamma},\bar p_{\Gamma}\rc,
	\end{align*}
where $\bar p_{\Gamma}$ is a minimum point of $\bar f_{\Gamma}$. Thus, for sufficiently large $i$, there exists a diffeomorphism $\psi_{i}:U_{i}\subset \bar M/\Gamma\to M$, such that $U_{i}\subset U_{i+1}$, and $\bigcup_{i}U_{i}=\bar M/\Gamma$, satisfying
	\begin{align}\label{equ:convcylqo}
		\psi_{i}(\bar p_{\Gamma})=p_{s_i}, \quad \psi_{i}^*g^z(s_i)\xrightarrow[i\to\infty]{C_{\loc}^\infty}\bar g_{\Gamma},\quad \psi_{i}^*f^z(s_i)\xrightarrow[i\to\infty]{C_{\loc}^\infty}\bar f_{\Gamma}.
	\end{align}

Next, we set $V_i:=\psi_i^{-1} \lc \Omega^{s_0} \rc \subset U_i$, which is well-defined for large $i$. Combining Claim \ref{cla:limitqo1} with \eqref{equ:convcylqo}, we obtain, for all sufficiently large $i$,
		\begin{align}\label{eq:extraqo002}
\Omega':=\left\{\bar b_{\Gamma} \le \sqrt{(8-\ep)\log s_0}\right\} \subset V_i.
		\end{align}
		
Since $\lc \varphi_{s_0}^*g^z(s),\varphi_{s_0}^*f^z(s)\rc$ converge smoothly to $\lc g_\infty, f_\infty\rc$ on $\left\{\bar b \leq \sqrt{(8-\ep/2)\log s_0}\right\}$, it follows from \eqref{equ:convcylqo} and \eqref{eq:extraqo002} that $\psi_i: \Omega' \to \Omega^{s_0}$ converge smoothly to a diffeomorphism $\varphi^{\Gamma}_{s_0}:\Omega' \to \Omega^{s_0}$ such that
	\begin{align*}
\lc (\varphi^{\Gamma}_{s_0})^{-1} \circ \varphi_{s_0} \rc^* \bar g_{\Gamma}=g_\infty \quad \text{and} \quad \lc (\varphi^{\Gamma}_{s_0})^{-1} \circ \varphi_{s_0} \rc^* \bar f_{\Gamma}=f_\infty.
	\end{align*}
	
Consequently, the desired estimate follows from \eqref{equ:decay1qo} (after adjusting constant $\ep$), completing the proof.
\end{proof}

Let $\XX=\{M^n, (g(t))_{t \in (-\infty, 0]}\}$ be a complete Ricci flow with bounded curvature on any compact time interval of $(-\infty, 0]$ and with entropy bounded below. For a fixed base point $p^* \in \XX_0$ and set $\mathrm{d}\nu_{p^*;t}=(4\pi|t|)^{-n/2}e^{-f(t)}\,\mathrm{d}V_{g(t)}$. Let $\phi_t$ be the family of diffeomorphisms generated by $-\na_{g(t)} f(t)$ with $\phi_{0}=\mathrm{id}$. As \eqref{eq:mcf1}, we consider
  \begin{equation*}
  \begin{dcases}
    &g' (s):= e^s \phi^*_{-e^{-s}} g(-e^{-s}),   \\
    &f' (s):=\phi^*_{-e^{-s}} f_z(-e^{-s}).\\
        \end{dcases}
  \end{equation*}
  
The following results are parallel to Theorems \ref{thm:modi1} and \ref{thm:stronguni-infty}.

\begin{thm}\label{thm:quotinfinit1}
Suppose $\XX$ admits $\bar{\mathcal C}^k(\Gamma)$ as a tangent flow at infinity. Let $\rA^{|\Gamma|}(s)$ and $\rBC^{|\Gamma|}(s)$ be the radius functions from Definition \ref{defnradiiquo} with respect to $(M, g'(s),f'(s))$. Then,
\begin{align*}
\min\left\{\rA^{|\Gamma|}(s), \rBC^{|\Gamma|}(s) \right\} \ge \sqrt{(12-\ep) \log |s|},
	\end{align*}
where $\sigma=10^{-10} \ep$, provided that $|s|$ is sufficiently large.
\end{thm}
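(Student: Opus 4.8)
\textbf{Proof proposal for Theorem \ref{thm:quotinfinit1}.}

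The plan is to follow the same strategy used in the proof of Theorem \ref{thm:modi} (and its quotient analogue Theorem \ref{thm:radiusquo}), with the time direction reversed: here the relevant limit is a tangent flow at infinity rather than a tangent flow at a singularity, so the blow-down scales $r_j \to \infty$ play the role that the blow-up scales $r_j \searrow 0$ played before. First I would set $W(s):=\WW(g'(s),f'(s))$ and observe, using Theorem \ref{thm:uniqcyl} (uniqueness of the cylindrical tangent flow at infinity in the quotient setting), that every tangent flow at infinity of $\XX$ is isometric to $\bar{\mathcal C}^k(\Gamma)$, hence $W(s) \to \Theta_{n-k}(\Gamma)$ as $s \to -\infty$. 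Moreover, for any fixed $L > 1$, the closeness of the rescaled flow to $\bar{\mathcal C}^k(\Gamma)$ at all large blow-down scales forces $\min\{\rA^{|\Gamma|}(s),\rBC^{|\Gamma|}(s)\} \ge L$ once $|s|$ is large enough, which is the hypothesis needed to invoke the quotient Lojasiewicz inequality.

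Next I would apply Theorem \ref{thm:loquotient} (with the convention that $\beta = 3(1-3\sigma)^2/4$ and $\sigma = 10^{-3}\ep$, and with scale $r = e^{-s/2}$): this gives, for $|s|$ large,
\begin{align*}
\abs{W(s) - \Theta_{n-k}(\Gamma)} \le C(n,Y)\,\bigl(W(s+\log 2) - W(s-\log 2)\bigr)^{\beta}.
\end{align*}
Then, exactly as in the proof of Corollary \ref{quantisummabilityW} — the ODE-comparison / discrete-gradient-flow argument using the monotonicity $\partial_s(\text{something}) \le 0$ encoded in Proposition \ref{propNashentropy} — one upgrades this to a polynomial decay rate
\begin{align*}
\abs{W(s) - \Theta_{n-k}(\Gamma)} \le C(n,Y,\ep)\,|s|^{-\frac{\beta}{1-\beta}} = C(n,Y,\ep)\,|s|^{-\frac{3(1-3\sigma)^2}{4-3(1-3\sigma)^2}},
\end{align*}
and consequently, by the definition of the entropy radius,
\begin{align*}
\exp\Bigl(-\tfrac{\rE^2(s)}{4}\Bigr) = \abs{W(s+\log 2) - W(s-\log 2)} \le C(n,Y,\ep)\,|s|^{-\frac{3(1-3\sigma)^2}{4-3(1-3\sigma)^2}}.
\end{align*}

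Finally, I would feed this lower bound on $\rE(s)$ into the quotient radius-comparison theorem — the analogue of Theorem \ref{thm:rasing}/Theorem \ref{thm:ra} obtained by iterating Proposition \ref{contractionquotient} and Theorem \ref{thm:ext2q} — to conclude
\begin{align*}
\min\{\rA^{|\Gamma|}(s),\rBC^{|\Gamma|}(s)\} \ge (1-3\sigma)\sqrt{\tfrac{12(1-3\sigma)^2}{4-3(1-3\sigma)^2}\log|s| - C(n,Y,\ep)},
\end{align*}
and with $\sigma = 10^{-3}\ep$ the right-hand side exceeds $\sqrt{(12-\ep)\log|s|}$ once $|s|$ is sufficiently large. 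The main obstacle I anticipate is purely bookkeeping rather than conceptual: one must check that all the auxiliary estimates in Section \ref{seclojaRF} (the stability propositions, the extension theorems, and the interpolation steps) were genuinely stated for an arbitrary modified Ricci flow satisfying \eqref{equationMRF} on the relevant time interval — which $(g'(s),f'(s))$ does — and that the entropy lower bound $-Y$ together with bounded curvature on compact time intervals suffices to run the compactness arguments (Theorem \ref{thm:intro1}) used inside Theorem \ref{thm:ext1q} and Theorem \ref{thm:ext2q} in the quotient setting. Since the ancient flow has entropy bounded below and the rescaled flows $(M, g'(s))$ remain in $\MM(n,Y,T')$ on every finite window, these inputs are available, so the argument goes through verbatim.
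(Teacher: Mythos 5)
Your proposal is correct and follows essentially the same route the paper intends: the paper proves Theorem \ref{thm:quotinfinit1} by declaring it "parallel" to Theorems \ref{thm:modi1} and \ref{thm:radiusquo}, i.e.\ by running the proof of Theorem \ref{thm:modi} with time reversed (blow-down scales $r=e^{-s/2}\to\infty$), the quotient Lojasiewicz inequality, the polynomial-decay argument from Corollary \ref{quantisummabilityW}, and the quotient radius-comparison obtained by iterating Proposition \ref{contractionquotient} and Theorem \ref{thm:ext2q}. Your closing remarks about verifying that the estimates apply to complete ancient flows with bounded curvature on compact time intervals are exactly the bookkeeping the paper itself waves through.
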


\begin{thm}[Strong uniqueness of quotient cylindrical tangent flow at infinity]\label{thm:stronguni-inftyquo}
Under the same assumptions of Theorem \ref{thm:quotinfinit1}, for any small $\ep>0$, there exists a diffeomorphism from $\Omega^j:=\left\{\bar b_{\Gamma} \leq \sqrt{(8-\ep)\log j}\right\} \subset \bar M/\Gamma$ onto a subset of $M$ such that
\begin{align*}
	\psi_{-j-1}=\psi_{-j} \quad \mathrm{on}\quad \Omega^j,
\end{align*}
and for any $s \le -j$, we have on $\Omega^j$,
\begin{align*}
	\left[\psi_{-j}^*g'(s)-\bar g_{\Gamma} \right]_{[\ep^{-1}]}+\left[\psi_{-j}^*f'(s)-\bar f_{\Gamma} \right]_{[\ep^{-1}]}\leq C(n,Y,\ep)e^{\frac{\bar f_{\Gamma}}{2}} |s|^{-1+\ep}.
\end{align*} 
\end{thm}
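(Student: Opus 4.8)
\textbf{Proof plan for Theorem \ref{thm:stronguni-inftyquo}.}

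The plan is to run the same induction-and-gluing scheme used in the proof of Theorem \ref{thm:stronguni2quo} (equivalently Theorem \ref{thm:stronguni2}), but on the ancient solution with time running backwards toward $s \to -\infty$. The starting point is Theorem \ref{thm:quotinfinit1}, which provides the radius bound $\min\{\rA^{|\Gamma|}(s), \rBC^{|\Gamma|}(s)\} \ge \sqrt{(12-\ep)\log|s|}$ for all sufficiently negative $s$. First I would record that, since $\XX$ admits $\bar{\mathcal C}^k(\Gamma)$ as a tangent flow at infinity and $\XX$ has entropy bounded below, the quotient-cylinder analogue of Theorem \ref{thm:uniqcyl} forces \emph{every} tangent flow at infinity to be $\bar{\mathcal C}^k(\Gamma)$; hence $\WW_{p^*}(\tau) \to \Theta_{n-k}(\Gamma)$ as $\tau \to \infty$, which is exactly the monotone-convergence input needed to feed the discrete Lojasiewicz inequality (the quotient version of Theorem \ref{thm:losing}, valid as remarked after Theorem \ref{thm:loquotient}). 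Applying that Lojasiewicz inequality together with the Colding--Minicozzi-type summability argument (as in Corollary \ref{quantisummabilityWq} and the proof of Theorem \ref{thm:modi}) gives the polynomial decay rate $|W(s) - \Theta_{n-k}(\Gamma)| \le C(n,Y,\ep)|s|^{-3(1-3\sigma)^2/(4-3(1-3\sigma)^2)}$, hence $\rE(s) \ge \sqrt{(12-\ep)\log|s|}$ up to an additive constant; combined with the quotient analogue of Theorem \ref{thm:rasing} this reproves Theorem \ref{thm:quotinfinit1} in the form we need.

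Next I would construct, for each sufficiently negative parameter $s_0$, the covering-into-$M$ map $\varphi^\Gamma_{s_0}$ from $\{\bar b_\Gamma \le 2\sqrt{(1-\ep)\log|s_0|}\}$ by exactly imitating the argument in the proof of the last unnumbered theorem preceding Theorem \ref{thm:stronguni-inftyquo}: take $\varphi_s$ corresponding to $\rA^{|\Gamma|}(s)$, use Corollary \ref{almostshrinkerlarger} (quotient version) and the radius bound to get the pointwise shrinker-operator estimate on $\Omega_s := \varphi_s(\{\bar b_\Gamma \le 2\sqrt{(1-\ep/3)\log|s|}\})$, then establish the backward-in-$s$ analogue of Claim \ref{claimpotential} (the containment $\Omega^{s_0} \subset \Omega_s$ for all $s \le s_0$) via the heat-kernel lower-bound propagation of the potential function, and finally pass to the limit $s \to -\infty$ using Lemma \ref{lem:smoothcyl} (quotient version) to extract the limiting isometry onto the standard quotient cylinder, composing $\varphi_s$ with an element of $\mathrm{O}(n-k)\times\mathrm{Iso}(S^k)$ descended to $\bar M/\Gamma$ to normalize the gauge. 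The decay estimate $[\psi_{-j}^*g'(s) - \bar g_\Gamma]_{[\ep^{-1}]} + [\psi_{-j}^*f'(s) - \bar f_\Gamma]_{[\ep^{-1}]} \le C(n,Y,\ep)e^{\bar f_\Gamma/2}|s|^{-1/2+\ep}$ then comes by integrating the shrinker-operator bound in $s$ along the flow \eqref{equationMRF}, just as in \eqref{equ:decay1qo}. The compatibility $\psi_{-j-1} = \psi_{-j}$ on $\Omega^j$ follows from the gluing lemma: the two normalized maps $\varphi^\Gamma_{-j}$ and $\varphi^\Gamma_{-j-1}$ differ on the common domain by an isometry of $(\bar M/\Gamma, \bar g_\Gamma)$ preserving $\bar f_\Gamma$ (this is the quotient analogue of Lemma \ref{lem:gluediff}), which one absorbs inductively.

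The main obstacle, and the only place where the ancient-solution setting genuinely differs from the first-singular-time case, is the containment step $\Omega^{s_0} \subset \Omega_s$ \emph{backward} in time, i.e.\ for $s \le s_0$ rather than $s \ge s_0$. In Claim \ref{claimpotential} the argument used that the shrinker-operator bound is integrable over $[s_0, \infty)$ so the potential function barely moves; for the ancient flow one instead needs integrability over $(-\infty, -j]$, which is supplied by the decay rate $|s|^{-1-\ep/24}$ from Theorem \ref{thm:quotinfinit1} — this decay is still integrable at $-\infty$, so the same Gronwall-type/continuity argument closes, but one must be careful that the entropy radius and the various $H$-center estimates (Theorem \ref{thm:upper}, Proposition \ref{existenceHncenter}) are being applied at times $t = -e^{-s} \to 0^-$ inside a flow that remains smooth there, which they do since $\XX$ is a complete ancient solution with curvature bounded on compact time intervals and the base point is fixed at $\XX_0$. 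A secondary technical point is that all maps here are \emph{covering maps of degree $|\Gamma|$} from domains in $\bar M$ rather than honest diffeomorphisms, so the glued map must be checked to remain a covering map of the correct degree — but this is exactly the bookkeeping already handled in Definitions \ref{defnradiiquo}, \ref{defnrCdq} and in the proofs of Theorems \ref{thm:ext1q} and \ref{thm:ext2q}, and it carries over verbatim.
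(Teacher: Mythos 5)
Your proposal is correct and follows essentially the same route as the paper, which in fact gives no separate argument for this theorem but simply asserts it follows "by the same proof" as Theorems \ref{thm:modi1}, \ref{thm:stronguni-infty} and \ref{thm:stronguni2quo}; your write-up supplies exactly the details the paper omits, and you correctly isolate the one genuinely new point, namely that the containment $\Omega^{s_0}\subset\Omega_s$ must now be propagated backward in $s$, which closes because the shrinker-operator decay $|s|^{-1-\ep/24}$ is integrable at $-\infty$. (Only a cosmetic remark: the gauge-fixing isometry group should be written consistently with the paper's convention $\mathrm{O}(k)\times\mathrm{Iso}(S^{n-k})$ for the cylinder $\R^{k}\times S^{n-k}$, restricted to the normalizer of $\Gamma$ so that it descends to $\bar M/\Gamma$.)
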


\newpage

    \printindex

\bibliographystyle{alpha}
\bibliography{lojaref2}

\vskip10pt

Hanbing Fang, Mathematics Department, Stony Brook University, Stony Brook, NY 11794, United States; Email: hanbing.fang@stonybrook.edu;\\

Yu Li, Institute of Geometry and Physics, University of Science and Technology of China, No. 96 Jinzhai Road, Hefei, Anhui Province, 230026, China; Hefei National Laboratory, No. 5099 West Wangjiang Road, Hefei, Anhui Province, 230088, China; Email: yuli21@ustc.edu.cn. \\

\end{CJK}
\end{document}